\newtheorem{thm}{Theorem}[section]
\newtheorem{cor}[thm]{Corollary}
\newtheorem{lemma}[thm]{Lemma}
\newtheorem{prop}[thm]{Proposition}
\newtheorem{conj}[thm]{Conjecture}
\newtheorem{assump}[thm]{Assumption}
\newtheorem{defin}[thm]{Definition}
\theoremstyle{definition}
\def\zed{{\mathbb Z}}
\def\D{{\mathcal D}}
\def\F{{\mathcal F}}
\def\Bar#1{\overline#1}
\def\ep{\varepsilon}
\renewcommand{\mod}{\operatorname{mod}}
\def\Int{\operatorname{int}}
\newtheoremstyle{cases}
  {12pt plus 6 pt}
  {2pt}
  {\bfseries}   
  {}
  {\bfseries}
  {.}
  {.5em}
  {}
\theoremstyle{cases}
\numberwithin{subcase}{case} \numberwithin{subsubcase}{subcase}
\numberwithin{equation}{subsection}
\begin{document}

\def\G{{\Gamma}}
  \def\d{{\delta}}
  \def\ci{{\circ}}
  \def\e{{\epsilon}}
  \def\l{{\lambda}}
  \def\L{{\Lambda}}
  \def\m{{\mu}}
  \def\n{{\nu}}
  \def\o{{\omega}}
  \def\O{{\Omega}}
  \def\Th{{\Theta}}\def\s{{\sigma}}
  \def\v{{\varphi}}
  \def\a{{\alpha}}
  \def\b{{\beta}}
  \def\p{{\partial}}
  \def\r{{\rho}}
  \def\ra{{\rightarrow}}
  \def\lra{{\longrightarrow}}
  \def\g{{\gamma}}
  \def\D{{\Delta}}
  \def\La{{\Leftarrow}}
  \def\Ra{{\Rightarrow}}
  \def\x{{\xi}}
  \def\c{{\mathbb C}}
  \def\z{{\mathbb Z}}
  \def\2{{\mathbb Z_2}}
  \def\q{{\mathbb Q}}
  \def\t{{\tau}}
  \def\u{{\upsilon}}
  \def\th{{\theta}}
  \def\la{{\leftarrow}}
  \def\lla{{\longleftarrow}}
  \def\da{{\downarrow}}
  \def\ua{{\uparrow}}
  \def\nwa{{\nwtarrow}}
  \def\swa{{\swarrow}}
  \def\nea{{\netarrow}}
  \def\sea{{\searrow}}
  \def\hla{{\hookleftarrow}}
  \def\hra{{\hookrightarrow}}
  \def\sl{{SL(2,\mathbb C)}}
  \def\ps{{PSL(2,\mathbb C)}}
  \def\qed{{\hspace{2mm}{\small $\diamondsuit$}\goodbreak}}
  \def\pf{{\noindent{\bf Proof.\hspace{2mm}}}}
  \def\ni{{\noindent}}
  \def\sm{{{\mbox{\tiny M}}}}
   \def\sf{{{\mbox{\tiny F}}}}
   \def\sc{{{\mbox{\tiny C}}}}
  \def\ke{{\mbox{ker}(H_1(\partial M;\2)\ra H_1(M;\2))}}
  \def\et{{\mbox{\hspace{1.5mm}}}}

\title{Characteristic Submanifold Theory and Toroidal Dehn Filling\footnotetext{2000 Mathematics Subject Classification. Primary 57M25, 57M50, 57M99}}

\author[Steven Boyer]{Steven Boyer}
\thanks{Steven Boyer was partially supported by NSERC grant RGPIN 9446-2008}
\address{D\'epartement de Math\'ematiques, Universit\'e du Qu\'ebec \`a Montr\'eal, 201 avenue du Pr\'esident-Kennedy, Montr\'eal, QC H2X 3Y7.}
\email{boyer.steven@uqam.ca}
\urladdr{http://www.cirget.uqam.ca/boyer/boyer.html}

\author{Cameron McA. Gordon}
\thanks{Cameron Gordon was partially supported by NSF grant DMS-0906276.}
\address{Department of Mathematics, University of Texas at Austin, 1 University Station, Austin, TX 78712, USA.}
\email{gordon@math.utexas.edu}
\urladdr{http://www.ma.utexas.edu/text/webpages/gordon.html}

\author{Xingru Zhang}
\address{Department of Mathematics, University at Buffalo, Buffalo, NY, 14214-3093, USA.}
\email{xinzhang@buffalo.edu}
\urladdr{http://www.math.buffalo.edu/~xinzhang/}

\maketitle
\vspace{-.6cm}
\begin{center}
\today
\end{center}

\begin{abstract}
The exceptional Dehn filling conjecture of the second author concerning the relationship between exceptional slopes $\alpha, \beta$ on the boundary of a hyperbolic knot manifold $M$ has been verified in all cases other than small Seifert filling slopes. In this paper we verify it when $\alpha$ is a small Seifert filling slope and $\beta$ is a toroidal filling slope in the generic case where $M$ admits no punctured-torus fibre or semi-fibre, and there is no incompressible torus in $M(\beta)$ which intersects $\partial M$ in one or two components. Under these hypotheses we show that $\Delta(\alpha, \beta) \leq 5$. Our proof is based on an analysis of the relationship between the topology of $M$, the combinatorics of the intersection graph of an immersed disk or torus in $M(\alpha)$, and the two sequences of characteristic subsurfaces associated to an essential punctured torus properly embedded in $M$.
\end{abstract}

\section{Introduction}

This is the first of four papers concerned with the relationship between Seifert filling
slopes and toroidal filling slopes on the boundary of a hyperbolic knot manifold $M$.
Such results are part of the exceptional surgery problem, which we describe now.

A \textit{hyperbolic knot manifold} $M$ is a compact, connected, orientable,
hyperbolic $3$-manifold whose boundary is a torus. A \textit{slope} on $\partial M$ is a $\partial
M$-isotopy class of essential simple closed curves. Slopes can be
visualized by identifying them with $\pm$-classes of primitive
elements of $H_1(\partial M)$ in the \textit{surgery plane} $H_1(\partial
M; \mathbb R)$. The \textit{distance} $\Delta(\alpha_1, \alpha_2)$ between slopes
$\alpha_1, \alpha_2$ is the absolute value of the algebraic intersection number
of their associated classes in $H_1(\partial M)$. Given a set of slopes $\mathcal{S}$, set $\Delta(\mathcal{S}) = \sup\{\Delta(\alpha, \beta) : \alpha, \beta \in \mathcal{S}\}$.

To each slope $\alpha$ on $\partial M$ we associate the {\it $\alpha$-Dehn
  filling} $M(\alpha) = (S^1 \times D^2) \cup_f M$ of~$M$ where $f:
\partial (S^1 \times D^2) \to \partial M$ is any homeomorphism such
that $f(\{*\} \times \partial D^2)$ represents $\alpha$.

Set $\mathcal{E}(M) = \{ \alpha \mid M(\alpha) \mbox{ is not hyperbolic}\}$
and call the elements of $\mathcal{E}(M)$ \textit{exceptional slopes}. It
follows from Thurston's hyperbolic Dehn surgery theorem
that $\mathcal{E}(M)$ is finite, while
Perelman's solution of the geometrisation conjecture implies that
$$\mathcal{E}(M) = \{ \alpha \mid M(\alpha) \mbox{ is either reducible, toroidal, or small Seifert}\}$$
Here, a manifold is {\it small Seifert} if it admits a Seifert
structure with base orbifold of the form $S^2(a, b, c)$, where $a,b,c \geq 1$.

Much work has been devoted to understanding the structure of $\mathcal{E}(M)$
and describing the topology of $M$ when $|\mathcal{E}(M)| \geq  2$. For instance, sharp upper bounds are known for the distance between two reducible filling slopes \cite{GL}, between two toroidal filling slopes \cite{Go}, \cite{GW}, and between a reducible filling slope and a toroidal filling slope \cite{Oh}, \cite{Wu1}. More recently, strong upper bounds were obtained for the distance between a reducible filling slope and a small Seifert filling slope \cite{BCSZ2}, \cite{BGZ2}. In this paper, and its sequel, we examine the distance between toroidal filling slopes and small Seifert filling slopes.

Let $W$ be the exterior of the right-handed Whitehead link and $T$ one of its boundary components. Consider the following hyperbolic knot exteriors obtained by the indicated Dehn filling of $W$ along $T$: $M_1 = W(T; -1), M_2 = W(T; -2), M_3 = W(T; 5), M_4 = W(T; \frac52)$. One of the key conjectures concerning $\mathcal{E}(M)$ is the following:

\begin{conj}[C.McA.~Gordon] \label{conj} For any hyperbolic knot
    manifold $M$, we have $\#\mathcal{E}(M) \leq 10$ and
    $\Delta(\mathcal{E}(M)) \leq 8$.  Moreover, if $M \ne M_1, M_2,
    M_3, M_4$, then $\#\mathcal{E}(M) \leq 7$ and
    $\Delta(\mathcal{E}(M)) \leq 5$.
\end{conj}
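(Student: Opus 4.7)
The plan is to attack Conjecture \ref{conj} by decomposing $\mathcal{E}(M)$ according to the three types guaranteed by Perelman (reducible, toroidal, small Seifert) and bounding $\Delta(\alpha,\beta)$ separately for each pair of types. For the pairs reducible--reducible, toroidal--toroidal, reducible--toroidal and reducible--small Seifert, the required bounds are already in the literature and supply $\Delta \leq 5$ (with the usual exceptions) except in the two remaining configurations: small Seifert versus toroidal, and small Seifert versus small Seifert. I would focus first on the former, since an incompressible torus in $M(\beta)$ gives an embedded essential punctured torus $F_\beta \subset M$ of slope $\beta$, whereas $M(\alpha)$ only forces an \emph{immersed} essential surface, and this asymmetry is what drives the whole argument.

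To handle the small Seifert versus toroidal pair I would produce, from the small Seifert filling $M(\alpha)$, an immersed essential disk or torus whose puncturing along the filling solid torus yields a properly immersed punctured surface $F_\alpha \subset M$ with all boundary components of slope $\alpha$, and then study the intersection graphs $\Gamma_\alpha \subset F_\alpha$ and $\Gamma_\beta \subset F_\beta$ that record the components of $F_\alpha \cap F_\beta$. The bound $\Delta(\alpha,\beta)\leq 5$ translates to an explicit upper bound on the valence of vertices of $\Gamma_\alpha$, and the low complexity of $F_\alpha$ (disk or torus) and of $F_\beta$ (punctured torus) makes combinatorial restrictions on parallel edges, Scharlemann cycles, and great webs available. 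The new ingredient I would layer onto this classical setup is the pair of characteristic subsurfaces of $F_\beta$, namely the traces in $F_\beta$ of the maximal $I$-bundle and Seifert fibred pieces of the characteristic submanifold of $M$ split along $F_\beta$. Each edge of $\Gamma_\beta$ is labelled by which piece it sits in, and edges outside both pieces are constrained by the absence of parallel essential annuli, which feeds back sharp restrictions on the dual graph $\Gamma_\alpha$. The genericity hypotheses in the abstract, namely that $M$ admits no punctured-torus fibre or semi-fibre and that $F_\beta$ has at least three boundary components, are precisely what is needed to make these two characteristic subsurfaces proper, non-degenerate, and large enough to absorb the bulk of the edges.

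The main obstacle, which I expect to occupy most of the work, is controlling what happens when a large fraction of the edges of $\Gamma_\beta$ lies in the $I$-bundle part of the characteristic subsurface: such configurations encode maps from $F_\alpha$ into an $I$-bundle over $F_\beta$ and push $M$ arbitrarily close to being a semi-fibre over the punctured torus, which is exactly the degenerate case excluded by hypothesis. Ruling out the borderline configurations will require a careful Euler-characteristic count on $F_\alpha$ combined with the graph-theoretic constraints (parity of labels around a vertex, length of Scharlemann cycles, existence of extended Scharlemann cycles) and a systematic use of the fact that the two characteristic subsurfaces on $F_\beta$ interact via their common frontier. Once these degenerate cases are eliminated, the remaining combinatorics reduces to finitely many enumerated configurations, each of which forces $\Delta(\alpha,\beta)\leq 5$; handling the small Seifert versus small Seifert pair in a companion paper then completes the verification of Conjecture \ref{conj}.
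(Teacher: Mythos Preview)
The statement you are addressing is a \emph{conjecture}, not a theorem, and the paper does not prove it. The paper explicitly presents Conjecture~\ref{conj} as open and devotes itself to a partial result toward the special case Conjecture~\ref{conj2} (small Seifert versus toroidal), namely Theorem~\ref{main}: $\Delta(\alpha,\beta)\le 5$ under the generic hypotheses that $F$ is neither a fibre nor a semi-fibre and that $m=|\partial F|\ge 3$. The non-generic cases $m\le 2$ and the fibre/semi-fibre case are deferred to sequel papers, and the small Seifert versus small Seifert case is not addressed at all. So your proposal is not a proof but a programme, and one that the paper itself is only partially executing.

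That said, your strategic outline is broadly the paper's approach: produce from $M(\alpha)$ an immersed disk or torus, form the intersection graph with the essential punctured torus $F$ of slope $\beta$, and constrain the combinatorics via the characteristic $I$-bundle structure on the complementary pieces $X^\pm$. Where your proposal is too optimistic is the claim that ``the remaining combinatorics reduces to finitely many enumerated configurations, each of which forces $\Delta(\alpha,\beta)\le 5$.'' The paper shows that even under the genericity hypotheses this reduction is extremely delicate: it requires a full theory of tight components of the iterated characteristic subsurfaces $\breve\Phi_j^\epsilon$, sharp bounds on the length of essential homotopies (\S\ref{length}), a structural analysis of $\widehat X^\epsilon$ forcing Seifert fibrations (\S\ref{NT}), and then lengthy case-by-case arguments (hexagonal versus rectangular reduced graphs, extended $S$-cycles, Loop Theorem cut-and-paste on $A_-$-disks, relative group presentations and the Bogley--Pride/Edjvet--Howie machinery in \S\ref{proof not all tight}). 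Your sketch does not anticipate any of these layers, and in particular gives no mechanism for the crucial step of bounding the weights of edges of $\overline\Gamma_S$ via the sequence $t_j^\epsilon$ of tight-component counts, which is what actually converts the characteristic-submanifold information into a bound on $\Delta$.
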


It is shown in \cite{BGZ1} that the conjecture holds if the first
Betti number of $M$ is at least $2$. (By duality, it is at least $1$.)
Lackenby and Meyerhoff have proven that the first
statement of the conjecture holds in general \cite{LM}. See \S 2 of
their paper for a historical survey of results concerning upper bounds
for $\#\mathcal{E}(M)$ and $\Delta(\mathcal{E}(M))$. Agol has shown
that there are only finitely many hyperbolic knot manifolds $M$ with
$\Delta(\mathcal{E}(M)) > 5$ \cite{Ag}, though no practical
fashion to determine this finite set is known.

It follows from \cite{GL}, \cite{Oh}, \cite{Wu1}, \cite{Go} and \cite{GW} that Conjecture 1.1 holds if $\mathcal{E}(M)$ is replaced by $\mathcal{E}(M) \setminus \{\alpha \; \vert \; M(\alpha) \hbox{ is small Seifert}\}$. It remains, therefore, to consider pairs $\alpha, \beta$ such that $M(\alpha)$ is small Seifert and $M(\beta)$ is either reducible, toroidal or small Seifert. The first case is treated in \cite{BCSZ2}, where it is shown that, generically, $\Delta(\alpha, \beta) \le 4$. (See below for a more precise statement.) In the present paper we are interested in the second case. (We remark that if $M(\alpha)$ is toroidal Seifert fibred and $M(\beta)$ is toroidal then $\Delta(\alpha, \beta) \leq 4$ \cite{BGZ3}.) Here, Conjecture 1.1 implies

\begin{conj}\label{conj2} Suppose that $M$ is a hyperbolic knot
manifold $M$ and $\alpha, \beta$ are slopes on $\partial M$ such that $M(\alpha)$ is small Seifert and $M(\beta)$ toroidal. If $\Delta(\alpha, \beta) > 5$, then $M$ is the figure eight knot exterior.
\end{conj}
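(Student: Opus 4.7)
The plan is to split the problem according to the two degeneracy hypotheses excluded in the abstract: whether $M$ admits a punctured-torus fibre or semi-fibre, and whether some essential torus in $M(\beta)$ meets $\partial M$ in at most two components. Under the negation of both — the \emph{generic} case — I would pursue the outline of the paper's main theorem: let $\hat F \subset M$ be the essential punctured torus obtained by cutting an essential torus in $M(\beta)$ along $\partial M$, chosen to minimise $t = |\hat F \cap \partial M|$ (so $t \geq 3$), and let $\hat P \subset M(\alpha)$ be a $\pi_1$-essential immersed disk or torus produced from the small Seifert structure (for instance a singular disk coming from the non-cyclic base orbifold, or a vertical torus pulled back from a suitable cover). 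Transversely intersecting the preimage of $\hat P$ in $M$ with $\hat F$ and making $\Delta = \Delta(\alpha,\beta)$ explicit produces intersection graphs $G_F \subset \hat F$ and $G_P \subset \hat P$ whose vertex valences are linear in $\Delta$.

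The heart of the argument is then to apply characteristic submanifold theory to the pair $(M, \hat F)$. Cutting $M$ along $\hat F$ yields two sides, each carrying a characteristic $I$-bundle-plus-Seifert decomposition which in turn induces two sequences of characteristic subsurfaces of $\hat F$. Every face of $G_F$ must respect these decompositions: faces lying in an $I$-bundle piece encode cobordism data between two subarcs of $\partial \hat F$, while faces in a Seifert piece are pinned down by the vertical fibration. Combining these rigidity statements with standard Euler-characteristic and edge-parity estimates on $G_F, G_P$ should force $\Delta \leq 5$ in the generic case. The two excluded cases must then be addressed separately in subsequent papers: a punctured-torus (semi-)fibration of $M$ is rigidly constrained by its monodromy and the figure-eight knot exterior arises as the unique example with hyperbolic monodromy; low-intersection toroidal fillings ($t \leq 2$) can be attacked via the classification of manifolds admitting such tori, together with Klein bottle slope analysis.

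The main obstacle is the synthesis in the generic case. Pure graph-theoretic arguments of the sort used in \cite{GL}, \cite{Oh}, \cite{Wu1}, \cite{Go}, \cite{GW} cannot on their own reduce $\Delta$ below $8$; the further drop to $5$ must genuinely exploit the characteristic submanifold decomposition, and identifying which face configurations of $G_F$ are realisable inside a given $I$-bundle component — versus which configurations force $\hat F$ to fibre $M$ with a specific monodromy and hence recover the figure-eight — is where the essential technical work lies. A secondary obstacle, and the reason the conjecture is not settled outright in a single paper, is the combinatorial explosion when either $\hat F$ bounds a twisted $I$-bundle on one side (the semi-fibre situation) or $t \leq 2$: in these collapsed regimes the characteristic submanifold machinery loses most of its constraining power, so one must reintroduce direct geometric and group-theoretic arguments to rule out any non-figure-eight manifold achieving $\Delta > 5$.
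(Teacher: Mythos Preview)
The statement you are attempting to prove is a \emph{conjecture} (Conjecture~\ref{conj2}), and the paper does not prove it. The paper's main result (Theorem~\ref{main}) establishes only the generic case: under the assumptions that $F$ is neither a fibre nor a semi-fibre and that $m = |\partial F| \geq 3$, one has $\Delta(\alpha,\beta) \leq 5$. The paper explicitly defers the non-generic cases---where $F$ is a (semi-)fibre or where some essential torus in $M(\beta)$ meets $\partial M$ in one or two components---to the sequels \cite{BGZ3}, \cite{BGZ4}. Your own proposal acknowledges this: you write that the excluded cases ``must then be addressed separately in subsequent papers.'' So what you have written is not a proof of the conjecture but an outline of a programme, and one that you yourself do not claim to complete.

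For the generic case your sketch is in the right spirit---intersection graph of an immersed disk or torus in $M(\alpha)$ against $F$, coupled with the characteristic $I$-bundle decomposition on each side of $F$---and this is indeed the paper's strategy. But the sketch is far too coarse to constitute a proof. The paper's argument does not proceed by a single ``Euler-characteristic and edge-parity estimate'': it requires a detailed analysis of the sequences $\dot\Phi_j^\epsilon$ of characteristic subsurfaces, the introduction and counting of \emph{tight} components, bounds on the length $l_S$ of essential homotopies (\S\ref{length}), and then a long case division according to the values of $t_1^+, t_1^-$ and whether $X^-$ is an $I$-bundle (\S\S\ref{section non-sep}--\ref{not twisted} and beyond). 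Several of these cases terminate only after delicate combinatorics on hexagonal or rectangular reduced graphs combined with group-theoretic relations in $\pi_1(\widehat X^\epsilon)$, Loop Theorem applications, and even relative-presentation results of Edjvet--Howie and Bogley--Pride. None of this structure is visible in your outline, and the assertion that ``standard'' estimates ``should force $\Delta \leq 5$'' is precisely where all the work lies.
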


Understanding the relationship between Dehn fillings which yield small Seifert manifolds and other slopes in $\mathcal{E}(M)$ has proven difficult. The techniques used to obtain sharp distance bounds in other cases either provide realtively weak bounds here or do not apply at all. For instance, the graph intersection method (see e.g. \cite{CGLS}, \cite{GL}) cannot be used as typically, small Seifert manifolds do not admit closed essential surfaces. On the other hand, they usually do admit essential immersions of tori, a fact which can be exploited. Suppose that $\alpha$ and $\beta$ are slopes on $\partial M$ such that $M(\alpha)$ is small Seifert and $M$ contains an essential surface $F$ of slope $\beta$. It was shown in \cite{BCSZ1} how to construct an immersion $h: Y \to M(\alpha)$ where $Y$ is a disk or torus, a labeled ``intersection" graph $\Gamma_F = h^{-1}(F) \subset Y$, and, for each sign $\epsilon = \pm$, a sequence of characteristic subsurfaces
$$F = \dot{\Phi}_0^\epsilon \supset \dot{\Phi}_2^\epsilon \supset \dot{\Phi}_3^\epsilon \supset \ldots \supset \dot{\Phi}_n^\epsilon \supset \ldots $$
The relationship between the combinatorics of $\Gamma_F$, the two sequences of characteristic subsurfaces, and the topology of $M$  was exploited in \cite{BCSZ2} to show that if $M(\alpha)$ is small Seifert, $M(\beta)$ is reducible, and the (planar) surface $F$ is neither a fibre nor semi-fibre in $M$, then $\Delta(\alpha, \beta) \le 4$.
(See also \cite{CL}, \cite{Li} where a related method is used to study the existence of immersed essential surfaces in Dehn fillings of knot manifolds.) The main contributions of this paper are the further refinement of this technique and its application in the investigation of Conjecture \ref{conj2}.

When $M$ is the figure eight knot exterior there are (up to orientation-reversing homeomorphism of $M$) two pairs $(\alpha, \beta)$ with $\Delta(\alpha, \beta) > 5$ such that $M(\alpha)$ is small Seifert and $M(\beta)$ is toroidal, namely $(-3,4)$ and $(-2,4)$. The toroidal manifold $M(4)$ contains a separating incompressible torus which intersects $\partial M$ in two components. Moreover the corresponding punctured torus is not a fibre or semi-fibre in $M$. We show that if a hyperbolic knot manifold $M$ has a small Seifert filling $M(\alpha)$ and a toroidal filling $M(\beta)$ then $\Delta(\alpha, \beta) \leq 5$ in the generic case when $M$ admits no punctured-torus fibre or semi-fibre, and there is no incompressible torus in $M(\beta)$ which intersects $\partial M$ in one or two components. Our precise result is stated in \S \ref{assumptions} where we also detail our underlying assumptions and provide an outline of the paper. We will examine the non-generic cases of Conjecture \ref{conj2} in the forthcoming manuscripts \cite{BGZ3}, \cite{BGZ4}.

We are indebted to Marc Culler and Peter Shalen  for their role in the development of the ideas  in this paper.

\section{Basic assumptions and statement of main result} \label{assumptions}

Throughout the paper we work under the following assumptions.

\begin{assump} \label{assumptions on alpha and beta}
$M(\alpha)$ is a small Seifert manifold with base orbifold $S^2(a,b,c)$ where $a, b, c \geq 1$ and $M(\beta)$ is toroidal.
\end{assump}

\begin{assump} \label{assumption minimal}
Among all embedded essential tori in $M(\beta)$, $\widehat F$ is one whose intersection with $\partial M$  has the least number of components.
\end{assump}

Then $F=\widehat F\cap M$ is a
properly embedded essential punctured torus in $M$ with boundary
slope $\b$. Set $m=|\partial F| \geq 1$.

\begin{assump}\label{assumption sep}  If there is an essential separating torus in $M(\beta)$
satisfying Assumption \ref{assumption minimal}, $\widehat F$ has been chosen
to be separating.
\end{assump}

\begin{assump}\label{assumption twisted}
 If there is an essential torus in $M(\beta)$ satisfying
Assumption \ref{assumption minimal} which bounds a twisted
$I$-bundle over the Klein bottle in $M(\beta)$, $\widehat F$ has been chosen to
bound such an $I$-bundle.
\end{assump}

Note that it is possible that there are essential tori $\widehat F_1, \widehat F_2$ in $M(\beta)$ which bound twisted
$I$-bundles over the Klein bottle in $M(\beta)$, such that $\widehat F_1 \cap M$ is the frontier of a twisted $I$-bundle in $M$ but $\widehat F_2 \cap M$ is not.

\begin{assump}\label{assumption twisted 2}
 If there is an essential torus $\widehat F$ in $M(\beta)$ satisfying
Assumption \ref{assumption minimal} such that there is a twisted $I$-bundle in $M$ with frontier $F = \widehat F \cap M$, $\widehat F$ has been chosen so that $F $ is the frontier of a twisted $I$-bundle.
\end{assump}

Let $S$ be the surface in $M$ which is $F$ when $F$ is separating
and is a union of two parallel copies $F_1, F_2$ of $F$ when  $F$ is
non-separating. Then $S$ splits $M$ into two components $X^+$ and
$X^-$.

Let $\widehat S$ be a closed surface in $M(\beta)$ obtained by attaching disjoint
meridian disks of the $\beta$-filling solid torus to $S$. Then $\widehat S$ splits $M(\beta)$
into two compact submanifolds $\widehat X^+$ containing $X^+$ and $\widehat X^-$ containing $X^-$,
each having incompressible boundary $\widehat S$.

We call $F$ a {\it fibre} in $M$ if it is a fibre of a surface bundle map $M \to S^1$. Equivalently, the exterior of $F$ in $M$ is homeomorphic to $F \times I$. We call $F$ a {\it semi-fibre} in $M$ if it separates and splits $M$ into two twisted $I$-bundles.

\begin{assump}\label{assumption not (semi) fibre}
Assume that $F$, chosen as above, is neither a fibre nor a semi-fibre in $M$. In particular, assume that $X^+$ is not an $I$-bundle over a surface.
\end{assump}

Here is our main theorem.

\begin{thm}\label{main} Suppose that $M$ is a hyperbolic knot manifold and $\alpha, \beta$ slopes on $\partial M$ such that $M(\alpha)$ is a small Seifert manifold and $M(\beta)$ is toroidal. Let $F$ be an essential genus $1$ surface of slope $\beta$ which is properly embedded in $M$ and which satisfies the assumptions listed above. If $m \geq 3$, then
$\Delta(\alpha, \beta) \leq 5$.
\end{thm}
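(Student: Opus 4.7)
The plan is to argue by contradiction, assuming $\Delta := \Delta(\alpha, \beta) \geq 6$ and deriving incompatibility with the characteristic subsurface analysis. First I would invoke the construction of \cite{BCSZ1} on the small Seifert manifold $M(\alpha)$ to produce an essential immersion $h : Y \to M(\alpha)$ of a disk or torus (arising from an exceptional or regular fibre, or an essential vertical torus in the Seifert structure), put $h$ in general position with respect to $\widehat F$, and form the ``intersection graph'' $\Gamma_F = h^{-1}(\widehat F) \subset Y$ whose vertices are $h^{-1}(\partial M)$ and whose edges are arcs of $h^{-1}(F)$. Each interior vertex of $\Gamma_F$ carries $m$ endpoints for every meridional intersection, so has valence $m\Delta \geq 6m$ and is labeled around its boundary by the punctures $\partial_1 F, \dots, \partial_m F$ repeated $\Delta$ times.

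Next I would exploit the two characteristic subsurface sequences $\dot\Phi_n^\epsilon \subset F$ for $\epsilon=\pm$. The key principle, following \cite{BCSZ2}, is that a family of $n$ mutually parallel edges in $\Gamma_F$ with a common label pair on the $\epsilon$--side produces an essential map of a rectangle into $X^\epsilon$ with boundary tracing an $n$-fold parallel arc in $F$, which by Jaco--Shalen--Johannson is forced to factor through the characteristic $I$-pair of $(X^\epsilon, S)$; the target arcs therefore lie in $\dot\Phi_n^\epsilon$. Since $F$ is compact, the sequence $\dot\Phi_n^\epsilon$ stabilises at a small $n$; Assumption~\ref{assumption not (semi) fibre} guarantees $\dot\Phi^\epsilon \neq F$, so $|\chi(\dot\Phi^\epsilon)|$ is controlled, and Assumptions~\ref{assumption sep}--\ref{assumption twisted 2} rule out degenerate $I$-bundle configurations that would otherwise inflate the characteristic subsurface.

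The core of the proof is then a double counting: on one hand, an Euler-characteristic/Gauss-Bonnet argument on $Y$ (using $\chi(Y) \in \{0,1\}$) combined with the valence lower bound $6m$ forces the existence of ``large'' parallel families in $\Gamma_F$, distributed among the $\sim m^2$ possible label-pair classes; on the other hand, each such family is constrained to sit in a $\dot\Phi_n^\epsilon$ of restricted Euler characteristic. The hypothesis $m \geq 3$ is essential because it is exactly what allows the characteristic-subsurface bound to beat the combinatorial lower bound. Pushing $\Delta$ from $5$ up to $6$ violates one of these two estimates and yields the contradiction; this refinement of the $\Delta \leq 4$ argument of \cite{BCSZ2} (reducible case) is made possible by separating the two sides $X^+$ and $X^-$ and tracking the two sequences $\dot\Phi_n^+$ and $\dot\Phi_n^-$ independently.

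I expect the main obstacle to be the detailed case analysis when $Y$ is a torus, since then $\chi(Y)=0$ gives no slack from the outset and one must extract the parallel families from more delicate features of $\Gamma_F$ (e.g., ``great webs'' or cycles of bigons), ruling out pathological configurations using the non-fibre/non-semi-fibre hypothesis and the minimality Assumption~\ref{assumption minimal}. A second serious difficulty is controlling the characteristic subsurface when $X^\pm$ contains nontrivial Seifert pieces or cable spaces, where the $I$-pair may interact in a complicated way with boundary annuli in $\partial M$; here Assumptions~\ref{assumption twisted} and \ref{assumption twisted 2} play a subtle role in ensuring that the choice of $\widehat F$ maximises the combinatorial content of $\dot\Phi^\pm$.
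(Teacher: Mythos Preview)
Your outline captures the overall architecture correctly---intersection graph on an immersed disk or torus, characteristic subsurfaces $\dot\Phi_j^\epsilon$, an Euler-characteristic count on $Y$ forcing many parallel edges, and a bound on how many parallel edges the characteristic $I$-bundle can absorb---but as written it is a plan rather than a proof, and the ``double counting'' step hides almost all of the real content.

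The principal missing idea is the notion of \emph{tight components} of $\breve\Phi_j^\epsilon$ and the resulting sharp bound on the maximal length $l_S$ of an essential homotopy: one shows $t_j^\epsilon \leq t_{j+1}^\epsilon$ with equality forcing stabilisation, and deduces $l_S \leq |\partial S| - t_1^+ + O(1)$ (with the $O(1)$ depending on whether $X^-$ is an $I$-bundle and whether $\dot\Phi_3^+$ contains an $\widehat F$-essential annulus). This is what converts ``$\dot\Phi_n^\epsilon$ stabilises at small $n$'' into the precise edge-weight bound you need. The proof then splits into cases according to $t_1^+ + t_1^-$: when this is $\geq 4$ or $= 2$ the weight bound $\leq m$ (or $m-1$) plugs directly into the Euler count; when $t_1^+ = t_1^- = 0$ one must show $\widehat X^\epsilon$ is Seifert over $D^2(p,q)$, analyse $S$-cycles and (doubly-)extended $S$-cycles, and then for small $m$ (specifically $m=4,6,8$) carry out a detailed study of the reduced graph $\overline\Gamma_S$ being hexagonal or rectangular. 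In those residual cases the contradiction does \emph{not} come from counting alone: one reads off relations in $\pi_1(\widehat X^+)$ or $\pi_1(\widehat X^+(\alpha_-))$ from triangle and rectangle faces, applies the Loop Theorem to get embedded $A_-$- or $Q$-disks, and in the very last case ($m=4$, $X^-$ twisted, $\dot\Phi_3^+$ not all tight) invokes group-theoretic results on relative presentations over triangle groups (Edjvet--Howie, Bogley--Pride) to show the quotient of $\pi_1(M(\alpha))$ cannot surject onto the required group. None of this machinery is visible in your sketch, and the assertion that ``$m \geq 3$ is exactly what allows the characteristic-subsurface bound to beat the combinatorial lower bound'' is not how the argument actually closes.
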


When the first Betti number of $M$ is at least $2$ or one of $M(\alpha)$ and $M(\beta)$ is reducible, Theorem \ref{main} holds by \cite{Ga}, \cite{BGZ1}, \cite{BCSZ2}, \cite{Oh}, \cite{Wu1}. Thus we make the following assumption.

\begin{assump} \label{assumption irreducible}
The first Betti number of $M$ is $1$ and both $M(\alpha)$ and $M(\beta)$ are irreducible.
\end{assump}

The paper is organised as follows. Section \ref{characteristic} contains background information on characteristic submanifolds associated to the pair $(X^\epsilon, S)$ ($\epsilon = \pm$). Sections \ref{essential annuli} and \ref{pairs of essential annuli} are devoted to exploring the restrictions forced on essential annuli in $(X^\epsilon, S)$ by our assumptions on $F$. These results will be applied in \S \ref{NT} to the study the structure of the characteristic submanifolds of $(X^\epsilon, S)$ and the topology of $\widehat X^\epsilon$. An analysis of the existence and numbers of certain characteristic subsurfaces of $S$ is made in \S \ref{section-tight}, \S \ref{f-hat essential annuli}, and \S \ref{existence tights}. The relation between the number of such surfaces and the length of essential homotopies in $(M, S)$ is determined in \S \ref{length}. Section \ref{graph} introduces the intersection graphs associated with certain immersions in $M(\alpha)$ and relates their structure to lengths of essential homotopies, leading to bounds on $\Delta(\alpha, \beta)$. Conditions which guarantee the existence of faces of the graph with few edges are investigated in \S \ref{triangle faces}, while the relations in the fundamental groups of $X^+$ and $X^-$ associated to these faces are considered in \S \ref{section-relation}. Theorem \ref{main} is proved when $F$ is non-separating in \S  \ref{section non-sep} and in the presence of ``tight" characteristic subsurfaces in \S \ref{4 or more tights} and \S \ref{2 tights}. The implications of certain combinatorial configurations in the intersection graph are  examined in \S \ref{extended s-cycles}. The proof of Theorem \ref{main} in the absence of tight components is achieved in the last two sections.

{\footnotesize \tableofcontents}

\section{Characteristic submanifolds of $(X^\epsilon, S)$} \label{characteristic}

\subsection{General subsurfaces of $S$} \label{subsurfaces}

A surface is called {\it large} if each of its components has negative Euler characteristic.

A connected subsurface $S_0$ of $S$ is called {\it neat} if it is either a collar on a boundary component of $S$ or it is large and each boundary component of $S$ that can be isotoped into $S_0$ is contained in $S_0$. A subsurface of $S$ is {\it neat} if each of its components has this property.

For each boundary component $b$ of $S$, let $\widehat b$ denote a meridian disk which it bounds in the $\beta$-filling solid torus of $M(\beta)$. The {\it completion} of a neat subsurface $S_0$ of $S$ is the surface $\widehat S_0 \subseteq M(\beta)$ obtained by attaching the disks $\widehat b$ to $S_0$ for each boundary component $b$ of $S$ contained in $S_0$.

A simple closed curve $c \subseteq S$ is called {\it outer} if it is parallel in $S$ to a component
of $\partial S$. Otherwise it is called {\it inner}.

A boundary component of a subsurface is called an {\it inner boundary component} if it is an inner curve, and an {\it outer boundary component} otherwise.

A neat subsurface $S_0$ of $S$ is called {\it tight}  if $\widehat S_0$ is a disk. Equivalently, $S_0$ is a connected, planar, neat subsurface of $S$ with at most one inner boundary component.

A simple closed curve $c \subseteq S$ which is essential in $\widehat S$
will be called $\widehat S$-{\it essential}.

We call a subsurface $S_0$ of $S$ an {\it $\widehat S$-essential annulus} if $\widehat S_0$ is an essential annulus in $\widehat S$.

Two essential annuli in $\widehat S$ are called {\it parallel} if their core
circles are parallel in $\widehat S$.

\subsection{Characteristic subsurfaces of $S$} \label{characteristic subsurfaces}

For the rest of the paper we use $\epsilon$ to denote either of the signs $\{\pm\}$.

A map $f$ of a path-connected space $Y$ to $S$ is called {\it large} if $f_\#(\pi_1(Y))$ contains a non-abelian free group.

A map of pairs $f:(Y, Z) \to (X^\epsilon, S)$ is called {\it essential} if it is not homotopic, as
a map of pairs, to a map $f':(Y, Z) \to (X^\epsilon, S)$ where $f'(Y) \subseteq S$.

An {\it essential annulus} in $(X^\epsilon, S)$ is the image of an essential proper embedding $(S^1 \times I, S^1 \times \partial I) \to (X^\epsilon, S)$.

An {\it essential homotopy of length $n$} in $(M, S)$ of $f: Y \to S$ which starts on the
$\epsilon$-side of $S$ is a homotopy
$$H:(Y \times [a, a + n], Y \times \{a,a+1,  \ldots, a+n\}) \rightarrow (M, S)$$
such that
\begin{enumerate}

\vspace{-.25cm} \item $H(y,a) = f(y)$ for each $y \in Y$;

\vspace{.25cm} \item $H^{-1}(S) = Y \times  \{a,a+ 1,  \ldots, a+n\}$;

\vspace{.25cm} \item for each $i \in \{1, 2, \ldots , n\}$, $H|Y \times [a + i-1, a + i]$ is an
essential map in $(X^{(-1)^{i-1} \epsilon}, S)$.

\end{enumerate}

Let $(\Sigma^\epsilon, \Phi^\epsilon)\subseteq (X^\epsilon,S)$ be the characteristic
$I$-bundle pair of $(X^\epsilon,S)$ \cite{JS}, \cite{Jo}. We shall use $\tau_\epsilon$ to denote the free involution
on $\Phi^\epsilon$ which interchanges the endpoints of the $I$-fibres of $\Sigma^\epsilon$.

The union of the components $P$ of $\Sigma^\epsilon$ for which $P \cap S$ is large is denoted by $\Sigma_1^\epsilon$. Set
$$\Phi_0^\epsilon = S$$
and
$$\Phi_1^\epsilon = \Sigma_1^\epsilon \cap S$$
More generally, for $j \geq 0$ we define $\Phi_j^\epsilon \subseteq S$ to be
the $j$-th characteristic subsurface with respect to the pair $(M,S)$ as defined
in \S 5 of \cite{BCSZ1}. We shall assume throughout the paper that $\Phi_j^\epsilon$ is neatly embedded in $S$. It is characterised up to ambient isotopy by the following property:
$$(*) \left \{ \begin{array}{ll}
\mbox{\rm a large function $f_0:Y \to S$ admits an essential homotopy of length $j$ which starts} \\
\mbox{\rm on the $\epsilon$-side of $S$ if and only if it is homotopic in $S$ to a map with image in $\Phi_j^\epsilon$ }
\end{array} \right. $$
See  \cite[Proposition 5.2.8]{BCSZ1}.

A compact connected $3$-dimensional submanifold $P$ of $X^\epsilon$ is called {\it neat} if
\begin{enumerate}

\vspace{- .15cm} \item $\partial P \cap S$ is a neat subsurface of $S$;

\vspace{.25cm} \item each component of $\overline{\partial P \setminus S}$ is an essential annulus in $(X^\epsilon, S)$;

\vspace{.25cm} \item some component of $\partial M \cap X^\epsilon$ is isotopic in $(X^\epsilon, S)$ into $P$, then it is contained in $P$.

\end{enumerate}
\vspace{- .15cm} A compact $3$-dimensional submanifold $P$ of $X^\epsilon$ is called {\it neat} if each of its components has this property.

Given a neat submanifold $P$ of $X^\epsilon$, we use $\widehat P$ to denote the submanifold of $\widehat X^\epsilon$ obtained by attaching to $P$ those components $H$ of $\widehat X^\epsilon \setminus \hbox{int}(M)$ for which $H \cap \partial M \subseteq P$.

For convenience we describe some properties of the characteristic $I$-bundle pair $(\Sigma^\epsilon, \Phi^\epsilon)$ which hold under the assumptions on $F$ listed in \S \ref{assumptions}, even though their justification will only be addressed in \S \ref{essential annuli}.

It follows from Proposition \ref{boundary-parallel} that if $c$ and $\tau_\epsilon(c)$ are two outer boundary components of ${\Phi}_1^\epsilon$, then ${\Phi}_1^\epsilon$ can be isotoped so that the annulus component in $\partial M \cap X^\epsilon$ bounded by $c$ and $\tau_\epsilon(c)$ is contained in
${\Sigma}_1^\epsilon$. We will therefore assume from \S \ref{pairs of essential annuli} on that ${\Sigma}_1^\epsilon$ is neatly embedded in $S$.

Let $\dot{\Phi}_j^\epsilon$ denote the union of the components
of $\Phi_j^\epsilon$ which contain some outer boundary components. We will see in
Proposition \ref{BdryToBdry} that $\tau_\epsilon$ preserves the set of outer, respectively inner, essential simple closed curves in $\Phi_1^\epsilon$. Hence, it restricts to a free involution on $\dot{\Phi}_1^\epsilon$,
which we continue to denote $\tau_\epsilon$. Let $\dot{\Sigma}_1^\epsilon$
denote the corresponding $I$-bundle.

Let $\breve{\Phi}_j^\epsilon$ be the neat subsurface in $S$ obtained from the union of
$\dot{\Phi}_j^\epsilon$ and a closed collar neighbourhood of $\partial S \setminus \partial  \dot{\Phi}_j^\epsilon$ in $S \setminus \dot{\Phi}_j^\epsilon$. It follows from the previous paragraph that there is an $I$-bundle pair $(\breve \Sigma_1^\epsilon, \breve \Phi_1^\epsilon)$ properly embedded in $(X^\epsilon, S)$ where $\breve \Sigma_1^\epsilon$ is the union of $\dot \Sigma_1^\epsilon$ and closed collar neighbourhoods of the annular components of $\partial M \cap X^\epsilon$ cobounded by components of $\partial S \setminus \partial  \dot{\Phi}_1^\epsilon$. Thus $\tau_\epsilon: \dot{\Phi}_1^\epsilon \to \dot{\Phi}_1^\epsilon$ extends to an involution $\tau_\epsilon: \breve{\Phi}_1^\epsilon \to \breve{\Phi}_1^\epsilon$.

A properly embedded annulus in $(X^\epsilon, S)$ is called {\it vertical} if it is a union of $I$-fibres of $\breve \Sigma_1^\epsilon$. A subsurface of $\breve \Phi_1^\epsilon$ is called {\it horizontal}.

It follows from the defining property ($*$) of the surfaces $\dot \Phi_j^\epsilon$ that if $(X^-, S)$ is an $I$-bundle pair, then for each $j\geq 0$,
$$(\breve{\Phi}_{2j}^-, \dot{\Phi}_{2j}^-) = (\breve{\Phi}_{2j+1}^-, \dot{\Phi}_{2j+1}^-)$$
$$(\breve{\Phi}_{2j+1}^+, \dot{\Phi}_{2j+1}^+) = (\breve{\Phi}_{2j+2}^+, \dot{\Phi}_{2j+2}^+)$$
$$(\breve{\Phi}_{2j+2}^-, \dot{\Phi}_{2j+2}^-) = (\tau_-(\breve{\Phi}_{2j+1}^+), \tau_-(\dot{\Phi}_{2j+1}^+))$$

Recall from \cite[Proposition 5.3.1]{BCSZ1} that for each $\epsilon$
and $j \geq 0$ there is a homeomorphism
$$h_j^\epsilon: (\breve{\Phi}_{j}^\epsilon, \dot{\Phi}_{j}^\epsilon) \to
(\breve{\Phi}_j^{(-1)^{j+1}\epsilon}, \dot{\Phi}_j^{(-1)^{j+1}\epsilon})$$
obtained by concatenating alternately restrictions of $\tau_+$ and $\tau_-$.
These homeomorphisms satisfy some useful properties:
$$h_1^\epsilon = \tau_\epsilon$$
$$h_{2j}^\epsilon: (\breve{\Phi}_{2j}^\epsilon, \dot{\Phi}_{2j}^\epsilon) \stackrel{\cong}{\lra}
(\breve{\Phi}_{2j}^{-\epsilon}, \dot{\Phi}_{2j}^{-\epsilon})$$
$$h_{2j+1}^\epsilon: (\breve{\Phi}_{2j+1}^\epsilon, \dot{\Phi}_{2j+1}^\epsilon) \stackrel{\cong}{\lra}
(\breve{\Phi}_{2j+1}^\epsilon, \dot{\Phi}_{2j+1}^\epsilon)\mbox{ is a free involution} $$

Finally, consider two large subsurfaces $S_0, S_1$ of $S$. Their {\it large essential intersection} is a large, possibly empty, subsurface $S_0 \wedge S_1$ of $S$ characterised up to isotopy in $S$ by the property:
$$(**) \left \{ \begin{array}{ll}
\mbox{\rm a large function $f: Y \to S$ is homotopic into both  } \\
\mbox{\rm $S_0$ and $S_1$ if and only if it is homotopic into $S_0 \wedge S_1$ }
\end{array} \right. $$
See  \cite[Proposition 4.2]{BCSZ1}. It follows from the defining property ($*$) of the surfaces $\dot \Phi_j^\epsilon$ that
\begin{equation}\label{eq surface calculus}
\text{\em $h_j^\epsilon(\dot{\Phi}_{j+k}^\epsilon)  = \dot{\Phi}_{j}^{(-1)^{j+1}\epsilon} \wedge \dot{\Phi}_{k}^{(-1)^{j}\epsilon}$}
\end{equation}

\section{Essential embedded annuli in $(X^\epsilon, S)$} \label{essential annuli}

The next two sections are devoted to exploring the restrictions forced on essential annuli in $(X^\epsilon, S)$ by our assumptions on $F$. These results will be applied in \S \ref{NT} to the study of the structure of $\dot{\Phi}_1^+$ and $\dot{\Phi}_1^-$.

\begin{lemma}\label{incomp}
Let $U$ be a submanifold of $M(\beta)$ which is homeomorphic to a
Seifert fibred space over the disk with two cone points. If
$U$ contains a closed curve which is non-null homotopic in
$M(\beta)$, then either

$(i)$ $\partial U$ is an incompressible torus in $M(\beta)$, or

$(ii)$ $\overline{M(\beta) \setminus U}$ is a solid torus and $M(\beta)$ is a torus bundle over the circle which admits a Seifert
structure with base orbifold of the form $S^2(a,b,c)$ where $\frac{1}{a} + \frac{1}{b} + \frac{1}{c} = 1$.

\end{lemma}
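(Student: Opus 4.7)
The plan is to first argue that $\partial U$ is incompressible in $U$ using standard Seifert fibered space theory, then to analyze what happens when $\partial U$ is compressible in $M(\beta)$. In the compressible case I will exploit the irreducibility of $M(\beta)$ (Assumption~\ref{assumption irreducible}) to show that the complement of $U$ is a solid torus, and then use the toroidality of $M(\beta)$ (Assumption~\ref{assumptions on alpha and beta}) to pin down the resulting Seifert structure.

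First I would verify that $\partial U$ is incompressible in $U$. Since $U$ is Seifert fibered over $D^2(a,b)$ with $a,b \geq 2$, its orbifold base group $\pi_1^{\mathrm{orb}}(D^2(a,b)) \cong \mathbb{Z}/a \ast \mathbb{Z}/b$ is infinite and non-cyclic, so $\pi_1(U)$ is non-abelian and $U$ is not a solid torus; a standard result on Seifert fibered manifolds then forces $\partial U$ to be incompressible in $U$. Hence if $\partial U$ is compressible in $M(\beta)$ any compressing disk $D$ must lie in $V := \overline{M(\beta)\setminus U}$. Compressing $V$ along $D$ yields a manifold $V'$ whose boundary is a $2$-sphere, since $\partial D$ is essential and therefore non-separating on the torus $\partial U$. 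By Assumption~\ref{assumption irreducible} this sphere bounds a $3$-ball in $M(\beta)$. The ball must be $V'$, for otherwise $U \cup N(D)$ would be a $3$-ball with an open $1$-handle removed, making $U$ a solid torus and contradicting the first step. Therefore $V = V' \cup N(D)$ is a solid torus, whose meridian on $\partial U$ I denote by $m$.

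Next I would analyze the gluing $M(\beta) = U \cup V$. Let $h$ denote the regular fiber of $U$ on $\partial U$. If $m$ is isotopic to $h$ then filling along the fiber slope expresses $M(\beta)$ as a connect sum of two lens spaces, again contradicting Assumption~\ref{assumption irreducible}. Otherwise the Seifert fibration of $U$ extends over $V$ as a fibered solid torus, and $M(\beta)$ inherits the structure of a closed orientable Seifert fibered space with base orbifold $S^2(a,b,c)$, where $c = \Delta(m,h) \geq 1$. For such an $M(\beta)$ to be toroidal the base orbifold must be Euclidean, forcing $c \geq 2$ and $1/a + 1/b + 1/c = 1$: closed Seifert fibered manifolds whose base has at most three cone points and spherical or hyperbolic orbifold Euler characteristic are atoroidal, being either spherical space forms or admitting neither essential vertical nor horizontal tori. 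Under this constraint $M(\beta)$ is a closed orientable flat or Nil $3$-manifold, and in either case standard classification results (Bieberbach in the flat case, the structure theory of Nil geometry in the other) exhibit $M(\beta)$ as a torus bundle over $S^1$, yielding conclusion (ii).

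The hypothesis that $U$ contains a closed curve non-null-homotopic in $M(\beta)$ is automatic under our standing assumptions --- toroidality forces $\pi_1(M(\beta))$ to be infinite --- but it serves as a safeguard against the degenerate possibility that the meridian $m$ normally generates $\pi_1(U)$ and collapses $\pi_1(M(\beta))$ to the trivial group. The main obstacle I anticipate lies in the last step: carefully tracking the Seifert invariants to verify that the extended fibration really yields a manifold with base of the claimed Euclidean form, and then invoking the correct classification to recognize $M(\beta)$ as a torus bundle.
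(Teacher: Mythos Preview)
Your argument is correct and follows essentially the same route as the paper: show $\partial U$ is incompressible in $U$, compress in the complement, use irreducibility to get a solid torus outside, then invoke toroidality of the resulting small Seifert manifold to force the Euclidean base orbifold and torus-bundle structure. The one substantive variation is how you rule out the ball containing $U$: the paper uses the non-null-homotopic-curve hypothesis directly, whereas you observe that $U$ would then be a ball with a $2$-handle removed, hence a solid torus, contradicting your first step. Your version is self-contained and shows the hypothesis is in fact redundant under the standing assumptions, which is a nice observation; the paper's use of the hypothesis is simply the quickest way to the same conclusion.
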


\pf Suppose that $\partial U$ is compressible in $M(\beta)$. Then
it is compressible in $\overline{M(\beta) \setminus U}$. The
surgery of the torus $\partial U$ using a compressing disk produces a
separating $2$-sphere. Since $M(\beta)$ is irreducible, this $2$-sphere
bounds a $3$-ball $B$ in $M(\beta)$. By hypothesis, $U$ is not
contained in $B$. Thus $\overline{M(\beta) \setminus U}$ is a solid torus.
As $M(\beta)$ is irreducible (Assumption \ref{assumption irreducible}) and a Dehn filling of $U$, it is a Seifert
fibred manifold over the $2$-sphere with at most three cone points.
But if such manifold contains an incompressible torus, it is a torus
bundle over the circle and admits a Seifert structure of the type described in (ii).
\qed

\begin{lemma}\label{three cases}
Suppose that $(A, \partial A)\subseteq (X^\e, S)$ is an embedded essential
annulus. Let $c_1, c_2$ be the two boundary components of $A$.

$(1)$ $c_1$ is essential in $\widehat S$ if and only if
$c_2$ is essential in $\widehat S$.

$(2)$ If $c_1$ and $c_2$ cobound an annulus $E$ in $\widehat S$, then
$A$ is not parallel in $\widehat X^\epsilon$ to $E$. Furthermore $E$  is
essential in $\widehat S$.

$(3)$ If  one of $c_1$ and  $c_2$ is not essential in $\widehat S$,
then $c_1$ and $c_2$ bound disjoint disks $D_1$ and $D_2$ in
$\widehat S$ such that $|D_1\cap \partial M|=|D_2\cap \partial M|$.
\end{lemma}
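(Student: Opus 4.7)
The plan is to establish the three parts in the order (1), the non-parallelism assertion in (2), (3), and finally the essentiality of $E$ in (2). The essential tools are the incompressibility of $\widehat S$ in $\widehat X^\epsilon$ (since $\widehat F$ is essential in $M(\beta)$) and the irreducibility of $M(\beta)$ guaranteed by Assumption \ref{assumption irreducible}. For part (1), if $c_1$ bounds a disk $D \subseteq \widehat S$, then since $A \cap \widehat S = \partial A$ we have two cases: either $c_2 \subseteq D$, in which case $c_2$ bounds a subdisk of $D$; or $c_2 \cap D = \emptyset$, so that $A \cup_{c_1} D$ is an embedded disk in $\widehat X^\epsilon$ with boundary $c_2$ and the incompressibility of $\widehat S$ produces a disk in $\widehat S$ bounded by $c_2$. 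In either case $c_2$ is inessential in $\widehat S$.

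For the non-parallelism in (2), suppose to the contrary that $A$ and $E$ cobound a solid torus $W \subseteq \widehat X^\epsilon$. I would modify $W$ to a solid torus $W^\ast$ lying in $X^\epsilon$ by successively excising each filling piece $H_j \cong D^2 \times I$ contained in $W$. Each such $H_j$ meets $\partial W$ in its two meridian-disk ends $D^2 \times \partial I$, which lie in $E \subseteq \widehat S$; removing $H_j^\circ$ from $W$ replaces these two disks on $E$ by the attaching annulus $\partial D^2 \times I \subseteq \partial M$ of $H_j$, producing an annulus $E^\ast \subseteq S \cup (\partial M \cap X^\epsilon)$ cobounded with $A$ by $W^\ast \subseteq X^\epsilon$. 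A collar push sliding $E^\ast$ off $\partial M$ and into $S$ then yields a homotopy of pairs from $A$ into $S$, contradicting the essentiality of $A$ in $(X^\epsilon, S)$.

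For (3), assume $c_1$ (and hence $c_2$, by (1)) is inessential, bounding the unique disk $D_i$ in its torus component of $\widehat S$. If the disks were nested, say $D_1 \subseteq D_2$, then $E := D_2 \setminus \mathring{D}_1$ would be an annulus in $\widehat S$ cobounded by $c_1, c_2$, and $T = A \cup_\partial E$ would be an embedded torus in $\widehat X^\epsilon$ (as $A \cap E = \partial A = \partial E$) whose core is null-homotopic (lying in the disk $D_2$). Thus $T$ is compressible, and by irreducibility of $\widehat X^\epsilon$ it bounds a solid torus $U$ in which $A$ and $E$ are parallel, contradicting the non-parallelism just established. So $D_1 \cap D_2 = \emptyset$, and the sphere $\Sigma := A \cup D_1 \cup D_2$ is embedded in $\widehat X^\epsilon$ and bounds a ball $B$ by irreducibility of $M(\beta)$. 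The equality $|D_1 \cap \partial M| = |D_2 \cap \partial M|$ then follows from a combinatorial analysis of the filling pieces $H_j \subseteq B$: each such $H_j$ pairs a meridian disk $\widehat b \subseteq D_1 \cup D_2$ with an adjacent one, and Lemma \ref{incomp} is invoked to exclude configurations in which both meridian-disk ends of some $H_j$ lie in the same $D_i$, producing the required bijection between meridian disks in $D_1$ and those in $D_2$.

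The essentiality of $E$ in $\widehat S$ for (2) is then immediate: an inessential annulus $E$ would force $c_1, c_2$ to bound nested disks in the torus component of $\widehat S$ containing them, contradicting the disjointness established in (3). The main obstacle is the excision step in the non-parallelism argument of (2): verifying rigorously that removing the balls $H_j \subseteq W$ and replacing the meridian-disk pairs on $E$ by their attaching annuli leaves a genuine solid torus $W^\ast$ providing an honest isotopy of $A$ to $E^\ast$ in $X^\epsilon$, rather than a handlebody of higher genus or a more complicated region. An induction on the number of filling pieces in $W$, verifying at each step that the boundary remains a torus and the modified region remains a solid torus, should suffice.
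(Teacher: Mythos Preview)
Your proof has a genuine gap in the non-parallelism assertion of (2), and this gap propagates through the rest of your argument. The excision you propose does not work: when you remove a filling piece $H_j \cong D^2 \times I$ from the solid torus $W$, you are deleting a ball that meets $\partial W$ in two disjoint disks $D', D'' \subset E$. The result $W \setminus \mathrm{int}(H_j)$ is a genus-$2$ handlebody, not a solid torus, and the surface $E^* = (E \setminus (D' \cup D'')) \cup (\partial D^2 \times I)$ has Euler characteristic $-2$ and two boundary components, hence is a twice-punctured torus, not an annulus. So $W^*$ cannot exhibit a parallelism of $A$ into $S$, and your concern about this step being the ``main obstacle'' is well founded: it simply fails.

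The paper's argument for non-parallelism is entirely different and rests on Assumption~\ref{assumption minimal} (the minimality of $m = |\partial F|$), which you never invoke. If $A$ were parallel to $E$ in $\widehat X^\epsilon$, then since $A$ is essential in $(X^\epsilon,S)$ we must have $E \cap \partial M \ne \emptyset$; replacing $E \subset \widehat F$ by $A$ yields a torus isotopic to $\widehat F$ (hence incompressible) but meeting $\partial M$ in fewer than $m$ components, contradicting minimality. The same minimality assumption is the key to the equality $|D_1 \cap \partial M| = |D_2 \cap \partial M|$ in (3): if, say, $|D_1 \cap \partial M| < |D_2 \cap \partial M|$, one isotopes $D_2$ through the ball $B$ bounded by $A \cup D_1 \cup D_2$ to reduce $|\widehat F \cap \partial M|$. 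Your appeal to Lemma~\ref{incomp} here is misplaced --- that lemma concerns Seifert-fibred submanifolds and says nothing about pairing meridian disks --- and indeed a filling piece $H_j \subset B$ can perfectly well have both ends in the same $D_i$, so no bijection arises in the way you suggest.

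Finally, the paper proves the essentiality of $E$ in (2) directly (using the hyperbolicity of $M$ to analyse the torus $A \cup E$ inside $X^\epsilon$), and then (3) disjointness follows in one line. Your reverse reduction of (2) essentiality to (3) disjointness is logically fine, but your argument for (3) disjointness from non-parallelism alone is incomplete: you assert that the compressible torus $T = A \cup E$ bounds a solid torus in which $A$ is parallel to $E$, but you neither rule out $T$ lying in a ball nor show that $A$ has winding number $1$ in such a solid torus.
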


\pf (1)  This follows from the incompressibility of $\widehat S$ in
$\widehat X^\epsilon$.

(2) Suppose otherwise that  $A$ is parallel to $E$ in $\widehat
X^\epsilon$. Since $A$ is essential in $X^\epsilon$,  $E\cap \partial M$ is not empty.
But then we may consider $E$ as an annulus in $\widehat F$, and if
we replace $E$ in $\widehat F$  by $A$,  we get a torus in $M(\beta)$
which is incompressible (since it is isotopic to $\widehat F$) but has
fewer than $m$ components of intersection with $\partial M$. This
contradicts Assumption \ref{assumption minimal}.

Now we show that  $E$ is essential in $\widehat S$. Suppose
otherwise. Then one of $c_1$ and $c_2$, say $c_1$, bounds a disk $D$
in $\widehat S$ with interior disjoint from $E$. If $A$ is non-separating in
$\widehat X^\epsilon$ then $A \cup E$ is a non-separating Klein bottle or torus
with compressing disk $D$ with non-separating boundary. Compression of $A \cup E$
along $D$ yields a non-separating $2$-sphere in $\widehat X^\epsilon$,
which is impossible since $\widehat X^\epsilon$ is
irreducible (Assumption \ref{assumption irreducible}).
Thus $A$ is separating in $\widehat X^\epsilon$ and therefore
$T=E\cup A$ is a torus. Denote by $W_1$ and $W_2$ the two components of $\widehat X^\epsilon$
cut open along $A$ and assume that $W_1$ is the component whose boundary is $T$.
Assumption \ref{assumption irreducible} shows that a regular neighborhood $Y$ of
 $W_1\cup D$ in $\widehat X^\epsilon$ is a $3$-ball.
Hence the disk $E\cup D$ is isotopic in $Y$ to the disk $A\cup D$.
So by Assumption \ref{assumption minimal} we have $|E\cap \partial M|=0$.
Therefore $W_1$ is contained in $X^\epsilon$.

Since $F$ is not contained in a regular neighborhood of $\partial M$,
$T=\partial W_1$ is not parallel to $\partial M$.
The hyperbolicity of $M$ then implies that $T$ is compressible in $M$.
Since $F$ is essential, we may assume that
a compressing disk $D_*$ for $T$ in $M$ is contained in $X^\epsilon$.
If the interior of $D_*$ is disjoint from $W_1$, then $W_1$ is contained
in a $3$-ball in $M$ contrary to the fact that $c_1$ is essential in $S$.
Thus $D_* \subset W_1$. The $2$-sphere obtained by compressing
$T$ along $D_*$ bounds a $3$-ball contained in $W_1$.
(Otherwise $\partial M$ would be contained in $W_1\subseteq X^\epsilon$.)
Hence  $W_1$ is a solid torus. But $A$ is not parallel to $E$ in $W_1$.
Therefore $Y$ is once-punctured lens space with non-trivial fundamental group
and not a $3$-ball. This contradiction completes the proof of (2).

(3) By part (1) of this lemma,  $c_i$ bounds a disk $D_i$ in
$\widehat S$ for each of $i=1,2$. If $D_1$ and $D_2$ are not
disjoint, then one is contained in the other, say $D_1\subseteq D_2$.
Thus $c_1$ and $c_2$ bound an annulus $E$ in $D_2$. This contradicts
 part (2) of this theorem.

So $D_1$ and $D_2$ are disjoint. Let $d_i=|D_i\cap \partial M|$, $i=1,2$.
Suppose otherwise that $d_1\ne d_2$, say $d_1<d_2$. Since $\widehat
X^\epsilon$ is irreducible,  $A\cup D_1\cup D_2$ bounds a
$3$-ball $B$ in $\widehat X^\epsilon$ with the interior of $B$ disjoint
from $\widehat S$. Then it is not hard to see that the disk $D_2$
can be isotoped rel its boundary in $B$ to have at most $d_1$
intersection components with $\partial M$. This implies that the
incompressible torus $\widehat F$ can be isotoped in $M(\beta)$ to have
less than $m$ intersection components with $\partial M$, which again
contradicts our minimality assumption on $m=|\partial F|$.
\qed

A {\it root torus} in $(X^\epsilon, S)$ is a solid torus $\Theta
\subseteq X^\epsilon$ such that $\Theta \cap S$ is an incompressible
annulus in $\partial \Theta$ whose winding number in $\Theta$ is at least
$2$ in absolute value. For instance, a regular neighbourhood of an embedded M\"{o}bius band $(B, \partial B)\subseteq (X^\e, S)$ is a root torus. Note that for such a $\Theta$, $\overline{\partial \Theta \setminus (\Theta \cap F)}$ is an essential annulus in $(X^\epsilon, F)$.

Lemma \ref{three cases}(2) yields the following lemma.

\begin{prop} \label{Mband}
If $\Theta$ is a root torus in $(X^\epsilon, S)$, then $\Theta \cap S$ is an essential annulus in $\widehat S$. In particular, the boundary of a M\"{o}bius band properly embedded in $X^\epsilon$ is essential in $\widehat S$.
\qed
\end{prop}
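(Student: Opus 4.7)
This should be an almost immediate consequence of Lemma~\ref{three cases}(2). Set $A = \Theta \cap S$ and let $A' = \overline{\partial\Theta \setminus A}$ be the complementary annulus in $\partial\Theta$. The parenthetical remark in the definition of root torus already asserts that $A'$ is an essential annulus in $(X^\epsilon, S)$: this is where the winding number hypothesis enters, since if the winding of $A$ in $\Theta$ were $\pm 1$ the annulus $A'$ would be boundary-parallel through $\Theta$ back into $S$. The annuli $A$ and $A'$ share their boundary, $\partial A = \partial A' = c_1 \cup c_2$.

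The plan is to apply Lemma~\ref{three cases} to the essential annulus $A' \subset (X^\epsilon, S)$, with the candidate cobounded annulus in part~(2) being $E = A$ itself. Because $A \subset S \subset \widehat{S}$ is an annulus whose boundary is precisely $c_1 \cup c_2 = \partial A'$, the hypothesis of~(2) is met. Part~(2) then delivers two conclusions: $A'$ is not parallel in $\widehat{X}^\epsilon$ to $A$ (consistent with the winding number being at least $2$, since a parallelism would run through $\Theta$ and force winding one), and, more importantly, the cobounding annulus $A$ is essential in $\widehat{S}$. This is exactly the first assertion of the proposition. The only minor point to verify in passing is that $A$ really is an annulus as a subsurface of $\widehat{S}$, i.e.\ that its interior is disjoint from $\partial S$; this follows from the requirement in the definition of root torus that $\Theta \cap S$ be an annulus in $\partial\Theta$.

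For the Möbius band statement, let $B \subset X^\epsilon$ be a properly embedded Möbius band and let $\Theta$ be a tubular neighbourhood of $B$ in $X^\epsilon$. Then $\Theta$ is a solid torus, $\Theta \cap S$ is an annular collar of $\partial B$ in $S$, and the core of this annulus represents twice the core of $\Theta$ (which is a core of $B$). Hence $\Theta$ is a root torus of winding number $2$, and the first part of the proposition gives that $\Theta \cap S$ is $\widehat{S}$-essential. Since $\partial B$ is isotopic in $\widehat{S}$ to the core of $\Theta \cap S$, it too is essential in $\widehat{S}$.

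I expect no real obstacle: all of the topological content—the minimality of $m$, irreducibility of $\widehat X^\epsilon$, the parallelism argument—has already been absorbed into Lemma~\ref{three cases}, so here the proof is just a matter of feeding the right $E$ into part~(2).
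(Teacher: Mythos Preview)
Your proposal is correct and follows exactly the approach the paper takes: the paper's entire proof is the one-line remark that Lemma~\ref{three cases}(2) yields the result, and you have correctly unpacked this by taking the frontier annulus $A' = \overline{\partial\Theta\setminus(\Theta\cap S)}$ as the essential annulus and $E=\Theta\cap S$ as the cobounding annulus in $\widehat S$. Your treatment of the M\"{o}bius band case is also as intended, matching the paper's remark that a regular neighbourhood of a properly embedded M\"{o}bius band is a root torus.
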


\begin{prop} \label{BdryToBdry}
A simple closed curve $c \subseteq \breve{\Phi}_1^\epsilon$ is inner, respectively outer, if and only if
$\tau_\epsilon(c)$ is inner, respectively outer. In particular, the image by $\tau_\epsilon$ of a tight subsurface of $\breve{\Phi}_1^\epsilon$ is a tight subsurface of $\breve{\Phi}_1^\epsilon$.
\end{prop}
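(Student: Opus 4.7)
The plan is to associate to each essential simple closed curve $c \subseteq \dot{\Phi}_1^\epsilon$ the vertical annulus $A \subseteq \dot{\Sigma}_1^\epsilon$ formed by the $I$-fibres of the characteristic $I$-bundle lying over $c$. Then $A$ is a properly embedded annulus in $X^\epsilon$ with $\partial A = c \cup \tau_\epsilon(c)$. As $A$ is a union of fibres over a curve essential in $S$ inside the characteristic $I$-bundle, it is essential in $(X^\epsilon, S)$, and the inner/outer dichotomy can then be read off by applying Lemma \ref{three cases} to $A$.

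Concretely, suppose $c$ is outer, so that $c$ cobounds an annulus $E \subseteq S$ with some component $\gamma$ of $\partial S$. Capping $\gamma$ off with its meridian disk $\widehat{\gamma}$ in the $\beta$-filling solid torus produces a disk $D_1 := E \cup \widehat{\gamma} \subseteq \widehat S$ bounded by $c$ with $|D_1 \cap \partial M| = 1$, so $c$ is not essential in $\widehat S$. Lemma \ref{three cases}(3) applied to $A$ then supplies a disjoint disk $D_2 \subseteq \widehat S$ bounded by $\tau_\epsilon(c)$ with $|D_2 \cap \partial M| = |D_1 \cap \partial M| = 1$, and therefore $D_2 \cap S$ is an annulus in $S$ cobounded by $\tau_\epsilon(c)$ and a single component of $\partial S$, i.e.\ $\tau_\epsilon(c)$ is outer. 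Interchanging the roles of $c$ and $\tau_\epsilon(c)$ via $\tau_\epsilon^2 = \mathrm{id}$ gives the converse, and the inner statement is the contrapositive. A simple closed curve $c$ lying in one of the collar annuli added to $\dot{\Phi}_1^\epsilon$ in forming $\breve{\Phi}_1^\epsilon$ is automatically outer, and the $I$-bundle pairing of the annular components of $\partial M \cap X^\epsilon$ sends it to another outer curve in the paired collar.

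For the second assertion, let $S_0 \subseteq \breve{\Phi}_1^\epsilon$ be a tight subsurface, namely a connected, planar, neat subsurface with at most one inner boundary component. Because $\tau_\epsilon$ is a homeomorphism of $\breve{\Phi}_1^\epsilon$, the image $\tau_\epsilon(S_0)$ is again connected, planar, and neat; moreover the first part of the proposition guarantees that each inner boundary component of $S_0$ maps to an inner boundary component of $\tau_\epsilon(S_0)$ and each outer one to an outer one, so $\tau_\epsilon(S_0)$ has at most one inner boundary component and is tight.

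The main technical point is confirming the essentiality of the vertical annulus $A$ in $(X^\epsilon, S)$, so that Lemma \ref{three cases} actually applies in the required form; once this has been secured, everything reduces to the puncture-counting built into Lemma \ref{three cases}(3) together with the involution identity $\tau_\epsilon^2 = \mathrm{id}$.
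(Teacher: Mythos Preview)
Your overall strategy---build a vertical annulus $A$ over $c$ with $\partial A = c\cup\tau_\epsilon(c)$ and then invoke Lemma~\ref{three cases}(3)---is exactly the paper's argument when the relevant component of $\breve{\Sigma}_1^\epsilon$ is a \emph{product} $I$-bundle. The gap is in the twisted case. If the component $\Sigma$ of $\breve{\Sigma}_1^\epsilon$ containing $c$ is a twisted $I$-bundle, then the union of $I$-fibres through $c$ equals $\pi^{-1}(\pi(c))$, where $\pi:\Sigma\to B$ is the bundle projection, and $\pi|_\phi:\phi\to B$ is a double cover with deck transformation $\tau_\epsilon$. This surface is an embedded annulus with boundary $c\cup\tau_\epsilon(c)$ only when $c\cap\tau_\epsilon(c)=\emptyset$; if $\tau_\epsilon(c)=c$ you get a M\"obius band, and if $c$ and $\tau_\epsilon(c)$ intersect transversally you do not get an embedded surface at all. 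Nothing in your argument rules these situations out, so Lemma~\ref{three cases} (which is about embedded essential annuli) does not apply as stated. The ``main technical point'' is therefore not essentiality---vertical annuli over essential curves in the characteristic $I$-bundle are automatically essential because their $I$-fibres are---but rather the existence of an embedded annulus in the first place.

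The paper handles this by splitting off the twisted case. When $c$ is boundary-parallel in $\phi$ (which, by neatness of $\phi$, is forced whenever $c$ is outer in $S$), one replaces $c$ by the nearby boundary component $c_0$ of $\phi$; boundary components of $\phi$ always cobound embedded vertical annuli with their $\tau_\epsilon$-images (these are frontier annuli or components of $\partial M\cap X^\epsilon$), and then Lemma~\ref{three cases}(3) applies to $c_0$ and transfers back by isotopy. When $c$ is not boundary-parallel in $\phi$, the paper argues directly that $\tau_\epsilon(c)$ is not boundary-parallel in $\phi$ either (since $\tau_\epsilon$ is a homeomorphism), and that this forces $\tau_\epsilon(c)$ to be inner in $S$. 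Your write-up can be repaired along the same lines, but as it stands the twisted case is not covered.
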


\pf Suppose that $c$ is inner. It suffices to see that $\tau_\epsilon(c)$ is inner as well. If $c$ is $\widehat S$-essential, so is $\tau_\epsilon(c)$ since they cobound a singular annulus. Thus $\tau_\epsilon(c)$ is inner. Otherwise, $c$ bounds a disk $D$ in $\widehat S$ containing at least two components of $\partial S$. If $c$ cobounds an annulus with $\tau_\epsilon(c)$ in $X^\epsilon$, for instance if $c$ is contained in a product bundle component of $\breve{\Sigma}_1^\epsilon$, then Lemma \ref{three cases}(3) shows that $\tau_\epsilon(c)$ is inner. In general, let $\phi$ be the component of $\breve{\Phi}_1^\epsilon$ which contains $c$ and $\Sigma$ the component of $\breve{\Sigma}_1^\epsilon$ which contains $\phi$. Each boundary component of $\phi$ cobounds a vertical annulus in $\Sigma$ with its image under $\tau_\epsilon$, so if $c$ is boundary-parallel in $\phi$, we are done.  On the other hand, if it is not  boundary-parallel in $\phi$, $\tau_\epsilon(c)$ is not boundary-parallel in $\tau_\epsilon(\phi)$ and therefore cannot be boundary-parallel in $S$. Thus $\tau_\epsilon(c)$ is inner.
\qed

\begin{prop} \label{tight not invariant}
If $\phi$ is a tight component of $\breve \Phi_{2j+1}^\epsilon$, then $h_{2j+1}^\epsilon(\phi) \cap \phi = \emptyset$.
\end{prop}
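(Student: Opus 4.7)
I would argue by contradiction. Since $h_{2j+1}^\epsilon$ is a free involution on $\breve\Phi_{2j+1}^\epsilon$ permuting its components, the assumption $h_{2j+1}^\epsilon(\phi) \cap \phi \neq \emptyset$ is equivalent to $h_{2j+1}^\epsilon(\phi) = \phi$, so suppose the latter and set $h := h_{2j+1}^\epsilon|\phi$, a free involution on $\phi$. By Proposition \ref{BdryToBdry}, each of $\tau_+$ and $\tau_-$ preserves the inner/outer dichotomy of boundary circles; since $h_{2j+1}^\epsilon$ is obtained by concatenating alternate restrictions of $\tau_+$ and $\tau_-$, $h$ inherits the same property. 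Tightness of $\phi$ makes $\widehat\phi$ a disk with a single boundary circle, leaving two cases: \textbf{(A)} $\phi$ is a collar annulus on a component of $\partial S$, whose two outer boundaries (a $\partial S$ component and its interior parallel copy) are swapped by $h$; or \textbf{(B)} $\phi$ is large tight with a unique inner boundary $c$ and at least two outer boundaries lying on $\partial S$, so that $h(c)=c$ setwise, with $h|c$ a fixed-point-free involution of $S^1$, hence antipodal.

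For the base case $j=0$, where $h_1^\epsilon = \tau_\epsilon$ is the characteristic $I$-bundle involution, $\tau_\epsilon$-invariance exhibits $\phi$ as the horizontal boundary of a twisted $I$-bundle component $\Sigma_0 \subseteq \breve\Sigma_1^\epsilon$ over base $B := \phi/\tau_\epsilon$. In Case (B), the loop $c/\tau_\epsilon \subseteq \partial B$ is orientation-reversing in $B$ (its double cover $c$ being a connected circle), so the vertical piece of $\Sigma_0$ above it is a M\"obius band $N$ properly embedded in $X^\epsilon$ with $\partial N = c$. Proposition \ref{Mband} then forces $c$ to be essential in $\widehat S$; but tightness gives $\widehat\phi$ a disk in $\widehat S$ bounded by $c$, making $c$ null-homotopic in $\widehat S$, a contradiction. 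In Case (A), the vertical piece of $\Sigma_0$ is an essential annulus in $(X^\epsilon, S)$ whose two boundary circles cobound the collar annulus $\phi$ in $\widehat S$, and Lemma \ref{three cases}(2) forces $\phi$ to be essential in $\widehat S$, contradicting that its core is homotopic in $\widehat S$ to a null-homotopic component of $\partial S$.

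For $j \geq 1$ the same outline applies, with the embedded twisted $I$-bundle replaced by the singular analog produced by the essential-homotopy characterization $(*)$ of $\dot\Phi_{2j+1}^\epsilon$. The property $(*)$ supplies an essential homotopy $H : \phi \times [a, a+2j+1] \to M$ of length $2j+1$ from the $\epsilon$-side whose endpoint map is $h$; identifying $(x, a) \sim (h(x), a+2j+1)$ quotients $H$ into a singular map $\bar H : \bar W \to M$ from a mapping-torus/twisted bundle $\bar W$ over $\phi/h$. Restricting $\bar H$ to the subbundle over $c/h$ in Case (B) yields a singular M\"obius-band-like piece in $M$ with boundary $c$, and the essentiality of each of the $2j+1$ alternating legs of $H$, combined with a tower-type argument, propagates the conclusion of Proposition \ref{Mband} so that $c$ must still be essential in $\widehat S$ --- giving the same contradiction with tightness. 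Case (A) is treated analogously via an immersed version of Lemma \ref{three cases}(2) applied to the corresponding singular annulus.

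The main obstacle is precisely this transition from the clean embedded $I$-bundle picture of the base case to the singular, immersed picture at higher $j$: one must carefully track how $\widehat S$-incompressibility of the relevant boundary circle propagates through the $2j+1$ alternating pieces of $H$ in $(X^+, S)$ and $(X^-, S)$, which typically requires a tower argument to upgrade immersed surfaces to embedded ones before invoking Proposition \ref{Mband} or Lemma \ref{three cases}(2).
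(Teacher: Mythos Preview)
Your base case $j=0$ is essentially the paper's argument (your Case B is exactly what the paper does), but your treatment of $j\geq 1$ has a genuine gap: you do not actually prove anything there. You sketch a plan involving singular M\"obius bands, tower arguments, and ``immersed versions'' of Proposition~\ref{Mband} and Lemma~\ref{three cases}(2), and then explicitly flag this passage from embedded to immersed as the main obstacle. That obstacle is real, and you have not overcome it; no such immersed versions are established in the paper, and a tower argument here would be nontrivial to set up.

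The paper avoids all of this with a one-line reduction you are missing: the conjugation identity
\[
h_{2j+1}^\epsilon \;=\; g_j \circ \tau_\epsilon \circ g_j^{-1},
\qquad g_j \;=\; \tau_{(-1)^j\epsilon}\circ\tau_{(-1)^{j-1}\epsilon}\circ\cdots\circ\tau_{-\epsilon}.
\]
If $h_{2j+1}^\epsilon(\phi)=\phi$, then setting $\phi' = g_j^{-1}(\phi)$ gives $\tau_\epsilon(\phi')=\phi'$, and by Proposition~\ref{BdryToBdry} $\phi'$ is again a tight subsurface of $\breve\Phi_1^\epsilon$. Now you are back in the embedded base case: the unique inner boundary $c$ of $\phi'$ satisfies $\tau_\epsilon(c)=c$, so $c$ bounds an embedded M\"obius band in $X^\epsilon$, and Proposition~\ref{Mband} makes $c$ essential in $\widehat S$, contradicting tightness. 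No singular surfaces or tower arguments are needed. (Your Case A is also unnecessary: a collar-annulus component contains exactly one component of $\partial S$, and $h_{2j+1}^\epsilon$, being an odd-length composition of the $\tau_\pm$, cannot fix that component setwise; alternatively, the swap you posit would send a curve in $\partial S$ to one not in $\partial S$, which the $\tau$'s never do.)
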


\pf If $h_{2j+1}^\epsilon(\phi) \cap \phi \ne \emptyset$, then $h_{2j+1}^\epsilon(\phi) = \phi$. Hence as $h_{2j+1}^\epsilon = g_j \circ \tau_\epsilon \circ g_j^{-1}$ where $g_j = \tau_{(-1)^j \epsilon} \circ \tau_{(-1)^{j-1} \epsilon} \circ \ldots \circ \tau_{-\epsilon}$, we have $\tau_\epsilon(\phi') = \phi'$ where $\phi'$ is the the tight subsurface $g_j(\phi)$ of $\breve \Phi_1^\epsilon$. It follows from Proposition \ref{BdryToBdry} that if $c$ is the inner boundary component of $\phi'$, then $\tau_\epsilon(c) = c$. Thus $c$ bounds a M\"{o}bius band properly embedded in $X^\epsilon$. But then Proposition \ref{Mband} implies that $c$ is $\widehat S$-essential, contrary to the tightness of $\phi'$. Thus $h_{2j+1}^\epsilon(\phi) \cap \phi = \emptyset$.
\qed

\begin{prop} \label{extend fibering}
Let $P$ be a component of $\Sigma_1^\epsilon$ and suppose that $c$ is an inner boundary component of $P \cap S$ which is inessential in $\widehat S$. Let $D \subset \widehat S$ be the disk with boundary $c$ and suppose that the component of $P \cap S$ containing $c$ is disjoint from $\hbox{int}(D)$. Then $P \cap D = c$ and if $H$ is the component of $\overline{\widehat X^\epsilon \setminus P}$ which contains $D$ and $A$ is the annulus $P \cap H$, then $(H, A) \cong (D^2 \times I, (\partial D^2) \times I)$. In particular, the $I$-bundle structure on $P$ extends over $P \cup H$.
\end{prop}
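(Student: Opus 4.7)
The plan is to produce a $2$-sphere in $\overline{\widehat{X}^\epsilon}$ which, by irreducibility, bounds a $3$-ball coinciding with $H$. Let $A$ denote the vertical annulus in $\partial_v P$ adjacent to $c$, so $\partial A = c \cup \tau_\epsilon(c)$. Since $c$ is inessential in $\widehat S$, Lemma~\ref{three cases}(1) shows $\tau_\epsilon(c)$ is too, and Lemma~\ref{three cases}(3) produces disjoint disks $D = D_1$ and $D_2$ in $\widehat S$ bounded by $c$ and $\tau_\epsilon(c)$ respectively. I would assemble these into the embedded $2$-sphere $\Sigma = D_1 \cup A \cup D_2$.

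Pushing $\Sigma$ slightly off $\widehat S$ into $\Int(\widehat X^\epsilon)$, the irreducibility of $M(\beta)$ (Assumption~\ref{assumption irreducible}) produces a $3$-ball in $M(\beta)$ bounded by the pushed sphere. The key subtlety is identifying the correct side for the ball: it cannot be the side containing $\widehat X^{-\epsilon}$, since otherwise the incompressible torus $\widehat S$ would lie inside a $3$-ball, contradicting the fact that every embedded torus in $S^3$ compresses. Hence $\Sigma$ bounds a ball $B$ contained in $\widehat X^\epsilon$.

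I would next verify that $B = H$. First note that $|D_1 \cap \partial M| \geq 2$: if it were $0$ then $c$ would bound a disk in $S$ and hence be trivial in $\pi_1(X^\epsilon)$, contradicting essentiality of the vertical annulus $A$; if it were $1$ then $c$ would be parallel to a component of $\partial S$ in $S$, making $c$ outer rather than inner. Consequently $\Int(D_1)$ meets the meridian disks of the $\beta$-filling solid torus, placing points of $\Int(D_1)$ outside $P$; this forces $B \not\subset P$, and since $\partial B$ meets $\partial P$ only in $A$, the interior of $B$ lies entirely in a single component of $\widehat X^\epsilon \setminus P$, necessarily the component $H$ containing $D = D_1$. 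Hence $B \subset H$ and $\partial B \subset \partial H$.

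Being a closed connected surface embedded in the $2$-manifold $\partial H$, $\partial B$ constitutes a whole connected component of $\partial H$. A point of $H \setminus B$ could then be reached from $B$ only by a path crossing $\partial B \subset \partial H$, which would exit $H$; connectedness of $H$ therefore forces $H = B$. This yields $(H, A) \cong (D^2 \times I, (\partial D^2) \times I)$ immediately, and since $H \cap P = A$ we deduce $P \cap \Int(D) = \emptyset$, i.e.\ $P \cap D = c$. The product $I$-bundle structure on $H \cong D^2 \times I$ restricts on $A$ to the $I$-fibration inherited from $P$, so the $I$-bundle structure on $P$ extends across $H$, completing the proof.
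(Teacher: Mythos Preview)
Your argument follows the same approach as the paper's: invoke Lemma~\ref{three cases}(3) to obtain the disjoint disk $D_2$, assemble the sphere $D_1 \cup A \cup D_2$, and use irreducibility to bound a $3$-ball which is then identified with $H$. The paper's version is terser but structurally identical.

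One organizational point: your assertion that ``$\partial B$ meets $\partial P$ only in $A$'' already amounts to $P \cap D_1 = c$ and $P \cap D_2 = \tau_\epsilon(c)$, so deriving $P \cap D = c$ at the end from $H \cap P = A$ is mildly circular. The paper avoids this by establishing $P \cap D = c$ immediately after producing $D'$: the component of $P \cap S$ containing $c$ misses $\hbox{int}(D)$ by hypothesis, and if there is a second component $\tau_\epsilon(\phi)$ it contains $\tau_\epsilon(c) = \partial D'$, hence lies outside $D$ (it is connected, disjoint from $c = \partial D$, and has a point outside $D$ since $D \cap D' = \emptyset$). With $P \cap D = c$ in hand, the sphere and ball are then formed, and the paper simply records $B \cap P = A$. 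Also note that the relevant direction for placing $\hbox{int}(B)$ outside $P$ is $P \not\subset B$ (which follows since the collar of $c$ in $\phi$ lies outside $D_1 \cup D_2 = B \cap \widehat S$), rather than $B \not\subset P$.
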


\pf Lemma \ref{three cases}(3) implies that there is a disk $D' \subset \widehat S$ disjoint from $D$ such that $\partial D' = \tau_\epsilon(c)$. Thus $P \cap D = c$. Note that $\partial A = c \cup \tau_\epsilon(c)$.  Then $D \cup A \cup D'$ is a $2$-sphere which bounds a $3$-ball $B \subseteq \widehat X^\epsilon$ such that $B \cap P = A$. The desired conclusions follow from this.
\qed

\begin{lemma} \label{non-sep}
Suppose that $(A, \partial A) \subseteq (X^\epsilon, S)$ is a non-separating essential annulus with boundary components $c_1, c_2$. Then $c_1$ and $c_2$ are essential in $\widehat S$. Further, either

$(i)$ $S = F$, $X^-$ is a twisted $I$-bundle, and $\partial A$ splits $\widehat F$ into two annuli $E_1, E_2$ such that $|E_j \cap \partial M| = m/2$ and $A \cup E_j$ is a Klein bottle for $j = 1, 2$;
or

$(ii)$ $S = F_1 \cup F_2$ has two components where $c_j$ is contained in $F_j$.
\end{lemma}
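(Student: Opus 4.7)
The plan is to first establish that $c_1$ and $c_2$ are essential in $\widehat S$, and then bifurcate on whether $F$ is separating. For the first step I would argue by contradiction: if $c_1$ were inessential in $\widehat S$, then so would be $c_2$ by Lemma \ref{three cases}(1), and the two would bound disjoint disks $D_1, D_2 \subseteq \widehat S$ by Lemma \ref{three cases}(3). The resulting $2$-sphere $A \cup D_1 \cup D_2 \subset \widehat X^\epsilon$ remains non-separating: indeed, $\widehat X^\epsilon$ is built from $X^\epsilon$ by attaching $3$-balls of the $\beta$-filling solid torus to annular pieces of $\partial M \cap X^\epsilon$, an operation that preserves the connectedness of $X^\epsilon \setminus A$. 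This contradicts Assumption \ref{assumption irreducible}.

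For the non-separating case $S = F_1 \cup F_2$, the region between the two parallel copies of $F$ is a product, and it is one of $X^{\pm}$; call it $X^-$. If $A \subseteq X^-$, then $A$ is isotopic to a vertical annulus $\gamma \times I$, so $c_j \subseteq F_j$ as required in case (ii). If instead $A \subseteq X^+$, I would argue that $c_1$ and $c_2$ cannot both lie on the same copy $F_j$. Otherwise they would be parallel essential curves in the torus $\widehat F_j$, and the swap argument of the next paragraph would produce either an essential torus in $M(\beta)$ with fewer than $m$ intersection components with $\partial M$ (contradicting Assumption \ref{assumption minimal}) or a Klein bottle whose twisted $I$-bundle regular neighbourhood contradicts Assumption \ref{assumption twisted 2}.

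For the separating case $S = F$, the curves $c_1, c_2$ are parallel essential curves in the torus $\widehat F$, splitting it into annuli $E_1, E_2$ with $|E_1 \cap \partial M| + |E_2 \cap \partial M| = m$. The surfaces $T_j = A \cup E_j$ are either both tori or both Klein bottles, determined by the orientations induced on $\partial A$. If $T_j$ is a torus, it is incompressible (by arguments analogous to Lemma \ref{three cases}(2)) and isotopic to $\widehat F$ in $M(\beta)$ with $|E_{3-j} \cap \partial M|$ intersections with $\partial M$; Assumption \ref{assumption minimal} would force each $|E_j \cap \partial M| \geq m$, which is impossible unless some $E_j$ lies entirely in $F$, in which case $T_{3-j}$ is an essential torus in the hyperbolic manifold $M$, a contradiction. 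Hence both $T_j$ are Klein bottles. A regular neighbourhood $N$ of $T_1$ is a twisted $I$-bundle over the Klein bottle; using irreducibility of $\widehat X^\epsilon$ and incompressibility of $\widehat F$, I would show that the complement of $N$ in $\widehat X^\epsilon$ collapses to $\partial \widehat X^\epsilon \setminus \widehat F$, whence $X^\epsilon$ itself is a twisted $I$-bundle over a Klein bottle. Assumption \ref{assumption not (semi) fibre} then forces $\epsilon = -$, and the orientation-reversing deck involution of this bundle interchanges $E_1$ and $E_2$ while preserving $\partial M$, giving $|E_1 \cap \partial M| = |E_2 \cap \partial M| = m/2$.

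The hardest step is the identification of $X^-$ with a twisted $I$-bundle over a Klein bottle in the separating case. Knowing only that $A$ is non-separating and each $T_j$ is a Klein bottle produces local bundle neighbourhoods, but ruling out additional topology in $X^-$ beyond such a neighbourhood requires a careful use of irreducibility of $\widehat X^\epsilon$, incompressibility of $\widehat F$, and the fact that $A$ is the preimage of a non-separating two-sided curve on the Klein bottle core. The analogous argument in the non-separating case (where both $c_j$ would hypothetically lie on the same $F_j$) requires the same ingredients plus a comparison with the parallel $\widehat F_j$ to invoke Assumptions \ref{assumption minimal} and \ref{assumption twisted 2}.
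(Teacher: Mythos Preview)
Your first step (essentiality of $c_1,c_2$ in $\widehat S$) is fine and matches the paper. The main gap is in how you reach conclusion~(i). You attempt to show that $X^\epsilon$ itself (the side containing $A$) is a twisted $I$-bundle over the Klein bottle, by arguing that the complement of a regular neighbourhood $N$ of the Klein bottle $T_1 = A\cup E_1$ in $\widehat X^\epsilon$ ``collapses'' to $\partial\widehat X^\epsilon$. This does not follow: nothing prevents $\widehat X^\epsilon$ from being, say, a Seifert piece over $D^2(p,q)$ with $p,q>2$, or a more complicated manifold containing a non-separating annulus. In fact the lemma does not assert that $A\subset X^-$; the conclusion is that $X^-$ is a twisted $I$-bundle, while $A$ may well lie in $X^+$. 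Consequently your derivation of $|E_j\cap\partial M|=m/2$ via a putative deck involution of $X^\epsilon$ also collapses.

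The paper's argument avoids this entirely. Assuming (ii) fails, both $c_j$ lie in a single component $S_0$ of $S$, and $\partial A$ cuts $\widehat S_0$ into annuli $E_1,E_2$; take $|E_1\cap\partial M|\le m/2$. As you observed, $A\cup E_1$ is non-separating in $M(\beta)$, so if it were a torus it would be essential with fewer than $m$ intersections, contradicting Assumption~\ref{assumption minimal}; hence $A\cup E_1$ is a Klein bottle. Now take its regular neighbourhood $U$ in $M(\beta)$ (not just in $\widehat X^\epsilon$). Since $\widehat S$ can be isotoped off $U$, the complement is not a solid torus, so Lemma~\ref{incomp} forces $\partial U$ to be incompressible. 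This gives $m\le |\partial U\cap\partial M| = 2|E_1\cap\partial M|\le m$, hence $|E_1\cap\partial M|=|E_2\cap\partial M|=m/2$ directly. Moreover $\partial U$ is a \emph{separating} essential torus achieving the minimum $m$, and $U\cap M$ is a twisted $I$-bundle with frontier $\partial U\cap M$. Now Assumptions~\ref{assumption sep} and~\ref{assumption twisted 2} do the work: they say $\widehat F$ must have been chosen separating and with $F$ the frontier of a twisted $I$-bundle in $M$, whence $S=F$ and (by Assumption~\ref{assumption not (semi) fibre}) $X^-$ is the twisted $I$-bundle. This simultaneously rules out the non-separating scenario with both $c_j$ on the same $F_j$, so the case split you introduce is unnecessary.
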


\pf The components of $\partial A$ are either both inessential in $\widehat S$ or both essential (Lemma \ref{three cases}(1)). In the former case, Lemma \ref{three cases}(3) implies that there are disjoint disks $D_1, D_2$ in $\widehat S$ such that $c_j = \partial D_j$. Then $D_1 \cup A \cup D_2$ is a $2$-sphere in the irreducible manifold $\widehat X^\epsilon$, which therefore bounds a $3$-ball. This is impossible since $A$ in non-separating. Thus the components of $\partial A$ are essential in $\widehat S$.

If conclusion (ii) does not hold, there is a component $S_0$ of $S$ such that $\partial A$ splits $\widehat S_0$ into two annuli: $\widehat S_0 = E_1 \cup_{\partial A} E_2$. We assume, without loss of generality, that $|E_1 \cap \partial M| \leq m/2$. Since $E_j \cup A$ is non-separating and intersects $\partial M$ in fewer than $m$ components, $E_1 \cup A$ is a Klein bottle. A regular neighbourhood $U$ of $E_1 \cup A$ is a twisted $I$-bundle over $E_1 \cup A$ and contains a loop which is not null-homotopic in $M$. Since $\widehat S$ is isotopic into $\overline{M(\beta) \setminus U}$, the latter cannot be a solid torus. Thus Lemma \ref{incomp} implies that $\partial U$ is an incompressible torus in $M(\beta)$. Hence $m \leq |\partial U \cap \partial M| = 2|(E_1 \cup A) \cap \partial M| = 2|E_1 \cap \partial M| \leq m$. It follows that $|E_1 \cap \partial M| = |E_2 \cap \partial M| = m/2$ and $|\partial U \cap \partial M|  = m$. In particular, $(E_1 \cup A) \cap M$ is an $\frac{m}{2}$-punctured Klein bottle properly embedded in $M$ with twisted $I$-bundle neighbourhood $U \cap M$. Assumptions \ref{assumption sep} and \ref{assumption twisted 2} then imply that $S = F$ and $X^-$ is a twisted $I$-bundle. Hence situation (i) holds.
\qed

\begin{lemma} \label{parallel}
Suppose that $(A_j, \partial A_j)$ $(j = 1, 2)$ are disjoint essential annuli contained in $(X^\epsilon, S)$. If a boundary component $c_1$ of $A_1$ cobounds an annulus $E \subseteq S$ with a boundary component $c_2$ of $A_2$ and $c_1$ is $\widehat S$-inessential, then $A_1$ is isotopic to $A_2$ in $X^\epsilon$.
\end{lemma}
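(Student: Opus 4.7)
The plan is to use the irreducibility of $\widehat X^\epsilon$ to construct a 3-ball in $\widehat X^\epsilon$ bounded by $A_1$, $A_2$, the annulus $E$, and two disks in $\widehat S$, and then to exploit its product structure to produce an ambient isotopy from $A_1$ to $A_2$. First I would apply Lemma \ref{three cases}(1) to $A_1$ and $A_2$: since $c_1$ is $\widehat S$-inessential, the other boundary component $c_1'$ of $A_1$ is too, and since $c_2$ is $\widehat S$-parallel to $c_1$ via $E$ it is also $\widehat S$-inessential, hence so is the other boundary $c_2'$ of $A_2$. Lemma \ref{three cases}(3) then provides disjoint disks $D_1, D_1' \subseteq \widehat S$ bounded by $c_1, c_1'$ and disjoint disks $D_2, D_2' \subseteq \widehat S$ bounded by $c_2, c_2'$. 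Since $c_1$ and $c_2$ cobound the annulus $E$ in the torus $\widehat S$, the disks $D_1$ and $D_2$ must be nested; after relabeling, $D_1 \subseteq D_2$ and $E = D_2 \setminus \mathrm{int}(D_1)$.

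Next I would form the 2-sphere $\Sigma = A_1 \cup E \cup A_2 \cup D_1' \cup D_2' \subseteq \widehat X^\epsilon$. The pieces meet only along their common boundaries ($A_j \cap E = c_j$, $A_j \cap D_j' = c_j'$, $A_1 \cap A_2 = \emptyset$), and their Euler characteristics sum to $2$, so $\Sigma$ is an embedded sphere. The irreducibility of $\widehat X^\epsilon$ (a consequence of Assumption \ref{assumption irreducible} together with the incompressibility of $\widehat S$) then implies that $\Sigma$ bounds a 3-ball $B \subseteq \widehat X^\epsilon$. This ball carries a product structure $B \cong D^2 \times I$ with caps $D_1' = D^2 \times \{0\}$ and $D_2' = D^2 \times \{1\}$, and lateral surface $\partial D^2 \times I$ decomposing into the three sub-annuli $A_1$, $E$, $A_2$. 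A $\pi$-rotation type ambient isotopy of $B$ swapping the two caps slides $A_1$ across $E$ onto $A_2$; extended by the identity outside $B$, it yields an ambient isotopy of $\widehat X^\epsilon$, and so of $X^\epsilon$, carrying $A_1$ to $A_2$.

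The main obstacle is ensuring embeddedness of $\Sigma$, which requires the disks $D_1'$ and $D_2'$ to be disjoint from $E$ and from each other. In the degenerate configurations where $c_1'$ (respectively $c_2'$) lies in the interior of $E$, the natural $\widehat S$-disk $D_1'$ (respectively $D_2'$) is contained in $E$; the fix is to replace it with a disk having the same boundary whose interior has been pushed off $\widehat S$ into the interior of the 3-ball bounded by the embedded sphere $A_1 \cup D_1 \cup D_1'$ (respectively $A_2 \cup D_2 \cup D_2'$), which exists by irreducibility. A similar interior modification resolves any pathological intersection $D_1' \cap D_2' \ne \emptyset$. After such adjustments $\Sigma$ is embedded, and the sphere-bounding argument proceeds unchanged.
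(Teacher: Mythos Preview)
Your argument has a genuine gap in the isotopy step, and it stems from working in $\widehat X^\epsilon$ rather than $X^\epsilon$. The ``$\pi$-rotation type ambient isotopy of $B$ swapping the two caps'' is not the identity on $\partial B$, so it cannot be extended by the identity to an ambient isotopy of $\widehat X^\epsilon$; the sentence ``extended by the identity outside $B$, it yields an ambient isotopy of $\widehat X^\epsilon$, and so of $X^\epsilon$'' is therefore unjustified on two counts. Even if you produced an honest isotopy of properly embedded annuli in $(\widehat X^\epsilon,\widehat S)$, you would still need one in $(X^\epsilon,S)$: the ball $B$ meets $\widehat X^\epsilon\setminus X^\epsilon$ in the $2$-handle pieces attached along the meridian disks inside $D_1'$ and $D_2'$, and any isotopy carrying $c_1'\subset D_1'$ to $c_2'\subset D_2'$ through $B$ must leave $S$. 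Your fix of pushing $D_1'$ or $D_2'$ off $\widehat S$ only makes this worse, since then neither cap lies in $\widehat S$ at all.

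The paper avoids this by replacing your sphere with a torus that lives entirely in $X^\epsilon$. It first shows (using that $c_1'$ is essential in $S$) that $D_1'\subset\operatorname{int}(D_2')$, so $c_1'$ and $c_2'$ cobound an annulus $E'\subset D_2'$, and then uses the equalities $|D_j'\cap\partial M|=|D_j\cap\partial M|$ from Lemma~\ref{three cases}(3) together with $E\subset S$ to deduce $E'\subset S$. Thus $T=A_1\cup E\cup A_2\cup E'$ is a torus in $X^\epsilon$. Hyperbolicity of $M$ forces $T$ to bound a solid torus $\Theta\subset X^\epsilon$, and one still needs Proposition~\ref{Mband} to rule out the possibility that $\Theta$ is a root torus (in which case $A_1$ and $A_2$ would \emph{not} be parallel). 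Your approach bypasses both the location argument for $E'$ and the root-torus obstruction, and without them the conclusion does not follow.
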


\pf Let $\partial A_j = c_j \cup c_j'$ $(j = 1, 2)$. We can suppose that $c_j$ bounds a disk $D_j$ in
$\widehat S$ ($j = 1, 2$) such that $D_2 = D_1 \cup E$. According to Lemma \ref{three cases}(3), $c_j'$ bounds
a disk $D_j'$ in $\widehat S$ such that $D_j \cap D_j' = \emptyset$ ($j = 1, 2$). Since $M(\beta)$ is
irreducible, the $2$-sphere $\Pi_2 = D_2 \cup A_2 \cup D_2'$ bounds a $3$-ball $B_2 \subseteq \widehat
X^\epsilon$.

Since $c_1 \subseteq \hbox{int}(D_2)$ and $\hbox{int}(A_1)$ is disjoint from $\Pi_2$,  $A_1$ is contained in
$B_2$. If $D_1' \cap D_2 \ne \emptyset$, then $D_1' \subseteq \hbox{int}(E) \subseteq S$. But this is
impossible as $c_1'$ is essential in $S$. Thus $D_1' \subseteq \hbox{int}(D_2')$ and therefore $c_1'$ and
$c_2'$ cobound an annulus $E' \subseteq D_2' \subseteq \widehat S$. It then follows from Lemma \ref{three
cases}(3) that $E'\subset S$.

The torus $T = E \cup A_1 \cup E' \cup A_2 \subset X^\epsilon$ is not boundary-parallel in the hyperbolic
manifold $M$, so must compress in $X^\epsilon$. It cannot be contained in a $3$-ball in $M$ since $\partial A_1$ is essential in $S$. Hence it bounds a solid torus $\Theta$ in $X^\epsilon$. Proposition \ref{Mband} shows that $\Theta$ is not a root torus, so $A_1$ must be parallel to $A_2$ in $X^\epsilon$.
\qed

\begin{prop} \label{boundary-parallel}
Let $(A, \partial A)$ be an essential annulus in $(X^\epsilon, S)$ such that a component $c$ of $\partial A$ cobounds an annulus $E \subseteq S$ with a component $c'$ of $\partial S$. Then $(A, \partial A)$ is isotopic in $(X^\epsilon, S)$  to a component of $\partial M \cap X^\epsilon$.
\end{prop}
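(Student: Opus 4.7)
The plan is to extend $A$ by annular pieces of $S$ into an annulus in $(M, \partial M)$ and then invoke the anannular nature of the hyperbolic manifold $M$. First I would show that the other boundary component $c''$ of $A$ is parallel in $S$ to a component of $\partial S$. By hypothesis $c$ bounds the disk $E \cup \widehat{c'}$ in $\widehat S$, which meets $\partial M$ in exactly one component, so in particular $c$ is $\widehat S$-inessential. Lemma \ref{three cases}(1) then gives that $c''$ is also $\widehat S$-inessential, and Lemma \ref{three cases}(3) implies $c''$ bounds a disk $D_2 \subseteq \widehat S$ with $|D_2 \cap \partial M| = 1$. Writing $D_2 = E'' \cup \widehat{c'''}$ with $c''' \subseteq \partial S$, the surface $E'' \subseteq S$ is an annulus cobounded by $c''$ and $c'''$.

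Set $A' = A \cup E \cup E''$, a properly embedded annulus in $X^\epsilon$ with $\partial A' = c' \cup c''' \subset \partial M$, which we view as an annulus in $(M, \partial M)$. If $A'$ were compressible in $M$, compressing along a disk would yield two disks in $M$ bounded by the essential curves $c', c'''$ on $\partial M$, contradicting incompressibility of $\partial M$ in the hyperbolic manifold $M$. Since hyperbolic knot manifolds are anannular, $A'$ must be boundary-parallel: there is an annulus $E^* \subseteq \partial M$ with $\partial E^* = c' \cup c'''$ and a solid torus $V_0 \subseteq M$ with $\partial V_0 = A' \cup E^*$ and $V_0 \cong A' \times I$ realising the parallelism.

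Because this parallelism sweeps $A'$ toward $\partial M$ without the interior of $V_0$ ever crossing $S$, one checks that $V_0 \subseteq X^\epsilon$ and hence $E^* \subseteq \partial M \cap X^\epsilon$; connectedness of the annulus $E^*$, combined with the fact that the annuli of $\partial M \setminus \partial S$ alternate between $X^+$ and $X^-$ in both the separating and non-separating cases, then forces $E^*$ to be a single component of $\partial M \cap X^\epsilon$. The boundary torus $T = A' \cup E^* = A \cup E \cup E^* \cup E''$ of $V_0$ thus carries a 4-annulus decomposition with all four cores longitudinal in $V_0$, and a solid torus with such a decomposition admits a refined product structure $V_0 \cong A \times [0,1]$ in which $A \times \{0\} = A$, $A \times \{1\} = E^*$, and $\partial A \times [0,1] = E \cup E''$; this product structure provides the desired isotopy of $(A, \partial A)$ in $(X^\epsilon, S)$ onto the component $E^*$ of $\partial M \cap X^\epsilon$. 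The main obstacle will be the careful side-tracking needed to confirm $V_0 \subseteq X^\epsilon$ and that $E^*$ equals a single component of $\partial M \cap X^\epsilon$, together with the construction of the refined product structure on $V_0$ matching the 4-annulus decomposition of its boundary torus.
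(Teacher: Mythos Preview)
Your approach is valid but different from the paper's, and the step you flag as the ``main obstacle'' is genuinely the heart of the matter and is not yet justified in your sketch.

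The paper's proof is three lines: let $A_0$ be the component of $\partial M\cap X^\epsilon$ containing $c'$. This is a properly embedded essential annulus in $(X^\epsilon,S)$ disjoint from $A$, and $c$ (which is $\widehat S$-inessential) cobounds the annulus $E\subset S$ with a boundary component of $A_0$. Lemma~\ref{parallel} then gives directly that $A$ is parallel to $A_0$ in $X^\epsilon$. Everything happens inside $X^\epsilon$; there is no excursion into $M$.

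Your route --- extending $A$ to $A'=A\cup E\cup E''$ in $(M,\partial M)$ and using that $M$ is anannular --- works, but the assertion ``the parallelism sweeps $A'$ toward $\partial M$ without the interior of $V_0$ ever crossing $S$'' is exactly what must be proved; you cannot simply ``check'' it. Here is how to close that gap. The interior of $X^{-\epsilon}$ is connected and disjoint from $A'$ (since $A'\cap X^{-\epsilon}=E\cup E''\subset S$), so $\mathrm{int}(X^{-\epsilon})$ lies entirely on one side of $A'$; a collar of $E$ shows it is on the side opposite to the $X^\epsilon$-push-off of $E$. If that side were $V_0$, then $X^{-\epsilon}\subset V_0$, and since $S$ is essential the inclusion $X^{-\epsilon}\hookrightarrow M$ is $\pi_1$-injective, forcing the non-abelian group $\pi_1(X^{-\epsilon})$ to factor through $\pi_1(V_0)\cong\mathbb Z$, a contradiction. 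Hence $V_0$ lies on the other side, giving $V_0\subset X^\epsilon$; your remaining claims (that $E^*$ is a single component of $\partial M\cap X^\epsilon$ and that the product structure refines appropriately) then follow.

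So your argument can be completed, but it is longer and effectively reproves a special case of Lemma~\ref{parallel} by leaving $X^\epsilon$ and coming back. The paper's approach buys brevity by staying inside $X^\epsilon$ and invoking the already-established Lemma~\ref{parallel}.
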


\pf Let $A'$ be the component of $\partial M \cap X^\epsilon$ which contains $c'$. Then $A'$ is a properly embedded essential annulus in $(X^\epsilon, S)$. Since $c$ is inessential in $\widehat S$, Lemma \ref{parallel} implies that $A$ is isotopic to $A'$ in $X^\epsilon$.
\qed

As mentioned in \S \ref{characteristic subsurfaces}, this corollary allows us to assume that $\Sigma_1^\epsilon$ is neatly embedded in $X^\epsilon$.

\begin{lemma} \label{root torus}
Let $\phi_1$ and $\phi_2$ be components of $\Phi_1^\epsilon$, possibly equal, and suppose that there are a component $c_1$ of $\partial \phi_1$, a component $c_2$ of $\partial \phi_2$, and an annulus $E \subseteq \overline{S \setminus \Phi_1^\epsilon}$ such that $\partial E = c_1 \cup c_2$. Then $E$ is essential in $\widehat S$.
\end{lemma}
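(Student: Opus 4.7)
The plan is to argue by contradiction, assuming that $E$ is not essential in $\widehat S$. By Lemma \ref{three cases}(1), the curves $c_1$ and $c_2$ are then both inessential in $\widehat S$. For $j=1,2$, let $A_j\subseteq \overline{\partial \Sigma_1^\epsilon \setminus S}$ be the frontier annulus of the $I$-bundle component $P_j\subseteq \Sigma_1^\epsilon$ with $c_j\subseteq \partial A_j$; by construction of the characteristic $I$-bundle pair, each $A_j$ is an essential annulus in $(X^\epsilon, S)$. I would first dispose of the degenerate case $A_1 = A_2$: then $\partial A_1 = c_1\cup c_2 = \partial E$, and Lemma \ref{three cases}(2) applied to the essential annulus $A_1$ immediately yields that $E$ is essential in $\widehat S$, contradicting the standing assumption.

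Assume henceforth that $A_1 \ne A_2$, so they are disjoint. Lemma \ref{parallel} now applies to $A_1, A_2$, with the $\widehat S$-inessential boundary component $c_1$ cobounding $E\subseteq S$ with $c_2$: it yields that $A_1$ is isotopic to $A_2$ in $X^\epsilon$. Consequently $A_1\cup A_2$ bounds a product region $N\cong (\text{annulus})\times I$ in $X^\epsilon$ meeting $S$ in $E\cup E'$, where $E'\subset S$ is the annulus with $\partial E' = \tau_\epsilon(c_1)\cup \tau_\epsilon(c_2)$. I would equip $N$ with its natural $I$-bundle structure whose horizontal boundary is $E\cup E'$ and whose $I$-fibres run from $E$ to $E'$; its restriction to each $A_j$ is a fibration by arcs from $c_j$ to $\tau_\epsilon(c_j)$, matching the fibration of $A_j$ as a vertical annulus of $P_j$ inside the characteristic $I$-bundle of $(X^\epsilon, S)$.

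These compatible fibrations glue along $A_1, A_2$ to produce an $I$-bundle pair $\widetilde P = P_1\cup N\cup P_2$ properly embedded in $(X^\epsilon,S)$ (with $P_1=P_2$ permitted), whose horizontal boundary $\widetilde P\cap S$ contains $\phi_1\cup E\cup \phi_2$ together with its $\tau_\epsilon$-image; this horizontal boundary is large because $\phi_1$ and $\phi_2$ are. The frontier of $\widetilde P$ in $(X^\epsilon,S)$ is obtained from that of $\Sigma_1^\epsilon$ by deleting $A_1$ and $A_2$, so it still consists of annuli essential in $(X^\epsilon, S)$. Replacing $P_1\cup P_2$ by $\widetilde P$ inside $\Sigma_1^\epsilon$ therefore produces an $I$-bundle pair in $(X^\epsilon,S)$ with large horizontal components and essential-annulus frontier which strictly enlarges $(\Sigma_1^\epsilon, \Phi_1^\epsilon)$, since its horizontal boundary properly contains $\Phi_1^\epsilon$ via the inclusion of $E \subseteq \overline{S\setminus \Phi_1^\epsilon}$. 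This contradicts the maximality (up to ambient isotopy of pairs) characterising the characteristic $I$-bundle pair $(\Sigma^\epsilon, \Phi^\epsilon)$, and hence its large-components sub-pair $(\Sigma_1^\epsilon, \Phi_1^\epsilon)$. The main step to verify with care is the compatibility of the $I$-fibrations on $A_1$ and $A_2$; once that is in hand, the contradiction is essentially the defining maximality property of the characteristic subsurface $\Phi_1^\epsilon$.
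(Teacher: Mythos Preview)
Your proof is correct and follows essentially the same strategy as the paper's: introduce the frontier annuli $A_1,A_2$, split into the cases $A_1=A_2$ and $A_1\ne A_2$, and in the latter case use Lemma~\ref{parallel} together with the maximality of the characteristic $I$-bundle. The only notable difference is in the case $A_1=A_2$: you invoke Lemma~\ref{three cases}(2) directly to conclude $E$ is essential, whereas the paper instead observes that $A_1\cup E$ bounds a solid torus in which $E$ has winding number at least $2$, producing a root torus, and then appeals to Proposition~\ref{Mband}. Your route is cleaner; the paper's route is what gives the lemma its name. Your caveat about matching the $I$-fibrations on $A_1$ and $A_2$ is reasonable to flag, but the paper treats this as routine as well, simply asserting that the $I$-bundle structures extend across the product region; this is standard in characteristic submanifold theory.
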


\pf There are $I$-bundles $\Sigma_j \subseteq X^\epsilon$ such that $\phi_j \subseteq \Sigma_j \cap S$ is a component of the associated $S^0$-bundle ($j = 1, 2$).  Let $(A_j, \partial A_j) \subseteq (X^\epsilon, S)$ be the essential annulus in the frontier of $\Sigma_j$ in $X^\epsilon$ which contains $c_j$ ($j = 1, 2$).

If $A_1 = A_2$, then $A_1 \cup E$ is a torus in $X^\epsilon \subseteq M$ and so is either contained in a $3$-ball in $X^\epsilon$ or bounds a solid torus $\Theta \subseteq X^\epsilon$. Since $c_1$ is essential in $S$, the latter must occur, and since $A_1$ is an essential annulus in $(X^\epsilon, S)$, the winding number of $E$ in $\Theta$ is at least $2$. Thus $\Theta$ is a root torus of the type described. Proposition \ref{Mband} now implies that $E$ is essential in $\widehat S$.

Next suppose that $A_1 \ne A_2$, so these two annuli are disjoint. Note that they cannot be parallel as otherwise the $I$-bundle structures on $\Sigma_1$ and $\Sigma_2$ can be extended across an embedded $(E \times I, E \times \partial I) \subseteq (X^\epsilon, S)$, which contradicts the defining properties of $\Phi_1^\epsilon$. Hence Lemma \ref{parallel} implies that $E$ is essential in $\widehat S$.
\qed

\begin{prop} \label{parallel means essential}
Let $\phi_1$ and $\phi_2$ be components of $\breve{\Phi}_j^\epsilon$, possibly equal, and suppose that there are a component $c_1$ of $\partial \phi_1$, a component $c_2$ of $\partial \phi_2$, and an annulus $E \subseteq \overline{S \setminus \breve{\Phi}_j^\epsilon}$ such that $\partial E = c_1 \cup c_2$. Then $E$ is essential in $\widehat S$.
\end{prop}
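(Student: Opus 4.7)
The plan is to adapt the proof of Lemma \ref{root torus} to higher levels $j$. Suppose, for contradiction, that $E$ is not essential in $\widehat S$. By Lemma \ref{three cases}(1), both $c_1$ and $c_2$ are then inessential in $\widehat S$, so each bounds a disk there.

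I first construct, for $i=1,2$, an essential annulus $(A_i, \partial A_i) \subseteq (X^\epsilon, S)$ with $c_i \subseteq \partial A_i$. When $\phi_i$ is a component of $\dot\Phi_1^\epsilon$ and $c_i$ lies on its frontier, $A_i$ is simply the frontier annulus of the characteristic $I$-bundle $\dot\Sigma_1^\epsilon$ containing $c_i$; when $\phi_i$ is one of the collar components of $\breve\Phi_1^\epsilon$, $A_i$ is taken to be a parallel copy of the associated component of $\partial M \cap X^\epsilon$. For $j\geq 2$, the curve $c_i$ is produced from a refinement of $\dot\Phi_1^\epsilon$ obtained by iteratively applying the homeomorphisms $h_k^\epsilon$, $k \leq j$; the successive $I$-bundle pairs supplied by this refinement provide an embedded essential annulus in $(X^\epsilon, S)$ with $c_i$ on its boundary.

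With the $A_i$ in hand, the argument splits into two cases, exactly as in Lemma \ref{root torus}. If $A_1 = A_2$, then $T = A_1 \cup E$ is a torus in $X^\epsilon$. Because $c_1, c_2$ are essential in $S$ (being frontier curves of the neat subsurface $\breve\Phi_j^\epsilon$), $T$ cannot lie in a $3$-ball, so it bounds a solid torus $\Theta \subseteq X^\epsilon$. Essentiality of $A_1$ in $(X^\epsilon, S)$ forces $E$ to have winding number at least $2$ in $\Theta$, making $\Theta$ a root torus; Proposition \ref{Mband} then contradicts the assumed inessentiality of $E$ in $\widehat S$. If instead $A_1 \ne A_2$, the annuli are disjoint and, since $c_1$ is $\widehat S$-inessential, Lemma \ref{parallel} gives an isotopy of $A_1$ to $A_2$ in $X^\epsilon$ realized through an embedded product $(E \times I, E \times \partial I) \subseteq (X^\epsilon, S)$. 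Splicing this product into the iterated $I$-bundle structures witnessing $\phi_1, \phi_2 \subseteq \breve\Phi_j^\epsilon$ would extend them over $E$, contradicting the maximality of $\breve\Phi_j^\epsilon$ encoded in the defining property $(*)$.

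The principal technical obstacle I expect is the first step when $j > 1$: one must carefully extract, from the iterated characteristic structure underlying $\breve\Phi_j^\epsilon$, an essential annulus $A_i$ that sits in $(X^\epsilon, S)$ rather than threading through both sides of $S$, and then confirm that the parallelism produced in the $A_1 \ne A_2$ case truly contradicts maximality at level $j$ as opposed to merely at some intermediate level.
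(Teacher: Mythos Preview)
Your proposal has a genuine gap at precisely the point you flag as the ``principal technical obstacle.'' For $j\ge 2$ the boundary curves $c_i$ of $\breve\Phi_j^\epsilon$ lie in $\breve\Phi_1^\epsilon$ but need not be frontier curves of $\breve\Sigma_1^\epsilon$; the iterated $I$-bundle structure you invoke produces essential annuli that thread alternately through $X^\epsilon$ and $X^{-\epsilon}$, not annuli sitting properly in $(X^\epsilon,S)$. You could instead take $A_i$ to be the vertical annulus over $c_i$ in $\breve\Sigma_1^\epsilon$, but then in the $A_1\ne A_2$ case the parallelism supplied by Lemma~\ref{parallel} gives a product region $(E\times I,E\times\partial I)\subset (X^\epsilon,S)$ whose horizontal part $E$ is only assumed to lie in $\overline{S\setminus\breve\Phi_j^\epsilon}$, \emph{not} in $\overline{S\setminus\breve\Phi_1^\epsilon}$. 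Hence splicing in this product does not contradict the maximality of $\breve\Sigma_1^\epsilon$, and there is no evident way to promote a single level-$1$ product in $X^\epsilon$ to a length-$j$ essential homotopy of $E$ that would contradict the defining property $(*)$ of $\breve\Phi_j^\epsilon$.

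The paper avoids this difficulty by a transport argument rather than by staying at level $j$. One chooses the smallest $k$ with $1\le k\le j$ such that $E\subset\breve\Phi_{k-1}^\epsilon$ but $E\not\subset\breve\Phi_k^\epsilon$, and checks (via a short arc argument) that in fact $(E,\partial E)\subset(\overline{\breve\Phi_{k-1}^\epsilon\setminus\breve\Phi_k^\epsilon},\partial\breve\Phi_k^\epsilon)$. Applying the composite of the $\tau_\pm$ at the intermediate levels carries $E$ to an annulus $E_0\subset\overline{S\setminus\Phi_1^{(-1)^{k-1}\epsilon}}$ with $\partial E_0\subset\partial\Phi_1^{(-1)^{k-1}\epsilon}$, to which Lemma~\ref{root torus} applies directly; since the $\tau_\pm$ preserve $\widehat S$-essentiality (Proposition~\ref{BdryToBdry}), the conclusion transports back to $E$. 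The key idea you are missing is this descent to the critical level $k$ followed by transport of the \emph{annulus $E$}, rather than attempting to manufacture level-$j$ annuli over the curves $c_i$.
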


\pf As $\breve{\Phi}_0^\epsilon = S$, there is an integer $k$ such that $1 \leq k \leq j$ and $E$ is contained in $\breve{\Phi}_{k-1}^\epsilon$ but not in $\breve{\Phi}_{k}^\epsilon$. Then $\phi_1 \cup \phi_2 \subseteq \breve{\Phi}_j^\epsilon \subseteq \breve{\Phi}_k^\epsilon$. Further, as $E$ is not contained in $\breve{\Phi}_{k}^\epsilon$, there must be inner boundary component of $\breve{\Phi}_k^\epsilon$, call it $c_0$, contained in $E$. If there is an arc $a$ in $E \cap \breve{\Phi}_{k}^\epsilon$ connecting $c_1$ and $c_2$, then $c_0 \subset (E \setminus a)$ and therefore $c_0$ is contained in a disk in $E \subset S$, which contradicts the essentiality of the inner components of $\partial \breve{\Phi}_{k}^\epsilon$ in $S$. Hence $(E, \partial E) \subseteq (\overline{\breve{\Phi}_{k-1}^\epsilon \setminus \breve{\Phi}_{k}^\epsilon}, \partial \breve{\Phi}_{k}^\epsilon)$. When $k = 1$ set $E_0 = E$ and when $k > 1$ set $E_0 = (\Pi_{i=1}^{k-1} \tau_{(-1)^{k - i - 1}\epsilon})(E)$ so that $(E_0, \partial E_0) \subseteq (\overline{S \setminus \Phi_1^{(-1)^{k-1}\epsilon}}, \partial \Phi_1^{(-1)^{k-1}\epsilon}).$ By Lemma \ref{root torus}, $E_0$ is $\widehat S$-essential, and therefore $E$ is as well.
\qed

\section{Pairs of embedded essential annuli in $(M, S)$} \label{pairs of essential annuli}

In this section we consider pairs of essential annuli lying  on either side of $S$ in $M$.

\begin{lemma}\label{nested-annuli} Suppose that $S = F$ and that there are
embedded, separating, essential annuli $(A^+, \partial A^+)\subseteq (X^+, F)$ and
 $(A^-, \partial A^-)\subseteq (X^-, F)$ such that $\partial A^+$ and $\partial A^-$ are four
parallel essential mutually disjoint curves in $\widehat F$. Then

$(1)$ $\partial A^\epsilon$ does not separate $\partial A^{-\e}$ in $\widehat F$.

$(2)$ Let $E$ be an annulus in $\widehat F$ bounded by a component of $\p
A^+$ and a component $\partial A^-$, with the interior of $E$ disjoint
from $A^+\cup A^-$. Then $|E \cap \partial M| = m/2$.
\end{lemma}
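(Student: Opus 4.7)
My plan is to prove both parts by constructing closed surfaces from $A^+$, $A^-$, and sub-annuli of $\widehat F$, showing each is essential in $M(\beta)$, and then invoking the minimality $|\widehat F \cap \partial M| = m$ from Assumption \ref{assumption minimal}. Write $\partial A^+ = a_1 \cup a_2$ and $\partial A^- = b_1 \cup b_2$; the four mutually parallel essential curves cut $\widehat F$ into four annuli, the sum of whose $\partial M$-intersection counts is $m$.

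For Part (1), I argue by contradiction: assume $\partial A^+$ separates $\partial A^-$, so the cyclic order on $\widehat F$ is $a_1, b_1, a_2, b_2$, with resulting annuli $E_1, E_2, E_3, E_4$. For each alternating pair $\{i, j\} \in \{\{1,3\}, \{2,4\}\}$, the union $T_{ij} := A^+ \cup A^- \cup E_i \cup E_j$ is a closed surface (each of the four curves lies on exactly two pieces) with $\chi = 0$. Since $\sum_k |E_k \cap \partial M| = m$, at least one choice of $\{i,j\}$ yields $|T_{ij} \cap \partial M| \leq m/2 < m$. The crucial step is showing $T_{ij}$ is essential in $M(\beta)$: I plan to apply a standard innermost-circle/outermost-arc argument to any hypothetical compressing disk, using incompressibility of $\widehat F$ to eliminate inessential intersections and essentiality of $A^\pm$ in $(X^\pm, F)$, together with Lemma \ref{three cases}, to rule out the remaining disk pieces in $\widehat X^\pm$. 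A Klein-bottle outcome is handled by passing to the boundary torus of a regular neighbourhood, which has at most $2|T_{ij} \cap \partial M|$ intersections with $\partial M$, and invoking Assumptions \ref{assumption twisted}, \ref{assumption twisted 2} to exclude the borderline case. This violates Assumption \ref{assumption minimal}.

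For Part (2), Part (1) places us in the cyclic order $a_1, a_2, b_1, b_2$, with annuli $E_{12}, E_{23}, E_{34}, E_{41}$; the hypothesis singles out $E \in \{E_{23}, E_{41}\}$. The surface $T := A^+ \cup E_{23} \cup A^- \cup E_{41}$ is closed with $\chi = 0$, essential by the Part (1) argument, so minimality yields $|E_{23} \cap \partial M| + |E_{41} \cap \partial M| = |T \cap \partial M| \geq m$. Combined with $\sum_k |E_k \cap \partial M| = m$, this forces $|E_{12} \cap \partial M| = |E_{34} \cap \partial M| = 0$ and $|E_{23} \cap \partial M| + |E_{41} \cap \partial M| = m$. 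To upgrade this to the equality $|E_{23} \cap \partial M| = |E_{41} \cap \partial M| = m/2$, I use that $|E_{12} \cap \partial M| = 0$ places $A^+ \cup E_{12}$ entirely in $M$; by hyperbolicity of $M$ and essentiality of $\partial A^+$, this torus bounds a solid torus $V^+ \subset M$, and Lemma \ref{three cases}(2) prevents $A^+$ from being parallel to $E_{12}$, so $V^+$ is a root torus in which $A^+$ has winding number $k \geq 2$. The resulting Seifert structure on $V^+$, together with the analogous root torus $V^- \subset M$ bounded by $A^- \cup E_{34}$, provides a self-homeomorphism exchanging $a_1 \leftrightarrow a_2$ (and similarly $b_1 \leftrightarrow b_2$), which extends across $\widehat F$ to swap $E_{23}$ and $E_{41}$, yielding the desired equality.

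The principal obstacle is verifying essentiality of the constructed tori. The innermost-disk analysis must carefully handle the annular pieces of $\widehat F$ and the intersection pattern $\partial A^\pm \subset \widehat F$, and Proposition \ref{Mband} together with Lemma \ref{three cases} are needed to rule out the alternative that $T$ bounds a solid (root) torus on one side. The symmetry argument concluding Part (2) is also delicate and requires careful exploitation of the root-torus Seifert structures in $V^\pm$.
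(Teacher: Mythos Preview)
Your overall architecture---build a closed $\chi=0$ surface from $A^+,A^-$ and sub-annuli of $\widehat F$, show it is essential, and invoke Assumption~\ref{assumption minimal}---matches the paper's, and the torus you call $T_{ij}$ is exactly the paper's $\partial U$. But the two places where you wave your hands are precisely the places where real work is needed, and your sketched arguments do not go through.

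\textbf{Essentiality of $T_{ij}$ (and of $T$).} A ``standard innermost-circle/outermost-arc'' argument does not suffice. After cutting a hypothetical compressing disk along $\widehat F$, an outermost piece lies in $\widehat X^\epsilon$ with boundary on $A^\epsilon$ together with a sub-annulus of $\widehat F$. But the torus $A^\epsilon\cup(\text{complementary annulus in }\widehat F)$ \emph{does} compress in $\widehat X^\epsilon$: it bounds a solid torus there (this is exactly how the paper produces $V^\pm$). So essentiality of $A^\epsilon$ in $(X^\epsilon,F)$ and Lemma~\ref{three cases} are not enough to kill such pieces. The paper instead identifies $T_{ij}$ as $\partial U$ where $U=V^+\cup_{A_3}V^-$ is Seifert over a disk with two cone points, and then applies Lemma~\ref{incomp}: either $\partial U$ is incompressible, or the complement of $U$ is a solid torus---and the latter is ruled out by a separate argument (the annulus $A_1$ would become $\partial$-parallel, contrary to construction). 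You need this Seifert-piece mechanism, not a naive disk argument.

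\textbf{The symmetry step in Part (2).} Once you know $|E_{12}\cap\partial M|=|E_{34}\cap\partial M|=0$, you try to get $|E_{23}\cap\partial M|=|E_{41}\cap\partial M|$ via a ``self-homeomorphism'' coming from the root-torus Seifert structures on $V^\pm$. This does not work: an involution of the solid torus $V^+$ swapping $a_1\leftrightarrow a_2$ lives only on $V^+$, and there is no reason it extends to a self-homeomorphism of $M(\beta)$ (or even of $\widehat F$) that swaps $E_{23}$ with $E_{41}$ while preserving the intersection with $\partial M$. The paper proves the equality by a third application of the Seifert-piece method: assuming $|E\cap\partial M|<m/2$, it builds a regular neighbourhood $U$ of $V^+\cup E\cup V^-$, again Seifert over a disk with two cone points, and uses Lemma~\ref{incomp} to force $|\partial U\cap\partial M|\ge m$, a contradiction.
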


\pf For each $\epsilon$, the boundary $\partial A^\epsilon$ of $A^\epsilon$ separates
$\widehat F$ into two parallel essential  annuli, $E_1^\epsilon$ and
$E_2^\epsilon$, in $\widehat F$.

In order to prove the first assertion of the lemma, assume that $\partial A^\epsilon$ separates $\partial A^{-\e}$
in $\widehat F$, that is, $\partial A^-$ is not contained in $E_1^+$ or $E_2^+$. Then $|E_j^+ \cap \partial A^-| = 1$ for $j = 1, 2$. Hence $\partial A^-$ splits $E_1^+$
and $E_2^+$ into four  annuli, which we denote by $A_1, A_{2},
A_{3}, A_{4}$ with $A_1=E_1^+\cap E_1^-$, $A_2=E^+_1\cap E_2^-$,
$A_3=E_2^+\cap E_2^-$, $A_4=E_2^+\cap E_1^-$. Note that
$A_1,...,A_4$ are four parallel essential annuli in $\widehat F$ with
disjoint  interiors and with $A_1 \cup...\cup A_4=\widehat F$.

Suppose that $|A_{1}\cap \partial M| > 0$.
Then the torus $A^+\cup E_2^+$ bounds a solid torus $V^+$ in
$\widehat X^+$ (since it intersects $\partial M$ in fewer than $m$ components) such that $A^+$ is not parallel to $E_2^+$ in $V^+$ (Lemma \ref{three cases}). Similarly the torus $A^-\cup E_2^-$ bounds
a solid torus $V^-$ in $\widehat X^-$  such that $A^-$ is not
parallel to $E_2^-$ in $V^-$. Hence  $\displaystyle U=V^+\cup_{A_3}
V^-$ is a submanifold of $M(\beta)$ which is a Seifert fibred space
over the disk with two cone points. Also a core circle of $A_3$
is non-null homotopic in $M(\beta)$. If $\partial U$ compresses in $M(\beta)$, then  Lemma \ref{incomp} implies that
$V = \overline{M(\beta) \setminus U}$ is a solid torus. Hence $A_1$ is $\partial$-parallel in $V$ and therefore is isotopic to either $A^+ \cup A_2$ or to $A^- \cup A_4$ in $V$, contrary to construction. Thus $\partial
U$ is an incompressible torus in $M(\beta)$. But $\partial U=A_2\cup A^+\cup
A_4\cup A^-$ intersects $\partial M$ in fewer than $m$ components, which contradicts Assumption \ref{assumption minimal}. Thus $|A_{1}\cap \partial M| = 0$, and similarly $|A_{j}\cap \partial M| = 0$ for $j = 2, 3, 4$, which is impossible. This proves (1).

Next we prove the lemma's second assertion. By (1), we can suppose that $\partial A^-$ is contained in $E_1^+$ or $E_2^+$, say,  $E_1^+$. Then we
may assume that $E_1^-$ is contained in $E_1^+$ and that $E_2^+$ is
contained in $E_2^-$. Let $E, E_*$ be the two annulus components of
$E_2^-\cap E_1^+$. We need to show that $|E\cap \partial M|=|E_*\cap \p
M|=m/2$.

First we show that $|E_1^- \cap \partial M| = |E_2^+ \cap \partial M|=0$. Suppose
otherwise that $|E_1^-\cap \partial M|\ne 0$, say. Then  the torus
$A^+\cup E_2^+$ bounds a solid torus $V^+$ in $\widehat X^+$  such
that $A^+$ is not parallel to $E_2^+$ in $V^+$, and the torus
$A^-\cup E_2^-$ bounds a solid torus $V^-$ in $\widehat  X^-$ such
that $A^-$ is not parallel to $E_2^-$ in $V^-$. Hence
$U=V^+\cup_{E_2^+} V^-$ is a submanifold of $M(\beta)$ which is a
Seifert fibred space over the disk with two cone points. Also the
center circle of $E_2$ is non-null homotopic in $M(\beta)$. As in the proof of assertion (1), we can use Lemma \ref{incomp} to see that $\partial U$ is an incompressible torus in $M(\beta)$. But
$\partial U=A^-\cup E\cup A^+\cup E_*$ intersects $\partial M$ in fewer than $m$ components, contradicting Assumption \ref{assumption minimal}. Thus $|E_1^-\cap \partial M| = 0$ and a similar argument yields $|E_2^+ \cap \partial M|=0$. Hence $\partial F\subseteq E\cup E_*$.

Next we prove $|E\cap \partial M|=|E_*\cap \partial M|=m/2$. Suppose otherwise, say
$|E\cap \partial M|<m/2$ and $|E_*\cap \partial M|>m/2$. By the previous paragraph, the torus
$A^+\cup E_2^+$ bounds a solid torus $V^+$ in $\widehat X^+$ such
that $A^+$ is not parallel to $E_2^+$ in $V^+$, and the torus
$A^-\cup E_1^-$ bounds a solid torus $V^-$ in $\widehat  X^-$ such
that $A^-$ is not parallel to $E_1^-$ in $V^-$. Hence a regular
neighborhood $U$ of $V^+ \cup E \cup  V^-$ in $M(\beta)$ is a submanifold
of $M(\beta)$ which is a Seifert fibred space over the disk with two
cone points, and the core circle of $E$, which is contained in $U$, is non-null homotopic in
$M(\beta)$. As above, Lemma \ref{incomp} implies that $\partial U$ is incompressible in $M(\beta)$. But by construction, $|\partial U\cap \partial M|<m$, contradicting Assumption
\ref{assumption minimal}. Thus $|E\cap \partial M|=|E_*\cap \partial M|=m/2$, which completes the proof of the lemma.
 \qed

\begin{prop}\label{na2} Suppose that $S=F$ and that there are
embedded, separating, essential annuli $(A^+, \partial A^+)\subseteq (X^+, F)$ and
 $(A^-, \partial A^-)\subseteq (X^-, F)$ such that $\partial A^+$ and $\partial A^-$ are four
parallel essential mutually disjoint curves in $\widehat F$. Then no
component of $\partial A^+$ is isotopic in $F$ to a component of
$\partial A^-$.
\end{prop}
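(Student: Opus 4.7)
The strategy is to translate the conclusion into a combinatorial statement about the sub-annuli of $\widehat F$ that a pair consisting of a component of $\partial A^+$ and a component of $\partial A^-$ can cobound, and then rule each case out using the precise intersection counts that appear in Lemma \ref{nested-annuli}.

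First, I would adopt notation from the proof of Lemma \ref{nested-annuli}. Write $\partial A^+ = c_1^+ \cup c_2^+$ and $\partial A^- = c_1^- \cup c_2^-$. By Lemma \ref{nested-annuli}(1), both components of $\partial A^-$ lie in the same component of $\widehat F \setminus \partial A^+$, so after relabeling the cyclic order of the four curves on $\widehat F$ is
$c_1^+,\, c_1^-,\, c_2^-,\, c_2^+$.
These curves cut $\widehat F$ into four consecutive parallel annuli, which (following the labelling in that proof) I call $E$ (between $c_1^+$ and $c_1^-$), $E_1^-$ (between $c_1^-$ and $c_2^-$), $E_*$ (between $c_2^-$ and $c_2^+$), and $E_2^+$ (between $c_2^+$ and $c_1^+$). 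Applying Lemma \ref{nested-annuli}(2) to each of $E$ and $E_*$, and using the intermediate computation in its proof, gives
$$|E \cap \partial M| \,=\, |E_* \cap \partial M| \,=\, m/2, \qquad |E_1^- \cap \partial M| \,=\, |E_2^+ \cap \partial M| \,=\, 0.$$
Since $m = |\partial F| \geq 1$ and $m/2$ must be an integer, necessarily $m \geq 2$ and $m/2 \geq 1$.

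Second, I would argue by contradiction. Suppose a component $c^+$ of $\partial A^+$ is isotopic in $F$ to a component $c^-$ of $\partial A^-$. Then $c^+$ and $c^-$ cobound an annulus in $F$, which means that one of the two sub-annuli of $\widehat F$ bounded by $c^+ \cup c^-$ is disjoint from the $m$ meridian disks making up $\widehat F \setminus F$. There are four such pairs $(c^+, c^-)$ and two candidate sub-annuli for each, giving eight candidates. Each candidate is a union of consecutive members of the cyclic sequence $(E, E_1^-, E_*, E_2^+)$, and a short direct inspection (the short sub-annulus in the cases $(c_1^+, c_1^-)$ and $(c_2^+, c_2^-)$ is $E$ or $E_*$ itself, while in the remaining situations each candidate spans either $E \cup E_1^-$, $E_* \cup E_2^+$, $E_2^+ \cup E$, or $E_* \cup E_1^-$, and each of the four ``long'' complements contains one of $E$ or $E_*$) shows that every candidate contains exactly one of $E$ or $E_*$. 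Hence every candidate meets $\partial M$ in precisely $m/2 \geq 1$ components, contradicting the assumed isotopy.

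I do not anticipate a genuine obstacle: once Lemma \ref{nested-annuli} is in hand, the proposition is a bookkeeping consequence of the fact that the two ``meridian-loaded'' annuli $E$ and $E_*$ separate the two members of each of $\partial A^+$ and $\partial A^-$ from one another. The only substantive input is the cyclic arrangement of $\partial A^+ \cup \partial A^-$ guaranteed by Lemma \ref{nested-annuli}(1); the intersection counts then do all the remaining work.
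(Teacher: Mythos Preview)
Your argument is correct, but it takes a different route from the paper's. The paper's proof is a two-line isotopy argument: if a component of $\partial A^+$ is isotopic in $F$ to a component of $\partial A^-$, then one can isotope $A^\pm$ in $X^\pm$ (keeping $\partial A^+$ and $\partial A^-$ disjoint) so that $\partial A^+$ now \emph{separates} $\partial A^-$ in $\widehat F$, which immediately contradicts Lemma~\ref{nested-annuli}(1). In other words, the paper leverages the \emph{qualitative} separation conclusion of part~(1) together with the freedom to slide curves across an annulus of $F$.

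By contrast, you first use part~(1) to pin down the cyclic order, then extract the \emph{quantitative} intersection counts from the proof of part~(2) (namely $|E\cap\partial M|=|E_*\cap\partial M|=m/2$ and $|E_1^-\cap\partial M|=|E_2^+\cap\partial M|=0$), and finally check by hand that every sub-annulus of $\widehat F$ cobounded by a component of $\partial A^+$ and one of $\partial A^-$ contains exactly one of $E$ or $E_*$ and hence meets $\partial M$. This is a valid and self-contained argument, though it requires citing an intermediate step from the proof of Lemma~\ref{nested-annuli}(2) rather than just its statement. The paper's approach is slicker and uses only the statement of part~(1); yours makes the obstruction to the isotopy completely explicit.
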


\pf Otherwise we may isotope $A^\epsilon$ in $X^\epsilon$, so that $\partial A^+$ and
$\partial A^-$ remain disjoint but $\partial A^+$  separates $\partial A^{-}$ in $\widehat
F$. This is impossible by Lemma \ref{nested-annuli}. \qed

\begin{prop} \label{na3}
Suppose that $S=F$ and that there is an
embedded M\"{o}bius band $(B,\partial B)\subseteq (X^\e, F)$. Then $\partial B$ cannot
be isotopic in $F$
 to a boundary component of an embedded, separating, essential annulus $(A, \partial A)\subseteq (X^{-\e}, F)$.
 \end{prop}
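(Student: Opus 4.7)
The plan is to reduce Proposition \ref{na3} directly to Proposition \ref{na2} by replacing the M\"obius band $B$ by a natural separating essential annulus in $(X^\epsilon,F)$ with the same boundary slope.

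\textbf{Step 1.} Let $\Theta$ be a regular neighbourhood of $B$ in $X^\epsilon$. Then $\Theta$ is a root torus in $(X^\epsilon,F)$, and the remark preceding Proposition \ref{Mband} identifies $A^\epsilon = \overline{\partial \Theta \setminus (\Theta \cap F)}$ as an essential annulus in $(X^\epsilon, F)$. Since $A^\epsilon$ is the frontier of the submanifold $\Theta$, it separates $X^\epsilon$. Moreover, the two components of $\partial A^\epsilon$ are the two boundary components of the annulus $\Theta\cap F$, both of which are parallel in $F$ to $\partial B$. Proposition \ref{Mband} ensures that $\partial B$ is essential in $\widehat F$, so both components of $\partial A^\epsilon$ are $\widehat F$-essential.

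\textbf{Step 2.} Suppose, for a contradiction, that $\partial B$ is isotopic in $F$ to a boundary component of the separating essential annulus $(A,\partial A) \subset (X^{-\epsilon},F)$. By Lemma \ref{three cases}(1), both components of $\partial A$ are also essential in $\widehat F$, and since any two disjoint essential simple closed curves on the torus $\widehat F$ are isotopic there, the two components of $\partial A$ are parallel in $\widehat F$, and similarly for $\partial A^\epsilon$. Since $\partial B$ is parallel in $F$ to each component of $\partial A^\epsilon$ and, by hypothesis, isotopic in $F$ to some component of $\partial A$, all four curves in $\partial A^\epsilon \cup \partial A$ are mutually parallel essential curves in $\widehat F$. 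After a small isotopy supported near these curves, we may assume in addition that the four curves are mutually disjoint.

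\textbf{Step 3.} We now have embedded, separating, essential annuli $(A^\epsilon,\partial A^\epsilon)\subseteq (X^\epsilon,F)$ and $(A,\partial A)\subseteq (X^{-\epsilon},F)$ whose boundaries consist of four parallel, essential, mutually disjoint curves in $\widehat F$, so Proposition \ref{na2} applies and asserts that no component of $\partial A^\epsilon$ is isotopic in $F$ to a component of $\partial A$. This contradicts the fact established in Step 2 that each component of $\partial A^\epsilon$ is parallel in $F$ to $\partial B$, which is isotopic in $F$ to a component of $\partial A$. This contradiction completes the proof.

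The argument is almost entirely formal once the correct replacement annulus $A^\epsilon$ is identified; the only point to watch is that the frontier annulus of a root torus is genuinely essential in $(X^\epsilon,F)$ and separating, both of which are immediate from the construction, together with the fact that the four curves actually become mutually parallel in $\widehat F$ (rather than just essential), which follows because $\widehat F$ is a torus.
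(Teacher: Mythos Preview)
Your proof is correct and follows essentially the same approach as the paper: take the frontier annulus $A^\epsilon$ of a regular neighbourhood of $B$ as a separating essential annulus in $(X^\epsilon,F)$ whose boundary components are parallel in $F$ to $\partial B$, then apply Proposition \ref{na2} to the pair $A^\epsilon$ and $A$ to obtain a contradiction. The paper's argument is the same, only more terse.
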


\pf Suppose otherwise. By Proposition \ref{Mband}, $\partial B$ is essential in
$\widehat F$. Let $P$ be a regular neighborhood of $B$ in $X^\epsilon$. Then
the frontier $A_*$ of $P$ in $X^\epsilon$ is an essential annulus in
$X^\epsilon$. Also $\partial A$ and $\partial A_*$ are essential curves in $\widehat F$
which can be assumed to be mutually disjoint since $\partial B$ is
isotopic in $F$ to a component of $\partial A$ and each component of $\p
A_*$ is isotopic to $\partial B$ in $F$. But such a situation is impossible
by Proposition \ref{na2}.
 \qed

\begin{lemma}\label{sep-non-sep nested-annuli}
Suppose that $S = F$ and that there are disjoint
embedded, essential annuli $(A^+, \partial A^+) \subseteq (X^+, F)$ and
 $(A^-, \partial A^-)\subseteq (X^-, F)$ such that $A^+$ is separating, $A^-$ is non-separating, and $\partial A^+$ and $\partial A^-$ are four parallel essential mutually disjoint curves in $\widehat F$ which split it into four annuli $E_1, E_2, E_3, E_4$ where $E_i \cap E_{i+1} \ne \emptyset$ for all $i$ (mod $4$).

$(1)$ Suppose that $\partial A^+$ does not separate $\partial A^-$ in $\widehat F$ and that the annuli $E_i$ are numbered so that $\partial A^+ = \partial E_1$ and $\partial A^- = \partial E_3$.
Then $|E_1 \cap \partial F| = 0$.

$(2)$ Suppose that $\partial A^+$ separates $\partial A^-$ in $\widehat F$ and that the annuli $E_i$ are numbered so that the components of $\partial A^+$ are $E_1 \cap E_2$ and $E_3 \cap E_4$. Then $|E_1 \cap \partial F| = |E_4 \cap \partial F|, |E_2 \cap \partial F| = |E_3 \cap \partial F|$, and $|E_1 \cap \partial F| + |E_2 \cap \partial F| = m/2$.
\end{lemma}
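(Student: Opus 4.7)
The strategy is to apply Lemma~\ref{non-sep} to the non-separating annulus $A^-$, then analyze certain compound closed surfaces built from $A^+, A^-$, and pairs of the $E_i$'s via Lemma~\ref{incomp}. Because $A^-\subseteq(X^-,F)$ is non-separating and $S=F$, alternative~(ii) of Lemma~\ref{non-sep} is unavailable; alternative~(i) then applies, forcing $X^-$ to be a twisted $I$-bundle and each of the two annuli into which $\partial A^-$ splits $\widehat F$ to meet $\partial M$ in $m/2$ components (in particular $m$ is even). In case~(1) those annuli are $E_3$ and $E_4\cup E_1\cup E_2$, giving $|E_3\cap\partial F|=m/2$ and $|E_1\cap\partial F|+|E_2\cap\partial F|+|E_4\cap\partial F|=m/2$. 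In case~(2) they are $E_1\cup E_2$ and $E_3\cup E_4$, yielding $|E_1\cap\partial F|+|E_2\cap\partial F|=m/2=|E_3\cap\partial F|+|E_4\cap\partial F|$---the first of these is the third assertion of~(2).

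For each relevant $(i,j)$---only $(2,4)$ in case~(1); both $(1,3)$ and $(2,4)$ in case~(2)---I form the closed surface
\begin{equation*}
\widetilde A_{ij}:=A^+\cup E_i\cup A^-\cup E_j,
\end{equation*}
obtained by cyclically identifying the four annuli along their shared boundary circles (a direct check using Step~1 shows these identifications are consistent and yield a closed surface of Euler characteristic zero). A closed loop in the interior of $X^-$ meeting $A^-$ transversely once is disjoint from $A^+$ and from $E_i, E_j\subseteq\widehat F$, so has mod-$2$ intersection number~$1$ with $\widetilde A_{ij}$; thus $\widetilde A_{ij}$ is non-separating in the irreducible manifold $M(\beta)$ (Assumption~\ref{assumption irreducible}). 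Since any non-separating $2$-sided torus in $M(\beta)$ is automatically incompressible, Assumption~\ref{assumption minimal} would then force $|E_i\cap\partial F|+|E_j\cap\partial F|\geq m$, a requirement that is either directly incompatible with Step~1 or leads via the complementary compound surface to alternative~(ii) of Lemma~\ref{incomp} (discussed below). Hence each $\widetilde A_{ij}$ is a one-sided Klein bottle, and its regular neighbourhood $U_{ij}$ is a twisted $I$-bundle over the Klein bottle---equivalently a Seifert-fibred space over the disk with two cone points of order~$2$---containing the non-null-homotopic loop $\partial A^+\subseteq\widehat F$.

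Lemma~\ref{incomp} then applies to each $U_{ij}$. Its alternative~(i), that $\partial U_{ij}$ is incompressible in $M(\beta)$, combined with Assumption~\ref{assumption minimal}, gives $2(|E_i\cap\partial F|+|E_j\cap\partial F|)=|\partial U_{ij}\cap\partial M|\geq m$. In case~(1) this with Step~1 yields $|E_1\cap\partial F|\leq 0$, hence $=0$. In case~(2) the two inequalities from the $(1,3)$ and $(2,4)$ surfaces combined with $\sum_k|E_k\cap\partial F|=m$ become equalities, which with Step~1 give $|E_1\cap\partial F|=|E_4\cap\partial F|$ and $|E_2\cap\partial F|=|E_3\cap\partial F|$. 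The main obstacle is alternative~(ii) of Lemma~\ref{incomp}, in which $\overline{M(\beta)\setminus U_{ij}}$ is a solid torus and $M(\beta)$ is a Euclidean torus bundle Seifert fibred over $S^2(a,b,c)$ with $\tfrac1a+\tfrac1b+\tfrac1c=1$; I expect to rule this case out by combining Assumptions~\ref{assumption sep} and~\ref{assumption twisted 2} with the classification of incompressible tori in Euclidean torus bundles, to either reduce $|\widehat F\cap\partial M|$ below $m$---violating Assumption~\ref{assumption minimal}---or to deduce the desired intersection equalities directly from the explicit decomposition $M(\beta)=U_{ij}\cup V$.
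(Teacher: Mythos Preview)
Your approach is the same as the paper's: apply Lemma~\ref{non-sep} to $A^-$, build the closed surfaces $A^+\cup E_i\cup A^-\cup E_j$, and use incompressibility of the boundaries of their regular neighbourhoods together with Assumption~\ref{assumption minimal}. The genuine gap is your handling of alternative~(ii) of Lemma~\ref{incomp}, which you leave as ``I expect to rule this case out'' via an elaborate plan involving the classification of tori in Euclidean torus bundles. In fact this alternative is immediately excluded, and the paper's proof sidesteps Lemma~\ref{incomp} for exactly this reason. Each $U_{ij}$ is a twisted $I$-bundle over the Klein bottle, so Seifert over $D^2(2,2)$. If $\partial U_{ij}$ compresses in $M(\beta)$ then (by irreducibility) $\overline{M(\beta)\setminus U_{ij}}$ is a solid torus and $M(\beta)$ is a Dehn filling of $U_{ij}$. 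But every such filling is either reducible (the fibre slope gives $L(2,1)\# L(2,1)$) or Seifert over $S^2(2,2,c)$, which is atoroidal since $\tfrac12+\tfrac12+\tfrac1c>1$. This contradicts Assumptions~\ref{assumptions on alpha and beta} and~\ref{assumption irreducible}. Equivalently, in the language of Lemma~\ref{incomp}(ii), no triple $(a,b,c)$ with $\tfrac1a+\tfrac1b+\tfrac1c=1$ has two entries equal to~$2$. The paper records this as ``no Dehn filling of $U$ is toroidal''.

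A secondary point: your contradiction argument that each $\widetilde A_{ij}$ is a Klein bottle rather than a torus is vague in case~(2), where the clause ``leads via the complementary compound surface to alternative~(ii)'' does not stand on its own. There is a direct argument. Since $A^+$ separates $\widehat X^+$, each of $A^+\cup(E_2\cup E_3)$ and $A^+\cup(E_4\cup E_1)$ bounds a region of $\widehat X^+$ and is therefore a torus. In particular the annulus $A^+\cup E_3$ has the same boundary as $E_2$ and, since $(A^+\cup E_3)\cup E_2$ is a torus, replacing $E_2$ by $A^+\cup E_3$ in the Klein bottle $A^-\cup(E_1\cup E_2)$ (furnished by Lemma~\ref{non-sep}(i)) again yields a Klein bottle, namely $\widetilde A_{13}$; similarly for $\widetilde A_{24}$ and, in case~(1), for $\widetilde A_{24}$. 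The paper simply asserts these surfaces are Klein bottles without further comment.
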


\pf The proof is based on Lemma \ref{non-sep}. Since we have assumed that $S = F$, conclusion (i) of this lemma holds.

Assume first that $\partial A^+$ does not separate $\partial A^-$ in $\widehat F$. Then Lemma \ref{non-sep} implies that $|E_3 \cap \partial F| = |E_1 \cap \partial F| + |E_2 \cap \partial F| + |E_4 \cap \partial F| = m/2$. Since $A^+ \cup E_2 \cup A^- \cup E_4$ is a Klein bottle, $|E_2 \cap \partial F| + |E_4 \cap \partial F| \geq m/2$ and therefore $|E_1 \cap \partial F| = 0$.

Next assume that $\partial A^+$ separates $\partial A^-$ in $\widehat F$. A tubular neighbourhood $U$ of the Klein bottle $A^+ \cup E_1 \cup A^- \cup E_3$ is a twisted $I$-bundle over the Klein bottle, and as no Dehn filling of $U$ is toroidal, the torus $\partial U$ must be incompressible in $M(\beta)$ (cf. Assumption \ref{assumption irreducible}). Hence $m \leq |\partial U \cap \partial M| \leq 2(|E_1 \cap \partial F| + |E_3 \cap \partial F|)$ and therefore $|E_1 \cap \partial F| + |E_3 \cap \partial F| \geq m/2$. Similarly, consideration of the Klein bottle $A^+ \cup E_2 \cup A^- \cup E_4$ shows that  $|E_2 \cap \partial F| + |E_4 \cap \partial F| \geq m/2$. On the other hand, Lemma \ref{non-sep} implies that $|E_1 \cap \partial F| + |E_2 \cap \partial F| = |E_3 \cap \partial F| + |E_4 \cap \partial F| = m/2$, from which we deduce the desired conclusion.
\qed

\section{The dependence of the number of tight components of $\breve{\Phi}_j^\epsilon$ on $j$} \label{section-tight}

Let $\mathcal{T}_j^\epsilon$ be the union of the tight components of $\breve{\Phi}_j^\epsilon$ and set
$$t_j^\epsilon = |\mathcal{T}_j^\epsilon|$$
If $j$ is odd, the free involution $h_j: \breve{\Phi}_j^\epsilon \to \breve{\Phi}_j^\epsilon$ preserves $\mathcal{T}_j^\epsilon$ but none of its components (cf. Proposition \ref{tight not invariant}). Thus $t_j^\epsilon$ is even for $j$ odd. Further, as $\breve{\Phi}_j^\epsilon \cong \breve{\Phi}_j^{-\epsilon}$ for $j$ even, $t_{2k}^+ = t_{2k}^-$ for all $k$.

\begin{lemma} \label{tight-in-disk}
Suppose that $C \subseteq (S \setminus \breve{\Phi}_j^\epsilon)$ is an essential simple closed curve which bounds a disk $D \subseteq \widehat S$. Then $D$ contains a tight component of $\breve{\Phi}_j^\epsilon$. Further, if $C$ is not isotopic in $S$ into the boundary of a tight component of $\breve{\Phi}_j^\epsilon$ (i.e. $D \cap S$ is not isotopic in $S$ to a tight component of $\breve{\Phi}_j^\epsilon$), then $D$ contains at least two tight components of $\breve{\Phi}_j^\epsilon$.
\end{lemma}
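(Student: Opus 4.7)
The plan is to prove both assertions by strong induction on the number $k$ of components of $\breve{\Phi}_j^\epsilon$ contained in $D$. I first note that since $C$ is essential in $S$ but bounds a disk in $\widehat S$, $D$ must contain at least two components of $\partial S$; otherwise $C$ would be null-homotopic or boundary-parallel in $S$. For each such $b \subset D \cap \partial S$, the construction of $\breve{\Phi}_j^\epsilon$ (as $\dot{\Phi}_j^\epsilon$ together with collar neighbourhoods of $\partial S \setminus \partial \dot{\Phi}_j^\epsilon$) yields a collar of $b$ lying in $\breve{\Phi}_j^\epsilon$, and this collar lies in $D$. Its containing component of $\breve{\Phi}_j^\epsilon$, being connected and disjoint from $C \subset S \setminus \breve{\Phi}_j^\epsilon$, lies entirely in $D$, so $k \geq 1$.

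For the first assertion, assume every component of $\breve{\Phi}_j^\epsilon$ in $D$ is non-tight and pick any such $\phi$. Since $\widehat\phi$ is a planar subsurface of the disk $D$ which is not itself a disk, $\widehat\phi$ has $n \geq 2$ boundary circles, each an inner boundary component of $\phi$ and hence essential in $S$. An Euler characteristic count shows that $D \setminus \mathrm{int}(\widehat\phi)$ has exactly $n$ components: one annulus $A_0$ containing $C$ and $n - 1 \geq 1$ sub-disks $D_1, \ldots, D_{n-1}$. Each $\partial D_s$ is an inner boundary of $\phi$; pushing it slightly into $D_s$ across $\partial\phi$ produces a curve $C_s \subset S \setminus \breve{\Phi}_j^\epsilon$ parallel in $S$ to $\partial D_s$, hence still essential in $S$, bounding a disk $D_s^\ast \subset D_s$. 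Since $\phi \not\subset D_s^\ast$, the induction hypothesis applied to $D_s^\ast$ yields a tight component of $\breve{\Phi}_j^\epsilon$ inside $D_s^\ast \subset D$.

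For the second assertion, the same cutting argument produces $n - 1 \geq 2$ tight components whenever some non-tight $\phi \subset D$ has $n \geq 3$. The delicate case is when every non-tight component of $\breve{\Phi}_j^\epsilon$ in $D$ has completion an annulus. Here I would argue by contradiction: supposing $\phi_0$ is the unique tight component in $D$ and $C \not\sim \partial\phi_0$ in $S$, an analogous induction on $k$ applied to innermost sub-disks cut off by the annular $\widehat\psi$'s produces a non-tight $\psi \subset D$ whose two inner boundary circles $c^{(1)}, c^{(2)}$ are each isotopic in $S$ to $\partial\phi_0$, and hence to each other. Proposition \ref{parallel means essential} applied to $\phi_1 = \phi_2 = \psi$ and an innermost annulus in $\overline{S \setminus \breve{\Phi}_j^\epsilon}$ cobounding $c^{(1)}$ and $c^{(2)}$ then gives a contradiction, since that annulus would be $\widehat S$-essential while $c^{(1)}$ and $c^{(2)}$ are inessential in $\widehat S$ (being contained in the disk $D$). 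The principal obstacle is carrying out the innermost analysis in this annular case: one must track the nesting of the $\widehat\psi_i$'s inside the annular region $D \setminus \mathrm{int}(\widehat{\phi_0})$ together with the positions of the meridian disks to ensure both that $\psi$ with the required pair of isotopic inner boundaries exists, and that a suitable cobounding $S$-annulus can be chosen disjoint from $\breve{\Phi}_j^\epsilon$ so that Proposition \ref{parallel means essential} genuinely applies.
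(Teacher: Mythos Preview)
Your argument for the first assertion is correct, though more elaborate than necessary. The paper dispenses with induction entirely: among all inner boundary components of $\breve{\Phi}_j^\epsilon$ lying in $D$, take one, say $C_1$, that is innermost in $D$; the component $\phi_1$ of $\breve{\Phi}_j^\epsilon$ bounded by $C_1$ then has $C_1$ as its \emph{only} inner boundary component and is therefore tight. (A minor quibble: your claim that $D$ contains at least two components of $\partial S$ assumes $C$ is not boundary-parallel, which the lemma does not; but you only actually use that $D$ contains at least one.)

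Your second assertion, however, has a genuine gap, and it is more serious than the ``principal obstacle'' you flag. Your plan is to find an annular $\widehat\psi$ whose two inner boundary circles $c^{(1)}, c^{(2)}$ cobound an annulus $E \subset \overline{S \setminus \breve{\Phi}_j^\epsilon}$, and then invoke Proposition~\ref{parallel means essential} with $\phi_1 = \phi_2 = \psi$. But no such $E$ can exist: since $\widehat\psi$ is an annulus in the disk $D$, the curves $c^{(1)}$ and $c^{(2)}$ are nested, and the \emph{only} annulus in $\widehat S$ they cobound is $\widehat\psi$ itself, which lies in $\breve{\Phi}_j^\epsilon$, not in its complement. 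So Proposition~\ref{parallel means essential} cannot be applied in the way you describe.

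The paper's route is both simpler and avoids this trap. Having found the tight $\phi_1$ with inner boundary $C_1$, one observes that since $C$ is not isotopic to $C_1$, some $b_i$ lies in $D \setminus \phi_1$, so there are further components $\phi_2,\ldots,\phi_n$ in $D$. If every inner boundary of $\phi_2 \cup \cdots \cup \phi_n$ is essential in the annulus $\overline{D \setminus \widehat\phi_1}$, then the one closest to $C_1$, say $C_2 \subset \partial\phi_2$, cobounds an annulus $E \subset \overline{S \setminus \breve{\Phi}_j^\epsilon}$ with $C_1$; now Proposition~\ref{parallel means essential} (applied to the \emph{distinct} components $\phi_1$ and $\phi_2$) forces $E$ to be $\widehat S$-essential, contradicting $E \subset D$. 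Hence some inner boundary of $\phi_2 \cup \cdots \cup \phi_n$ bounds a subdisk of $D$ disjoint from $\phi_1$, and the first assertion gives a second tight component there. The key point you missed is that Proposition~\ref{parallel means essential} should be applied \emph{between} the tight component and a neighbouring one, not between the two boundary circles of a single annular component.
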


\pf Since $C$ is essential, $D$ contains at least one boundary component of $S$ and hence at least one component of $\breve{\Phi}_j^\epsilon$. Amongst all the inner boundary components of $\breve{\Phi}_{j}^\epsilon$ which are contained in $D$, choose one, $C_1$ say, which is innermost in $D$. It is easy to see that this circle is the inner boundary component of a tight component $\phi_1$ of $\breve{\Phi}_{j}^\epsilon$. This proves the first assertion of the lemma.

Next suppose that $C$ is not isotopic in $S$ into the boundary of a tight component of $\breve{\Phi}_j^\epsilon$. Then $C$ and $C_1$ do not cobound an annulus in $D \cap S$, so there is a component of $\partial S$ contained in $\overline{D \setminus \phi_1}$. Hence if $\phi_1, \phi_2, \ldots , \phi_n$ are the components of $\breve{\Phi}_{j}^\epsilon$ contained in $D \cap S$, then $n \geq 2$. If every inner boundary component of $\phi_2 \cup \phi_3 \cup  \ldots \cup  \phi_n$ is essential in the annulus $\widehat{\overline{D \setminus \phi_1}}$, some such boundary component cobounds an annulus $E \subseteq S$ with $C_1$. Without loss of generality we may suppose $\partial E = C_1 \cup C_2$ where $C_2 \subseteq \partial \phi_2$. But this is impossible as Proposition \ref{parallel means essential} would then imply that $E$ is essential in $\widehat S$. Hence some inner boundary component of $\phi_2 \cup \phi_3 \cup  \ldots \cup  \phi_n$ bounds a subdisk $D'$ of $D$ which is disjoint from $\phi_1$, the argument of the first paragraph of this proof shows that $D$ contains another tight component of $\breve{\Phi}_{j}^\epsilon$, so we are done.
\qed

An immediate consequence of the lemma is the following corollary.

\begin{cor} \label{tight-in-tight} $\;$

$(1)$ If $\breve{\Phi}_j^\epsilon$ has a component $\phi$ which is contained in a disk $D \subseteq \widehat S$, then either $\phi$ is tight or $D$ contains at least two tight components  of $\breve{\Phi}_j^\epsilon$.

$(2)(a)$ If $\phi_0$ is a tight component of $\breve{\Phi}_j^\epsilon$, there is a tight component $\phi_1$ of $\breve{\Phi}_{j+1}^\epsilon$ contained in $\phi_0$.

\indent \hspace{4mm} $(b)$ If $\phi_1$ is not isotopic to $\phi_0$ in $S$, there are at least two tight components of $\breve{\Phi}_{j+1}^\epsilon$ contained \indent \hspace{4mm} in $\phi_0$.
\qed
\end{cor}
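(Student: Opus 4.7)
Both parts will be derived from Lemma~\ref{tight-in-disk} together with Proposition~\ref{parallel means essential}. The recurring strategy is to apply Lemma~\ref{tight-in-disk} to a suitable simple closed curve $C \subseteq S \setminus \breve{\Phi}_j^\epsilon$ bounding a disk containing the configuration of interest; whenever the exceptional alternative of the lemma (where $D' \cap S$ is ambient isotopic in $S$ to a single tight component) threatens, I will eliminate it by a Proposition~\ref{parallel means essential} contradiction, exploiting the $\widehat S$-inessentiality of the bounding curve.

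For Part $(1)$, I plan to choose $C$ as a slight push-off, into the complement of $\breve{\Phi}_j^\epsilon$, of the outermost boundary curve of $\phi$ in $D$; this produces a sub-disk $D' \subseteq \widehat S$ with $\phi \subseteq D'$ and $C \subseteq S \setminus \breve{\Phi}_j^\epsilon$ (any tight component of $\breve{\Phi}_j^\epsilon$ in the puncture-free thin region $D' \setminus D$ would have inessential boundary, so tight components found in $D'$ actually lie in $D$). Lemma~\ref{tight-in-disk} either immediately yields two tight components in $D'$ or else produces a tight $\psi \subseteq D'$ with $D' \cap S$ ambient isotopic in $S$ to $\psi$. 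In this exceptional case, matching topological types forces $\psi$ to contain all $\partial S$ components of $D' \cap S$, so $D' \cap S \setminus \psi$ is an annulus $A$ cobounded by $\partial \widehat{\psi}$ and $C$ with no $\partial S$ inside. Since $\phi \cap \psi = \emptyset$, we get $\phi \subseteq A$, and as a connected planar subsurface of $A$ whose boundary curves are essential in $S$, $\phi$ must itself be an annulus with boundary parallel to $\partial \widehat{\psi}$. Taking the innermost component of $\breve{\Phi}_j^\epsilon \cap A$ and the annulus $E \subseteq \overline{S \setminus \breve{\Phi}_j^\epsilon}$ it cobounds with $\partial \widehat{\psi}$, Proposition~\ref{parallel means essential} forces $E$ to be essential in $\widehat S$; but the core of $E$ is parallel to $C = \partial D'$, which bounds a disk in $\widehat S$, a contradiction.

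Part $(2)(a)$ will follow from Part $(1)$. First I observe that $\phi_0$ must contain at least one boundary component $\delta$ of $S$---otherwise $\phi_0 = \widehat{\phi_0}$ would be a disk in $S$ with essential boundary. The collar structure in the definition of $\breve{\Phi}_{j+1}^\epsilon$ places $\delta$ in some component $\phi_1$ of $\breve{\Phi}_{j+1}^\epsilon$, and the inclusion $\breve{\Phi}_{j+1}^\epsilon \subseteq \breve{\Phi}_j^\epsilon$ forces $\phi_1 \subseteq \phi_0$. Applying Part $(1)$ to $\phi_1 \subseteq \widehat{\phi_0}$, now for the family $\breve{\Phi}_{j+1}^\epsilon$, either $\phi_1$ is tight or $\widehat{\phi_0}$ contains two tight components of $\breve{\Phi}_{j+1}^\epsilon$, all necessarily lying in $\phi_0 = \widehat{\phi_0} \cap S$.

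For Part $(2)(b)$, given $\phi_1 \not\sim \phi_0$ in $S$, I will argue that $\phi_0$ contains strictly more $\partial S$ components than $\phi_1$: equality would make the unique non-$\partial S$ boundaries $c_0, c_1$ of $\widehat{\phi_0}, \widehat{\phi_1}$ cobound an unpunctured annulus in $\phi_0 \setminus \phi_1$, producing an ambient isotopy from $\phi_1$ to $\phi_0$ in $S$. Picking $\delta \in (\phi_0 \setminus \phi_1) \cap \partial S$ and repeating the $(2)(a)$ argument yields a tight component $\phi_1' \subseteq \phi_0$ of $\breve{\Phi}_{j+1}^\epsilon$ containing $\delta$; since $\delta \notin \phi_1$, we have $\phi_1' \ne \phi_1$, giving two distinct tight components in $\phi_0$. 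The main obstacle throughout is Part $(1)$, specifically ruling out the exceptional case of Lemma~\ref{tight-in-disk}: this is exactly where Proposition~\ref{parallel means essential} and the $\widehat S$-inessentiality of $C$ combine crucially.
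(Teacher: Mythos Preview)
Your argument is correct and follows the paper's approach, which records the corollary as an immediate consequence of Lemma~\ref{tight-in-disk} without further detail. One small simplification: in your Part~(1) exceptional case, once you show that $\phi$ would be an annulus in $A$ with both boundary components inner, you already contradict the neatness of $\breve{\Phi}_j^\epsilon$ (a neat component is either a collar on a component of $\partial S$ or large), so the detour through Proposition~\ref{parallel means essential} is unnecessary, though valid.
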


\begin{prop} \label{tight increase} $\;$

$(1)(a) $ $t_j^\epsilon \leq t_{j+1}^\epsilon$ with equality if and only if $\mathcal{T}_j^\epsilon$ is isotopic to $\mathcal{T}_{j+1}^\epsilon$ in $S$.

\indent \hspace{3.5mm} $(b)$ $t_j^\epsilon \leq t_{j+1}^{-\epsilon}$

$(2)$ If $0 < t_j^\epsilon = t_{j+2}^\epsilon$, then $t_j^\epsilon = |\partial S|$, so $\mathcal{T}_j^\epsilon$ is a regular neighbourhood of $\partial S$.
\end{prop}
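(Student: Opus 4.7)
For part (1)(a), I would define a map $\iota\colon \mathcal{T}_j^\epsilon \to \mathcal{T}_{j+1}^\epsilon$ sending each tight component $\phi_0$ of $\breve{\Phi}_j^\epsilon$ to a tight component $\iota(\phi_0)$ of $\breve{\Phi}_{j+1}^\epsilon$ contained in $\phi_0$; existence follows from Corollary \ref{tight-in-tight}(2)(a). Distinct components of $\mathcal{T}_j^\epsilon$ are disjoint in $S$, so $\iota$ is injective, giving $t_j^\epsilon \leq t_{j+1}^\epsilon$. If equality holds, $\iota$ is a bijection, and each $\phi_0$ contains a unique tight component of $\breve{\Phi}_{j+1}^\epsilon$; Corollary \ref{tight-in-tight}(2)(b) then forces $\iota(\phi_0)$ to be isotopic to $\phi_0$ in $S$, so $\mathcal{T}_j^\epsilon$ is isotopic to $\mathcal{T}_{j+1}^\epsilon$. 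The converse is immediate. Part (1)(b) follows by combining (1)(a) with the identity $t_{j+1}^+ = t_{j+1}^-$ valid whenever $j+1$ is even (noted just above the statement): when $j$ is odd, (1)(a) gives $t_j^\epsilon \leq t_{j+1}^\epsilon = t_{j+1}^{-\epsilon}$; when $j$ is even, $t_j^\epsilon = t_j^{-\epsilon}$ and applying (1)(a) with sign $-\epsilon$ yields $t_j^\epsilon = t_j^{-\epsilon} \leq t_{j+1}^{-\epsilon}$.

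For part (2), assume $0 < t_j^\epsilon = t_{j+2}^\epsilon$. Applying (1)(a) twice gives $t_j^\epsilon = t_{j+1}^\epsilon = t_{j+2}^\epsilon$, together with isotopies $\mathcal{T}_j^\epsilon \simeq \mathcal{T}_{j+1}^\epsilon \simeq \mathcal{T}_{j+2}^\epsilon$ in $S$. In particular each tight component $\phi_0$ of $\breve{\Phi}_j^\epsilon$ contains a unique tight component $\phi_1$ of $\breve{\Phi}_{j+1}^\epsilon$, which itself contains a unique tight component $\phi_2$ of $\breve{\Phi}_{j+2}^\epsilon$, with $\phi_0 \simeq \phi_1 \simeq \phi_2$ in $S$. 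I then need to show that every such $\phi_0$ is a collar on a single component of $\partial S$ and that every component of $\partial S$ is enclosed by such a collar. My plan is to rule out (a) the possibility that $\phi_0$ has two or more outer boundary components on $\partial S$ together with an inner boundary $c$ (which is automatically $\widehat{S}$-inessential by tightness), and (b) the possibility that some component of $\partial S$ lies in a non-tight component of $\breve{\Phi}_j^\epsilon$. In each case the strategy is to use the free involution $h_{j+1}^\epsilon$ of Proposition \ref{tight not invariant} (when $j+1$ is odd), together with the surface calculus identity \eqref{eq surface calculus}, $h_j^\epsilon(\dot{\Phi}_{j+2}^\epsilon) = \dot{\Phi}_j^{(-1)^{j+1}\epsilon} \wedge \dot{\Phi}_2^{(-1)^j\epsilon}$, and Proposition \ref{BdryToBdry}, to produce more than one tight component of $\breve{\Phi}_{j+2}^\epsilon$ inside the offending component, contradicting the uniqueness established above.

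The main obstacle is the refinement argument itself: extracting an additional tight subsurface from the presence of a second outer boundary or from an extra inner boundary, by comparing how the characteristic $I$-bundle structures on the two sides of $S$ intersect. Once (a) and (b) are eliminated, each component of $\mathcal{T}_j^\epsilon$ is a collar on a single component of $\partial S$, these collars are pairwise disjoint, and every component of $\partial S$ is covered, so $t_j^\epsilon = |\partial S|$ and $\mathcal{T}_j^\epsilon$ is a regular neighbourhood of $\partial S$, as claimed.
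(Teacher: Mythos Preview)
Your argument for part (1) is correct and matches the paper. For part (2) your setup is also right, but what you call the ``main obstacle'' --- the refinement argument --- is never actually carried out; you only name some tools ($h_{j+1}^\epsilon$, the surface calculus, Proposition~\ref{BdryToBdry}) without explaining how they would manufacture an extra tight component of $\breve\Phi_{j+2}^\epsilon$ inside the offending $\phi_0$, and you address only the parity $j+1$ odd. More importantly, your plan never invokes Assumption~\ref{assumption not (semi) fibre}, and this is the essential global input that makes part (2) true.

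The paper's route to (2) is different and dynamical rather than local. From $\mathcal{T}_j^\epsilon \simeq \mathcal{T}_{j+2}^\epsilon$ one shows that $\tau_{-\epsilon}\tau_\epsilon$ preserves $\mathcal{T}_j^\epsilon$ up to isotopy: since $(\tau_{-\epsilon}\tau_\epsilon)(\breve\Phi_{j+2}^\epsilon) \subseteq \breve\Phi_j^\epsilon$, each tight component of $\mathcal{T}_j^\epsilon$ receives at least one tight image, and the equality $t_j^\epsilon = t_{j+2}^\epsilon$ together with a count of outer boundary components forces it to receive exactly one, isotopic to itself. Iterating gives $\mathcal{T}_j^\epsilon \subseteq \breve\Phi_{j+2k}^\epsilon$ for every $k \ge 0$. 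Now Assumption~\ref{assumption not (semi) fibre} enters: since $F$ is neither a fibre nor a semi-fibre, $\breve\Phi_{j+2k}^\epsilon$ is a regular neighbourhood of $\partial S$ for $k$ large (cf.\ \cite[proof of Theorem 5.4.1]{BCSZ1}), so $\mathcal{T}_j^\epsilon$ is a union of annular collars on components of $\partial S$. The invariance under $\tau_{-\epsilon}\tau_\epsilon$, which sends $b_i$ to $b_{i \pm 2}$, then forces $\mathcal{T}_j^\epsilon$ to contain all $b_i$ of one parity; a parity swap via the free involution $h_j^\epsilon$ (when $j$ is odd) or via $h_{j+1}^{-\epsilon}$ after passing to $\breve\Phi_{j+1}^{-\epsilon}$ (when $j$ is even) shows it contains them all, whence $t_j^\epsilon = |\partial S|$.
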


\pf Part (1)(a) follows immediately from Corollary \ref{tight-in-tight}. For part (1)(b), note that if $j$ is odd then $t_j^\epsilon \leq t_{j+1}^{\epsilon} = t_{j+1}^{-\epsilon}$, while if $j$ is even, $t_j^\epsilon = t_j^{-\epsilon} \leq  t_{j+1}^{-\epsilon}$.

Next we prove part (2). Suppose that $0 < t_j^\epsilon = t_{j+2}^\epsilon$. Then Lemma \ref{tight-in-disk} implies that up to isotopy, $\mathcal{T}_j^\epsilon = \mathcal{T}_{j+2}^\epsilon$. We claim that $(\tau_{-\epsilon} \tau_\epsilon)(\mathcal{T}_{j+2}^\epsilon) = \mathcal{T}_{j}^\epsilon$, at least up to isotopy fixed on $\partial S$. To see this, first note that $(\tau_{-\epsilon} \tau_\epsilon)(\breve{\Phi}_{j+2}^\epsilon) \subseteq \breve{\Phi}_{j}^\epsilon$. Fix a tight component $\phi_0$ of $\breve{\Phi}_{j}^\epsilon$ and let $\phi_1, \phi_2, \ldots, \phi_n$ be the components of $\breve{\Phi}_{j+2}^\epsilon$ such that for each $i = 1, 2, \ldots , n$, $\phi_i' = (\tau_{-\epsilon} \tau_\epsilon)(\phi_i) \subseteq \phi_0$. Since each component of $\partial S \cap \phi_0$ is contained in some $\phi_i'$, the argument of the first paragraph of the proof of Lemma \ref{tight-in-disk} shows that at least one of the $\phi_i'$, or equivalently $\phi_i$, is tight. Since $\phi_0$ is an arbitrary tight component of $\breve{\Phi}_j^\epsilon$ and $t_j^\epsilon = t_{j+2}^\epsilon$,
it follows that $(\tau_{-\epsilon} \tau_\epsilon)(\mathcal{T}_{j+2}^\epsilon) \subseteq \mathcal{T}_{j}^\epsilon$ and each component of $\mathcal{T}_{j}^\epsilon$ contains a unique component of $(\tau_{-\epsilon} \tau_\epsilon)(\mathcal{T}_{j+2}^\epsilon)$. Note as well that as $\mathcal{T}_j^\epsilon = \mathcal{T}_{j+2}^\epsilon$, we have $|\partial S \cap \mathcal{T}_j^\epsilon| = |\partial S \cap \mathcal{T}_{j+2}^\epsilon| = |\partial S \cap (\tau_{-\epsilon} \tau_\epsilon)(\mathcal{T}_{j+2}^\epsilon)|$, so if $\phi_1$ is a tight component of $\mathcal{T}_{j+2}^\epsilon$ and $\phi_0$ the tight component of $\mathcal{T}_j^\epsilon$ containing $\phi_1' = (\tau_{-\epsilon} \tau_\epsilon)(\phi_1)$, then $|\partial S \cap \phi_1'| = |\partial S \cap \phi_0|$. But then as $\phi_0$ and $\phi_1'$ are tight, $\phi_1'$ is isotopic to $\phi_0$ by an isotopy fixed on $\partial S$. Hence we can assume that $(\tau_{-\epsilon} \tau_\epsilon)(\mathcal{T}_{j+2}^\epsilon) = \mathcal{T}_{j}^\epsilon$, or in other words, $(\tau_{-\epsilon} \tau_\epsilon)(\mathcal{T}_{j}^\epsilon) = \mathcal{T}_{j}^\epsilon$. It follows that $(\tau_{-\epsilon} \tau_\epsilon)^k(\mathcal{T}_{j}^\epsilon) = \mathcal{T}_{j}^\epsilon \subseteq \breve{\Phi}_j^\epsilon$, and so $\mathcal{T}_{j}^\epsilon \subseteq \breve{\Phi}_{j+2k}^\epsilon$ for all $k$. But since $F$ is neither a fibre nor a semi-fibre, $\breve{\Phi}_{j+2k}^\epsilon$ is a regular neighbourhood of $\partial S$ for large $k$ (cf.  \cite[proof of Theorem 5.4.1]{BCSZ1}). Thus $\mathcal{T}_{j}^\epsilon$ is a union of annuli.

The boundary components of $S$ can be numbered $b_1, b_2, \ldots , b_{|\partial S|}$ so that they arise successively around $\partial M$ and $(\tau_{-\epsilon} \tau_\epsilon)(b_i) = b_{i + (-1)^i 2}$, where the indices are considered (mod $|\partial S|$). Hence as $(\tau_{-\epsilon} \tau_\epsilon)(\mathcal{T}_{j}^\epsilon) = (\tau_{-\epsilon} \tau_\epsilon)(\mathcal{T}_{j+2}^\epsilon) = \mathcal{T}_{j}^\epsilon$, $\partial \mathcal{T}_j^\epsilon \cap \partial S$ is the union of either all even-indexed $b_i$, or all odd-indexed $b_i$, or all the $b_i$ (recall that we have assumed $t_j^\epsilon > 0$). In particular, for either all even $i$ or all odd $i$, the component of $\breve{\Phi}_j^\epsilon$ containing $b_i$ is an annulus. If $j$ is odd, we have a free involution $\breve h_j^\epsilon: \breve{\Phi}_{j}^\epsilon \to \breve{\Phi}_{j}^\epsilon$ which the reader will verify preserves $\mathcal{T}_{j}^\epsilon$ and exchanges the even-indexed $b_i$ with the odd-indexed $b_i$. Hence for any $i$, the component of $\breve{\Phi}_j^\epsilon$ containing $b_i$ is an annulus. Thus $t_j^\epsilon = m$, so $\mathcal{T}_j^\epsilon$ is a regular neighbourhood of $\partial S$.

Next suppose that $j$ is even. After possibly adding $1$ (mod $|\partial S|$) to the indices of the labels of the components of $\partial S$, we can assume that $\partial \mathcal{T}_j^\epsilon \cap \partial S$ contains the union of all even-indexed $b_i$. Then as $\breve{\Phi}_{j+1}^{-\epsilon} = \tau_{-\epsilon}(\breve{\Phi}_j^\epsilon \wedge \breve{\Phi}_1^{-\epsilon}) \supseteq \tau_{-\epsilon}(\mathcal{T}_{j}^{\epsilon} \wedge \breve{\Phi}_1^{-\epsilon})$, the component of $\mathcal{T}_{j+1}^{-\epsilon}$ containing an odd-indexed $b_i$ is an annulus. Consideration of the free involution $\breve h_{j+1}^{-\epsilon}: \breve{\Phi}_{j+1}^{-\epsilon} \to \breve{\Phi}_{j+1}^{-\epsilon}$ shows that the same is true for the even-indexed $b_i$. Thus $m = t_{j+1}^{-\epsilon} \leq t_{j+2}^\epsilon = t_j^\epsilon$. It follows that $\mathcal{T}_j^\epsilon$ is a regular neighbourhood of $\partial S$.
\qed

\begin{cor} \label{inessential non-tight} $\;$

$(1)$ If some non-tight component of $\breve{\Phi}_1^\epsilon$ has an $\widehat S$-inessential inner boundary component, then $t_1^\epsilon \geq 4$.

$(2)$ If $\hbox{genus}(\breve \Phi_1^\epsilon) = 1$ but $\breve \Phi_1^\epsilon \ne S$, then $t_1^\epsilon \geq 4$.
\end{cor}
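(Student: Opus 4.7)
The strategy is to prove (1) directly and then derive (2) from it via a torus argument, appealing to (1).

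For (1), let $\phi$ be the non-tight component with $\widehat S$-inessential inner boundary $c$ and let $D\subseteq \widehat S$ be the disk bounded by $c$. By Proposition \ref{BdryToBdry} the image $\tau_\epsilon(c)$ is also inner, and Lemma \ref{three cases}(3) applied to the vertical annulus in $\breve{\Sigma}_1^\epsilon$ joining $c$ to $\tau_\epsilon(c)$ gives that $\tau_\epsilon(c)$ is $\widehat S$-inessential and bounds a disk $D'\subseteq \widehat S$ disjoint from $D$. The key sub-claim is that each of $D$ and $D'$ contains at least two tight components of $\breve{\Phi}_1^\epsilon$. For $D$, if $\phi\subseteq D$ this is immediate from Corollary \ref{tight-in-tight}(1) applied to the non-tight $\phi$ in the disk $D$. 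Otherwise $\phi$ lies on the opposite side of $c$; push $c$ slightly off of $\phi$ into $D$ to obtain a curve $c_\ast\subseteq S\setminus \breve{\Phi}_1^\epsilon$ bounding a subdisk $D_\ast\subseteq D$, and apply Lemma \ref{tight-in-disk}. This gives two tight components unless $c_\ast$ is isotopic in $S$ to the boundary of some tight component $\psi$; in that case $c$ would cobound an annulus $E\subseteq D$ with a boundary component of $\psi$, from which one extracts a subannulus lying in $\overline{S\setminus \breve{\Phi}_1^\epsilon}$ and joining boundary components of two components of $\breve{\Phi}_1^\epsilon$. Proposition \ref{parallel means essential} would force this subannulus to be $\widehat S$-essential, contradicting its containment in the disk $D$. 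The symmetric argument applied to $\tau_\epsilon(\phi)$, which is non-tight by Proposition \ref{BdryToBdry}, produces two tight components in $D'$, giving $t_1^\epsilon\geq 4$.

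For (2), let $\phi$ be the unique genus-one component of $\breve{\Phi}_1^\epsilon$; it is automatically non-tight. If some inner boundary of $\phi$ is $\widehat S$-inessential, (1) applies and we are done. Otherwise every inner boundary of $\phi$ is $\widehat S$-essential, hence an essential simple closed curve in the torus completion of its ambient component of $S$. Since disjoint essential simple closed curves in a torus are mutually parallel, these curves form a parallel family cutting the ambient torus into annuli. The completion $\widehat{\phi}$ is then a union of such annuli; having genus one, it must coincide with the entire ambient torus, so $\phi$ has no inner boundary and, by neatness, equals an entire component of $S$. In the separating case $S=F$ this gives $\breve{\Phi}_1^\epsilon=S$, contradicting the hypothesis. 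In the non-separating case $S=F_1\cup F_2$ one is reduced to $\phi=F_i$, in which case the associated characteristic $I$-bundle component is a twisted $I$-bundle over $F_i/\tau_\epsilon$ with horizontal boundary the whole component $F_i\subseteq \partial X^\epsilon$.

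I expect this last subcase to be the main obstacle. The plan there is to analyse $\breve{\Phi}_1^\epsilon\cap F_{3-i}$ using $\tau_\epsilon$-equivariance together with the frontier annuli of the twisted $I$-bundle in $\mathrm{int}(X^\epsilon)$, whose existence is guaranteed by Assumption \ref{assumption not (semi) fibre} (otherwise $\Sigma^\epsilon=X^\epsilon$ and $X^\epsilon$ would itself be an $I$-bundle). M\"obius-band vertical boundary components are ruled out by capping the boundary slopes with meridian disks in the $\beta$-filling solid torus, which would produce an embedded $\mathbb{RP}^2$ in the orientable manifold $M(\beta)$, in combination with Proposition \ref{Mband}. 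The resulting annulus picture should force $\breve{\Phi}_1^\epsilon\cap F_{3-i}$ either to contain a non-tight component with an $\widehat S$-inessential inner boundary (reducing once more to (1)) or to be a disjoint union of sufficiently many tight collars, whose $\tau_\epsilon$-pairing together with the assumption $m\geq 3$ (hence $m\geq 4$ by parity) yields $t_1^\epsilon\geq 4$.
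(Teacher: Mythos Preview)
Your argument for part (1) is correct and is essentially the paper's proof: both show that each of the disks $D$ and $D' = \tau_\epsilon(D)$ contains at least two tight components, using Lemma~\ref{tight-in-disk} together with the fact that $c$ cannot be parallel in $S$ to the inner boundary of a tight component (the paper cites Lemma~\ref{root torus}; your use of Proposition~\ref{parallel means essential} is equivalent).

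For part (2), however, you take an unnecessarily circuitous route and end in a genuine gap. The paper's argument is a one-liner: if $\phi$ is a genus-$1$ component and has an inner boundary component $c$, then $c$ is automatically $\widehat S$-inessential, since a genus-$1$ subsurface of a torus cannot have an essential boundary circle (removing such a circle from the torus leaves an annulus). So (1) applies directly whenever $\phi$ has inner boundary. Your case split ``some inner boundary inessential / all inner boundaries essential'' is therefore redundant; the second alternative simply means $\phi$ has no inner boundary at all, i.e.\ $\phi$ is a full component of $S$.

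Where your proposal actually fails is in the residual non-separating subcase $\phi = F_i$. You correctly identify that if the $I$-bundle component $P$ with horizontal boundary $F_i$ had empty frontier in $\hbox{int}(X^\epsilon)$ then $P = X^\epsilon$ would be an $I$-bundle, contradicting Assumption~\ref{assumption not (semi) fibre}. But that \emph{is} the contradiction, and it finishes the case: the vertical boundary of $P$ lies over $\partial F_i \subseteq \partial S$, so after ruling out M\"obius bands via Proposition~\ref{Mband} each vertical boundary annulus has both boundary circles in $\partial S$ and hence (Proposition~\ref{boundary-parallel} and neatness of $\Sigma_1^\epsilon$) lies in $\partial M \cap X^\epsilon$. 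Thus $P$ has no frontier in $\hbox{int}(X^\epsilon)$ and you are done. Instead you assume such frontier annuli exist and launch into an analysis of $\breve\Phi_1^\epsilon \cap F_{3-i}$, which is both unnecessary and left incomplete.
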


\pf Let $\phi$ be a non-tight component of $\breve{\Phi}_1^\epsilon$ and $c$ an $\widehat S$-inessential inner boundary component of $\phi$. Let $D \subseteq \widehat S$ be the disk with boundary $c$. Lemma \ref{tight-in-disk} implies that $D$ contains a tight component $\phi_0$. By Lemma \ref{root torus}, $c$ is not isotopic in $S$ into the boundary of $\phi_0$, so $D$ contains at least two tight components (Lemma \ref{tight-in-disk}). It follows from Lemma \ref{three cases}(3) that $\tau_\epsilon(c)$ bounds a disk $D' \subseteq \widehat S$ disjoint from $D$ and as above, $D'$ contains at least two tight components of $\breve{\Phi}_1^\epsilon$. Thus $t_1^\epsilon \geq 4$, which proves (1).

Next suppose that $\hbox{genus}(\breve{\Phi}_1^\epsilon) = 1$ but $\breve{\Phi}_1^\epsilon \ne S$. Then there is a component $\phi \ne S$ of $\breve{\Phi}_1^\epsilon$ of genus $1$. In particular, $\phi$ is not tight. Since $\phi \ne S$, it has inner boundary components. Since $\hbox{genus}(\phi) = 1$, each such inner boundary component is $\widehat S$-inessential. Hence part (2) of the corollary follows from part (1).
\qed

\section{The structure of $\dot{\Phi}_1^\epsilon$ and the topology of $X^\epsilon$} \label{NT}

In this section we study how the existence of a component $\phi$ of $\dot{\Phi}_1^\epsilon$ such that $\widehat \phi$ contains an $\widehat S$-essential annulus constrains $\dot{\Phi}_1^\epsilon$ and the topology of $\widehat X^\epsilon$. The following construction will be useful to our analysis.

Let $P$ be a component of $\dot{\Sigma}_1^\epsilon$. For each $\widehat S$-inessential inner component of $c$ of $P \cap S$ let $D_c \subset \widehat S$ be the disk with boundary $c$ and suppose that the component of $P \cap S$ containing $c$ is disjoint from $\hbox{int}(D_c)$. The component $H_c$ of $\overline{\widehat X^\epsilon \setminus P}$ containing $c$ satisfies $(H_c, H_c \cap P) \cong (D^2 \times I, (\partial D^2) \times I)$ (cf. Proposition \ref{extend fibering}). Let
\begin{equation}\label{definition of qp}
\text{\em $Q_P = \widehat P \cup (\cup_{c} H_c)$}
\end{equation}
where $c$ ranges over all $\widehat S$-inessential inner components of $P \cap S$ such that $P \cap \hbox{int}(D_c) = \emptyset$. The $I$-fibre structure on $P$ extends over $Q_P$.

We prove the following results.

\begin{prop} \label{sep-seifert}
Suppose that $F$ is separating in $M$, so $S = F$ is connected.

$(1)$ There is at most one component $P$ of $\dot{\Sigma}_1^\epsilon$ such that $\widehat{P \cap F}$ contains an $\widehat F$-essential annulus.

$(2)$ There is exactly one such component if  $\dot{\Sigma}_1^\epsilon$ contains a twisted $I$-bundle.

$(3)$ Suppose that $P$ is a component of $\dot{\Sigma}_1^\epsilon$ such that $\widehat{P \cap F}$ contains an $\widehat F$-essential annulus. Then $\widehat X^\epsilon$ admits a Seifert structure and if

\hspace{3mm} $(a)$ $\hbox{genus}(\widehat{P \cap F}) = 1$, then $\widehat X^\epsilon$ is a twisted $I$-bundle over the Klein bottle.

\hspace{3mm} $(b)$ $\hbox{genus}(\widehat{P \cap F}) = 0$, then an $\widehat F$-essential annulus in $\widehat{P \cap F}$ is vertical in the Seifert structure \\ \indent \hspace{3mm} and $Q_P$ splits $\widehat X^\epsilon$ into
a union of solid tori. Moreover, if

\hspace{9mm} $(i)$ $P$ is a twisted $I$-bundle, then $\widehat X^\epsilon$ has base orbifold a disk with two cone points, at \\ \indent \hspace{9mm} least one of which has order $2$.

\hspace{9mm} $(ii)$ $P$ is a product $I$-bundle and $Q_P$ separates $\widehat X^\epsilon$, then $\widehat X^\epsilon$ has base orbifold a disk with \\ \indent \hspace{9mm} two cone points.

\hspace{9mm} $(iii)$ $P$ is a product $I$-bundle and $Q_P$ does not separate $\widehat X^\epsilon$, then $X^-$ is a twisted $I$-bundle \\ \indent \hspace{9mm} and $\widehat X^\epsilon$ has base orbifold a M\"{o}bius band with at most one cone point.
\end{prop}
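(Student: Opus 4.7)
The plan is to prove (3) first and then deduce (1) and (2) from it. The key input is that a component $P$ of $\dot\Sigma_1^\epsilon$ with an $\widehat F$-essential annulus $A$ in $\widehat{P \cap F}$ yields a properly embedded essential annulus $(A',\partial A') \subset (X^\epsilon,F)$ whose boundary circles are $\widehat F$-essential, from which a Seifert fibration on $\widehat X^\epsilon$ can be constructed. To produce $A'$, take a core $c$ of $A$ which, after isotopy in $A$ off the capping disks of $\widehat{P \cap F}$, lies in $P \cap F$; lifting $c$ through the $I$-bundle $P$ yields either a vertical annulus $A'$ with $\partial A' = c \cup \tau_\epsilon(c)$ (when $c \ne \tau_\epsilon(c)$) or a properly embedded M\"obius band whose regular neighborhood has an annular intersection with $F$ that serves as $A'$ (when $c = \tau_\epsilon(c)$). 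In either case, Proposition \ref{BdryToBdry} and Proposition \ref{Mband} imply that $\partial A'$ consists of two parallel $\widehat F$-essential circles of some common slope $\alpha_P$ on the torus $\widehat F$.

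To prove (3), form $Q_P$ as in \eqref{definition of qp}. By Proposition \ref{extend fibering} the $I$-bundle structure of $P$ extends across $Q_P$, and the frontier of $Q_P$ in $\widehat X^\epsilon$ is a disjoint union of essential vertical annuli, each with $\widehat F$-essential boundary of slope $\alpha_P$. Each component $W$ of $\overline{\widehat X^\epsilon \setminus Q_P}$ has boundary consisting of these frontier annuli and pieces of $\widehat F$; if $\partial W$ is a torus that is incompressible in $M(\beta)$, Lemma \ref{three cases}(2) yields fewer than $m$ intersection components with $\partial M$, contradicting Assumption \ref{assumption minimal}, so $\partial W$ compresses and $W$ is a solid torus by Lemma \ref{incomp} and Assumption \ref{assumption irreducible}. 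Hence $\widehat X^\epsilon$ is assembled from $Q_P$ and solid tori glued along vertical annuli of common slope $\alpha_P$, yielding a Seifert fibration with regular fiber slope $\alpha_P$. For case (a), the genus-$1$ hypothesis forces $\widehat{P \cap F}$ to equal $\widehat F$ minus a disjoint union of disks, so $Q_P = \widehat X^\epsilon$; a product $I$-bundle would require two disjoint genus-$1$ horizontal components inside the torus $\widehat F$, which is impossible, so $P$ is twisted and $\widehat X^\epsilon$ is the twisted $I$-bundle over the Klein bottle. For case (b), $\widehat{P \cap F}$ is planar, and the three subcases are distinguished by whether $P$ is twisted (subcase (i), contributing an order-$2$ singular fiber from the double-cover structure) or product, and in the product case by whether $Q_P$ separates $\widehat X^\epsilon$. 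Subcase (iii) is the most delicate: non-separation of $Q_P$ pairs the two sides of $F$ in $\widehat X^\epsilon$ under a $\2$-symmetry arising from the Seifert structure, which combined with the characteristic nature of $P$ and Assumption \ref{assumption not (semi) fibre} forces $X^-$ to be a twisted $I$-bundle with $F$ as its frontier and the base orbifold of $\widehat X^\epsilon$ to be a M\"obius band with at most one cone point.

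For (1), suppose two distinct components $P_1 \ne P_2$ of $\dot\Sigma_1^\epsilon$ both carry $\widehat F$-essential annuli in their completions. Applying the construction above to each gives disjoint essential annuli $A_1', A_2' \subset X^\epsilon$ with $\widehat F$-essential boundaries of slopes $\alpha_{P_1}, \alpha_{P_2}$; disjointness of $P_1 \cap F$ and $P_2 \cap F$ in $F$ forces the four boundary circles to be mutually disjoint essential simple closed curves on the torus $\widehat F$, hence $\alpha_{P_1} = \alpha_{P_2}$. Non-parallelism of $A_1', A_2'$ in $X^\epsilon$ follows from the characteristic maximality of $\dot\Sigma_1^\epsilon$ as in the proof of Lemma \ref{root torus}. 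But applying (3) to $P_1$ equips $\widehat X^\epsilon$ with a Seifert structure in which every essential vertical annulus of slope $\alpha_{P_1}$ is isotopic into $Q_{P_1}$; then $A_2'$ can be isotoped into $Q_{P_1}$, forcing $P_2 \subseteq Q_{P_1}$, which by the characteristic property of $\dot\Sigma_1^\epsilon$ implies $P_1 = P_2$, a contradiction. For (2), let $P_0 \subset \dot\Sigma_1^\epsilon$ be a twisted $I$-bundle component; an orientation-reversing essential loop in its base lifts to a properly embedded M\"obius band in $P_0$, whose boundary is $\widehat F$-essential by Proposition \ref{Mband}, and the annular intersection of a regular neighborhood of the M\"obius band with $F$ is an $\widehat F$-essential annulus in $\widehat{P_0 \cap F}$, giving existence; uniqueness is (1). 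The principal obstacle is the case analysis in (3)(b)(iii), where extracting the twisted $I$-bundle structure on $X^-$ from the non-separation of $Q_P$ requires careful tracking of the $\2$-symmetry and its compatibility with the global topology of $M$ imposed by Assumption \ref{assumption not (semi) fibre}.
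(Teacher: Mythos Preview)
Your overall strategy for (3)---build $Q_P$, extend the $I$-fibering, and show the complementary pieces are solid tori---matches the paper's, and your treatment of (2) and of (3)(a), (b)(i), (b)(ii) is essentially correct (though your lemma citations are off: the complementary torus $\partial W$ compresses simply because it meets $\partial M$ in fewer than $m$ components and so cannot be essential by Assumption~\ref{assumption minimal}; neither Lemma~\ref{three cases}(2) nor Lemma~\ref{incomp} is the right hook there).

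There are two genuine gaps. First, your argument for (3)(b)(iii) is not a proof. The phrase ``non-separation of $Q_P$ pairs the two sides of $F$ in $\widehat X^\epsilon$ under a $\mathbb Z/2$-symmetry'' does not make sense as stated---$F$ is the boundary of $X^\epsilon$, so there is only one side visible---and nothing you wrote forces $X^-$ to be a twisted $I$-bundle. The paper's mechanism is concrete: when $Q_P$ is a product and does not separate, one of its frontier annuli is a \emph{non-separating} essential annulus in $(X^\epsilon,F)$, and Lemma~\ref{non-sep} then directly yields that $X^-$ is a twisted $I$-bundle. You need that lemma (or an equivalent argument producing a Klein bottle with few punctures) rather than an appeal to an unspecified symmetry.

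Second, your proof of (1) does not work. Even granting that $A_2'$ is vertical of slope $\alpha_{P_1}$ in the Seifert structure coming from $P_1$, the claim ``every essential vertical annulus of slope $\alpha_{P_1}$ is isotopic into $Q_{P_1}$'' fails when the base orbifold is a M\"obius band (your own case (b)(iii)); and in any case an isotopy in $\widehat X^\epsilon$ is not an isotopy in $X^\epsilon$, so it does not contradict the non-parallelism you deduced from characteristic maximality. More seriously, even if $A_2'$ were isotopic into $Q_{P_1}$, the inference ``$P_2 \subseteq Q_{P_1}$'' is a non sequitur: $A_2'$ is a single annulus in $P_2$, not all of $P_2$. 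The paper instead argues directly by contradiction: given two such components $P_1, P_2$, it builds from their associated solid tori a Seifert piece $U$ over $D^2(p,q)$ inside $M(\beta)$, applies Lemma~\ref{incomp} to see $\partial U$ is incompressible, and checks $|\partial U \cap \partial M| < m$, violating Assumption~\ref{assumption minimal}. This requires a short case analysis on whether $P_i \cap F$ is connected, but each case is a direct torus-counting argument rather than an isotopy claim.
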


\begin{prop} \label{nonsep-seifert}
Suppose that $F$ is non-separating in $M$, so $S = F_1 \cup F_2$ is not connected.

$(1)$ $\dot{\Sigma}_1^+$ is a $($possibly empty$)$ product bundle and for each $j = 1, 2$ and component $P$ of $\dot{\Sigma}_1^+$, $\hbox{genus}(P \cap F_j) = 0$.

$(2)$ If $P$ is a component of $\dot{\Sigma}_1^+$ such that $P \cap S \subseteq P \cap F_j$ for some $j$, then $\widehat{P \cap F_j}$ contains no $\widehat S$-essential annulus.

$(3)$ If $t_1^+ = 0$, then $\dot{\Sigma}_1^+$ has exactly one component $P$ and for each $j = 1, 2$, $\widehat{P \cap F_j}$ is an annulus which is essential in $\widehat F_j$.  Further, $\widehat X^+$ admits a Seifert structure with base orbifold an annulus with exactly one cone point.
\end{prop}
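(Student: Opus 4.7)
The non-separating case is distinguished by the fact that $(X^-, S) = (F \times I, F \times \partial I)$ is itself an $I$-bundle pair, so $\widehat X^- \cong T^2 \times I$ and $\widehat F_1, \widehat F_2$ are parallel non-separating tori in $M(\beta)$. The plan is to repeatedly pair essential annuli on the $X^+$-side with vertical annuli of $X^-$, using the minimality of $m = |\partial F|$ from Assumption \ref{assumption minimal} to drive contradictions, together with Lemmas \ref{three cases}, \ref{non-sep}, \ref{incomp} and the subsurface machinery of \S \ref{characteristic}.

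For part~(1), suppose $P$ is a twisted $I$-bundle component of $\dot\Sigma_1^+$. Its horizontal boundary $P \cap S$ is connected and lies in (say) $F_1$, and $P$ contains a properly embedded M\"obius band $B$ with $\partial B \subset F_1$ essential in $\widehat F_1$ by Proposition \ref{Mband}. The frontier annulus $A^+$ of a regular neighbourhood of $B$ in $X^+$ is then an essential annulus in $(X^+, S)$ with both boundary components in $F_1$, and Lemma \ref{non-sep}(ii) forces $A^+$ to be separating. I would then pair $A^+$ with the vertical annulus $A^- = \partial B \times I \subset F \times I = X^-$ sharing a boundary component with $A^+$: the resulting torus, capped off suitably inside $\widehat X^-$, produces an incompressible torus in $M(\beta)$ (via Lemma \ref{incomp} and Assumption \ref{assumption irreducible}) intersecting $\partial M$ in fewer than $m$ components, contradicting Assumption \ref{assumption minimal}. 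The same pairing argument, applied to the vertical annulus of $P$ over a non-separating essential simple closed curve in a positive-genus $P \cap F_j$, handles the claim $\hbox{genus}(P \cap F_j) = 0$.

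Part~(2) is a direct application of the same device. Given a component $P$ of $\dot\Sigma_1^+$ with $P \cap S \subset F_j$ and an $\widehat S$-essential annulus $\widehat A_0 \subset \widehat{P \cap F_j}$, the vertical annulus of $P$ over $A_0 = \widehat A_0 \cap F_j$ is an essential annulus $A^+ \subset (X^+, S)$ with both components of $\partial A^+$ in $F_j$ and essential in $\widehat F_j$. Combining $A^+$ with a vertical annulus $A^- \subset X^- = F \times I$ whose $F_j$-boundary matches a component of $\partial A^+$ and capping in $\widehat X^-$ yields an incompressible torus in $M(\beta)$ meeting $\partial M$ in fewer than $m$ components, again contradicting Assumption \ref{assumption minimal}.

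Part~(3) combines (1), (2), and Lemma \ref{tight-in-disk}. Any annulus collar of a boundary component of $S$ is tight, so $t_1^+ = 0$ forces $\dot\Phi_1^+ \neq \emptyset$ and $\partial S \subset \partial \dot\Phi_1^+$. Let $\phi$ be a non-tight component of $\dot\Phi_1^+$; by~(1) it is planar. Any $\widehat S$-inessential inner boundary of $\phi$ would bound a disk in $\widehat S$ containing a tight component (Lemma \ref{tight-in-disk}), contradicting $t_1^+ = 0$; so every inner boundary of $\phi$ is $\widehat S$-essential, and by~(2) $\phi$ cannot lie in a single $F_j$. Hence the component $P$ of $\dot\Sigma_1^+$ containing $\phi$ is a product bundle with $P \cap F_1, P \cap F_2$ both nonempty, planar, and with mutually disjoint essential inner boundaries on the torus $\widehat F_j$---necessarily parallel---which forces the connected planar surface $P \cap F_j$ to be an annulus essential in $\widehat F_j$. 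Uniqueness of $P$ follows because a second such component would supply four parallel essential curves on each $\widehat F_j$, and pairing its annuli with vertical annuli of $X^-$ in the spirit of Lemma \ref{nested-annuli} would again violate Assumption \ref{assumption minimal}. Finally, the product structure on $P$ yields a canonical $S^1$-fibering of $Q_P$ that extends across $\widehat X^+ \setminus Q_P$---a solid torus by irreducibility together with the boundary-count on the complementary torus---and introducing a single exceptional fibre there produces the Seifert structure on $\widehat X^+$ with base orbifold an annulus with one cone point. The main obstacle is adapting the separating-case Lemma \ref{nested-annuli} to the non-separating setting for the uniqueness step, and orchestrating the annulus pairings across $S$ so that the tally of $\partial M$-intersection components strictly drops below $m$.
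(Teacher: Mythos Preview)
Your overall strategy---produce a torus that violates Assumption \ref{assumption minimal}---matches the paper, but the actual constructions diverge, and yours have a real gap.

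For parts (1) and (2) you route through $X^- = F\times I$, pairing an annulus $A^+$ in $X^+$ with a vertical annulus $A^-$ in $X^-$. This does not cleanly produce a torus. Concretely, in part (1) you take $A^+$ to be the frontier of a regular neighbourhood of the M\"obius band $B$ and set $A^- = \partial B \times I$; but $\partial A^+$ consists of two curves \emph{parallel to} $\partial B$, not $\partial B$ itself, so $A^+$ and $A^-$ share no boundary as claimed. Even after repairing this by isotopy, closing $A^+\cup A^-$ into a torus requires a second annulus in $\widehat X^-$, and you have not controlled how many components of $\partial M$ that second annulus meets, nor argued why the resulting torus is essential rather than boundary-parallel in the product $\widehat X^- \cong T^2\times I$. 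The phrase ``capped off suitably inside $\widehat X^-$'' is hiding exactly the difficulty.

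The paper avoids $X^-$ entirely and exploits a cleaner mechanism: the presence of the second torus $\widehat F_2 \subset \partial\widehat X^+$. Given an essential annulus $A$ in $(X^+,F_1)$ with $\widehat F_1$-essential boundary, one takes the complementary annulus $E \subset \widehat F_1$ (the one carrying fewer than $m$ components of $\partial F$) and observes that $A\cup E$ is a torus in $\widehat X^+$ which is \emph{non-separating} in $M(\beta)$, since $\widehat F_2$ lies on only one side. A non-separating torus with fewer than $m$ intersections with $\partial M$ contradicts either Assumption \ref{assumption minimal} (if incompressible) or Assumption \ref{assumption irreducible} (if compressible, yielding a non-separating sphere). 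This single observation drives Lemmas \ref{product-Sigma+} and \ref{one-to-other}, and is the idea you are missing. For part (3), the paper's uniqueness argument (Lemma \ref{one-comp}) builds explicit Seifert pieces $U_i$ from adjacent solid tori and applies Lemma \ref{incomp}; your appeal to ``the spirit of Lemma \ref{nested-annuli}'' in the non-separating setting is not a substitute, since that lemma is specifically about $S=F$ separating.
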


We consider the cases $S$ connected and $S$ disconnected separately.

\subsection{$S$ is connected}

In this subsection we prove Proposition \ref{sep-seifert}.

\begin{lemma} \label{seifert}
Suppose that $F$ is separating in $M$ and $(A, \partial A) \subseteq (X^\epsilon, F)$ is an essential separating annulus whose boundary separates $\widehat F$ into two annuli $E_1$ and $E_2$. If $|E_1 \cap \partial M| < m$ then $A \cup E_1$ bounds a solid torus in $\widehat X^\epsilon$ in which $A$ has winding number at least $2$. Hence if either

\indent \hspace{3mm} $(a)$ $|E_1 \cap \partial M| < m$ and $|E_2 \cap \partial M| < m$; or

\indent \hspace{3mm} $(b)$ $|E_1 \cap \partial M| = m$ and $E_1 \cup A$ bounds a solid torus in $\widehat X^\epsilon$,

then $A$ splits $\widehat X^\epsilon$ into two solid tori in each of which $A$ has winding number at least $2$. In particular, $\widehat X^\epsilon$ admits a Seifert structure with base orbifold a disk with two cone points in which $A$ is vertical.
\end{lemma}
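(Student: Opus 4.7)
The strategy is to view $T = A \cup E_1$ as an embedded torus in $M(\beta)$ and to combine the minimality Assumption~\ref{assumption minimal} with Lemma~\ref{three cases}(2). Since $A$ is disjoint from $\partial M$, $|T \cap \partial M| = |E_1 \cap \partial M| < m$, so Assumption~\ref{assumption minimal} prevents $T$ from being incompressible in $M(\beta)$. Since $M(\beta)$ is irreducible by Assumption~\ref{assumption irreducible}, a compressible torus in $M(\beta)$ must bound a solid torus on one of its two sides. The two sides of $T$ in $M(\beta)$ are the component $W_1 \subseteq \widehat X^\epsilon$ of $\widehat X^\epsilon \setminus A$ whose boundary in $\widehat X^\epsilon$ is $A \cup E_1$, and the complementary region $W_2 \cup_{E_2} \widehat X^{-\epsilon}$, so $T$ bounds a solid torus $V \subseteq M(\beta)$ equal to one of these.

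The crucial step is to exclude $V = W_2 \cup_{E_2} \widehat X^{-\epsilon}$. In that case, isotope $\widehat F$ slightly off $\partial V = T$ by pushing the sub-annulus $E_1$ into $\mathrm{int}(\widehat X^{-\epsilon})$; this produces a torus $\widehat F' \subset \mathrm{int}(V)$ isotopic to $\widehat F$ and hence incompressible in $V$. An incompressible torus in a solid torus is boundary-parallel, so $\widehat F'$ cuts off a solid torus $V_0$ in $V$ on the side away from $T$. Since $V_0$ differs from $\widehat X^{-\epsilon}$ by a collar neighbourhood of $\widehat F$, the submanifold $\widehat X^{-\epsilon}$ is itself a solid torus; but then $\widehat F = \partial \widehat X^{-\epsilon}$ is compressible in $\widehat X^{-\epsilon}$, contradicting the essentiality of $\widehat F$ in $M(\beta)$. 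Hence $V = W_1$. To see that the winding number of $A$ in $W_1$ is at least $2$: Lemma~\ref{three cases}(2) shows that $E_1$ is essential in $\widehat F$, so $\partial A$ is essential in $\widehat F$ and hence (by incompressibility of $\widehat F$ in $\widehat X^\epsilon$) essential in $\widehat X^\epsilon$, giving winding $\geq 1$; if the winding number were exactly $1$, the annulus $A$ would be parallel in $W_1 \subseteq \widehat X^\epsilon$ to $E_1$, again contradicting Lemma~\ref{three cases}(2).

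For the ``hence'' statement, in case (a) I would apply the main assertion twice, interchanging the roles of $E_1$ and $E_2$, to conclude that both $W_1$ and $W_2$ are solid tori with $A$ of winding number $\geq 2$ in each. In case (b), $|E_1 \cap \partial M| = m$ forces $|E_2 \cap \partial M| = 0 < m$, so the main assertion applied to $E_2$ gives $W_2$ as a solid torus with $A$ of winding number $\geq 2$; the solid-torus conclusion for $W_1$ is part of the hypothesis, and Lemma~\ref{three cases}(2) again yields winding $\geq 2$ there. Once $\widehat X^\epsilon = W_1 \cup_A W_2$ is a union of two solid tori glued along the annulus $A$ of winding number $\geq 2$ in each, a standard construction produces a Seifert fibration of $\widehat X^\epsilon$ over a disk with two cone points of orders equal to these winding numbers, in which $A$ is a union of regular fibres and hence vertical. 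The principal obstacle is ruling out the alternative $V = W_2 \cup \widehat X^{-\epsilon}$; the remainder is a direct combination of the earlier lemmas and the minimality assumption.
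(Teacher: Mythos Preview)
Your proof is correct and follows essentially the same approach as the paper. The paper's argument is more terse: it notes that $T=A\cup E_1$ compresses in $M(\beta)$, is not contained in a $3$-ball, and hence bounds a solid torus ``necessarily contained in $\widehat X^\epsilon$'', then invokes Lemma~\ref{three cases}(2) for the winding number; your push-off argument showing $\widehat X^{-\epsilon}$ would otherwise be a solid torus is a fully explicit justification of exactly that step.
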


\pf If $|E_1 \cap \partial M| < m$, then $A \cup E_1$ is a torus which compresses in $M(\beta)$ but is not contained in a $3$-ball. Hence it bounds a solid torus $V$ which is necessarily contained in $\widehat X^\epsilon$. Lemma \ref{three cases}(2) shows that $A$ has winding number at least $2$ in $V$. It follows that if condition (a) holds, $\widehat X^\epsilon$ admits a Seifert structure with base orbifold a $2$-disk with two cone points. Note that $A$ is vertical in this structure and splits $\widehat X^\epsilon$ into two solid tori. A similar argument yields the same conclusion under condition (b).
\qed

{\bf Proof of part (3) of Proposition \ref{sep-seifert}}. First suppose that $\hbox{genus}(\widehat{P \cap F}) = 1$. Then $P$ is necessarily a twisted $I$-bundle and $\phi = \widehat{P \cap F}$ is connected. Further, each inner boundary component of $P \cap F$ is inessential in $\widehat F$. Thus $\widehat X^\epsilon = Q_P$ is a twisted $I$-bundle over the Klein bottle. Hence part (3)(a) of Proposition \ref{sep-seifert} holds.

Next suppose that $\hbox{genus}(\widehat{P \cap F}) = 0$ and $\phi$ is a component of $P \cap F$. Then $\phi$ has two inner boundary components, $c_1, c_2$ say, which are $\widehat F$-essential. Any other inner boundary component $c$ of $\phi$ is inessential in $\widehat F$ so $Q_P$ is either a twisted $I$-bundle over a M\"{o}bius band or product $I$-bundle over an annulus.

Let $A_1, A_2$ be the vertical annuli in the frontier of $P$, possibly equal, that contain $c_1, c_2$ respectively There are three cases to consider.

\noindent {\bf Case 1}. $P$ is a twisted $I$-bundle.

In this case, $A_1 = A_2$ and $Q_P$ is a twisted $I$-bundle over a M\"{o}bius band. In particular $Q_P$ is a solid torus in which a core of $\widehat \phi$ has winding number $2$. Lemma \ref{seifert} shows that Proposition \ref{sep-seifert}(3)(b)(i) holds.

\noindent {\bf Case 2}. $P$ is a product $I$-bundle and $Q_P$ separates $X^\epsilon$.

Then $A_1 \ne A_2$ where $A_1$ is separating in $X^\epsilon$ and $Q_P$ is a product $I$-bundle over an annulus.
Let $V$ and $W$ be the components of the exterior of $Q_P$ in $\widehat X^\epsilon$ and define $E_1, E_2$ to be the $\widehat F$-essential annuli $V \cap \widehat F,  W \cap \widehat F$. Since $|P \cap \partial F| > 0$, we have $|E_1 \cap \partial F| < m$ and $|E_2 \cap \partial F| < m$. Then Lemma \ref{seifert} implies that both $V$ and $W$ are solid tori and therefore that Proposition \ref{sep-seifert}(3)(b)(ii) holds.

\noindent {\bf Case 3}. $P$ is a product $I$-bundle and $Q_P$ does not separate $X^\epsilon$.

Here $A_1 \ne A_2$ where $A_1$ is non-separating and $X^-$ is a twisted $I$-bundle by Lemma \ref{non-sep}. Also the boundary of the complement of the interior of $Q_P$ in $X^\epsilon$ is a torus which intersects $\partial M$ in fewer than $m$ components but is not contained in any $3$-ball in $M(\beta)$. Thus it bounds a solid torus $V$ in $\widehat X^\epsilon$, from which we can see that Proposition \ref{sep-seifert}(3)(b)(iii) holds.
\qed

{\bf Proof of parts (1) and (2) of Proposition \ref{sep-seifert}}.
If $\dot{\Sigma}_1^\epsilon$ contains a twisted $I$-bundle $P$, then $P$ contains a subbundle homeomorphic to a M\"{o}bius band. Proposition \ref{Mband} then shows that $P \cap F$ contains an $\widehat F$-essential annulus. Thus part (1) implies part (2). We prove part (1) by contradiction.

Suppose that $\dot{\Sigma}_1^\epsilon$ has at least two
components $P_1, P_2$ such that $\widehat{P_i \cap F}$ contains an $\widehat F$-essential annulus for $i = 1, 2$. Let $\phi_i = P_i \cap F$ be the horizontal
boundary of $P_i$ ($i=1, 2$). Clearly, both $\phi_1$ and $\phi_2$ have genus $0$. Since each properly embedded  incompressible annulus in a solid torus is separating, Proposition \ref{sep-seifert}(3), which we proved above, implies that both $Q_{P_1}$ and $Q_{P_2}$ are separating in $\widehat X^\epsilon$ and split it into a union of solid tori.
We have three cases to consider.

\noindent{\bf Case 1}. $\phi_i$  is connected for $i=1,2$.

Then  $Q_{P_i}$ is a twisted $I$-bundle over a M\"{o}bius band whose frontier in $\widehat X^\epsilon$ is an essential annulus $A_i$ in $X^\epsilon$ which is
not parallel to the annulus $\psi_i = Q_{P_i} \cap \widehat F$ in $Q_{P_i}$. Let $E_1, E_2$
be the components of the closure of the complement of $\psi_1\cup
\psi_2$ in $\widehat F$. Then the torus $E_1\cup A_1\cup
E_2\cup A_2$ bounds a solid torus $V$ in $\widehat{X}^\epsilon$ in which
$A_1$ is not parallel to $A_2$. Therefore  $U = V\cup \widehat
P_1$  satisfies the hypotheses of Lemma \ref{incomp}. Since $\widehat F$ is isotopic into
$\overline{M(\beta) \setminus U}$, the latter cannot be a solid torus. Thus $\partial
U = \psi_1\cup E_1\cup A_2\cup E_2$ is incompressible in
$M(\beta)$. But this torus intersects $\partial M$ in fewer than $m$ components, which contradicts Assumption \ref{assumption minimal}.

\noindent{\bf Case 2}. $\phi_1$ is connected but $\phi_2$ is not.

Then  $Q_{P_1}$ is a twisted $I$-bundle over a M\"{o}bius band and
the frontier of $Q_{P_1}$ in $\widehat X^\epsilon$ is an essential annulus $A_1 \subseteq X^\epsilon$ which
is not parallel to the annulus $\psi_1 = Q_{P_1} \cap \widehat F$ in $Q_{P_1}$. Further, $\phi_2$
has two components, $\phi_{21}, \phi_{22}$ say, and $P_2$ is a product
$I$-bundle over $\phi_{21}$. The frontier of $Q_{P_2}$ is a pair of
essential annuli $A_{21}, A_{22} \subseteq X^\epsilon$. We noted above that $Q_{P_2}$ is separating in
$X^\epsilon$, and so the same is true for $A_{21}$ and $A_{22}$.

We may suppose that $A_{21}$ is adjacent to $A_1$. That is, $\partial A_1 \cup \partial
A_{21}$ cobounds the union of two disjoint annuli $E_1, E_2 \subseteq \widehat F$ whose interiors are
disjoint from $\phi_1, \phi_2$. Then the torus $A_1\cup E_1\cup
A_{21}\cup E_2$ bounds a solid torus
 $V$ in $\widehat{X}^\epsilon$ such that
$A_1$ is not parallel to $A_{21}$ in $V$. Therefore $U=V\cup
\widehat P_1$ is a submanifold of $M(\beta)$ satisfying the hypotheses
of Lemma \ref{incomp}. As in case 1, this lemma implies that
$\partial U=\psi_1\cup E_1\cup
A_{21}\cup E_2$ is incompressible in $M(\beta)$. But
this torus intersects $\partial M$ in fewer than $m$ components, contrary to Assumption \ref{assumption minimal}.

\noindent{\bf Case 3}. Neither $\phi_1$ nor $\phi_2$ is connected.

The frontier of $Q_{P_i}$ in $\widehat X^\epsilon$ is a pair of annuli $A_{i1},
A_{i2}$ contained in $X^\epsilon$. We may assume that $\partial A_{12}$ and $\partial A_{21}$ cobound two
annuli $E_1, E_2$ in $\widehat F$ whose interiors are disjoint from
$\phi_1\cup \phi_2$. The torus $A_{12}\cup E_1\cup A_{21}\cup
E_2$ bounds a solid torus
 $V$ in $\widehat{X}^\epsilon$ in which
$A_{12}$ is not parallel to $A_{21}$. Let $E_*$ be the annulus in $\widehat F$ with $\partial E_* = \partial A_{11}$ and whose interior
is disjoint from $\phi_1 \cup \phi_2$. The torus $A_{11}\cup E_*$ bounds
a solid torus $V_*$ in $\widehat X^\epsilon$ in which $A_{11}$ is not
parallel to $E_*$. Therefore $U = V_*\cup Q_{P_1} \cup V$
is a submanifold of $M(\beta)$ satisfying the hypotheses of Lemma
\ref{incomp} and as above, this lemma implies that $\partial U=E_*\cup (Q_{P_1} \cap \widehat F)\cup E_1\cup
E_2\cup A_{21}$ is incompressible in $M(\beta)$. But this is impossible as $|\partial U \cap \partial
M|<m$.
\qed

We can refine Proposition \ref{sep-seifert} somewhat in the absence of tight components of $\breve{\Phi}_1^\epsilon$.

\begin{lemma} \label{t1e=0 implies dot nonempty}
If $\dot{\Sigma}_1^\epsilon = \emptyset$, then $t_1^\epsilon = |\partial S|$.
\end{lemma}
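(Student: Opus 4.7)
The plan is to unwind the definitions: the hypothesis $\dot{\Sigma}_1^\epsilon = \emptyset$ forces $\breve{\Phi}_1^\epsilon$ to consist entirely of boundary collars, each of which is automatically tight.

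First I would observe that, by definition, $\dot{\Sigma}_1^\epsilon$ is the $I$-bundle whose horizontal boundary is $\dot{\Phi}_1^\epsilon$. Hence $\dot{\Sigma}_1^\epsilon = \emptyset$ is equivalent to $\dot{\Phi}_1^\epsilon = \emptyset$; that is, no component of $\Phi_1^\epsilon$ contains an outer boundary component.

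Next I would apply the definition of $\breve{\Phi}_1^\epsilon$ recalled in \S\ref{characteristic subsurfaces}: $\breve{\Phi}_1^\epsilon$ is the neat subsurface obtained from the union of $\dot{\Phi}_1^\epsilon$ with a closed collar neighbourhood of $\partial S \setminus \partial \dot{\Phi}_1^\epsilon$ taken in $S \setminus \dot{\Phi}_1^\epsilon$. Under the present hypothesis $\dot{\Phi}_1^\epsilon = \emptyset$, this reduces to a closed collar neighbourhood of $\partial S$ in $S$. In particular $\breve{\Phi}_1^\epsilon$ is a disjoint union of $|\partial S|$ annuli, each a collar on a distinct boundary component of $S$.

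Finally I would check that every such component is tight. If $A \subset \breve{\Phi}_1^\epsilon$ is a collar on a component $b \subset \partial S$, then $A$ is neat by definition and $A$ is a connected planar subsurface with exactly one inner boundary component (the non-$\partial S$ edge of the collar). Its completion $\widehat{A}$ is obtained by capping off $b$, and is therefore a disk, so $A$ is tight. Consequently each of the $|\partial S|$ components of $\breve{\Phi}_1^\epsilon$ lies in $\mathcal{T}_1^\epsilon$, which yields $t_1^\epsilon = |\partial S|$.

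The argument involves no geometric obstacle; the only care required is in verifying that the definition of $\breve{\Phi}_1^\epsilon$ really does degenerate to a boundary collar when $\dot{\Phi}_1^\epsilon$ is empty, and that a collar annulus on a single component of $\partial S$ satisfies the definition of tightness (since it has no components in the interior to worry about and exactly one inner boundary component).
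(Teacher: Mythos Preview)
Your proposal is correct and follows essentially the same approach as the paper's proof: both argue that $\dot{\Sigma}_1^\epsilon = \emptyset$ forces $\dot{\Phi}_1^\epsilon = \emptyset$, whence $\breve{\Phi}_1^\epsilon$ is a collar on $\partial S$, giving $t_1^\epsilon = |\partial S|$. Your version simply spells out in more detail why each collar annulus is tight.
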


\begin{proof}
If $\dot{\Sigma}_1^\epsilon$ is empty, then so is $\dot{\Phi}_1^\epsilon$ and therefore $\breve{\Phi}_1^\epsilon$ is a collar on $\partial S$, so $t_1^\epsilon = |\partial S|$.
\end{proof}

\begin{prop} \label{order two}
When $F$ is separating and $t_1^\epsilon = 0$, then $\dot{\Sigma}_1^\epsilon$ has a unique component $P$ and either $P = X^\epsilon$ or each component of $\dot{\Phi}_1^\epsilon = P \cap F$ completes to an essential annulus in $\widehat F$. Further, the base orbifold of the Seifert structure on $\widehat X^\epsilon$ described in Proposition \ref{sep-seifert} has

$(1)$ no cone points of order $2$ if $P$ is a product $I$-bundle,

$(2)$ one cone point of order $2$ if $P$ is a twisted $I$-bundle and $\dot{\Phi}_1^\e \ne F$,

$(3)$ two cone points of order $2$ if $P = X^\epsilon$, i.e. $X^\epsilon$ is a twisted $I$-bundle so $\epsilon = -$.
\end{prop}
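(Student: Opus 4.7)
Since $t_1^\epsilon = 0$ and $|\partial S| = m \geq 1$, Lemma \ref{t1e=0 implies dot nonempty} gives $\dot{\Sigma}_1^\epsilon \ne \emptyset$. For each component $\phi$ of $\dot{\Phi}_1^\epsilon$, Corollary \ref{inessential non-tight}(1) forces every inner boundary of $\phi$ to be $\widehat F$-essential, while Corollary \ref{inessential non-tight}(2) forces $\phi$ to be planar unless $\breve{\Phi}_1^\epsilon = F$ (in which case $P = X^\epsilon$). In the planar case, $\widehat{\phi}$ is a connected planar subsurface of the torus $\widehat F$ whose boundary consists of mutually parallel essential curves; since the complement of $\widehat{\phi}$ in $\widehat F$ is a disjoint union of annuli of total Euler characteristic $0$, a short Euler characteristic count shows that $\widehat{\phi}$ has exactly two boundary components, hence is an essential annulus in $\widehat F$. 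The uniqueness of the component $P$ of $\dot{\Sigma}_1^\epsilon$ then follows from Proposition \ref{sep-seifert}(1), and the Seifert fibered structure on $\widehat X^\epsilon$ from Proposition \ref{sep-seifert}(3).

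For conclusion (3), the condition $P = X^\epsilon$ means $X^\epsilon$ itself admits an $I$-bundle structure. Assumption \ref{assumption not (semi) fibre} excludes the product case (which would force $F$ to be boundary-parallel) and forces $\epsilon = -$, so $X^-$ is a twisted $I$-bundle over a punctured Klein bottle and $\widehat X^-$ is a twisted $I$-bundle over the Klein bottle; its standard Seifert structure has base $D^2(2,2)$, giving two cone points of order $2$. For conclusion (2), Proposition \ref{sep-seifert}(3)(b)(i) already guarantees at least one cone point of order $2$ arising from the M\"{o}bius band $0$-section of the twisted $I$-bundle $P$.

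To establish the upper bound in both (2) and (1), I would argue that if both cone points of $\widehat X^\epsilon$ had order $2$, then $\widehat X^\epsilon$ would be Seifert fibered with base $D^2(2,2)$, hence a twisted $I$-bundle over the Klein bottle. Via Assumption \ref{assumption twisted 2}, this structure descends to make $X^\epsilon$ itself a twisted $I$-bundle, whereupon the characteristic $I$-bundle absorbs all of $X^\epsilon$, forcing $P = X^\epsilon$ and contradicting $\dot{\Phi}_1^\epsilon \ne F$. This completes case (2) and shows that case (1) has at most one cone point of order $2$.

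The main obstacle is ruling out the remaining possibility in case (1): a single cone point of order $2$ when $P$ is a product $I$-bundle. My strategy is to extract a M\"{o}bius band from the hypothetical order-$2$ cone point in a complementary solid torus $V$, then push its boundary along Seifert fibers onto $F$ to obtain an embedded M\"{o}bius band $(B, \partial B) \subseteq (X^\epsilon, F)$. By Proposition \ref{Mband}, $\partial B$ is $\widehat F$-essential and, by construction, isotopic in $F$ to a boundary component of the separating essential vertical frontier annulus $A$ of $Q_P$. One then derives a contradiction by either combining the root torus $N(B)$ with $P$ to enlarge $P$ to a twisted $I$-bundle (violating the characteristic $I$-bundle's maximality and the product-type classification of $P$), or, alternatively, by assembling from $N(B)$ and subsurfaces of $F$ a torus in $M(\beta)$ that is incompressible by Lemma \ref{incomp} but intersects $\partial M$ in fewer than $m$ components, contradicting Assumption \ref{assumption minimal}. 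Making one of these constructions rigorous---carefully pushing the M\"{o}bius band into $X^\epsilon$ and verifying that the resulting $I$-bundle or torus has the claimed properties---is the core technical difficulty.
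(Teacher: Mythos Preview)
Your setup---using Lemma \ref{t1e=0 implies dot nonempty}, Corollary \ref{inessential non-tight}, and Proposition \ref{sep-seifert} to get the unique component $P$ and the essential-annulus completions---is exactly what the paper does. Case (3) is also fine.

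The paper's treatment of (1) and (2) is, however, far simpler and more uniform than what you outline. Once $t_1^\epsilon = 0$, every boundary component of $F$ lies in $\dot{\Phi}_1^\epsilon$, so $P$ contains all of $\partial M \cap X^\epsilon$; hence the solid tori complementary to $Q_P$ in $\widehat X^\epsilon$ actually lie in $X^\epsilon$. If any such solid torus $V$ had its frontier annulus with winding number $2$, then $V$ would be a twisted $I$-bundle over a M\"{o}bius band whose $I$-fibres match those of $P$ along the frontier; thus the $I$-bundle structure on $P$ would extend over $V$, contradicting the defining maximality of the characteristic $I$-bundle. So every complementary solid torus contributes a cone point of order $\geq 3$. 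Counting: $Q_P$ contributes zero order-$2$ cone points when $P$ is a product, one when $P$ is twisted, and the whole of $\widehat X^\epsilon$ has two when $P = X^\epsilon$. That is the entire argument; there is no technical difficulty in ``pushing the M\"{o}bius band'' because the solid torus already sits in $X^\epsilon$ with its frontier annulus already in $F$.

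Your alternative route for the upper bound---arguing that base orbifold $D^2(2,2)$ makes $\widehat X^\epsilon$ a twisted $I$-bundle over the Klein bottle, and then invoking Assumption \ref{assumption twisted 2} to conclude $X^\epsilon$ is a twisted $I$-bundle---has a genuine gap. Assumption \ref{assumption twisted 2} is a hypothesis about how $\widehat F$ was \emph{chosen}, not a structure-transfer lemma: it does not assert that whenever $\widehat X^\epsilon$ happens to be a twisted $I$-bundle over the Klein bottle, the arcs $K_\beta \cap \widehat X^\epsilon$ are isotopic to $I$-fibres (which is what you would need to conclude that $X^\epsilon$ itself is an $I$-bundle). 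The paper's argument avoids this issue entirely by never passing through $\widehat X^\epsilon \cong N(K)$.
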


\pf Since $t_1^\epsilon = 0$, $\dot{\Sigma}_1^\epsilon$ has at least one component (Lemma \ref{t1e=0 implies dot nonempty}) and each inner boundary component of $\dot{\Phi}_1^\epsilon$ is $\widehat F$-essential (Corollary \ref{inessential non-tight}). Proposition \ref{sep-seifert} then shows that $\dot{\Sigma}_1^\epsilon$ has exactly one component. Call it $P$. Proposition \ref{sep-seifert} also shows that either $P = X^\epsilon$ or $X^\epsilon \setminus P$ is a union of solid tori. Since the $I$-bundle structure on $P$ does not extend over these solid tori, the result follows.
\qed

\begin{cor} \label{both not (2,2)}
If $F$ is separating and $t_1^+ = 0$, the base orbifold of the Seifert structure on $\widehat X^+$ described in Proposition \ref{sep-seifert} is $D^2(a, b)$ where $(a, b) \ne (2,2)$. Further, $M(\beta)$ is not a union of two twisted $I$-bundles over the Klein bottle.
\end{cor}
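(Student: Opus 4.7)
The plan is to prove the two claims in sequence, using Proposition \ref{order two} directly for the first and then combining it with Assumption \ref{assumption twisted} to derive a contradiction for the second.

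For the first claim I would invoke Proposition \ref{order two} with $\epsilon = +$. Since Assumption \ref{assumption not (semi) fibre} forbids $X^+$ from being an $I$-bundle over a surface, the unique component $P$ of $\dot{\Sigma}_1^+$ produced by that proposition satisfies $P \ne X^+$, so case $(3)$ of Proposition \ref{order two} is ruled out. Hence either case $(1)$ holds, in which $P$ is a product $I$-bundle and the base of the Seifert structure on $\widehat X^+$ has no cone points of order $2$, or case $(2)$ holds, in which $P$ is a twisted $I$-bundle and the base has exactly one cone point of order $2$. Combined with the descriptions in Proposition \ref{sep-seifert}$(3)(b)(i)$--$(ii)$ the base orbifold is $D^2(a,b)$, and since $(2,2)$ would require two cone points of order $2$, we conclude $(a,b)\ne(2,2)$.

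For the second claim I would argue by contradiction. Suppose $M(\beta) = W_1 \cup_T W_2$, where each $W_i$ is a twisted $I$-bundle over the Klein bottle and $T = \partial W_1 = \partial W_2$. Since each $\partial W_i$ is incompressible in $W_i$, the torus $T$ is essential in $M(\beta)$, and $M(\beta)$ is then a closed Seifert fibered flat $3$-manifold whose base orbifold is $S^2(2,2,2,2)$, obtained by gluing the two $D^2(2,2)$ bases of the twisted $I$-bundles along their boundary circles. A standard analysis of such flat manifolds shows that every essential torus is isotopic to one that is vertical with respect to this Seifert structure, and every such vertical torus bounds a twisted $I$-bundle over the Klein bottle on each side. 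In particular the essential torus $\widehat F$ bounds such an $I$-bundle in $M(\beta)$, so by Assumption \ref{assumption twisted} we may take $\widehat X^+ \cong K \widetilde{\times} I$. But the only Seifert fibering of $K \widetilde{\times} I$ with disk base is over $D^2(2,2)$, which contradicts the first claim.

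The main obstacle will be pinning down that the $K \widetilde{\times} I$ side of $\widehat F$ is $\widehat X^+$ rather than $\widehat X^-$. This is resolved by the observation above that in the hypothetical decomposition $M(\beta) = W_1 \cup_T W_2$ both sides of every essential torus are $K \widetilde{\times} I$, so the conclusion $\widehat X^+ \cong K \widetilde{\times} I$ holds unconditionally. A secondary technical point concerns Proposition \ref{sep-seifert}$(3)(b)(iii)$, where the base of $\widehat X^+$ could a priori be a M\"obius band rather than $D^2(a,b)$; one rules this out by noting that the M\"obius-band case would force $\widehat X^+$ to be a twisted $I$-bundle over the Klein bottle, which via Assumption \ref{assumption twisted} and the structural analysis of $M(\beta)$ above would lead to precisely the contradiction establishing the second claim.
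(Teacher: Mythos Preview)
Your argument for the first assertion is fine and matches the paper's: Proposition~\ref{order two}(3) is ruled out since $X^+$ is not an $I$-bundle, so the base orbifold has at most one cone point of order~$2$.

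Your argument for the second assertion, however, has a real gap. You assert that a union $W_1\cup_T W_2$ of two twisted $I$-bundles over the Klein bottle is automatically a Seifert fibred flat $3$-manifold with base orbifold $S^2(2,2,2,2)$. This is false in general: each $W_i$ carries \emph{two} Seifert structures (base $D^2(2,2)$ and base a M\"{o}bius band), and the gluing homeomorphism of $T$ need not match any pair of fibre slopes. When it does not, $M(\beta)$ is a Sol-manifold, not a Seifert manifold at all; and even when fibres do match, the resulting base orbifold could equally well be $P^2(2,2)$ or a Klein bottle rather than $S^2(2,2,2,2)$. Everything downstream in your argument (that essential tori are vertical, that both sides of $\widehat F$ are $K\,\widetilde{\times}\,I$) rests on this unjustified claim.

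The paper closes this gap by first using the \emph{first} assertion: since $\widehat X^+$ is not a twisted $I$-bundle over the Klein bottle, $T$ cannot be isotopic to $\widehat F$. But $T$ splits $M(\beta)$ into two atoroidal Seifert pieces, so if $M(\beta)$ were not itself Seifert, $T$ would be the unique JSJ torus and every essential torus in $M(\beta)$ would be isotopic to $T$---contradiction. Hence $M(\beta)$ \emph{is} Seifert. One then argues that $\widehat F$ cannot be horizontal (this would force $\widehat X^\pm$ to be twisted $I$-bundles over the Klein bottle, contradicting the first assertion via Assumption~\ref{assumption not (semi) fibre}), so $\widehat F$ is vertical; an analysis of the Euclidean base orbifold then shows $\widehat F$ splits $M(\beta)$ into two twisted $I$-bundles over the Klein bottle, giving the desired contradiction. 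The key step you are missing is the JSJ argument that forces $M(\beta)$ to be Seifert.
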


\pf The first assertion follows from part (3) of the previous proposition. Suppose that $M(\beta)$ is a union of two twisted $I$-bundles over the Klein bottle along their common boundary $T$. Then $T$ is not isotopic to $\widehat F$ by the first assertion. Hence as $T$ splits $M(\beta)$ into two atoroidal Seifert manifolds, $M(\beta)$ must be Seifert. If $\widehat F$ is horizontal, it splits  $M(\beta)$ into two twisted $I$-bundles, necessarily over the Klein bottle, which contradicts Assumption \ref{assumption not (semi) fibre}. Thus it is vertical and $T$ is horizontal. It follows that the base orbifold $\mathcal{B}$ of $M(\beta)$ is Euclidean. Further, $\mathcal{B}$ is non-orientable as $T$ separates. Thus $\mathcal{B}$ is either a Klein bottle or $P^2(2,2)$. In either case $\widehat F$ splits $M(\beta)$ into the union of two twisted $I$-bundles over the Klein bottle, contrary to the first assertion of the corollary. This completes the proof.
\qed

\subsection{$S$ is not connected} In this subsection we prove Proposition \ref{nonsep-seifert}. It will follow from the four lemmas below.

\begin{lemma}\label{product-Sigma+}
When $F$ is non-separating, $\dot{\Sigma}_1^+$ is a $($possibly empty$)$ product $I$-bundle.
\end{lemma}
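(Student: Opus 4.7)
The plan is to argue by contradiction. Suppose some component $P$ of $\dot{\Sigma}_1^+$ is a twisted $I$-bundle over a connected non-orientable surface $B_P$. Then the horizontal boundary $\partial_h P = P \cap S$ is the orientation double cover of $B_P$, hence a connected orientable surface. Since $F$ is non-separating, $S = F_1 \cup F_2$ is disconnected, and so $\partial_h P$ must be contained in a single component of $S$; without loss of generality $\partial_h P \subset F_1$.

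Because $P$ lies in $\dot{\Sigma}_1^+$, there is an outer boundary component $c$ of $\partial_h P$, parallel in $F_1$ to a component of $\partial F_1$. By Proposition \ref{BdryToBdry}, $\tau_+(c) \subset F_1$ is also outer. I would then split into two cases depending on whether $\tau_+(c) = c$ or $\tau_+(c) \neq c$. These correspond, via the $I$-bundle structure on $\pi: P \to B_P$, to the restriction of the bundle over the underlying circle $c_0 \subset \partial B_P$ being non-trivial (so that $\pi^{-1}(c_0)$ is a M\"obius band $B' \subset X^+$ with $\partial B' = c$) or trivial (so that $c$ and $\tau_+(c)$ cobound a vertical essential annulus $\alpha = \pi^{-1}(c_0) \subset P$, with $\partial \alpha \subset F_1$).

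The first case is eliminated by Proposition \ref{Mband}: since $c$ is outer in $F_1$ it bounds a disk in $\widehat F_1 \subset \widehat S$ and so is not $\widehat S$-essential, contradicting $c = \partial B'$. In the second case, I would apply Proposition \ref{boundary-parallel} to $\alpha$ (using that $c$ is outer) to conclude that $\alpha$ is isotopic in $(X^+, S)$ to a component of $\partial M \cap X^+$. The crux of the argument, and what I expect to be the main step, is then to invoke the geometric fact that when $F$ is non-separating each annular component of $\partial M \cap X^+$ has exactly one boundary circle in $F_1$ and the other in $F_2$. This follows because $F$ is two-sided in $M$, so the $2m$ parallel circles of $\partial F_1 \cup \partial F_2$ alternate between $\partial F_1$- and $\partial F_2$-circles as one traverses $\partial M$ transverse to the slope $\beta$. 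This alternation is incompatible with $\partial \alpha \subset F_1$, and the resulting contradiction shows that $\dot{\Sigma}_1^+$ contains no twisted $I$-bundle component, as desired.
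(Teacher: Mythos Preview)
Your argument has a genuine gap in the final step of Case 2. The assertion that ``each annular component of $\partial M \cap X^+$ has exactly one boundary circle in $F_1$ and the other in $F_2$'' is equivalent to the claim that all boundary components $c_1,\ldots,c_m$ of $F$ are coherently oriented on $\partial M$. This is \emph{not} a consequence of $F$ being two-sided. What two-sidedness (together with the fact that $S$ separates) gives you is that the $2m$ annuli of $\partial M \setminus \partial S$ alternate between $X^+$ and $X^-$; it does not force the $2m$ circles to alternate between $\partial F_1$ and $\partial F_2$. Concretely, the $X^-$-annulus $c_i\times I$ has its $F_1$-end and $F_2$-end determined by the co-orientation of $F$ at $c_i$, so if two adjacent $c_i, c_{i+1}$ carry opposite orientations, the intervening $X^+$-annulus has both boundary circles in the same $F_j$. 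Nothing rules this out a priori; indeed, the paper's own Lemma \ref{even} only deduces coherence of the orientations \emph{after} Lemma \ref{product-Sigma+} (and Lemma \ref{4}) are in hand, under the extra hypothesis $t_1^+ \le 2$. So your argument is circular at exactly the point you flag as ``the crux.''

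The paper's proof avoids this by working \emph{inside} the twisted $I$-bundle rather than at its outer boundary. Since the base $B_P$ is non-orientable, $P$ contains a M\"obius band $B$ as a sub-$I$-bundle, and Proposition \ref{Mband} forces $\partial B$ to be $\widehat S$-essential. One then extracts a $\tau_+$-invariant twice-punctured annulus $\phi_0\subset\phi\subset F_1$ with $\widehat\phi_0$ essential in $\widehat F_1$; the two $\widehat S$-essential inner boundary circles $c_1,c_2=\tau_+(c_1)$ cobound an essential annulus $A\subset X^+$, and $A\cup(\overline{\widehat F_1\setminus\widehat\phi_0})$ is a non-separating torus in $M(\beta)$ meeting $\partial M$ in fewer than $m$ components, contradicting Assumption \ref{assumption minimal} and irreducibility. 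The key difference is that the paper leverages the $\widehat S$-essential curve guaranteed by the interior M\"obius band, rather than the outer boundary curves (which, being $\widehat S$-inessential, only give $\partial$-parallel information via Proposition \ref{boundary-parallel}).
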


\pf Suppose that $\dot{\Phi}_1^+$ has a $\tau_+$-invariant component, $\phi$ say. Then there is a M\"{o}bius band $(B, \partial B) \subseteq (X^+, \phi)$. According to Proposition \ref{Mband}, $\partial B$ is essential in $\widehat S$. Our hypotheses imply that $\phi/\tau_+$ contains a once-punctured M\"{o}bius band. Its inverse image in $\widehat S$ is a $\tau_+$-invariant twice-punctured annulus $\phi_0 \subseteq \phi$ such that $\widehat \phi_0$ is essential in $\widehat S$. Without loss of generality we can suppose that $\widehat \phi_0 \subseteq \widehat F_1$.

Now $\phi_0$ has at least two outer boundary components and two inner ones. We denote the latter by
$c_1, c_2$. By construction $c_2 = \tau_+(c_1)$ and $c_1$ and $c_2$ cobound
an essential annulus $A$ in $(X^+, F_1)$. Note that $E = \overline{\widehat F_1 \setminus \widehat \phi_0}$ is an annulus and $A \cup E$ a non-separating torus in $M(\beta)$ which intersects $\partial M$ in fewer than $m$ components. Hence it is compressible. But then $M(\beta)$ contains a non-separating $2$-sphere, which is impossible by Assumption \ref{assumption irreducible}. Thus there is no $\tau_+$-invariant component of $\dot{\Phi}_1^+$.
\qed

\begin{lemma} \label{one-to-other}
Suppose $F$ is non-separating. Let $P$ be a component of
$\dot{\Sigma}_1^+$ and let $\phi_1,\phi_2 \subseteq S$ be the two horizontal
boundary components of $P$. If $\phi_1$ contains an $\widehat S$-essential annulus, then $\phi_1$ and $\phi_2$ are contained in different components of $S$.
\end{lemma}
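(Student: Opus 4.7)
The plan is to argue by contradiction, supposing $\phi_1, \phi_2 \subseteq F_1$ for a single component $F_1$ of $S$. Extract from the given $\widehat S$-essential annulus in $\phi_1$ a simple closed curve $c \subseteq \phi_1$ which is essential in $\widehat F_1$, and set $c' = \tau_+(c) \subseteq \phi_2 \subseteq F_1$. Since $P \cong R \times I$ is a product $I$-bundle by Lemma \ref{product-Sigma+}, the vertical annulus $A = c \times I \subseteq P$ is essential in $(X^+, S)$ with $\partial A = c \cup c' \subseteq F_1$. Lemma \ref{non-sep}(ii) forces $A$ to separate $X^+$, as $\partial A$ lies in a single component of $S$; write $X^+ \setminus A = Y_1 \sqcup Y_2$ with $F_2 \subseteq Y_1$. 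The curves $c, c'$ are disjoint simple closed curves on the torus $\widehat F_1$, both essential (the essentiality of $c'$ follows from the homotopy $c \simeq c'$ through $A$ in $M(\beta)$ and the $\pi_1$-injectivity of $\widehat F \hookrightarrow M(\beta)$), so they cobound two annuli $E_1, E_2 \subseteq \widehat F_1$ with $|E_1 \cap \partial M| + |E_2 \cap \partial M| = m$.

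Form the tori $T_i = A \cup E_i \subseteq M(\beta)$. A direct $\mathbb{Z}/2$-homology calculation gives $[T_1] + [T_2] = [\widehat F] \neq 0$, the nonvanishing coming from $\widehat F$ being non-separating; in particular some $T_i$ is non-separating. If this $T_i$ additionally satisfies $|T_i \cap \partial M| < m$, the proof finishes: if $T_i$ is incompressible in $M(\beta)$, it contradicts the minimality of $m$ (Assumption \ref{assumption minimal}); if compressible, compression preserves the $\mathbb{Z}/2$-homology class and produces a non-separating $2$-sphere in $M(\beta)$, violating Assumption \ref{assumption irreducible}. Since $|E_1 \cap \partial M| + |E_2 \cap \partial M| = m$, the only unresolved configuration is that the non-separating torus meets $\partial M$ in all $m$ components while the separating one is disjoint from $\partial M$.

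The main obstacle is to handle this remaining configuration. Without loss of generality $T_1$ is separating with $|E_1 \cap \partial M| = 0$, so $T_1 \subseteq M$. Since $A$ is vertical in the untwisted $I$-bundle $P$, the two sides of $A$ meet $F_1$ in the two components of $F_1 \setminus \partial A$; after possibly swapping the labels of $E_1, E_2$ we may take $Y_2 \cap F_1 = E_1$. The vanishing $|E_1 \cap \partial M| = 0$ combined with $F_2 \subseteq Y_1$ forces $Y_2 \cap (\partial F_1 \cup \partial F_2) = \emptyset$, and since each annular component of $\partial M \cap X^+$ has its boundary circles in $\partial F_1 \cup \partial F_2$, all such components lie in $Y_1$; hence $\partial Y_2 = A \cup E_1 = T_1$. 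Since $M$ is hyperbolic and atoroidal, the closed torus $T_1$ in the interior of $M$ must be either compressible in $M$ or boundary-parallel to $\partial M$. A compressing disc inside $Y_2$ would make the irreducible $Y_2$ a solid torus, in which case $A$ would be isotopic to $E_1 \subseteq \partial X^+$ through $Y_2$, contradicting the essentiality of $A$. A compressing disc in $M \setminus Y_2$ would make $M \setminus Y_2$ a solid torus containing the essential surface $F_2$; but $F_2$ has genus $1$ with $m$ boundary components and so is neither a disc nor an annulus. And if $T_1$ is boundary-parallel, then $M \setminus Y_2 \cong T^2 \times I$, whose essential surfaces are annuli, once more incompatible with $F_2$. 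Every case yields a contradiction, so $\phi_1$ and $\phi_2$ must lie in different components of $S$.
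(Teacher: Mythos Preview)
Your approach diverges from the paper's and runs into a genuine gap in the ``bad case'' analysis.

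The paper's proof does not work with a single vertical annulus over an arbitrary $\widehat S$-essential curve. Instead it fixes a neat subsurface $\phi_{1,0} \subseteq \phi_1$ with $\widehat\phi_{1,0}$ an $\widehat S$-essential annulus and, crucially, $|\phi_{1,0}\cap\partial M|>0$. The frontier of the sub-$I$-bundle $\phi_{1,0}\times I \subseteq P$ then consists of two essential annuli $A_1,A_2$, and the two ``gap'' annuli $E_1,E_2 \subseteq \widehat F_1$ (with interiors disjoint from $\phi_{1,0}\cup\phi_{2,0}$) automatically satisfy $|E_i\cap\partial M|<m$. Locating $\widehat F_2$ on one side yields a non-separating torus $A_i\cup E_i$ in $M(\beta)$ meeting $\partial M$ in fewer than $m$ components, contradicting Assumption~\ref{assumption minimal}. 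The condition $|\phi_{1,0}\cap\partial M|>0$ is exactly what prevents the configuration you call the ``bad case''.

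Your single-annulus route forces you to treat the possibility $|E_1\cap\partial M|=0$ directly, and here the argument breaks. In the sub-case where a compressing disk for $T_1$ lies inside $Y_2$, you conclude that the resulting solid torus $Y_2$ exhibits $A$ as parallel to $E_1$. But this only follows if $A$ has winding number $1$ in $Y_2$. Nothing you have said rules out winding number $n\geq 2$, in which case $Y_2$ is a root torus in $(X^+,S)$ with $Y_2\cap S=E_1$; then $A$ is \emph{not} parallel to $E_1$, $A$ remains essential, and no contradiction with essentiality arises. Proposition~\ref{Mband} does not help here since $E_1$ is already an $\widehat S$-essential annulus. This scenario is consistent with the characteristic submanifold picture (a root torus glued to $P$ along the frontier annulus $A$), so the gap is not vacuous.

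There are also two smaller issues. First, in your outside-compression sub-case, $\overline{M\setminus Y_2}$ has boundary $T_1\cup\partial M$, hence cannot be a solid torus; the correct conclusion is that compression places $Y_2$ inside a $3$-ball, contradicting the essentiality of $c$ in the incompressible surface $F_1$. Second, your ``swap the labels of $E_1,E_2$'' step is illegitimate once $E_1$ has been fixed as the annulus with $|E_1\cap\partial M|=0$; the correct justification is that $Y_2\cap F_1=E_2\cap F_1$ would force $T_1$ to be non-separating in $M(\beta)$, contrary to hypothesis.
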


\pf Suppose that both $\phi_1$ and $\phi_2$ are contained
in $F_1$, say. Choose a neat subsurface $\phi_{1,0}$ of $\phi_1$ such that $\widehat \phi_{1,0}$ is an $\widehat S$-essential annulus and $|\phi_{1,0} \cap \partial M| > 0$.
Set $\phi_{2,0} = \tau_+(\phi_{1,0})$.
Then $\widehat \phi_{1,0}$ and $\widehat\phi_{2,0}$ are
disjoint essential annuli  in $\widehat F_1$. The frontier
of $P$ in $X^+$  is a set of two essential annuli in $(X^+,F_1)$,
which we denote by $A_1$ and $A_2$. According to Lemma \ref{non-sep},
each $A_i$ is separating in $X^+$. For $i=1,2$, $\partial A_i$
bounds an annulus $E_i$ in $\widehat F_1$ whose interior is disjoint
from $\phi_{1,0} \cup \phi_{2,0}$.

The annulus $A_1$ splits $\widehat X^+$ into two components, which
we denote by $W_1$ and $W_2$. We may suppose that the torus $A_1\cup
E_1$ is a boundary component of $W_1$. Now $\widehat F_2 \subseteq \partial W_i$
for some $i$, and in this case $A_i \cup E_i$ is a non-separating
torus in $M(\beta)$ whose intersection with $\partial M$ has fewer than $m$ components, contrary to
Assumption \ref{assumption minimal}. Thus the conclusion of the lemma holds.
\qed

\begin{lemma} \label{4}
If there is a component $P$ of $\dot{\Sigma}_1^+$ and $j \in \{1, 2\}$ such that $|P \cap F_j| = 2$, then $t_1^+ \geq 4$.
\end{lemma}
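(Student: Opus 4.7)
The plan is to show that the hypothesis forces $\phi_1$ (one horizontal sheet of the product bundle $P$) to be a planar, non-tight component of $\breve\Phi_1^+$ with $\widehat S$-inessential inner boundary, after which Corollary \ref{inessential non-tight}(1) yields $t_1^+ \geq 4$.

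By Lemma \ref{product-Sigma+}, $P$ is a product $I$-bundle, say $P \cong Q \times I$, with horizontal boundary $\phi_1 \sqcup \phi_2$ where $\phi_2 = \tau_+(\phi_1)$. The hypothesis $|P \cap F_j| = 2$ places both $\phi_1$ and $\phi_2$ in $F_j$. Lemma \ref{one-to-other} applied to $\phi_1$ then gives that $\phi_1$ contains no $\widehat S$-essential annulus. A tubular neighbourhood in $\phi_1$ of any essential simple closed curve would be such an annulus if the curve were $\widehat S$-essential, so every essential simple closed curve in $\phi_1$ is $\widehat S$-inessential, bounding a disk in $\widehat F_j = T^2$; in particular all inner boundary components of $\phi_1$ are $\widehat S$-inessential. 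Since every non-planar subsurface of $T^2$ contains a non-separating simple closed curve essential in $T^2$, this also forces $\phi_1$ to be planar.

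The hard step is showing $\phi_1$ is not tight. Suppose for contradiction it is. Then $\phi_1$ is a planar surface with one inner boundary and, since $\phi_1$ is a component of $\dot\Phi_1^+$, at least one outer boundary $b \subset \partial F_j$. The vertical annulus $b \times I$ lies in the frontier of the characteristic $I$-bundle and is thus essential in $(X^+, S)$ with both boundary components on $\partial S$. Proposition \ref{boundary-parallel} then makes it isotopic in $(X^+, S)$ to a component of $\partial M \cap X^+$. In the non-separating setting, each component of $\partial M \cap X^+$ is an annulus on $\partial M$ joining a component of $\partial F_1$ to a component of $\partial F_2$ (the $X^-$-side of $\partial M$ carries the ``parallel'' annuli of the collar $F \times I$, and the $X^+$-side consists of the remaining $m$ annuli, each linking the two $\partial F_k$). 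Tracking the isotopy in $S$ shows $\tau_+(b)$ is parallel in $S$ to a component of $\partial F_{3-j}$. But $\tau_+(b)$ is an outer boundary of $\phi_2 \subseteq F_j$, hence parallel to a component of $\partial F_j$, a contradiction.

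Consequently $\phi_1$ is non-tight. Being planar, it cannot have zero inner boundary components ($\widehat\phi_1$ would then be a closed planar subsurface of $T^2$, hence empty) and having exactly one would make it tight, so it has at least two inner boundary components, all $\widehat S$-inessential. Applying Corollary \ref{inessential non-tight}(1) gives $t_1^+ \geq 4$, completing the proof.
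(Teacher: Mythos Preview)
Your argument has a genuine gap in the step where you rule out $\phi_1$ being tight. You assert that ``each component of $\partial M \cap X^+$ is an annulus on $\partial M$ joining a component of $\partial F_1$ to a component of $\partial F_2$,'' but this is false in general, and in fact the lemma's own hypothesis furnishes a counterexample. By the neatness convention for $\Sigma_1^+$, if $b$ is an outer boundary component of $\phi_1$, then the component of $\partial M \cap X^+$ containing $b$ is already absorbed into $P$ as an $I$-fibred annulus; its other boundary is therefore $\tau_+(b)$, which lies in $\phi_2 = \tau_+(\phi_1) \subset F_j$. Thus this component of $\partial M \cap X^+$ has \emph{both} boundary circles in $F_j$, directly contradicting your claim. (Whether the annuli of $\partial M \cap X^+$ connect $F_1$ to $F_2$ depends on the relative orientations of the components of $\partial F$ on $\partial M$; compare the hypothesis of Lemma~\ref{even}.) Consequently your proof that $\phi_1$ is not tight collapses, and indeed there is no reason $\phi_1$ cannot be tight.

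The paper does not try to exclude this possibility. It observes (via Lemma~\ref{one-to-other} and Corollary~\ref{tight-in-tight}) that each of the two disjoint disks in $\widehat F_j$ containing $\phi_1$ and $\phi_2$ must contain a tight component of $\breve\Phi_1^+$, giving at least two tight components in $F_j$; your Corollary~\ref{inessential non-tight} argument is essentially the non-tight branch of this. The remaining two tight components are found in $F_{3-j}$ by a counting argument: since every component of $\dot\Sigma_1^+$ is a product (Lemma~\ref{product-Sigma+}), the ``mixed'' components (those with one sheet in each $F_k$) contribute equally many outer boundary components to $F_1$ and $F_2$, so the non-mixed contribution is the same on both sides. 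This produces either a component $P'$ with $|P' \cap F_{3-j}| = 2$ (and one repeats the argument) or enough collar-annulus tight components in $F_{3-j}$.
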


\pf Without loss of generality, we can suppose that $j = 1$. Lemma \ref{one-to-other} implies that no component $\phi$ of $P \cap F_1$ contains an $\widehat S$-essential annulus. Thus there is a disk in $\widehat F_1$ containing $\phi$ and this disk must contain a tight component of $\breve{\Phi}_1^\epsilon$. The same is true for the other component of $P \cap F_1$, so the number of tight components of $\breve{\Phi}_1^+$ contained in $F_1$ is at least $2$. To see that the same is true for $F_2$, it suffices to show that there is a component $P'$ of $\dot{\Sigma}_1^+$ such that $|P' \cap F_2| = 2$. But it is clear that such a component exists since Lemma \ref{product-Sigma+} implies that the number of boundary components of $F_1$ contained in a component of $\dot{\Sigma}_1^+$ which intersects both $F_1$ and $F_2$ equals the number of such boundary components of $F_2$.
\qed

\begin{lemma}\label{one-comp}
Suppose $F$ is non-separating and $t_1^+ = 0$. Then there is a unique component $P$ of $\dot{\Sigma}_1^+$ such that $\widehat{P \cap F_1}$ contains an $\widehat F_1$-essential annulus. Further, $\widehat X^+$ admits a Seifert structure in which $\widehat{\dot{\Phi}_1^+}$ is vertical and whose base orbifold is an annulus with exactly one cone point.
\end{lemma}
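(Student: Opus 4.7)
The plan has three parts: derive the structure of each component of $\dot\Sigma_1^+$ from $t_1^+=0$, prove uniqueness of such a component, and extract the Seifert structure on $\widehat X^+$.

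First, combining $t_1^+=0$ with Corollary~\ref{inessential non-tight}(1) shows that every inner boundary of every component of $\dot\Phi_1^+$ is $\widehat S$-essential. By Lemma~\ref{t1e=0 implies dot nonempty}, $\dot\Sigma_1^+\neq\emptyset$; by Lemma~\ref{product-Sigma+}, each component $P$ is a product $I$-bundle; by Lemma~\ref{one-to-other}, its two horizontal boundaries lie in distinct $F_j$'s; and by Lemma~\ref{4}, $|P\cap F_j|=1$ for $j=1,2$. Since the inner boundaries of $\phi = P\cap F_j$ are parallel essential circles in the torus $\widehat F_j$, the connected completion $\widehat\phi$ is either all of $\widehat F_j$ (excluded by Assumption~\ref{assumption not (semi) fibre}, which would otherwise force $P=X^+$ to be a product $I$-bundle) or an essential annulus in $\widehat F_j$. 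In particular $\widehat P=\widehat\phi\times I$ is a solid torus admitting a Seifert fibering by the core circles of the annular base $\widehat\phi$, and every boundary annulus of $\widehat P$ is vertical in this fibering.

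Next I would prove uniqueness by contradiction. Assume two distinct components $P_1,P_2$ of $\dot\Sigma_1^+$. Then each $\widehat F_j$ contains two disjoint essential annuli $\widehat{P_1\cap F_j},\widehat{P_2\cap F_j}$, whose complement in $\widehat F_j$ consists of two annuli. Working with the four frontier annuli of $\widehat P_1,\widehat P_2$ together with suitable complementary annuli in $\widehat F_1$ and $\widehat F_2$, I would assemble a submanifold $U$ of $M(\beta)$ Seifert fibered over a disk with two cone points (obtained by joining $\widehat P_1,\widehat P_2$ through an intermediate solid-torus region bounded by complementary annuli plus two frontier annuli), containing a non-null-homotopic loop and with $|\partial U\cap\partial M|<m$. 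Lemma~\ref{incomp} then gives a contradiction: $\partial U$ cannot be incompressible (by the minimality in Assumption~\ref{assumption minimal}), and the alternative that $M(\beta)$ is a torus bundle over $S^1$ with Seifert structure of type $S^2(a,b,c)$, $1/a+1/b+1/c=1$, can be excluded using Assumptions~\ref{assumption irreducible} and~\ref{assumption not (semi) fibre}. This parallels the three-case analysis in the proof of Proposition~\ref{sep-seifert}(1)/(2), now adapted to disconnected $S=F_1\cup F_2$.

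Finally, having uniqueness, let $P$ be the sole component and set $W=\overline{\widehat X^+\setminus\widehat P}$. The boundary of $W$ is a single torus $T=E_1\cup A_1\cup E_2\cup A_2$, where $A_1,A_2$ are the frontier annuli of $\widehat P$ and $E_j\subseteq\widehat F_j$ is the essential annulus complementary to $\widehat{P\cap F_j}$. A similar application of Lemma~\ref{incomp} shows that $T$ compresses in $M(\beta)$, so by irreducibility $W$ is a solid torus. The Seifert fibering of $\widehat P$ is vertical on $A_1,A_2$; its fibre slope on $\partial W$ cannot be the meridian of $W$, for otherwise $\widehat X^+\cong T^2\times I$, making $X^+$ a product $I$-bundle and contradicting Assumption~\ref{assumption not (semi) fibre}. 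Hence the fibering extends to $\widehat X^+$, and the core of $W$ is an exceptional fibre of multiplicity at least $2$ (again, multiplicity one would yield $\widehat X^+\cong T^2\times I$). The base orbifold is obtained by gluing the disk bases of $\widehat P$ and $W$ along the two arcs corresponding to $A_1,A_2$, producing an annulus with one cone point; its two boundary circles come from the vertical tori $\widehat F_1,\widehat F_2$. The main obstacle is the combinatorial case analysis in the uniqueness step: the cyclic arrangement of inner boundary circles in $\widehat F_1$ relative to $\widehat F_2$, together with the matching imposed by the product $I$-bundle structure of each $P_i$, generates several subcases analogous to Cases~1--3 in the proof of Proposition~\ref{sep-seifert}(1)/(2), each requiring a careful choice of which frontier and complementary annuli to combine into the auxiliary Seifert-fibered region $U$.
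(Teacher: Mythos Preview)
Your overall architecture matches the paper's: first show each component of $\dot\Sigma_1^+$ is a product $I$-bundle with $\widehat{P\cap F_j}$ an essential annulus, then prove uniqueness by building an auxiliary Seifert piece and invoking Lemma~\ref{incomp}, then extract the Seifert structure on $\widehat X^+$ from the complementary solid torus. Part~1 is fine and agrees with the paper.

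The uniqueness step, however, has a real gap. You assert $|\partial U\cap\partial M|<m$, but this is not automatic. In the paper's construction one sets $d_i=|\phi_{1i}\cap\partial M|$, notes $d_1+\cdots+d_k=m$ (since $t_1^+=0$ forces $\partial S\subset\dot\Phi_1^+$), and chooses $i$ with $2d_i\le m$; the resulting incompressible torus then satisfies $|\partial U_i\cap\partial M|=2d_i\le m$, not $<m$. When $2d_i=m$ the minimality Assumption~\ref{assumption minimal} alone gives no contradiction. The paper closes this case with Assumption~\ref{assumption sep}: the torus $\partial U_i$ is \emph{separating} with $m$ boundary intersections, so $\widehat F$ should have been chosen separating, contradicting the hypothesis that $F$ is non-separating. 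You need this step; without it the argument does not conclude.

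Two smaller points. Your way of excluding option~(ii) of Lemma~\ref{incomp} (torus bundle) via Assumptions~\ref{assumption irreducible} and~\ref{assumption not (semi) fibre} is not right: Assumption~\ref{assumption not (semi) fibre} concerns $F$ in $M$, not $\widehat F$ in $M(\beta)$, and Corollary~\ref{not fibred} is proved \emph{after} this lemma. The paper's exclusion is much simpler: $\widehat S$ is isotopic into $\overline{M(\beta)\setminus U}$, so that complement cannot be a solid torus. Also, in Part~3 you invoke Lemma~\ref{incomp} to show $T=\partial W$ compresses, but since $t_1^+=0$ implies $\partial S\subset\dot\Phi_1^+$, the complementary annuli $E_j$ lie in $F_j\subset M$; thus $T\subset M$ and compressibility follows directly from hyperbolicity of $M$. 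Finally, the three-case analysis you anticipate does not arise here: by Lemma~\ref{product-Sigma+} every component is a product bundle, so the paper needs only a single construction, not the trichotomy of Proposition~\ref{sep-seifert}.
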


\pf First observe that $\dot{\Sigma}_1^+$ has at least one component, $P$ say, since $t_1^+ = 0$. By Lemma \ref{4},
$|P \cap F_j| = 1$ for each $j$. Set $\phi_j = P \cap F_j$. Corollary \ref{inessential non-tight} implies that each inner boundary component of $\phi_j$ is $\widehat F_j$-essential. There must be such boundary components since $X^+$ is not a product. Thus $\widehat \phi_j$ is an $\widehat F_j$-essential annulus.

Let $P_1,...,P_k$ be the components of $\dot{\Sigma}_1^+$ and set
$\phi_{1i} = P_i\cap F_1$ and $\phi_{2i}=P_i\cap F_2$.  Then each $\widehat{\phi}_{j,i}$ is an
$\widehat F_j$-essential annulus. The
closure of the complement of $\cup_i \widehat \phi_{ji}$ in $\widehat F_j$ is
a set of annuli which we denote by $E_{ji}, i=1,...,k$. We may
assume that $\widehat \phi_{j1}, E_{j1}, \widehat \phi_{j2}, E_{j2},...,
\widehat \phi_{jk}, E_{jk}$ appear consecutively in $\widehat F_j$.

Let $d_i = |\phi_{1i} \cap \partial M| = |\phi_{2i} \cap \partial M|$. Since $\dot{\Phi}_1^+$ has no tight components, $d_1+...+ d_k = m$. We will assume that $k > 1$ in order to derive a contradiction.
Then without loss of generality, $2 d_1 \leq m$.

For each $i=1,...,k$, let $A_i, A_i'$ be the two components of the
frontier of $P_i$ in $X^+$. Then each of $A_i$ and $A_i'$ is an
essential annulus in $(X^+, S)$. We may assume that $\partial A_i' \cup \partial A_{i+1} =
\partial E_{1i} \cup \partial E_{2i}$, so $A_1,
A_1',A_2,A_2',..., A_k, A_k'$ appear consecutively in $X^+$.

Now $A_i'\cup E_{1i}\cup A_{i+1}\cup E_{2i}$ is a torus in $X^+$
which contains a curve which is null-homotopic in $M$. (Here the indices are defined (mod $k$).)
It therefore bounds a solid torus $V_i$ in $X^+$. Note that $A_i'$ is not parallel in $V$ to
$A_{i+1}$ as otherwise $P_i$ and $P_{i+1}$ would be contained in a component of $\dot{\Sigma}_1^+$.
Then $U_i = V_{i-1} \cup \widehat P_i \cup V_i$ is a
submanifold of $M(\beta)$ which is a Seifert fibred space over the disk
with two cone points. Since $\widehat S$ can be isotoped into $\overline{M(\beta) \setminus U_i}$,
Lemma \ref{incomp} implies that $\partial U_i$ is an incompressible torus in $M(\beta)$. By construction,
$\partial U_1$ contains $2d_1\leq m$ components of $\partial M$. Assumption \ref{assumption minimal} then implies that $2 d_1 = m$. But this is impossible by Assumption \ref{assumption sep}. Thus $k = 1$.

Finally note that the closure of the complement of $\dot{\Sigma}_1^+$ in $X^+$ is a solid torus $V$ such that $\dot{\Sigma}_1^+\cap V = A_1 \cup A_1'$, in $X^+$. Hence $\widehat X^+$ is
homeomorphic to the manifold obtained from $V$ by identifying $A_1$
with $A_1'$. It is therefore a Seifert fibred space over the annulus with
at most one cone point. If there is no cone point, then $F$ is a fibre in $M$, contrary to Assumption \ref{assumption not (semi) fibre}. This completes the proof.
\qed

\begin{cor} \label{not fibred}
If $F$ is non-separating and $t_1^+ = 0$, then $M(\beta)$ does not fibre over the circle with torus fibre.
\end{cor}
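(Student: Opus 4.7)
The plan is to argue by contradiction, combining the Seifert structure on $\widehat X^+$ furnished by Lemma \ref{one-comp} with the solvability of the fundamental group of any torus bundle over $S^1$.

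Suppose for contradiction that $M(\beta)$ fibres over $S^1$ with torus fibre. Then $\pi_1(M(\beta))$ sits in a short exact sequence $1\to \z^2\to \pi_1(M(\beta))\to \z\to 1$ and so is solvable. Since $\widehat F$ is essential in $M(\beta)$, its two parallel copies $\widehat S=\widehat F_1\cup \widehat F_2$ form a two-sided incompressible surface, and the standard argument using Assumption \ref{assumption irreducible} shows that $\widehat X^+\hookrightarrow M(\beta)$ is $\pi_1$-injective. Hence $\pi_1(\widehat X^+)$ is a subgroup of the solvable group $\pi_1(M(\beta))$, and is therefore itself solvable.

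By Lemma \ref{one-comp}, $\widehat X^+$ carries a Seifert fibration whose base orbifold is an annulus with exactly one cone point of some order $n\geq 2$; call this base orbifold $A^2(n)$. The exact sequence of the fibration yields a surjection
\[\pi_1(\widehat X^+)\twoheadrightarrow \pi_1^{\mathrm{orb}}(A^2(n))\cong \z*(\z/n),\]
where the right-hand identification follows from a van Kampen computation decomposing $A^2(n)$ as a pair of pants glued along a circle to the orbifold disk $D^2(n)$. For $n\geq 2$ the free product $\z*(\z/n)$ contains a free subgroup of rank two (for instance the subgroup generated by $a$ and $bab^{-1}$, where $a$ generates the $\z$-factor and $b$ generates the $\z/n$-factor), so it is not solvable. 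Quotients of solvable groups being solvable, this contradicts the previous paragraph and completes the proof.

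The principal subtlety lies in identifying the correct algebraic invariant separating the two hypotheses: the presence of the cone point of positive order supplied by Lemma \ref{one-comp} forces the orbifold fundamental group of the base, and hence some quotient of $\pi_1(\widehat X^+)$, to be non-solvable, and this is exactly what is incompatible with the solvability of $\pi_1(M(\beta))$ in the torus-bundle setting.
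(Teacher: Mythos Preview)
Your proof is correct and takes a genuinely different route from the paper's. The paper argues geometrically: it isotopes a putative torus fibre $T$ to meet $\widehat F$ minimally, observes that the pieces of $\widehat F$ are annuli running across $T$, concludes that $M(\beta)$ is Seifert with $T$ horizontal, and then does a short case analysis (is $\widehat F$ horizontal or vertical?) to contradict either Assumption~\ref{assumption not (semi) fibre} or the structure of $\widehat X^+$ from Lemma~\ref{one-comp}. Your argument is purely group-theoretic: the torus-bundle hypothesis forces $\pi_1(M(\beta))$ to be solvable, while Lemma~\ref{one-comp} forces $\pi_1(\widehat X^+)$ to surject onto the non-solvable free product $\mathbb Z * (\mathbb Z/n)$, and $\pi_1$-injectivity of $\widehat X^+$ (via incompressibility of $\widehat S$) ties the two together. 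Both proofs draw the essential structural input from Lemma~\ref{one-comp}; yours avoids the horizontal/vertical case split and any appeal to Assumption~\ref{assumption not (semi) fibre} at this stage, at the cost of invoking the Seifert exact sequence and a small free-product computation, while the paper's argument stays closer to the surface-intersection techniques used throughout the paper.
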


\pf Suppose otherwise and let $T$ be the fibre. Isotope $\widehat F$ so that it intersects $T$ transversally and in a minimal number of components. Since $T$ is a fibre, 	the previous lemma shows that $T \cap \widehat F \ne \emptyset$ and so $T$ cuts $\widehat F$ into a finite collection of incompressible annuli which run from one side of $T$ to the other. It follows that $M(\beta)$ admits a Seifert structure in which $T$ is horizontal. If $\widehat F$ is horizontal it is a fibre in  $M(\beta)$, which contradicts Assumption \ref{assumption not (semi) fibre}. Thus it is vertical. It follows that the base orbifold $\mathcal{B}$ of $M(\beta)$ is Euclidean. Further, the projection image of $\widehat F$ in $\mathcal{B}$ is a non-separating two-sided curve. Thus $\mathcal{B}$ is either a torus or Klein bottle. In either case $\widehat F$ splits $M(\beta)$ into the product of a torus and an interval, which is impossible by Lemma \ref{one-comp}. This completes the proof.
\qed

\section{$\widehat S$-essential annuli in $\dot \Phi_j^\epsilon$} \label{f-hat essential annuli}

\begin{prop} \label{sep annulus-in-2}
Suppose that $F$ is separating. If $\dot \Phi_2^+$ or $\dot \Phi_2^-$ contains an $\widehat F$-essential annulus, then $\widehat X^\epsilon$ admits a Seifert structure with base orbifold of the form $D^2(a,b)$ for some $a, b \geq 2$ for both $\epsilon$. Further, one of the following situations arises:

$(i)$ $t_1^+ + t_1^- \geq 4$.

$(ii)$ $X^-$ is a twisted $I$-bundle.

$(iii)$ $M(\beta)$ admits a Seifert structure with base orbifold $S^2(a, b, c, d)$. Further, if $t_1^\epsilon = 0$ for some $\epsilon$, then $(a,b,c,d) \ne (2,2,2,2)$.

\end{prop}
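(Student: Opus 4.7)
My plan is to transfer the $\widehat F$-essential annulus hypothesis from $\dot\Phi_2^\epsilon$ to both $\dot\Phi_1^+$ and $\dot\Phi_1^-$, apply Proposition \ref{sep-seifert} on each side to extract Seifert structures on $\widehat X^\pm$, and then analyze how these structures interact along $\widehat F$ to produce one of the three alternatives.

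First, specializing equation (\ref{eq surface calculus}) to $j=k=1$ and using $h_1^\epsilon = \tau_\epsilon$ gives $\tau_\epsilon(\dot\Phi_2^\epsilon) = \dot\Phi_1^\epsilon \wedge \dot\Phi_1^{-\epsilon}$. Since $\tau_\epsilon$ is a homeomorphism preserving the property of containing an $\widehat F$-essential annulus, the hypothesis forces $\dot\Phi_1^\epsilon \wedge \dot\Phi_1^{-\epsilon}$ to contain such an annulus for some $\epsilon$. The defining property $(**)$ of the large essential intersection then places an $\widehat F$-essential annulus into each of $\dot\Phi_1^+$ and $\dot\Phi_1^-$ up to isotopy. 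Applying Proposition \ref{sep-seifert} to both sides, part (1) identifies a unique component $P^\epsilon$ of $\dot\Sigma_1^\epsilon$ whose completed horizontal surface carries an $\widehat F$-essential annulus, and part (3) puts a Seifert fibration on $\widehat X^\epsilon$. If either side falls into case (3)(b)(iii), then $X^-$ is a twisted $I$-bundle, giving conclusion (ii); otherwise each side lies in case (3)(a), (3)(b)(i), or (3)(b)(ii), and in all three situations $\widehat X^\epsilon$ admits a Seifert fibration with base orbifold $D^2(a_\epsilon, b_\epsilon)$ where $a_\epsilon, b_\epsilon \geq 2$ (the twisted $I$-bundle over the Klein bottle in case (3)(a) being Seifert fibered over $D^2(2,2)$). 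This establishes the opening assertion of the proposition.

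Next I would glue the two Seifert structures along $\widehat F$ to obtain conclusion (iii). In cases (3)(b)(i) and (3)(b)(ii) the vertical fiber of the Seifert structure on $\widehat X^\epsilon$ is a core of the $\widehat F$-essential annulus in $\widehat{P^\epsilon \cap F}$, and the preceding step ensures that an $\widehat F$-essential annulus witnessing both sides can be chosen inside the common subsurface $\dot\Phi_1^+\wedge\dot\Phi_1^-$, so the two cores (and hence the vertical fibers on $\widehat F$) coincide. In case (3)(a), the twisted $I$-bundle over the Klein bottle admits two Seifert fibrations, giving the freedom to choose one whose vertical fiber matches the common core. Once the fibers agree on $\widehat F$, the two Seifert structures assemble to exhibit $M(\beta)$ as Seifert fibered with orientable base $S^2(a_+, b_+, a_-, b_-)$, which is conclusion (iii). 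The supplementary inequality $(a,b,c,d)\ne(2,2,2,2)$ when $t_1^\epsilon=0$ for some $\epsilon$ is Corollary \ref{both not (2,2)} applied to $\widehat X^\epsilon$.

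I expect the principal obstacle to be verifying the matching in case (3)(a): one must check that an $\widehat F$-essential annulus representing the common core can be isotoped to be vertical in an appropriate Seifert fibration of the Klein bottle $I$-bundle, and when such a match cannot be arranged, argue that the obstruction produces at least two additional tight components of $\breve\Phi_1^\epsilon$ on each side of $\widehat F$, thereby forcing $t_1^+ + t_1^- \geq 4$ and conclusion (i).
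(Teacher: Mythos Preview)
Your opening reduction is correct and matches the paper: the identity $\tau_\epsilon(\dot\Phi_2^\epsilon)=\dot\Phi_1^+\wedge\dot\Phi_1^-$ pushes an $\widehat F$-essential annulus into both $\dot\Phi_1^+$ and $\dot\Phi_1^-$, and Proposition~\ref{sep-seifert} then provides Seifert structures on $\widehat X^\pm$. Your fibre-matching argument in cases (3)(b)(i) and (3)(b)(ii) is also fine and amounts to the paper's observation that the common annulus forces $\varphi_+=\varphi_-$.

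The gap is in your handling of case (3)(a). You propose to exploit the two Seifert fibrations of the Klein-bottle $I$-bundle to match fibres, and only fall back on conclusion~(i) when matching fails. This is the wrong organization: the two fibre slopes of the Klein-bottle $I$-bundle are a fixed pair at distance~$1$ on $\widehat F$, and nothing forces the other side's fibre slope to be one of them. More to the point, no matching analysis is needed. Being in case (3)(a) on side $\epsilon$ means $\hbox{genus}(\dot\Phi_1^\epsilon)=1$; Corollary~\ref{inessential non-tight}(2) then gives immediately that either $\breve\Phi_1^\epsilon=S$ (so $X^\epsilon$ is a twisted $I$-bundle, hence $\epsilon=-$ by Assumption~\ref{assumption not (semi) fibre} and conclusion~(ii) holds) or $t_1^\epsilon\geq 4$ (so conclusion~(i) holds). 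Your speculative ``two additional tight components on each side'' is not what the corollary produces---it yields four tight components on the single side where the genus is~$1$.

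The paper's argument is therefore organized around the genus dichotomy rather than the case structure of Proposition~\ref{sep-seifert}(3): if $\hbox{genus}(\dot\Phi_1^\epsilon)=1$ for some $\epsilon$ one gets (i) or (ii) directly, while if both genera are $0$ then $X^-$ is not a twisted $I$-bundle, the fibre slopes must coincide (a second essential slope in $\dot\Phi_1^-$ would force genus~$1$), and conclusion~(iii) follows. The final clause about $(a,b,c,d)\ne(2,2,2,2)$ comes from Proposition~\ref{order two} (which applies for either $\epsilon$ since neither $X^\epsilon$ is a twisted $I$-bundle in this branch), not from Corollary~\ref{both not (2,2)}, which is stated only for $\epsilon=+$.
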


\pf As $h_2^+: \dot \Phi_2^+ \stackrel{\cong}{\longrightarrow} \dot \Phi_2^-$, we can suppose that $\dot \Phi_2^-$ contains an $\widehat F$-essential annulus. Since $\dot \Phi_2^- = \tau_-(\dot \Phi_1^- \wedge \dot \Phi_1^+)$, if $\dot \Phi_2^-$ contains an $\widehat F$-essential annulus, so do $\dot \Phi_1^+$ and $\dot \Phi_1^-$. Hence Proposition \ref{sep-seifert} implies that $\widehat X^\epsilon$ admits a Seifert structure with base orbifold of the form $D^2(a,b)$ for some $a, b \geq 2$ for both $\epsilon$.

If $\hbox{genus}(\dot \Phi_1^\epsilon) = 1$ for some $\epsilon$, then either $X^-$ is a twisted $I$-bundle or $t_1^\epsilon \geq 4$ (Corollary \ref{inessential non-tight}). Thus (i) or (ii) holds. Assume that $\hbox{genus}(\dot \Phi_1^\epsilon) = 0$ for both $\epsilon$, so $X^-$ is not a twisted $I$-bundle, and let $\varphi_{\epsilon}$ be the slope on $\widehat F$ of an $\widehat F$-essential annulus contained in $\dot \Phi_1^\epsilon$. Then $\varphi_\epsilon$ is the fibre slope of the Seifert structure on $\widehat X^\epsilon$ given by Proposition \ref{sep-seifert}. As $\dot \Phi_2^- = \tau_-(\dot \Phi_1^- \wedge \dot \Phi_1^+)$, we see that $\dot \Phi_1^-$ contains curves of slope $\varphi_+$ and $\varphi_-$. Hence if these slopes are distinct, $\hbox{genus}(\dot \Phi_1^-) = 1$, contrary to our assumptions. Thus $\varphi_+ = \varphi_-$ so $M(\beta)$ admits a Seifert structure with base orbifold of the form $S^2(a, b, c, d)$. Finally if $t_1^\epsilon = 0$ for some $\epsilon$, Proposition \ref{order two} shows that $(a,b,c,d) \ne (2,2,2,2)$.
\qed

\begin{prop} \label{sep annulus-in-3-+}
Suppose that $F$ is separating. If $\dot \Phi_3^+$ contains an $\widehat F$-essential annulus then either

$(i)$ $t_1^+ \geq 4$, or

$(ii)$ $X^-$ is a twisted $I$-bundle and $M(\beta)$ is Seifert with base orbifold $P^2(2,n)$ for some $n > 2$. Further, $t_1^+ = 0$, $\dot \Phi_1^+$ is an $\widehat F$-essential annulus, $\dot \Phi_3^+$ is the union of two $\widehat F$-essential annuli, and there are disjoint, non-separating annuli $A_1^-, A_2^-$ properly embedded in $X^-$ such that $\partial A_1^- \cup \partial A_2^- \subseteq \dot \Phi_1^+$ and for each $j$, $\partial \dot \Phi_1^+ \cap \partial A_j^-$ is a boundary component of $\dot \Phi_1^+$.
\end{prop}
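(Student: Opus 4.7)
The plan is to propagate the existence of an $\widehat F$-essential annulus down through the characteristic surface sequence and then apply the structural results of the previous sections to derive conclusion (ii) whenever $t_1^+ < 4$.

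Applying the identity of equation~(\ref{eq surface calculus}) with $(j,k,\epsilon) = (1,2,+)$ and with $(j,k,\epsilon) = (1,1,-)$ gives
$$\tau_+(\dot\Phi_3^+) = \dot\Phi_1^+ \wedge \dot\Phi_2^- \quad \text{and} \quad \tau_-(\dot\Phi_2^-) = \dot\Phi_1^- \wedge \dot\Phi_1^+.$$
Since a large essential intersection $S_0 \wedge S_1$ inherits an $\widehat F$-essential annulus from each factor (by property~($**$)), an $\widehat F$-essential annulus in $\dot\Phi_3^+$ yields one in each of $\dot\Phi_1^+$, $\dot\Phi_2^-$ and $\dot\Phi_1^-$. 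By Proposition~\ref{sep-seifert} both $\widehat X^+$ and $\widehat X^-$ admit Seifert structures, and Proposition~\ref{sep annulus-in-2} places us in one of its three cases. Assume $t_1^+ < 4$; the goal is to derive all parts of conclusion (ii).

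Next I pin down $\dot\Phi_1^+$. Corollary~\ref{inessential non-tight}(2) forces $\hbox{genus}(\dot\Phi_1^+) = 0$, placing us in Proposition~\ref{sep-seifert}(3)(b). The first identity above implies $\dot\Phi_3^+ \subseteq \dot\Phi_1^+$, and $\tau_+$ restricts to a free involution on $\dot\Phi_1^+$ by Proposition~\ref{BdryToBdry}. Using Propositions~\ref{sep-seifert}(1) and~\ref{order two}, the possibility that $\dot\Sigma_1^+$ has a product component (so that $\dot\Phi_1^+$ has two $\tau_+$-exchanged pieces each completing to an essential annulus) would split $\dot\Phi_3^+$ across the two pieces and force, via the second identity, an analogous product component in $\dot\Sigma_1^-$; Proposition~\ref{sep-seifert}(1) on the $-$-side then yields a contradiction. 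The case $t_1^+ = 2$ is ruled out analogously: $\dot\Phi_1^+\wedge\dot\Phi_2^-$ could then support an $\widehat F$-essential annulus only inside the two non-tight components of $\dot\Phi_1^+$, whose exchange by $\tau_+$ combined with Proposition~\ref{tight not invariant} leaves no consistent $\widehat F$-essential annulus. Hence $t_1^+ = 0$ and $\dot\Phi_1^+$ is a single $\widehat F$-essential annulus, so by Proposition~\ref{order two}(2) the unique component $P^+$ of $\dot\Sigma_1^+$ is a twisted $I$-bundle over a M\"obius band and $\widehat X^+$ has base orbifold $D^2(2, n)$ for some $n \geq 2$. The first identity then forces $\dot\Phi_3^+$ to be a $\tau_+$-invariant pair of subsurfaces parallel in $\dot\Phi_1^+$, each completing to an $\widehat F$-essential annulus.

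To identify case (ii) of Proposition~\ref{sep annulus-in-2} as the operative one, observe that in case (iii) the manifold $M(\beta)$ would be Seifert over $S^2(a,b,c,d)$ with $\widehat F$ vertical; the previous paragraph makes $\widehat X^+$ contribute $D^2(2, n)$, so a parallel analysis on the $-$-side forces either $\widehat X^-$ to be a twisted $I$-bundle (collapsing back to case (ii)) or $(a,b,c,d)=(2,2,2,2)$, which is excluded by Corollary~\ref{both not (2,2)}. So $X^-$ is a twisted $I$-bundle. Gluing the Seifert structure on $\widehat X^+$ to the twisted $I$-bundle $X^-$ along $\widehat F$, the $\mathbb Z/2$ deck action on $X^-$ projects to a M\"obius-band contribution to the combined base, which together with the two cone points of $\widehat X^+$ yields a global Seifert structure on $M(\beta)$ with non-orientable base $P^2(2,n)$. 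The strict inequality $n > 2$ follows from Corollary~\ref{both not (2,2)} and Assumption~\ref{assumption not (semi) fibre}. Finally, the annuli $A_1^-, A_2^-$ are constructed as preimages in the twisted $I$-bundle projection $X^- \to G^-$ of two properly embedded arcs in $G^-$ joining the two projected boundary components of $\dot\Phi_1^+$ while crossing the core of $G^-$; non-orientability of $G^-$ makes them non-separating and the prescribed boundary intersection is built into the choice of arcs. The main obstacle is the second paragraph: carefully excluding $t_1^+ = 2$ and the product-bundle subcase of $P^+$ by tracking the interaction of $\tau_\pm$ with $\wedge$, and then correctly pinning the global base orbifold of $M(\beta)$ as $P^2(2,n)$ rather than another non-orientable Euclidean orbifold.
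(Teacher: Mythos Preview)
Your proposal has a genuine gap in the second paragraph, where the heavy lifting happens. The arguments you offer for excluding $t_1^+ = 2$ and for excluding a product component of $\dot\Sigma_1^+$ are not arguments at all---they are assertions dressed up in the language of $\tau_\pm$ and $\wedge$. For instance, there is no reason the two non-tight components of $\dot\Phi_1^+$ (in a hypothetical $t_1^+ = 2$ situation) must be exchanged by $\tau_+$, and Proposition~\ref{tight not invariant} says nothing that would prevent an $\widehat F$-essential annulus from living in $\dot\Phi_1^+ \wedge \dot\Phi_2^-$ in that case. Likewise, saying that a product $P^+$ ``forces an analogous product component in $\dot\Sigma_1^-$'' and then invoking Proposition~\ref{sep-seifert}(1) does not yield a contradiction: that proposition says there is at most one such component, not that there is none.

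The paper's proof is structurally different and relies on geometric input you never touch. Starting from a component $\phi_0 \subset \dot\Phi_3^+$ containing an $\widehat F$-essential annulus, one sets $\phi_1 = \tau_+(\phi_0)$ and analyzes the dichotomy $\tau_-(\phi_1) = \phi_1$ versus $\tau_-(\phi_1) \cap \phi_1 = \emptyset$. The first case is eliminated using Proposition~\ref{na3} and Lemma~\ref{nested-annuli}, which together force $\phi_0$ to have no outer boundary components---a contradiction. In the second case one obtains concrete properly embedded annuli $A_1^-, A_2^-$ in $X^-$ as the vertical boundary of the product $\phi_1 \times I$; if these are separating, Lemma~\ref{nested-annuli} again kills outer boundary components, so they are non-separating and Lemma~\ref{non-sep} makes $X^-$ a twisted $I$-bundle. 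The product case for $P^+$ is then ruled out by building an essential Klein-bottle neighbourhood whose boundary torus violates Assumption~\ref{assumption minimal}, and the conclusion $t_1^+ = 0$ comes specifically from Lemma~\ref{sep-non-sep nested-annuli}, which forces $|(\widehat F \setminus \psi_0) \cap \partial F| = 0$. None of Lemmas~\ref{nested-annuli}, \ref{sep-non-sep nested-annuli}, or Proposition~\ref{na3} appear in your argument, and they are exactly what makes the exclusion work; the surface-calculus identities alone are not strong enough. Your construction of $A_1^-, A_2^-$ as preimages of arcs in the base of $X^-$ is also not the right object: the boundary condition $\partial A_j^- \subset \dot\Phi_1^+$ with one component on $\partial\dot\Phi_1^+$ comes for free from the $\phi_1 \times I$ construction but would have to be separately arranged (and is not obviously possible) in yours.
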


\pf Assume that $t_1^+ \leq 2$. We will show that (ii) holds.

Suppose that some component $\phi_0$ of $\dot \Phi_3^+$ contains an $\widehat F$-essential annulus and let $\psi_0$ be the component of $\dot \Phi_1^+$ containing $\phi_0$. By Assumption \ref{assumption not (semi) fibre}, $\psi_0 \ne F$. Corollary \ref{inessential non-tight} then shows that $\hbox{genus}(\psi_0) = 0$ and $\widehat \psi_0$ completes to an $\widehat F$-essential annulus.

We can suppose that $\phi_0 \subseteq \hbox{int}(\psi_0)$. Set $\phi_1 = \tau_+(\phi_0) \subseteq \dot \Phi_1^+ \wedge \dot \Phi_2^-$. Now $h_3^+ = \tau_+ \circ \tau_- \circ \tau_+| \dot \Phi_3^+$ is a free involution of $\dot \Phi_3^+$. In particular, either $h_3^+(\phi_0) = \phi_0$ or $h_3^+(\phi_0) \cap \phi_0 = \emptyset$. Equivalently, either $\tau_-(\phi_1) = \phi_1$ or $\tau_-(\phi_1) \cap \phi_1 = \emptyset$. In the first case there are an essential annulus $A^-$ properly embedded in $(X^-, \phi_1)$ such that $\partial A^- = \partial \widehat \phi_1$ and a M\"{o}bius band $B$ properly embedded in $(X^-, \hbox{int}(\phi_1))$. Proposition \ref{na3} then implies that $\psi_0$ is $\tau_+$-invariant. Hence there is an annulus $A^+$ properly embedded in $(X^+, \psi_0)$ with $\partial A^+ = \partial \widehat \psi_0$. Lemma \ref{nested-annuli} implies that $\phi_1$, and therefore $\phi_0$, has no outer boundary components, which is impossible.

Next suppose that $\tau_-(\phi_1) \cap \phi_1 = \emptyset$. Then there is an embedding $(\phi_1 \times I, \phi_1 \times \{0\}, \phi_1 \times \{1\}) \to (X^-, \phi_1, \tau_-(\phi_1))$. First suppose that the components $A_1^-, A_2^-$ of the image of $\partial \widehat \phi_1 \times I$ are separating annuli in $X^-$. Let $A^+$ be a properly embedded annulus in $(X^+, \psi_0)$ such that at least one boundary component of $A^+$ is contained in $\partial \widehat \psi_0$. According to Lemma \ref{nested-annuli}(1), $\partial A^+$ does not separate $\partial A_j^-$ for $j = 1, 2$. Lemma \ref{nested-annuli}(2) then implies that $\phi_1$ has no outer boundary components, which is impossible. Thus $A_1^-$ and $A_2^-$ are non-separating in $X^-$. In particular, $X^-$ is a twisted $I$-bundle  (Lemma \ref{non-sep}).

Let $P$ be the unique component of $\dot \Sigma_1^+$ whose intersection with $F$ contains an $\widehat F$-essential annulus (Proposition \ref{sep-seifert}). Then $\psi_0$ is a component of $P \cap F$ and $\phi_0 \cup \phi_1 \subseteq P \cap F$. If $P$ is a product $I$-bundle, let $A_1^+, A_2^+$ be the annuli in its frontier in $X^+$, and consider the torus $T$ obtained from the union of $A_1^+, A_1^-, A_2^+, A_2^-$ and four annuli in $\widehat F$ disjoint from $\hbox{int}(\phi_1) \cup \hbox{int}(\tau_-(\phi_1))$. The reader will verify that $T$ bounds a twisted $I$-bundle over the Klein bottle in $M(\beta)$ and so is essential in $M(\beta)$ by Lemma \ref{incomp}. Hence it intersects $\partial M$ in at least $m$ components. But this implies $|\phi_1 \cap \partial F| = 0$, which is impossible. Thus $P$ is a twisted $I$-bundle. It follows from Proposition \ref{sep-seifert} that $\widehat X^+$ is Seifert with base orbifold $D^2(2,n)$ with $\partial A_1^-$ vertical. Thus $M(\beta)$ is Seifert with base orbifold $P^2(2,n)$. Let $A^+$ be the frontier of $P$ in $X^+$. By construction, $\partial A^+$ does not separate $\partial A_1^-$ or $\partial A_2^-$ in $\widehat F$. Lemma \ref{sep-non-sep nested-annuli} then shows that $|(\widehat F \setminus \psi_0) \cap \partial F| = 0$. Hence, $t_1^+ = 0$. This implies that $n > 2$ (Corollary \ref{both not (2,2)}) and $\tau_-$ is defined on $\overline{\widehat F \setminus \psi_0}$ and sends it into the interior of $\dot \Phi_1^+$. Thus, there are disjoint, non-separating annuli $E_1^-, E_2^-$ properly embedded in $X^-$ such that $\partial E_1^- \cup \partial E_2^- \subseteq \dot \Phi_1^+$ and for each $j$, $\partial \dot \Phi_1^+ \cap \partial E_j^-$ is a boundary component of $\dot \Phi_1^+$. Write $\partial E_j^- = c_j \cup c_j'$ where $\partial \dot \Phi_1^+ = c_1 \cup c_2$. Since $\overline{F \setminus \psi_0}$ is an annulus, it follows from our constructions that the disjoint subsurfaces of $\dot \Phi_1^+$ with inner boundaries $c_1 \cup c_2'$ and $c_2 \cup c_1'$ lie in $\dot \Phi_3^+$ and contain $\partial F$. Thus their union is $\dot \Phi_3^+$. This proves the proposition.
\qed

\begin{defin} {\rm (\cite[page 266]{BGZ1})} \label{singular slope}
{\rm  Given a closed, essential surface $G$ in $M,$ we let ${\mathcal
    C}(G)$ denote the set of slopes $\delta$ on $\partial M$ such that $S$
  compresses in $M(\delta).$  A slope $\eta$ on $\partial M$ is called a
  {\it singular slope} for $G$ if $\eta \in {\mathcal C}(G)$ and
  $\Delta(\delta, \eta) \leq 1$ for each $\delta \in {\mathcal C}(G).$}
\end{defin}
A fundamental result of Wu \cite{Wu1} states that if ${\mathcal C}(G) \ne
\emptyset,$ then there is at least one singular slope for $G.$

\begin{prop}\label{bgz1} Let $\eta$ and $\delta$ be slopes on the boundary of a hyperbolic knot manifold $M$.

$(1)$ {\rm (\cite[Theorem 1.5]{BGZ1})} If $\eta$ is a singular slope for some closed essential surface in $M$ and $M(\delta)$ is not hyperbolic, then
$\Delta(\delta, \eta) \leq 3.$

$(2)$ {\rm (\cite[Theorem 1.7]{BGZ1})} If $M(\eta)$ is a Seifert fibred manifold whose base orbifold is hyperbolic but a $2$-sphere with three cone points, then
$\eta$ is a singular slope for some closed essential surface in $M$.
\qed
\end{prop}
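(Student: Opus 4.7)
The two assertions of Proposition~\ref{bgz1} are recalled verbatim from \cite{BGZ1}, so my plan is to outline the strategies of that paper rather than reproduce the technical arguments.

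For part (1), I would argue by cases on the geometric type of $M(\delta)$. By Perelman's geometrisation theorem, $M(\delta)$ is reducible, toroidal, or small Seifert. The singular slope hypothesis gives very strong control: every slope in $\mathcal{C}(G)$ lies within distance~$1$ of $\eta$, so the Culler--Shalen seminorm $\|\cdot\|_G$ on $H_1(\partial M;\mathbb{R})$ associated with $G$ takes its minimum over $\mathcal{C}(G)$ at (or essentially at) $\eta$. In the reducible and toroidal cases, the sharp distance bounds of \cite{GL}, \cite{Go}, \cite{GW}, \cite{Oh}, \cite{Wu1} apply: a reducing sphere or essential torus in $M(\delta)$ can be promoted to show that $G$ compresses in a filling within a controlled distance of $\delta$, which, together with the singular slope property, produces the bound~$3$. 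The main obstacle is the small Seifert case, where $M(\delta)$ may contain no embedded closed essential surface; here one must exploit the Culler--Shalen machinery directly, using that the Seifert structure provides a representation of $\pi_1(M)$ whose restriction to $\pi_1(\partial M)$ has tightly constrained character, and extract the distance estimate from the geometry of the character variety.

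For part (2), the strategy is to produce closed essential surfaces in $M$ from the Seifert structure on $M(\eta)$ via character variety deformations. Since the base orbifold $\mathcal{O}$ of $M(\eta)$ is hyperbolic and not of the form $S^2(a,b,c)$, its Teichm\"uller space is positive-dimensional. This yields a one-parameter family of discrete faithful $PSL_2(\mathbb{R})$-representations of $\pi_1^{\mathrm{orb}}(\mathcal{O})$, and pulling them back through the Seifert projection and the inclusion $\pi_1(M) \to \pi_1(M(\eta))$ gives a positive-dimensional algebraic subset $X_0$ of the $PSL_2(\mathbb{C})$-character variety of $M$. Applying Culler--Shalen theory to an algebraic curve in $X_0$ and choosing any ideal point produces a closed essential surface $G$ in $M$ together with the seminorm $\|\cdot\|_G$. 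The filling class $\eta$ projects to a torsion element of $\pi_1^{\mathrm{orb}}(\mathcal{O})$, so its character is bounded along the family, forcing $\|\eta\|_G = 0$; and a standard property of Culler--Shalen seminorms then places every slope in $\mathcal{C}(G)$ within distance~$1$ of $\eta$, making $\eta$ singular.

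The principal obstacle in part~(2) is verifying the essentiality and closedness of the surface produced at the ideal point: one must rule out boundary-parallel or compressible degenerations, which requires analysing the Bass--Serre tree action of $\pi_1(M)$ at the ideal point and excluding representations whose images are too small (for instance, reducible or dihedral limits). The hypothesis that $\mathcal{O}$ is hyperbolic and not a three-cone-point sphere is precisely what guarantees sufficient non-triviality of the deformation family to avoid these degenerations.
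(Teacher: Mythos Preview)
The paper does not prove this proposition at all: it is stated with explicit attribution to \cite[Theorems~1.5 and~1.7]{BGZ1} and immediately closed with a \qed. So there is nothing to compare your argument against in the present paper; you are attempting to sketch the proofs from the cited reference, which is more than what is required here.

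That said, a couple of remarks on your sketch. For part~(1), you invoke a ``Culler--Shalen seminorm $\|\cdot\|_G$ associated with $G$'', but a closed essential surface does not by itself produce a seminorm; seminorms arise from curves in the character variety, not from surfaces. The actual argument in \cite{BGZ1} does use character variety techniques, but the logical structure is different from what you describe. For part~(2), your outline is broadly in the right spirit (positive-dimensional deformations of the base orbifold give a curve in $X_{PSL_2}(M)$ on which $f_\eta$ is bounded), but the claim that ``$\eta$ projects to a torsion element of $\pi_1^{\mathrm{orb}}(\mathcal{O})$'' is not correct: the class $[\eta]$ is trivial in $\pi_1(M(\eta))$, so $f_\eta$ is constant on the curve, which is what forces the associated surface at an ideal point to be closed or to have boundary slope $\eta$. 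The passage from this to the singular slope conclusion still requires work (ruling out that the surface has boundary, or invoking Wu's result on $\mathcal{C}(G)$), which your sketch does not fully address.

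In short: for the purposes of the present paper, a bare citation suffices, and that is what the paper does.
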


\begin{prop} \label{nonsep annulus-in-3}
Suppose that $F$ is non-separating. If $\dot \Phi_3^+$ contains an $\widehat S$-essential annulus and $t_1^+ = 0$, then $M(\beta)$ is Seifert fibred with base orbifold a torus or a Klein bottle with exactly one cone point. In particular, $\beta$ is a singular slope for a closed essential surface in $M$ and thus, $\Delta(\alpha, \beta) \leq 3$.
\end{prop}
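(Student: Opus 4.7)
By Lemma \ref{one-comp}, since $t_1^+ = 0$ and $F$ is non-separating, $\dot\Sigma_1^+$ has a unique component $P$, a product $I$-bundle with $\phi_j = P \cap F_j$ an $\widehat F_j$-essential annulus ($j=1,2$), and $\widehat X^+$ is Seifert fibred over an annulus with exactly one cone point (of some order $n \geq 2$), with $\widehat \phi_1 \cup \widehat \phi_2$ vertical; moreover $\tau_+$ exchanges $\phi_1$ and $\phi_2$ via the product structure of $P$. Assumption \ref{assumption not (semi) fibre} rules out $X^+$ being an $I$-bundle, so in the non-separating case $X^-$ is the product side $F \times I$; hence $\widehat X^- \cong T^2 \times I$, $\dot \Phi_1^- = S$, and $\tau_- : S \to S$ is the product involution swapping $F_1$ and $F_2$.

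Since $\dot \Phi_3^+ \subseteq \dot \Phi_1^+ = \phi_1 \cup \phi_2$ and each $\phi_j$ is an essential annulus in $\widehat F_j$, any $\widehat S$-essential annulus in $\dot \Phi_3^+$ must, up to isotopy, coincide with $\phi_j$ for some $j$, so $\phi_j$ is a component of $\dot \Phi_3^+$. Say $\phi_1$ is such a component (the other case is symmetric). Using the calculus (\ref{eq surface calculus}), which with $\dot \Phi_1^- = S$ gives $\dot \Phi_2^- = \tau_-(\dot \Phi_1^+)$ and $\dot \Phi_3^+ = \tau_+(\dot \Phi_1^+ \wedge \tau_-(\dot \Phi_1^+))$, and arguing in analogy with Proposition \ref{sep annulus-in-3-+}, the inclusion $\phi_1 \subseteq \dot \Phi_3^+$ is witnessed by an essential homotopy of length $3$: push $\phi_1$ across $P$ to $\phi_2$, across $X^- \cong F \times I$ to $\tau_-(\phi_2) \subseteq F_1$, then essentially across $X^+$. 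Essentiality of the last step requires $\tau_-(\phi_2)$ to lie in $\dot \Phi_1^+ \cap F_1 = \phi_1$, forcing $\tau_-(\phi_2) \simeq \phi_1$ in $F_1$, equivalently $\tau_-(\phi_1) \simeq \phi_2$ in $F_2$.

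The condition $\tau_-(\phi_1) \simeq \phi_2$ says the product identification $\widehat F_1 \to \widehat F_2$ furnished by $\widehat X^-$ carries the fiber slope of $\widehat X^+$ on $\widehat F_1$ (the slope of $\widehat \phi_1$) to the fiber slope on $\widehat F_2$ (the slope of $\widehat \phi_2$). Equip $\widehat X^- \cong T^2 \times I$ with the product Seifert structure over the annulus whose boundary fibres agree with those of $\widehat X^+$; the two structures then assemble to a Seifert fibration of $M(\beta)$ in which $\widehat S$ is vertical. The base orbifold is obtained by gluing an annulus with one cone point of order $n$ (from $\widehat X^+$) to an unmarked annulus (from $\widehat X^-$) along both pairs of boundary circles, yielding a closed surface of Euler characteristic zero carrying exactly one cone point --- a torus or Klein bottle with one cone point, as claimed. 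Its orbifold Euler characteristic $1/n - 1$ is negative and it is not a $2$-sphere with three cone points, so the base is hyperbolic.

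Finally, Proposition \ref{bgz1}(2) guarantees that $\beta$ is a singular slope for some closed essential surface in $M$, and since $M(\alpha)$ is small Seifert and hence not hyperbolic, Proposition \ref{bgz1}(1) yields $\Delta(\alpha, \beta) \leq 3$. The main technical obstacle is the second step: translating the existence of an $\widehat S$-essential annulus in $\dot \Phi_3^+$ into the slope-matching $\tau_-(\phi_1) \simeq \phi_2$. This is the non-separating, $t_1^+ = 0$ counterpart of the case analysis in Proposition \ref{sep annulus-in-3-+}, and is much simplified here by the rigid structural description of $\widehat X^+$ and $X^-$ provided by Lemma \ref{one-comp} and Assumption \ref{assumption not (semi) fibre}.
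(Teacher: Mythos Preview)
Your proof is essentially correct and follows the same approach as the paper's. Both arguments reduce to showing that the slope of $\widehat{\tau_-(\phi_j)}$ on $\widehat F_{3-j}$ coincides with that of $\widehat\phi_{3-j}$, so that the Seifert structures on $\widehat X^+$ (annulus with one cone point) and $\widehat X^- \cong T^2 \times I$ (plain annulus) glue to a Seifert structure on $M(\beta)$. The paper records this via the surface-calculus identity $\tau_+(\dot\Phi_3^+) = \dot\Phi_1^+ \wedge \tau_-(\dot\Phi_1^+)$ (noting that an $\widehat S$-essential annulus on the right forces the slope match), while you unpack the same identity as an explicit essential homotopy of length $3$; these are two phrasings of the same mechanism.

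One minor inaccuracy: from ``$\dot\Phi_3^+$ contains an $\widehat S$-essential annulus'' it does not follow that some $\phi_j$ is itself a \emph{component} of $\dot\Phi_3^+$, only that $\dot\Phi_3^+$ has a component $\psi \subseteq \phi_j$ whose completion is an $\widehat S$-essential annulus (necessarily of the same slope as $\widehat\phi_j$). This is harmless for your argument --- run the length-$3$ essential homotopy with $\psi$ in place of $\phi_1$; the conclusion that $\tau_-\tau_+(\psi)$ must land (up to homotopy) in $\dot\Phi_1^+ \cap F_1 = \phi_1$ still forces the slope of $\tau_-(\phi_2)$ to equal that of $\phi_1$, which is all you need.
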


\pf Since $t_1^+ = 0$, Proposition \ref{nonsep-seifert} implies that $\dot \Phi_1^+ = \phi_1 \cup \phi_2$ where $\phi_1, \phi_2$ lie in different components of $S$ and complete to $\widehat S$-essential annuli. This proposition also implies that $\widehat X^+$ admits a Seifert fibred structure with base orbifold an annulus with one cone point. Further, $\widehat \phi_j$ is vertical in this structure for both $j$. To see that $M(\beta)$ is Seifert with base orbifold as claimed, it suffices to show that the slope of $\widehat{\tau_-(\phi_j)}$ coincides with that of $\widehat \phi_{3-j}$. But this is an immediate consequence of the fact that $\tau_+(\dot \Phi_3^+) = \dot \Phi_1^+ \wedge \dot \Phi_2^- = \dot \Phi_1^+ \wedge \tau_-(\dot \Phi_1^+)$ contains an $\widehat S$-essential annulus.
\qed

\section{The existence of tight components in $\breve{\Phi}_j^\epsilon$ for small values of $j$} \label{existence tights}

In this section we examine the existence of tight components in $\breve{\Phi}_j^\epsilon$ for small values of $j$. Note that if $t_1^\epsilon \ne 0$ for some $\epsilon$, then Proposition \ref{tight increase} implies that $t_2^{-\epsilon} = t_2^\epsilon \geq t_1^\epsilon > 0$. Thus we examine the case $t_1^+ = t_1^- = 0$. Recall that under this hypothesis, $\dot{\Phi}_1^+$ and $\dot{\Phi}_1^-$ are non-empty (Lemma \ref{t1e=0 implies dot nonempty}).

\begin{lemma} \label{seifert means twisted}
Suppose that $t_1^+ = t_1^- = 0$. If $\Delta(\alpha, \beta) > 3$ and $M(\beta)$ is Seifert fibred, then its base orbifold is of the form $P^2(a,b)$ for some $(a, b) \ne (2,2)$ and $X^-$ is a twisted $I$-bundle.
\end{lemma}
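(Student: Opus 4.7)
The plan is to argue by cases on whether $F$ separates $M$, repeatedly invoking Proposition \ref{bgz1} to rule out Seifert structures on $M(\beta)$ whose base orbifold is orientable hyperbolic and not $S^2(a,b,c)$: such a base would make $\beta$ a singular slope for a closed essential surface in $M$, forcing $\Delta(\alpha,\beta) \leq 3$ and contradicting the hypothesis. The starting observation is that $\widehat F$ must be \emph{vertical} in any Seifert structure on $M(\beta)$, since otherwise $\widehat F$ would be horizontal and $F$ would be a fibre or semi-fibre of $M$, violating Assumption \ref{assumption not (semi) fibre}. Consequently the Seifert structure on $M(\beta)$ restricts to Seifert fibrations of $\widehat X^+$ and $\widehat X^-$ with $\widehat F$ as a common boundary torus, and the base orbifold of $M(\beta)$ is obtained by gluing the two base orbifolds along the boundary circle dual to $\widehat F$.

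First I would dispose of the non-separating case. By Assumption \ref{assumption not (semi) fibre}, $X^+$ is not an $I$-bundle, so $X^- \cong F \times I$ and $\widehat X^- \cong T^2 \times I$ is Seifert fibred over an annulus with no cone points. Proposition \ref{nonsep-seifert}(3) (applicable since $t_1^+=0$) gives $\widehat X^+$ a Seifert structure over an annulus with one cone point. Gluing these two annular bases along their boundary circles produces a base orbifold for $M(\beta)$ that is either $T^2(a)$ or $K^2(a)$: both are hyperbolic and neither equals $S^2(a,b,c)$. Proposition \ref{bgz1} then yields $\Delta(\alpha,\beta) \leq 3$, a contradiction. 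Therefore $F$ is separating and $S = F$.

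For the separating case, I apply Proposition \ref{order two} with $\epsilon = +$ together with Assumption \ref{assumption not (semi) fibre} and Corollary \ref{both not (2,2)}: $\widehat X^+$ admits a unique Seifert fibration over $D^2(a_+,b_+)$ with $(a_+,b_+) \neq (2,2)$. Applying Proposition \ref{order two} with $\epsilon = -$ gives two possibilities. If $X^-$ is not a twisted $I$-bundle, then $\widehat X^-$ too has a unique Seifert structure over $D^2(a_-,b_-)$ with $(a_-,b_-) \neq (2,2)$, and uniqueness on both sides forces the fibrations to match across $\widehat F$; the base of $M(\beta)$ is then $S^2(a_+,b_+,a_-,b_-)$, orientable with at least four cone points not all of order two, hence hyperbolic and distinct from $S^2(a,b,c)$. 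Proposition \ref{bgz1} again yields $\Delta(\alpha,\beta) \leq 3$, a contradiction. So $X^-$ must be a twisted $I$-bundle. Since the punctured torus $F$ double-covers a punctured Klein bottle, $\widehat X^-$ is a twisted $I$-bundle over the Klein bottle; it admits exactly two Seifert fibrations---one over $D^2(2,2)$ and one over a M\"obius band---with distinct fibre slopes on $\widehat F$. The restricted fibration from $M(\beta)$ chooses one. If it chooses the $D^2(2,2)$ structure, the base of $M(\beta)$ is $S^2(a_+,b_+,2,2)$, orientable hyperbolic since $(a_+,b_+) \neq (2,2)$, and Proposition \ref{bgz1} again forces $\Delta(\alpha,\beta) \leq 3$, a contradiction. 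Hence the M\"obius band fibration must be chosen, so the base of $M(\beta)$ is $P^2(a_+,b_+)$ with $(a_+,b_+) \neq (2,2)$, as claimed.

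The main obstacle is the gluing analysis of Seifert fibrations across $\widehat F$: one must verify that whenever at least one side admits a unique Seifert fibration the base of $M(\beta)$ is exactly the orbifold obtained by joining the two canonical bases along the boundary circle corresponding to $\widehat F$, and correctly identify the resulting orbifold in each case. A closely related subtlety is that the Klein bottle base $K^2(a)$ appearing in the non-separating step is non-orientable, so the applicability of Proposition \ref{bgz1}(2) there depends on the statement of Theorem~1.7 of \cite{BGZ1} extending to non-orientable hyperbolic base orbifolds; this should be checked against the original reference.
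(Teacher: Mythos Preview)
Your overall strategy is sound and reaches the right conclusion, and it is close in spirit to the paper's argument: both rely on Proposition~\ref{bgz1}, Corollary~\ref{both not (2,2)}, and Proposition~\ref{order two}. The paper's proof is more compact: it observes that $\Delta(\alpha,\beta)>3$ forces $\beta$ not to be a singular slope, combines this with Corollary~\ref{both not (2,2)} and \cite[Theorem~1.7]{BGZ1} to conclude directly that the base orbifold is $P^2(a,b)$ with $(a,b)\ne(2,2)$, and then notes that any essential vertical torus in such a manifold cuts off a twisted $I$-bundle over the Klein bottle, whence Proposition~\ref{order two}(3) forces $X^-$ itself to be a twisted $I$-bundle. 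Your version unpacks the same content by an explicit gluing analysis on the two sides of $\widehat F$; this is a legitimate alternative and has the virtue of also disposing of the non-separating case (where the hypotheses are in fact vacuous).

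There is, however, one genuine gap in your argument. Your justification that $\widehat F$ is vertical appeals to Assumption~\ref{assumption not (semi) fibre}, but that assumption is about $F$ in $M$, not about $\widehat F$ in $M(\beta)$: a horizontal $\widehat F$ in $M(\beta)$ does not by itself force $X^\pm$ to be $I$-bundles in $M$. The correct references are already in the paper. In the separating case, a horizontal $\widehat F$ would make both $\widehat X^\pm$ twisted $I$-bundles over the Klein bottle, contradicting Corollary~\ref{both not (2,2)} (whose first assertion says the base of $\widehat X^+$ is $D^2(a,b)$ with $(a,b)\ne(2,2)$, which is not a Seifert structure on that manifold). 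In the non-separating case, a horizontal $\widehat F$ would make $M(\beta)$ a torus bundle, contradicting Corollary~\ref{not fibred}. With these citations in place your argument goes through. Your closing worry about $K^2(a)$ is unnecessary: the paper itself applies \cite[Theorem~1.7]{BGZ1} to non-orientable hyperbolic bases (see Proposition~\ref{nonsep annulus-in-3} and the proof of Proposition~\ref{EScycle}).
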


\pf Since $\Delta(\alpha, \beta) > 3$, $\beta$ is not a singular slope of a closed essential surface in $M$ (\cite[Theorem 1.5]{BGZ1}). Hence, as $M(\beta)$ is toroidal, Seifert but not the union of two twisted $I$-bundles over the Klein bottle (Corollary \ref{both not (2,2)}), its base orbifold is of the form $P^2(a,b)$ (\cite[Theorem 1.7]{BGZ1}) where $(a,b) \ne (2,2)$. Each essential torus in $M(\beta)$ splits it into the union of a twisted $I$-bundle over the Klein bottle and a Seifert manifold with base orbifold $D^2(a,b)$. Since $t_1^+ = t_1^- = 0$, Proposition \ref{order two}(3) implies that $X^-$ is a twisted $I$-bundle.
\qed

\begin{lemma} \label{tight or essential annulus}
Let $S_1, S_2$ be large, neat, connected surfaces contained in the same component of $S$. Suppose, for each $j$, that either $S_j$ is tight or $\widehat S_j$ is an $\widehat S$-essential annulus.

$(1)$ Each component of $S_1 \wedge S_2$ is either tight or an $\widehat S$-essential annulus.

$(2)$ If we further assume that when both $\widehat S_1$ and $\widehat S_2$ are $\widehat S$-essential annuli, their slopes are distinct, then each component of $S_1 \wedge S_2$ is tight.

\end{lemma}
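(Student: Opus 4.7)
The plan is to analyze each component $C$ of $S_1 \wedge S_2$ by viewing its completion $\widehat C$ as a connected subsurface of both $\widehat S_1$ and $\widehat S_2$. Using the construction of the large essential intersection in \cite[Proposition 4.2]{BCSZ1}, I would place $S_1, S_2$ in minimal position in $S$ so that $C$ embeds in $S_1 \cap S_2$. Since the components of $\partial S$ inside $C$ are also inside $S_1$ and $S_2$, the completion $\widehat C$ embeds in $\widehat S_j$ for $j = 1, 2$. Each $\widehat S_j$ is planar (a disk or an essential annulus in $\widehat S$), so $\widehat C$ is a connected planar surface whose boundary components are exactly the inner boundary components of $C$ in $S$.

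For part (1), let $n$ denote the number of boundary components of $\widehat C$. Inside $\widehat S_j$, any simple closed curve is either a component of $\partial \widehat S_j$, essentially parallel to $\partial \widehat S_j$ (only possible when $\widehat S_j$ is an annulus), or bounds a disk in $\widehat S_j$. Since $|\partial \widehat S_j| \leq 2$, at most two boundary components of $\widehat C$ can fail to bound a disk in $\widehat S_j$. The core argument is that if $n \geq 3$, some boundary component $c$ of $\widehat C$ must bound a disk in both $\widehat S_1$ and $\widehat S_2$; adjoining this disk enlarges $C$ to a strictly larger neat subsurface of $S$ still contained, up to isotopy, in $S_1 \cap S_2$, and hence by the universal property $(**)$ lying inside $S_1 \wedge S_2$, contradicting the choice of $C$ as a component. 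Thus $n \leq 2$. When $n = 1$, $\widehat C$ is a disk and $C$ is tight. When $n = 2$, $\widehat C$ is an annulus, and if it were $\widehat S$-inessential the same capping-off reduces to a disk, again a contradiction; so $\widehat C$ must be an $\widehat S$-essential annulus.

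For part (2), the extra hypothesis forbids $\widehat C$ from being an essential annulus. If one of $\widehat S_1, \widehat S_2$ is a disk, then it contains no essential annulus and so neither can $\widehat C$. If both are essential annuli with distinct slopes, then a core of $\widehat C$, as an essential annulus in $\widehat S$, would be simultaneously isotopic in $\widehat S$ to the slope of $\widehat S_1$ and to that of $\widehat S_2$, which is impossible. Combining this with part (1), every component of $S_1 \wedge S_2$ must be tight.

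The main obstacle will be making the capping-off reduction rigorous: verifying that once we enlarge $C$ by attaching a disk bounded by an inessential inner boundary, the result still represents a single component of $S_1 \wedge S_2$ up to isotopy, so as to yield a genuine contradiction with $C$'s status. This rests on a careful reading of the large essential intersection construction in \cite[Proposition 4.2]{BCSZ1}, together with the fact that inessential inner boundaries in $\widehat S$ can always be absorbed without affecting which large functions $Y \to S$ factor through the subsurface.
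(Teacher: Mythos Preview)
Your approach is essentially the same as the paper's: both arguments hinge on showing that if a component $S_0$ of $S_1 \wedge S_2$ has an $\widehat S$-inessential inner boundary component, then the disk it bounds in $\widehat S$ (intersected with $S$) must already lie in $S_1 \wedge S_2$, contradicting the fact that $S_0$ is a full component. The paper phrases this as an innermost-disk argument; you phrase it as a bound on the number $n$ of inner boundary components.

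There is, however, a small logical gap in your counting step. From ``at most two boundary components of $\widehat C$ can fail to bound a disk in $\widehat S_j$'' applied separately for $j=1,2$, it does \emph{not} follow that when $n \geq 3$ some $c$ bounds a disk in both $\widehat S_1$ and $\widehat S_2$: the two exceptional pairs could in principle be different. The missing observation is that a curve in $\widehat S_j$ bounds a disk there if and only if it is $\widehat S$-inessential (since $\widehat S_j$ is a disk or an $\widehat S$-essential annulus), so the condition is actually independent of $j$. Also, the justification you give for ``at most two can fail'' --- namely $|\partial \widehat S_j| \leq 2$ --- is not the right reason: an annulus contains arbitrarily many disjoint core-parallel curves, but a \emph{connected} subsurface of an annulus has at most two core-parallel boundary components. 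Once these two points are inserted your argument is correct and coincides with the paper's.
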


\pf Let $S_0$ be a component of $S_1 \wedge S_2$.

First suppose that $S_0$ is contained in a disk $D$ in $\widehat S$. If $S_0$ is not tight, it has at least two inner boundary components. Let $C$ be an inner boundary component of $S_0$ which is innermost in $D$ amongst all the other inner boundary components of $S_0$. Let $D_0 \subseteq D \subseteq \widehat S$ be the disk with boundary $C$. By construction, $D_0 \cap S_0 = C$. Further, the neatness of $S_1$ and $S_2$ implies that $D_0 \cap S$ is large. Since $\widehat S_j$ is either a disk or an $\widehat S$-essential annulus, $D_0 \cap S \subseteq S_j$ for each $j$. Hence it is contained in $S_1 \wedge S_2$ and therefore $S_0$, contrary to our construction. Thus $S_0$ must be tight. In particular, this proves (2).

Next suppose that $S_0$ contains an $\widehat S$-essential annulus but $S_0$ is not itself an $\widehat S$-essential annulus. Then $S_0$ has at least three inner boundary components and all but exactly two of them are inessential in $\widehat S$. Fix an inessential inner boundary component $C$ of $S_0$. The argument of the previous paragraph is easily adapted to this case and leads to a contradiction. Thus $\widehat S_0$ must be an $\widehat S$-essential annulus.
\qed

\begin{prop} \label{phi3 options}
If $t_1^+ = t_1^- = 0$, then one of the following three scenarios arises.

$(i)$ $\breve \Phi_3^+$ is a union of tight components.

$(ii)$ $t_3^+ = 0$, $X^-$ is a twisted $I$-bundle, and $M(\beta)$ is Seifert with base orbifold $P^2(2,n)$ for some $n >2$. Further, $\dot \Phi_1^+$ completes to an $\widehat F$-essential annulus, $\dot \Phi_3^+$ completes to the union of two $\widehat F$-essential annuli, and there are disjoint, non-separating annuli $A_1^-, A_2^-$ properly embedded in $(X^-, F)$ such that $\partial A_1^- \cup \partial A_2^- \subseteq \dot \Phi_1^+$ and for each $j$, $\partial \dot \Phi_1^+ \cap \partial A_j^-$ is a boundary component of $\dot \Phi_1^+$.

$(iii)$ $X^-$ is a product $I$-bundle and $M(\beta)$ is Seifert fibred with base orbifold a torus or a Klein bottle with exactly one cone point. In particular, $\beta$ is a singular slope for a closed essential surface in $M$ and thus, $\Delta(\alpha, \beta) \leq 3$.

\end{prop}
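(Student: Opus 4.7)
The plan is to prove the contrapositive: if (i) fails, then (ii) or (iii) holds. Suppose some component of $\breve{\Phi}_3^+$ is non-tight.

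First I would exploit the hypothesis $t_1^+ = t_1^- = 0$. Propositions \ref{order two} and \ref{one-comp}, combined with Assumption \ref{assumption not (semi) fibre}, imply that every component of $\dot{\Phi}_1^+$ is planar and completes to an $\widehat S$-essential annulus (with two $\widehat S$-essential inner boundary components). Since $\dot{\Phi}_3^+ \subseteq \dot{\Phi}_1^+$, every component of $\dot{\Phi}_3^+$ is likewise planar. Non-tight components of $\breve{\Phi}_3^+$ necessarily lie in $\dot{\Phi}_3^+$, because collar components are tight, and planarity together with non-tightness forces any such component $\phi$ to have at least two inner boundary components.

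The key step is to produce an $\widehat S$-essential annulus inside $\dot{\Phi}_3^+$. If $\phi$ has an $\widehat S$-essential inner boundary component $c$, then a small collar of $c$ in $\phi$ is an annulus whose completion has core isotopic to $c$, hence is $\widehat S$-essential; we are done. Otherwise every inner boundary of $\phi$ is $\widehat S$-inessential. Sitting inside the annular completion $\widehat \psi$ of the component $\psi$ of $\dot{\Phi}_1^+$ containing $\phi$, every such boundary curve bounds a disk contained in $\widehat \psi$. Via a case analysis on the nesting of these disks --- applying Proposition \ref{parallel means essential} to annuli in $\overline{S \setminus \breve{\Phi}_3^+}$ cobounded by appropriate pairs of such curves, and Lemma \ref{tight-in-disk} to locate tight components in the disks cut off in $\widehat S$ --- I would derive a contradiction with the structural constraints already established on $\dot{\Phi}_1^+$ (in particular the fact that every inner boundary of $\dot{\Phi}_1^+$ is $\widehat S$-essential). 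This eliminates the all-inessential subcase and yields the desired annulus.

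Once an $\widehat S$-essential annulus is found inside $\dot{\Phi}_3^+$, the argument splits on the separability of $F$. If $F$ is separating, Proposition \ref{sep annulus-in-3-+} applies; since $t_1^+ = 0 < 4$, its conclusion (i) is excluded, leaving its conclusion (ii), which is exactly our scenario (ii). If $F$ is non-separating, then $X^-$ is the product $I$-bundle $F \times I$ between the two parallel copies $F_1, F_2$, and Proposition \ref{nonsep annulus-in-3} furnishes the Seifert structure on $M(\beta)$ with base orbifold a torus or a Klein bottle with exactly one cone point, as well as the bound $\Delta(\alpha,\beta) \leq 3$ via the singular-slope conclusion. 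This is scenario (iii). The main obstacle I anticipate is the clean elimination of the all-inessential configuration in the key step; the remaining reductions to the previously established structure theorems are direct.
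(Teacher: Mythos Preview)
Your overall architecture is exactly the paper's: show that if some component of $\breve\Phi_3^+$ is not tight then $\dot\Phi_3^+$ contains an $\widehat S$-essential annulus, and then invoke Proposition~\ref{sep annulus-in-3-+} (separating case) or Proposition~\ref{nonsep annulus-in-3} (non-separating case). The final case split and the identification of scenarios (ii) and (iii) are handled correctly.

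The gap is precisely where you flag it. Your Case~B (a non-tight component $\phi$ all of whose inner boundary components are $\widehat S$-inessential) does not obviously succumb to the tools you name. Proposition~\ref{parallel means essential} only applies when two inner boundary components of $\breve\Phi_3^+$ cobound an annulus in $\overline{S\setminus\breve\Phi_3^+}$, and the complementary region $R$ adjacent to your innermost curve $c$ need not be an annulus; Lemma~\ref{tight-in-disk} produces tight components inside the disk but not a contradiction with anything. The fact that the inner boundaries of $\dot\Phi_1^+$ are $\widehat S$-essential tells you only that $\phi$ sits inside a disk in $\widehat\psi$, which you already knew. What is missing is the observation that $\dot\Phi_3^+$ is (up to applying $\tau_\pm$) a large essential intersection $S_1\wedge S_2$ of surfaces each of which completes to an $\widehat S$-essential annulus. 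That structure is exactly what Lemma~\ref{tight or essential annulus} exploits: for an innermost inessential inner boundary $C$ of a component $S_0$ of $S_1\wedge S_2$, the disk $D_0\cap S$ it cuts off lies in $S_1$ and in $S_2$, hence in $S_1\wedge S_2$, hence in $S_0$ itself---a contradiction. The paper's proof of the proposition is then a one-line appeal to Lemma~\ref{tight or essential annulus}. You should either cite that lemma directly, or, if you wish to argue by hand, bring in the identity $\tau_+(\dot\Phi_3^+)=\dot\Phi_1^+\wedge\dot\Phi_2^-$ and the annular structure of the factors; the purely local tools you list do not see this.
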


\pf Propositions \ref{sep-seifert} and \ref{nonsep-seifert} imply that $\breve \Phi_1^\epsilon = \dot \Phi_1^\epsilon$ is either $S$ or a union of subsurfaces whose completions are $\widehat S$-essential annuli. If no component of $\breve \Phi_3^+$ contains an $\widehat S$-essential annulus, Lemma \ref{tight or essential annulus} shows that (i) holds. If some component of $\breve \Phi_3^+$ does contain an $\widehat S$-essential annulus, Propositions \ref{sep annulus-in-3-+}  and \ref{nonsep annulus-in-3} show that (ii) and (iii) hold.
\qed

\begin{prop} \label{tightness for large j}
Suppose that $t_1^+ = t_1^- = 0$ and $\Delta(\alpha, \beta) > 3$.

$(1)$ If $X^-$ is not an $I$-bundle, each component of $\breve \Phi_j^\epsilon$ is tight for all $j \geq 2$ and both $\epsilon$.

$(2)$ If $X^-$ is a product $I$-bundle, or $X^-$ is a twisted $I$-bundle and $\breve \Phi_3^+$ does not contain an $\widehat S$-essential annulus, each component of $\breve \Phi_j^+$ is tight for all $j \geq 3$.

$(3)$ If $X^-$ is a twisted $I$-bundle and $\breve \Phi_3^+$ contains an $\widehat S$-essential annulus, then $t_3^+ = 0$, $M(\beta)$ is Seifert with base orbifold $P^2(2,n)$ for some $n > 2$, $\dot \Phi_1^+$ and $\dot \Phi_3^+$ are as described in Proposition \ref{phi3 options}(ii), and each component of $\breve \Phi_j^+$ is tight for all $j \geq 5$.

\end{prop}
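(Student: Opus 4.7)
The plan is to identify which of the three scenarios of Proposition~\ref{phi3 options} arises in each case, and then to propagate tightness up the filtration using the surface calculus identity~\eqref{eq surface calculus} together with Lemma~\ref{tight or essential annulus}. The basic mechanism I will use repeatedly is the following: if $\psi$ is tight then no large neat subsurface of $\psi$ completes to an $\widehat S$-essential annulus, since such an annulus has boundary essential in $\widehat S$ while $\widehat \psi$ is a disk. Hence whenever a large essential intersection $S_1\wedge S_2$ is formed with one of $S_1,S_2$ tight and the other tight or an $\widehat S$-essential annulus, Lemma~\ref{tight or essential annulus}(1) forces every component of $S_1\wedge S_2$ to be tight. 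Recall also that by Propositions~\ref{sep-seifert} and \ref{nonsep-seifert}, under the standing assumption $t_1^+=t_1^-=0$, each component of $\dot \Phi_1^\pm$ completes to an $\widehat S$-essential annulus.

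For part (1), since $X^-$ is not an $I$-bundle, scenarios (ii) and (iii) of Proposition~\ref{phi3 options} are excluded and (i) holds, so $\breve \Phi_3^+$ is a union of tight components; note that $F$ is then separating and $S=F$. I will first establish the base case $\breve \Phi_2^\pm$ tight. If $\dot \Phi_2^\epsilon$ contained an $\widehat F$-essential annulus, then one of conclusions (i)--(iii) of Proposition~\ref{sep annulus-in-2} would hold: conclusion (i) is excluded by $t_1^+=t_1^-=0$, conclusion (ii) is excluded by the hypothesis on $X^-$, and conclusion (iii) would make $M(\beta)$ Seifert, whereupon Lemma~\ref{seifert means twisted} (using $\Delta(\alpha,\beta)>3$) again forces $X^-$ to be twisted, a contradiction. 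So no such annulus exists; applying the mechanism above to $\dot \Phi_2^-=\tau_-(\dot \Phi_1^-\wedge \dot \Phi_1^+)$ yields tightness of $\dot \Phi_2^-$, and $h_2^+$ transfers this to $\dot \Phi_2^+$. For the inductive step $j\Rightarrow j+1$, I use the identity $\dot \Phi_{j+1}^+=\tau_+(\dot \Phi_1^+\wedge \dot \Phi_j^-)$ coming from~\eqref{eq surface calculus}: with $\dot \Phi_j^-$ tight (by induction) and $\dot \Phi_1^+$ a union of $\widehat S$-essential annuli, the mechanism above gives tightness of $\dot \Phi_{j+1}^+$, and symmetrically of $\dot \Phi_{j+1}^-$.

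For parts (2) and (3) I will exploit the $I$-bundle relations from the end of Section~\ref{characteristic subsurfaces}, which give $\dot \Phi_{2k+1}^+=\dot \Phi_{2k+2}^+$ and $\dot \Phi_{2k+2}^-=\tau_-(\dot \Phi_{2k+1}^+)$. In part (2), scenario (ii) of Proposition~\ref{phi3 options} is excluded either by the hypothesis that $\breve \Phi_3^+$ contains no $\widehat S$-essential annulus (when $X^-$ is twisted) or by $X^-$ being a product, and scenario (iii) is excluded by $\Delta(\alpha,\beta)>3$, so once again $\breve \Phi_3^+$ is tight. Applying $\tau_-$ (a homeomorphism) to $\dot \Phi_3^+$ gives tightness of $\dot \Phi_4^-=\tau_-(\dot \Phi_3^+)$, and the mechanism of paragraph~1 applied to $\dot \Phi_{2k+3}^+=\tau_+(\dot \Phi_1^+\wedge \dot \Phi_{2k+2}^-)$ then propagates tightness up the odd filtration; the $I$-bundle identity $\dot \Phi_{2k+1}^+=\dot \Phi_{2k+2}^+$ fills in the even terms.

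For part (3), scenario (ii) of Proposition~\ref{phi3 options} is the only option, giving $M(\beta)=P^2(2,n)$ with $n>2$ together with the concrete description of $\dot \Phi_1^+$, $\dot \Phi_3^+$, and the non-separating annuli $A_1^-,A_2^-$ in $X^-$. The main obstacle is showing that $\dot \Phi_5^+=\tau_+(\dot \Phi_1^+\wedge\tau_-(\dot \Phi_3^+))$ is tight, since both $\dot \Phi_1^+$ and $\dot \Phi_4^-=\tau_-(\dot \Phi_3^+)$ consist of $\widehat F$-essential annuli so Lemma~\ref{tight or essential annulus}(1) alone only gives tight-or-annulus components. I will use the data in Proposition~\ref{phi3 options}(ii), specifically the way $\partial A_1^-\cup\partial A_2^-$ sits inside $\partial \dot \Phi_1^+$, to compute the slopes in $\widehat F$ of the two annular components of $\dot \Phi_4^-$ relative to the slope of $\dot \Phi_1^+$; the boundary-pairing $(c_1\cup c_2',\,c_2\cup c_1')$ forces these slopes to be distinct from the slope of $\dot \Phi_1^+$. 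Lemma~\ref{tight or essential annulus}(2) then applies and all components of $\dot \Phi_1^+\wedge \tau_-(\dot \Phi_3^+)$ are tight, so $\dot \Phi_5^+$ is tight. The inductive propagation from paragraph~3 then completes the argument for all $j\geq 5$.
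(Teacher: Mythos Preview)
Your arguments for parts (1) and (2) are essentially the same as the paper's, with one harmless variation: in part (1) you rule out an $\widehat F$-essential annulus in $\dot\Phi_2^\epsilon$ via Proposition~\ref{sep annulus-in-2}, whereas the paper argues directly that the slopes of $\widehat{\dot\Phi_1^+}$ and $\widehat{\dot\Phi_1^-}$ are distinct (since $M(\beta)$ is not Seifert by Lemma~\ref{seifert means twisted}) and applies Lemma~\ref{tight or essential annulus}(2). Both routes work.

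Your approach to part (3) has a genuine gap. The claim that the slopes of the two annular components of $\dot\Phi_4^- = \tau_-(\dot\Phi_3^+)$ are distinct from the slope $\varphi_+$ of $\dot\Phi_1^+$ is false. In scenario~(ii) of Proposition~\ref{phi3 options}, $M(\beta)$ is Seifert with base orbifold $P^2(2,n)$, and the Seifert structure on $\widehat X^-$ (with base orbifold a M\"obius band) has fibre slope $\varphi_+$ on $\widehat F$; consequently $\widehat\tau_-$ preserves $\varphi_+$. Since the components of $\dot\Phi_3^+$ are sub-annuli of $\widehat{\dot\Phi_1^+}$ and hence have slope $\varphi_+$, so do the components of $\dot\Phi_4^-$. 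Thus Lemma~\ref{tight or essential annulus}(2) does not apply, and Lemma~\ref{tight or essential annulus}(1) genuinely allows annular components in $\dot\Phi_1^+ \wedge \dot\Phi_4^-$.

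The paper handles this quite differently. It computes $\tau_-\tau_+(\breve\Phi_5^+) = \breve\Phi_3^+ \wedge \tau_+(\breve\Phi_3^+)$ and shows that this intersection does contain annular pieces $E_1, E_2$, but then proves $E_j \cap \partial S = \emptyset$ by a direct topological construction: assembling a Seifert piece $W$ over $D^2$ with two cone points whose incompressible boundary would intersect $\partial M$ in fewer than $m$ components if some $E_j$ met $\partial S$, contradicting Assumption~\ref{assumption minimal}. Since $\tau_-\tau_+$ preserves outer boundary components, the $E_j$ cannot be images of components of $\breve\Phi_5^+$, and the remaining (tight) components account for all of $\breve\Phi_5^+$.
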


\pf Propositions \ref{sep-seifert} and \ref{nonsep-seifert} show that for each $\epsilon$, $\breve \Phi_1^\epsilon = \dot \Phi_1^\epsilon$ is either $S$ or a union of subsurfaces whose completions are $\widehat S$-essential annuli.

First note that in order to prove assertion (1), it suffices to show that each component of $\dot \Phi_2^+$ is tight. For if this holds, the same is true of $\dot \Phi_2^- = h_2^+(\dot \Phi_2^+)$. Suppose inductively that each component of $\dot \Phi_j^\epsilon$ is tight for some $j \geq 2$ and both $\epsilon$. Lemma \ref{tight or essential annulus}  combines with the identity $\dot \Phi_{j+1}^{\epsilon} = \tau_\epsilon(\dot \Phi_1^\epsilon \wedge \dot \Phi_j^{-\epsilon})$ to show that each component of $\dot \Phi_{j+1}^\epsilon$ is tight.

Consider $\dot \Phi_2^+$ then. Since $t_1^+ = t_1^- = 0$ and $X^-$ is not an $I$-bundle, $S = F$ is separating. Proposition \ref{sep-seifert} implies that for each $\epsilon$ and component $\phi$ of $\dot \Phi_1^\epsilon$, $\widehat \phi$ is an $\widehat F$-essential annulus. Lemma \ref{seifert means twisted} implies that $M(\beta)$ is not Seifert, so the slopes of an $\widehat F$-essential annulus in $\widehat{\dot \Phi_1^+}$ and an $\widehat F$-essential annulus in $\widehat{\dot \Phi_1^-}$ are distinct. Hence Lemma \ref{tight or essential annulus} implies that each component of $\dot \Phi_2^+ = \tau_+(\dot \Phi_1^+ \wedge \dot \Phi_1^-)$ is tight.

Next consider the hypotheses of assertion (2). Proposition \ref {phi3 options} implies that each component of $\dot \Phi_3^+$ is tight. Since $X^-$ is an $I$-bundle, $\dot \Phi_3^+ = \dot \Phi_4^+$, so the lemma holds when $j = 3, 4$. But when $j \geq 5$, we have $\dot \Phi_{j+1}^+ = \tau_\epsilon(\dot \Phi_1^\epsilon \wedge \tau_-(\dot \Phi_{j-1}^+)$, so Lemma \ref{tight or essential annulus} combines with an inductive argument to show that (2) holds for all $j \geq 3$.

Finally consider assertion (3). Now $\tau_- \tau_+(\breve \Phi_5^+) = \breve \Phi_3^+ \wedge \breve \Phi_2^- = \breve \Phi_3^+ \wedge \tau_-(\breve \Phi_1^+) = \breve \Phi_3^+ \wedge \tau_+(\breve \Phi_3^+)$ where the latter identity follows from Proposition \ref{phi3 options}(ii). Now $\tau_+(\breve \Phi_3^+) = \phi_1 \cup \phi_2$ where $\widehat \phi_j$ is an $\widehat F$-essential annulus and $\phi_2 = \tau_- (\phi_1)$. Hence
$$\breve \Phi_3^+ \wedge \tau_+(\breve \Phi_3^+) = \phi_1 \wedge \tau_+(\phi_1) \sqcup \phi_1 \wedge \tau_+(\phi_2) \sqcup \phi_2 \wedge \tau_+(\phi_1) \sqcup \phi_2 \wedge \tau_+(\phi_2)$$
By construction, $\phi_j \cap \tau_+(\phi_{3-j})$ contains an inner boundary component of $\dot \Phi_1^+$ for both $j$. If $\phi_j \wedge \tau_+(\phi_{3-j}) = \phi_j$ for some $j$, then $\phi_j \wedge \tau_+(\phi_{3-j}) = \phi_j$ and $\phi_j \wedge \tau_+(\phi_{j}) = \emptyset$ for both $j$. Hence $\tau_- \tau_+(\breve \Phi_5^+) = \breve \Phi_3^+$ from which it follows that $\breve\Phi_3^+ = \breve \Phi_5^+$, which is impossible. Hence $\phi_j \wedge \tau_+(\phi_{3-j}) \ne \phi_j$ for both $j$. It follows that $\phi_j \wedge \tau_+(\phi_{j}) \ne \emptyset$ is a non-empty union of tight components for each $j$.

Next consider $\phi_j \wedge \tau_+(\phi_{3-j})$. By Lemma \ref{tight or essential annulus}, each of its components is either tight or completes to an $\widehat S$-essential annulus. Since $\tau_+(\phi_{3-j})$ contains the inner component $c_j$ of $\dot{\Phi}_1^+$ contained in $\phi_j$, there is exactly one component of $\phi_j \wedge \tau_+(\phi_{3-j})$, $E_j$ say,  which completes to an $\widehat S$-essential annulus. To complete the proof we need only show that $E_j \cap \partial S = \emptyset$.

By construction, $\tau_+(E_1)=E_2$ and $\tau_+(E_2)=E_1$. Let $E_0 = \overline{\dot \Phi_1^+ \setminus (E_1\cup E_2)}$. Then $E_0$ completes to an $\widehat F$-essential annulus $\widehat E_0$ which is invariant under $\tau_+$. Hence the associated $I$-bundle over $\widehat E_0$ is a solid torus $V_1$ whose frontier in $\widehat X^+$ is an essential annulus in $X^+$ which has winding number $2$ in $V_1$.

Next consider the solid torus $V_2= \overline{X^+\setminus \dot{\Sigma}_1^+}$. The frontier of $V_2$ in $X^+$ is an essential annulus in $X^+$ cobounded by $c_1$ and $c_2$. Let $A$ be the other annulus in $\partial V_2$ cobounded by $c_1$ and $c_2$. Then $\tau_-(A) = \overline{\dot{\Phi}_1^+ \setminus (\phi_1 \cup \phi_2)}$ is a core annulus in $\widehat E_0$. The $I$-bundle in $X^-$ over $A$ is a solid torus in which $A$ has winding number $1$. It follows that $W = V_1 \cup V_2 \cup V_3 \subset M(\beta)$ is a Seifert fibered space
over a disk with two cone points. In particular, $\partial W$ is incompressible
in $W$. The exterior of $W$ in $M(\beta)$ is a Seifert fibered space over a M\"{o}bius band so $\partial W$ is also incompressible in $M(\beta)$. If $E_j \cap \partial S \ne  \emptyset$ for some $j$ then $\partial W$ intersects $\partial M$ in fewer than $m$ components contrary to Assumption \ref{assumption minimal}. Thus we must have $E_j \cap \partial S = \emptyset$ for both $j$, which completes that proof of (3).
\qed

\section{Lengths of essential homotopies} \label{length}

It is clear that $\chi(\breve{\Phi}_j^\epsilon) = 0$ if and only if $\chi(\breve{\Phi}_j^\epsilon)$ is a regular neighbourhood of $\partial S$. Thus if we set
$$l_\epsilon = \hbox{max}\{ j : \chi(\breve{\Phi}_j^\epsilon) \ne 0\}$$
then $l_\epsilon$ is the maximal length of an essential homotopy in $(M, S)$ of a large function which begins on the $\epsilon$-side of $S$. Hence
$$l_S = \hbox{max}\{ l_+, l_- \}$$
is the maximal length of an essential homotopy in $(M, S)$ of a large function. It is evident that $|l_+ - l_-| \leq 1$ and therefore $|l_\epsilon - l_S| \leq 1$ for each $\epsilon$.

\begin{prop} \label{tight bound}
Suppose that $\Delta(\alpha, \beta) > 3$ if $F$ is non-separating.

$(1)$ $l_+ \leq |\partial S| - t_1^+$ if $t_3^+ > 0$ and $l_+ \leq |\partial S| - t_1^+ + 2$ otherwise. Hence,
$$l_S \leq \left\{ \begin{array}{ll} |\partial S| - t_1^+ + 1 & \mbox{if } t_3^+ > 0 \\  |\partial S| - t_1^+ + 3 & \mbox{otherwise} \end{array} \right. $$

$(2)$ If $X^-$ is not an $I$-bundle, then $l_- \leq |\partial S| - t_1^-$. Hence,
$$l_S \leq |\partial S| - \left\{\begin{array}{ll}  \hbox{max}\{t_1^+, t_1^-\} & \hbox{if } t_1^+ = t_1^- \\
& \\ \hbox{max}\{t_1^+, t_1^-\} - 1&  \hbox{if } t_1^+ \ne t_1^-
\end{array} \right. $$
\end{prop}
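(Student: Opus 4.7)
The plan is to bound $l_\epsilon$ by tracking how rapidly the tight count $t_j^\epsilon$ grows with $j$, and then locating the first index at which $\breve{\Phi}_j^\epsilon$ must collapse to a regular neighbourhood of $\partial S$. The key preliminary observation is that once $t_j^\epsilon = |\partial S|$, each boundary component of $S$ lies in its own tight component, each of which (by Proposition~\ref{tight increase}(2)) is an annular collar; consequently every component of $\breve{\Phi}_j^\epsilon$ is an annular collar, so $\chi(\breve{\Phi}_j^\epsilon) = 0$ and $j > l_\epsilon$. The task therefore reduces to exhibiting a small $j$ with $t_j^\epsilon = |\partial S|$.

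To control that index, I would combine Proposition~\ref{tight increase} with the parity remark from the opening of \S\ref{section-tight}: for every odd $j$, the free involution $h_j$ pairs tight components, so $t_j^\epsilon$ is even. Part~(1)(a) of that proposition gives monotonicity $t_j^+ \leq t_{j+1}^+$, and part~(2) gives the strict inequality $t_{j+2}^+ > t_j^+$ whenever $0 < t_j^+ < |\partial S|$. Combining strict increase with evenness, the odd-indexed subsequence obeys $t_{2k+1}^+ \geq t_1^+ + 2k$ until it reaches $|\partial S|$. Under the hypothesis $t_3^+ > 0$ the recursion is primed from $j=1$, so $t_j^+ = |\partial S|$ is forced by $j = |\partial S| - t_1^+ + 1$, which gives $l_+ \leq |\partial S| - t_1^+$. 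When $t_3^+ = 0$ one needs at most two extra indices of runway before the recursion can engage, yielding the weaker bound $l_+ \leq |\partial S| - t_1^+ + 2$. The displayed estimate on $l_S$ then follows from $|l_+ - l_-| \leq 1$.

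For Part~(2) the same parity argument applies on the $-$ side. The extra assumption that $X^-$ is not an $I$-bundle enters through Propositions~\ref{sep-seifert} and \ref{nonsep-seifert}: it rules out the degenerate scenarios (twisted or product $I$-bundle on the $-$ side, $F$ a semi-fibre) in which $\dot{\Sigma}_1^-$ could fail to generate tight components at the lowest indices, and so guarantees that the recursion in Proposition~\ref{tight increase}(2) may be applied from $j=1$ without slack. This produces $l_- \leq |\partial S| - t_1^-$. Combining with the bound on $l_+$ from Part~(1), when $t_1^+ = t_1^-$ both bounds coincide at $|\partial S| - \max\{t_1^+,t_1^-\}$, whereas when they differ the weaker side's direct estimate of $|\partial S| - \min\{t_1^+,t_1^-\}$ is sharpened to $|\partial S| - \max\{t_1^+,t_1^-\} + 1$ by routing through $|l_+ - l_-| \leq 1$ applied to the stronger side.

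The main obstacle will be the book-keeping for the boundary cases: confirming that parity mismatches between $t_1^\epsilon$ (always even) and $|\partial S|$ (possibly odd in the separating case) never eat more than one additional index of slack, and verifying that in the $t_3^+ = 0$ regime precisely the stated $+2$ suffices. In Part~(2) the delicate point is identifying the minimal structural input from \S\ref{NT} needed to rule out $\mathcal{T}_j^-$ stagnating at the first two values of $j$ when $X^-$ is not an $I$-bundle; once that is pinned down, the argument runs in parallel with Part~(1).
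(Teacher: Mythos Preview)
Your overall strategy matches the paper's: track the odd-indexed tight counts $t_1^+ < t_3^+ < \cdots$ using parity (each is even) and the strict inequality from Proposition~\ref{tight increase}(2), then read off $l_+$ from the first index at which $t_j^+ = |\partial S|$. The paper's proof carries out exactly this computation.

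However, there is a genuine gap in your handling of the two boundary cases you flag as ``the main obstacle''. In the $t_3^+ = 0$ case you assert that ``at most two extra indices of runway'' suffice, but do not say why the recursion ever engages, i.e.\ why $t_5^+ > 0$. This is not automatic: if $t_3^+ = 0$ then also $t_1^+ = t_1^- = 0$, and one needs the substantial Proposition~\ref{tightness for large j} (which in turn uses Propositions~\ref{phi3 options} and~\ref{sep annulus-in-3-+}, and in the non-separating case the hypothesis $\Delta(\alpha,\beta) > 3$) to conclude that every component of $\breve\Phi_5^+$ is tight, hence $t_5^+ > 0$. This is where the hypothesis in the statement is actually consumed.

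The same misattribution occurs in Part~(2). You cite Propositions~\ref{sep-seifert} and~\ref{nonsep-seifert}, but those describe the Seifert structure on $\widehat X^\epsilon$ and the shape of $\dot\Phi_1^\epsilon$; they do not by themselves yield $t_3^- > 0$. The correct input is again Proposition~\ref{tightness for large j}(1): when $X^-$ is not an $I$-bundle (and $t_1^+ = t_1^- = 0$, the only case requiring work), every component of $\breve\Phi_j^\epsilon$ is tight for $j \geq 2$, so $t_3^- > 0$. Once you replace your citations with Proposition~\ref{tightness for large j}, the rest of your argument goes through exactly as in the paper.
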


\pf For each $\epsilon$ we know that $\mathcal{T}_{l_\epsilon + 1}^\epsilon$ is a regular neighbourhood of $\partial S$ while $\mathcal{T}_{l_\epsilon}^\epsilon$ isn't, so $t_{l_\epsilon} < t_{l_\epsilon + 1} = |\partial S|$. If $t_3^+ > 0$, Proposition \ref{tight increase} implies that if $2k+1 \leq l_+$, then $t_1^+ < t_3^+ < \ldots < t_{2k+1}^+ < |\partial S|$. As each of these numbers is even, $|\partial S| > t_{2k+1}^+ \geq 2k + t_1^+$. Hence $2k + 2 \leq |\partial S| - t_1^+$. It follows that $l_+  \leq |\partial S| - t_1^+$ and therefore $l_S  \leq |\partial S| - t_1^+ + 1$. In general, Proposition \ref{tightness for large j} implies that $t_5^+ > 0$, which yields  $l_+ \leq |\partial S| - t_1^+ + 2$. Thus assertion (1) holds.

If $X^-$ is not an $I$-bundle, then Proposition \ref{tightness for large j} implies that $t_3^- > 0$, so the argument of the previous paragraph shows that $l_-  \leq |\partial S| - t_1^-$ and therefore

\indent \hspace{5mm} $\bullet$ $l_S  \leq |\partial S| - t_1^- + 1$; and

\indent \hspace{5mm} $\bullet$ $l_S  = \hbox{max}\{l_+, l_-\} \leq \hbox{max}\{|\partial S| - t_1^+, |\partial S| - t_1^-\} = |\partial S| - \hbox{min}\{t_1^+, t_1^-\}$.

Part (1) of the proposition combines with the first inequality to show that $l_S  \leq \hbox{min}\{|\partial S| - t_1^+ + 1, |\partial S| - t_1^- + 1\} = |\partial S| - \hbox{max}\{t_1^+, t_1^-\} + 1$. The latter combines with the second inequality to yield the upper bound for $l_S$ described in (2).
\qed

Proposition \ref {tightness for large j} and Proposition \ref{tight bound} imply the following corollary.

\begin{cor} \label{general length}
Suppose that $\Delta(\alpha, \beta) > 3$ if $F$ is non-separating.  Then
$$l_S \leq \left\{ \begin{array}{ll}
|\partial S|  & \mbox{if $X^-$ is not an $I$-bundle } \\
|\partial S| + 1 & \mbox{if $X^-$ is an $I$-bundle and $\dot \Phi_3^+$ contains no $\widehat S$-essential annulus  when it is twisted} \\
|\partial S| + 3 & \mbox{if $X^-$ is a twisted $I$-bundle and $\dot \Phi_3^+$ contains an $\widehat S$-essential annulus  \qed} \end{array}  \right. $$

\end{cor}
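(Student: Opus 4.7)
The plan is to case-split on the topology of $X^-$ and combine Propositions \ref{tightness for large j} and \ref{tight bound}.

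In case (i), $X^-$ is not an $I$-bundle, so Proposition \ref{tight bound}(2) applies directly. Since $t_1^+$ and $t_1^-$ are both even, either $t_1^+ = t_1^-$ and $l_S \leq |\partial S| - \max\{t_1^+, t_1^-\} \leq |\partial S|$, or they differ, in which case $\max\{t_1^+, t_1^-\} \geq 2$ and $l_S \leq |\partial S| - 1$. Either way the desired bound $l_S \leq |\partial S|$ follows.

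In cases (ii) and (iii), $X^-$ is an $I$-bundle and Proposition \ref{tight bound}(2) is unavailable. Since $|l_+ - l_-| \leq 1$ implies $l_S \leq l_+ + 1$, it suffices to bound $l_+$ using Proposition \ref{tight bound}(1), and the crux is to decide whether $t_3^+ > 0$. For case (ii), I would show $t_3^+ > 0$ in both sub-cases. If $t_1^+ > 0$, Proposition \ref{tight increase} forces $t_3^+ \geq t_1^+ > 0$. If $t_1^+ = 0$, then $t_1^- = 0$ as well, since $X^-$ being an $I$-bundle makes $\breve{\Phi}_1^- = S$ and the components of $S$ have positive genus, hence are not tight. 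Proposition \ref{tightness for large j}(2) then applies and forces every component of $\breve{\Phi}_3^+$ to be tight; since every boundary component of $S$ lies in some component of $\breve{\Phi}_3^+$, this yields $t_3^+ > 0$. Proposition \ref{tight bound}(1) then gives $l_+ \leq |\partial S| - t_1^+ \leq |\partial S|$ and hence $l_S \leq |\partial S| + 1$.

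For case (iii), if $t_1^+ > 0$ the same reasoning as in case (ii) gives the stronger bound $l_S \leq |\partial S| + 1$. Otherwise $t_1^+ = 0$ and (as above) $t_1^- = 0$, so Proposition \ref{tightness for large j}(3) applies and yields $t_3^+ = 0$; the second clause of Proposition \ref{tight bound}(1) then gives $l_+ \leq |\partial S| - t_1^+ + 2 = |\partial S| + 2$ and hence $l_S \leq |\partial S| + 3$. The main subtlety is the $t_1^+ = 0$ sub-case of (ii), which requires combining Proposition \ref{tightness for large j}(2) with the observation that every boundary component of $S$ sits in some component of $\breve{\Phi}_3^+$ in order to guarantee $t_3^+ > 0$.
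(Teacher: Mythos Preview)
Your proof is correct and follows the same approach the paper indicates (the paper simply states that the corollary follows from Propositions~\ref{tightness for large j} and~\ref{tight bound}); you have spelled out the case analysis that the paper leaves implicit. One minor remark: in case~(i) the evenness of $t_1^\pm$ is not actually needed, since $t_1^+ \ne t_1^-$ already forces $\max\{t_1^+,t_1^-\} \geq 1$, which suffices for $l_S \leq |\partial S|$. Also, in cases~(ii) and~(iii) you invoke Proposition~\ref{tightness for large j}, whose stated hypothesis includes $\Delta(\alpha,\beta) > 3$, while the corollary only assumes this when $F$ is non-separating; however, in the separating (twisted $I$-bundle) sub-cases the relevant conclusions follow already from Proposition~\ref{phi3 options}, so there is no actual gap---this is the same shorthand the paper itself uses.
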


\section{The intersection graph of an immersed disk or torus} \label{graph}

We recall some of the set up from \cite[\S 12]{BCSZ2}.

A $3$-manifold is {\it very small} if its fundamental group does not contain a non-abelian free group.

By Assumption \ref{assumptions on alpha and beta}, $M(\alpha)$ is a small Seifert manifold with base orbifold $S^2(a,b,c)$ where $a, b, c \geq 1$. It is well-known that $M(\alpha)$ is very small if and only if $\frac{1}{a} + \frac1b + \frac1c \geq 1$. In this case, the non-abelian free group $\pi_1(F)$ cannot inject into $\pi_1(M(\alpha))$. Hence for either $\epsilon$ we can find maps $h: D^2 \to M(\alpha)$ such that the loop $h(\partial D^2)$ is contained in $X^\epsilon \setminus F$ and represents a non-trivial element of $\pi_1(X^\epsilon )$. This won't necessarily be possible when $\frac{1}{a} + \frac1b + \frac1c < 1$ since $M(\alpha)$ is not very small. Nevertheless, the inverse image in $M(\alpha)$ of an essential immersed loop contained in the exterior of the cone points of $S^2(a,b,c)$ will be an essential immersed torus in $M(\alpha)$. Hence we can find $\pi_1$-injective immersions $h: T \to  M(\alpha)$ where $T$ is a torus.

Let $V_\alpha$ be the filling solid torus used in forming $M(\alpha)$. It is shown in \cite[\S 12]{BCSZ2} that we can choose an immersion $h: Y \ra M(\alpha)$, where $Y$ is a disk $D$ if $M(\alpha)$ is very small or a torus $T$ if $M(\alpha)$ otherwise, such that
\begin{enumerate}

\vspace{-.3cm} \item When $Y$ is a disk $D$, $h(\partial D)\subseteq M \setminus F\subseteq
M\subseteq M(\alpha)$;

\vspace{.3cm} \item $h^{-1}(V_\alpha)$ is a non-empty set of embedded disks in
 the interior of $Y$ and $h$ is an embedding when restricted on
 $h^{-1}(V_\alpha)$;

\vspace{.3cm} \item $h^{-1}(F)$ is a set of arcs or circles properly embedded
  in the punctured surface $Y_0 = Y \setminus int(h^{-1}(V_\alpha))$;

\vspace{.3cm} \item If $e$ is an arc component of $h^{-1}(F)$, then $h|:e\ra F$
  is an essential (immersed) arc;

\vspace{.3cm} \item If $c$ is a circle component of $h^{-1}(F)$, then $h|:c\ra F$ is an essential (immersed)
  $1$-sphere.

\end{enumerate}
\vspace{-.3cm}
For any subset $s$ of $Y$, we use $s^*$ to denote its image under
the map $h$.  Denote the components of $\p(h^{-1}( V_\alpha))$ by
$a_1, ..., a_n$ so that $a_1^*, ..., a_n^*$ appear consecutively on
$\partial M$. Note again that $h|: a_i\ra a_i^*\subseteq \partial M$ is an
embedding and that $a_i^*$ has slope $\alpha$ in $\partial M$, for each
$i=1,...,n$. We fix an orientation on
$Y_0$ and let each component $a_i$ of $\partial Y_0$ have the induced
orientation. Two components $a_i$ and $a_j$ are said to have the
same orientation if $a_i^*$ and $a_j^*$ are homologous in $\partial M$.
Otherwise, they are said to have different orientations.

Denote the components of $\partial F$ by $b_1, ..., b_{m}$ so
that they appear consecutively in $\partial M$. Similar
definitions apply to the components of $\partial F$. Since $Y_0$, $F$
and $M$ are all orientable, one has the following

\noindent {\bf Parity rule}: An arc component $e$ of $h^{-1}(F)$
in $Y_0$ connects components of $\partial Y_0$ with the same orientation
(respectively opposite orientations) if and only if the corresponding
$e^*$ in $F$ connects components of $\partial F$ with opposite
orientations (respectively the same orientation).

We define an {\it intersection graph} $\Gamma_F$ on the surface $Y$ by taking
$h^{-1}(V_\alpha)$ as (fat) vertices and taking arc components of
$h^{-1}(F)$ as edges. Note that $\Gamma_F$ has no trivial loops, i.e.  no
$1$-edge disk faces. Also note that we can assume that each $a_i^*$ intersects each
component $b_j$ in $\partial M$ in exactly $\D(\alpha,\beta)$ points. If
$e$ is an edge in $\Gamma_F$ with an endpoint at the vertex $a_i$, then
the corresponding endpoint of $e^*$ is in $a_i^*\cap b_j$ for some
$b_j$, and the endpoint of $e$ is thus given the label $j$. So when
we travel around $a_i$ in some direction, we see the labels of the
endpoints of edges appearing in the order $1,...,m,..., 1,...,m$
(repeated $\D(\alpha,\beta)$ times). It also follows that each
vertex of $\Gamma_F$ has valency $m\D(\alpha,\beta)$.

Define the {\it double of $\Gamma_F$} to be the graph $D(\Gamma_F)$
in $Y$ as follows: the vertices of $D(\Gamma_F)$ are the vertices of $\Gamma_F$; the edges of $D(\Gamma_F)$ are obtained by doubling the edges of $\Gamma_F$ (i.e. each edge $e$ is replaced by two parallel copies of $e$). Finally we set
$$\Gamma_S = \left\{ \begin{array}{ll} \Gamma_F & \hbox{if $F$ separates} \\
D(\Gamma_F) & \hbox{if $F$ does not separate}
\end{array} \right.  \\
$$
It is clear that
\begin{enumerate}

\vspace{-.1cm} \item $\Gamma_S$ is a graph in $Y$ determined by the intersection of an immersed disk or torus with $S$.

\vspace{.3cm} \item each vertex of $\Gamma_S$ has valency $|\partial S| \Delta(\alpha, \beta)$.

\vspace{.3cm} \item if two faces of $\Gamma_S$ share a common edge, then they lie on different sides of $S$.

\vspace{.3cm} \item if $F$ does not separate, then a face of $\Gamma_S$ which is sent by $h$ into $X^-$ is a bigon bounded by parallel edges.

\end{enumerate}
\vspace{-.3cm}
Suppose that $e$ and $e'$ are two adjacent parallel edges of $\Gamma_S$.
Let $R$ be the bigon face between them, realizing the parallelism. Then $(R, e\cup e')$ is mapped into
$(X^\epsilon, S)$ by the map $h$ for some $\epsilon$. Moreover $h|_R$ provides a basic essential
homotopy between the essential paths $h|_e$ and $h|_{e'}$. We may
and shall assume that $R^*=h(R)$ is contained in the characteristic
$I$-bundle pair $(\dot{\Sigma}_1^\epsilon, \dot{\Phi}_1^\epsilon)$ of $(X^\epsilon,
S)$. We may consider $R$ as $e\times I$ and assume that the map $h:
R \ra \dot{\Sigma_1^\epsilon}$ is $I$-fibre preserving.

Let $\overline{\Gamma}_S$ be the {\it reduced graph of $\Gamma_S$} obtained from $\Gamma_S$ by
amalgamating each maximal family of parallel edges into a single edge. It is evident that $\overline{\Gamma}_S$ coincides with the similarly defined graph $\overline{\Gamma}_F$. The following lemma is a simple consequence of the construction of $\Gamma_S$.

\begin{lemma} \label{1-edge and 2-edge}
$\;$

$(1)$ There is at most  one $1$-edge face of $\overline{\Gamma}_S$ and if one, it is a collar on $\partial Y$ when $Y$ is a disk and a once-punctured torus when $Y$ is a torus.

$(2)$ A $2$-edge face of $\overline{\Gamma}_S$ is either

\indent \hspace{3mm} $(a)$ a collar on $\partial Y$ bounded by a circuit of two edges and two vertices when $Y$ is a disk;

\indent \hspace{3mm} $(b)$ a once-punctured torus bounded by a circuit of two edges and two vertices;

\indent \hspace{3mm} $(c)$ an annulus cobounded by two circuits, each with one edge and one vertex;

\indent \hspace{3mm} $(d)$  a twice-punctured torus bounded by a circuit of one edge and one vertex.
\qed

\end{lemma}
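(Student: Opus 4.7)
The plan is to classify the small faces of $\overline{\Gamma}_S \subset Y$ by a topological case analysis, where I read ``$k$-edge face'' as a face $R$ whose inner boundary on $\overline{\Gamma}_S$ carries exactly $k$ edge-sides (so an edge with both sides on $R$ contributes two). Three structural constraints will be used throughout.  First, $\overline{\Gamma}_S$ has no disk face with a single edge-side on its boundary, which is the ``no trivial loops'' property inherited from $\Gamma_S$.  Second, $\overline{\Gamma}_S$ has no disk face with exactly two edge-sides on its boundary:  with two distinct edges, the circuit $e_1\,\alpha_1\,e_2\,\alpha_2$ closes up only if the edges share both endpoints, making them parallel and hence amalgamated away; with one edge traversed twice, the self-identification that takes the cut face back into $Y$ prevents the ambient face from being a disk.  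Third, every vertex has positive valence, so no boundary circuit can be built purely from vertex arcs.

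For Part~(1), given a $1$-edge face $R$, its inner boundary is a single circuit containing one edge once plus vertex arcs.  When $Y=D^2$, if $R$ is disjoint from $\partial D^2$ then $R$ has a single boundary circle and is a disk, a configuration excluded by the first constraint; otherwise $R$ has two boundary circles and is a planar annulus, i.e.\ a collar on $\partial Y$, and uniqueness follows since $\partial D^2$ lies on a unique face.  When $Y=T^2$, $R$ has one boundary circle and is either a disk (forbidden) or a once-punctured torus.  For uniqueness in the torus case, two disjoint once-punctured torus faces $R_1,R_2$ would yield, by additivity of Euler characteristic,
\[
\chi\bigl(T^2 \setminus (R_1 \cup R_2)^\circ\bigr) \;=\; \chi(T^2) - \chi(R_1) - \chi(R_2) \;=\; 0 + 1 + 1 \;=\; 2,
\]
forcing the complement to have negative genus, which is impossible.

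For Part~(2), given a $2$-edge face $R$, the two edge-sides lie on either a single boundary circuit of $R$ or on two distinct ones.  In the single-circuit case the circuit is a length-four cycle of two edge arcs interlaced with two vertex arcs, and, ruling out the disk option by the second constraint, $R$ is a collar on $\partial D^2$ when $Y=D^2$ (case (a)) or a once-punctured torus when $Y=T^2$ (case (b)).  In the two-circuit case each circuit has one edge arc and one vertex arc, $R$ has two boundary circles, and by determining the genus of $R$ inside $Y$ we conclude that $R$ is either a planar annulus (case (c)) or, when $Y=T^2$ and $\chi(R)=-2$, a twice-punctured torus (case (d)).

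The main obstacle I anticipate is the verification of the second constraint, especially its second half concerning a single edge traversed twice on a single circuit; this will require a short local cut-and-paste analysis at the offending edge to rule out a disk face in the ambient surface.  Once the three structural constraints are established, the remainder of the classification reduces to a direct topological enumeration driven by the Euler characteristic of each candidate face.
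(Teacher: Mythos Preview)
The paper does not give a proof of this lemma: it is announced as ``a simple consequence of the construction of $\Gamma_S$'' and closed immediately with \qed. So there is nothing to compare your argument against; you are supplying what the authors omit.

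Your approach is correct and complete in outline. The three structural constraints you isolate are exactly the right ones, and once they are in place the topological enumeration is routine. The step you flag as the main obstacle --- ruling out a disk face $R$ whose single boundary circuit traverses one edge $e$ twice --- can be finished as follows. Let $N'(e)$ be the band neighbourhood of $e$ taken rel the vertex discs. Then $R\cup N'(e)$ is the disk $R$ with a $1$-handle attached along the two $e$-sides of $\partial R$; since it is embedded in the orientable surface $Y$, it is an annulus. Both boundary circles of this annulus consist only of corner arcs and the short ends of $N'(e)$, hence lie in the union of vertex circles $\partial N(v)$. If $e$ joins distinct vertices $v_1,v_2$, each boundary circle of the annulus is an embedded circle inside the single circle $\partial N(v_i)$, hence equals it; capping with the two vertex discs $N(v_1),N(v_2)$ yields an embedded $2$-sphere in $Y$, which is impossible for $Y$ a disk or torus. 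If $e$ is a loop at $v$, the two disjoint boundary circles of the annulus both lie in the single circle $\partial N(v)$, again impossible. This establishes your Constraint~2 in full.

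One small correction: in your uniqueness argument for Part~(1) on the torus, the equation $\chi(\text{complement})=2$ does not by itself force negative genus, since the complement could a priori be two disjoint disks. The cleanest fix is to observe directly that the complement in $T^2$ of a once-punctured torus is a disk, into which a second once-punctured torus cannot embed.
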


The {\it weight} of an edge $\bar e$ of $\overline{\Gamma}_S$ is
the number of parallel edges in $\Gamma_S$ that $\bar e$ represents.

Call the vertex of $\Gamma_S$ (or $\overline{\Gamma}_S$) with boundary $a_i$ {\it positive} if $a_i$ and $a_1$ are like-oriented on $\partial M$. Otherwise call it {\it negative}.

Call an edge $e$, respectively $\bar e$, of $\Gamma_S$, respectively $\overline{\Gamma}_S$, {\it positive}
if it connects two positive vertices or two negative vertices. Otherwise it is said to be
{\it negative}.

\begin{prop} \label{plus-minus vertices}
If $Y$ is a torus, the number of positive vertices of $\Gamma_S$ equals the number of negative vertices.
\end{prop}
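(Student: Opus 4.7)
The plan is homological. Orient $Y = T$; each disk $D_i$ then inherits an orientation, and each $a_i = \partial D_i$ inherits the boundary orientation. Setting $[\alpha] := [a_1^*] \in H_1(\partial M)$, we define $\epsilon_i \in \{\pm 1\}$ by $[a_i^*] = \epsilon_i [\alpha]$ in $H_1(\partial M)$; then the vertex $a_i$ is positive precisely when $\epsilon_i = +1$, so the proposition is equivalent to the identity $\sum_i \epsilon_i = 0$.

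The key observation is that $Y_0 = T \setminus \bigsqcup_i \operatorname{int}(D_i)$ is a compact oriented surface with boundary, so in the long exact sequence of the pair $(Y_0, \partial Y_0)$ the relative fundamental class $[Y_0,\partial Y_0]$ maps under $\partial$ to $\sum_i [a_i] \in H_1(\partial Y_0)$, and this class dies in $H_1(Y_0)$ by exactness (since it sits in the image of $\partial$). Pushing forward by $h: Y_0 \to M$ and using $h_*[a_i] = \epsilon_i\, j_*[\alpha]$ (where $j: \partial M \hookrightarrow M$ is the inclusion), one obtains
\[
\Bigl(\sum_i \epsilon_i\Bigr)\, j_*[\alpha] = 0 \quad \text{in}\ H_1(M).
\]

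To conclude $\sum_i \epsilon_i = 0$ it suffices to show $j_*[\alpha]$ has infinite order in $H_1(M)$. Since $b_1(M) = 1$ by Assumption~\ref{assumption irreducible}, the rational kernel of $j_*$ is one-dimensional, generated by the rational longitude $\lambda_0$; hence $j_*[\alpha]$ is torsion precisely when $\alpha = \lambda_0$, in which case $b_1(M(\alpha)) \geq 1$. The case $Y = T$ forces $\tfrac{1}{a}+\tfrac{1}{b}+\tfrac{1}{c} < 1$, so $M(\alpha)$ is an irreducible Seifert manifold over a hyperbolic $2$-orbifold, and such an $M(\alpha)$ has nonzero $b_1$ exactly when its rational Seifert Euler number vanishes, which would make $M(\alpha)$ toroidal. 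I expect this is the main technical point to rule out.

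In the potentially degenerate Euler-number-zero case, the construction of $T$ itself can be sharpened: since the thrice-punctured sphere $S^2(a,b,c) \setminus \{\text{cone points}\}$ has free $\pi_1$, one may take the base loop $\gamma$ to be a commutator, so that $[\gamma] = 0$ in the abelianisation and hence $h_*[T] = 0$ in $H_2(M(\alpha))$. Under the identification $H_2(M(\alpha), M) \cong H_2(V_\alpha, \partial V_\alpha) \cong \mathbb{Z}$, the class $h_*[T, Y_0]$ computes to $\sum_i \epsilon_i$ because each restriction $h|_{D_i}$ is a homeomorphism of degree $\epsilon_i$ onto a meridian disk of $V_\alpha$; then naturality in the long exact sequence of $(M(\alpha), M)$, applied to the vanishing of $h_*[T]$ in $H_2(M(\alpha))$, gives $\sum_i \epsilon_i = 0$ unconditionally.
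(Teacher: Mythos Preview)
Your homological setup is the same as the paper's: $\sum_i \epsilon_i$ is the algebraic intersection of the core of $V_\alpha$ with $h_*[Y] \in H_2(M(\alpha))$, so the only issue is the case $H_2(M(\alpha)) \ne 0$.

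Your first attempt at that case contains a false step. A Seifert manifold over a hyperbolic $S^2(a,b,c)$ with vanishing rational Euler number is \emph{not} toroidal: it is atoroidal and fibres over $S^1$ with a horizontal fibre of negative Euler characteristic. So ``Euler number zero $\Rightarrow$ toroidal'' fails, and that paragraph does not close the argument.

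Your fallback---choosing the base loop $\gamma$ to be a commutator so that $h_*[T] = 0$, then reading off $\sum_i\epsilon_i=0$ from naturality of the long exact sequence of the pair $(M(\alpha), M)$---is correct. Two small points. First, you should note that such a $\gamma$ still yields a $\pi_1$-injective torus: the commutator $[x,y]$ lies in the derived subgroup of the hyperbolic triangle group, which is the fundamental group of a closed hyperbolic surface and hence torsion-free, so its preimage in $\pi_1(M(\alpha))$ is a rank-two abelian group. Second, you are modifying the choice of $h$ made in \S\ref{graph}; this is harmless for the paper's purposes, since the choice there was unconstrained beyond properties (1)--(5), but strictly speaking you are proving the proposition for a particular $\Gamma_S$.

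The paper handles the degenerate case differently, in a way that works for \emph{any} $h$. If $h_*[T]$ were a nonzero class in $H_2(M(\alpha)) \cong \mathbb Z$, then since $T$ is a torus the Thurston norm of $h_*[T]$ vanishes, hence so does that of the generating horizontal fibre $G$; irreducibility of $M(\alpha)$ then forces $G$ to be a torus, so $M(\alpha)$ is toroidal small Seifert and therefore very small, contradicting $Y=T$. This is precisely the missing ingredient in your first approach: toroidality follows not from $e=0$ alone but from the existence of a nonzero $H_2$-class carried by a torus.
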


\pf Up to taking absolute value, the difference between the number of positive vertices and the number of negative vertices is the intersection number between a class in $H_1(M(\alpha))$ carried by the core of the $\alpha$-filling torus and $h_*([Y]) \in H_2(M(\alpha))$. Thus the lemma holds as long as $H_2(M(\alpha)) = 0$. Suppose then that $H_2(M(\alpha)) \ne 0$. Since $M(\alpha)$ is small Seifert, we have $H_2(M(\alpha)) \cong \mathbb Z$ and is generated by an embedded horizontal surface, $G$ say, which is a fibre in a locally trivial surface bundle $M(\alpha) \to S^1$. Thus $h_*([Y])$ is a non-zero multiple of $[G]$. In particular, the Thurston norm of $[G]$ is zero. Hence $G$ is a torus (cf. Assumption \ref{assumption irreducible}). Thus $M(\alpha)$ is toroidal small Seifert. But then $M(\alpha)$ is very small contrary to our assumption that $Y$ is a torus. This completes the proof.
\qed

To each oriention of an edge $\bar e$ of $\overline{\Gamma}_S$ of weight $|\partial S|$ or more we can associate a permutation $\sigma$ of the labels as follows: if $e$ is an edge of $\Gamma_S$ in the $\bar e$-family and $j$ is the label of its tail, then $\sigma(j)$ is the label of its head. The parity rules implies that there is an integer $k$ such that
$$\sigma(j) \equiv \left\{ \begin{array}{ll}
j + 2k \hbox{ (mod $m$)} & \hbox{if $e$ is negative} \\
-j + 2k +1 \hbox{ (mod $m$)} & \hbox{if $e$ is positive}
\end{array} \right.$$

We say that a face of $\Gamma_S$ or $\overline{\Gamma}_S$ {\it lies on the $\epsilon$-side of $F$} if it is mapped to $X^\epsilon$ by $h$.

The discussion in \cite[\S 3.4]{BCSZ1} implies the conclusion of the following proposition.

\begin{prop} \label{long}
If \; $\overline{\Gamma}_S$ has an edge of weight $k$, then there is an essential homotopy in $(M,S)$ of length $k-1$ of a large map with image in $S$.
\qed
\end{prop}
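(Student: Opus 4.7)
The plan is to read off the essential homotopy directly from the band of $k$ parallel edges represented by $\bar e$, and then apply the construction of \cite[\S 3.4]{BCSZ1} to upgrade the associated arc into a large map. Concretely, let $e_1, e_2, \ldots, e_k$ denote the parallel edges of $\Gamma_S$ in the family of $\bar e$, ordered so that $e_i$ and $e_{i+1}$ cobound a bigon face $R_i$ of $\Gamma_S$ for each $i = 1, \ldots, k-1$. By property (3) of the construction of $\Gamma_S$ the faces $R_i$ alternate sides of $S$, so there is a sign $\epsilon$ with $h(R_i) \subseteq X^{(-1)^{i-1}\epsilon}$. Moreover, by the discussion preceding the proposition, each $h|_{R_i}\colon (R_i, e_i \cup e_{i+1}) \to (X^{(-1)^{i-1}\epsilon}, S)$ is essential and can be taken to be $I$-fibre preserving into $(\dot{\Sigma}_1^{(-1)^{i-1}\epsilon}, \dot{\Phi}_1^{(-1)^{i-1}\epsilon})$, providing a basic essential homotopy between the essential arcs $h|_{e_i}$ and $h|_{e_{i+1}}$.

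Next, parameterize the band $B = R_1 \cup \cdots \cup R_{k-1}$ as $I \times [0, k-1]$ so that $I \times \{i\}$ is identified with $e_{i+1}$, and let $H\colon I \times [0, k-1] \to M$ be the restriction of $h$. Then $H^{-1}(S) = I \times \{0, 1, \ldots, k-1\}$ and each strip $H|_{I \times [i-1, i]}$ is essential in $(X^{(-1)^{i-1}\epsilon}, S)$. Taking $a = 0$, $n = k-1$, and $Y_0 = I$, this is precisely an essential homotopy of length $k-1$ at the arc level; however, the initial map $f_0 = h|_{e_1}$ is an essential immersed arc and does not yet qualify as large in the sense of \S\ref{characteristic subsurfaces}.

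To promote $f_0$ to a large map with image in $S$, I would invoke the fattening procedure of \cite[\S 3.4]{BCSZ1}. The idea is to enlarge the domain $I$ by attaching, along its two endpoints, bands coming from the arcs of $\partial Y_0$ adjacent to the endpoints of $e_1$ on the two vertex circles; these arcs map under $h$ to arcs in $\partial M$ lying in the annular components of $\partial M \cap X^\epsilon$, and the latter are $I$-fibred in $(\dot{\Sigma}_1^\epsilon, \dot{\Phi}_1^\epsilon)$ (after the adjustment following Proposition~\ref{boundary-parallel}). Successive attachments produce a path-connected domain $Y$ whose image in $S$ contains a properly embedded graph with non-abelian free $\pi_1$-image, and the extensions of $H$ across each added handle remain $I$-fibre preserving into the appropriate characteristic bundle, hence essential on each strip. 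The resulting map $Y \times [0, k-1] \to (M, S)$ is then an essential homotopy of length $k-1$ of a large map $f_0\colon Y \to S$.

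The main technical point, carried out in \cite[\S 3.4]{BCSZ1}, is verifying that the fattening preserves essentiality of each strip as a map into $(X^{(-1)^{i-1}\epsilon}, S)$; this rests on the essentiality of the original bigons $R_i$ and on the fact that the annular components of $\partial M \cap X^\epsilon$ are essential in $(X^\epsilon, S)$ and lie within $\Sigma_1^\epsilon$, so attaching them cannot introduce compressions. Once this is established, the proposition follows immediately.
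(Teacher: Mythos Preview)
Your proposal is correct and follows essentially the same approach as the paper, which simply refers the reader to \cite[\S 3.4]{BCSZ1} for the construction. You have spelled out in more detail what that construction does: the band of $k-1$ successive bigons gives an essential homotopy of length $k-1$ at the arc level, and the fattening procedure of \cite[\S 3.4]{BCSZ1} upgrades the initial arc to a large map while preserving essentiality on each strip.
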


This result combines with Corollary \ref{general length} to yield the following corollary.

\begin{cor} \label{max weight of edge}
Suppose that $\Delta(\alpha, \beta) > 3$.

$(1)$ If $X^-$ is not an $I$-bundle, the weight of an edge of $\overline{\Gamma}_S$ is at most $|\partial S| +1$.

$(2)$ If $X^-$ is a product $I$-bundle, or a twisted $I$-bundle and $\Phi_3^+$ does not contain an $\widehat F$-essential annulus, the weight of an edge of $\overline{\Gamma}_S$ is at most $|\partial S| + 2$.

$(3)$ If $X^-$ is a twisted $I$-bundle and $\Phi_3^+$ contains an $\widehat F$-essential annulus, the weight of an edge of $\overline{\Gamma}_S$ is at most $|\partial S| + 4$.
\qed
\end{cor}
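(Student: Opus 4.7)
The plan is to derive the corollary as a direct combination of Proposition \ref{long} and Corollary \ref{general length}. Given an edge $\bar e$ of $\overline{\Gamma}_S$ of weight $k$, I would apply Proposition \ref{long} to produce an essential homotopy in $(M, S)$ of length $k-1$ of a large map with image in $S$. By the definition of $l_S$ as the maximal length of such an essential homotopy, this forces $k - 1 \leq l_S$, i.e.\ $k \leq l_S + 1$.

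To conclude, I would simply substitute the three upper bounds for $l_S$ supplied by Corollary \ref{general length} into the inequality $k \leq l_S + 1$. In case (1), $X^-$ is not an $I$-bundle, so $l_S \leq |\partial S|$, which gives $k \leq |\partial S| + 1$. In case (2), either $X^-$ is a product $I$-bundle, or $X^-$ is a twisted $I$-bundle and $\dot\Phi_3^+$ contains no $\widehat S$-essential annulus; in both subcases Corollary \ref{general length} yields $l_S \leq |\partial S| + 1$, hence $k \leq |\partial S| + 2$. In case (3), $X^-$ is a twisted $I$-bundle with $\dot\Phi_3^+$ containing an $\widehat S$-essential annulus, so $l_S \leq |\partial S| + 3$ and thus $k \leq |\partial S| + 4$.

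The only bookkeeping point I would verify is that the uniform hypothesis $\Delta(\alpha, \beta) > 3$ of the corollary suffices to invoke Corollary \ref{general length}, which formally demands $\Delta(\alpha, \beta) > 3$ only when $F$ is non-separating; this is immediate. There is no substantive obstacle at this stage, since the real work has already been done upstream: the control of the length of essential homotopies in Corollary \ref{general length} rests on the tight-subsurface analysis of Propositions \ref{tight increase} and \ref{tightness for large j}, which in turn depend on the structural results for $\dot\Sigma_1^\epsilon$ and $\widehat X^\epsilon$ established in Section \ref{NT}. The present corollary simply packages this machinery in a form suitable for later combinatorial arguments on $\Gamma_S$.
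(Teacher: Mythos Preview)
Your proposal is correct and follows precisely the approach indicated in the paper: the corollary is stated there with a \qed and is introduced by the sentence ``This result combines with Corollary~\ref{general length} to yield the following corollary,'' where ``this result'' is Proposition~\ref{long}. Your argument---use Proposition~\ref{long} to get $k-1\le l_S$ and then feed in the three bounds on $l_S$ from Corollary~\ref{general length}---is exactly what the paper intends.
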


We call a graph {\it hexagonal} if it is contained in a torus, each vertex has valency $6$, and each face is a triangle. We call it {\it rectangular} if it is contained in a torus, each vertex has valency $4$, and each face is a rectangle. Such graphs are connected.

The following proposition follows from simple Euler characteristic
calculations.

\begin{prop}\label{euler} $\;$

$(1)$ If each vertex of $\overline{\Gamma}_S$ has valency $6$ or more, then it is hexagonal, so $Y$ is a torus. Moreover there is a vertex of $\overline{\Gamma}_S$ incident to at least two positive edges.

$(2)$ If $\overline{\Gamma}_S$ has no triangle faces, it has a vertex of valency at most $4$. If it has no vertices of valency less than $4$, then  it is rectangular, so $Y$ is a torus.
\qed
\end{prop}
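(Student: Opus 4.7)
My plan is to apply the Euler characteristic identity
$$\chi(Y) \;=\; V - E + \sum_{f} \chi(f)$$
to the CW decomposition of $Y$ induced by $\overline{\Gamma}_S$, where $V, E, F$ count vertices, edges, and faces, and $n_f$ is the number of edges (with multiplicity) on the boundary of a face $f$. The key ingredient will be the uniform bound $\chi(f) \leq n_f/3$ for part (1) and, when triangle faces are forbidden, $\chi(f) \leq n_f/4$ for part (2). I will verify these from Lemma \ref{1-edge and 2-edge} by a quick case check: when $n_f \leq 2$ the face is not a disk and has $\chi(f) \leq 0$; when $n_f \geq 3$ a disk has $\chi(f) = 1 \leq n_f/3$ while any non-disk has $\chi(f) \leq 0$. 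The analogue with $4$ in place of $3$ is valid once $n_f \neq 3$ is assumed.

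For part (1), I assume every vertex has valency $\geq 6$, so $2E \geq 6V$, i.e. $V \leq E/3$. Combined with $\sum_f \chi(f) \leq 2E/3$, this yields $\chi(Y) \leq E/3 - E + 2E/3 = 0$. Since $Y$ is a disk ($\chi = 1$) or a torus ($\chi = 0$), it must be a torus, and equality throughout forces every vertex to have valency exactly $6$ and every face to be a $3$-edged disk. Hence $\overline{\Gamma}_S$ is hexagonal.

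To produce a vertex incident to at least two positive edges, I will use the resulting counts $E = 3V$ and $F = 2V$. Suppose, for contradiction, that every vertex is incident to at most one positive edge; then the positive edges form a matching $H$, and in particular $|H| \leq V/2$. If some triangle $T$ has all three vertices of the same sign, all three of its edges are positive, and each vertex of $T$ meets two positive edges, violating the matching property. Hence every triangle is ``mixed'' (a $2$-$1$ sign split) and carries exactly one positive edge. Counting (triangle, positive-edge)-incidences gives $2|H| = F = 2V$, so $|H| = V > V/2$, the desired contradiction.

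For part (2), I run the same Euler scheme with the stronger bound $\chi(f) \leq n_f/4$ available once no face is a triangle, obtaining $\chi(Y) \leq V - E/2$. If every vertex had valency $\geq 5$, then $V \leq 2E/5$ and $\chi(Y) \leq -E/10 < 0$, contradicting $\chi(Y) \in \{0,1\}$; so some vertex must have valency $\leq 4$. Under the stronger hypothesis that every vertex has valency $\geq 4$, the estimate $\chi(Y) \leq 0$ forces $Y$ to be a torus and all inequalities to be equalities: every vertex has valency exactly $4$ and every face is a $4$-edged disk, so $\overline{\Gamma}_S$ is rectangular. The main obstacle in this plan is the uniform verification of the face-bounds $\chi(f) \leq n_f/3$ and $\chi(f) \leq n_f/4$ in the presence of the several non-disk face types enumerated in Lemma \ref{1-edge and 2-edge}, but this amounts to a short tabulation.
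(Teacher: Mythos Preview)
Your Euler-characteristic arguments for both parts are correct and amount to the same computation as the paper's. The paper packages it as combinatorial Gauss--Bonnet, writing $\chi(Y)=\sum_f\bigl(\chi(f)-\sum_{v\in\partial f}(\tfrac12-\tfrac{1}{\mathrm{val}(v)})\bigr)$ and checking each summand is nonpositive; expanding that double sum recovers exactly your inequality $V-E+\sum_f\chi(f)\le 0$. Your face-by-face verification of $\chi(f)\le n_f/3$ (respectively $n_f/4$) via Lemma~\ref{1-edge and 2-edge} is precisely what is needed, and the equality analysis forcing all faces to be triangle (respectively square) disks is clean.

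There is, however, a small gap in your positive-edge argument. The bound $|H|\le V/2$ presumes the positive edges form a matching in the usual sense, but $\overline{\Gamma}_S$ can have loops, and every loop is automatically positive. Concretely, there is a hexagonal graph on the torus with $V=2$ (one vertex of each sign), two positive loops, and four negative edges between the two vertices; here every triangle carries exactly one positive edge (a loop), $|H|=2=V$, and each vertex is incident to precisely \emph{one} positive edge --- so your inequality $|H|\le V/2$ fails and no contradiction results. The repair is to read the conclusion as ``some vertex has positive degree at least $2$'' (counting edge-ends, so a loop contributes $2$). Negating that immediately excludes all loops, since any loop is positive and contributes $2$ to the degree of its vertex; once there are no loops, every triangle has three distinct vertices, the positive edges genuinely form a matching with $|H|\le V/2$, each edge lies on two distinct triangle faces, and your incidence count $2|H|=F=2V$ yields the desired contradiction. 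The paper itself does not supply this argument --- it records the claim as ``easy to check''.
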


\pf We have
$$0 \leq \chi(Y) = \sum_{\hbox{{\footnotesize faces} } f \hbox{ {\footnotesize of} } \overline{\Gamma}_S} \{\chi(f) - \sum_{v \in \partial f} (\frac{1}{2} - \frac{1}{\hbox{valency}_{\overline{\Gamma}_S}(v)})\}$$
Set $\chi_f = \chi(f) - \sum_{v \in \partial f} (\frac{1}{2} - \frac{1}{\hbox{valency}_{\overline{\Gamma}_S}(v)})$.
Lemma \ref{1-edge and 2-edge} implies that $\chi_{f} \leq 0$ for each monogon and bigon $f$ in $\overline{\Gamma}_S$. The hypotheses of assertions (1) and (2) of the lemma imply that $\chi_f \leq 0$ for  faces with three or more sides. Thus under either set of  hypotheses, $\chi(Y) \leq 0$, so $Y$ is a torus. Then $\chi(Y) = 0$ so $\chi_f = 0$ for all faces $f$. It follows that $\overline{\Gamma}_S$ is hexagonal under the conditions of (1) and rectangular under those of (2). Finally, it is easy to check that when $\overline{\Gamma}_S$ is hexagonal, it has a vertex incident to at least two positive edges.
\qed

\begin{lemma}\label{even}
Suppose that $F$ is non-separating and each component of $\breve{\Sigma}_1^+$ intersects both $F_1$ and $F_2$. Then every edge of $\overline{\Gamma}_S$ is negative. Hence every face of $\overline{\Gamma}_S$ has an even number of edges. In particular this is true if $t_1^+ \leq 2$.
\end{lemma}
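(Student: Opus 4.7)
The plan is to prove the lemma in two parts: first verify that $t_1^+ \leq 2$ implies the main hypothesis, then derive the conclusion from the hypothesis.

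First I would show that $t_1^+ \leq 2$ implies the hypothesis. By Proposition~\ref{nonsep-seifert}(1), $\dot\Sigma_1^+$ is a product $I$-bundle, so each component $P$ satisfies $|P \cap S|=2$. Lemma~\ref{4} forces $t_1^+ \geq 4$ whenever $|P \cap F_j|=2$ for some $P, j$, so under $t_1^+ \leq 2$ we must have $|P \cap F_1|=|P \cap F_2|=1$, and every component of $\dot\Sigma_1^+$ meets both $F_1$ and $F_2$. The remaining components of $\breve\Sigma_1^+$ are collar neighbourhoods of the annular components of $\partial M \cap X^+$ whose boundaries lie in $\partial S \setminus \partial\dot\Phi_1^+$; since $F_2$ is a close parallel copy of $F_1$, the components of $\partial S$ alternate between $\partial F_1$ and $\partial F_2$ around $\partial M$, so every $+$-side annulus joins a component of $\partial F_1$ to one of $\partial F_2$ and its collar piece meets both $F_1$ and $F_2$.

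For the main conclusion, the key idea is to analyse the $-$-side bigons of $\Gamma_S$. Since $F$ is non-separating, $X^- \cong F \times I$, and property (4) of the intersection graph says every face of $\Gamma_S$ mapped into $X^-$ is a bigon bounded by parallel edges. Each edge $e$ of $\Gamma_S$ has, via the doubling $\Gamma_S = D(\Gamma_F)$, a partner $e'$ lying in the opposite component of $S = F_1 \cup F_2$; the two arcs cobound a bigon face $R$ in $Y_0$ which maps $I$-fibre preservingly into $F \times I$. I would then show that the two vertices $v, w$ of $R$ have opposite signs. Orient $\partial R$ with $R$ on its left. Each corner of $\partial R$ lies on a $-$-side annulus of $\partial M$, tracing an $I$-fibre of $X^- = F \times I$, whose direction on $\partial M$ (from the $F_1$-side of the thin slab to the $F_2$-side) is geometrically fixed and independent of the vertex. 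The ``$R$ on left'' condition at a corner forces the corner direction on $\partial M$ to be the reverse of the $a_v^*$-direction, and since $a_v^*$ agrees with or is opposite to the fixed $a_1^*$-direction exactly when $v$ is positive or negative, comparing the corner direction at $v$ with the fixed fibre direction pins down the sign of $v$; the symmetric analysis at $w$ pins down its sign oppositely. Hence $e$ is negative and the same holds for every edge of $\overline\Gamma_S$.

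The statement that every face of $\overline\Gamma_S$ has an even number of edges is then immediate from a closing-up argument: traversing the boundary of any face, each edge switches the current vertex sign, so the number of edges must be even. The main obstacle is the careful orientation bookkeeping at the bigon corners, where the induced orientation on $\partial R$, the intrinsic $I$-fibre direction of $X^-$ on $\partial M$, the $\partial v$- and $\partial w$-orientations, and the signs of $v$ and $w$ via the $a_v^*$- and $a_w^*$-directions must all be synchronised consistently.
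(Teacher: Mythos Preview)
Your argument for the main conclusion never actually invokes the hypothesis that each component of $\breve\Sigma_1^+$ meets both $F_1$ and $F_2$, and the claim you treat as a geometric given---that the $F_1\to F_2$ direction along an $X^-$ annulus is ``geometrically fixed and independent of the vertex'' on $\partial M$---is precisely where the hypothesis must enter. The $X^-$ annulus through $b_j$ is the thin slab $b_j\times I$; on the torus $\partial M$, whether one crosses it from $F_1$ to $F_2$ or from $F_2$ to $F_1$ when moving in a fixed transverse direction depends on the normal direction of $F$ at $b_j$, i.e., on the orientation of $b_j$. If two boundary components $b_j,b_k$ had opposite orientations, their $X^-$ annuli would have opposite $F_1\to F_2$ directions on $\partial M$, and your bigon corner comparison would no longer force $v$ and $w$ to have opposite signs. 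The paper's proof makes exactly this deduction: the hypothesis implies all the $b_j$ have the same orientation (each $+$-side annulus then joins $\partial F_1$ to $\partial F_2$, so the $\partial S$-curves alternate around $\partial M$). Once that is known, the parity rule applied to $\Gamma_F$ gives in one line that every edge is negative---your bigon orientation-chasing is essentially a rederivation of this special case of the parity rule, and is unnecessary.

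Your Part~1 also has a circular step: you justify that the collar pieces of $\breve\Sigma_1^+$ meet both $F_i$ by asserting that the $\partial S$-curves alternate around $\partial M$, but that alternation is equivalent to what you are trying to prove and does not follow merely from $F_2$ being a parallel copy of $F_1$. A correct repair: an $X^+$-annulus not absorbed into $\dot\Sigma_1^+$ with both boundary circles in $F_j$ contributes two tight collar components of $\breve\Phi_1^+$ in $F_j$; since sign changes of the $b_i$ around $\partial M$ come in pairs, there is a matching annulus on the $F_{3-j}$ side, and either via Lemma~\ref{4} or by counting these collars one obtains $t_1^+\ge4$.
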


\pf If each component of $\breve{\Sigma}_1^+$ intersects both $F_1$ and $F_2$, all the boundary components
$b_1,...,b_m$ of $F$ have the same orientation. Hence by the parity
rule, every edge of $\overline{\Gamma}_S$ is negative. The second assertion
follows from the first, while the third is a consequence of Lemma \ref{4} and the others.
\qed

A  disk face  of $k$-edges in the graph $\Gamma_S$ is call a
{\it Scharlemann $k$-gon with label pair $\{j,j+1\}$}
if each edge of the face is positive with
the fixed label pair $\{j,j+1\}$ at its two endpoints.
The set of edges of a Scharlemann $k$-gon is called a {\it Scharlemann $k$-cycle}.
A Scharlemann $2$-cycle  is also called an {\it $S$-cycle}.
An $S$-cycle $\{e_1, e_2\}$ is called an {\it extended $S$-cycle} if
$m\geq 4$ and the two edges $e_1$ and $e_2$ are the middle edges
in a family of four adjacent parallel edges of $\Gamma_S$. An $S$-cycle $\{e_1, e_2\}$ is called a {\it doubly-extended $S$-cycle} if
$m \geq 6$ and the two edges $e_1$ and $e_2$ are the middle edges
in a family of six adjacent parallel edges of $\Gamma_S$.

The method of proof of \cite[Lemma 12.3]{BCSZ2} yields the following proposition.

\begin{prop} \label{Scycle}
Suppose that two vertices $v$ and $v'$
of $\Gamma_S$ have the same orientation and
are connected by a family of $n$ parallel consecutive edges
$e_1,...,e_n$.

$(1)$ If $n > m/2$, then there is an $S$-cycle in this family of edges.

$(2)(a)$ If $m \geq 4$ and $n > \frac{m}{2}+1$, then either there is an extended
$S$-cycle in this family of edges or both $\{e_1,e_2\}$ and $\{e_{n-1},e_n\}$ are $S$-cycles.

\indent \hspace{4mm} $(b)$ If $m \geq 4$ and $n> \frac{m}{2}+2$, then there is an extended $S$-cycle in this family of edges.

$(3)(a)$ If $m \geq 6$ and $n > \frac{m}{2}+3$, then either there is a doubly-extended
$S$-cycle in this family of edges or both $\{e_2,e_3\}$ and $\{e_{n-2},e_{n-1}\}$ are extended $S$-cycles.

\indent \hspace{4mm} $(b)$ If $m \geq 6$ and $n> \frac{m}{2}+4$, there is a doubly-extended $S$-cycle in this family of edges. \qed
\end{prop}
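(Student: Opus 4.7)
I plan to prove this by a direct labelling and pigeonhole argument on the endpoints of the parallel family, following the method of \cite[Lemma 12.3]{BCSZ2}.

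Because $v$ and $v'$ have the same orientation, every $e_i$ is positive; around $\partial v$ the labels of $e_1, \ldots, e_n$ are consecutive, say $j, j+1, \ldots, j+n-1 \pmod{m}$. By the parity rule (and the fact that the boundary components of $F$ on $\partial M$ alternate in their induced orientation, so that $m$ is even), the permutation $\sigma$ associated to this positive family has the form $\sigma(x) \equiv (2k+1)-x \pmod m$ for some integer $k$. Hence the labels of $e_1, \ldots, e_n$ at $v'$, read in the reverse cyclic direction around $\partial v'$, are $2k+1-j,\, 2k-j,\, \ldots,\, 2k+2-j-n \pmod m$.

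A bigon face between $e_i$ and $e_{i+1}$ is a Scharlemann $2$-gon exactly when the unordered label pairs $\{j+i-1,\, 2k+2-j-i\}$ and $\{j+i,\, 2k+1-j-i\}$ coincide and equal $\{p, p+1\}$ for some $p$. Direct comparison shows this is equivalent to
\[
2(j+i) \equiv 2(k+1) \pmod{m},
\]
or, since $m$ is even, $i \equiv (k+1)-j \pmod{m/2}$. Thus the indices $i \in \{1, \ldots, n-1\}$ yielding an $S$-cycle $\{e_i, e_{i+1}\}$ form an arithmetic progression of common difference $m/2$ inside $\{1,\ldots,n-1\}$.

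Each of the four conclusions now follows by counting members of this progression in the appropriate sub-interval. For (1), $n > m/2$ forces $\{1, \ldots, n-1\}$ to have size at least $m/2$, which contains a progression member. For (2)(b), the same bound applied to the interior sub-interval $\{2, \ldots, n-2\}$ of size $n-3$ yields an interior solution when $n > m/2 + 2$, which by definition is an extended $S$-cycle. For (2)(a), $n > m/2 + 1$ produces at least one progression member in $\{1, \ldots, n-1\}$; if any is interior (i.e., in $\{2, \ldots, n-2\}$) it gives an extended $S$-cycle, and otherwise all members must lie in $\{1, n-1\}$, which in view of the common difference $m/2$ forces $n = m/2 + 2$ with members at both $i = 1$ and $i = n-1$, providing the two end $S$-cycles. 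Parts (3)(a) and (3)(b) are proved identically with $\{3, \ldots, n-3\}$ as the interior sub-interval required by the definition of a doubly-extended $S$-cycle; when $n > m/2 + 3$, the only configuration avoiding a doubly-extended $S$-cycle is the borderline case $n = m/2 + 4$ with progression members at $i = 2$ and $i = n-2$, giving the extended $S$-cycles $\{e_2, e_3\}$ and $\{e_{n-2}, e_{n-1}\}$.

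The step deserving most care is transferring the analysis from $\Gamma_F$ to $\Gamma_S$ in the non-separating case where $\Gamma_S = D(\Gamma_F)$: a parallel family in $\Gamma_S$ is inherited by doubling from one in $\Gamma_F$, but since $S$-cycles, extended $S$-cycles, and doubly-extended $S$-cycles are all defined in terms of the labels on $\partial F$, the same congruence above continues to govern their existence in $\Gamma_S$ and the pigeonhole count yields the stated conclusions.
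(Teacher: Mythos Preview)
Your argument is correct and is precisely the method the paper has in mind: the paper gives no independent proof but simply cites the method of \cite[Lemma~12.3]{BCSZ2}, and your labelling/congruence analysis reproducing the arithmetic progression $i \equiv (k+1)-j \pmod{m/2}$ is that method. The pigeonhole counts for parts (1), (2)(a)(b), (3)(a)(b) are all handled correctly.

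One remark on the closing paragraph. Your main derivation uses that $m$ is even, which is automatic when $F$ is separating (so that $\Gamma_S = \Gamma_F$); this is the only setting in which the paper actually invokes the proposition (see the hypotheses of Lemmas~\ref{weight of positive version 1} and~\ref{weight of positive version 2}), and indeed Lemma~\ref{SC} shows that an $S$-cycle forces $F$ to be separating. Your attempt to transfer the argument to the non-separating doubled graph is vague as written: the relevant label set there has size $|\partial S| = 2m$, so the congruence governing $S$-cycle indices has period $m$ rather than $m/2$, and one must be careful about what ``$m$'' means in the statement. Since this case is never used, the cleanest fix is simply to note that the proposition is applied only when $F$ separates (hence $m$ even), and drop the last paragraph.
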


\begin{lemma}\label{SC}
Suppose that $\{e_1,e_2\}$ is an $S$-cycle in $\Gamma_S$ and $R$ the associated bigon face of $\Gamma_S$. If $R$ lies on the $\epsilon$-side of $F$, then $\dot{\Phi}^\epsilon_1$ contains a $\tau_\epsilon$-invariant component, so $F$ is separating. Further, this component contains an $\widehat S$-essential annulus  and $\widehat X^\epsilon$ admits a Seifert structure with base orbifold a disk with two cone points, at least one of which has order $2$.
\end{lemma}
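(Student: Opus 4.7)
The plan is to turn the S-cycle into a properly embedded M\"obius band $B \subseteq X^\epsilon$ with $\partial B \subseteq S$, and then read off the claimed structure from Propositions \ref{Mband}, \ref{nonsep-seifert}, and \ref{sep-seifert}.

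First I would produce $B$ by the standard S-cycle construction. The bigon $R$ meets $M$ in a rectangle $R^* \cap M$ properly embedded in $X^\epsilon$ with two opposite sides on $S$ (the arcs $e_1^*, e_2^*$) and two on $\partial M$. Because the label pair is $\{j, j+1\}$ and both vertices $v, v'$ are positive, each $\partial M$-side is a sub-arc of a slope-$\alpha$ curve running from $b_j$ to $b_{j+1}$ across the annular component $A$ of $\partial M \cap X^\epsilon$ cobounded by $b_j$ and $b_{j+1}$. Capping off with a band $Q \subseteq A$ parallel to $b_j$ and $b_{j+1}$ yields a properly embedded surface $B$ with $\partial B \subseteq S$; the crucial combinatorial check, using the positivity of the vertices and the consecutiveness of the labels, is that the resulting gluing is orientation-reversing, so $B$ is a M\"obius band rather than an annulus. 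By Proposition \ref{Mband}, $\partial B$ is then $\widehat S$-essential.

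A regular neighborhood $N(B) \subseteq X^\epsilon$ is a twisted $I$-bundle over $B$ whose horizontal boundary is an annular neighborhood of $\partial B$ in $S$; both boundary circles of this annulus are isotopic to $\partial B$, so it completes to an $\widehat S$-essential annulus. Since $Q$ was built adjacent to the arc components of $\partial M \cap X^\epsilon$, $N(B)$ naturally enlarges to incorporate a neighborhood of a portion of $\partial M$, and therefore sits inside a twisted $I$-bundle component $P$ of $\Sigma_1^\epsilon$ whose horizontal piece $\phi = P \cap S$ is large. This $\phi$ is $\tau_\epsilon$-invariant, contains (up to isotopy) $\partial B$ together with the $\widehat S$-essential annulus above, and meets outer boundary components of $S$; hence $\phi \subseteq \dot \Phi_1^\epsilon$.

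If $F$ were non-separating, the argument of Lemma \ref{product-Sigma+} (which applies symmetrically to either side) would force $\dot \Sigma_1^\epsilon$ to be a product $I$-bundle, contradicting the twistedness of $P$; hence $F$ is separating. Then Proposition \ref{sep-seifert}(3) applies to $P$: when $\hbox{genus}(\widehat{P \cap F}) = 1$, case (3)(a) identifies $\widehat X^\epsilon$ with a twisted $I$-bundle over the Klein bottle, which is Seifert fibred over $D^2(2,2)$; when the genus is $0$, twistedness of $P$ places us in case (3)(b)(i), giving $\widehat X^\epsilon$ a Seifert structure with base orbifold $D^2(a,b)$ where at least one of $a, b$ equals $2$. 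The main technical obstacle I expect is the orientation-reversal verification making $B$ a M\"obius band rather than an annulus; this is precisely where the specific S-cycle data (positive edges with consecutive label pair $\{j, j+1\}$) is used in an essential way. Once $B$ is produced, the deduction is a direct application of the earlier propositions.
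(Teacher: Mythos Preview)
Your approach is correct and reaches the same conclusion, but it is genuinely different from the paper's argument. The paper does not build a M\"obius band explicitly. Instead it uses the standing convention (established just before this lemma in \S\ref{graph}) that the bigon $R$ maps $I$-fibre-preservingly into $\dot\Sigma_1^\epsilon$; then $\tau_\epsilon(b_j\cup e_1^*\cup b_{j+1}) = b_{j+1}\cup e_2^*\cup b_j$ is immediate, so the component $\phi$ of $\dot\Phi_1^\epsilon$ containing $b_j\cup e_1^*\cup e_2^*\cup b_{j+1}$ is $\tau_\epsilon$-invariant. From there the paper invokes Lemma~\ref{product-Sigma+} (whose proof produces the M\"obius band and the $\widehat S$-essential boundary curve you construct by hand) and then Proposition~\ref{sep-seifert}(3). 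So the two routes converge, but the paper's is one line of characteristic-submanifold bookkeeping whereas yours redoes the classical S-cycle geometry.

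Two small points to tighten in your write-up. First, your appeal to Lemma~\ref{product-Sigma+} ``symmetrically on either side'' is true but for different reasons: for $\epsilon=+$ it is the lemma itself, while for $\epsilon=-$ in the non-separating case $X^-$ is by construction the product region $F\times I$ between the parallel copies, so $\dot\Sigma_1^-$ is trivially a product. Second, the step ``$N(B)$ \ldots therefore sits inside a twisted $I$-bundle component $P$ of $\Sigma_1^\epsilon$'' needs the fact that $P\cap S$ is \emph{large}; the cleanest way to get this is to note (as the paper already arranges) that the bigon image $R^*$ lies in $\dot\Sigma_1^\epsilon$, so a neighbourhood of the pair-of-pants $b_j\cup e_1^*\cup b_{j+1}$ is already in $P\cap S$. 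Once you know the component is in $\dot\Sigma_1^\epsilon$, your M\"obius band forces it to be twisted and the rest follows.
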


\pf Suppose that the $S$-cycle has label pair $\{j,j+1\}$. Then
$\tau_\epsilon(b_j\cup e_1^*\cup b_{j+1})= b_{j+1}\cup e_2^*\cup b_j$. Hence
$ b_j\cup e_1^*\cup b_{j+1}$ and $b_{j+1}\cup e_2^*\cup b_j$ are
contained in the same component $\phi$ of $\dot{\Phi}^\epsilon$ and this
component is $\tau_\epsilon$-invariant. Proposition \ref{product-Sigma+}
implies that $S$ is connected and $\phi$ contains an $\widehat S$-essential annulus. Proposition \ref{sep-seifert} shows that $\phi$ is the unique component of $\dot{\Phi}^\epsilon$ to contain such an annulus. Finally, Proposition \ref{sep-seifert}(3) implies that $\widehat X^\epsilon$ is of the form described in (4).
 \qed

\section{Counting faces in $\overline{\Gamma}_S$} \label{triangle faces}

In this section we examine the existence of triangle faces of $\overline{\Gamma}_S$ incident to vertices of small valency.

For each vertex $v$ of $\Gamma_S$ let $\varphi_j(v)$ be the number of corners of $j$-gons incident to $v$. Then
$$\hbox{valency}_{\overline{\Gamma}_S}(v) = |\partial S| \Delta(\alpha, \beta) - \varphi_2(v)$$
Set
$$\psi_3(v) = \hbox{valency}_{\overline{\Gamma}_S}(v) - \varphi_3(v) \geq 0,$$
$$\mu(v) = \varphi_2(v) + \frac{\varphi_3(v)}{3} \in \{ \frac{k}{3} : k \in \mathbb Z\}$$

\begin{lemma} \label{size mu}
Suppose that $v$ is a vertex of $\Gamma_S$ and set $\mu(v) = |\partial S| \Delta(\alpha, \beta) - 4 + x$. Then
$$\hbox{valency}_{\overline{\Gamma}_S}(v) = 6 - \frac12 (3x + \psi_3(v))$$
and
$$\varphi_3(v) = 3(\hbox{valency}_{\overline{\Gamma}_S}(v) - 4 + x)$$
\end{lemma}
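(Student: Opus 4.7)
The plan is to derive both identities by straightforward algebraic manipulation of the three defining equations for $\hbox{valency}_{\overline{\Gamma}_S}(v)$, $\psi_3(v)$, and $\mu(v)$, together with the hypothesis on $\mu(v)$. There is no genuine geometric obstacle here; the content is purely bookkeeping among the corner counts at $v$.

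First I would introduce the abbreviations $V = \hbox{valency}_{\overline{\Gamma}_S}(v)$ and $N = |\partial S|\Delta(\alpha,\beta)$, so that the given identities read $V = N - \varphi_2(v)$ and $\psi_3(v) = V - \varphi_3(v)$. Solving these for the corner counts yields $\varphi_2(v) = N - V$ and $\varphi_3(v) = V - \psi_3(v)$. Substituting these into the definition $\mu(v) = \varphi_2(v) + \frac{1}{3}\varphi_3(v)$ gives
\[
\mu(v) \;=\; (N - V) + \tfrac{1}{3}\bigl(V - \psi_3(v)\bigr) \;=\; N - \tfrac{2}{3}V - \tfrac{1}{3}\psi_3(v).
\]

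Next I would equate this expression with the hypothesis $\mu(v) = N - 4 + x$, obtaining $-\tfrac{2}{3}V - \tfrac{1}{3}\psi_3(v) = -4 + x$. Clearing denominators and rearranging gives $2V = 12 - 3x - \psi_3(v)$, i.e.\ $V = 6 - \tfrac{1}{2}(3x + \psi_3(v))$, which is the first claimed identity.

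Finally, the second identity follows by eliminating $\psi_3(v)$: from $\psi_3(v) = 12 - 3x - 2V$ and $\varphi_3(v) = V - \psi_3(v)$ we obtain
\[
\varphi_3(v) \;=\; V - (12 - 3x - 2V) \;=\; 3V - 12 + 3x \;=\; 3\bigl(V - 4 + x\bigr),
\]
as required. The main "obstacle," such as it is, is simply keeping the signs and the factor of $\tfrac{1}{3}$ straight; there is no nontrivial step.
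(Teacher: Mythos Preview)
Your proof is correct and follows essentially the same approach as the paper: both arguments are direct algebraic manipulations of the defining identities for $\hbox{valency}_{\overline{\Gamma}_S}(v)$, $\psi_3(v)$, and $\mu(v)$, differing only in the order of substitution.
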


\pf We noted above that $\hbox{valency}_{\overline{\Gamma}_S}(v) = |\partial S| \Delta(\alpha, \beta) - \varphi_2(v)$. Thus
$$\hbox{valency}_{\overline{\Gamma}_S}(v) = |\partial S| \Delta(\alpha, \beta) - \mu(v) + \frac{\varphi_3(v)}{3}
= 4 - x + \frac{\hbox{valency}_{\overline{\Gamma}_S}(v)}{3} - \frac{\psi_3(v)}{3},$$
and therefore
$$ \hbox{valency}_{\overline{\Gamma}_S}(v) = \frac32 (4 - x - \frac{\psi_3(v)}{3}) = 6 - \frac{1}{2} (3x + \psi_3(v)).$$
On the other hand,
$$\varphi_3(v) = 3(\mu(v) - \varphi_2(v)) = 3(|\partial S| \Delta(\alpha, \beta) - 4 + x - \varphi_2(v)) =
3(\hbox{valency}_{\overline{\Gamma}_S}(v) - 4 + x).$$
Thus the lemma holds.
\qed

\begin{prop} \label{possible values for mu}
Suppose that $v$ is a vertex of $\Gamma_S$.

$(1)$ If $\mu(v) > |\partial S| \Delta(\alpha, \beta) - 4$, then $\hbox{valency}_{\overline{\Gamma}_S}(v) \leq 5$. Further,

\indent \hspace{5mm} $(a)$ if $\hbox{valency}_{\overline{\Gamma}_S}(v) = 3$, then $\varphi_3(v) \geq 0$.

\indent \hspace{5mm} $(b)$ if $\hbox{valency}_{\overline{\Gamma}_S}(v) = 4$, then $\varphi_3(v) \geq 1$.

\indent \hspace{5mm} $(c)$ if $\hbox{valency}_{\overline{\Gamma}_S}(v) = 5$, then $\varphi_3(v) \geq 4$.

$(2)$ If $\mu(v) = |\partial S| \Delta(\alpha, \beta) - 4$, then $ 4 \leq \hbox{valency}_{\overline{\Gamma}_S}(v) \leq 6$. Further,

\indent \hspace{5mm} $(a)$ if $\hbox{valency}_{\overline{\Gamma}_S}(v) = 4$ then $\varphi_3(v) = 0$.

\indent \hspace{5mm} $(b)$ if $\hbox{valency}_{\overline{\Gamma}_S}(v) = 5$ then $\varphi_3(v) = 3$.

\indent \hspace{5mm} $(c)$ if $\hbox{valency}_{\overline{\Gamma}_S}(v) = 6$ then $\varphi_3(v) = 6$.

\end{prop}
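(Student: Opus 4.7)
The statement follows directly from Lemma~\ref{size mu} by case analysis. Writing $x := \mu(v) - (|\partial S|\Delta(\alpha,\beta) - 4)$, so that $x > 0$ is the hypothesis of (1) and $x = 0$ that of (2), that lemma supplies the two identities
\begin{equation*}
\hbox{valency}_{\overline{\Gamma}_S}(v) = 6 - \tfrac{1}{2}(3x + \psi_3(v)), \qquad \varphi_3(v) = 3(\hbox{valency}_{\overline{\Gamma}_S}(v) - 4 + x),
\end{equation*}
which I plan to combine with the a priori constraints $\psi_3(v) \geq 0$, $\varphi_3(v) \geq 0$, and $\hbox{valency}_{\overline{\Gamma}_S}(v) \in \mathbb{Z}$.

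For part (2), setting $x = 0$ reduces the identities to $\hbox{valency}_{\overline{\Gamma}_S}(v) = 6 - \psi_3(v)/2$ and $\varphi_3(v) = 3(\hbox{valency}_{\overline{\Gamma}_S}(v) - 4)$. Non-negativity of $\psi_3(v)$ bounds $\hbox{valency}_{\overline{\Gamma}_S}(v)$ above by $6$, non-negativity of $\varphi_3(v)$ bounds it below by $4$, and reading off $\varphi_3(v)$ for each value $\hbox{valency}_{\overline{\Gamma}_S}(v) \in \{4,5,6\}$ gives the asserted values $0, 3, 6$.

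For part (1), the definition $\mu(v) = \varphi_2(v) + \varphi_3(v)/3$ puts $\mu(v)$ in $\tfrac{1}{3}\mathbb{Z}_{\geq 0}$, so $x \in \tfrac{1}{3}\mathbb{Z}$ and the hypothesis $x > 0$ forces $x \geq 1/3$. Plugging this minimum into the valency identity gives $\hbox{valency}_{\overline{\Gamma}_S}(v) \leq 11/2$, which integrality sharpens to $\hbox{valency}_{\overline{\Gamma}_S}(v) \leq 5$. For (a)--(c), I will fix $k = \hbox{valency}_{\overline{\Gamma}_S}(v) \in \{3,4,5\}$, rewrite the first identity as $3x + \psi_3(v) = 12 - 2k$, and minimize $\varphi_3(v) = 3(k - 4 + x)$ subject to $x \geq \max\{1/3,\, 4-k\}$ and $x \in \tfrac{1}{3}\mathbb{Z}$ (the lower bound $x \geq 4-k$ coming from $\varphi_3(v) \geq 0$). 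This yields the lower bounds $\varphi_3(v) \geq 0, 1, 4$ for $k = 3, 4, 5$ respectively.

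There is no substantive obstacle: the argument is purely an enumeration of admissible $(x, \psi_3(v))$ pairs consistent with the two identities from Lemma~\ref{size mu}, the integrality of $\hbox{valency}_{\overline{\Gamma}_S}(v)$, and the non-negativity of $\psi_3(v)$ and $\varphi_3(v)$. The one subtlety to keep track of is that $\mu(v)$ lies in $\tfrac{1}{3}\mathbb{Z}$ rather than in $\mathbb{Z}$; it is precisely the third-integer values $x = 1/3, 2/3$ that allow the tight configurations yielding $\hbox{valency}_{\overline{\Gamma}_S}(v) = 5$ in (1)(c) and $\hbox{valency}_{\overline{\Gamma}_S}(v) = 4$ in (1)(b).
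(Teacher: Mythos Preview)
Your proof is correct and follows essentially the same route as the paper's: both introduce $x = \mu(v) - (|\partial S|\Delta(\alpha,\beta) - 4)$, invoke the two identities of Lemma~\ref{size mu}, and read off the conclusions from the non-negativity of $\psi_3(v)$ and $\varphi_3(v)$ together with the integrality of the valency and the fact that $x \in \tfrac{1}{3}\mathbb{Z}$. Your version is slightly more explicit in tracking the minimal admissible value $x \ge 1/3$ and in carrying out the minimization for each valency, but the underlying argument is identical.
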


\pf Write $\mu(v) = |\partial S| \Delta(\alpha, \beta) - 4 + x$ where $x \geq 0$ is an element of $\{ \frac{k}{3} : k \in \mathbb Z\}$. By Lemma \ref{size mu} we have $\hbox{valency}_{\overline{\Gamma}_S}(v) \leq 6 - \frac{3x}{2}$. Thus
$\hbox{valency}_{\overline{\Gamma}_S}(v) \leq \left\{
\begin{array}{ll}
6 & \hbox{if } x = 0 \\
5 & \hbox{if } x > 0
\end{array} \right.$.
Further, if $x = 0$, the same lemma implies that $\hbox{valency}_{\overline{\Gamma}_S}(v) = 6 - \frac{\psi_3(v)}{2}$. Since $\psi_3(v) = \hbox{valency}_{\overline{\Gamma}_S}(v) - \varphi_3(v)$, this is equivalent to $\hbox{valency}_{\overline{\Gamma}_S}(v) = 4 + \frac{\varphi_3(v)}{3}$. Thus $\hbox{valency}_{\overline{\Gamma}_S}(v) \geq 4$. The remaining conclusions follow from the identity $\varphi_3(v) = 3(\hbox{valency}_{\overline{\Gamma}_S}(v) - 4 + x)$ of Lemma \ref{size mu}.
\qed

Let $V, E, F$ be the number of vertices, edges, and faces of $\overline{\Gamma}_S$.

\begin{prop} \label{mu sum} $\;$

$(1)$ If the immersion surface is a disk, then $\sum_v \mu(v) \geq (|\partial S| \Delta(\alpha, \beta) - 4)V + 4$.

$(2)$ If the immersion surface is a torus, $\sum_v \mu(v) \geq (|\partial S| \Delta(\alpha, \beta) - 4)V$.
\end{prop}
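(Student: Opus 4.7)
The plan is to prove Proposition~\ref{mu sum} by an Euler characteristic computation on the graph $\Gamma_S \subset Y$, converted into counting identities among its bigon and triangle faces. Let $B_j$ denote the number of $j$-gon faces of $\Gamma_S$, write $F_0 = \sum_{j \ge 2} B_j$ for the total face count, and let $E_0$ denote its edge count. The starting observation is the translation
\[
\sum_v \mu(v) \;=\; \sum_v \varphi_2(v) + \tfrac{1}{3}\sum_v \varphi_3(v) \;=\; 2B_2 + B_3,
\]
which follows from the corner-count identity $\sum_v \varphi_j(v) = j B_j$: each $j$-gon face of $\Gamma_S$ contributes exactly $j$ corners (counted with multiplicity if its boundary revisits a vertex).

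Next I would assemble two combinatorial identities. Since every vertex of $\Gamma_S$ has valency $|\partial S|\Delta(\alpha, \beta)$, the handshake lemma gives
\[
2 E_0 \;=\; V \cdot |\partial S|\Delta(\alpha, \beta) \;=\; \sum_{j \ge 2} j B_j.
\]
For the face count, the Euler identity $V - E_0 + \sum_f \chi(f) = \chi(Y)$---valid because the vertices of $\Gamma_S$ lie in $\operatorname{int}(Y)$---combined with $\chi(f) \le 1$ for every face $f$ yields
\[
F_0 \;\ge\; \chi(Y) + V\bigl(\tfrac{|\partial S|\Delta(\alpha,\beta)}{2} - 1\bigr).
\]
Lemma~\ref{1-edge and 2-edge} enumerates the possible non-disk faces (annuli, once- and twice-punctured tori), and each has $\chi(f) \le 0$, so its presence only strengthens this inequality.

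Finally I would combine the two. Multiplying the face-count inequality by $4$ and subtracting the handshake identity gives
\[
\sum_{j \ge 2}(4 - j) B_j \;\ge\; 4 \chi(Y) + V\bigl(|\partial S|\Delta(\alpha,\beta) - 4\bigr).
\]
Rewriting the left side as $2 B_2 + B_3 - \sum_{j \ge 5}(j - 4) B_j$ and dropping the non-negative tail produces
\[
\sum_v \mu(v) \;=\; 2 B_2 + B_3 \;\ge\; 4\chi(Y) + V\bigl(|\partial S|\Delta(\alpha,\beta) - 4\bigr).
\]
Setting $\chi(Y) = 1$ when $Y$ is a disk proves~(1), and $\chi(Y) = 0$ when $Y$ is a torus proves~(2). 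The argument presents no serious obstacle; the only point requiring care is the use of the general Euler identity $V - E_0 + \sum_f \chi(f) = \chi(Y)$ in place of the more familiar $V - E_0 + F_0 = \chi(Y)$, together with the verification that the non-disk faces catalogued in Lemma~\ref{1-edge and 2-edge} only improve the face-count lower bound.
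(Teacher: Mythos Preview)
Your argument is essentially the paper's: translate $\sum_v \mu(v)$ into $2B_2+B_3$, combine the handshake identity $2E_0 = |\partial S|\Delta(\alpha,\beta)\,V$ with the Euler relation $V - E_0 + \sum_f \chi(f) = \chi(Y)$, and use $\chi(f)\le 1$ to bound $\sum_j(4-j)B_j$ from below.

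There is one small gap. You write $F_0 = \sum_{j\ge 2} B_j$ and $2E_0 = \sum_{j\ge 2} jB_j$, implicitly taking $B_1=0$. But $\Gamma_S$ can have a single non-disk monogon face (a collar on $\partial Y$ when $Y$ is a disk, a once-punctured torus when $Y$ is a torus). Your remark that non-disk faces ``only improve the face-count lower bound'' is correct in spirit, but as written you do not use that improvement: when the monogon is included, the left side of your combined inequality becomes $3B_1 + 2B_2 + B_3 - \sum_{j\ge 5}(j-4)B_j$, and the extra $3B_1$ goes the wrong way unless it is offset by the gain $1-\chi(f_1)\ge 1$ coming from $\chi(f_1)\le 0$. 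You can either track this offset explicitly, or do what the paper does: observe that removing the unique monogon face leaves a disk containing $\Gamma_S$ with no monogons, reducing both cases to the disk inequality already proved.
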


\pf First assume that $\Gamma_S$ has no monogon faces. Since its vertices each have valency $|\partial S| \Delta(\alpha, \beta)$ we have $2E = |\partial S| \Delta(\alpha, \beta) V$. Let $F_i$ be the number of $i$-faces so $F = \sum_i F_i$ and $2E = \sum_i iF_i$. Then
$$(|\partial S| \Delta(\alpha, \beta) - 4)V = 2E - 4V = 4(E - V) - 2E  =  4((\sum_{\hbox{{\footnotesize faces} } f} \chi(f)) - \chi(Y)) - 2E$$
Since $\chi(f) \leq 1$ for each face $f$, we have
\begin{eqnarray}
(|\partial S| \Delta(\alpha, \beta) - 4)V \leq 4(F - \chi(Y)) - 2E & = & \sum (4 - i) F_i - 4 \chi(Y)  \nonumber \\
& \leq & 2F_2 + F_3 - 4 \chi(Y) \nonumber \\
& = & \sum_v (\varphi_2(v) + \frac{\varphi_3(v)}{3})  - 4 \chi(Y) \nonumber \\
& = & \sum_v \mu(v) - 4 \chi(Y) \nonumber
\end{eqnarray}
Thus the lemma holds when there are no monogons.

If there are monogons, it is easily verified that there is only one, $f$ say, and that it is a collar on $\partial Y$ when $Y$ is a disk and a once-punctured torus when $Y$ is a torus. In either case, $\overline{Y \setminus f}$ is a disk containing $\Gamma_S$ without monogons. The first case implies that $\sum_v \mu(v) \geq (|\partial S| \Delta(\alpha, \beta) - 4)V + 4$, which implies the result.
\qed

\begin{cor} \label{mu constant} $\;$

$(1)$ If the immersion surface is a disk there is a vertex $v$ of $\Gamma_S$ such that $\mu(v) > |\partial S| \Delta(\alpha, \beta) - 4$.

$(2)$ If the immersion surface is a torus, then either there is a vertex $v$ of $\Gamma_S$ such that $\mu(v) > |\partial S| \Delta(\alpha, \beta) - 4$ or $\mu(v) = |\partial S| \Delta(\alpha, \beta) - 4$ for each vertex.
\qed
\end{cor}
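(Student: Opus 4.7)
The plan is to deduce both parts of the corollary directly from Proposition \ref{mu sum} by a simple averaging argument. There is essentially no geometric content beyond what has already been done; the corollary is just the pointwise translation of an aggregate inequality.

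For part (1), suppose the immersion surface is a disk. I would argue by contradiction. If every vertex $v$ of $\Gamma_S$ satisfied $\mu(v) \leq |\partial S|\Delta(\alpha,\beta) - 4$, then summing over the $V$ vertices would give
\[
\sum_v \mu(v) \;\leq\; (|\partial S|\Delta(\alpha,\beta) - 4)\, V,
\]
which directly contradicts the strict lower bound
\[
\sum_v \mu(v) \;\geq\; (|\partial S|\Delta(\alpha,\beta) - 4)\, V + 4
\]
of Proposition \ref{mu sum}(1). Hence at least one vertex $v$ must satisfy $\mu(v) > |\partial S|\Delta(\alpha,\beta) - 4$.

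For part (2), suppose the immersion surface is a torus and that no vertex of $\Gamma_S$ satisfies $\mu(v) > |\partial S|\Delta(\alpha,\beta) - 4$; I need to show that equality $\mu(v) = |\partial S|\Delta(\alpha,\beta) - 4$ then holds at every vertex. Under the hypothesis, $\mu(v) \leq |\partial S|\Delta(\alpha,\beta) - 4$ for each $v$, whence
\[
\sum_v \mu(v) \;\leq\; (|\partial S|\Delta(\alpha,\beta) - 4)\, V.
\]
Combined with the opposite inequality $\sum_v \mu(v) \geq (|\partial S|\Delta(\alpha,\beta) - 4)\, V$ from Proposition \ref{mu sum}(2), this forces equality in the sum, and therefore equality term by term. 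Thus $\mu(v) = |\partial S|\Delta(\alpha,\beta) - 4$ for every vertex $v$, as required.

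There is no real obstacle here: the only substantive input is Proposition \ref{mu sum}, whose proof has already been given via the Euler-characteristic computation $(|\partial S|\Delta(\alpha,\beta) - 4)V = 2E - 4V$ and the estimate $\sum(4-i)F_i \leq 2F_2 + F_3 = \sum_v(\varphi_2(v) + \varphi_3(v)/3)$. The corollary simply converts the average bound into a pointwise existence statement by the pigeonhole principle.
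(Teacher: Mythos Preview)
Your proof is correct and is exactly the argument the paper intends: the corollary is marked \qed\ with no proof given, since it follows immediately from Proposition \ref{mu sum} by the averaging/pigeonhole reasoning you supply.
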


\begin{prop} \label{local structure}
Suppose that $\mu(v) = |\partial S| \Delta(\alpha, \beta) - 4$ for each vertex $v$ of $\Gamma_S$. Then each face of $\Gamma_S$ is a disk. Further, if $v$ is a vertex of $\Gamma_S$ and

$(1)$ $\hbox{valency}_{\overline{\Gamma}_S}(v) = 4$, then $\varphi_4(v) = 4$.

$(2)$ $\hbox{valency}_{\overline{\Gamma}_S}(v) = 5$, then $\varphi_3(v) = 3$ and $\varphi_4(v) = 2$.

$(3)$ $\hbox{valency}_{\overline{\Gamma}_S}(v) = 6$, then $\varphi_3(v) = 6$.
\end{prop}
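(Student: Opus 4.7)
The plan is to extract the equality cases from the Euler characteristic argument of Proposition \ref{mu sum} and then perform a local count at each vertex. Writing $F_i$ for the number of $i$-edged faces of $\Gamma_S$ and using the identities $\sum_v \varphi_i(v) = iF_i$, $\sum_f (\mbox{edges of } f) = 2E$, and $\sum_f \chi(f) = E - V + \chi(Y)$, I repeat the computation behind Proposition \ref{mu sum} to obtain
\[
(|\partial S|\Delta(\alpha,\beta) - 4)V = 4\sum_f\chi(f) - 2E - 4\chi(Y) \leq \sum_i (4-i)F_i - 4\chi(Y) \leq 2F_2 + F_3 - 4\chi(Y) = \sum_v \mu(v) - 4\chi(Y).
\]
The first inequality uses $\chi(f) \leq 1$ for each face, with equality if and only if $f$ is a disk; the second uses $4-i \leq 0$ for $i \geq 4$, with equality if and only if $F_i = 0$ for all $i \geq 5$. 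When $\Gamma_S$ has a monogon, Lemma \ref{1-edge and 2-edge}(1) says there is only one, and rerunning the argument on the disk $\overline{Y\setminus f}$ (as in the proof of Proposition \ref{mu sum}) produces the strictly stronger lower bound $\sum_v \mu(v) \geq (|\partial S|\Delta(\alpha,\beta) - 4)V + 4$.

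Under the hypothesis $\mu(v) = |\partial S|\Delta(\alpha,\beta) - 4$ for every vertex $v$, we have $\sum_v\mu(v) = (|\partial S|\Delta(\alpha,\beta) - 4)V$, which immediately rules out the monogon scenario and forces each inequality above to be an equality while simultaneously demanding $\chi(Y) \leq 0$. Since $Y$ is either a disk or a torus, this forces $Y$ to be a torus, every face of $\Gamma_S$ to be a disk, and $F_i = 0$ for all $i \geq 5$. In particular, each face of $\Gamma_S$ is a disk with exactly $2$, $3$, or $4$ edges, which establishes the first assertion of the proposition.

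It remains to compute $\varphi_4(v)$ at each vertex. Since only bigons, triangles, and quadrilaterals appear as faces of $\Gamma_S$, the corners at $v$ decompose as
\[
\varphi_2(v) + \varphi_3(v) + \varphi_4(v) = \hbox{valency}_{\Gamma_S}(v) = |\partial S|\Delta(\alpha,\beta),
\]
and subtracting the identity $\varphi_2(v) = |\partial S|\Delta(\alpha,\beta) - \hbox{valency}_{\overline{\Gamma}_S}(v)$ gives $\varphi_3(v) + \varphi_4(v) = \hbox{valency}_{\overline{\Gamma}_S}(v)$. The values of $\varphi_3(v)$ supplied by Proposition \ref{possible values for mu}(2)---namely $0$, $3$, and $6$ for valencies $4$, $5$, and $6$ respectively---then yield $\varphi_4(v) = 4$, $\varphi_4(v) = 2$, and $\varphi_4(v) = 0$ in the respective cases, completing the proof. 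The one genuinely delicate point in the argument is tracking the equality conditions along the whole chain and excluding the monogon scenario; beyond this bookkeeping and the local count, no new geometric input beyond what has been built up in Section \ref{triangle faces} is required.
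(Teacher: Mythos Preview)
Your proof is correct and follows essentially the same Euler-characteristic strategy as the paper. The only organizational difference is that the paper distributes the Euler characteristic over the vertices of $\overline{\Gamma}_S$ via the local quantity $\chi(v) = 1 - \tfrac{1}{2}\hbox{valency}_{\overline{\Gamma}_S}(v) + \sum_{v\in\partial f}\chi(f)/|\partial f|$ and reads off the conclusion from $\chi(v)=0$, whereas you run the global count in $\Gamma_S$ first (extracting ``all faces are disks'' and $F_i=0$ for $i\ge 5$ from the equality case) and then localize; these are two packagings of the same computation.
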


\pf Corollary \ref{mu constant} shows that $Y$ is a torus. Thus $0 = \chi(Y) = \sum_v  \chi(v)$ where
$$\chi(v) = 1 - \frac{\hbox{valency}_{\overline{\Gamma}_S}(v)}{2} + \sum_{v \in \partial f} \frac{\chi(f)}{|\partial f|}$$
and $f$ ranges over the faces of $\overline{\Gamma}_S$ containing $v$. From Proposition \ref{possible values for mu}(2) we see that $\chi(v) \leq 0$ for all $v$. Hence $\chi(v) = 0$ for all $v$. This is only possible if  the proposition holds.
\qed

\section{Proof of Theorem \ref{main} when $F$ is non-separating} \label{section non-sep}

We show that when $F$ is non-separating and $m \geq 3$, $\Delta(\alpha, \beta) \leq 4$ if $M(\alpha)$ is very small and $\Delta(\alpha, \beta) \leq 5$ otherwise. This follows from the two propositions below. Recall that $|\partial S| = 2m$ when $F$ is non-separating.

Proposition \ref{tight bound} shows that $l_S \leq 2m - t_1^+ + 1$, so the weight of each edge in the reduced graph $\overline{\Gamma}_S$ of $\Gamma_S$ is at most $2m - t_1^+ + 2$. Hence if $v$ is a vertex of $\overline{\Gamma}_S$, $2m \Delta(\alpha, \beta)/\hbox{valency}_{\overline{\Gamma}_S}(v) \leq 2m - t_1^+ + 2$, so
\begin{equation}\label{eqsection non-sep.1}
\text{\em $\Delta(\alpha, \beta) \leq \Big(\frac{ 2m - t_1^+ + 2}{2m}\Big) \hbox{valency}_{\overline{\Gamma}_S}(v)$}
\end{equation}

\begin{prop} \label{non-sep t_1^+ > 0}
Suppose that $F$ is non-separating and $t_1^+ > 0$. Then
$$\Delta(\alpha, \beta) \leq \left\{ \begin{array}{ll}
4 & \hbox{if } m \leq 5 \hbox{ or } M(\alpha) \hbox{ is very small} \\
5 & \hbox{if } m \geq 6 \\
\end{array} \right.$$

\end{prop}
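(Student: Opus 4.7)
The plan is to combine the length bound on essential homotopies with a local Euler-characteristic analysis of $\overline{\Gamma}_S$. Since $F$ is non-separating, $|\partial S| = 2m$, and the displayed inequality immediately preceding the statement reads, for every vertex $v$ of $\overline{\Gamma}_S$,
$$\Delta(\alpha,\beta) \;\leq\; \left(\frac{2m - t_1^+ + 2}{2m}\right) \hbox{valency}_{\overline{\Gamma}_S}(v). \quad (\ast)$$
Because $j=1$ is odd, $t_1^+$ is even, and since $t_1^+ > 0$ by hypothesis, either $t_1^+ = 2$ or $t_1^+ \geq 4$. I will handle these two subcases separately, in each instance producing a vertex of small valency and substituting into $(\ast)$.

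Suppose first that $t_1^+ = 2$. Lemma \ref{4} forces each component of $\dot{\Sigma}_1^+$ to intersect both $F_1$ and $F_2$, which is the hypothesis of Lemma \ref{even}. Consequently every edge of $\overline{\Gamma}_S$ is negative and every face of $\overline{\Gamma}_S$ has an even number of edges; in particular $\overline{\Gamma}_S$ admits no triangle face. Proposition \ref{euler}(2) then yields a vertex $v$ of $\overline{\Gamma}_S$ with $\hbox{valency}_{\overline{\Gamma}_S}(v) \leq 4$, and inserting $t_1^+ = 2$ and this valency into $(\ast)$ gives $\Delta(\alpha,\beta) \leq 4$.

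Suppose next that $t_1^+ \geq 4$. If $M(\alpha)$ is very small, the immersion surface $Y$ is a disk, so $\overline{\Gamma}_S$ cannot be hexagonal and Proposition \ref{euler}(1) produces a vertex $v$ with $\hbox{valency}_{\overline{\Gamma}_S}(v) \leq 5$. Inequality $(\ast)$ now gives
$$\Delta(\alpha,\beta) \;\leq\; \frac{2m-2}{2m}\cdot 5 \;=\; 5 - \frac{5}{m} \;<\; 5,$$
so $\Delta(\alpha,\beta) \leq 4$. Otherwise $Y$ is a torus and $\overline{\Gamma}_S$ always contains a vertex of valency at most $6$ (trivially when $\overline{\Gamma}_S$ is hexagonal, and by Proposition \ref{euler}(1) otherwise); $(\ast)$ then yields
$$\Delta(\alpha,\beta) \;\leq\; \frac{2m-2}{2m}\cdot 6 \;=\; 6 - \frac{6}{m},$$
which is at most $4$ for $m \leq 5$ and at most $5$ for $m \geq 6$, giving the asserted bounds.

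The crux, and the only place I expect any real subtlety, is the parity step in the subcase $t_1^+ = 2$: the hypothesis $t_1^+ \leq 2$ must be leveraged to force every component of $\dot{\Sigma}_1^+$ to straddle both of $F_1$ and $F_2$, so that by the parity rule no edge of $\overline{\Gamma}_S$ is positive. This is already encoded in Lemmas \ref{4} and \ref{even}, so once those are cited the argument reduces to plugging valencies $4$, $5$, $6$ into $(\ast)$ and doing the arithmetic in $m$.
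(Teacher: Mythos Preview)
Your proof is correct and follows essentially the same line as the paper: split into $t_1^+=2$ versus $t_1^+\ge 4$, use Lemma~\ref{even} to rule out triangle faces in the first case and get a vertex of valency~$\le 4$, and use an Euler-characteristic bound to get a vertex of valency~$\le 5$ (disk) or~$\le 6$ (torus) in the second, then substitute into $(\ast)$. The only cosmetic difference is that you invoke Proposition~\ref{euler} directly for the valency bounds, whereas the paper routes through Corollary~\ref{mu constant} and Proposition~\ref{possible values for mu}; both give the same numbers. One small simplification: in the $t_1^+=2$ case you need not argue via Lemma~\ref{4}, since the last sentence of Lemma~\ref{even} already asserts the conclusion for $t_1^+\le 2$.
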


\pf  If there is a vertex of $\overline{\Gamma}_S$ of valency $3$ or less, Inequality \ref{eqsection non-sep.1} yields $\Delta(\alpha, \beta) \leq 3$, so we are done. Suppose then that all vertices are of valency $4$ or more.

If $t_1^+ = 2$, then by Lemma \ref{even} there are no triangle faces of $\Gamma_S$ and therefore Lemma \ref{euler}(2) implies that $\overline{\Gamma}_S$ is quadrilateral. Thus $Y$ is a torus, so $M(\alpha)$ is not very small. Further, as all vertices have valency $4$, Inequality \ref{eqsection non-sep.1} implies that $\Delta(\alpha, \beta) \leq 4$. Thus we are done.

If $t_1^+ > 2$, then $2m \geq t_1^+ \geq 4$, so $m \geq 2$. Corollary \ref{mu constant} and Proposition \ref{possible values for mu} imply that there is a vertex $v$ of $\overline{\Gamma}_S$ of valency at most $5$ if $Y$ is a disk (e.g. if $M(\alpha)$ is very small) and at most $6$ if it is a torus. Inequality \ref{eqsection non-sep.1} then shows that the proposition holds.
\qed

\begin{prop} \label{non-sep t_1^+ = 0}
Suppose that $F$ is non-separating and $t_1^+ = 0$.

$(1)$ If $M(\alpha)$ is very small, then $\Delta(\alpha, \beta) \leq \left\{ \begin{array}{ll}
4 & \hbox{if } m \geq 2 \\
6 & \hbox{if } m = 1
\end{array} \right.$

$(2)$ If $M(\alpha)$ is not very small, then $\Delta(\alpha, \beta) \leq \left\{ \begin{array}{ll}
5 & \hbox{if } m \geq 3 \\
6 & \hbox{if } m = 2 \\
8 & \hbox{if } m = 1
\end{array} \right.$
\end{prop}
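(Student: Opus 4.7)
The plan is to reduce the inequality to a combinatorial count on the reduced intersection graph $\overline{\Gamma}_S$: extract a vertex of small valency and convert that valency into a bound on $\Delta(\alpha,\beta)$ via the edge-weight bound of Corollary \ref{max weight of edge}. I may assume throughout that $\Delta(\alpha,\beta) \geq 4$, since every claimed inequality is trivial otherwise. Because $F$ is non-separating, $X^-$ is the product $I$-bundle $F \times I$ lying between $F_1$ and $F_2$ (Assumption \ref{assumption not (semi) fibre} forbids $X^+$ from being one), so Corollary \ref{max weight of edge}(2) bounds the weight of every edge of $\overline{\Gamma}_S$ by $|\partial S| + 2 = 2m+2$. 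Since each vertex of $\Gamma_S$ has valency $2m\Delta(\alpha,\beta)$, summing weights at an arbitrary vertex $v$ of $\overline{\Gamma}_S$ gives
$$\Delta(\alpha,\beta) \;\leq\; \mathrm{val}_{\overline{\Gamma}_S}(v)\left(1 + \tfrac{1}{m}\right),$$
so the problem reduces to producing a vertex of small valency.

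The next step is to exploit the hypothesis $t_1^+ = 0$. Proposition \ref{nonsep-seifert}(3) provides a unique component $P$ of $\dot\Sigma_1^+$ meeting both $F_1$ and $F_2$, with all of $\partial S$ lying in $P \cap S$; consequently $\breve\Sigma_1^+ = \dot\Sigma_1^+$ is connected and has horizontal boundary on both parallel copies of $F$. Lemma \ref{even} then applies: every edge of $\overline{\Gamma}_S$ is negative, so every face has an even number of edges; in particular $\overline{\Gamma}_S$ has no triangle faces. From here I split according to whether $M(\alpha)$ is very small.

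When $M(\alpha)$ is very small, $Y$ is a disk, so $\mu(v) = \varphi_2(v) = 2m\Delta(\alpha,\beta) - \mathrm{val}_{\overline{\Gamma}_S}(v)$ and Proposition \ref{mu sum}(1) simplifies to $\sum_v \mathrm{val}_{\overline{\Gamma}_S}(v) \leq 4V - 4$, forcing some vertex to have valency at most $3$. The weight inequality then yields $\Delta(\alpha,\beta) \leq 3 + 3/m$, which is $\leq 4$ for $m \geq 2$ and $\leq 6$ for $m = 1$. When $M(\alpha)$ is not very small, $Y$ is a torus and Proposition \ref{euler}(2) supplies a vertex of valency at most $4$ (in the rectangular case every vertex has valency exactly $4$); the weight inequality becomes $\Delta(\alpha,\beta) \leq 4 + 4/m$, yielding $\leq 5$ for $m \geq 3$, $\leq 6$ for $m = 2$, and $\leq 8$ for $m = 1$. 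I expect no serious obstacle here, since all the substantive content has already been packaged into previous sections; the care required is purely bookkeeping, namely confirming that both hypotheses of Corollary \ref{max weight of edge}(2) (i.e.\ $\Delta(\alpha,\beta) > 3$ together with $X^-$ being a product bundle) are in force and that $\breve\Sigma_1^+$ is connected and meets both parallel copies of $F$, so that Lemma \ref{even} fires.
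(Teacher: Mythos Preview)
Your proposal is correct and follows essentially the same approach as the paper: use Lemma~\ref{even} (via $t_1^+ = 0 \leq 2$) to rule out triangle faces, use the edge-weight bound $2m+2$ (the paper packages this as Inequality~\ref{eqsection non-sep.1}, you cite Corollary~\ref{max weight of edge}(2); these coincide when $t_1^+ = 0$), extract a vertex of small valency, and compute. The only cosmetic difference is in the very-small case: the paper invokes Proposition~\ref{euler}(2) and observes that the rectangular alternative forces $Y$ to be a torus, while you go through Proposition~\ref{mu sum}(1) to obtain $\sum_v \mathrm{val}(v) \leq 4V-4$ directly; both yield a vertex of valency at most~$3$.
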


\pf Suppose that $t_1^+ = 0$. By Lemma \ref{even}, $\overline{\Gamma}_S$ has no triangle face, so $\varphi_3(v) = 0$ for each vertex of $\Gamma_S$. Hence $\overline{\Gamma}_S$ has a vertex $v$ of valency at most $4$ by Proposition \ref{euler}(2). If there is a vertex of valency $3$ or less, then Inequality \ref{eqsection non-sep.1} shows $\Delta(\alpha, \beta) \leq 4$ for $m \geq 2$ and $\Delta(\alpha, \beta) \leq 6$ for $m = 1$. If there are no vertices of valency less than $4$, Proposition \ref{euler}(2) implies that $\overline{\Gamma}_S$ is rectangular, so $Y$ is a torus and $M(\alpha)$ is not very small. Thus assertion (1) of the lemma holds. By Inequality \ref{eqsection non-sep.1}, $\Delta(\alpha, \beta) \leq 4 + 4/m$. It follows that $\Delta(\alpha, \beta) \leq 5$ if $m \geq 3$, $\Delta(\alpha, \beta) \leq 6$ if $m = 2$, and $\Delta(\alpha, \beta) \leq 8$ if $m = 1$.
\qed

\section{Proof of Theorem \ref{main} when $F$ is separating and  $t_1^+ + t_1^- \geq 4$} \label{4 or more tights}

\begin{prop} \label{sep t_1^+ + t_1^- > 2}
Suppose that $F$ is separating and $t_1^+ + t_1^- \geq 4$. Then
$$\Delta(\alpha, \beta) \leq \left\{
\begin{array}{ll}
4 & \hbox{if $M(\alpha)$ is very small} \\
5 & \hbox{otherwise.} \end{array} \right.$$
\end{prop}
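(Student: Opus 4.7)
The plan is to combine the pigeonhole bound on edge weights in $\overline{\Gamma}_S$ with the face-counting bound on the minimum vertex valency. Since $F$ is separating, $|\partial S|=m$, so every vertex of $\Gamma_S$ has valency $m\Delta(\alpha,\beta)$. Hence if some vertex $v$ of $\overline{\Gamma}_S$ has valency $d$, pigeonhole produces an edge incident to $v$ of weight at least $m\Delta(\alpha,\beta)/d$, and Proposition \ref{long} bounds this weight by $l_S+1$. This yields the key inequality
$$\Delta(\alpha,\beta) \leq \frac{d(l_S+1)}{m}.$$

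First I would show that $l_S+1 \leq m-1$ under the hypothesis $t_1^+ + t_1^- \geq 4$. If $X^-$ is not an $I$-bundle, Proposition \ref{tight bound}(2) gives $l_S \leq m - \max\{t_1^+,t_1^-\}$ when $t_1^+=t_1^-$ and $l_S \leq m - \max\{t_1^+,t_1^-\} + 1$ otherwise. Since $t_1^\pm$ are even, the equal case forces both to be $\geq 2$, yielding $l_S \leq m-2$, while the unequal case forces $\max\{t_1^+,t_1^-\}\geq 4$, yielding $l_S \leq m-3$. If $X^-$ is an $I$-bundle, Assumption \ref{assumption not (semi) fibre} makes it twisted, and then $\Sigma_1^-=X^-$ forces $\dot{\Phi}_1^- = S$ to be a single non-tight (genus $1$) component, so $t_1^- = 0$ and hence $t_1^+ \geq 4$. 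Proposition \ref{tight increase}(1)(a) then gives $t_3^+ \geq t_1^+ > 0$, and Proposition \ref{tight bound}(1) gives $l_S \leq m-t_1^+ + 1 \leq m-3$. In every case the maximum weight of an edge of $\overline{\Gamma}_S$ is at most $l_S+1 \leq m-1$.

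The final step will be to exhibit a vertex of small valency in $\overline{\Gamma}_S$. If $M(\alpha)$ is very small, the immersion surface $Y$ is a disk, so Corollary \ref{mu constant}(1) produces a vertex $v$ with $\mu(v) > m\Delta(\alpha,\beta) - 4$, and Proposition \ref{possible values for mu}(1) gives $\hbox{valency}_{\overline{\Gamma}_S}(v) \leq 5$. The key inequality becomes $\Delta(\alpha,\beta) \leq 5(m-1)/m < 5$, forcing $\Delta(\alpha,\beta)\leq 4$. If $M(\alpha)$ is not very small, $Y$ may be a torus. If some vertex of $\overline{\Gamma}_S$ has valency $\leq 5$, the same estimate still yields $\Delta(\alpha,\beta)\leq 4$. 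Otherwise every vertex has valency $\geq 6$, and Proposition \ref{euler}(1) forces $\overline{\Gamma}_S$ to be hexagonal with every vertex of valency exactly $6$; the key inequality then reads $\Delta(\alpha,\beta) \leq 6(m-1)/m < 6$, giving $\Delta(\alpha,\beta) \leq 5$.

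The argument is essentially bookkeeping, and I do not anticipate a serious obstacle. The most delicate point is the twisted-$I$-bundle sub-case of the weight bound, where one must use that the hypothesis $t_1^+ \geq 4$ propagates automatically to $t_3^+>0$ via the monotonicity statement in Proposition \ref{tight increase}(1)(a), allowing access to the stronger clause of Proposition \ref{tight bound}(1).
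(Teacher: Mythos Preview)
Your proposal is correct and follows essentially the same approach as the paper's own proof: bound the maximal edge weight of $\overline{\Gamma}_S$ by $m-1$ via Proposition \ref{tight bound}, then find a vertex of valency at most $5$ (disk case) or $6$ (torus case) and apply the pigeonhole inequality $\Delta(\alpha,\beta) \leq d(m-1)/m$. Your treatment is in fact a bit more explicit than the paper's: you separately handle the twisted $I$-bundle subcase (where $t_1^-=0$ forces $t_1^+\geq 4$, hence $t_3^+>0$ and the sharp clause of Proposition \ref{tight bound}(1) applies), whereas the paper folds this into a single terse appeal to Proposition \ref{tight bound}; and for the torus case you reach valency $6$ via Proposition \ref{euler}(1) rather than via Corollary \ref{mu constant}(2) together with Proposition \ref{possible values for mu}(2), which is an equivalent route.
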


\begin{proof}
Since $F$ is separating, $S = F$ and $|\partial S| = m$.

If $t_1^\epsilon \geq 4$ for some $\epsilon$ then Proposition \ref{tight bound} shows that $l_S \leq m - 3$. Thus the weight of each edge in $\overline{\Gamma}_S$ is at most $m-2$. If $t_1^\epsilon = 2$ for both $\epsilon$, then $l_+, l_- \leq m - 2$, so $l_S \leq m - 2$. Thus the weight of each edge in $\overline{\Gamma}_S$ is at most $m-1$. In either case, it follows that for each vertex $v$ of $\Gamma_S$, $\frac{m \Delta(\alpha, \beta)}{\hbox{valency}_{\overline{\Gamma}_S}(v)} \leq m-1$. Hence
\begin{equation}\label{eq 4 or more tights.1}
\text{\em $\Delta(\alpha, \beta) \leq \Big(\frac{ m - 1}{m}\Big) \hbox{valency}_{\overline{\Gamma}_S}(v) < \hbox{valency}_{\overline{\Gamma}_S}(v)$}
\end{equation}
Corollary \ref{mu constant} and Proposition \ref{possible values for mu} imply that there is a vertex $v$ of valency $5$ or less if $Y$ is a disk, in particular if $M(\alpha)$ is very small, and of valency at most $6$ otherwise. Inequality \ref{eq 4 or more tights.1} then shows that the conclusion of the proposition hold.
\end{proof}

\section{The relation associated to a face of $\Gamma_S$} \label{section-relation}

The proof of Theorem \ref{main} when $F$ is separating and $t_1^+ + t_1^- \leq 2$ necessitates a deeper use of the properties of the intersection graph $\Gamma_S$. We begin with a description of the relations associated to its faces.

Recall that the boundary components of $F$ have been indexed (mod $m$): $b_1, b_2, \ldots, b_m$ so that they appear successively around $\partial M$. For each $\epsilon$ we use $\tau_\epsilon(j) (= j \pm 1)$ to be the index such that $\tau_\epsilon(b_j) = b_{\tau_\epsilon(j)}$. Let $\sigma_{j}$ be a path which runs from $b_j$ to $b_{\tau_\epsilon(j)}$ in the annular component of $\partial M \cap X^\epsilon$ containing $b_j \cup b_{\tau_\epsilon(j)}$. Fix a base point $x_0 \in F$ and for each $j$ a path $\eta_j$ in $F$ from $x_0$ to $b_j$. The loop $\eta_j * \sigma_{j} * \eta_{\tau_\epsilon(j)}^{-1}$ determines a class $x_{j} \in \pi_1(\widehat X^\epsilon; x_0)$ well-defined up to our choice of the $\eta_j$. Clearly $x_{j} x_{\tau_\epsilon(j)} = 1$. (The use of $x_j$ to describe this class is ambiguous in that it does not specify which value $\epsilon$ takes on. Nevertheless, whenever we use it the value of $\epsilon$ will be understood from the context.)

Recall that if $t_1^\epsilon = 0$, $\widehat X^\epsilon$ admits a Seifert structure with base orbifold $D^2(p,q)$. There is a projection homomorphism $\pi_1(\widehat X^\epsilon) \to \pi_1(D^2(p,q))$ obtained by quotienting out the normal cyclic subgroup of $\pi_1(\widehat X^\epsilon)$ determined by the class of a regular Seifert fibre. We denote the image in $\pi_1(D^2(p,q))$ of an element $x \in \pi_1(\widehat X^\epsilon)$ by $\bar x$.  Fix generators $a, b$ of $\mathbb Z/p, \mathbb Z/q$ such that $ab$ represents the class of the boundary circle of $D^2(p,q)$ in $\pi_1(D^2(p,q)) \cong \mathbb Z/p * \mathbb Z/q$.

\begin{prop} \label{not peripheral}
If $t_1^\epsilon = 0$, then no $x_{j}$ is peripheral in $\widehat X^\epsilon$. Indeed, there are integers $k, l$ and $\delta \in \{\pm 1\}$ such that $x_{j}$ is sent to an element $\bar x_{j}$ of the form $(ab)^k a^\delta (ab)^l$ in $\pi_1(D^2(p,q))$.
\end{prop}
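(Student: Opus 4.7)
By Propositions \ref{order two} and \ref{sep-seifert}, the hypothesis $t_1^\epsilon = 0$ implies that $\widehat X^\epsilon$ admits a Seifert fibration over $D^2(p,q)$ with $p, q \geq 2$, in which the regular fiber slope on $\widehat F = \partial \widehat X^\epsilon$ equals the slope of the boundary of an $\widehat F$-essential vertical annulus in $\widehat{P \cap F}$. The plan is to project $x_j$ to the base orbifold using this Seifert structure.

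First I would reduce the non-peripherality assertion to the normal-form assertion. The image of the peripheral subgroup $\pi_1(\widehat F) \to \pi_1(\widehat X^\epsilon) \to \pi_1(D^2(p,q))$ is the cyclic subgroup $\langle ab \rangle \subset \mathbb{Z}/p \ast \mathbb{Z}/q$ generated by the class of $\partial D^2$. An equation $(ab)^k a^\delta (ab)^l = (ab)^n$ would force $a^\delta = (ab)^{n-k-l}$; but by the normal form in a free product of cyclic groups, $a^{\pm 1} \in \mathbb{Z}/p$ is a reduced syllable of length one, whereas $(ab)^m$ for $m \ne 0$ has reduced length at least two. Hence no element of the form $(ab)^k a^\delta (ab)^l$ with $\delta = \pm 1$ lies in $\langle ab \rangle$, so the normal-form statement implies non-peripherality.

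For the normal-form statement, I would use that the slab $H_j^\epsilon \cong D^2 \times I$ is a 3-ball, so $\sigma_j$ is homotopic rel endpoints through $H_j^\epsilon$ to an arc $\rho_j \ast \gamma_j \ast \rho_{\tau_\epsilon(j)}^{-1}$, where $\rho_j \subset \widehat b_j$ and $\rho_{\tau_\epsilon(j)} \subset \widehat b_{\tau_\epsilon(j)}$ are arcs from the endpoints of $\sigma_j$ on $b_j, b_{\tau_\epsilon(j)}$ to the centres of those meridian disks, and $\gamma_j$ is a spanning arc through the interior of $H_j^\epsilon$ joining these centres. Setting $y_j = \eta_j \ast \rho_j \ast \gamma_j \ast \rho_{\tau_\epsilon(j)}^{-1} \ast \eta_{\tau_\epsilon(j)}^{-1}$, a loop homotopic to $x_j$ in $\widehat X^\epsilon$, the four pieces on $\widehat F$, namely $\eta_j, \rho_j, \rho_{\tau_\epsilon(j)}^{-1}, \eta_{\tau_\epsilon(j)}^{-1}$, project to arcs in $\partial D^2 \subset D^2(p,q)$ and contribute the factors $(ab)^k$ and $(ab)^l$ according to their winding around $\partial D^2$. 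The interior arc $\gamma_j$ projects to a path in $D^2(p,q)$ joining two points of $\partial D^2$; I would show that in the Seifert fibration this projected path encircles exactly one exceptional fiber, yielding the factor $a^\delta$ with $\delta = \pm 1$ after relabelling $a, b$ so that the encircled cone point corresponds to $a$.

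The main obstacle will be verifying the encirclement claim for $\gamma_j$. I expect to handle this by a case analysis following Proposition \ref{sep-seifert}(3)(b): in the product $I$-bundle case where $Q_P$ separates, each slab $H_j^\epsilon$ lies in one of the solid-torus extensions whose core is an exceptional fiber, and a spanning arc of $H_j^\epsilon$ can be chosen to cross this core transversely exactly once; in the twisted $I$-bundle case, the order-$2$ exceptional fiber coming from the M\"obius-band base of $Q_P$ plays the analogous role. In both situations, concatenating the $(ab)$-power contributions from the $\widehat F$-pieces with the single $a^\delta$ contribution from $\gamma_j$ yields $\bar x_j$ of the required form.
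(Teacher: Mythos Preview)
Your reduction of non-peripherality to the normal-form statement is correct and matches the paper. The overall strategy of projecting to the base orbifold and separating the contributions of the $\widehat F$-pieces from that of the spanning arc is also right.

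The gap is in your geometric claim about the location of the slabs $H_j^\epsilon$. You assert that in the product $I$-bundle case each $H_j^\epsilon$ lies in one of the solid-torus pieces whose core is an exceptional fibre. This is false: since $\dot\Sigma_1^\epsilon = P$ is neatly embedded in $X^\epsilon$, the annular components of $\partial M \cap X^\epsilon$ are contained in $P$, so $H_j^\epsilon \subset \widehat P \subset Q_P$. In the product case $Q_P \cong (\text{annulus}) \times I$ is the \emph{middle} piece of $\widehat X^\epsilon$, and the two solid tori $V,W$ containing the exceptional fibres are precisely the components of $\overline{\widehat X^\epsilon \setminus Q_P}$, disjoint from $H_j^\epsilon$. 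In the twisted case $Q_P$ is indeed a solid torus containing the order-$2$ fibre, but your spanning arc $\gamma_j$ is essentially an $I$-fibre of $Q_P$, and these are disjoint from the core circle of the M\"obius band; so the claim that $\gamma_j$ ``crosses the core transversely exactly once'' is unjustified there too.

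The paper's argument avoids this difficulty by observing directly that $K_\beta \cap \widehat X^\epsilon$ (the union of the cores of the $H_j^\epsilon$) may be taken to consist of arcs properly embedded in the vertical annulus $A = V \cap W$. The projection of $A$ to $D^2(p,q)$ is then a properly embedded arc in the base disk \emph{separating} the two cone points, so the projection of $\sigma_j$ is such a separating arc. A based loop that runs along $\partial D^2$, then across this separating arc, then back along $\partial D^2$, represents $(ab)^k a^{\delta}(ab)^l$ in $\pi_1(D^2(p,q))$. This is the missing geometric input: rather than arguing that $\gamma_j$ links an exceptional fibre, you should argue that it lies in (an isotope of) the separating vertical annulus.
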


\pf It follows from the method of proof of Proposition \ref{order two} that $\widehat X^\epsilon = V \cup W$ where $V$ and $W$ are solid tori whose intersection is an essential annulus $(A, \partial A) \subset (\widehat X^\epsilon, \widehat F)$. Further, if $K_\beta$ is the core of the $\beta$ filling solid torus, we can assume that $K_\beta \cap X^\epsilon$ is a finite union of arcs properly embedded in $A$. Consideration of the Seifert structure on $\widehat X^\epsilon$ then shows that the image of the projection of $\sigma_{j}$ to $D^2(p,q)$ is a properly embedded arc which separates the two cone points. Thus there are integers $k, l$ and $\delta \in \{\pm 1\}$ such that $\bar x_{j} = (ab)^k a^\delta (ab)^l \in \pi_1(D^2(p,q))$. Such an element is peripheral if and only if it equals $(ab)^n$ for some $n$. But then $a = (ab)^{\pm(n-k-l)}$ would be peripheral, which is false.
\qed

Consider an $n$-gon face $f$ of $\Gamma_S$ lying to the $\epsilon$-side of $F$ with boundary $c_1 \cup e_1 \cup c_2 \cup \ldots \cup c_n \cup e_n$ where each $c_i$ is a corner of $f$, $e_i$ an edge of $f$, and they are indexed as they arise around $\partial f$. In this ordering, let $b_{j_i}$ be the boundary component of $F$ at $c_i$ corresponding to $c_i \cap e_i$ and $b_{j_i'}$ that corresponding to $c_{i+1} \cap e_i$. (See Figure \ref{relation}.)

\begin{figure}[!ht]
\centerline{\includegraphics{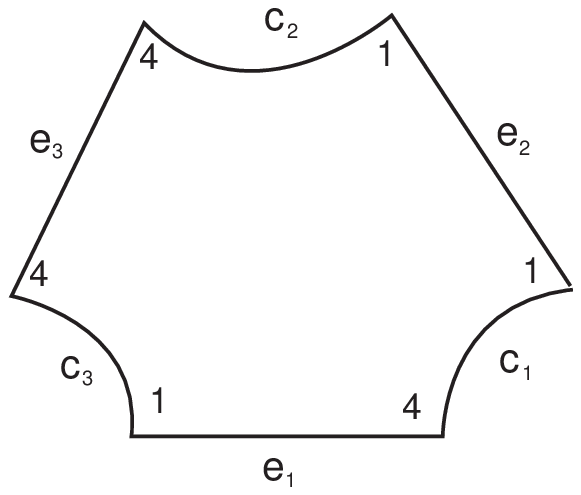}} \caption{ }\label{relation}
\end{figure}

The relation
$$\Pi_{i=1}^n w_i x_{j_{i}'}  = 1$$
holds in $\pi_1(\widehat X^\epsilon)$ where $w_i$ is represented by the loop $\eta_{j_i}  * e_i^* * \eta_{j_{i}'}^{-1}$.

For each boundary component $b_j$ of $F$, let $\widehat b_j$ denote the meridional disk it bounds in $\widehat F$.

\begin{cor} \label{F hat essential edge}
Suppose that $e_1$ is a negative edge of $\Gamma_S$ whose end labels are the same. Suppose as well that $e_1$ is a boundary edge of a triangle face lying on the $\epsilon$-side of $F$ where $t_1^\epsilon = 0$. If the boundary label of $e$ is $j$, then the loop $\widehat b_j \cup e_1^*$ is essential in $\widehat F$.
\end{cor}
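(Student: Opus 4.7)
My plan is to argue by contradiction: I will suppose the loop $\gamma := \widehat b_j \cup e_1^*$ is inessential in $\widehat F$ and use the triangle relation to force $x_{j_2'}$ to be peripheral in $\widehat X^\epsilon$, contradicting Proposition \ref{not peripheral}.

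The first step is to translate this inessentiality hypothesis into the algebraic statement $w_1 = 1$ in $\pi_1(\widehat X^\epsilon)$. Since $\widehat F$ is incompressible in $\widehat X^\epsilon$, such a $\gamma$ is also null-homotopic in $\widehat X^\epsilon$, and, based at $x_0$ via $\eta_j$, its class in $\pi_1(\widehat X^\epsilon)$ coincides with $w_1$: the closing arc of $\gamma$ lies in the disk $\widehat b_j$, hence is homotopic rel endpoints inside $\widehat X^\epsilon$ to an arc in $b_j \subset F$, which is precisely the configuration entering the definition of $w_1$.

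Next I will extract a combinatorial identity from the fact that the triangle face $R$ lies on the $\epsilon$-side: each of its three corners is contained in a single annular component of $\partial M \cap X^\epsilon$. The two corners $c_1$ and $c_2$ of $R$ adjacent to $e_1$ both have an endpoint on $b_j$, so they both lie in the unique annulus of $\partial M \cap X^\epsilon$ incident to $b_j$, whose other boundary circle is $b_{\tau_\epsilon(j)}$. This pins down $j_3' = j_2 = \tau_\epsilon(j)$, whence $x_{j_3'} = x_{\tau_\epsilon(j)} = x_j^{-1}$.

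Plugging $w_1 = 1$, $j_1' = j$, and $x_{j_3'} = x_j^{-1}$ into the triangle relation $w_1 x_{j_1'} w_2 x_{j_2'} w_3 x_{j_3'} = 1$ gives $x_j w_2 x_{j_2'} w_3 x_j^{-1} = 1$, which collapses to $w_2 x_{j_2'} w_3 = 1$ and hence $x_{j_2'} = w_2^{-1} w_3^{-1}$. Each of $w_2$ and $w_3$ is represented by a loop built from the paths $\eta_\bullet \subset F$, the arcs $e_i^* \subset F$, and arcs in the circles $b_\bullet \subset F$, so both lie in the image of $\pi_1(\widehat F) \hookrightarrow \pi_1(\widehat X^\epsilon)$; therefore so does $x_{j_2'}$, which makes it peripheral in $\widehat X^\epsilon$ and contradicts Proposition \ref{not peripheral}. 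The one delicate point is the identification $j_3' = j_2 = \tau_\epsilon(j)$, which is precisely what allows the two factors $x_j$ and $x_j^{-1}$ at the ends of the relation to cancel; without it, no such collapse is available.
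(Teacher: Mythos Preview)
Your proof is correct and follows essentially the same argument as the paper: assuming the loop is inessential forces $w_1 = 1$, and then the triangle relation (after the cyclic shift $x_{j_3'} w_1 x_{j_1'} w_2 x_{j_2'} w_3 = 1$ used in the paper, or equivalently your conjugation step) yields $x_{j_2'} \in \pi_1(\widehat F)$, contradicting Proposition~\ref{not peripheral}. Your extra care in justifying $w_1 = 1$ via incompressibility of $\widehat F$ and in pinning down $j_3' = \tau_\epsilon(j)$ is the same content the paper packages into the single line ``the relation from the given face reads $x_j^{-1} w_1 x_j w_2 x_k w_3 = 1$''.
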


\pf The relation from the given face reads $x_{j}^{-1} w_1 x_{j} w_2 x_{k} w_3 = 1$ where $k \in \{1, 2, \ldots , m\}$ and $w_1, w_2, w_3$ are the peripheral elements of $\pi_1(\widehat X^\epsilon)$ defined above. If $\widehat b_j \cup e_1^*$ is inessential in $\widehat F$, then $w_1 = 1$, so the relation gives $x_{k} = (w_3 w_2)^{-1}$ is peripheral, which contradicts Proposition \ref{not peripheral}. Thus the corollary holds.
\qed

As an immediate consequence of this corollary we have:

\begin{cor} \label{triangle not tight}
Suppose that $e$ is a negative edge of $\Gamma_S$ whose end labels are the same. Suppose as well that $e$ is a boundary edge of a triangle face lying on the $\epsilon$-side of $F$ where $t_1^\epsilon = 0$. If the weight of $e$ in the reduced graph $\overline{\Gamma}$ is $k+1$, then $e^*$ is contained in a component of $\dot \Phi_k^{-\epsilon}$ which contains an $\widehat F$-essential annulus.
\qed
\end{cor}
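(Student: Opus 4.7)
The plan is to turn the weight-$(k+1)$ hypothesis on $\bar{e}$ into an essential homotopy of length $k$ for $e^*$ which starts on the $-\epsilon$-side, then combine the resulting $\Phi_k^{-\epsilon}$-membership (via property $(*)$ of the characteristic subsurfaces) with Corollary \ref{F hat essential edge} to exhibit an $\widehat F$-essential annulus inside the component of $\dot\Phi_k^{-\epsilon}$ housing $e^*$.

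First I would list the $k+1$ parallel representatives of $\bar{e}$ in $\Gamma_S$ as $e = e_1, e_2, \ldots, e_{k+1}$, with bigon faces $R_1, \ldots, R_k$, where $R_i$ sits between $e_i$ and $e_{i+1}$. Since $e_1$ borders a triangle face on the $\epsilon$-side of $F$, property (iii) of $\Gamma_S$ from \S \ref{graph} forces $R_1$ onto the $-\epsilon$-side, and consecutive bigons then alternate sides in $i$. As noted in \S \ref{graph}, each $h|_{R_i}$ may be taken into the characteristic $I$-bundle pair on its side of $F$, so concatenating these $I$-fibre-preserving maps yields an essential homotopy of length $k$ of the arc $e_1^*$ starting on the $-\epsilon$-side, exactly as in the mechanism behind Proposition \ref{long}. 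The defining property $(*)$ of the characteristic subsurfaces then homotopes $e_1^*$ in $S$ into some component $\phi$ of $\Phi_k^{-\epsilon}$. Because both endpoints of $e_1^*$ lie on the boundary component $b_j$ and $\Phi_k^{-\epsilon}$ is neatly embedded, $b_j \subseteq \phi$, so $\phi$ is in fact a component of $\dot\Phi_k^{-\epsilon}$.

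To finish, I would apply Corollary \ref{F hat essential edge} to $e$, whose hypotheses are precisely the ones given: it yields that the simple closed curve $c \subset \widehat F$ obtained by closing $e^*$ with an arc in the meridian disk $\widehat b_j$ is essential in $\widehat F$. Since $b_j \subseteq \phi$, the disk $\widehat b_j$ lies inside $\widehat{\phi}$, and therefore $c \subset \widehat{\phi}$. Because $\widehat F$ is a torus and $c$ is two-sided and essential in the orientable subsurface $\widehat{\phi}$, a regular neighbourhood of $c$ in $\widehat{\phi}$ is an annulus whose core is essential in $\widehat F$, providing the desired $\widehat F$-essential annulus inside $\widehat{\phi}$.

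The main technical point I anticipate is that property $(*)$ is formally stated for large maps $Y \to S$, whereas $e_1^*$ is only an arc. This is handled in the usual way by replacing $e_1^*$ with a suitably chosen regular neighbourhood in $S$ on which the concatenation of the maps $h|_{R_i}$ induces a large essential homotopy in the sense required by $(*)$, exactly along the lines of \cite[\S 3.4]{BCSZ1}. Beyond this, the argument reduces to routine bookkeeping about which side of $F$ carries each bigon and confirming that neatness of $\Phi_k^{-\epsilon}$ forces $b_j$ into the component $\phi$.
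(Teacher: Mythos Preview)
Your argument is essentially the one the paper has in mind (it marks the corollary as immediate from Corollary~\ref{F hat essential edge}), and the two main steps---using the $k$ bigons to place $e^*$ in a component $\phi$ of $\dot\Phi_k^{-\epsilon}$ containing $b_j$, then invoking Corollary~\ref{F hat essential edge}---are exactly right.

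There is one imprecision in the last paragraph. You call $c = \widehat b_j \cup e^*$ a \emph{simple} closed curve and take a regular neighbourhood of $c$ to produce the annulus. But $h|_e$ is only an essential \emph{immersed} arc in $F$ (see the list of properties of $h$ in \S\ref{graph}), so $e^*$ need not be embedded and $c$ need not be simple; a regular neighbourhood of an immersed loop is not an annulus. Corollary~\ref{F hat essential edge} only asserts that the \emph{loop} $c$ is essential in $\widehat F$, i.e.\ represents a nontrivial element of $\pi_1(\widehat F)$. The fix is immediate: since $c\subset\widehat\phi$ is not null-homotopic in the torus $\widehat F$, the subsurface $\widehat\phi$ cannot lie in a disk of $\widehat F$, and hence (being a compact subsurface of a torus) must contain an embedded simple closed curve essential in $\widehat F$; a collar of that curve is the desired $\widehat F$-essential annulus. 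With this adjustment your proof is complete.
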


\section{Proof of Theorem \ref{main} when $F$ is separating and $t_1^+ + t_1^- = 2$} \label{2 tights}

We assume that $F$ is separating and $t_1^+ + t_1^- = 2$ in this section. There is an $\epsilon$ such that $t_1^\epsilon = 2$ and $t_1^{-\epsilon} = 0$. Without loss of generality we can suppose that $\epsilon = +$.

\begin{prop} \label{sep equal 2}
If $F$ is separating and $t_1^+ = 2, t_1^- = 0$, then
$$\Delta(\alpha, \beta) \leq \left\{ \begin{array}{ll} 5 & \hbox{if } m \geq 4
\\ 6  & \hbox{if } m = 2
\end{array} \right.$$
\end{prop}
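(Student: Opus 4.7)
The plan is to bound the weight of each edge of $\overline{\Gamma}_S$ by $m$, deduce $\Delta(\alpha, \beta) \leq \hbox{valency}_{\overline{\Gamma}_S}(v)$ for every vertex $v$, then settle the bound by vertex-valency analysis, treating the remaining hexagonal configuration separately when $m \geq 4$.

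First I would establish the edge-weight bound. Since $t_1^+ = 2$, Proposition \ref{sep annulus-in-3-+} excludes both of its alternatives, so $\dot\Phi_3^+$ contains no $\widehat F$-essential annulus. Proposition \ref{tight increase}(1)(a) gives $t_3^+ \geq t_1^+ = 2 > 0$, so Proposition \ref{tight bound}(1) yields $l_+ \leq m - 2$. If $X^-$ is not an $I$-bundle, Proposition \ref{tight bound}(2) gives $l_S \leq m - 1$. If $X^-$ is a twisted $I$-bundle---the only alternative for separating $F$, by Assumption \ref{assumption not (semi) fibre}---the $I$-bundle identities of Section \ref{characteristic subsurfaces} force $l_+$ to be even and $l_- = l_+ + 1 \leq m - 1$, whence $l_S \leq m - 1$. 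Proposition \ref{long} then bounds each edge weight of $\overline{\Gamma}_S$ by $m$, so
$$\Delta(\alpha, \beta) \leq \hbox{valency}_{\overline{\Gamma}_S}(v) \quad \text{for every vertex } v.$$

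Next I would combine Corollary \ref{mu constant} with Proposition \ref{possible values for mu}. Either some vertex has valency at most $5$, yielding $\Delta(\alpha, \beta) \leq 5$, or $Y$ is a torus and every vertex has $\mu(v) = m\Delta(\alpha, \beta) - 4$ with valency in $\{4,5,6\}$; again, any vertex of valency at most $5$ finishes the proof. Otherwise every vertex has valency $6$ and $\overline{\Gamma}_S$ is hexagonal with every face a triangle by Proposition \ref{local structure}. This already yields $\Delta(\alpha, \beta) \leq 6$, settling the case $m = 2$.

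For $m \geq 4$ one must exclude $\Delta(\alpha, \beta) = 6$ in the hexagonal case. Equality forces every edge weight to equal $m$. Because $F$ separates, $m$ is even, so by the parity rule a positive weight-$m$ family has permutation $\sigma(j) \equiv -j + 2k + 1 \pmod m$ with no fixed points, while a negative family has $\sigma(j) \equiv j + 2k \pmod m$ and admits a fixed point exactly when $\sigma$ is the identity, in which case all $m$ parallel edges have matching end labels. Proposition \ref{euler}(1) supplies a vertex incident to at least two positive edges, and for any positive weight-$m$ family the relation $\sigma(j) = j + 1$ (take $j \equiv k \pmod{m/2}$) produces an $S$-cycle; Lemma \ref{SC} applied to such $S$-cycles on both sides pins down $\dot\Phi_1^\pm$ through Proposition \ref{sep-seifert}. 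I would then locate a negative weight-$m$ edge $e$ with identity $\sigma$ bordering a triangle face on the $-$ side of $F$. Since $t_1^- = 0$, Corollary \ref{triangle not tight} (with $k + 1 = m$) places $e^*$ inside a component of $\dot\Phi_{m-1}^+$ carrying an $\widehat F$-essential annulus; because $m - 1 \geq 3$, this component lies in $\dot\Phi_3^+$, contradicting the first step.

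The principal obstacle will be producing the negative weight-$m$ edge with identity permutation bordering a triangle on the correct side of $F$. The route is to combine Proposition \ref{euler}(1) with Proposition \ref{plus-minus vertices} and the labeling constraints forced by the $S$-cycles extracted from every positive weight-$m$ family; when $m$ is small (especially $m = 4$) one should supplement this with Proposition \ref{Scycle}(2), which supplies either an extended $S$-cycle or $S$-cycles at each end of every such family, thereby ruling out the last pathological sign/label arrangements.
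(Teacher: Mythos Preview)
Your opening two steps are sound and match the paper: with $t_1^+ = 2$ one gets $t_3^+ > 0$ and hence $l_+ \le m-2$, $l_S \le m-1$, so every edge weight is at most $m$ and $\Delta(\alpha,\beta) \le \hbox{valency}_{\overline{\Gamma}_S}(v)$; the reduction to the hexagonal case with $\Delta = 6$ and all weights equal to $m$ is correct.

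The gap is in your treatment of the hexagonal case for $m \ge 4$. You plan to find a triangle face on the $-$-side of $F$ and apply Corollary \ref{triangle not tight} with $\epsilon = -$ (using $t_1^- = 0$). But no such triangle exists. With every edge of weight $m$, the $(m-1)$-bigon strip between the extreme edges of each family realises an essential homotopy of length $m-1$ starting on the side \emph{opposite} to the adjacent $\overline{\Gamma}_S$-face; if that face were on the $-$-side this would give $l_+ \ge m-1$, contradicting $l_+ \le m-2$. Hence every triangle face of $\overline{\Gamma}_S$ lies on the $+$-side, and Corollary \ref{triangle not tight} is unavailable there since $t_1^+ = 2 \ne 0$. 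Your acknowledged ``principal obstacle'' is therefore not merely difficult but impossible as stated.

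The paper works entirely on the $+$-side. From $l_+ = m-2$ and $t_{m-3}^+ = m-2$ it first shows every component of $\breve\Phi_{m-3}^+$ contains at most two boundary components of $F$, and then uses this to determine the edge permutations precisely (positive families have $\sigma(i) = m+1-i$, negative families have $\sigma = \hbox{id}$). Choosing a triangle with one positive and two negative edges, it argues that the component $\phi$ of $\dot\Phi_{m-3}^+$ containing $b_2 \cup b_{m-1}$ satisfies $h_{m-3}^+(\phi) = \phi$, forcing $\phi$ to contain an $\widehat F$-essential annulus; Proposition \ref{sep annulus-in-3-+} then gives $m-3 \le 2$. The residual case $m=4$ is handled by a substantial separate argument (an $S$-cycle forces the base orbifold $D^2(2,b)$, then $X^-$ is a twisted $I$-bundle, and a Loop-Theorem/relation analysis in $\pi_1(\widehat X^+)$ produces the final contradiction). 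Your proposal does not supply a substitute for either the $h_{m-3}^+$-invariance step or the $m=4$ endgame.
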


\pf Proposition \ref{tight bound} shows that $l_+ \leq m - 2$ and $l_S \leq m - 1$. Thus the weight of each edge in $\overline{\Gamma}_S$ is at most $m$. Hence if there is a vertex of $\overline{\Gamma}_S$ of valency $k$, then $m \Delta(\alpha, \beta) \leq km$, so $\Delta(\alpha, \beta) \leq k$. In the case that $\mu(v) > m \Delta(\alpha, \beta) - 4$ for some vertex $v$ of $\overline{\Gamma}_S$, Proposition \ref{possible values for mu}(1) implies that $\Delta(\alpha, \beta) \leq 5$. In particular this is true when $M(\alpha)$ is very small by Corollary \ref{mu constant}(1). By Corollary \ref{mu constant}(2) we can therefore suppose that $Y$ is a torus and $\mu(v) = m \Delta(\alpha, \beta) - 4$ for all vertices $v$ of $\overline{\Gamma}_S$. Then $\Delta(\alpha, \beta) \leq 6$ by  Proposition \ref{possible values for mu}(2).

To complete the proof we shall suppose that $\Delta(\alpha, \beta) = 6$ and show that $m = 2$. In this case $\overline{\Gamma}_S$ has no vertices of valency $5$ or less. Thus it is hexagonal (Proposition \ref{euler}). As no edge of $\overline{\Gamma}_S$ has weight larger than $m$, each of its edges has weight $m$. It follows that $l_+ \geq m-2$, and since we noted above that $l_+ \leq m - 2$, we have $l_+ = m-2$. Thus each face of $\overline{\Gamma}_S$ lies on the $+$-side of $F$.

Note that $t_{m-3}^+ < m$ since $l_+ = m - 2$. The fact that $t_{2j+1}^+$ is even couples with Proposition \ref{tight increase} to show that $t_{m-3}^+ = m-2$. Thus $\breve \Phi_{m-3}^+$ has at least $m-2$ components. If some such component $\phi_0$ contains at least three boundary components of $F$, $\breve\Phi_{m-3}^+$ has at most $m - 3$ other components. But then $\phi_0$ is tight, so there must be another component of $\breve \Phi_{m-3}^+$ containing at least three boundary components and therefore $m - 2 \leq |\breve \Phi_{m-3}^+| \leq m - 4$, a contradiction. Thus each component of $\breve \Phi_{m-3}^+$ contains at most two boundary components of $F$.

Let $b_1, b_2, \ldots, b_m$ be the boundary components of $F$ numbered in successive fashion around $\partial M$. Fix a triangle face $f$ of $\overline{\Gamma}_S$ and let $v_1, v_2$ be two of its vertices. They are connected by a family $e_1, e_2, \ldots, e_m$ of mutually parallel edges of $\Gamma_S$ successively numbered around $v_1$ so that $e_1$ is the boundary edge of $f$ thought of as a face in $\Gamma_S$.

We can suppose that the tail of each $e_i$ lies on $v_1$ and is labeled $i$. Let $j$ be the label of the head of $e_2$. If $v_1$ and $v_2$ are like-oriented, then $j$ is odd and $b_2 \cup e_2^* \cup b_j$ is contained in a component $\phi$ of $\dot \Phi_{m-3}^+$. From above, $b_2$ and $b_j$ are the only boundary components of $F$ $\phi$ contains. Similarly $b_{m-1} \cup e_{m-1}^* \cup b_{j+3}$ is contained in a component of $\dot \Phi_{m-3}^+$ and $b_{m-1}$ and $b_{j+3}$ are the only boundary components of $F$ it contains. Let $v_3$ be the third vertex of $f$ and consider the family of $m$ edges of $\Gamma_S$ parallel to the edge of $f$ connecting $v_1$ and $v_3$. The second edge from $f$ in this family has label $m-1$ at $v_1$ and its label at $v_3$ must be $j+3$ if $v_1$ and $v_3$ are like-oriented and $m-1$ otherwise. Similarly, the second edge from $f$ in the family of parallel edges corresponding to the edge of $f$ connecting $v_2$ and $v_3$ has label $m-1$ at $v_3$ if $v_2$ and $v_3$ are like-oriented and $j-3$ otherwise. Since the orientations of $v_1$ and $v_3$ coincide if and only if those of $v_2$ and $v_3$ do,
it follows that $j = m - 1$ whatever the relative orientations of $v_1$ and $v_3$. This implies that the head of each $e_i$ has label $m + 1 - i$.

A similar argument shows that the head of $e_i$ is labeled $i$ if $v_1$ and $v_2$ are oppositely-oriented.

Suppose that $m \geq 4$ and fix a triangle face $f$ of $\Gamma_S$ with one positive boundary edge and two negative ones (Proposition \ref{plus-minus vertices}). Let $v_1, v_2, v_3$ be the vertices of $f$ chosen so that the edge between $v_1$ and $v_2$ is positive. Number the family of $m$ parallel edges of $\Gamma_S$ connecting $v_1$ and $v_2$ as in the previous paragraph. In particular $e_1$ is an edge of $f$. Let $\phi$ be the component of $\dot \Phi_{m-3}^+$ containing $b_2 \cup b_{m-1}$. Consideration of the $m-2$ successive bigons connected by $e_3, e_4, \ldots , e_{m-2}$ shows that $h_{m-3}^+(\phi) = \phi$ (cf. the end of \S \ref{characteristic subsurfaces}). Equivalently, if $\epsilon = (-1)^{\frac{m}{2}}$ and $\phi' = (\tau_{-\epsilon} \circ \tau_{\epsilon} \circ \tau_{-\epsilon} \circ \ldots \circ \tau_+)(\phi)$ (a composition of $\frac{m}{2} - 2$ factors), then $\tau_{\epsilon}(\phi') = \phi'$. Hence $\phi'$, and therefore $\phi \subset \dot \Phi_{m-3}^+$ contains an $\widehat F$-essential annulus. It follows that the same is true for $\dot \Phi_j^+$ for each $j \leq m - 3$. (See  \ref{eq surface calculus}.)
Proposition \ref{sep-seifert} now implies that $\widehat X^+$ admits a Seifert structure with base orbifold of the form $D^2(a,b)$ where $a, b \geq 2$. Furthermore, Proposition \ref{sep annulus-in-3-+} implies that $m - 3 \leq 2$. Thus $m \leq 4$. We assume now that $m = 4$ and show that this leads to a contradiction. This will complete the proof.

Consideration of the family of parallel positive edges adjacent to $f$ shows that there is an $S$-cycle in $\Gamma_S$ lying on the $+$-side of $F$. Hence Lemma \ref{SC} implies that $\widehat X^+$ admits a Seifert structure with base orbifold $D^2(2,b)$ and $\dot \Phi_1^+$ has a unique component which completes to an $\widehat F$-essential annulus. It's not hard to see then that $\breve \Phi_{1}^+$ has three components: two boundary parallel annuli and a $4$-punctured sphere with two inner boundary components and two outer ones. If $\varphi_+$ denotes the slope on $\widehat F$ of the latter component, it is the slope of the Seifert structure on $\widehat X^+$.

Since $\Delta(\alpha, \beta) > 3$, $\beta$ is not a singular slope and therefore $M(\beta)$ is not Seifert with base orbifold $S^2(a,b,c,d)$ where $(a,b,c,d) \ne (2,2,2,2)$. Hence as $\dot \Phi_3^-$ contains an $\widehat F$-essential annulus, Proposition \ref{sep annulus-in-2} implies that $X^-$ is a twisted $I$-bundle. In particular, $\breve \Phi_{3}^- = \tau_-(\breve \Phi_{1}^+)$. Hence if $A$ is an $\widehat F$-essential annulus containing $\breve \Phi_{3}^-$, its slope $\varphi_-$ is given by $(\tau_-)_*(\varphi_+)$. It follows that $\Delta(\varphi_+, \varphi_-) \equiv 0$ (mod $2$).
Thus either $\Delta(\varphi_+, \varphi_-) = 0$ and $\widehat X^+(\varphi_-)$ is the connected sum of two non-trivial lens spaces or $\Delta(\varphi_+, \varphi_-) \geq 2$ and $\widehat X^+(\varphi_-)$ is a Seifert manifold with base orbifold $S^2(2,b, \Delta(\varphi_+, \varphi_-))$. In either case, $\pi_1(\widehat X^+(\varphi_-))$ is non-abelian.

Let $H_{(14)}$ be the component of $(\overline{M(\beta) \setminus M}) \cap \widehat X^+$ containing $b_1 \cup b_4$ and $\partial_0 H_{(14)}$ the annulus $H_{(14)} \cap X^+$. Then the image in $X^+$ of $\partial f$ lies in $A \cup \partial_0  H_{(14)}$. Moreover, once oriented, $\partial f$  intersects $\partial_0  H_{(14)}$ in three disjoint arcs exactly two of which are like-oriented. By an application of the Loop Theorem (see \cite[Theorem 4.1]{He}), there is a properly embedded disk $(D, \partial D) \subseteq (X^+, A \cup \partial_0  H_{(14)})$ such that $\partial D \cap \partial_0  H_{(14)} \subseteq \partial f \cap \partial_0  H_{(14)}$ and $\partial D$ algebraically intersects a core of $\partial_0  H_{(14)}$ a non-zero number of times (mod $3$). There are two possibilities,

\indent \hspace{.5cm} (1) $\partial D \cap \partial_0  H_{(14)}$ consists of two like-oriented arcs, or

\indent \hspace{.5cm} (2) $\partial D \cap \partial_0  H_{(14)}$ consists of three arcs, two like-oriented and one oppositely-oriented.

Suppose that (1) arises. Then $\partial D = e_1 \cup a_1 \cup e_2 \cup a_2$ where $a_1, e_1, a_2, e_2$ are arcs arising successively around $\partial D$ and $a_1, a_2$ are properly embedded in $H_{(14)}$ while $e_1, e_2$ are properly embedded in $F$. Let $\widehat b_i$ be the disk in $\widehat F$ with boundary $b_i$ and fix a (fat) basepoint in $\widehat X^+$ to be $\widehat b_1 \cup e_1 \cup \widehat b_4$. We take $\eta_1, \eta_4$ to be constant paths (see \S \ref{section-relation}). The ``loop" $e_2$ carries a generator $t$ of $\pi_1(A)$ as otherwise $M(\beta)$ would contain a $P^3$ connected summand. Thus the relation associated to $D$ is $x_1^2 = t$. Further, there is a M\"{o}bius band $B$ properly embedded in $X^+$ whose core carries  the class $x_1$. Consequently, $t$ represents the class of the slope $\varphi_+$. But by construction, it represents the class of $\varphi_-$. It follows that $\varphi_+ = \varphi_-$ so that $\widehat X^+(\varphi_-)$ is the connected sum of lens spaces $L(2,1)$ and $L(n, m)$ for some $n, m$. Note that $x_1$ represents a non-trivial class in $H_1(\widehat X^+(\varphi_-))$. But the relation associated to $f$ is of the form $t^a x_1 t^b x_1 t^c x_1^{-1} = 1$. In particular, $x_1$ is trivial in $H_1(\widehat X^+) / \langle t \rangle = H_1(\widehat X^+(\varphi_-))$. Thus possibility (1) cannot occur.

Suppose that (2) arises. Then $\partial D = e_1 \cup a_1 \cup e_2 \cup a_2 \cup e_3 \cup a_3$ where $a_1, e_1, a_2, e_2, a_3, e_3$ are arcs arising successively around $\partial D$ and $a_1, a_2, a_3$ are properly embedded in $H_{(14)}$ while $e_1, e_2, e_3$ are properly embedded in $F$. We can suppose that the indices are chosen so that $e_1$ connects $b_1$ to $b_4$, $e_2$ is a loop based at $b_4$ and $e_3$ is a loop based at $b_1$. These loops are essential as otherwise we could isotope $\partial D$ so that it intersects a core of $\partial_0 H_{(14)}$ once transverselly. This would imply that we could isotope $\widehat F$ in $M(\beta)$ to remove two points of intersection with the core of the $\beta$-filling solid torus contrary to Assumption \ref{assumption minimal}. Fix the basepoint in $\widehat X^+$ to be $\widehat b_1 \cup e_1 \cup \widehat b_4$ and take $\eta_1, \eta_4$ to be constant paths. The arcs $a_1, a_2, a_3$ determine triples of distinct points, one on $b_1$ and one on $b_4$, which we denote $1,2,3$. The reader will verify that these triples are oppositely orientated on $A$. From this it follows an orientation on $\partial D$ determines the same orientation on the loops $b_1 \cup e_2$ and $b_4 \cup e_3$. In particular they  yield the same generator $t$ of $\pi_1(A)$. (See Figure \ref{attachments}.)

\begin{figure}[!ht]
\centerline{\includegraphics{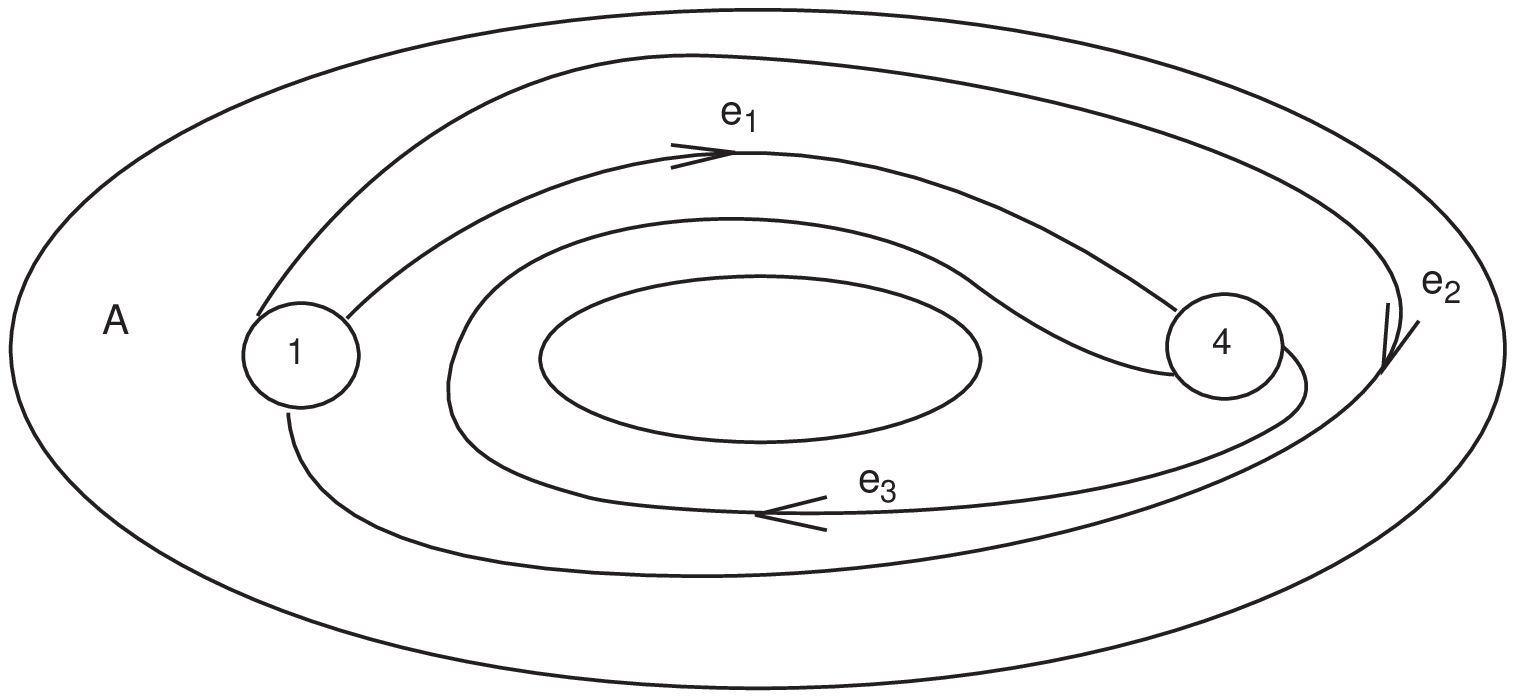}} \caption{ }\label{attachments}
\end{figure}

Hence the relation associated to $D$ is
$$1 = x_1^2 t x_1^{-1} t = x_1^3 (x_1^{-1} t)^2$$
Let $N(A)$ be a collar of $A$ in $\widehat X^+$ and $N(D)$ a tubular neighbourhood of $D$ in $X^+$. Set $Q = N(A) \cup H_{(14)} \cup N(D)$. Then the boundary of $Q$ is a torus and its fundamental group is presented by $\langle x_1, t : x^3 (x^{-1} t)^2\rangle$. It follows that $Q$ is a trefoil complement contained in $\widehat X^+$. Since the latter has a Seifert structure with base orbifold $D^2(2, n)$, $Q$ must be isotopic in $M(\beta)$ to $\widehat X^+$. It follows from the presentation that $t$ normally generates $\pi_1(\widehat X^+)$. Since the slope of $A$ is $\varphi_-$ we have $\widehat X^+(\varphi_-)$ is simply connected, contrary to our observation that it has a non-abelian fundamental group. Thus possibility (2) is also impossible. Therefore $\Delta(\alpha, \beta) \leq 5$ when $m = 4$.
\qed

\section{Extended $S$-cycles in $\Gamma_S$} \label{S-cycles} \label{extended s-cycles}

In this section we examine the implications of the existence of extended and doubly-extended $S$-cycles in $\Gamma_S$ when $t_1^+ = t_1^- = 0$. (See \S \ref{graph}.)

\begin{prop}\label{EScycle}
Suppose that $t_1^+ = t_1^- = 0$ and $\{e_0, e_1, e_2, e_3\}$ is an extended $S$-cycle in $\Gamma_S$ where
$\{e_1,e_2\}$ is an  $S$-cycle. Let $R$ be the bigon face  between $e_1$ and $e_2$ and suppose that $R$ lies on the $\epsilon$-side of $\widehat S$. Then either

$(i)$ $\beta$ is a singular slope so $\Delta(\alpha, \beta) \leq 3$.

$(ii)$ $\epsilon = +$, $X^-$ is a twisted $I$-bundle, and $\widehat X^+$ admits a Seifert structure with base orbifold a disk with two cone points, exactly one of which has order $2$.
\end{prop}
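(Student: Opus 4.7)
Assuming $\Delta(\alpha,\beta)>3$ (else (i) holds), the plan is to extract two pieces of Seifert structure---one on $\widehat X^\epsilon$ from the $S$-cycle $\{e_1,e_2\}$, one on $\widehat X^{-\epsilon}$ from the outer pair of bigons $\{e_0,e_1\}$ and $\{e_2,e_3\}$---and to show that, unless we are in case (ii), these fibrations glue along $\widehat F$ to a global Seifert structure on $M(\beta)$ of a type excluded by Lemma \ref{seifert means twisted} and Proposition \ref{bgz1}.

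First I would apply Lemma \ref{SC} to $\{e_1,e_2\}$: $F$ is separating, $\dot\Phi_1^\epsilon$ contains a $\tau_\epsilon$-invariant component $\phi_0$ whose completion contains an $\widehat F$-essential annulus, and $\widehat X^\epsilon$ admits a Seifert structure over $D^2(p,q)$ with $p=2$ or $q=2$. Combined with $t_1^\epsilon=0$, Proposition \ref{order two} refines this: the unique component $P$ of $\dot\Sigma_1^\epsilon$ is a twisted $I$-bundle, with either $P\subsetneq X^\epsilon$ or $P=X^\epsilon$ (which forces $\epsilon=-$ with $\widehat X^-$ a twisted $I$-bundle over the Klein bottle). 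Writing $\{j,j+1\}$ for the label pair of $\{e_1,e_2\}$, the corners of the outer bigons give $\tau_{-\epsilon}$-pairings $b_{j-1}\leftrightarrow b_j$ and $b_{j+1}\leftrightarrow b_{j+2}$, and each outer bigon lifts $I$-fibre preservingly into $\dot\Sigma_1^{-\epsilon}$; hence $e_0^*,e_1^*$ lie in a common component $\psi\subset\dot\Phi_1^{-\epsilon}$ with $\tau_{-\epsilon}(e_0^*)=e_1^*$, and similarly $e_2^*,e_3^*$ in a component $\psi'$.

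Suppose next that (ii) fails, i.e., $\epsilon=-$ or $X^-$ is not a twisted $I$-bundle. In each sub-case I would apply Proposition \ref{order two} on the $-\epsilon$-side to show that every component of $\dot\Phi_1^{-\epsilon}$ completes to an $\widehat F$-essential annulus (using Assumption \ref{assumption not (semi) fibre} to rule out $X^+$ being itself a twisted $I$-bundle whenever needed). In particular, $\widehat\psi$ and $\widehat\psi'$ are $\widehat F$-essential annuli which are vertical in the Seifert structure on $\widehat X^{-\epsilon}$ given by Proposition \ref{sep-seifert}(3)(b)---or, in the sub-case $\epsilon=-$ with $P=X^-$, I would equip the twisted $I$-bundle $\widehat X^-$ with the Seifert structure making the core of $\widehat{\phi_0}$ vertical.

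The main obstacle is then to show that the two fibre slopes on $\widehat F$---the one from $\widehat X^\epsilon$ and the one from $\widehat X^{-\epsilon}$---coincide. I expect this to require a careful combinatorial identification of both slopes as the isotopy class of a simple closed curve built from the arc $e_1^*$ and the meridional disks $\widehat b_j$ and $\widehat b_{j+1}$, using the $\tau_\epsilon$-invariance of $\phi_0$ on one hand and the pairings $b_{j-1}\sim b_j$, $b_{j+2}\sim b_{j+1}$ produced by the outer bigons on the other. Once the slopes agree, the Seifert fibrations assemble into one on $M(\beta)$ whose base orbifold is $S^2(a,b,c,d)$ or $P^2(a,b)$; Lemma \ref{seifert means twisted} excludes the $S^2$ case, and, in the $P^2$ case, Corollary \ref{both not (2,2)} forces $(a,b)\ne(2,2)$, so the base is hyperbolic and not a sphere with three cone points. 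Proposition \ref{bgz1}(2) then shows that $\beta$ is a singular slope for some closed essential surface in $M$, and Proposition \ref{bgz1}(1) gives $\Delta(\alpha,\beta)\le 3$, contradicting our standing assumption. Therefore (ii) must hold.
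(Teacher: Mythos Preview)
Your overall strategy is sound and close to the paper's, but you miss the one observation that makes what you call ``the main obstacle'' essentially trivial. From the $S$-cycle $\{e_1,e_2\}$ you correctly extract (via Lemma~\ref{SC}) a $\tau_\epsilon$-invariant neighbourhood $N$ of $b_j\cup e_1^*\cup b_{j+1}\cup e_2^*$ containing the $\widehat F$-essential curve $\partial B$, where $B$ is a M\"obius band properly embedded in $\widehat X^\epsilon$; this curve is vertical in the Seifert structure on $\widehat X^\epsilon$. Now the extended $S$-cycle forces $e_1^*$ and $e_2^*$ (hence all of $N$) to lie in $\dot\Phi_1^{-\epsilon}$: the outer bigons place $e_1^*,e_2^*\subset\dot\Phi_1^{-\epsilon}$, and since both arcs have endpoints on $b_j$ and $b_{j+1}$, they lie in a \emph{single} component $\phi'$ of $\dot\Phi_1^{-\epsilon}$. (In particular your $\psi$ and $\psi'$ are the same component---you do not need to separate them.) The crucial consequence is that the vertical curve $\partial B\subset N\subset\phi'$. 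When $\hbox{genus}(\phi')=0$, Proposition~\ref{sep-seifert}(3)(b) equips $\widehat X^{-\epsilon}$ with a Seifert structure in which any $\widehat F$-essential annulus in $\widehat{\phi'}$ is vertical; since $\partial B$ is a core of such an annulus, it is vertical on the $-\epsilon$-side as well. So the fibre slopes agree automatically---no ``careful combinatorial identification'' is needed.

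With this in hand, the paper's argument runs as a clean trichotomy on $(\hbox{genus}(\phi),\hbox{genus}(\phi'))$, using Corollary~\ref{inessential non-tight} (together with $t_1^\pm=0$) to force any genus-$1$ component to equal $S$. The case $(0,0)$ gives $M(\beta)$ Seifert over $S^2(a,b,c,d)$ with $(a,b,c,d)\ne(2,2,2,2)$ (Corollary~\ref{both not (2,2)}), hence $\beta$ is singular; the case $(1,0)$ forces $\epsilon=-$ with $X^-$ a twisted $I$-bundle, and here one must also allow for the possibility that the product bundle $\Sigma'$ is non-separating in $X^+$, which by Proposition~\ref{sep-seifert}(3)(b)(iii) yields base orbifold a M\"obius band with one cone point and hence $M(\beta)$ Seifert over $P^2$ with three cone points (not $P^2(a,b)$ as you wrote). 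The case $(0,1)$ is precisely conclusion~(ii). Your appeal to Lemma~\ref{seifert means twisted} and Proposition~\ref{bgz1} is a valid way to finish once the slope match is established, but note that the possible base orbifolds include $P^2(a,b,c)$, not only $S^2(a,b,c,d)$ and $P^2(a,b)$.
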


\pf Suppose that the $S$-cycle $\{e_1,e_2\}$ has label pair $\{j, j+1\}$. By Lemma \ref{SC},
$S=F$ is connected and $\dot{\Phi}_1^{\e}$ has a unique component $\phi$ which contains an $\widehat S$-essential annulus. Further, $\phi$ is also $\tau_\epsilon$-invariant and $\widehat X^\epsilon$ admits a Seifert structure with base orbifold a disk with two cone points, at least one of which has order $2$. The proof of Lemma \ref{SC} also shows that a regular neighbourhood $N$ of $b_j \cup e_1^* \cup b_{j+1} \cup e_2^*$ in $S$ is also $\tau_\epsilon$-invariant, at least up to isotopy in $S$, and so there is a M\"{o}bius band $B$ properly embedded in $(\widehat X^\epsilon, N)$. Thus $N$ contains an $\widehat S$-essential annulus with core $\partial B$ which is vertical in $\widehat X^\epsilon$.

Since $\{e_0, e_1, e_2, e_3\}$ is an extended $S$-cycle, $b_j \cup e_1^* \cup b_{j+1} \cup e_2^*$, and therefore $N$, is contained in $\dot{\Phi}_1^{-\epsilon}$. Let $\phi'$ be the component of $\dot{\Phi}_1^{-\epsilon}$ which contains $N$ and $\Sigma'$ the component of $\dot{\Sigma}_1^{-\epsilon}$ which contains $\phi'$. The genera of $\phi$ and $\phi'$ cannot both be $1$ as otherwise, Proposition \ref{sep-seifert} implies that both $\widehat X^+$ and $\widehat X^-$ are twisted $I$-bundles over the Klein bottle, contrary to Corollary \ref{both not (2,2)}.

Suppose first that $\hbox{genus}(\phi) = \hbox{genus}(\phi') = 0$. Then $\Phi_1^- \ne S$, so $X^-$ cannot be a twisted $I$-bundle. In particular, $X^{-\epsilon}$ does not admit a properly embedded non-separating annulus (cf. Lemma \ref{non-sep}). Proposition \ref{sep-seifert}(3) then shows that $\widehat X^-$ admits a Seifert structure with base orbifold a disk with two cone points in which $\partial B \subseteq N \subseteq \phi'$ is vertical. Thus $M(\beta)$ admits a Seifert structure with base orbifold the $2$-sphere with four cone points. Their orders cannot all be $2$ by Corollary \ref{both not (2,2)}. Hence $\beta$ is a singular slope (\cite[Theorem 1.7]{BGZ1}) so $\Delta(\alpha, \beta) \leq 3$ (\cite[Theorem 1.5]{BGZ1}).

Suppose next that $\hbox{genus}(\phi) = 1$ and $\hbox{genus}(\phi') = 0$. Then Corollary \ref{inessential non-tight} implies that $\phi = S$. Thus $X^\epsilon$ is a twisted $I$-bundle, so $\epsilon = -$. If $\Sigma'$ is either a product $I$-bundle which separates $X^{-\epsilon} = X^+$ or a twisted $I$-bundle, then by Proposition \ref{sep-seifert}(3), $\widehat X^+$ admits a Seifert structure with base orbifold a disk with two cone points in which $\partial B \subseteq N \subseteq \phi'$ is vertical. Corollary \ref{both not (2,2)} shows that at least one of the cone points has order larger than $2$. Thus $M(\beta)$ admits a Seifert structure with base orbifold the $2$-sphere with four cone points, at least one of which has order larger than $2$. Thus (i) occurs. If, on the other hand, $\Sigma'$ is a product $I$-bundle which does not separate $X^+$, Proposition \ref{sep-seifert}(3) implies that there is a Seifert structure on $\widehat X^+$ for which $\partial B \subseteq \phi'$ contains a fibre and whose base orbifold is a M\"{o}bius band with at most one cone point. Since $\widehat X^+$ is not a twisted $I$-bundle over the Klein bottle (Corollary \ref{both not (2,2)}), there is exactly one cone point. It follows that $M(\beta)$ admits a Seifert structure with base orbifold a projective plane with three cone points. Thus (i) holds.

Finally suppose that $\hbox{genus}(\phi) = 0$ and $\hbox{genus}(\phi') = 1$. Then Corollary \ref{inessential non-tight} implies that $X^{-\epsilon}$ is a twisted $I$-bundle, so $\epsilon = +$. From above, $\widehat X^+$ admits a Seifert structure with base orbifold a disk with two cone points, at least one of which has order $2$. They cannot both have order $2$ by Corollary \ref{both not (2,2)}. This is case (ii).
\qed

\begin{prop} \label{2SC}
Suppose that $t_1^+ = t_1^- = 0$. If $\Gamma_S$ contains a doubly-extended $S$-cycle, then
$\Delta(\alpha, \beta) \leq 3$.
\end{prop}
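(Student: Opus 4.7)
The plan is to exploit both the extended $S$-cycle sitting in the middle of the doubly-extended one and the two additional bigons at its ends. First I would apply Proposition \ref{EScycle} to the extended $S$-cycle $\{e_0, e_1, e_2, e_3\}$ contained in the given family $\{e_{-1}, e_0, e_1, e_2, e_3, e_4\}$. If its conclusion (i) holds then $\Delta(\alpha, \beta) \leq 3$ and we are done. Otherwise we are in case (ii): $\epsilon = +$, $X^-$ is a twisted $I$-bundle, and $\widehat X^+$ admits a Seifert structure with base orbifold $D^2(2, n)$ where $n > 2$. Since $t_1^+ = 0$, Proposition \ref{sep-seifert} gives a unique component $\phi_0$ of $\dot\Phi_1^+$, and $\widehat\phi_0$ is an $\widehat F$-essential annulus. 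Because $X^-$ is a twisted $I$-bundle one has $\dot\Phi_1^- = F$, whence the surface-calculus identity $\dot\Phi_2^- = \tau_-(\dot\Phi_1^- \wedge \dot\Phi_1^+) = \tau_-(\phi_0)$.

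Arrange the labels so that the central $S$-cycle $\{e_1, e_2\}$ has label pair $\{j, j+1\}$. The two outer $+$-side bigons $\{e_{-1}, e_0\}$ and $\{e_3, e_4\}$, present precisely because of the doubly-extended hypothesis, show that the arcs $e_0^*, e_3^*$ lie in $\phi_0$ and that the circles $b_{j-1}, b_{j+2}$ are outer boundary components of $\phi_0$. Combining this with the pairings $\tau_-(b_j) = b_{j-1}$, $\tau_-(b_{j+1}) = b_{j+2}$, $\tau_-(e_1^*) = e_0^*$, $\tau_-(e_2^*) = e_3^*$ read off from the two intermediate $-$-side bigons, we deduce $\tau_-(N) \subseteq \phi_0$, where $N$ is a regular neighbourhood in $F$ of $b_j \cup e_1^* \cup b_{j+1} \cup e_2^*$. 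Applying $\tau_-$ yields $N \subseteq \tau_-(\phi_0) = \dot\Phi_2^-$, and then $\dot\Phi_3^+ = \tau_+(\dot\Phi_1^+ \wedge \dot\Phi_2^-)$ combined with the $\tau_+$-invariance of $N$ (from the $S$-cycle) gives $N \subseteq \dot\Phi_3^+$.

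An Euler-characteristic count yields $\chi(N) = -2$, so $N$ is a four-holed sphere with two outer boundary circles $b_j, b_{j+1}$ and two inner boundary circles $c_1, c_2$, and $\widehat N$ is an annulus in $\widehat F$ bounded by $c_1 \cup c_2$. Using the $\tau_+$-invariance of $N$ and the freeness of $\tau_+$ on $\phi_0$, one shows $\widehat N$ is $\widehat F$-essential: either each $c_i$ is $\tau_+$-invariant and so bounds a M\"obius band in $X^+$, with Proposition \ref{Mband} yielding the essentiality, or $\tau_+$ exchanges $c_1$ and $c_2$ and the vertical annulus joining them in $\dot\Sigma_1^+$ is an essential annulus in $(X^+, F)$, whereupon Lemma \ref{three cases} combined with the irreducibility of $\widehat X^+$ rules out the possibility that $c_1$ and $c_2$ bound disks in $\widehat F$. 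Because a tight neat subsurface has a disk completion and cannot contain an essential annulus, the component of $\dot\Phi_3^+$ containing $N$ is not tight, and Lemma \ref{tight or essential annulus} then forces its completion to be an $\widehat F$-essential annulus. Proposition \ref{sep annulus-in-3-+} now applies, and since $t_1^+ = 0 < 4$ we are in its case (ii), giving that $M(\beta)$ is Seifert fibred with base orbifold $P^2(2, n)$ for some $n > 2$. This orbifold has orbifold Euler characteristic $1/n - 1/2 < 0$ and is not a $2$-sphere with three cone points, so Proposition \ref{bgz1}(2) shows that $\beta$ is a singular slope for a closed essential surface in $M$, and then Proposition \ref{bgz1}(1) gives $\Delta(\alpha, \beta) \leq 3$. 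The main obstacle is the essentiality of $\widehat N$ in the swapped case, which requires a careful application of Lemma \ref{three cases} under the irreducibility constraint on $\widehat X^+$.
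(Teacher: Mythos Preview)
Your argument is sound through step 6: you correctly deduce from the doubly-extended $S$-cycle that $N \subseteq \dot\Phi_3^+$ and that $\widehat N$ is an $\widehat F$-essential annulus, whence $\dot\Phi_3^+$ contains an $\widehat F$-essential annulus and Proposition \ref{sep annulus-in-3-+}(ii) yields $M(\beta)$ Seifert with base orbifold $P^2(2,n)$, $n>2$. This is essentially the same intermediate conclusion the paper reaches (the paper argues directly that $\widehat{\tau_-(\phi_0)}$ is isotopic to $\widehat\phi_0$ rather than routing through $\dot\Phi_3^+$, but the content is equivalent).

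The gap is in your final step. Proposition \ref{bgz1}(2) does \emph{not} apply when the base orbifold is $P^2(p,q)$. The statement in the paper is elliptically phrased, but the intended exclusion covers both $S^2(a,b,c)$ and $P^2(p,q)$: a closed Seifert manifold over $P^2(p,q)$ contains no closed essential surface of genus $\ge 2$ (its only vertical essential surfaces are a torus and a Klein bottle, and generically there is no horizontal one), so there is no candidate surface in $M$ for which $\beta$ could be a singular slope. You can see this exclusion operating throughout the paper: Lemma \ref{seifert means twisted} concludes the base orbifold is $P^2(a,b)$ \emph{from} the hypothesis that $\beta$ is not singular, and the entire \S\ref{proof not all tight} is devoted to handling precisely the $P^2(2,n)$ case, which would be superfluous if your shortcut worked. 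The paper's own proof of Proposition \ref{2SC} likewise continues past the point $M(\beta)\cong P^2(2,n)$ with a substantial geometric argument: it takes a $\tau_+$-invariant core curve $C\subset N$, analyses the two cases $C\cap\tau_-(C)=\emptyset$ and $C\cap\tau_-(C)\ne\emptyset$, and in each case builds either a Seifert piece with incompressible boundary inside $M$ (contradicting hyperbolicity) or forces the base orbifold to be $S^2(2,2,2,n)$ (contradicting the already-established $P^2(2,n)$). That extra work is not optional; your proposal stops exactly where the real difficulty begins.
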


\pf Suppose that $\Delta(\alpha, \beta) > 3$. Proposition \ref{EScycle} implies that $X^-$ is a twisted $I$-bundle, the image of the $S$-cycle rectangle is contained in $X^+$, and $\widehat X^+$ admits a Seifert structure with base orbifold a disk with two cone points, exactly one of which has order $2$.

Suppose that the $S$-cycle $\{e_1,e_2\}$ has label pair $\{j, j+1\}$. It follows from the proof of Lemma \ref{SC} that there is a $\tau_+$-invariant regular neighbourhood $N$ of $e_1^* \cup b_j \cup e_{2}^* \cup b_{j+1}$ contained in a $\tau_+$-invariant component $\phi_0$ of $\dot \Phi_1^+$ such that $\widehat \phi_0$ is an $\widehat F$-essential annulus. Further, $\phi_0$ is the unique component of $\dot \Phi_1^+$ to contain an $\widehat F$-essential annulus. Since $\{e_1,e_2\}$ is a doubly-extended $S$-cycle, $\tau_-(N) \subseteq \dot \Phi_1^+$. But $\tau_-(N)$ contains an $\widehat F$-essential annulus, so $\tau_-(N) \subseteq \phi_0$. Since $N$ contains a core of $\widehat \phi_0$, it follows that $\widehat{\tau_-(\phi_0)}$ is isotopic to $\widehat \phi_0$ in $\widehat F$. In particular, $\widehat \phi_0$ is vertical in some Seifert structure on $\widehat X^-$. Thus $M(\beta)$ is Seifert with base orbifold either $P^2(2,n)$ or $S^2(2,2,2,n)$ where $n > 2$. Since $\Delta(\alpha, \beta) > 3$, $\beta$ is not a singular slope (\cite[Theorem 1.5]{BGZ1}), so $M(\beta)$ has base orbifold $P^2(2,n)$ (\cite[Theorem 1.7]{BGZ1}).

Since $N$ is $\tau_+$-invariant and connected, it contains a $\tau_+$-invariant simple closed curve $C$ which is necessarily a core of $\widehat \phi_0$. There is a M\"{o}bius band $B$ properly embedded in $X^+$ with boundary $C$. First suppose that $C \cap \tau_-(C) = \emptyset$. Then there is an annulus $A_-$ properly embedded in $X^-$ with $\partial A_- = C \cup \tau_-(C)$. Since $C$ is vertical in $M(\beta)$, $A_-$ is non-separating in $X^-$. Hence $C \cup \tau_-(C)$ splits $\widehat F$ into two annuli, each containing $m/2$ boundary components of $F$. Let $A_+$ be the properly embedded annulus in $X^+$ which is the frontier of the component of $\dot \Sigma_1^+$ containing $\phi_0$. Since $C$ and $\tau_-(C)$ are disjoint curves in $\phi_0$, each isotopic to a core of $\widehat \phi_0$, $\widehat F$ is the union of four annuli $B_1, B_2, B_3, B_4$ with disjoint interiors such that $\widehat \phi_0 = B_1 \cup B_2 \cup B_3$, $B_4 = \overline{\widehat F \setminus \widehat \phi_0}$, and $\partial B_2 = C \cup \tau_-(C)$. Let $b_j = |B_j \cap \partial F|$. By construction, $b_2 = b_1 + b_3 + b_4 = m/2$. Since $C$ is a $\tau_+$-invariant curve in $\phi_0$, $m/2 \geq b_1 = b_2 + b_3 = m/2 + b_3$. Hence $b_3 = 0$. There are solid tori $V_1, V_2 \subseteq X^+$ where $V_1$ is a regular neighbourhood of $B$ and $V_2$ has boundary $A_+ \cup B_4$. By Lemma \ref{three cases}, $B_4$ has winding number at least $2$ in $V_2$. It follows that a regular neighbourhood of $V_1 \cup A_- \cup B_3 \cup V_2$ in $M$ is Seifert with incompressible boundary (Lemma \ref{incomp}), which is impossible.

Next suppose that $C \cap \tau_-(C) \ne \emptyset$. Since $C \cup \tau_-(C)$ is connected, $\tau_-$-invariant, and contained in $\phi_0$, there is a $\tau_-$-invariant simple closed curve $C'$ in $\phi_0$, necessarily a core of $\widehat \phi_0$. In particular $C'$ is vertical in $\widehat X_+$. It follows that there is a M\"{o}bius band $B'$ properly embedded in $X^-$ with boundary $C'$. Since $C'$ is vertical in the Seifert structure on $\widehat X^-$ with base orbifold $D^2(2,2)$, $M(\beta)$ admits a Seifert structure with base orbifold $S^2(2,2,2,n)$, contrary to our previous deductions. This final contradiction completes the proof.
\qed

\section{Proof of Theorem \ref{main} when $X^-$ is not an $I$-bundle and $t_1^+ = t_1^- = 0$}
\label{not twisted}

Throughout this section we assume
\begin{equation}\label{eq not twisted}
\text{\em $F$ is separating, $\Delta(\alpha, \beta) > 3$, $t_1^+ = t_1^- = 0, X^-$ is not a twisted $I$-bundle, and $m \geq 4$}
\end{equation}
By Proposition \ref{tightness for large j} there is a disk $D_\epsilon \subseteq \widehat F$ containing $\breve \Phi_2^\epsilon$. We choose our base point and the images of the paths $\eta_j$ to lie in $D_\epsilon \cap F$ (cf. \S \ref{section-relation}) when we are interested in a  relation associated to a face lying to the $-\epsilon$-side of $F$.

\subsection{Background results} \label{background 1}

\begin{lemma} \label{not identity permutation version 1}
Suppose that conditions \ref{eq not twisted} hold and $e$ is a negative edge of $\Gamma_S$ whose end labels are the same. Suppose as well that $e$ is a boundary edge of a triangle face $f$ of $\Gamma_S$. Then  the weight of the corresponding edge $\bar e$ in  $\overline{\Gamma}_S$ is at most $2$.
\end{lemma}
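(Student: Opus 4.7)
The plan is to combine Corollary \ref{triangle not tight} with Proposition \ref{tightness for large j}(1) in a short contradiction argument: the corollary extracts structural information about where $e^*$ lies from the weight of $\bar e$, Proposition \ref{tightness for large j}(1) forces deep characteristic subsurfaces to be tight, and tightness will collide with the presence of an $\widehat F$-essential annulus as soon as the weight of $\bar e$ exceeds $2$.

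First I will let $\epsilon \in \{\pm\}$ be the sign such that the triangle face $f$ lies on the $\epsilon$-side of $F$. Since $t_1^\epsilon = 0$ by \ref{eq not twisted} and $e$ is a negative edge of $\Gamma_S$ with equal end labels bounding $f$, the hypotheses of Corollary \ref{triangle not tight} are met. Writing the weight of $\bar e$ in $\overline{\Gamma}_S$ as $k+1$, that corollary produces a component $\phi$ of $\dot{\Phi}_k^{-\epsilon}$ containing $e^*$ such that $\widehat\phi$ contains an $\widehat F$-essential annulus. Because the collar neighbourhoods adjoined in passing from $\dot{\Phi}_k^{-\epsilon}$ to $\breve{\Phi}_k^{-\epsilon}$ are disjoint from $\dot{\Phi}_k^{-\epsilon}$, this $\phi$ is also a component of $\breve{\Phi}_k^{-\epsilon}$.

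Before invoking Proposition \ref{tightness for large j}(1) I will need to verify that under the present hypotheses ``$X^-$ is not a twisted $I$-bundle'' upgrades to ``$X^-$ is not an $I$-bundle''. Since $F$ is separating, $S = F$ is connected; a product $I$-bundle structure on $X^-$ would have horizontal boundary equal to two disjoint copies of its base surface sitting inside the connected $F$, which is impossible. Hence the only $I$-bundle structures $X^-$ could carry are twisted, and those are ruled out by \ref{eq not twisted}. Combined with $\Delta(\alpha,\beta) > 3$ and $t_1^+ = t_1^- = 0$, Proposition \ref{tightness for large j}(1) now guarantees that every component of $\breve{\Phi}_j^{-\epsilon}$ is tight for each $j \geq 2$.

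Finally, assume for contradiction that $k \geq 2$. Then $\phi$, as a component of $\breve{\Phi}_k^{-\epsilon}$, is tight, so $\widehat\phi$ is a disk; but a disk cannot contain an essential annulus of $\widehat F$, since the core of such an annulus is non-null-homotopic in $\widehat F$ while every curve inside a disk is contractible. This contradicts the output of Corollary \ref{triangle not tight}, so $k \leq 1$ and the weight of $\bar e$ is at most $2$. No serious obstacle is anticipated: the only non-cosmetic point is the reduction from ``not a twisted $I$-bundle'' to ``not an $I$-bundle'', which follows immediately from connectedness of $S$.
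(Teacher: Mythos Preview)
Your proof is correct and follows essentially the same approach as the paper's: both combine Corollary \ref{triangle not tight} with the tightness of $\breve\Phi_j^{-\epsilon}$ for $j\ge 2$ furnished by Proposition \ref{tightness for large j}(1). You are somewhat more explicit than the paper in justifying why the hypothesis ``$X^-$ is not a twisted $I$-bundle'' in \ref{eq not twisted} actually yields ``$X^-$ is not an $I$-bundle'' (via connectedness of $S=F$), which is exactly what Proposition \ref{tightness for large j}(1) requires; this is a worthwhile clarification.
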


\pf Suppose that $f$ lies on the $\epsilon$-side of $F$. Then if the weight of $\bar e$ is at least $3$, the image of $e$ in $F$ is contained in $\widehat{\dot \Phi_2^{-\epsilon}} \subseteq D_{-\epsilon}$. Corollary \ref{triangle not tight} then shows the labels at the ends of $j$ are different.
\qed

Proposition \ref{Scycle} combines with the fact that $\Gamma_S$ contains no extended $S$-cycles (Proposition \ref{EScycle}) to imply the following lemma.

\begin{lemma} \label{weight of positive version 1}
Suppose that conditions \ref{eq not twisted} hold. Then the weight of a positive edge of $\overline{\Gamma}_S$ is at most $\frac{m}{2} + 2$. In particular, its weight is less than $m$ if $m \geq 6$ and less than or equal to $m$ if $m = 4$.
\qed
\end{lemma}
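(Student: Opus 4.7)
The plan is to derive the bound directly from combining Proposition \ref{EScycle} with Proposition \ref{Scycle}(2)(b), using the standing hypotheses \ref{eq not twisted} to exclude any extended $S$-cycle in $\Gamma_S$.

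First I would observe that under conditions \ref{eq not twisted}, $\Gamma_S$ cannot contain an extended $S$-cycle. Indeed, suppose $\{e_0,e_1,e_2,e_3\}$ were such an extended $S$-cycle, with associated bigon face $R$ lying on the $\epsilon$-side of $\widehat S$. Proposition \ref{EScycle} gives only two possible conclusions: either $\beta$ is a singular slope and so $\Delta(\alpha,\beta)\leq 3$, contradicting $\Delta(\alpha,\beta)>3$; or $X^-$ is a twisted $I$-bundle, contradicting the hypothesis that $X^-$ is not a twisted $I$-bundle. Either alternative is impossible, so $\Gamma_S$ admits no extended $S$-cycles.

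Next I would let $\bar e$ be a positive edge of $\overline{\Gamma}_S$ of weight $n$, corresponding to a maximal family of $n$ parallel consecutive edges $e_1,\ldots,e_n$ of $\Gamma_S$ joining two vertices $v,v'$. Since $\bar e$ is positive, $v$ and $v'$ have the same orientation, so Proposition \ref{Scycle} applies to this family. If $n>\tfrac{m}{2}+2$, then Proposition \ref{Scycle}(2)(b) (whose hypothesis $m\geq 4$ is part of \ref{eq not twisted}) produces an extended $S$-cycle among $e_1,\ldots,e_n$, contradicting the previous paragraph. Therefore $n\leq \tfrac{m}{2}+2$, which is the principal conclusion of the lemma.

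For the final assertions I would simply observe the arithmetic: $\tfrac{m}{2}+2<m$ is equivalent to $m>4$, so in particular the weight is strictly less than $m$ whenever $m\geq 6$, while for $m=4$ one has $\tfrac{m}{2}+2=4=m$, giving the nonstrict bound.

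There is essentially no obstacle; the work has already been carried out in Propositions \ref{EScycle} and \ref{Scycle}. The only thing to be careful about is verifying that the hypothesis of Proposition \ref{Scycle} (like-oriented endpoints) follows from positivity of $\bar e$, and that the hypotheses of Proposition \ref{EScycle} ($t_1^+=t_1^-=0$) are part of \ref{eq not twisted}, both of which are immediate.
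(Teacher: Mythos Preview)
Your proposal is correct and follows precisely the argument the paper indicates: the sentence preceding the lemma states that Proposition \ref{Scycle} combined with the absence of extended $S$-cycles (Proposition \ref{EScycle}) yields the result, and the lemma is given a \qed\ with no further details. Your write-up simply makes explicit the two-step reasoning the paper leaves implicit.
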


\begin{lemma} \label{no more than m and 2 version 1}
Suppose that conditions \ref{eq not twisted} hold and that $\overline{\Gamma}_S$ has a triangle face $f$ with edges $\bar e, \bar e'$ where $wt(\bar e') > 2$. Then $wt(\bar e) \leq m$. Further, if $\bar e$ is negative of weight $m$, the permutation associated to the corresponding family of edges has order $\frac{m}{2}$.
\end{lemma}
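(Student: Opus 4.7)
The plan is to combine the general upper bound on edge-weights coming from Corollary \ref{max weight of edge}(1) with a case analysis based on the sign of $\bar e$. Under conditions \ref{eq not twisted}, Proposition \ref{phi3 options} together with $\Delta(\alpha,\beta) > 3$ rules out $X^-$ being a product $I$-bundle, and together with the hypothesis that $X^-$ is not twisted this gives that $X^-$ is not an $I$-bundle at all. Corollary \ref{max weight of edge}(1) then yields $wt(\bar e) \leq m + 1$, so it suffices to exclude the extremal case $wt(\bar e) = m+1$ and, in the negative case with $wt(\bar e) = m$, to pin down the permutation's order.

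If $\bar e$ is positive, Lemma \ref{weight of positive version 1} gives $wt(\bar e) \leq m/2 + 2$, which is at most $m$ since $m \geq 4$; the permutation-order assertion is vacuous in this case.

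Now assume $\bar e$ is negative and write the family as $e_1, \ldots, e_n$ between oppositely-oriented vertices $v, v'$, with the edge $e_a$ lying on the boundary of the triangle face $f$ (which itself lies on some side $\epsilon_0$ of $F$). The associated permutation has the form $\sigma(j) \equiv j + 2k \pmod{m}$, and its order $d$ divides $m$. Since $wt(\bar e) = n \geq 3$, Lemma \ref{not identity permutation version 1} applied to $e_a$ forbids any edge in the family from having equal labels at its two ends, so $\sigma \neq \mathrm{id}$ and hence $d \geq 2$.

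Suppose first that $n = m+1$. The $m$ consecutive bigon regions between consecutive parallel edges alternate sides of $F$ and yield an essential homotopy $H$ of length $m$ of the arc $e_1^*$, starting on the side $-\epsilon_0$ (opposite to $f$). I would combine $H$ with the triangle face $f$ and a bigon adjacent to an edge of the weight-$(\geq 3)$ family $\bar e'$ to splice a new essential homotopy of length $m+1$ of a large map, violating Corollary \ref{general length}, which gives $l_S \leq m$ under the hypothesis that $X^-$ is not an $I$-bundle. The combinatorial heart is that the triangle relation, interpreted through the $I$-bundle structure on $\dot{\Sigma}_1^{\epsilon_0}$, lets us replace $e_1^*$ by a homotopic translate that continues the alternating-side pattern by one more step.

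For the order assertion when $n = m$, the labels at $v$ realised by the family exhaust $\{1, \ldots, m\}$ exactly once. The length-$(m-1)$ essential homotopy coming from the bigons places each $e_i^*$ in an iterated characteristic subsurface $\dot{\Phi}_{m-1}^{\pm \epsilon_0}$; saturation of the length bound together with the tightness of $\breve{\Phi}_j^\epsilon$ for $j \geq 2$ (Proposition \ref{tightness for large j}) forces this subsurface to be a very thin piece on which the composition $\tau_{-\epsilon_0}\tau_{\epsilon_0}$, and hence $\sigma^2$, must act trivially. Combined with the translation form $\sigma(j) = j + 2k$ and the lower bound $d \geq 2$ already established, this pins the order to exactly $d = m/2$; in particular $m$ is forced to be even.

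The main obstacle will be the explicit geometric splicing in the $wt(\bar e) = m+1$ case: converting the triangle relation at $f$ plus the bigon chain between the $e_i$ into a bona fide essential homotopy of length $m+1$ of a large map requires careful bookkeeping for labels, orientations, and the side-alternation of bigons across the boundary of $f$, and must use the hypothesis $wt(\bar e') > 2$ in a nontrivial way (so that at least one bigon is available on the $\bar e'$ side to prolong the chain). The order calculation when $wt(\bar e) = m$ inherits the same delicacy, since saturating the length bound is precisely what forces the $\sigma^2$-invariance that narrows $d$ down from ``divisor of $m$ at least $2$'' to exactly $m/2$.
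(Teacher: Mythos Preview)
Your treatment of the positive case via Lemma \ref{weight of positive version 1} is fine, and you correctly observe that under conditions \ref{eq not twisted} $X^-$ is not an $I$-bundle, so the general bound $wt(\bar e)\le m+1$ from Corollary \ref{max weight of edge}(1) is available. The gap is in the negative case.

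Your plan to rule out $wt(\bar e)=m+1$ by ``splicing'' the length-$m$ homotopy from the $\bar e$-bigons with the triangle $f$ and a bigon from $\bar e'$ to produce an essential homotopy of length $m+1$ does not work as stated. An essential homotopy of length $n$ requires $H^{-1}(S)=Y\times\{a,a+1,\ldots,a+n\}$; a triangle face is not a bigon and does not furnish such a step. There is no mechanism in the paper for converting a triangle relation into an additional ``rung'' of a bigon ladder, and the hypothesis $wt(\bar e')>2$ only gives you bigons adjacent to edges of the $\bar e'$-family, not to $e_1$ or $e_{m+1}$. You acknowledge this as ``the main obstacle'', but it is not merely delicate bookkeeping: the construction you describe does not exist.

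The paper's argument is algebraic, not via homotopy length. Because every component of $\breve\Phi_2^{-\epsilon}$ is tight (Proposition \ref{tightness for large j}(1)), the images $e_1^*$ and $(e')^*$ lie in a disk $D_{-\epsilon}\subset\widehat F$; with base point and paths $\eta_j$ chosen in $D_{-\epsilon}\cap F$ (as set up at the start of \S\ref{background 1}), the triangle face gives a relation $x_{2k}x_m^{-1}x_j\in\pi_1(\widehat F)$, and the even-indexed bigons $B_{2p}$ give $x_{2p}=x_{2p+2k}$ for $2<2p<m$ together with $x_2x_{2+2k}^{-1}\in\pi_1(\widehat F)$. Running these around the orbit of $2k$ forces $x_{2k}x_m^{-1}\in\pi_1(\widehat F)$, hence $x_j$ is peripheral, contradicting Proposition \ref{not peripheral}. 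The same orbit analysis handles the order statement: if the permutation has order $<m/2$ then $2$ and $m-2$ miss the orbit of $2k$, the bigon relations alone force $x_{2k}=x_m$, and again $x_j$ is peripheral.

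Your order argument is also off. Claiming that ``$\tau_{-\epsilon_0}\tau_{\epsilon_0}$, and hence $\sigma^2$, must act trivially'' would give $4k\equiv 0\pmod m$, i.e.\ order dividing $2$, which is the opposite of order $m/2$; this cannot be reconciled with the translation form $\sigma(j)=j+2k$ to yield the stated conclusion.
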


\pf Let $v$ be the common vertex of $\bar e$ and $\bar e'$, and let
$v'$ be the other vertex of $\bar e$. Let $e'$ be the lead edge of $\bar e'$
incident to $f$. Suppose that $f$ lies on the $\epsilon$-side of $F$.

Suppose otherwise that $wt(\bar e) > m$ and let $e_1, e_2,..., e_m, e_{m+1}$ be the $m+1$ consecutive edges in $\bar e$-family with $e_1$ as the lead edge incident to $f$. We may assume that the labels of $e_{1}, e_{2},..., e_{m}, e_{m+1}$ at $v$ are $1, 2,..., m, 1$ respectively. So the label of $e'$ at $v$ is $m$.

Lemma \ref{weight of positive version 1} implies that $\bar e$ is a negative edge so the parity rule implies that the labels of $e_{1}, e_{2},..., e_{m}, e_{m+1}$ at $v'$ are $1+2k, 2+2k,..., m, 1, 2, ..., 2k, 1+2k$ respectively, for some $0< k< m/2$ (Lemma \ref{not identity permutation version 1}).

As both $e_{1}$ and $e'$ are contained in $\widehat{\dot \Phi_2^{-\epsilon}}$ and $f$ is on the $\epsilon$-side of $F$, $f$ gives the relation
\begin{equation}\label{eq 2k m}
\text{\em $x_{2k} x_m^{-1} x_j \in \pi_1(\widehat F)$}
\end{equation}
for some $j$. Let $B_i$ be the bigon face between $e_{i}$ and $e_{i+1}$ for $i=1,..., m$. Note that $B_i$ is on the $\epsilon$-side of $F$ if and only if $i$ is even. Also note that for each even $i$ with $2<i<m$, the images in $F$ of the two edges of $B_i$ both lie in $\widehat{\dot \Phi_2^{-\epsilon}}$. Also $e_3^*$ is  contained in $\widehat{\dot \Phi_2^{-\epsilon}}$. So  for each  $2 < i = 2p < m$, $B_i$ gives the relation $x_{2p} x^{-1}_{2p+2k} =1$, so
\begin{equation}\label{eq 2k m j}
\text{\em $x_{2p} = x_{2p+2k} \; \hbox{ for } \; 2 < 2p < m$}
\end{equation}
Similarly $B_2$ gives the relation
\begin{equation}\label{eq 2 2+2k}
\text{\em $x_2 x_{2+2k}^{-1} = u \in \pi_1(\widehat F)$}
\end{equation}
Now consider the permutation given by the first $m$ edges $e_1,...,e_m$. The orbit of the label $2k$ is $\{2k, 4k , 6k, \ldots, m\}$ where we consider the labels (mod $m$).  Applying \ref{eq 2k m j} successively shows that if $2$ is not in this orbit (i.e. the permutation has order less than $m/2$), then
$$x_{2k} = x_{4k} = \ldots = x_{m}$$
Thus $x_{2k} x_{m}^{-1} = 1$. But comparing with \ref{eq 2k m} shows that $x_j \in P$, which contradicts Proposition \ref{not peripheral}. On the other hand, if $2$ is in this orbit then by \ref{eq 2 2+2k},
$$x_{2k} = x_{4k} = \ldots = x_2 = u x_{2k +2} = u x_{4k + 2} = \ldots = u x_{m}$$
Thus $x_{2k} x_{m}^{-1} = u \in \pi_1(\widehat F)$ which combines with \ref{eq 2k m} to yield a similar contradiction. This proves the first assertion of the lemma.

Next suppose that $\bar e$ is negative of weight $m$ and let $e_1, e_2,..., e_m$ be the $m$ consecutive edges in $\bar e$-family with $e_1$ as the lead edge incident to $f$. As above we take the labels of $e_{1}, e_{2},..., e_{m}$ at $v$ to be  $1, 2,..., m$ respectively and those at $v'$ to be $1+2k, 2+2k,..., m, 1, 2, ..., 2k$ respectively, for some $0 < k < m/2$ (Lemma \ref{not identity permutation version 1}). Similar to identity \ref{eq 2k m j} we have $x_{2p} = x_{2p+2k}$ for $2 < 2p < m-2$ and $x_2 x_{2+2k}^{-1} = u \in \pi_1(\widehat F)$. If the permutation $j \mapsto j + 2k$ (mod $m$) does not have order $\frac{m}{2}$ then neither $2$ nor $m-2$ lie in the orbit $\{2k, 4k , 6k, \ldots, m\}$ of the label $2k$. Thus $x_{2k} = x_{4k} = \ldots = x_{m}$ so plugging $x_{2k} x_{m}^{-1} = 1$ into \ref{eq 2k m} yields the contradiction $x_j \in \pi_1(\widehat F)$. This completes the proof of the lemma.
\qed

\begin{lemma} \label{hexagonal version 1}
Suppose that conditions \ref{eq not twisted} hold. If $\Delta(\alpha, \beta) > 5$, then $\overline{\Gamma}_S$ is hexagonal.
\end{lemma}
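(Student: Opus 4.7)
The plan is to argue by contradiction using Corollary \ref{max weight of edge}(1) and Lemma \ref{no more than m and 2 version 1} in tandem with the $\mu$-counting of Section \ref{triangle faces}. Suppose $\overline{\Gamma}_S$ is not hexagonal, so by Proposition \ref{euler}(1) there is a vertex $v_0$ with $\hbox{valency}_{\overline{\Gamma}_S}(v_0) \leq 5$. Since $X^-$ is not an $I$-bundle, every edge of $\overline{\Gamma}_S$ has weight at most $m+1$, so
$$m\,\Delta(\alpha,\beta) \;\leq\; 5(m+1).$$
Combined with $\Delta(\alpha,\beta) \geq 6$ this forces $\Delta(\alpha,\beta)=6$ and $m\in\{4,5\}$. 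A straightforward arithmetic check then shows that $v_0$ must have valency exactly $5$, with weight sequence $(6,6,6,6,6)$ when $m=5$ and $(5,5,5,5,4)$ (up to cyclic rotation) when $m=4$, since smaller valencies would fail to sum to $m\,\Delta(\alpha,\beta)$.

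The next step is to bound $\varphi_3(v_0)$ using Lemma \ref{no more than m and 2 version 1}. A triangle face at $v_0$ has two edges incident to $v_0$ that are adjacent in the cyclic order; if both of those edges have weight $>2$, then the lemma forces each to have weight at most $m$. When $m=5$ every edge at $v_0$ has weight $6>m$, so no triangle face of $\overline{\Gamma}_S$ can meet $v_0$ and $\varphi_3(v_0)=0$. When $m=4$ the same reasoning shows that two adjacent weight-$5$ edges at $v_0$ cannot bound a triangle, leaving only the two corners incident to the unique weight-$4$ edge as possible triangle corners, so $\varphi_3(v_0)\leq 2$.

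To convert this into a contradiction, I would invoke Corollary \ref{mu constant}. In the subcase where some vertex $v$ satisfies $\mu(v) > m\,\Delta(\alpha,\beta)-4$, Proposition \ref{possible values for mu}(1) gives $\hbox{valency}_{\overline{\Gamma}_S}(v)\leq 5$. Valencies $\leq 4$ are impossible because $4(m+1)<6m$ for $m\geq 4$, so $v$ has valency $5$ with the same weight pattern as $v_0$, and the triangle analysis above delivers $\varphi_3(v)\leq 2$. This contradicts Proposition \ref{possible values for mu}(1c), which requires $\varphi_3(v)\geq 4$.

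The only remaining case is that $Y$ is a torus and $\mu(v)=m\,\Delta(\alpha,\beta)-4$ at every vertex. Proposition \ref{possible values for mu}(2) then restricts every valency to $\{4,5,6\}$: valency $4$ is impossible as above, valency $5$ would demand $\varphi_3=3$ by Proposition \ref{possible values for mu}(2b), again contradicting $\varphi_3\leq 2$, and valency $6$ contradicts $\hbox{valency}_{\overline{\Gamma}_S}(v_0)\leq 5$. The crux of the argument is the interplay between the weight bound $m+1$, the triangle constraint of Lemma \ref{no more than m and 2 version 1}, and the valency-versus-$\varphi_3$ inequalities from Proposition \ref{possible values for mu}; the delicate point is handling the low values $m=4,5$, where the naive weight estimate leaves just enough room for a valency-$5$ vertex, and one must exploit the triangle constraint to eliminate it.
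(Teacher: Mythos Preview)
Your proof is correct and follows essentially the same route as the paper: bound edge weights by $m+1$ via Corollary~\ref{max weight of edge}(1), force any low-valency vertex into a rigid weight pattern, use Lemma~\ref{no more than m and 2 version 1} to cap $\varphi_3$, and then obtain a contradiction from the $\mu$-inequalities of Proposition~\ref{possible values for mu}.

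Two minor remarks. First, the case $m=5$ is vacuous: conditions~\ref{eq not twisted} include that $F$ is separating, so $m=|\partial F|$ is even and only $m=4$ survives. Second, your bound $\varphi_3(v_0)\le 2$ is weaker than necessary. At a corner between the weight-$4$ edge and an adjacent weight-$5$ edge, the weight-$4$ edge also has weight $>2$, so Lemma~\ref{no more than m and 2 version 1} forces the weight-$5$ edge to have weight $\le m=4$, a contradiction; hence in fact $\varphi_3(v_0)=0$, as the paper obtains. Your weaker bound still suffices for the contradictions with Proposition~\ref{possible values for mu}(1c) and (2b), so the argument goes through either way.
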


\pf By Proposition \ref{euler}, it suffices to show that there is no vertex of valency $5$ or less.

Suppose otherwise that $v$ is a vertex of $\overline{\Gamma}_S$ of valency $5$ or less. Since $m \Delta(\alpha, \beta) / \hbox{valency}_{\overline{\Gamma}_S}(v) \leq m + 1$ (Proposition \ref{tight bound}), we have
\begin{equation}\label{eq delta 1}
\text{\em $6 \leq \Delta(\alpha, \beta) \leq \hbox{valency}_{\overline{\Gamma}_S}(v) + \frac{\hbox{valency}_{\overline{\Gamma}_S}(v)}{m}$}
\end{equation}
Hence $\hbox{valency}_{\overline{\Gamma}_S}(v) = 5, m = 4$, and $\Delta(\alpha, \beta) = 6$. It follows that the weights of the edges incident to $v$ are $4, 5, 5, 5, 5$. Lemma \ref{no more than m and 2 version 1} implies that there can be no triangle faces incident to $v$. In other words, $\varphi_3(v) = 0$. Then
$$\mu(v) = \varphi_2(v) = m \Delta(\alpha, \beta) - \hbox{valency}_{\overline{\Gamma}_S}(v) = m \Delta(\alpha, \beta) - 5$$
Hence by Corollary \ref{mu constant} there is a vertex $v_0$ of $\overline{\Gamma}_S$ with $\mu(v_0) > m \Delta(\alpha, \beta) - 4$. Then Proposition \ref{possible values for mu} shows that $v_0$ has valency $5$ or less.
As above we have $\hbox{valency}_{\overline{\Gamma}_S}(v_0) = 5$ and the weights of the edges incident to $v_0$ are $4, 5, 5, 5, 5$. By Corollary \ref{possible values for mu}, $\varphi_3(v_0) \geq 4$, so in particular there is a triangle face of $\overline{\Gamma}_S$ with two edges of weight $5$, which is impossible by Lemma \ref{no more than m and 2 version 1}. Thus there is no vertex $v$ of $\overline{\Gamma}_S$ of valency $5$ or less, so the lemma holds.
\qed

\subsection{Proof} \label{proof is not}
We prove Theorem \ref{main} under conditions \ref{eq not twisted}.

Assume that $\Delta(\alpha, \beta) > 5$ in order to derive a contradiction. Recall that $\bar \Gamma_S$ is hexagonal by Lemma \ref{hexagonal version 1}. In particular $Y$ is a torus.

Since $X^-$ is not a twisted $I$-bundle, Proposition \ref{sep-seifert} implies that for each $\epsilon$, $\dot \Sigma_1^\epsilon$ has a unique component and $\breve \Phi_1^\epsilon$ is the union of at most two components, each an $\widehat F$-essential annulus.

Suppose that there is an edge $\bar e$ of weight $m + 1$ incident to a vertex $v$ of $\overline{\Gamma}_S$. Since $\overline{\Gamma}_S$ is hexagonal, Lemma \ref{no more than m and 2 version 1} implies that the two edges of $\overline{\Gamma}_S$ incident to $v$ which are adjacent to $\bar e$ have weights at most $2$. Then                                                                                                                                                                                                                                                                                                                                                                                                                                                                                                                                                                                                                                                                                                                                                                                                                                                                                                                                                                                                                                                                                                                                                                                                                                                                                                                                                                                                                                                                                                                                                                                                                                                                                                                                                       the sum of the weights of the six edges incident to $v$ is at most $4m + 5$. On the other hand, this is $m \Delta(\alpha, \beta) \geq 6m$. Hence $6m \leq 4m + 5$, which is impossible for $m \geq 4$. Thus the weight of each edge in $\overline{\Gamma}_S$ is at most $m$. But then $6m \leq m \Delta(\alpha, \beta) \leq 6m$, so each edge of $\overline{\Gamma}_S$ has weight $m$ and $\Delta(\alpha, \beta) = 6$.

As $\overline{\Gamma}_S$ is hexagonal, it has positive edges. Then Lemma \ref{weight of positive version 1} implies that $m \leq \frac{m}{2} + 2$. Thus $m = 4$ and the weight of any edge in $\overline{\Gamma}_S$ is $4$. Proposition \ref{plus-minus vertices} implies that there is a triangle face $f$ with one positive edge $\bar e_1$ and two negative edges $\bar e_2, \bar e_3$. Let $v_1, v_2, v_3$ be its vertices where $v_1$ is determined by $\bar e_1$ and $\bar e_2$, $v_2$ is determined by $\bar e_2$ and $\bar e_3$, and $v_3$ is determined by $\bar e_2$ and $\bar e_3$. Let $e_1, e_2, e_3$ denote the lead edges of $\bar e_1, \bar e_2, \bar e_3$ at $f$. Without loss of generality we can take the label of $e_1$ at $v_1$ to be $1$ and that of $e_2$ to be $4$. Lemma \ref{not identity permutation version 1} shows that $e_2$ has label $2$ at $v_2$, so $e_3$ has label $3$ there. Lemma \ref{not identity permutation version 1} then shows that the label of $e_3$ at $v_3$ is $1$, so the label of $e_1$ at $v_3$ is $4$. But then the four edges of $\Gamma_S$ parallel to $\bar e_1$ form an extended $S$-cycle, which contradicts Proposition \ref{EScycle}. This final contradiction completes the proof of Theorem \ref{main}  when $X^-$ is not a twisted $I$-bundle.
\qed

\section{Proof of Theorem \ref{main} when $X^-$ is a twisted $I$-bundle and $t_1^+ = 0$}

We assume throughout this section that
\begin{equation} \label{eq background 2}
\text{\em $F$ is separating, $\Delta(\alpha, \beta) > 3$, $t_1^+ = 0, X^-$ is a twisted $I$-bundle, and $m \geq 4$}
\end{equation}
Note that $t_1^- = 0 $ when $X^-$ is a twisted $I$-bundle.

\subsection{Background results}
\label{background 2}

As $X^+$ is not a twisted $I$-bundle, Proposition \ref{sep-seifert} implies that $\dot \Sigma_1^+$ has a unique component and $\breve \Phi_1^+ = \dot \Phi_1^+$ is the union of one or two components each of which completes to an $\widehat F$-essential annulus. Let $\phi_+$ be the slope on $\widehat F$ of these annuli and note that it is the slope of the Seifert structure on $\widehat X^+$ (Proposition \ref{sep-seifert}). Set $\alpha_- = \widehat \tau_-(\phi_+)$, the slope on $\widehat F$ determined by $\dot \Phi_3^- = \tau_-(\dot \Phi_1^+)$. As $\widehat \tau_-$ is a fixed-point free orientation reversing involution, $\Delta(\phi_+, \alpha_-)$ is even.

For the rest of this section we take $A_- = \dot \Phi_3^- = \dot \Phi_2^- = \tau_-(\dot \Phi_1^+) \subset F$. Also we take a disk $D$ in $\widehat{A_-}$ containing all $\widehat b_j$,  choose the paths $\eta_j$ (defined in Section \ref{section-relation}) in $D$, and define the elements $x_j$ of $\pi_1(\widehat X^+)$ as in  Section \ref{section-relation}. It follows that if $e$ is an edge of a face $f$ of $\Gamma_S$ lying on the $+$-side of $F$ and the image of $e$ in $F$ lies in $\dot \Phi_2^-$, then the associated element of $\pi_1(\widehat F)$ determined by $e$ is a power of $t$, the element determined by a core of $\widehat A_-$.

\begin{lemma}  \label{not abelian}
Suppose that conditions \ref{eq background 2} hold. Then $\pi_1(\widehat X^+(\alpha_-))$ is not abelian.
\end{lemma}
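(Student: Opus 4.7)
The plan is to identify the topology of $\widehat X^+(\alpha_-)$ using the Seifert structure on $\widehat X^+$ together with the covering geometry of $\widehat\tau_-$, and then deduce non-abelianness of $\pi_1$ by passing to the base orbifold group. Under hypothesis \ref{eq background 2}, Proposition \ref{order two} together with Corollary \ref{both not (2,2)} endows $\widehat X^+$ with a Seifert fibration whose base orbifold is $D^2(a,b)$ for integers $a,b \geq 2$ with $(a,b) \neq (2,2)$, and $\phi_+$ is the regular fiber slope on $\widehat F$. Since $\widehat X^-$ is a twisted $I$-bundle over the Klein bottle $K$, the map $\widehat\tau_-\colon \widehat F \to \widehat F$ is the deck transformation of the orientable double cover $\widehat F \to K$, hence a free orientation-reversing involution of the torus. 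In a suitable basis $(e_1,e_2)$ of $H_1(\widehat F)$ it acts by $(p,q) \mapsto (-p,q)$; writing $\phi_+ = re_1 + se_2$ in this basis yields $\alpha_- = -re_1 + se_2$ and therefore $\Delta := \Delta(\phi_+,\alpha_-) = 2|rs| \in \{0,2,4,\ldots\}$.

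I would then split into two cases. When $\Delta = 0$ the slopes $\phi_+$ and $\alpha_-$ coincide, so $\widehat X^+(\alpha_-)$ is the reducible manifold $L(a,*) \# L(b,*)$, whose fundamental group $\mathbb{Z}/a * \mathbb{Z}/b$ is non-abelian since both factors have order at least $2$. When $\Delta \geq 2$, $\widehat X^+(\alpha_-)$ is Seifert fibered over $S^2(a,b,\Delta)$, the class $h$ of the regular fiber is central, and
\[
\pi_1(\widehat X^+(\alpha_-))/\langle h\rangle \;\cong\; T(a,b,\Delta) = \langle x,y \mid x^a,\ y^b,\ (xy)^\Delta \rangle.
\]
It then suffices to check that $T(a,b,\Delta)$ is non-abelian. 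The only abelian triangle groups $T(p,q,r)$ with $p,q \geq 2$ and $r \geq 1$ are $T(p,q,1) \cong \mathbb{Z}/\gcd(p,q)$ and $T(2,2,2) \cong (\mathbb{Z}/2)^2$; the former is ruled out here because $\Delta$ is even (in particular $\Delta \neq 1$), and the latter by $(a,b) \neq (2,2)$. Hence $T(a,b,\Delta)$, and therefore $\pi_1(\widehat X^+(\alpha_-))$, is non-abelian.

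The main obstacle is really just organizing this bookkeeping: the two inputs $(a,b) \neq (2,2)$ (from Corollary \ref{both not (2,2)}) and the parity of $\Delta(\phi_+,\alpha_-)$ (from the Klein-bottle covering geometry of $\widehat\tau_-$) together exclude precisely the two abelian triangle groups that could otherwise occur. Once these are excluded, non-abelianness of $\pi_1(\widehat X^+(\alpha_-))$ is immediate from the central extension supplied by the Seifert fibration.
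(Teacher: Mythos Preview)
Your proof is correct and follows essentially the same route as the paper's: identify $\widehat X^+(\alpha_-)$ as either $L(a,*)\#L(b,*)$ (when $\Delta(\phi_+,\alpha_-)=0$) or a Seifert space over $S^2(a,b,\Delta)$ (when $\Delta \geq 2$), use the parity of $\Delta(\phi_+,\alpha_-)$ to rule out $\Delta=1$, and conclude non-abelianness. Your argument is in fact more careful than the paper's in one respect: you explicitly invoke Corollary~\ref{both not (2,2)} to exclude $(a,b)=(2,2)$, which is what rules out the abelian triangle group $T(2,2,2)$; the paper's proof asserts non-abelianness of the Seifert-fibred case without mentioning this. Your re-derivation of the evenness of $\Delta(\phi_+,\alpha_-)$ via the explicit $H_1$-action of $\widehat\tau_-$ is also fine, though the paper records this fact immediately before the lemma.
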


\pf The base orbifold of $\widehat X^+$ has the form $D^2(p,q)$ for some $p, q \geq 2$. Since $\Delta(\phi_+, \alpha_-)$ is even, it can't be $1$. Thus $\widehat X^+(\alpha_-)$ is either $L_p \# L_q$ or is Seifert fibred with base orbifold $S^2(p,q, \Delta(\phi_+, \alpha_-))$ having three cone points. In either case, its fundamental group is not abelian.
\qed

For each $y \in \pi_1(\widehat X^+)$ we use $\bar y$ to denote its image in $\pi_1(\widehat X^+(\alpha_-))$.

\begin{defin} \label{P}
{\rm Define $P \leq \pi_1(\widehat X^+(\alpha_-))$ to be the subgroup generated by $\pi_1(\widehat F)$.}
\end{defin}

\begin{lemma} \label{not in P}
Suppose that conditions \ref{eq background 2} hold. For no $j$ is the image of $x_j$ in $\pi_1(\widehat X^+(\alpha_-))$ contained in $P$.
\end{lemma}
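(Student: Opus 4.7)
The plan is to push the statement $\bar x_j \in P$ down through the Seifert projection to the base orbifold fundamental group, where Proposition \ref{not peripheral} pins the image of $x_j$ down very explicitly. Let $h \in \pi_1(\widehat X^+)$ denote the class of a regular fibre of the Seifert structure on $\widehat X^+$ furnished by Proposition \ref{sep-seifert} and write $s = \Delta(\phi_+,\alpha_-)$, which is even by the observation preceding Lemma \ref{not abelian}. I split into the two cases distinguished by that lemma. If $s = 0$, then $\alpha_- = \phi_+$ and the filling kills $h$, so $\pi_1(\widehat X^+(\alpha_-))$ is canonically identified with $\pi_1(D^2(p,q)) = \mathbb{Z}/p * \mathbb{Z}/q$; call this group $G$. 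If $s \geq 2$, the Seifert fibration of $\widehat X^+$ extends across the filling solid torus, $h$ remains central, and killing its normal closure produces the triangle group $G = \pi_1(S^2(p,q,s))$. In either case this yields a surjection $\pi : \pi_1(\widehat X^+(\alpha_-)) \to G$.

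Next I would identify the image $\pi(P)$. Since $\pi_1(\widehat F)$ is generated by $\phi_+$ and a section $\sigma$ of the Seifert fibration along $\partial \widehat X^+$, while $\pi(\phi_+) = 1$ and $\pi(\sigma) = ab$ (the class of $\partial D^2(p,q)$), we get $\pi(P) = \langle ab\rangle$. Meanwhile Proposition \ref{not peripheral} forces $\pi(\bar x_j) = (ab)^k a^\delta (ab)^l$ for some $k,l \in \mathbb{Z}$ and $\delta \in \{\pm 1\}$.

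Assuming for contradiction that $\bar x_j \in P$, we obtain $(ab)^k a^\delta (ab)^l \in \langle ab \rangle$ in $G$, which forces $a^\delta \in \langle ab\rangle$, and then $b = a^{-1}(ab) \in \langle ab\rangle$ as well, so $G$ is cyclic. If $s = 0$ this directly contradicts the fact that $\mathbb{Z}/p * \mathbb{Z}/q$ is non-cyclic for $p,q \geq 2$. If $s \geq 2$, then $\pi_1(\widehat X^+(\alpha_-))$ becomes a central extension of the cyclic group $G$ by $\langle h\rangle$, hence abelian, contradicting Lemma \ref{not abelian}.

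The only non-algebraic step requiring care is the geometric identification $\pi(\sigma) = ab$: one must verify that the section of the Seifert fibration used to generate $\pi_1(\widehat F)$ projects to the boundary class of the base orbifold disk. This follows from the standard dictionary between sections of a Seifert fibration on a boundary torus and curves on the base orbifold, provided $\sigma$ is chosen compatibly with the presentation of $\pi_1(D^2(p,q))$ used in Proposition \ref{not peripheral}. Once this is in place, the remainder is a short calculation in a free product or a triangle group.
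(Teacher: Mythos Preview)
Your proof is correct and follows essentially the same route as the paper's. Both hinge on Proposition~\ref{not peripheral}, which forces the image of each $x_j$ in the base orbifold group to have the shape $(ab)^{k}a^{\pm1}(ab)^{l}$, together with the observation that the image of $\pi_1(\widehat F)$ lands in $\langle ab\rangle$; from there both arguments reduce $\bar x_j\in P$ to the abelian-ness of $\pi_1(\widehat X^+(\alpha_-))$, contradicting Lemma~\ref{not abelian}. The only difference is packaging: the paper compresses your two cases into one sentence by noting that since every $x_j$ differs from $a^{\pm1}$ by elements already in $P$, one $\bar x_j\in P$ forces all of them into $P$, whence $\pi_1(\widehat X^+(\alpha_-))=P$ is abelian---whereas you pass explicitly through the quotient $G$ (free product or triangle group) and show it collapses to $\langle ab\rangle$.
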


\pf We noted in \S \ref{section-relation} that there are generators $a, b$ of $\pi_1(D^2(p,q)) \cong \mathbb Z/p * \mathbb Z/q$ such that $ab$ generates its peripheral subgroup and the image of each $x_j$ in $\pi_1(D^2(p,q))$ is of the form $(ab)^{r_j} a^{\epsilon_j} (ab)^{s_j}$ where $r_j, s_j \in \mathbb Z$ and $\epsilon_j \in \{\pm 1\}$. Thus if the image of some $x_j$ in $\pi_1(\widehat X^+(\alpha_-))$ is contained in $P$, the image of each $x_j$ in $\pi_1(\widehat X^+(\alpha_-))$ is contained in $P$, contrary to Lemma \ref{not abelian}.
\qed

\begin{lemma} \label{even on negative side}
Suppose that conditions \ref{eq background 2} hold. Each face of $\Gamma_S$ which lies on the $-$-side of $F$ has an even number of edges. In particular, each triangle face lies on the $+$-side of $F$.
\end{lemma}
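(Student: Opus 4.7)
The plan is to exploit the fact that $X^-$ is a twisted $I$-bundle to produce a mod-$2$ cohomology class and evaluate it on the boundary of a disk face on the $-$-side. Let $B$ denote the base of the $I$-bundle, so the retraction $X^- \to B$ realizes the horizontal boundary $F$ as a double cover $p : F \to B$ under the free involution $\tau_-$. This double cover determines a class $\eta \in H^1(B; \mathbb{Z}/2) = H^1(X^-; \mathbb{Z}/2)$ that vanishes on a loop of $X^-$ precisely when the projected loop in $B$ lifts to a loop in $F$.

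I would first observe the local structure near corners: because $\partial M \cap X^-$ is the vertical boundary of the $I$-bundle, each of its annular components $A$ is the vertical $I$-bundle over a single boundary circle $C$ of $B$, and $\partial A = b_j \cup b_{\tau_-(j)}$ consists of two components of $\partial F$, each mapping homeomorphically onto $C$ via $p$. In particular, $b_j$ and $b_{\tau_-(j)}$ are precisely the two sheets of the trivial double cover $\partial A \to C$, so $m$ is even and the corner arcs run between the two sheets.

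Now let $f$ be a disk face of $\Gamma_S$ lying on the $-$-side of $F$, with boundary $c_1 e_1 c_2 e_2 \cdots c_n e_n$ where each $c_i$ is a corner arc on $\partial Y_0$ mapping into an annular component of $\partial M \cap X^-$ and each $e_i$ is an edge arc mapping into $F$. Since $h|_f$ is a null-homotopy in $X^-$ of $h(\partial f)$, we have $\eta(h(\partial f)) = 0$. I would then compute $\eta(h(\partial f))$ by projecting to $B$ and lifting back to $F$ piecewise: each $h(e_i)$ already lies in $F$, so its projection lifts back to $h(e_i)$ itself and contributes $0 \pmod 2$ to the sheet-swap count, whereas each corner $h(c_i)$ projects to an arc in $\partial B$ whose lift, started at the endpoint of the previous arc lying in some component $b_{j_i} \subset \partial F$, necessarily remains in $b_{j_i}$ (since a lift of an arc in $\partial B$ stays in one connected component of $\partial F$) and therefore ends at the $\tau_-$-image of the actual corner endpoint rather than at the endpoint itself. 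That is, each corner lift swaps sheets of $p$. Summing the contributions yields $\eta(h(\partial f)) \equiv n \pmod 2$, forcing $n$ to be even. The \emph{in particular} assertion is immediate since $3$ is odd.

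The main obstacle I anticipate is packaging the local model for the lift of a corner cleanly — that the lift must stay in its starting component of $\partial F$ and therefore swap sheets. Once that is in place the global null-homotopy provided by the disk $f$ closes the argument. For non-disk faces the same parity analysis applies to each boundary circuit, but those cases are not needed for the triangle consequence, which is the intended application.
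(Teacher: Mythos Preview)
Your proof is correct and is essentially the paper's argument in dual form: the paper counts mod-$2$ intersections of $h(\partial f)$ with the Klein bottle zero-section of $\widehat X^-$ (one per corner, none along edges), and since $\partial f$ bounds, this count vanishes; your class $\eta$ is the Poincar\'e--Lefschetz dual of that Klein bottle, and your sheet-swap count is exactly the same mod-$2$ intersection number. The paper's formulation is a one-liner, but the underlying obstruction is identical.
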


\pf The boundary of the face intersects the Klein bottle core of $\widehat X^-$ transversely in $k$ points where $k$ is the number of edges of the face. Since this curve is homologically trivial, $k$ is even.
\qed

Given this lemma, the fact that $\Gamma_S$ contains no doubly-extended $S$-cycles (Proposition \ref{2SC}), and the fact that the $S$-cycle bigon in an extended $S$-cycles lies to the $+$-side of $F$ (Proposition \ref{EScycle}) we deduce the following lemma.

\begin{lemma} \label{weight of positive version 2}
Suppose that conditions \ref{eq background 2} hold. The weight of a positive edge of $\overline{\Gamma}_S$ is at most $\frac{m}{2} + 3$ if $m$ is not divisible by $4$ and $\frac{m}{2} + 4$ otherwise. In particular, its weight is less than $m$ if $m \geq 10$ and less than or equal to $m$ if $m \geq 6$.
\qed
\end{lemma}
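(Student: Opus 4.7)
The plan is to argue by contradiction, assuming the weight $n$ of a positive edge $\bar e$ of $\overline{\Gamma}_S$ exceeds the stated bound, and examining the associated family of $n$ parallel positive edges $e_1, e_2, \ldots, e_n$ in $\Gamma_S$. The key preliminary observation is that consecutive bigons $\{e_i, e_{i+1}\}$ and $\{e_{i+1}, e_{i+2}\}$ share the edge $e_{i+1}$, so by property (3) of $\Gamma_S$ from \S\ref{graph} they lie on opposite sides of $F$; hence the bigons in the family alternate sides as $i$ varies.

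Assume first that $m \geq 6$. To show $n \leq \frac{m}{2} + 4$, suppose $n > \frac{m}{2} + 4$. Then Proposition \ref{Scycle}(3)(b) supplies a doubly-extended $S$-cycle in the family, contradicting Proposition \ref{2SC}. To refine this bound to $n \leq \frac{m}{2} + 3$ when $4 \nmid m$, I would assume $n = \frac{m}{2} + 4$, which is then odd. Proposition \ref{Scycle}(3)(a), combined with Proposition \ref{2SC}, forces both $\{e_2, e_3\}$ and $\{e_{n-2}, e_{n-1}\}$ to be extended $S$-cycles. Since $\Delta(\alpha, \beta) > 3$, option (i) of Proposition \ref{EScycle} is excluded, so by option (ii) both of these $S$-cycle bigons lie on the $+$-side of $F$. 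The alternating-sides observation then demands that the indices $2$ and $n-2$ have the same parity, forcing $n$ to be even -- a contradiction.

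For the residual small cases $m = 4$ and $m = 5$, where Proposition \ref{Scycle}(3) does not apply, I would use Proposition \ref{Scycle}(2) (which is valid for $m \geq 4$) together with Proposition \ref{EScycle}(ii) and the same parity-alternation reasoning, supplemented if needed by the absolute weight bound $n \leq l_S + 1 \leq m + 4$ coming from Proposition \ref{long} and Corollary \ref{general length}. The ``in particular'' clauses are then routine arithmetic: $\frac{m}{2} + 3 \leq m$ for $m \geq 6$ and $\frac{m}{2} + 4 \leq m$ for $m \geq 8$ give the $\leq m$ statement when $m \geq 6$; strict inequality $< m$ holds once $m \geq 10$.

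The hard part will be the parity step in the $4 \nmid m$ case: the success of the argument depends on both extended $S$-cycles produced by Proposition \ref{Scycle}(3)(a) having their $S$-cycle bigons on the \emph{same} (namely, $+$) side of $F$, which is precisely the content of Proposition \ref{EScycle}(ii) and uses our standing hypothesis $\Delta(\alpha, \beta) > 3$ in an essential way. A secondary technical obstacle is the small-$m$ case analysis, since the doubly-extended $S$-cycle machinery of Proposition \ref{2SC} is unavailable there and one must extract the bound from the weaker Proposition \ref{Scycle}(2) together with the extended $S$-cycle side constraint.
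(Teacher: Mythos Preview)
Your approach is correct and follows the paper's exactly: the paper's one-line justification cites precisely Proposition~\ref{2SC} (no doubly-extended $S$-cycles), Proposition~\ref{EScycle} (the $S$-cycle bigon of an extended $S$-cycle lies on the $+$-side), and the alternation-of-sides principle you invoke, and the parity argument you spell out for the $4\nmid m$ refinement is the intended one. One minor correction: since conditions~\eqref{eq background 2} include that $F$ is separating, $m$ is necessarily even, so your residual case $m=5$ is vacuous and only $m=4$ remains as the small case.
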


\begin{lemma} \label{not identity permutation version 2}
Suppose that conditions \ref{eq background 2} hold. Suppose that $e$ is a negative edge of $\Gamma_S$ whose end labels are the same. Suppose as well that $e$ is a boundary edge of a triangle face $f$ of $\Gamma_S$. Then  the weight of the corresponding edge $\bar e$ in $\overline{\Gamma}_S$ is at most $2$.
\end{lemma}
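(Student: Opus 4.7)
I would mimic the proof of Lemma \ref{not identity permutation version 1}, but replace its appeal to Corollary \ref{triangle not tight} (which is no longer useful, since in this setting $\dot\Phi_2^- = A_-$ really does complete to an $\widehat F$-essential annulus) with a fundamental-group computation in the Dehn-filled quotient $\pi_1(\widehat X^+(\alpha_-))$, using Lemma \ref{not in P}.

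First the geometric reduction. By Lemma \ref{even on negative side} any triangle face of $\Gamma_S$ lies on the $+$-side of $F$, so $f$ does. Suppose for contradiction that $\bar e$ has weight $k+1 \geq 3$. Apply the nested essential-homotopy argument of \S\ref{graph} to the $k$ bigons parallel to $e$ (the first of which lies on the $-$-side, since $f$ is on the $+$-side). This produces an essential homotopy of length $k \geq 2$ of $e^*$ starting on the $-$-side, so property $(*)$ of \S\ref{characteristic subsurfaces} lets me homotope $e^*$ in $F$ into $\dot\Phi_2^-$. Because $X^-$ is a twisted $I$-bundle, the identities recalled in \S\ref{characteristic subsurfaces} give $\dot\Phi_2^- = \tau_-(\dot\Phi_1^+) = A_-$, and I may therefore assume $e^* \subset A_-$.

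Now the algebra. Let $e_1=e, e_2, e_3$ denote the edges of $f$ and $j = j_1 = j_1'$ the common endpoint-label of $e$. By the conventions fixed in \S\ref{background 2}, the path $\eta_j$ lies inside the disk $D \subset \widehat A_-$, so the loop $w_1 = [\eta_j * e^* * \eta_j^{-1}]$ sits in $\pi_1(\widehat A_-) = \langle t \rangle$; that is, $w_1 = t^l$ for some $l \in \mathbb Z$. Written in the form from the proof of Corollary \ref{F hat essential edge}, the face relation reads
\[
x_j^{-1}\, w_1\, x_j\, w_2\, x_k\, w_3 = 1 \quad \text{in } \pi_1(\widehat X^+),
\]
with $w_2, w_3 \in \pi_1(\widehat F)$ coming from $e_2^*, e_3^*$ and $x_k$ from the corner opposite to $e$. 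Since $\alpha_-$ is by definition the slope of $\widehat A_-$, the class $t$ dies under the projection $\pi_1(\widehat X^+) \to \pi_1(\widehat X^+(\alpha_-))$. Hence $\bar w_1 = 1$ and the relation collapses to $\bar x_k = \bar w_2^{-1} \bar w_3^{-1} \in P$, contradicting Lemma \ref{not in P}.

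The only genuinely new ingredient beyond the version-1 proof is this transition from a tight-components contradiction to an $\alpha_-$-filling contradiction. The one step that needs care is the homotopy placing $e^*$ inside $A_-$: one must verify it can be realized keeping the endpoints on the boundary components of $F$ used to set up the $x_j$'s, so that the face relation in the displayed form remains valid. This is automatic from the inductive definition of the $\dot\Phi_k^\epsilon$'s together with the basepoint choices of \S\ref{section-relation} and \S\ref{background 2}.
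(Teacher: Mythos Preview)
Your proof is correct and follows essentially the same line as the paper's: assume the weight is at least $3$ so that $e^*$ lands in $\dot\Phi_2^- = A_-$, write the triangle-face relation $x_j^{-1} t^a x_j w_2 x_k w_3 = 1$ in $\pi_1(\widehat X^+)$, observe that $\bar t = 1$ in $\pi_1(\widehat X^+(\alpha_-))$ since $t$ has slope $\alpha_-$, and conclude $\bar x_k \in P$, contradicting Lemma~\ref{not in P}. Your writeup is somewhat more explicit than the paper's about why $e^*$ can be taken in $A_-$ and why $\bar w_1$ vanishes, but the argument is the same.
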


\pf Suppose otherwise that the weight of $\bar e$ is at least $3$. Then $e^*$ is contained in $\widehat{\dot \Phi_2^-}$. Let $j$ be the label of $e$ at its two end
points. The triangle face $f$ is on the $+$-side of $F$ and the associated relation is
$$x_{j}^{-1} t^a x_{j} w_2 x_{k} w_3 = 1$$
in $\pi_1(\widehat X^+)$, where $t$ is the class of a loop in the annulus
$\widehat{\dot \Phi_2^-}$ corresponding to the slope $\a_-$, and $w_1, w_2 \in \pi_1(\widehat F)$. Hence the relation implies that the image of $x_k$ is contained in $P \leq \pi_1(\widehat X^+(\alpha_-))$, contrary to Lemma \ref{not in P}. Thus the lemma holds.
\qed

\begin{lemma} \label{no more than m and 2 version 2}
Suppose that conditions \ref{eq background 2} hold and that $\overline{\Gamma}_S$ has a triangle face $f$ with edges $\bar e, \bar e'$ where $wt(\bar e') > 2$.

$(1)$ If $\bar e$ is negative, then $wt(\bar e) \leq m$. Further, if $wt(\bar e) = m$, then the permutation associated to the $\bar e$-family of parallel edges in $\Gamma_S$ has order $\frac{m}{2}$.

$(2)$ If $\bar e$ is positive, then $wt(\bar e) \leq m-1$ for $m > 6$ and $wt(\bar e) \leq m$ for $m = 4, 6$.

\end{lemma}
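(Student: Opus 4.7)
The proof should mirror that of Lemma \ref{no more than m and 2 version 1}, with one essential twist. Because $X^-$ is a twisted $I$-bundle we have $\dot{\Phi}_2^- = \tau_-(\dot{\Phi}_1^+) = A_-$, and the slope of $A_-$ in $\widehat F$ is $\alpha_-$. Any edge in the $\bar e$- or $\bar e'$-family whose representative lies in $\overline{\Gamma}_S$ with weight exceeding $2$ has its image in $\widehat{A_-}$, and the paths $\eta_j$ have been chosen in the disk $D \subset \widehat{A_-}$. Hence any loop built from such an edge together with the $\eta_j$ represents a power of the generator $t$ of $\pi_1(\widehat{A_-})$. Consequently, on passing to the quotient $\pi_1(\widehat X^+(\alpha_-))$ all of these loop factors become trivial, and Lemma \ref{not in P} (no $\bar x_j$ lies in $\overline P$) will play the role that Proposition \ref{not peripheral} played in the proof of Lemma \ref{no more than m and 2 version 1}.

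For Part (1), suppose for contradiction that $\bar e$ is negative with $wt(\bar e)\geq m+1$. By Lemma \ref{even on negative side} the triangle $f$ lies on the $+$-side of $F$. Label the $\bar e$-family $e_1,\ldots,e_{m+1}$ so that $e_1$ is the lead edge at $f$; the parity rule, together with Lemma \ref{not identity permutation version 2} applied to edges with the same end labels, forces the permutation $j\mapsto j+2k\pmod m$ for some $0<k<m/2$. The relation from $f$, together with the relations from the bigons $B_i$ ($i$ even, $2<i<m$) lying on the $+$-side, reduces in $\pi_1(\widehat X^+(\alpha_-))$ exactly as in the proof of Lemma \ref{no more than m and 2 version 1}: one obtains $\bar x_{2p}=\bar x_{2p+2k}$ in $\pi_1(\widehat X^+(\alpha_-))/\overline P$ for $2<2p<m$, together with $\bar x_2=\bar x_{2+2k}$ up to $\overline P$ from $B_2$, and the face relation $\bar x_{2k}\bar x_m^{-1}\bar x_{j_0}\in\overline P$. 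Chasing the orbit of $j\mapsto j+2k$ (modulo whether or not $2$ lies in the orbit of $2k$) yields $\bar x_{2k}\bar x_m^{-1}\in\overline P$ in both cases, and substitution into the face relation produces $\bar x_{j_0}\in\overline P$, contradicting Lemma \ref{not in P}. The same analysis carried out with $wt(\bar e)=m$ and one fewer bigon then shows that the permutation $j\mapsto j+2k$ must have order $m/2$.

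For Part (2) the parity rule yields the involution $\sigma(j)=-j+2k+1\pmod m$. When $m=4,6$ the asserted bound $wt(\bar e)\leq m$ is already contained in Lemma \ref{weight of positive version 2}. For $m\geq 7$ that lemma directly delivers $wt(\bar e)\leq m-1$ in every case \emph{except} $m=8$ (where it only gives $wt(\bar e)\leq m/2+4=8$); the rest of the argument is devoted to that single case. Assuming $wt(\bar e)=8$ for contradiction, Proposition \ref{Scycle}(2)(b) produces an extended $S$-cycle inside the $\bar e$-family, which by Proposition \ref{EScycle} lies on the $+$-side of $F$ and, together with the present assumptions, pins the base orbifold of $\widehat X^+$ to $D^2(2,q)$. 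Write down the face relation and the $B_i$-relations as in Part (1); because $\sigma$ is an involution these now chain labels in successive pairs $(i,\sigma(i))$, and a short orbit-chase in $\pi_1(\widehat X^+(\alpha_-))/\overline P$ again forces some $\bar x_{j_0}\in\overline P$, contradicting Lemma \ref{not in P}.

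The main obstacle is the case $m=8$ of Part (2): there the general positive-edge bound of Lemma \ref{weight of positive version 2} just fails to rule out $wt(\bar e)=m$, so one must squeeze out a single extra edge by a careful involution-chasing argument. The delicate point is that the $w_i$ factors in the face and bigon relations must genuinely be recognisable as powers of $t$ (so as to vanish in $\pi_1(\widehat X^+(\alpha_-))$), which requires checking that the face $f$ and the relevant bigons all lie on the $+$-side and that the relevant edges really have their images in $\widehat{A_-}$, a property that in turn relies on the weight hypothesis $wt(\bar e')>2$ plus the bigon-weight hypotheses implicit in the $\bar e$-family.
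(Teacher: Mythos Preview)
Your Part (1) is correct and matches the paper's argument exactly: mirror Lemma \ref{no more than m and 2 version 1} with $\bar x_j \in \pi_1(\widehat X^+(\alpha_-))$ in place of $x_j$, and Lemma \ref{not in P} in place of Proposition \ref{not peripheral}.

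Part (2), however, has a genuine gap. Your claim that for $m=4$ the bound $wt(\bar e)\le m$ is ``already contained in Lemma \ref{weight of positive version 2}'' is false: since $4$ is divisible by $4$, that lemma gives only $wt(\bar e)\le \tfrac{m}{2}+4 = 6$, not $\le 4$. Thus the case $m=4$ needs a separate argument. Your $m=8$ sketch is also inadequate: invoking an extended $S$-cycle via Proposition \ref{Scycle}(2)(b) and Proposition \ref{EScycle} only pins down the base orbifold of $\widehat X^+$; the promised ``short orbit-chase'' is not spelled out, and since the positive-edge permutation $\sigma(j) = -j+2k+1$ is an involution (rather than a cycle of order $m/2$), the orbit-chasing mechanism from Part (1) does not transplant directly.

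The paper's route for Part (2) is different and more uniform. It assumes $wt(\bar e)\ge m$, takes $m$ parallel edges $e_1,\dots,e_m$, and first uses a \emph{single} bigon relation (from the bigon $B_{2k}$, whose edge $e_{2k}$ has coincident end-labels) together with the face relation to force $2k\in\{2,m\}$. Then for $m>6$ it observes that in either case the family contains a \emph{doubly}-extended $S$-cycle, contradicting Proposition \ref{2SC}; this gives $wt(\bar e)\le m-1$. The cases $m=6$ with $wt(\bar e)>6$ and $m=4$ with $wt(\bar e)>4$ are then disposed of by short ad hoc arguments (a doubly-extended $S$-cycle using a seventh edge for $m=6$; an extended $S$-cycle on the wrong side or a bigon/face relation for $m=4$). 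The key point you are missing is the preliminary restriction $2k\in\{2,m\}$, which is what makes the doubly-extended $S$-cycle appear.
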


\pf Let $v$ be the common vertex of $\bar e$ and $\bar e'$, and let
$v'$ be the other vertex of $\bar e$. Let $e'$ be the lead edge of $\bar e'$
incident to $f$.

The proof of assertion (1) mirrors that of Lemma \ref{no more than m and 2 version 1}. The only difference is that we work with the images $\bar x_j \in \pi_1(\widehat X^+(\alpha_-))$ rather than the $x_j \in \pi_1(\widehat X^+)$ and replace the contradiction to Proposition \ref{not peripheral} with one to Lemma \ref{not in P}.

Next we consider assertion (2). Suppose that $\bar e$ is a positive edge of $\overline{\Gamma}_S$ with $wt(\bar e) > m-1$. First we show that $m \leq 6$.

Let $e_1, e_2,..., e_m$ be the $m$ consecutive edges in the $\bar e$-family with $e_1$ as the lead edge incident to $f$. We may assume that the labels of $e_{1}, e_{2},..., e_{m}$ at $v$ are $1, 2,..., m$ respectively. The label of $e'$ at $v$ is then $m$.

By the parity rule, the labels of $e_{1}, e_{2},..., e_{m}$ at $v'$ are $2k, 2k-1,...,1, m, m-1,..., 2k+1$ respectively, for some $0 < k \leq m/2$. So the triangle face $f$  gives the relation
\begin{equation}\label{eq 2k+1 1 j}
\text{\em $\bar x_{2k+1}\bar  x_1  \bar x_j \in P$}
\end{equation}

Again let $B_i$ be the bigon face between $e_{i}$ and $e_{i+1}$ for $i=1,..., m-1$.
If $2<2k<m$, then the image of $e_{2k}$ in $F$ is contained in
$\widehat{\dot \Phi_2^-}$ and so the bigon face $B_{2k}$ gives the relation
\begin{equation}\label{eq 2k+1 1}
\text{\em $\bar x_{2k+1}\bar x_{1} \in P$}
\end{equation}
Equations \ref{eq 2k+1 1 j} and \ref{eq 2k+1 1} imply that $\bar x_j$ is peripheral, a contradiction.
 Thus $2k=2$ or $2k=m$.

If $m > 6$ and $2k=2$, then $\{e_{m/2+1}, e_{m/2+2}\}$ is a doubly extended $S$-cycle, giving a contradiction.

If $m > 4$ and $2k=m$, then $\{e_{m/2}, e_{m/2+1}\}$ is a doubly extended $S$-cycle, giving a contradiction again.

If $m = 6$, $wt(\bar e) > 6$, and $2k = 2$, then $\{e_{4}, e_{5}\}$ is a doubly extended $S$-cycle, giving a contradiction.

Finally suppose $m = 4$ and $wt(\bar e) > m = 4$. The label of $e$ at $v'$ is either $2$ or $4$. If it is $2$, $\{e_{3}, e_{4}\}$ is an extended $S$-cycle lying on the $-$-side of $F$, giving a contradiction. If it is $4$, then the face $f$ shows $\bar x_{1}^2 \bar x_j \in P$ for some $j$ while the bigon between $e_4$ and $e_5$ gives $\bar x_{1}^2 \in P$. But then $\bar x_j \in P$, which contradicts Lemma \ref{not in P}.
\qed

\subsection{Proof when $\dot \Phi_3^+$ is a union of tight components} \label{proof all tight}

The hypothesis $\dot \Phi_3^+$ is a union of tight components implies that the edges of $\overline{\Gamma}_S$ have weight bounded above by $m+2$ (Corollary \ref{max weight of edge}). Thus, as in the proof of Lemma \ref{hexagonal version 1} we have
\begin{equation}\label{eq proof all tight}
\text{\em $6 \leq \Delta(\alpha, \beta) \leq \hbox{valency}_{\overline{\Gamma}_S}(v) + 2 \Big(\frac{\hbox{valency}_{\overline{\Gamma}_S}(v)}{m}\Big)$}
\end{equation}

\begin{lemma} \label{options for mu}
Suppose that conditions \ref{eq background 2} hold and $\dot \Phi_3^+$ is a union of tight components. If $\Delta(\alpha, \beta) > 5$, then $\mu(v) = m \Delta(\alpha, \beta) -  4$ for all vertices $v$ of $\overline{\Gamma}_S$.
\end{lemma}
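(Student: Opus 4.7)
The plan is to argue by contradiction: assume some vertex $v_0$ of $\overline{\Gamma}_S$ satisfies $\mu(v_0) > m\Delta(\alpha,\beta) - 4$, and show this is incompatible with $\Delta(\alpha,\beta) > 5$, $m \geq 4$, and the tightness of $\dot \Phi_3^+$. Once such a $v_0$ is excluded, Proposition \ref{mu sum} combined with $\mu(v) \leq m\Delta(\alpha,\beta)-4$ for every $v$ forces $Y$ to be a torus (not a disk) and $\mu(v) = m\Delta(\alpha,\beta)-4$ identically. Proposition \ref{possible values for mu}(1) forces $\hbox{valency}_{\overline{\Gamma}_S}(v_0) \leq 5$, and applying inequality \ref{eq proof all tight} (whose derivation uses the edge weight bound $m + 2$ from Corollary \ref{max weight of edge}(2), valid since $\dot \Phi_3^+$ is a union of tight components) with $\Delta(\alpha,\beta) \geq 6$ and $m \geq 4$ rules out valency 3 and restricts the remaining possibilities to either valency 4 with $m = 4, \Delta(\alpha,\beta) = 6$, or valency 5 with $m \leq 10$.

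In the valency 4 case, the weights at $v_0$ sum to $m\Delta(\alpha,\beta) = 24$ and are each bounded by $m + 2 = 6$, so every edge at $v_0$ has weight exactly 6. Proposition \ref{possible values for mu}(1)(b) provides a triangle face $f$ at $v_0$, whose two edges at $v_0$ have weight $6$. Applying Lemma \ref{no more than m and 2 version 2} (part (1) when the edge is negative, or part (2) under its $m = 4, 6$ clause when it is positive) bounds each such edge by $m = 4$, contradicting the value 6.

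For the valency 5 case, Proposition \ref{possible values for mu}(1)(c) yields $\varphi_3(v_0) \geq 4$, so at most one of the five corners at $v_0$ is not a triangle corner, and every edge at $v_0$ lies in a triangle at $v_0$ whose other $v_0$-edge has weight exceeding 2. Consequently, Lemma \ref{no more than m and 2 version 2} forces every edge at $v_0$ to have weight at most $m$ unless its triangle-partner at $v_0$ has weight at most 2; when $m$ is odd and part (2) of that lemma is silent, Lemma \ref{weight of positive version 2} supplies the bound $\lfloor m/2 \rfloor + 3 \leq m$ on positive edges. I would then carry out a case-by-case pigeonhole count on $m \in \{4, 5, \ldots, 10\}$: from $\sum_{i=1}^{5} w_i = m\Delta(\alpha,\beta) \geq 6m$ and $w_i \leq m + 2$ one extracts a lower bound on the number $k$ of edges at $v_0$ of weight $> m$. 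Any two adjacent such edges must share the unique non-triangle corner (else each would serve as the other's triangle partner, violating the weight bound), but the $5$-cycle has independence number $2$, so for large enough $k$ this is impossible. The hardest subcases are $m = 5, 6$, where $k$ need only be $3$; here the main obstacle is that one cannot rely on adjacency alone, and must combine the upper bound $m+2$ on the high-weight edges with the constraint that their triangle-partners have weight $\leq 2$ to obtain $\sum_{i=1}^5 w_i < m\Delta(\alpha,\beta)$, the desired contradiction.
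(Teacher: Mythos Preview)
Your overall structure matches the paper's proof: assume some $v$ has $\mu(v) > m\Delta(\alpha,\beta)-4$, use Proposition~\ref{possible values for mu}(1) to get valency $\leq 5$, use inequality~\eqref{eq proof all tight} to get valency $\geq 4$, then rule out valencies $4$ and $5$ via Lemma~\ref{no more than m and 2 version 2}. Your valency-$4$ case is identical to the paper's.

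The difference is in the valency-$5$ case. The paper dispatches it in one line: since $\varphi_3(v)\ge 4$, Lemma~\ref{no more than m and 2 version 2} forces the sum of the five edge weights to be at most $5m+2$, which is $< 6m \le m\Delta(\alpha,\beta)$ for every $m\ge 4$. (The point is that if any edge has weight $>m$ then its triangle-neighbour at $v$ has weight $\le 2$; a short check on the $5$-cycle with at most one non-triangle corner shows the total never exceeds $5m+2$.) This single uniform bound replaces your entire case-by-case analysis on $m\in\{4,\dots,10\}$.

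Two small remarks on your version. First, since $F$ is separating, $\partial F$ separates the torus $\partial M$, so $m$ is even; your discussion of odd $m$ and the appeal to Lemma~\ref{weight of positive version 2} for odd $m$ are unnecessary. Second, your sentence ``every edge at $v_0$ lies in a triangle at $v_0$ whose other $v_0$-edge has weight exceeding $2$'' asserts what you then immediately qualify (``unless its triangle-partner \dots has weight at most $2$''); the logic is fine once the second sentence is read, but the first as written is not yet justified. None of this affects correctness---your pigeonhole/independence-number argument does go through---but the paper's uniform $5m+2$ bound is considerably shorter and avoids the casework altogether.
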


\pf Assume that there is some vertex $v$ with $\mu(v) > m \Delta(\alpha, \beta) -  4$. Proposition \ref{possible values for mu} implies that $\hbox{valency}_{\overline{\Gamma}_S}(v)$ is at most $5$ while inequality \ref{eq proof all tight} shows that $\hbox{valency}_{\overline{\Gamma}_S}(v)$ is at least $4$ and if it is $4$, then $m = 4, \Delta(\alpha, \beta) = 6$, and each edge incident to $v$ has weight $6$. Since Proposition \ref{possible values for mu} implies $\varphi_3(v) \geq 1$, Lemma \ref{no more than m and 2 version 2} shows that this case is impossible. Assume then that $\hbox{valency}_{\overline{\Gamma}_S}(v) = 5$. Proposition \ref{possible values for mu} implies $\varphi_3(v) \geq 4$ and therefore the sum of the weights of the edges incident to $v$ is at most $5m + 2$ by Lemma \ref{no more than m and 2 version 2}. But this sum is bounded below by $\Delta(\alpha, \beta)m \geq 6m$, which is impossible for $m \geq 4$. Corollary \ref{mu constant} then implies the desired conclusion.
\qed

\begin{lemma} \label{even weight}
Suppose that conditions \ref{eq background 2} hold and $\dot \Phi_3^+$ is a union of tight components. If $\Delta(\alpha, \beta) > 5$, then $\Delta(\alpha, \beta) = 6$. Further, one of the following two situations occurs.

$(i)$ $\overline{\Gamma}_S$ is hexagonal and its edges have weight $m$.

$(ii)$ $m = 4$, $\overline{\Gamma}_S$ is rectangular and its edges have weight $6$.

\end{lemma}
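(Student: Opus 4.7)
The plan is to exploit Lemma~\ref{options for mu}, Proposition~\ref{local structure}, and the weight bound $w(\bar e)\le m+2$ from Corollary~\ref{max weight of edge}(2) (applicable because the tight components of $\dot\Phi_3^+$ contain no $\widehat F$-essential annulus). Together with the vertex identity $\sum_{\bar e\ni v}w(\bar e)=m\Delta(\alpha,\beta)$ and the fact that every valency is $4$, $5$, or $6$ (with the face patterns $\varphi_4(v)=4$; $\varphi_3(v)=3,\varphi_4(v)=2$; $\varphi_3(v)=6$ respectively), these will determine the structure.

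The valency-$4$ case is immediate: the vertex identity and the weight bound yield $m\Delta(\alpha,\beta)\le 4(m+2)$, so $\Delta(\alpha,\beta)\ge 6$ forces $m=4$, $\Delta(\alpha,\beta)=6$, and all four incident edges to have weight exactly $6$. To eliminate valency $5$, I would case-analyze the two cyclic face patterns at such a vertex. In the pattern $(T,T,Q,T,Q)$ every edge at $v$ is incident to some triangle at $v$; applying Lemma~\ref{no more than m and 2 version 2} with each edge in turn playing the role of $\bar e'$ bounds every incident weight by $m$, giving $\sum w\le 5m<6m$, a contradiction. In the pattern $(T,T,T,Q,Q)$ only the unique $Q$--$Q$ edge is exempt from this bound, and Lemma~\ref{no more than m and 2 version 2} further shows that within any triangle at most one edge exceeds $m$; careful bookkeeping yields $\sum w\le 5m+O(1)$, forcing $m$ into a small range, whereupon the residual possibilities are dispatched using the positive-edge bound of Lemma~\ref{weight of positive version 2} together with the $S$-cycle obstructions of Propositions~\ref{EScycle} and~\ref{2SC}.

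With valency $5$ excluded, every vertex has valency $4$ or $6$. If all valencies equal $4$ then $\overline\Gamma_S$ is rectangular with $m=4$ and all edges of weight $6$, giving case~(ii). If all valencies equal $6$ then $\overline\Gamma_S$ is hexagonal. To rule out the mixed case, I would trace a weight-$6$ edge $\bar e$ from a valency-$4$ vertex $v$ to a hypothetical valency-$6$ neighbour $v'$: Lemma~\ref{no more than m and 2 version 2} forces the remaining edges of the two triangles at $v'$ containing $\bar e$ to have weight $\le m=4$, and the balance $\sum_{\bar e'\ni v'}w(\bar e')=24$ supplemented by Corollary~\ref{triangle not tight} and the face relations of Section~\ref{section-relation} contradicts every consistent labelling around $v'$.

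In the hexagonal case I would count the ``bad'' edges (those of weight $>m$). Lemma~\ref{no more than m and 2 version 2} gives at most one bad edge per triangle, and since hexagonality yields $F_3=2V$ with each edge in two triangles, $B\le V$. Comparing the identity $\sum_{\bar e}w(\bar e)=m\Delta(\alpha,\beta)V/2$ to the upper bound $Em+2B\le 3mV+2V$ yields $\Delta(\alpha,\beta)\le 6+4/m$, so $\Delta(\alpha,\beta)\ge 7$ forces $m=4$. In that residual case the rigidity is maximal (every vertex carries exactly two bad edges of weight $6$, producing extended $S$-cycles on the $+$-side by Proposition~\ref{Scycle}), so Proposition~\ref{EScycle} forces $\widehat X^+$ to be Seifert with base $D^2(2,n)$ with $n>2$ (using Corollary~\ref{both not (2,2)}), the gluing along $\widehat F$ then forces $M(\beta)$ to be Seifert with base $P^2(2,n)$, and Proposition~\ref{bgz1} then yields $\beta$ singular, hence $\Delta(\alpha,\beta)\le 3$, the desired contradiction. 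Hence $\Delta(\alpha,\beta)=6$, and equality in the counting argument forces every edge to have weight exactly $m$, giving case~(i). The hardest step will be the mixed valency-$4$/valency-$6$ exclusion, where the bare weight accounting leaves consistent combinatorial possibilities and one must invoke the relations in $\pi_1(\widehat X^+(\alpha_-))$ of Section~\ref{section-relation} to close it out.
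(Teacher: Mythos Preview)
Your overall strategy matches the paper's, and the valency-$4$ computation and the use of Proposition~\ref{local structure} are fine. However, there is one unnecessary complication and one genuine gap.

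\textbf{The mixed valency-$4$/$6$ exclusion.} You propose tracing weights and invoking the relations of Section~\ref{section-relation}. This is not needed: by Proposition~\ref{local structure}, a valency-$4$ vertex has $\varphi_3(v)=0$ (no incident triangle faces) while a valency-$6$ vertex has $\varphi_3(v)=6$ (all incident faces triangles). An edge joining such vertices would have its two adjacent faces simultaneously forced to be triangles (from the valency-$6$ end) and non-triangles (from the valency-$4$ end). So no such edge exists, and the open star neighbourhood of the valency-$6$ vertices is therefore closed, giving the hexagonal/rectangular dichotomy immediately. Similarly, for valency $5$ the paper simply observes that at most one of the five edges at $v$ lies outside every triangle at $v$, so $\sum w\le 5m+2$, contradicting $\sum w\ge 6m$ for $m\ge 4$; your two-pattern case split and appeal to $S$-cycle obstructions is unnecessary.

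\textbf{The hexagonal case.} Here your argument has a real gap. Your counting gives $\tfrac{m\Delta V}{2}\le 3mV+2B$ with $B\le V$, hence $\Delta\le 6+4/m$; once $\Delta=6$ this reads $3mV\le 3mV+2B$, which is slack, so ``equality in the counting argument'' does \emph{not} force all weights to equal $m$. (For instance one edge of weight $m+2$ balanced by one of weight $m-2$ is consistent with your inequality.) The paper closes this gap with an observation you have missed entirely, and which gives the lemma its name: since $\overline\Gamma_S$ is hexagonal, \emph{every} face is a triangle, hence lies on the $+$-side by Lemma~\ref{even on negative side}, so every edge of $\overline\Gamma_S$ has \emph{even} weight. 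Then a local argument rules out any edge of weight $m+2$: by Lemma~\ref{no more than m and 2 version 2} the other two edges of each adjacent triangle have weight at most $2$, producing a vertex with two consecutive weight-$\le 2$ edges; casework on the remaining four edges (again via Lemma~\ref{no more than m and 2 version 2}) bounds their total by $\max\{4m,\,3m+4,\,2m+8\}$, all of which are $<6m-4$ for $m\ge 4$. Hence every weight is at most $m$, and the vertex identity then forces every weight to equal $m$ and $\Delta=6$. Your residual $m=4,\ \Delta=7$ analysis is also incomplete: Proposition~\ref{EScycle}(ii) gives only the Seifert structure on $\widehat X^+$, and you have not verified that the fibre slopes on $\widehat F$ agree so as to conclude $M(\beta)$ is Seifert with base $P^2(2,n)$.
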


\pf Since $\mu(v) = m \Delta(\alpha, \beta) - 4$ for each vertex $v$ of $\overline{\Gamma}_S$ (Lemma \ref{options for mu}), Proposition \ref{possible values for mu} implies that the valency of each vertex of $\overline{\Gamma}_S$ is at most $6$. First we show that the vertices of $\overline{\Gamma}_S$ have valency $4$ or $6$.

Let $v$ be a vertex of $\overline{\Gamma}_S$. Inequality \ref{eq proof all tight} shows that $\hbox{valency}_{\overline{\Gamma}_S}(v) \geq 4$. Suppose that $\hbox{valency}_{\overline{\Gamma}_S}(v) = 5$. By Proposition \ref{possible values for mu}, $\varphi_3(v) = 3$. Then Lemma \ref{no more than m and 2 version 2} implies that the sum of the weights of the edges incident to $v$ is at most $5m + 2$. As this sum is the valency of $v$ in $\Gamma_S$, we have $6m \leq \Delta(\alpha, \beta) \leq 5m + 2$, which is impossible since $m \geq 4$. Thus no vertex of $\overline{\Gamma}_S$ has valency $5$, so each vertex $v$ either has valency $4$ and $\varphi_3(v) = 0$ or valency $6$ and $\varphi_3(v) = 6$ (Proposition \ref{possible values for mu}). In particular, no edge connects a vertex of valency $4$ with one of valency $6$. It follows that the union of the open star neighbourhoods of the vertices of valency $6$ equals the union of the closed star neighbourhoods of these vertices. Thus this union is either $\widehat F$ or empty. It follows that either each vertex of $\overline{\Gamma}_S$ has valency $6$, so $\overline{\Gamma}_S$ is hexagonal (Proposition \ref{euler}), or each has valency $4$. In the latter case there are no triangle faces so $\overline{\Gamma}_S$ is rectangular by Proposition \ref{euler}.

Suppose that $\overline{\Gamma}_S$ is rectangular. Then inequality \ref{eq proof all tight} shows that $m = 4, \Delta(\alpha, \beta) = 6$, \ and therefore each edge of $\overline{\Gamma}_S$ has weight $6$. This is case (ii) of the lemma.

Suppose next that $\overline{\Gamma}_S$ is hexagonal. As each of its faces is a triangle, they lie on the $+$-side of $F$ (Lemma \ref{even on negative side}). Hence each edge of $\overline{\Gamma}_S$ has even weight. Suppose that some such edge $\bar e$ has weight $m + 2$. Lemma \ref{no more than m and 2 version 2} implies that if $f $ is a face of $\overline{\Gamma}_S$ incident to $\bar e$, then each of the two edges of $\partial f \setminus \bar e$ has weight $2$. Thus there is a vertex of $\overline{\Gamma}_S$ having successive edges of weight $2$ incident to it. But then the remaining four edges have weights adding to at least $m \Delta(\alpha, \beta) - 4 \geq 6m - 4$. On the other hand, Proposition \ref{possible values for mu} shows that the maximal weights of these four edges are either $m, m, m, m$, or $2, m, m, m+2$, or $2, 2, m+2, m+2$. Each possibility implies that $m < 4$. Thus each edge of $\overline{\Gamma}_S$ has weight $m$ or less. Then $6 m \leq m \Delta(\alpha, \beta) \leq 6m$. It follows that each edge of $\overline{\Gamma}_S$ has weight $m$ and $\Delta(\alpha, \beta) = 6$. This is case (i).
\qed

\begin{lemma} \label{m+2}
Suppose that conditions \ref{eq background 2} hold and $\dot \Phi_3^+$ is a union of tight components. If $\Delta(\alpha, \beta) = 6$, then $m = 4$.
\end{lemma}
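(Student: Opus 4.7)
The plan is to start from Lemma \ref{even weight}, which already restricts us to two configurations: (i) $\overline{\Gamma}_S$ is hexagonal with every edge of weight $m$, or (ii) $m = 4$ with $\overline{\Gamma}_S$ rectangular. Case (ii) yields $m = 4$ directly, so the work is to show that case (i) forces $m = 4$ as well. I will do this by eliminating $m = 6$ and $m = 8$.

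First I would bound $m$. Since $\overline{\Gamma}_S$ is hexagonal on a torus, Proposition \ref{euler}(1) produces a vertex incident to at least two positive edges, so positive edges exist. Their weight is $m$ by assumption, and Lemma \ref{weight of positive version 2} forces $m \leq m/2 + 3$ when $4 \nmid m$ and $m \leq m/2 + 4$ when $4 \mid m$. Combined with the parity statement of Lemma \ref{even weight} (edges have even weight, so $m$ is even) and $m \geq 4$, the only candidates are $m \in \{4, 6, 8\}$.

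To rule out $m = 8$ the argument is short. Lemma \ref{no more than m and 2 version 2}(2) says that for $m > 6$ any positive edge in a triangle sharing an edge of weight $>2$ has weight at most $m-1 = 7$. In the hexagonal graph every edge lies in two triangles, and every edge has weight $m = 8 > 2$, so no positive edges can exist, contradicting Proposition \ref{euler}(1).

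The main obstacle is $m = 6$, where Lemma \ref{no more than m and 2 version 2}(2) only yields the vacuous bound $\mathrm{wt}(\bar e) \leq m$. The plan is a careful label and $\pi_1$ analysis. For every positive edge of weight $6$ Proposition \ref{Scycle}(2)(b) forces an extended $S$-cycle in its bunch; Proposition \ref{2SC} forbids doubly-extended $S$-cycles, so within the $6$-edge family the $S$-cycle position $k$ must lie in $\{2,4\}$. I would then pick a triangle face $f$ carrying exactly one positive edge $\bar e_1$ and two negative edges $\bar e_2,\bar e_3$ (existence follows from Proposition \ref{plus-minus vertices} together with the fact that no triangle can have three negative edges, by considering vertex orientations). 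Because $\Delta(\alpha,\beta) = m = 6$, every vertex $v$ admits a common ``base label'' $b_v$ such that the six bunches at $v$ all present the label sequence $b_v, b_v+1,\ldots,b_v+5$. Using this coincidence, the parity rule, and Lemma \ref{not identity permutation version 2} applied to the negative edges, the labels $j_i, j_i'$ of the three lead edges $\hat e_i$ of $f$ are determined up to the discrete choices of $k_1 \in \{2,4\}$ and the shifts of $\bar e_2,\bar e_3$.

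With the labels pinned down I would write the relation associated to $f$ in $\pi_1(\widehat X^+(\alpha_-))$. Because each $\hat e_i^*$ lies in $\dot\Phi_5^- \subset A_-$ (as every edge has weight $6$ and Proposition \ref{tightness for large j} makes the relevant characteristic subsurfaces tight), each peripheral term $w_i$ projects to a power of $t$ and dies in $\pi_1(\widehat X^+(\alpha_-))$, giving $\bar x_{j_1'}\bar x_{j_2'}\bar x_{j_3'} = 1$. Combined with the $\tau_+$-pairings $\bar x_j \bar x_{\tau_+(j)} = 1$ and projected to $\pi_1(D^2(2,q)) \cong \mathbb{Z}/2 * \mathbb{Z}/q$ using the Seifert structure furnished by the extended $S$-cycle (Proposition \ref{EScycle}(ii)), the normal forms of Proposition \ref{not peripheral} should force some $\bar x_j$ into $P$, contradicting Lemma \ref{not in P}. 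An alternative route I would pursue in parallel is to show that the interaction at a vertex incident to two positive edges produces a doubly-extended $S$-cycle in $\Gamma_S$, directly contradicting Proposition \ref{2SC}. The hardest part is the $m=6$ step: verifying that \emph{every} admissible $(k_1, \text{shifts}, b_{v_i})$ produces a peripheral $\bar x_j$ (or a doubly-extended $S$-cycle). This is the technical heart of the lemma.
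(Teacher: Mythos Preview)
Your overall architecture is right, and your $m=8$ elimination is actually cleaner than the paper's: invoking Lemma \ref{no more than m and 2 version 2}(2) directly (positive edges in triangles with a neighbour of weight $>2$ have weight $\le m-1$ when $m>6$) immediately contradicts the hexagonal picture with all weights $m=8$. The paper instead redoes a special case of that lemma by hand, extracting the relation $\bar x_5\bar x_1\bar x_j=1$ from the triangle and $\bar x_5\bar x_1=1$ from a bigon.

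The genuine gap is $m=6$. You set up the right triangle $f$ (one positive, two negative edges), correctly observe that the lead edges land in $\dot\Phi_5^-\subset A_-$ so the face relation in $\pi_1(\widehat X^+(\alpha_-))$ reduces to $\bar x_{j_1'}\bar x_{j_2'}\bar x_{j_3'}=1$, and correctly use the no-doubly-extended-$S$-cycle constraint to limit the positive permutation. But you never verify that this relation, together with the bigon relations, forces some $\bar x_j\in P$. Your alternative (manufacturing a doubly-extended $S$-cycle at a vertex with two positive edges) is also left unexecuted. Neither route is obviously viable without the case check you postpone.

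The paper's argument for $m=6$ is different and avoids the $\pi_1$ computation entirely. After pinning down the labels (positive permutation forced to $i\mapsto 2$ or $i\mapsto 4$ at the far vertex; negative permutations constrained by Lemma \ref{not identity permutation version 2}), it reads off from the parallel families which boundary components $b_i$ must lie in a common component of $\dot\Phi_5^-$: the negative bunches force $b_1\cup b_3\cup b_5$ together and $b_2\cup b_4\cup b_6$ together, while the lead positive edge forces $b_1\cup b_2$ together. Hence $\dot\Phi_5^- = \tau_-(\dot\Phi_3^+)$ is connected. But $\dot\Phi_3^+$ is a union of tight components by hypothesis, and $t_3^+$ is even and positive, so $\dot\Phi_5^-$ has at least two tight components --- contradiction. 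This connectivity-of-$\dot\Phi_5^-$ idea is the missing ingredient you should supply in place of the speculative $\pi_1$ endgame.
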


\pf By Lemma \ref{even weight} we can suppose that $\overline{\Gamma}_S$ is hexagonal and each of its edges has weight $m$. Proposition \ref{euler} implies that it has positive edges. Thus $m \leq \frac{m}{2} + 4$ (Lemma \ref{weight of positive version 2}), so $m \leq 8$.

There are negative edges in $\overline{\Gamma}_S$ (Proposition \ref{plus-minus vertices}) so we can choose a triangle face $f$ of $\overline{\Gamma}_S$ with edges $\bar e_1, \bar e_2, \bar e_3$ where $\bar e_1$ is positive and $\bar e_2, \bar e_3$ are negative. Let $e_1, e_2, e_3$ be the edges of $\Gamma_S$ incident to $f$ and contained, respectively, in $\bar e_1, \bar e_2, \bar e_3$. Let $v_1$ be the vertex of $\Gamma_S$ determined by $e_1$ and $e_2$, $v_2$ that determined by $e_1$ and $e_3$, and $v_3$ that determined by $e_2$ and $e_3$. We can suppose that $e_1$ has label $1$ at the vertex $v_1$ and $e_2$ has label $m$ there.

Suppose $m = 8$. Since there are no doubly-extended $S$-cycles in $\Gamma_S$, the permutation associated to any positive edge is of the form $i \mapsto 5 - i$ (mod $8$). As $\bar e_1$ is positive, $e_1$ has label $4$ at $v_2$, so $e_3$ has label $5$ there. Then $f$ yields the relation $\bar x_5 \bar x_1 \bar x_j = 1$.
But the fourth bigon from $f$ in the $\bar e_1$ family of edges implies that $\bar x_5 \bar x_1 = 1$. Thus $\bar x_j = 1$, which is impossible. Thus $m \ne 8$.

Suppose then that $m = 6$. As $\bar e_1$ is positive and $\Gamma_S$ has no doubly-extended $S$-cycles, $e_1$ has label $2$ or $4$ at $v_2$. We will deal with the first case as the second is similar. Thus $e_1$ has label $2$ at $v_2$ so $e_3$ has label $3$ there. The label of $e_2$ at $v_3$ cannot be $6$ by Lemma \ref{not identity permutation version 2} and the same lemma shows that it cannot be $2$ as otherwise the label of $e_3$ at $v_3$ would be $3$. Hence this label must be $4$. Examination of the labels of the $\Gamma_S$-edges in $\bar e_2, \bar e_3$ at $v_3$ shows that $b_1 \cup b_3 \cup b_5$ and $b_2 \cup b_4 \cup b_8$ lie in components of $\dot \Phi_5^-$. But consideration of the lead edge of $\bar e_1$ at $f$ shows that $b_1 \cup b_2$ lie in the same component of $\dot \Phi_5^-$. Thus $\dot \Phi_5^- = \tau_-(\dot \Phi_3^+)$ is connected, contrary to Proposition \ref{tightness for large j}. Thus $m \ne 6$, which completes the proof of the lemma.
\qed

The previous two lemmas reduce the proof of Theorem \ref{main} under assumption \ref{eq background 2} to the cases described in the following two subsections.

\subsubsection{The case $m = 4$, $\Delta(\alpha, \beta) = 6$ and $\overline{\Gamma}_S$ hexagonal with edges of weight $4$} \label{hexagonal case}

We consider singular disks $D$ in $X^+$, with $D\cap \partial X^+ =\partial D$.
We can assume the components of $\partial F$ are labeled so that
$\partial X^+ = F\cup A_{23} \cup A_{41}$, where $A_{23}$ and
$A_{41}$ are annuli running between boundary components 2,3 and boundary
components 4,1 of $F$, respectively.
By a homotopy we may assume that $\partial D$ meets each of $A_{23}$ and
$A_{41}$ in a finite disjoint union of essential embedded arcs.
We will refer to these arcs as the {\em corners\/} of $D$.
More precisely, if we go around $\partial D$ in some direction we get
a cyclic sequence of $X_2^{\pm1}$ and {\em $X_4^{\pm1}$-corners}, where
$X_2,X_2^{-1}$ indicate that $\partial D$ is running across $A_{23}$ from
2 to 3 or from 3 to 2, respectively, and $X_4,X_4^{-1}$ indicate that
$\partial D$ is running across $A_{41}$ from 4 to 1 or 1 to 4, respectively.
In this way $D$ determines a cyclic word $W = W(X_2^{\pm 1},X_4^{\pm1})$,
well-defined up to inversion, and we say that $D$ is {\em of type\/} $W$.
(Thus $D$ is of type $W$ if and only if it is of type $W^{-1}$.)
We emphasize that $W$ is an unreduced word; for example $X_2$ and $X_2X_4X_4^{-1}$
are distinct.

Let $\widehat A_-\subseteq \widehat F$ be the annulus defined at the beginning of \S \ref{background 2}.
If $\partial D\cap F\subseteq A_-$ we say that $D$ is an {\em $A_-$-disk\/}.

Recall the elements $x_2,x_4$ of $\pi_1(\widehat X^+)$ as defined at the beginning of \S \ref{background 2}. We use $x_j$, respectively $\bar x_j$, to denote the image of $x_j$ in $\pi_1(\widehat X^+)$, respectively $\pi_1(\widehat X^+(\alpha_-))$. Clearly, if $D$ is an $A_-$-disk of type $W(X_2^{\pm1},X_4^{\pm1})$ then $D$
gives the relation $W(\bar x_2^{\pm1}, \bar x_4^{\pm1})$ in $\pi_1(\widehat X^+(\alpha_-))$.

Note that a triangle face of $\Gamma_S$ defines an $A_-$-disk.
Note also that there is a one-one correspondence between triangle faces of
$\Gamma_S$ and faces of the reduced graph $\overline{\Gamma}_S$.
We therefore say that a face of $\overline{\Gamma}_S$ has type $W$ if and only
if the corresponding triangle face of $\Gamma_S$ has type $W$.

Let $v$ be a vertex of $\Gamma_S$.
An endpoint at $v$ of an edge of the reduced graph $\overline{\Gamma}_S$
corresponds to four endpoints of edges of $\Gamma_S$, and we can assume that the label sequence
(reading around $v$ anticlockwise if $v$ is positive and clockwise if $v$
is negative) is either ${3\ 4\ 1\ 2}$ or ${1\ 2\ 3\ 4}$.
We say  that $v$ is of {\em type I} or {\em II}, respectively.
If $v$ is a positive vertex of type~I we will say $v$ is a
$(+,I)$ {\em vertex}, and so on.

\begin{figure}[!ht]
\centerline{\includegraphics{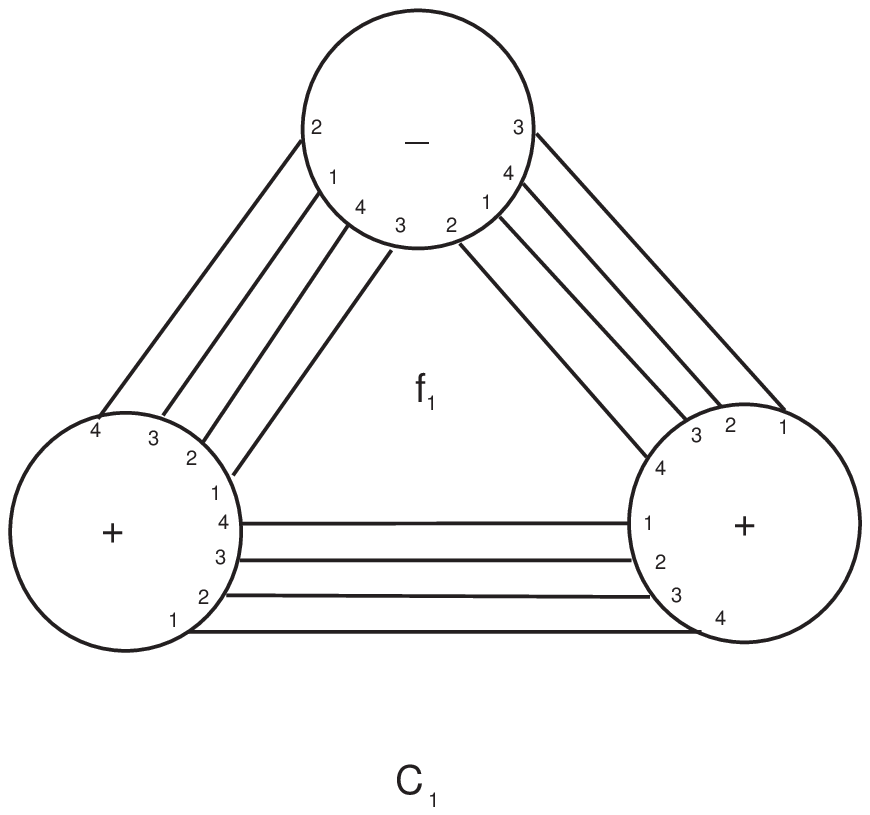}} \caption{ }\label{C1}
\end{figure}

\begin{figure}[!ht]
\centerline{\includegraphics{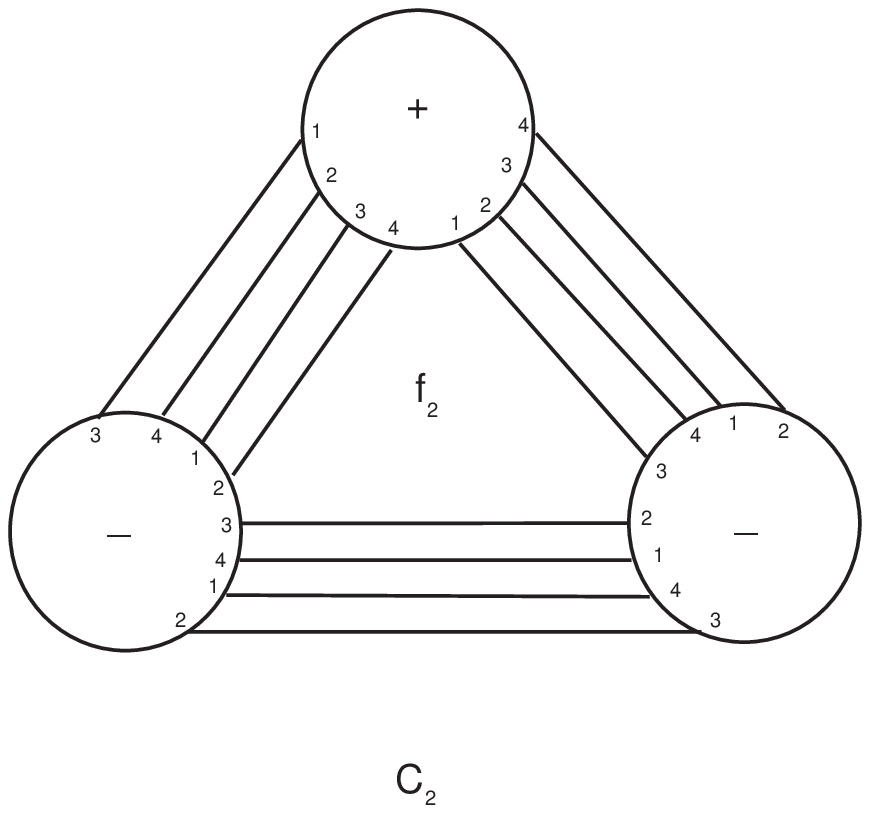}} \caption{ }\label{C2}
\end{figure}

Lemma \ref{no more than m and 2 version 2}(1) implies

\begin{lemma}\label{lem1}
No edge of $\overline{\Gamma}_S$ connects vertices of the same type and
opposite sign.
\end{lemma}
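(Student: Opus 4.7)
The plan is to combine the permutation constraint of Lemma~\ref{no more than m and 2 version 2}(1) with a direct identification of the permutation associated to a negative edge in terms of the types of its endpoints.

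First, I would observe that any edge $\bar e$ of $\overline{\Gamma}_S$ joining vertices of opposite sign is negative by definition. Since $\overline{\Gamma}_S$ is hexagonal with every edge of weight $m=4$, such $\bar e$ lies on a triangle face whose remaining two edges have weight $4>2$, so Lemma~\ref{no more than m and 2 version 2}(1) applies and the associated permutation $\sigma$ has order $m/2=2$. The parity rule requires $\sigma(j)\equiv j+2k\pmod 4$ for some integer $k$, so we must have $\sigma(j)\equiv j+2\pmod 4$.

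Next, I would compute $\sigma$ directly from the type data at the endpoints $v_1$ (positive) and $v_2$ (negative) of $\bar e$. The four parallel edges of the $\bar e$-family meet each $v_i$ in a block of four consecutive endpoints; read anticlockwise at the positive vertex $v_1$ the starting label of this block is $1$ if $v_1$ has type II and $3$ if $v_1$ has type I, and likewise read clockwise at the negative vertex $v_2$ the starting label is $1$ or $3$ according to the type of $v_2$. The sign change between $v_1$ and $v_2$ reverses the cyclic order in which the four parallel edges are encountered going anticlockwise around $v_1$ versus anticlockwise around $v_2$, but the convention of reading clockwise at $v_2$ restores the order along the band; matching the two blocks edge by edge along the band therefore produces the pure translation $\sigma(j)=j+(l_2-l_1)\pmod 4$, where $l_i\in\{1,3\}$ is the starting label at $v_i$.

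The lemma then follows at once: if $v_1$ and $v_2$ have the same type then $l_1=l_2$, so $\sigma$ is the identity, contradicting $\sigma(j)=j+2$ from the first step. The main care needed is the careful bookkeeping of the anticlockwise/clockwise conventions at $v_1$ and $v_2$, which is what guarantees that the induced permutation of a negative edge comes out as a translation (consistent with the parity rule) with shift precisely $l_2-l_1$; once this local label analysis is verified, the incompatibility of same-type endpoints with Lemma~\ref{no more than m and 2 version 2}(1) yields the claim.
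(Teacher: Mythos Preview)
Your proposal is correct and follows the same approach as the paper. The paper's proof is the single line ``Lemma~\ref{no more than m and 2 version 2}(1) implies'', and what you have written is precisely the unpacking of that citation: a negative edge of weight $m=4$ incident to a triangle face (with the other edges of weight $4>2$) must have associated permutation of order $m/2=2$, while the type convention forces the permutation of a negative edge joining two vertices of the same type to be the identity. Your careful bookkeeping of the anticlockwise/clockwise conventions at the positive and negative endpoints is exactly what justifies the shift computation $\sigma(j)=j+(l_2-l_1)$, and this is the detail the paper leaves to the reader.
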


By Proposition~\ref{plus-minus vertices}
$\Gamma_S$ has the same number of positive and negative vertices.
In particular, $\overline{\Gamma}_S$ has a face $\bar{f}_1$ in which not all
vertices have the same sign; without loss of generality we may assume
that two of the vertices are positive and one negative, and that the
negative vertex is of type~I.
It follows from Lemma~\ref{lem1} that the two positive vertices are of type~II.
Thus the face $\bar{f}_1$ has type $X_2^{-1}X_4^2$.
The configuration $C1$ of $\Gamma_S$ corresponding to $\bar{f}_1$ is
shown in Figure~\ref{C1}.

\begin{lemma}\label{lem2}
$\overline{\Gamma}_S$ has a face of at most one of the types $X_4^3$, $X_2X_4^2$,
$X_2^2X_4$.
\end{lemma}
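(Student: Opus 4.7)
The plan is to extract a clean relation in $\pi_1(\widehat X^+(\alpha_-))$ from each triangle face of $\overline{\Gamma}_S$ and then combine the relation already coming from $\bar{f}_1$ (of type $X_2^{-1}X_4^2$) with those coming from hypothetical faces of two of the three listed types, forcing $\bar{x}_4=1$ and hence $\bar{x}_2=1$, which contradicts Lemma \ref{not in P}.

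\textbf{Step 1: every triangle face is an $A_-$-disk.} By Lemma \ref{even on negative side} every triangle face $\bar{f}$ of $\overline{\Gamma}_S$ lies on the $+$-side of $F$, and each of its three $\overline{\Gamma}_S$-edges has weight $4=m$. In the associated family $e_1,e_2,e_3,e_4$ of parallel $\Gamma_S$-edges, with $e_1$ the lead edge incident to $\bar{f}$, the three intervening bigons alternate sides and begin on the $-$-side; thus $e_1^{*}$ starts an essential homotopy in $(M,F)$ of length $3$ on the $-$-side. Since $X^-$ is a twisted $I$-bundle one has $\dot{\Phi}_1^-=F$, and the identities of \S \ref{characteristic subsurfaces} then give $\dot{\Phi}_3^-=\tau_-(\dot{\Phi}_1^+)=A_-$. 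The characterisation $(*)$ now places $e_1^{*}$, up to isotopy in $F$, inside $A_-$. Because the $\eta_j$ were chosen inside $A_-$ and the meridian of the $\alpha_-$-filling is a core of $A_-$, every loop in $A_-$ is trivial in $\pi_1(\widehat X^+(\alpha_-))$, so the boundary word of any triangle face reduces in $\pi_1(\widehat X^+(\alpha_-))$ to its corner word $W(\bar{x}_2^{\pm 1},\bar{x}_4^{\pm 1})=1$.

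\textbf{Step 2: combining relations and reaching a contradiction.} In particular $\bar{f}_1$ yields $\bar{x}_2=\bar{x}_4^{2}$, and hypothetical faces of types $X_4^3$, $X_2X_4^2$, $X_2^2X_4$ yield $\bar{x}_4^{3}=1$, $\bar{x}_2\bar{x}_4^{2}=1$ and $\bar{x}_2^{2}\bar{x}_4=1$ respectively (the overall sign ambiguity of the type being immaterial after inversion). Assuming two of these three types occur and substituting $\bar{x}_2=\bar{x}_4^{2}$ into the other two relations, one obtains $\bar{x}_4^{3}=1$ and $\bar{x}_4^{4}=1$ in case $\{X_4^3,X_2X_4^2\}$; $\bar{x}_4^{3}=1$ and $\bar{x}_4^{5}=1$ in case $\{X_4^3,X_2^2X_4\}$; and $\bar{x}_4^{4}=1$ and $\bar{x}_4^{5}=1$ in case $\{X_2X_4^2,X_2^2X_4\}$. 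In every case the two exponents forced to act trivially on $\bar{x}_4$ are coprime, so $\bar{x}_4=1$ and hence $\bar{x}_2=\bar{x}_4^{2}=1$; but then $\bar{x}_2\in P$, contradicting Lemma \ref{not in P}.

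\textbf{Main obstacle.} The only delicate step is Step 1: showing that the lead $\Gamma_S$-edge at every triangle face of $\overline{\Gamma}_S$ lies in $A_-$, so that the face relation is exactly the corner word in $\pi_1(\widehat X^+(\alpha_-))$. This relies crucially on the hypotheses \ref{eq background 2}, most importantly on $X^-$ being a twisted $I$-bundle (which collapses $\dot{\Phi}_1^-$ to all of $F$ and yields $\dot{\Phi}_3^-=A_-$), together with the characterising property $(*)$ of the characteristic subsurfaces.
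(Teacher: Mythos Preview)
Your proof is correct and follows the same approach as the paper: extract the relation $\bar x_2=\bar x_4^2$ from $\bar f_1$ and combine it with any two of $\bar x_4^3=1$, $\bar x_2\bar x_4^2=1$, $\bar x_2^2\bar x_4=1$ to force $\bar x_2=\bar x_4=1$, contradicting Lemma~\ref{not in P}. Your Step~1 re-derives the fact, already recorded in the paper's setup for \S\ref{hexagonal case}, that every triangle face of $\Gamma_S$ is an $A_-$-disk; your Step~2 is precisely the paper's argument, made explicit.
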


\pf
We consider the relations in $\pi_1(\widehat X^+ (\alpha_-))$ coming from
the corresponding triangle faces of $\Gamma_S$.
A face of type $X_4^3$, $X_2X_4^2$ or $X_2^2X_4$ would give the relation
$\bar x_4^3 = 1$, $\bar x_2 \bar x_4^2 =1$ or $\bar x_2^2 \bar x_4 = 1$, respectively.
It is easy to see that any two of these, together with the relation $\bar x_2=\bar x_4^2$
coming from $f_1$, imply $\bar x_2=\bar x_4=1$, contradicting Lemma~\ref{not in P}.
\qed

Let $C2$ be the configuration shown in Figure~\ref{C2}.

\begin{prop}\label{prop3}
$\Gamma_S$ contains the configuration $C2$.
\end{prop}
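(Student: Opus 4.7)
The plan is to build $C2$ outward from the configuration $C1$ established just above, using the hexagonal structure of $\overline{\Gamma}_S$ and the rigid algebraic restrictions of Lemmas~\ref{lem1} and \ref{lem2}. Recall that $\bar{f}_1$ in $C1$ has two positive vertices of type~II and one negative vertex of type~I, and contributes the relation $\bar{x}_2 = \bar{x}_4^{2}$ in $\pi_1(\widehat X^+(\alpha_-))$. Since $\overline{\Gamma}_S$ is hexagonal, each vertex of $\bar{f}_1$ has valency~$6$ and is surrounded by six triangle faces; in particular the three faces of $\overline{\Gamma}_S$ sharing an edge with $\bar{f}_1$ exist, and their vertex signs and types are determined on two vertices by those of $\bar{f}_1$.

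First, I would classify each of the three triangles adjacent to $\bar{f}_1$. For each edge $\bar e$ of $\bar{f}_1$, Lemma~\ref{lem1} forbids the third vertex of the adjacent face from being of the same type and opposite sign to either endpoint of $\bar e$, which narrows the possible $(\text{sign}, \text{type})$ for the new vertex to one or two options. For each remaining option, the fact that every edge of $\overline{\Gamma}_S$ has weight $m=4$ and the parity rule completely determine the label sequences around the four edges of $\Gamma_S$ realizing that edge of $\overline{\Gamma}_S$; consequently the triangle type $W(X_2^{\pm 1},X_4^{\pm 1})$ of the adjacent face is forced. Applying Lemma~\ref{lem2} then eliminates those options whose associated relation, when combined with $\bar{x}_2 = \bar{x}_4^{2}$, would trivialize $\bar{x}_2$ or $\bar{x}_4$ and contradict Lemma~\ref{not in P}.

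The main technical step will be to iterate this procedure at a vertex $v$ of $\bar{f}_1$ whose type/sign is the most constrained (in practice, one of the $(+, \text{II})$ vertices): one walks once around $v$ through its six incident triangle faces, at each step using the just-determined vertices and Lemmas~\ref{lem1} and \ref{lem2} to pin down the next triangle's type. Because $\overline{\Gamma}_S$ has $\Delta(\alpha,\beta)\cdot m/6 = 4$ edges in each direction at $v$, and because no two distinct types from the list in Lemma~\ref{lem2} can coexist, only one cyclic pattern of types around $v$ survives, and it is precisely the local pattern depicted in Figure~\ref{C2}.

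The expected main obstacle is the bookkeeping of labels: to invoke Lemma~\ref{lem2} correctly at each step I must read off the cyclic word $W$ from the label pattern at the unique negative/positive vertex and confirm it belongs to $\{X_4^3,\,X_2X_4^2,\,X_2^2X_4\}$ (or that two such arise simultaneously, already a contradiction). A secondary obstacle is ensuring that the adjacent triangle one has identified is indeed an $A_-$-disk, so that the group-theoretic relations one is combining are genuinely relations in $\pi_1(\widehat X^+(\alpha_-))$; this follows from the containment $\dot\Phi_2^- = A_-$ and the fact that edges of weight $4$ have their images in $\widehat{A_-}$, but must be tracked face by face as the configuration grows from $C1$ into $C2$.
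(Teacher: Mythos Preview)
Your plan has a real gap: the argument you sketch is purely local, but the proposition does not follow from the structure of the six faces around a single vertex of $\bar f_1$. Suppose, for contradiction, that no face of $\overline{\Gamma}_S$ has two $(-,I)$ vertices and one $(+,II)$ vertex (which is what the existence of $C2$ amounts to). At a $(+,II)$ vertex $v$, Lemma~\ref{lem1} only forbids $(-,II)$ neighbours, and the no-$C2$ assumption only forbids two consecutive $(-,I)$ neighbours. Lemma~\ref{lem2} restricts the simultaneous appearance of the three specific types $X_4^3,\,X_2X_4^2,\,X_2^2X_4$, but leaves many other types (e.g.\ $X_2^{-1}X_4^2$ itself) unconstrained. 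With only these restrictions, perfectly consistent local configurations exist around $v$ in which $C2$ never appears --- for instance, the neighbours of $v$ could all be $(+,II)$, or could alternate $(+,II)$ and $(-,I)$. So walking once around $v$ cannot by itself force a $C2$-face into existence; your claim that ``only one cyclic pattern of types around $v$ survives'' is not justified.

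The paper's proof supplies exactly the missing global ingredient. It defines the family $\mathcal F$ of faces having either two $(+,II)$ and one $(-,I)$ vertex, or three positive vertices at least two of type~II, and proves (using Lemmas~\ref{lem1} and~\ref{lem2}) that $\mathcal F$ is closed under face-adjacency. Since $\bar f_1\in\mathcal F$ and the hexagonal graph is connected, every face lies in $\mathcal F$, hence every face has at least two positive vertices. A count over all faces then contradicts Proposition~\ref{plus-minus vertices} (equal numbers of positive and negative vertices). The spreading step plus the global count are the key ideas; your local walk captures neither.
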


We will assume that $\Gamma_S$ does not contain
such a configuration, and show that this leads to a contradiction.
Equivalently, we make the following assumption:
\begin{equation}\label{eq1.1}
\text{\em $\overline{\Gamma}_S$ contains no face with two $(-,I)$ vertices and
one $(+,II)$ vertex.}
\end{equation}

Let $\F_1$ be the set of faces of $\overline{\Gamma}_S$ with two $(+,II)$ vertices
and one $(-,I)$ vertex, and let $\F_2$ be the set of faces with three
positive vertices, at least two of which are of type~II.
Note that $\bar{f}_1 \in \F_1$.
Let $\F = \F_1\cup \F_2$.

\begin{lemma}\label{lem4}
Every face of $\overline{\Gamma}_S$ that shares an edge with a face in $\F$
belongs to $\F$.
\end{lemma}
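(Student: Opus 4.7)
The plan is a direct case analysis on the signs and types of the two endpoints of the shared edge $\bar e$ of $f$ and $f'$. Write $u, w$ for these endpoints and $s'$ for the third vertex of $f'$. Throughout, Lemma \ref{lem1} rules out any choice of $s'$ that would produce an edge between vertices of the same type and opposite sign.

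\emph{The easy cases.} If $\bar e$ has both endpoints of type $(+,II)$ (which covers the positive edge of any $f\in\F_1$, all three edges of any $f\in\F_2$ having three $(+,II)$ vertices, and the $(+,II)$--$(+,II)$ edge of any $f\in\F_2$ with exactly two), then Lemma \ref{lem1} rules out $s'=(-,II)$, and in each of the three remaining possibilities $s'\in\{(+,II),(+,I),(-,I)\}$ one reads off from the definition of $\F_1,\F_2$ that $f'\in\F$. If instead $f\in\F_1$ and $\bar e$ is a $(+,II)$--$(-,I)$ edge of $f$, Lemma \ref{lem1} excludes $s'\in\{(+,I),(-,II)\}$, while assumption \eqref{eq1.1} excludes $s'=(-,I)$ (as that would give two $(-,I)$'s and one $(+,II)$). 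The only remaining option is $s'=(+,II)$, placing $f'\in\F_1$.

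\emph{The delicate case.} The remaining possibility is that $f\in\F_2$ has vertices $(+,II),(+,II),(+,I)$ and $\bar e$ is one of its $(+,II)$--$(+,I)$ edges. Lemma \ref{lem1} forces $s'\in\{(+,II),(+,I)\}$; if $s'=(+,II)$ then $f'\in\F_2$, so the crux is to rule out $s'=(+,I)$. Here I would observe that a corner of an $A_-$-disk at a type II vertex passes through the annulus $A_{41}$ and therefore contributes a letter $X_4^{\pm 1}$ to the face type, while a corner at a type I vertex passes through $A_{23}$ and contributes $X_2^{\pm 1}$. Consequently, up to inversion, $f$ has type of the form $X_2^{\pm 1}X_4^{\pm 2}$ and the hypothetical $f'$ has type $X_2^{\pm 2}X_4^{\pm 1}$. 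Combining the relations these faces impose in $\pi_1(\widehat X^+(\alpha_-))$ with the relation $\bar x_2=\bar x_4^{\,2}$ supplied by $\bar f_1$ reduces both to equations of the form $\bar x_4^{\,N}=1$; pinning down the signs should force either $\bar x_2$ or $\bar x_4$ to lie in $P$, contradicting Lemma \ref{not in P}. Equivalently, the configuration forces $\overline{\Gamma}_S$ to realize at least two of the three face types $\{X_4^{\,3},X_2X_4^{\,2},X_2^{\,2}X_4\}$, which is forbidden by Lemma \ref{lem2}.

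The main technical difficulty I anticipate is tracking the signs in these face types. Since $f$ and $f'$ sit on opposite sides of $\bar e$ in $\overline{\Gamma}_S$ but both lift to $A_-$-disks in $X^+$, the orientations of the corners at the two shared vertices $u,w$ are rigidly determined by the local picture near $\bar e$; this, together with the corner-type computation above, should pin down the sign patterns sufficiently to derive the contradiction cleanly from Lemma \ref{lem2} (or, directly, from Lemma \ref{not in P}).
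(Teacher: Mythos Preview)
Your proposal is correct and follows the same case analysis as the paper. The one place you express uncertainty---sign-tracking in the delicate case---is simpler than you fear: since all three vertices of $f$ (and of the hypothetical $f'$) are \emph{positive}, the corner signs are completely determined (a positive type~II vertex contributes $X_4$, a positive type~I vertex contributes $X_2$, with no minus signs; compare the type $X_2^{-1}X_4^2$ of $\bar f_1$, where the $X_2^{-1}$ arises precisely because that vertex is \emph{negative}). Hence $f$ has type exactly $X_2X_4^2$ and the hypothetical $f'$ has type exactly $X_2^2X_4$, and Lemma~\ref{lem2} gives the contradiction directly---this is exactly what the paper does.
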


\pf
Let $\bar{f}$ be a face in $\F$, let $\bar g$ be a face of $\overline{\Gamma}_S$
that shares an edge $\bar e$ with $\bar f$, and let $v$ be the vertex of
$\bar g$ that is not a vertex of $\bar f$.

\medskip
\leftline{\bf Case (1). $\bar f\in \F_1$.}

First suppose that the vertices at the endpoints of $\bar e$ are
$(+,II)$ and $(-,I)$.
If $v$ is negative then by Lemma~\ref{lem1} it is a $(-,I)$ vertex, contradicting
assumption \eqref{eq1.1}.
Therefore $v$ is positive.
By Lemma~\ref{lem1} it is a $(+,II)$ vertex.
Hence $\bar g\in \F_1$.

Now suppose that $\bar e$ connects the two $(+,II)$ vertices of $\bar f$.
If $v$ is negative then by Lemma~\ref{lem1} it is of type~I, and hence $\bar g\in\F_1$.
If $v$ is positive then $\bar g\in \F_2$.

\medskip
\leftline{\bf Case (2). $\bar f\in \F_2$.}

First suppose that $\bar e$ connects two vertices of type~II.
If $v$ is negative then by Lemma~\ref{lem1} it is of type~I so $\bar g\in\F_1$.
If $v$ is positive then $\bar g\in\F_2$.

If $\bar e$ connects a $(+,I)$ vertex and a $(+,II)$ vertex then $v$
is positive by Lemma~\ref{lem1}.
Also, $\bar f$ is of type $X_2X_4^2$, so by Lemma~\ref{lem2} $\bar g$ is also of
type $X_2X_4^2$, and hence $\bar g\in \F_2$.
\qed

Now we prove Proposition \ref{prop3}.
Lemma~\ref{lem4}  implies that every face of $\overline{\Gamma}_S$ has at least two
positive vertices.
But this is easily seen to contradict the fact that $\Gamma_S$ has the
same number of positive and negative vertices.
We conclude that assumption \eqref{eq1.1} is false, i.e.
Proposition~\ref{prop3} holds.
\qed

If $W$ is a word in $X_2^{\pm1}$ and $X_4^{\pm1}$ we denote by $\ep_{X_2}(W)$ and
$\ep_{X_4}(W)$ the exponent sum in $W$ of $X_2$ and $X_4$ respectively, and if $D$
is a disk in $X^+$ of type $W$ then we define $\ep_{X_2}(D) = \ep_{X_2}(W)$,
$\ep_{X_4}(D) = \ep_{X_4}(W)$.

A disk in $X^+$ with 1, 2 or 3 corners will be called a {\em monogon,
bigon} or {\em trigon}, respectively.

\begin{lemma}\label{lem5}
There are no monogons.
\end{lemma}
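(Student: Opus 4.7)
My plan is to convert a hypothetical monogon into a forbidden relation in $\pi_1(\widehat X^+(\alpha_-))$ and then invoke Lemma~\ref{not in P}. Suppose for contradiction that $D$ is a monogon. Then $\partial D = \gamma \cup \delta$, where $\gamma$ is the single essential arc of $D$ on one of $A_{23}$ or $A_{41}$, and $\delta \subseteq F$ is an arc with $\partial \delta = \partial \gamma$. Up to symmetry between $X_2$ and $X_4$, I may assume $\gamma \subseteq A_{23}$ runs from $b_2$ to $b_3$, so that $D$ has type $X_2^{\pm 1}$; the $A_{41}$ case is identical with $x_4$ replacing $x_2$.

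The central step is to read off the relation in $\pi_1(\widehat X^+)$ determined by the loop $\partial D$. After capping each component of $\partial F$ with the corresponding meridian disk $\widehat b_j$ in the $\beta$-filling solid torus, the arc $\gamma$, joined to radial arcs in $\widehat b_2$ and $\widehat b_3$, becomes a loop which, based appropriately at $x_0$ via the paths $\eta_2,\eta_3$, differs from the loop $\eta_2 * \sigma_2 * \eta_3^{-1}$ representing $x_2$ only by a based loop in $\widehat F$. The arc $\delta$, capped off the same way, produces a loop contained entirely in $\widehat F$ and therefore represents an element $w \in \pi_1(\widehat F)$. Since $\partial D$ bounds the singular disk $D$ in $X^+ \subseteq \widehat X^+$, we obtain a relation of the shape $x_2^{\pm 1} = w'$ with $w' \in \pi_1(\widehat F) \leq \pi_1(\widehat X^+)$.

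Projecting this relation to $\pi_1(\widehat X^+(\alpha_-))$ shows that $\bar x_2^{\pm 1} \in P$, which directly contradicts Lemma~\ref{not in P}; the analogous argument for a corner in $A_{41}$ gives $\bar x_4^{\pm 1} \in P$, again contradicting Lemma~\ref{not in P}. The only potential wrinkle is the bookkeeping with basepoints and the auxiliary paths $\eta_j$, but because any two choices of $\eta_j$ differ by a loop in $\widehat F$, the class of $x_j$ is well-defined modulo $\pi_1(\widehat F)$, hence its image $\bar x_j$ is well-defined modulo $P$; thus this subtlety does not affect the contradiction.
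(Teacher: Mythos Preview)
Your argument is correct but takes a different route from the paper. The paper applies the Loop Theorem to obtain an \emph{embedded} monogon $D'$ of the same type; such a $D'$ is then a boundary-compressing disk for $F$ in $M$, contradicting the essentiality of $F$. This is a purely geometric argument that uses only the incompressibility and boundary-incompressibility of $F$, not the Seifert structure on $\widehat X^+$.

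Your approach instead reads off the relation $x_j^{\pm1}\in\pi_1(\widehat F)$ from the null-homotopy $\partial D\simeq *$ in $\widehat X^+$ and derives a contradiction algebraically. This is valid, and in fact you could shorten it: the relation $x_j\in\pi_1(\widehat F)$ already contradicts Proposition~\ref{not peripheral} directly in $\pi_1(\widehat X^+)$, so there is no need to pass to $\pi_1(\widehat X^+(\alpha_-))$ and invoke Lemma~\ref{not in P}. The paper's proof is more elementary in that it does not rely on the computation of $\bar x_j$ in $\pi_1(D^2(p,q))$, while yours avoids the Loop Theorem but depends on the algebraic machinery of \S\ref{section-relation} and \S\ref{background 2}. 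One minor remark: your final sentence about $x_j$ being ``well-defined modulo $\pi_1(\widehat F)$'' is imprecise since $\pi_1(\widehat F)$ is not normal, but this is harmless here because once the $\eta_j$ are fixed the relation $x_j = w'$ with $w'\in\pi_1(\widehat F)$ holds on the nose, and that is all you need.
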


\pf
Let $D$ be a  monogon.
Applying the Loop Theorem to $D$, among disks with $\ep_{X_2} + \ep_{X_4}\ne0$, we
get an embedded monogon $D'$ of the same type as $D$.
Then $D'$ is a boundary compressing disk for $F$ in $M$, contradicting
the fact that $F$ is essential.
\qed

\begin{figure}[!ht]
\centerline{\includegraphics{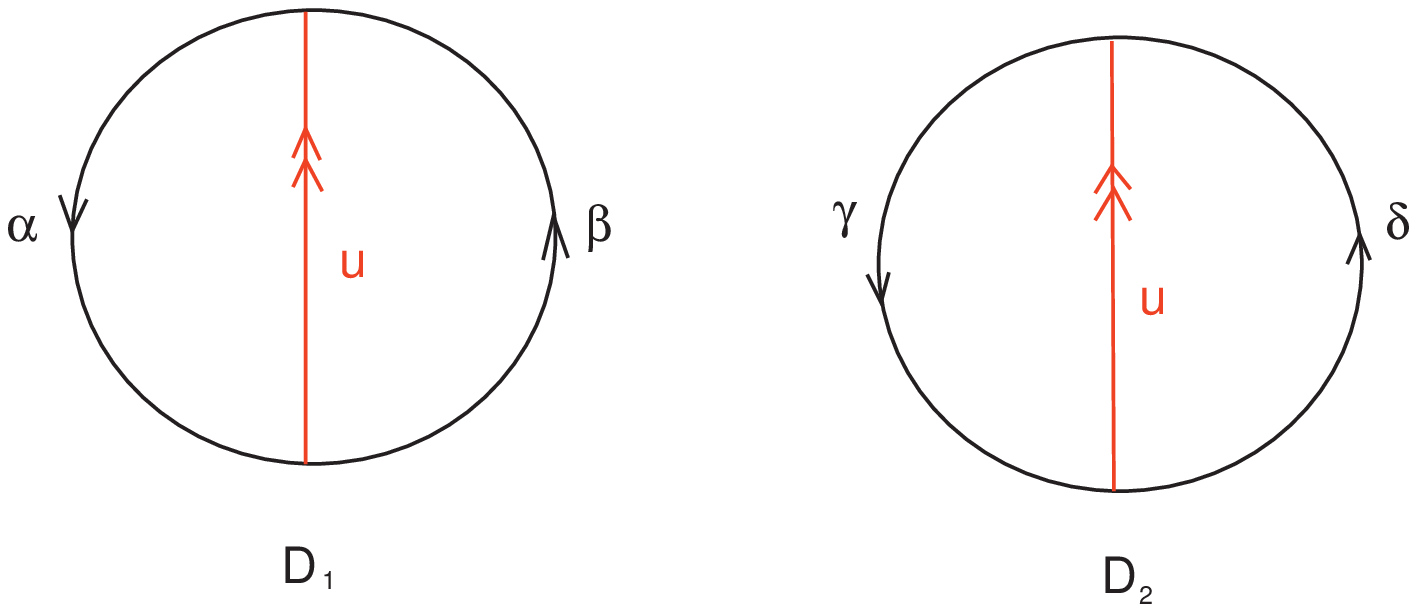}} \caption{ }\label{Fig3}
\end{figure}

\begin{lemma}\label{lem6'}	
There is no $A_-$-trigon of type $W$ with $|\ep_{X_2}(W)| + |\ep_{X_4}(W)| =1$.
\end{lemma}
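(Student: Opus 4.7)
The plan is to exploit directly the relation that an $A_-$-trigon imposes on $\pi_1(\widehat X^+(\alpha_-))$, and to show that any such relation with the stated exponent hypothesis must force some $\bar x_j$ to be trivial, hence to lie in $P$, in direct contradiction to Lemma \ref{not in P}.

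Concretely, suppose for contradiction that $D$ is an $A_-$-trigon of type $W$ with $|\ep_{X_2}(W)|+|\ep_{X_4}(W)|=1$. Because $\partial D \cap F \subseteq A_-$ and, by the conventions set up at the start of \S \ref{background 2}, the paths $\eta_j$ all lie in a disk inside $\widehat{A_-}$, each arc of $\partial D$ lying between two consecutive corners contributes an element of $\pi_1(\widehat{A_-}) = \langle t \rangle$, where $t$ represents the slope $\alpha_-$. Since $t$ becomes trivial in $\pi_1(\widehat X^+(\alpha_-))$, the trigon gives the clean relation $W(\bar x_2^{\pm1},\bar x_4^{\pm1}) = 1$ in that quotient group, exactly as recorded at the start of \S \ref{background 2}.

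After interchanging the roles of $X_2$ and $X_4$ if necessary, assume $\ep_{X_4}(W)=0$ and $\ep_{X_2}(W)=\pm 1$. A direct enumeration of length-three words with this exponent profile shows that each such $W$ is either freely equal to $X_2^{\pm 1}$ in the free group on $X_2, X_4$ (examples: $X_2X_4X_4^{-1}$, $X_4X_4^{-1}X_2$, $X_4^{-1}X_4X_2$, $X_2X_4^{-1}X_4$, and the three words $X_2X_2X_2^{-1}$, $X_2X_2^{-1}X_2$, $X_2^{-1}X_2X_2$), or is of the form $X_4^{\pm1}X_2^{\pm1}X_4^{\mp1}$, a conjugate of $X_2^{\pm1}$ by $X_4^{\pm1}$. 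In the first case the relation becomes $\bar x_2^{\pm1}=1$ at once, and in the second it becomes $\bar x_4^{\pm1}\bar x_2^{\pm1}\bar x_4^{\mp1}=1$, which again forces $\bar x_2^{\pm1}=1$. Either way $\bar x_2 = 1 \in P$, contradicting Lemma \ref{not in P}.

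I do not expect a substantive obstacle: the argument is essentially a bookkeeping exercise with the relation furnished by an $A_-$-disk. The one point requiring care is the opening claim that the $F$-arcs of $\partial D$ contribute only powers of $t$ to the word read off from $D$, so that the relation in $\pi_1(\widehat X^+(\alpha_-))$ is exactly $W(\bar x_2^{\pm1},\bar x_4^{\pm1})=1$ with no extra $\pi_1(\widehat F)$-factors; this is guaranteed by the choice of base point in $D$ and of the paths $\eta_j$ inside $D \subseteq \widehat{A_-}$ made at the beginning of \S \ref{background 2}. Once this is in hand, Lemma \ref{not in P} closes the argument immediately.
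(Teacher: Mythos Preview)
Your proof is correct and follows the same approach as the paper's. The paper's proof is a single sentence---it simply asserts that such a disk gives the relation $\bar x_2 = 1$ or $\bar x_4 = 1$ in $\pi_1(\widehat X^+(\alpha_-))$, contradicting Lemma~\ref{not in P}---while you spell out the underlying word enumeration; the content is identical.
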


\pf
Such a disk would give rise to the relation $\bar x_2=1$ or $\bar x_4=1$ in
$\pi_1(\widehat X^+ (\alpha_-))$, contradicting Lemma~\ref{not in P}.
\qed

Let $D$ be a singular disk in $X^+$.
We say that an embedded disk $E$ is {\em nearby\/} $D$ if $\partial E$ is
contained in a small regular neighborhood of $\partial D$ in $\partial X^+$.

\begin{lemma}\label{lem7}		
If there is an $A_-$-trigon of type $W$ then there is a nearby
embedded $A_-$-trigon of type $W$ if $W = X_2^{\pm 3}$ or $X_4^{\pm 3}$ and of type $W$ or
$W^* = W(X_2^{-1}, X_4)$ otherwise.
\end{lemma}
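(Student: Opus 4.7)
The plan is to apply Dehn's Lemma via a tower-of-covers construction to the singular $A_-$-trigon $D : D^2 \to X^+$. First I would construct a finite tower $X^+ = X_0 \leftarrow X_1 \leftarrow \cdots \leftarrow X_n$ in which $X_{i+1}$ is the connected cover of a regular neighborhood of the image of the lift $D_i : D^2 \to X_i$ corresponding to the kernel of the inclusion-induced map on $\pi_1$ of the boundary. This tower terminates by the usual Euler characteristic bound, and at its top stage Dehn's Lemma applies to yield an embedded disk $\widehat D_n \subseteq X_n$ whose boundary lies in a regular neighborhood of $\partial D_n$ on $\partial X_n$.

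Next I would descend $\widehat D_n$ through the tower one level at a time, resolving the self-intersections that appear at each projection by innermost-disk cut-and-paste on the boundary. The cyclic word $W$ is encoded by the sequence of corners along the boundary, and this sequence evolves in a controlled way: a cut-and-paste at a double arc in $F$ does not change the cyclic sequence of corners, while descent through a covering of the annulus $A_{23}$ (respectively $A_{41}$) either acts trivially on the corners in that annulus or acts by the deck transformation of the connected double cover, which swaps the two boundary components of the annulus and hence simultaneously reverses the orientation of every corner crossing it. Since any essential embedded arc in an annulus is isotopically unique up to orientation, each corner of $\partial \widehat D_i$ is parallel, as an unoriented arc, to a unique corner of $\partial D_i$.

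Tracking this through, the embedded disk $D' = \widehat D_0$ in $X^+$ has type in the set $\{W,\; W(X_2^{-1}, X_4),\; W(X_2, X_4^{-1}),\; W(X_2^{-1}, X_4^{-1})\}$, according to whether each of $A_{23}$, $A_{41}$ is unfolded somewhere in the tower. As a cyclic word, $W(X_2^{-1}, X_4^{-1})$ is the inverse of $W$, and $W(X_2, X_4^{-1})$ is the inverse of $W^* = W(X_2^{-1}, X_4)$, so under the convention that $W$ and $W^{-1}$ define the same type all four collapse to $W$ or $W^*$. Finally, when $W = X_2^{\pm 3}$ or $W = X_4^{\pm 3}$ one checks that $W^*$ equals $W$ or $W^{-1}$, hence $W$ and $W^*$ coincide as types and the statement reduces to type $W$, giving the final clause.

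The main obstacle will be verifying that no corners are lost or duplicated during the descent, i.e.\ that the number of corners in each of $A_{23}$ and $A_{41}$ is preserved. This requires starting from a $D$ whose type $W$ has the fewest corners among all singular $A_-$-trigons of its type, so that any cut-and-paste producing an embedded disk of strictly smaller complexity would first yield a singular $A_-$-trigon of the same type with fewer corners, contradicting minimality; this minimality is compatible with the original hypothesis that some $A_-$-trigon of type $W$ exists.
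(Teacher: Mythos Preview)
Your approach is substantially more elaborate than the paper's and contains a genuine error. The paper's proof is three lines and does not open up the tower at all: after swapping $X_2 \leftrightarrow X_4$ if necessary so that $\ep_{X_2}(W)$ is odd, it invokes the Loop Theorem directly (with normal subgroup $\ker(\ep_{X_2} \bmod 2)$) to produce a nearby embedded $A_-$-disk $D'$ with $\ep_{X_2}(D')$ odd whose corners lie among the three corners of $D$, and then appeals to Lemmas~\ref{lem5} and~\ref{lem6'} to eliminate the degenerate types and force $D'$ to be a trigon of type $W$ or $W^*$.

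Your central claim that ``a cut-and-paste at a double arc in $F$ does not change the cyclic sequence of corners'' is false. If the double arc has preimages $\gamma_1, \gamma_2 \subset D^2$, these cut $D^2$ into three pieces; after regluing, the surviving disk has boundary traversing only two of the four arcs into which $\partial\gamma_1 \cup \partial\gamma_2$ divides $\partial D^2$, so corners are generically dropped. Far from being an obstacle to patch at the end, this is exactly why the paper needs Lemmas~\ref{lem5} and~\ref{lem6'}: the Loop Theorem may well output a disk with fewer than three corners, and those lemmas are what rule such disks out after the fact. Your proposed fix --- choosing $D$ with ``fewest corners among all singular $A_-$-trigons of its type'' --- is vacuous, since a trigon has three corners by definition.

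Your second mechanism, that descent through the tower uniformly flips all $X_2$-corners (or all $X_4$-corners) via the deck transformation of a ``covering of the annulus $A_{23}$'', is also not well-founded. The tower covers are covers of regular neighbourhoods $N_i$ of images, not of $X^+$ or its boundary; in particular $N_0 \cap A_{23}$ is a disjoint union of simply connected rectangles (one per corner), over each of which every cover is trivial, so there is no nontrivial deck action on the corners of the kind you describe. The orientation ambiguity that produces the alternative $W^*$ in the statement is accounted for in the paper's argument by the parity constraint on $\ep_{X_2}(D')$ together with Lemma~\ref{lem6'}, not by tracking deck transformations through a tower.
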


\pf
After possibly interchanging $X_2$ and $X_4$ we may assume without loss of
generality that $\ep_{X_2} (W)\not\equiv 0$ $(\mod 2)$.
Let $D$ be an $A_-$-trigon of type $W$.
The Loop Theorem gives an embedded $A_-$-disk $D'$ with
$\ep_{X_2}(D')\not\equiv 0$ $(\mod 2)$.
Lemmas~\ref{lem5} and~\ref{lem6'} now show that $D'$ is of the desired type.
\qed

\begin{figure}[!ht]
\centerline{\includegraphics{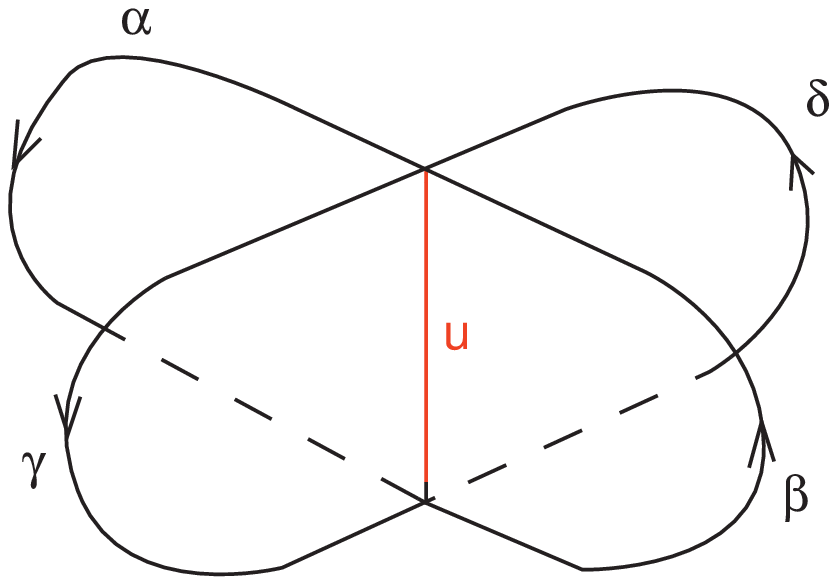}} \caption{ }\label{Fig4}
\end{figure}

Let $D_1,D_2$ be properly embedded disks in $X^+$.
Putting $D_1$ and $D_2$ in general position, $D_1\cap D_2$ will be a
compact 1-manifold.
A standard cutting and pasting argument allows us to eliminate the circle
components of $D_1\cap D_2$, without changing
$\partial D_1$ and $\partial D_2$.
So suppose that $D_1\cap D_2$ consists of $n\ge 1$ arcs.
Let $u$ be one of these arcs.
Then $u$ cuts $D_i$ into disks $D'_i$, $D''_i$, $i=1,2$, and the endpoints
of $u$ cut $\partial D_1$ and $\partial D_2$ into pairs of arcs
$\alpha,\beta$ and $\gamma,\delta$ respectively; see Figure~\ref{Fig3}.

Cutting and pasting $D_1$ and $D_2$ along $u$ we get four disks
$D'_1\cup D'_2$, $D'_1\cup D''_2$, $D''_1\cup D'_2$, and $D''_1\cup D''_2$,
with boundaries $\alpha\gamma^{-1}$, $\alpha\delta$, $\beta\gamma$ and $\beta\delta^{-1}$ respectively.
See Figure~\ref{Fig4}.

After a small perturbation, each of these disks $E$ meets each of $D_1$
and $D_2$ in less than $n$ double arcs, disjoint from the singularities
of $E$.

The arc $u$ is {\em trivial\/} in $D_1$ if either $\alpha$ or $\beta$
contains no corner of $D_1$, and similarly for $D_2$.
If $u$ is trivial in $D_1$ and in $D_2$ then without loss of generality
$\alpha$ contains no corner of $D_1$ and $\gamma$ contains no corner of $D_2$.
Then $D''_1\cup D'_2$ has the same type as $D_1$ and $D'_1\cup D''_2$
has the same type as $D_2$.
If $u$ is trivial in $D_1$ but not in  $D_2$, and $D_2$ is a bigon or
trigon, then at least one of $D'_1\cup D'_2, D'_1\cup D''_2, D_1'' \cup D_2'$, or $D_1'' \cup D_2''$ is a
monogon, contradicting Lemma~\ref{lem5}.

\begin{lemma}\label{lem8}		
If there is no $A_-$-disk of type $X_4^3$ then
there are disjoint embedded $A_-$-disks of types $X_2^{-1}X_4^2$ and $X_2^{-2}X_4$.
\end{lemma}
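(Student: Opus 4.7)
The strategy is to produce the two embedded $A_-$-trigons of the required types via Lemma \ref{lem7}, then arrange them to be disjoint using the cutting-and-pasting operation introduced just before that lemma, invoking the no-$X_4^3$ hypothesis to rule out bad intersection patterns.

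First I would identify the two singular $A_-$-trigons available to us. The face $\bar f_1$ constructed immediately before Lemma \ref{lem4} is an $A_-$-trigon of type $X_2^{-1} X_4^2$. The configuration $C2$ supplied by Proposition \ref{prop3} is a triangle face of $\overline{\Gamma}_S$ with two $(-,I)$ vertices and one $(+,II)$ vertex; reading off its corners in accordance with the label conventions above yields type $X_2^{-2} X_4$. Applying Lemma \ref{lem7} to each, we obtain embedded $A_-$-trigons $E_1, E_2$ whose types lie in $\{X_2^{-1} X_4^2, X_2 X_4^2\}$ and $\{X_2^{-2} X_4, X_2^2 X_4\}$ respectively.

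Next I would eliminate the starred alternatives. If $E_1$ were of type $X_2 X_4^2$, then pairing the embedded $E_1$ with the original singular $\bar f_1$-trigon of type $X_2^{-1} X_4^2$ and performing the cut-and-paste of the paragraph preceding Lemma \ref{lem7} along an innermost arc of their intersection yields an embedded $A_-$-disk whose word reduces (after the $X_2$ and $X_2^{-1}$ corners from the two sides meet) to $X_4^3$, contradicting the hypothesis; equivalently, one sees this at the level of relations in $\pi_1(\widehat X^+(\alpha_-))$ by the same arithmetic as in the proof of Lemma \ref{lem2}. The analogous argument rules out $E_2$ of type $X_2^2 X_4$. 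Hence $E_1$ has type $X_2^{-1} X_4^2$ and $E_2$ has type $X_2^{-2} X_4$.

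Finally I would make $E_1$ and $E_2$ disjoint. Place them transversally, remove the circle components of $E_1 \cap E_2$ by the standard trick, and let $n \geq 0$ be the number of remaining arc components. If $n = 0$ we are done; otherwise pick an outermost arc $u$ and apply the fourfold cut-and-paste of the paragraph before Lemma \ref{lem7}, each of whose outputs meets $E_1$ and $E_2$ in strictly fewer than $n$ arcs. If $u$ is trivial in both $E_1$ and $E_2$, two of the four new disks have the same types as $E_1$ and $E_2$, so we replace and iterate; if $u$ is trivial in exactly one, a monogon arises, contradicting Lemma \ref{lem5}. The main obstacle is the case where $u$ is non-trivial in both $E_i$: here I would enumerate the ways the cyclic words $X_2^{-1} X_4^2$ and $X_2^{-2} X_4$ can each be cut into two subwords by the endpoints of $u$, and verify that in every combination at least one of the four produced disks has type $X_4^3$ (excluded by hypothesis), has a single corner with $|\ep_{X_2}|+|\ep_{X_4}|=1$ (excluded by Lemma \ref{lem6'}), or is a monogon (excluded by Lemma \ref{lem5}). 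Carrying out this bookkeeping forces $n = 0$ and so yields the disjoint pair.
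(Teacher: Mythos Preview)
Your overall plan matches the paper's, and your first two steps (producing the singular trigons from $C1$ and $C2$, then using Lemma~\ref{lem7} together with the algebraic relations to force the embedded types to be $X_2^{-1}X_4^2$ and $X_2^{-2}X_4$) are correct. The gap is in your final disjointness step.

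You assert that for every non-trivial double arc $u$, at least one of the four cut-and-paste disks is a monogon, an $A_-$-trigon with $|\ep_{X_2}|+|\ep_{X_4}|=1$, or of type $X_4^3$. This is false. When one actually enumerates the eight cases (two non-trivial positions of $u$ in each trigon, times two relative orientations), three of them --- the paper's cases $(2,2)$, $(1,\bar 2)$, $(2,\bar 1)$ --- produce no disk in any of your three forbidden classes. In each of these three cases the four outputs are, up to relabelling, an $X_2^{-1}X_4^2$ trigon, an $X_2^{-2}X_4$ trigon, an inessential-looking bigon such as $X_4X_2$ or $X_2^{-1}X_2$, and a $4$-gon; none of these yields an immediate contradiction. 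So the case analysis does not ``force $n=0$'' as you claim.

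What the paper does in these three cases is observe that one of the outputs is a (singular) $A_-$-trigon of type $X_2^{-1}X_4^2$ with strictly fewer intersections with $D_2$ than $D_1$ had. One then applies Lemma~\ref{lem7} again to this singular trigon, rules out the $X_2X_4^2$ alternative algebraically (since $\bar x_2\bar x_4^2=1$ together with $\bar x_2^{-1}\bar x_4^2=1$ and $\bar x_2^{-2}\bar x_4=1$ would give $\bar x_2=\bar x_4=1$), and obtains a \emph{nearby} embedded $X_2^{-1}X_4^2$ trigon still meeting $D_2$ in fewer arcs. Iterating this descent eventually makes the two disks disjoint. This inductive replacement is the missing ingredient in your argument.
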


\pf
The faces $f_1$ and $f_2$ of $\Gamma_S$ in Figures~\ref{C1} and \ref{C2} are $A_-$-disks
of types $X_2^{-1}X_4^2$ and $X_2^{-2}X_4$ respectively. Since $\bar x_2^{-1} \bar x_4^2 = \bar x_2^{-2} \bar x_4 = 1$ in $\pi_1(\widehat X^+(\alpha_-))$,
Lemma \ref{not in P} implies that neither relation $\bar x_2 \bar x_4^2 = 1$ nor $\bar x_2^2 \bar x_4 = 1$ can hold.
Lemma~\ref{lem7} then gives embedded $A_-$-disks $D_1$ and $D_2$ of
type $X_2^{-1}X_4^2$ and $X_2^{-2}X_4$ respectively, which we may assume intersect in double arcs, none of which is trivial
in $D_1$ or $D_2$.
Let $u$ be a double arc.
Ignoring orientations, there are two possibilities for $u$ in each of
$D_1$ and $D_2$, shown in Figures~\ref{Fig5(1)} and \ref{Fig5(2)} respectively.

\begin{figure}[!ht]
\centerline{\includegraphics{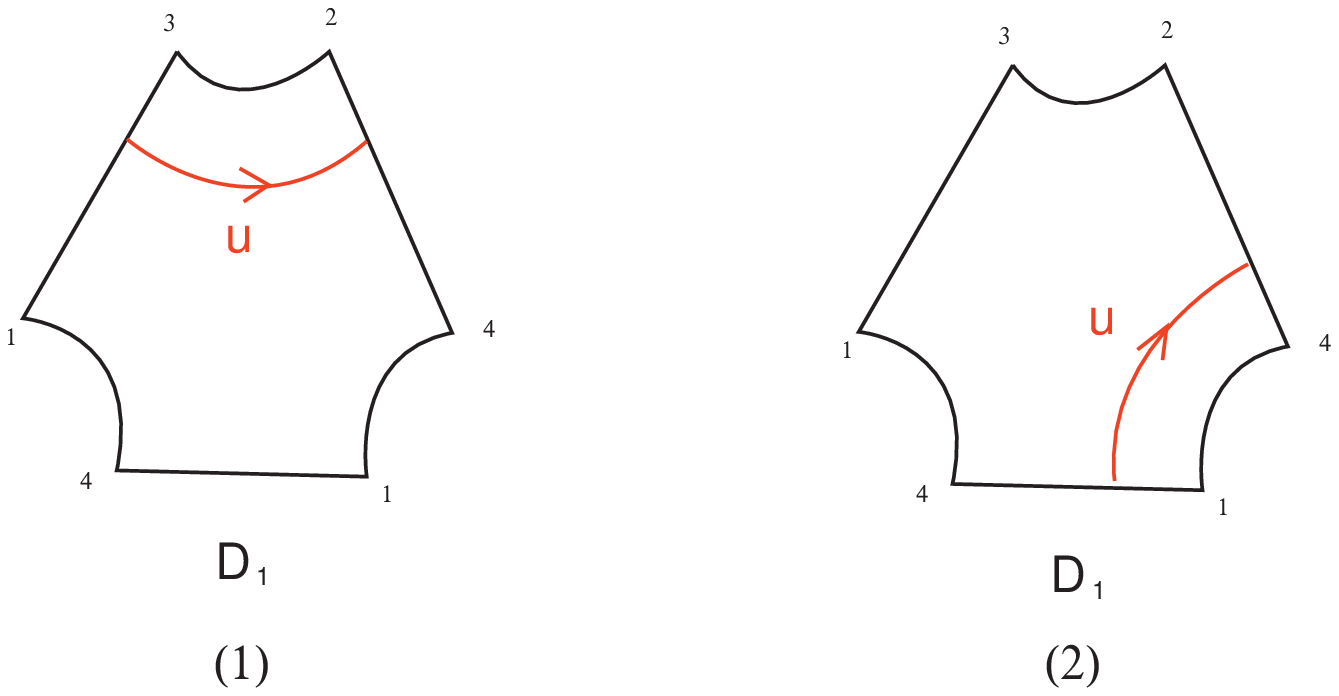}} \caption{ }\label{Fig5(1)}
\end{figure}

\begin{figure}[!ht]
\centerline{\includegraphics{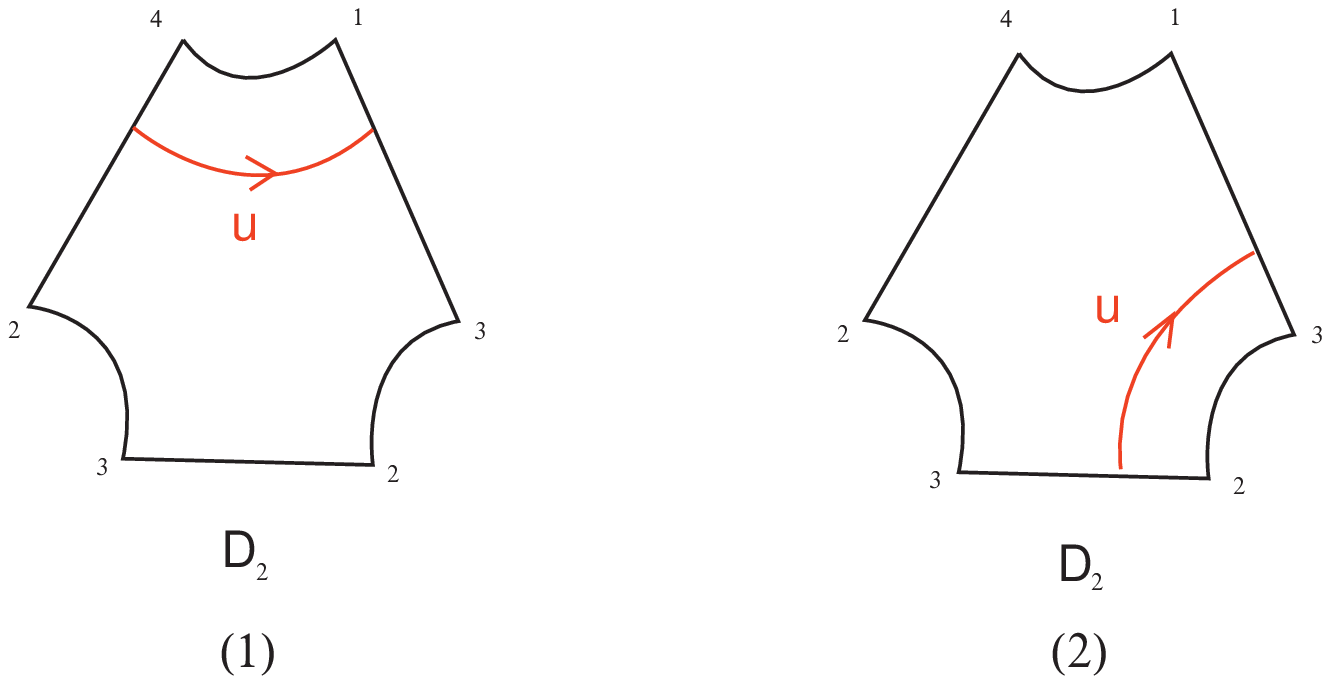}} \caption{ }\label{Fig5(2)}
\end{figure}

Orient $u$ as shown in Figures~\ref{Fig5(1)} and \ref{Fig5(2)}, so in case~(1), $(\alpha,\beta) =(X_4^2,X_2^{-1})$,
and in case~(2), $(\alpha,\beta) = (X_4,X_4X_2^{-1})$.
(Here, we are using the natural convention of labeling the oriented arc
$\alpha$ or $\beta$ by the sequence of corners it contains.)
If $u$ is oriented on $D_2$ as shown in Figure~5, then in case~(1),
$(\gamma,\delta) = (X_2^{-2},X_4)$, and in case~(2),
$(\gamma,\delta) = (X_2^{-1},X_2^{-1}X_4)$.
Note that if $u$ on $D_2$ is oriented in the opposite direction to that
shown then $\gamma$ and $\delta$ are interchanged, so that in
case~$(\overline{1})$, $(\gamma,\delta) = (X_4,X_2^{-2})$, and in
case~$(\overline{2})$, $(\gamma,\delta) = (X_2^{-1}X_4,X_2^{-1})$.
This gives eight possibilities $(i,j)$, where $i$ denotes case~$(i)$ for
$D_1$, $i=1,2$, and $j$ denotes case~$(j)$ for $D_2$, $j=1,2$,
$\overline1$ or $\overline2$.
In each case we choose one of the four disks obtained by cutting and
pasting along $u$.
Below we indicate the chosen disk by the arcs in its boundary and record
its type:
$$\vbox{\halign{$#$\enspace\hfil&$#$&\qquad$#$\hfil&\qquad$#$\hfil\cr
(1,1)&:&\alpha\delta&X_4^3\cr
(1,2)&:&\beta\delta^{-1}&X_2^{-1}X_4^{-1}X_2\cr
(2,1)&:&\beta\delta^{-1}&X_4X_2^{-1}X_4^{-1}\cr
(2,2)&:&\alpha\delta&X_2^{-1}X_4^2\cr
(1,\Bar1)&:&\alpha\gamma^{-1}&X_4^2X_4^{-1}\cr
(1,\Bar2)&:&\alpha\delta&X_2^{-1}X_4^2\cr
(2,\Bar1)&:&\beta\gamma&X_2^{-1}X_4^2\cr
(2,\Bar2)&:&\alpha\gamma^{-1}&X_4X_4^{-1}X_2
\cr }}
$$

In case $(1,1)$ we get an $A_-$-disk of type $X_4^3$, contradicting
our assumption.

Cases $(1,2)$, $(2,1)$, $(1,\Bar1)$ and $(2,\bar2)$ contradict
Lemma~\ref{lem6'}.

In the remaining three cases, $(2,2)$, $(1,\Bar2)$ and $(2,\Bar1)$ we get
an $A_-$-disk $E$ of type $X_2^{-1}X_4^2$.
By Lemma~\ref{lem7} there is a nearby embedded $A_-$-disk $E'$ of
type $X_2X_4^2$ or $X_2^{-1}X_4^2$. The former is impossible as otherwise we would have $\bar x_2 \bar x_4^2 = \bar x_2^{-1} \bar x_4^2 = \bar x_2^{-2} \bar x_4 = 1$ in $\pi_1(\widehat X^+(\alpha_-))$, which implies $\bar x_2 = \bar x_4 = 1$, contrary to Lemma \ref{not in P}.
Thus $E'$ has type $X_2^{-1}X_4^2$. Noting that $|E' \cap D_2| \le |E\cap D_2| < |D_1\cap D_2|$,
if we continue in this manner we eventually get an embedded $A_-$-disk of type
$X_2^{-1}X_4^2$ disjoint from $D_2$.
\qed

\noindent {\bf Note.}
It is easy to see that in cases $(2,2)$, $(1,\Bar2)$ and $(2,\Bar1)$ we
also get a disk of type $X_2^{-2}X_4$, so we could equally well have fixed
$D_1$ and obtained an embedded $A_-$-disk of type $X_2^{-2}X_4$ disjoint from $D_1$.

Let $D$ be an embedded disk in $X^+$.
Recall that the corners of $D$ are the components of
$\partial D\cap (A_{23}\cup A_{41})$.
We will refer to the components of $\partial D\cap F$ as the
{\em edges\/} of $D$.
We label the endpoints of the edges of $D$ with the label of the
corresponding corner.
Thus, if we have a disjoint union $\D$ of embedded disks whose
$X_2^{\pm1}$-corners are labeled so that reading clockwise around boundary
component 2 of $F$ they appear in the order $a,b,c,\ldots$, then they
appear in the same order $a,b,c,\ldots$ reading anticlockwise around
boundary component 3 of $F$.
Similarly, the clockwise order of the $X_4^{\pm1}$-corners of $\D$ at
boundary component 4 is the same as their anticlockwise order at boundary
component~1.
This ordering condition puts constraints on the existence of the
disjoint embedded arcs in $F$ that are the edges of $\D$.

\begin{lemma}\label{lem9bis}
If there is an $A_-$-disk of type $X_4^3$ then $\alpha_- = \varphi_+$.
\end{lemma}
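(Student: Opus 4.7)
The plan is to argue by contradiction: suppose $\alpha_-\neq\varphi_+$, and show that the existence of an $A_-$-disk of type $X_4^3$ is incompatible with the structure of $\pi_1(\widehat X^+(\alpha_-))$.

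First I would set up the algebra. By Proposition~\ref{sep annulus-in-3-+}(ii) together with the standing hypotheses \eqref{eq background 2}, $\widehat X^+$ is Seifert fibered with base orbifold $D^2(2,n)$ for some $n>2$, and the regular-fiber slope on $\widehat F$ is $\varphi_+$. Since $\Delta(\alpha_-,\varphi_+)$ is even, the assumption $\alpha_-\neq\varphi_+$ forces $\Delta:=\Delta(\alpha_-,\varphi_+)\geq 2$. Hence $\widehat X^+(\alpha_-)$ is Seifert fibered with base orbifold $S^2(2,n,\Delta)$, and quotienting by the fiber class $h$ yields a surjection
\begin{equation*}
\pi_1(\widehat X^+(\alpha_-))\twoheadrightarrow T:=\langle a,b\mid a^2,\ b^n,\ (ab)^\Delta\rangle.
\end{equation*}

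Next I would bring in the disk. By Lemma~\ref{lem7} I may take $D$ to be embedded. Then $D$ contributes the relation $\bar x_4^{\,3}=1$ in $\pi_1(\widehat X^+(\alpha_-))$, and so $\bar{\bar x}_4^{\,3}=1$ in $T$, where $\bar{\bar x}_4$ is the image of $\bar x_4$ in $T$. By Proposition~\ref{not peripheral} (with $p=2$, so $a^{\pm 1}=a$), one has $\bar{\bar x}_4=(ab)^r a (ab)^s$ for some $r,s\in\mathbb{Z}$. Writing $\sigma=(ab)^r a (ab)^{-r}$ (a conjugate of $a$, hence of order $2$ in $T$) and $\tau=(ab)^{r+s}$, we have $\bar{\bar x}_4=\sigma\tau$, and the relation becomes $(\sigma\tau)^3=1$. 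If $\sigma\tau=1$ then $\bar x_4\in\ker\bigl(\pi_1(\widehat X^+(\alpha_-))\to T\bigr)=\langle h\rangle\subseteq P$, contradicting Lemma~\ref{not in P}. Otherwise $\sigma\tau$ has order $3$, and standard facts about torsion in triangle groups---that every torsion element of $T$ is conjugate to a power of $a$, $b$, or $ab$, of orders $2$, $n$, and $\Delta$ respectively---force either $3\mid n$, with $\sigma\tau$ conjugate to $b^{n/3}$, or $3\mid\Delta$, with $\sigma\tau$ conjugate to $(ab)^{\Delta/3}$.

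The main obstacle---and the step requiring genuine geometric input---is to rule out this residual case. Here my plan is to exploit the embeddedness of $D$ in $X^+$: the three disjoint arcs $v_1,v_2,v_3\subset A_-$ connecting $b_1$ and $b_4$ in the six-holed sphere $A_-$ are, up to isotopy, parallel arcs, and upon completion through the disks $\widehat b_1,\widehat b_4$ they determine simple closed curves in $\widehat A_-$ with a definite winding number about the core. This winding number is, on one hand, read directly off the geometry of $D$, and, on the other hand, pinned down algebraically by the exponent $r+s$ appearing in the form of $\bar{\bar x}_4$. I expect the resulting identity, together with the parity of $\Delta(\alpha_-,\varphi_+)$ (which conflicts with $3\mid\Delta$ in the $\Delta=6$ case only and which can be ruled out by re-examining the Seifert base orbifold of $M(\beta)$ via Corollary~\ref{both not (2,2)} and Proposition~\ref{nonsep annulus-in-3}-type arguments) and Lemma~\ref{not in P}, to yield a contradiction. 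In the remaining case $3\mid n$, I would observe that $\sigma\tau$ conjugate to $b^{n/3}$ forces the word $(ab)^r a (ab)^s$ to reduce in the free product $\mathbb{Z}/2*\mathbb{Z}/n$ to a power of $b$---possible only when $r+s\equiv\pm 1$ and after cancellation---and then compare against the geometric winding number of the $v_i$'s, which is forced to be $0$ by embeddedness and the tightness-free configuration of $\dot\Phi_3^-$, to conclude that no such $r,s$ exist. This yields the desired contradiction, establishing $\alpha_-=\varphi_+$.
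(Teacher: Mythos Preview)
Your approach has a genuine gap and differs fundamentally from the paper's.

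First, a citation error: Proposition~\ref{sep annulus-in-3-+}(ii) applies when $\dot\Phi_3^+$ contains an $\widehat F$-essential annulus, but Lemma~\ref{lem9bis} lives in \S\ref{proof all tight}, where $\dot\Phi_3^+$ is assumed to be a union of tight components. So that proposition cannot be invoked here to conclude the base orbifold is $D^2(2,n)$.

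More seriously, the algebraic contradiction you are aiming for does not materialize. Even granting base orbifold $D^2(2,n)$, when $3\mid n$ the triangle group $T(2,n,\Delta)$ genuinely contains order-$3$ elements of the form $(ab)^r a (ab)^s$: for instance $(ab)^0 a (ab)^{-1} = ab^{-1}a$ is conjugate to $b^{-1}$, which has order $3$ when $n=3$. So the relation $\bar{\bar x}_4^{\,3}=1$ yields no contradiction on its own. You acknowledge this and propose a ``winding number'' argument, but the plan is not made precise: you never specify what the winding number is, why embeddedness forces it to be $0$, or how this pins down $r+s$. The claim that conjugacy to $b^{n/3}$ forces reduction in the free product $\mathbb Z/2*\mathbb Z/n$ also ignores the relation $(ab)^\Delta=1$ in $T$.

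The paper's proof is not by contradiction at all. It uses the embedded $A_-$-disk $D$ directly to build a solid torus $V=\widehat A_-\times I\cup H_{(41)}\cup N(D)\subset\widehat X^+$ in which the core $\alpha_-$ of $\widehat A_-$ has winding number $3$; the key geometric input is that the three edges of an embedded $X_4^3$-disk in $\widehat A_-$ must be configured so that $\pi_1(V)=\langle x_4,t: x_4^3=t\rangle\cong\mathbb Z$. The frontier annulus $A'$ of $V$ then exhibits $\widehat X^+=V\cup_{A'}V'$ as Seifert with base orbifold $D^2(3,b)$ and fibre slope $\alpha_-$. Uniqueness of the Seifert structure (Corollary~\ref{both not (2,2)} rules out $D^2(2,2)$) gives $\alpha_-=\varphi_+$. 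This constructive route avoids the triangle-group case analysis entirely.
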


\pf
If there is an $A_-$-disk of type $X_4^3$ then there is an embedded $A_-$-disk
$D$ of type $X_4^3$ by Lemma~\ref{lem7}.
Let the corners of $D$ be $a,b$ and $c$; see Figure~\ref{Fig6}.

\begin{figure}[!ht]
\centerline{\includegraphics{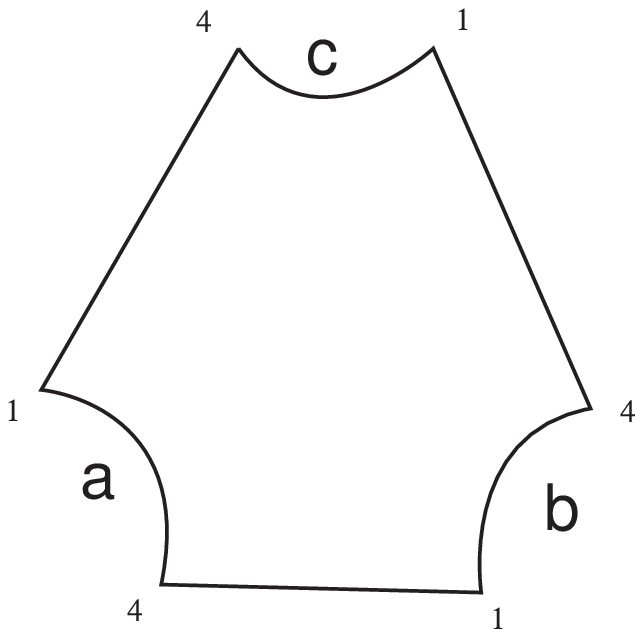}} \caption{ }\label{Fig6}
\end{figure}

Then without loss of generality the edges of $D$ appear in
$\widehat A_-$ as shown in Figure~\ref{Fig7}.

\begin{figure}[!ht]
\centerline{\includegraphics{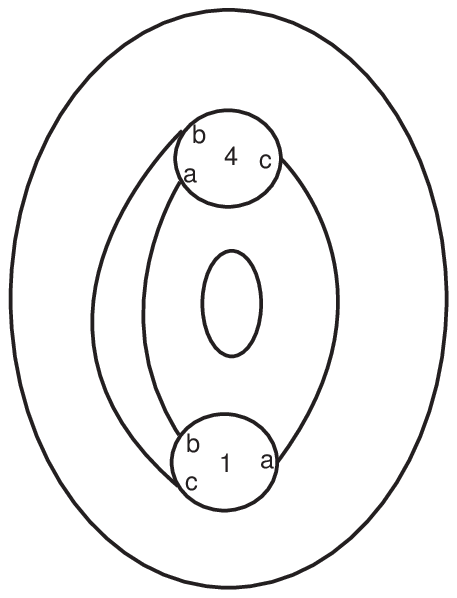}} \caption{ }\label{Fig7}
\end{figure}

Let $V= \widehat A_- \times I \cup H_{(41)} \cup N(D) \subseteq \widehat X^+$.
Then, taking as ``base-point'' a disk in $\widehat A_-$ containing the
two left-hand edges in Figure \ref{Fig7} together with fat vertices $v_1$ and $v_4$,
we get $\pi_1(V) \cong \langle x_4,t : x_4^3 =t\rangle \cong \zed$, where $t$
is represented by $\alpha_-$, the core of $\widehat A_-$.
Hence $V$ is a solid torus and $\alpha_-$ has winding number~3 in $V$.
Let $A'$ be the annulus $\partial V - \Int \widehat A_-$.
By Assumption \ref{assumption minimal}, the torus $(\widehat F- \widehat A_-) \cup A'$
bounds a solid torus $V'$ in $\widehat X^+$.
Therefore $\widehat X^+ = V\cup_{A'} V'$ is a Seifert fibre space with
base orbifold $D^2 (3,b)$, and $\alpha_-$ is the slope of the Seifert
fibre $\varphi_+$.
\qed

\begin{lemma}\label{lem10bis}
If there are disjoint embedded $A_-$-disks of types $X_2^{-1}X_4^2$ and
$X_2^{-2}X_4$ then $\alpha_- = \varphi_+$.
\end{lemma}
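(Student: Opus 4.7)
The plan is to mimic the proof of Lemma~\ref{lem9bis}: construct a compact submanifold $V\subseteq \widehat X^+$ built from a collar of $\widehat A_-$, the two ``hollow handles'' $H_{(23)}$ and $H_{(41)}$ coming from the annular components of $\partial M\cap X^+$, and tubular neighbourhoods of the given disks $D_1$ and $D_2$, and show that $V$ is a solid torus in which $\alpha_-$ has winding number $3$. Assumption~\ref{assumption minimal} then forces the complementary region $(\widehat F\setminus\widehat A_-)\cup (\overline{\partial V\setminus\widehat A_-})$ to bound a solid torus $V'$ in $\widehat X^+$, just as at the end of the proof of Lemma~\ref{lem9bis}, and $\widehat X^+ = V\cup V'$ becomes a Seifert fibration with base orbifold $D^2(3,b)$ and regular fibre slope $\alpha_-=\varphi_+$.

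First I would analyse how the edges of $D_1$ (type $X_2^{-1}X_4^2$) and $D_2$ (type $X_2^{-2}X_4$) sit in the annulus $\widehat A_-$. Being disjoint and embedded, their edges form a disjoint family of embedded arcs in $\widehat A_-$ joining pairs of the meridional disks $\widehat b_j$, constrained by the cyclic ordering condition noted before Lemma~\ref{lem9bis} (the clockwise order of $X_2^{\pm1}$-corners on $b_2$ must match the anticlockwise order on $b_3$, and similarly across $A_{41}$). Together with the specified signed corner types, this pins down the combinatorial pattern up to an overall relabelling, producing a picture analogous to Figure~\ref{Fig7}.

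Next I would set $V = (\widehat A_-\times I)\cup H_{(23)}\cup H_{(41)}\cup N(D_1)\cup N(D_2)$, take a fat base disk in $\widehat A_-$ containing appropriate fat vertices, and run a Van Kampen calculation as in Lemma~\ref{lem9bis}. This presents $\pi_1(V)$ on generators $x_2, x_4, t$ (with $t$ represented by the core of $\widehat A_-$, i.e.\ $\alpha_-$) with exactly two relations, one from each $D_i$, of the form $x_2^{-1}x_4^2=t^{k_1}$ and $x_2^{-2}x_4=t^{k_2}$, where $k_1,k_2\in\mathbb Z$ are read off directly from the edge pattern. The algebraic heart of the argument is the identity
\[
\det\begin{pmatrix} -1 & 2 \\ -2 & 1 \end{pmatrix}=3,
\]
so that eliminating $x_2$ and $x_4$ expresses both as powers of a single element and collapses the presentation to $\mathbb Z$, with $t$ mapping to $\pm 3$ times a generator. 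Combined with the irreducibility of $\widehat X^+$ (Assumption~\ref{assumption irreducible}), this forces $V$ to be a solid torus of the desired shape, and the argument concludes exactly as in Lemma~\ref{lem9bis}.

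The hard part will be the combinatorial bookkeeping: verifying that the ordering constraint leaves only configurations in which $(k_1,k_2)$ yields a relation matrix whose three $2\times 2$ minors have gcd equal to $1$, and in which the resulting winding number of $t$ is $3$ rather than $1$. Patterns giving winding number $1$ would have to be ruled out by the requirement that $\widehat{\dot\Phi{}_1^+}$ is an $\widehat F$-essential annulus of slope $\varphi_+$ distinct from the meridian and by Assumption~\ref{assumption minimal} (forbidding $\widehat F$-isotopies reducing $m$); the same input excludes patterns in which $V$ would fail to be a solid torus. Once the combinatorics are settled, the Seifert structure and the equality $\alpha_-=\varphi_+$ follow immediately.
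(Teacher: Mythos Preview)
Your overall architecture matches the paper's: build $V=(\widehat A_-\times I)\cup H_{(23)}\cup H_{(41)}\cup N(D_1)\cup N(D_2)$, compute $\pi_1(V)$, and identify $V$ as a solid torus with $\alpha_-$ of winding number~$3$. But there is a genuine gap in the algebraic step. The relation coming from $D_2$ is \emph{not} in general of the form $x_2^{-2}x_4=t^{k_2}$ as a word in the free group on $x_2,x_4,t$: the powers of $t$ appear \emph{interspersed} between the $x_i$'s, one for each edge of $D_2$ that winds once around $\widehat A_-$. Your determinant computation $\det\left(\begin{smallmatrix}-1&2\\-2&1\end{smallmatrix}\right)=3$ is really a statement about $H_1(V)$, not $\pi_1(V)$, and $H_1(V)\cong\mathbb Z$ together with irreducibility of $\widehat X^+$ does not force $V$ to be a solid torus.

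The paper handles this by enumerating the six possible edge patterns of $D_1\cup D_2$ in $\widehat A_-$ (after fixing a base disk containing all edges of $D_1$, so that $D_1$ contributes $x_2=x_4^2$). In five of the six cases a direct non-abelian reduction collapses the presentation to $\mathbb Z$ with $t$ a cube. But in the remaining case the $D_2$-relation is $x_2^{-1}tx_2^{-1}tx_4t=1$, and substituting $x_2=x_4^2$ and $z=t^{-1}x_2$ gives $\pi_1(V)\cong\langle x_4,z:x_4^3=z^3\rangle$, which is the fundamental group of a Seifert piece over $D^2(3,3)$, not $\mathbb Z$. This configuration cannot be excluded by combinatorics or Assumption~\ref{assumption minimal} alone; the paper rules it out using the extra input that $\widehat X^+$ already has base orbifold $D^2(2,b)$, which is incompatible with containing such a $V$. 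Your proposal neither isolates this case nor invokes the $D^2(2,b)$ structure, so as written it does not close.
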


\pf
Let $D_1,D_2$ be disjoint embedded $A_-$-disks of types $X_2^{-1}X_4^2$ and
$X_2^{-2}X_4$ respectively.

First note that the union of the edges of $D_1$ and $D_2$ and the fat
vertices $v_1,v_2,v_3,v_4$ cannot be contained in a disk in $\widehat A_-$.
For this would give relations $x_2 = x_4^2$, $x_4 = x_2^2$ in $\pi_1(\widehat X^+)$,
implying $x_2^3 = x_4^3 =1$.
But $x_2$ and $x_4$ are non-trivial (Lemma~\ref{not in P}), so $\pi_1(\widehat X^+)$
would have non-trivial torsion, contradicting the fact that $\widehat X^+$
is a Seifert fibre space with base orbifold $D^2 (2,b)$.

Let the corners of $D_1$ and $D_2$ be $a,b,c$ and $p,q,r$ respectively;
see Figure~\ref{Fig8}.

\begin{figure}[!ht]
\centerline{\includegraphics{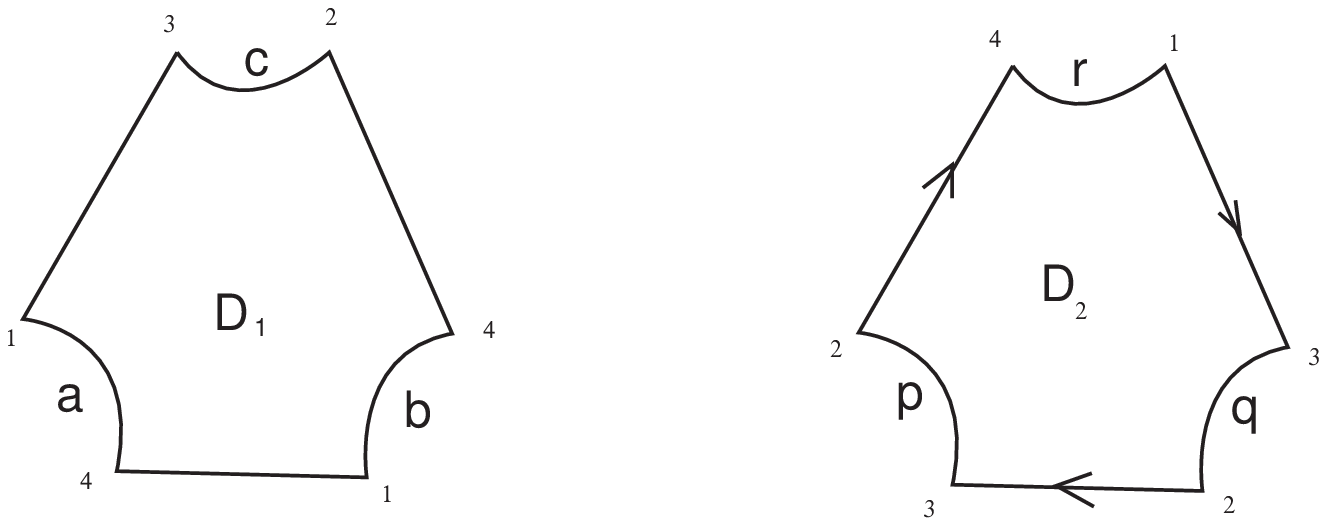}} \caption{ }\label{Fig8}
\end{figure}

Without loss of generality the labels $c,p,q$ appear in this order
anticlockwise around $v_3$.
The possible arrangements of the edges of $D_1$ and $D_2$ in
$\widehat A_-$ are then shown in Figure~\ref{Fig9} (1)--(6).
(For simplicity we have labeled the corners $a,b,c,p,q,r$ only in
Figure~\ref{Fig9}~(1).)

\begin{figure}[!ht]
\centerline{\includegraphics{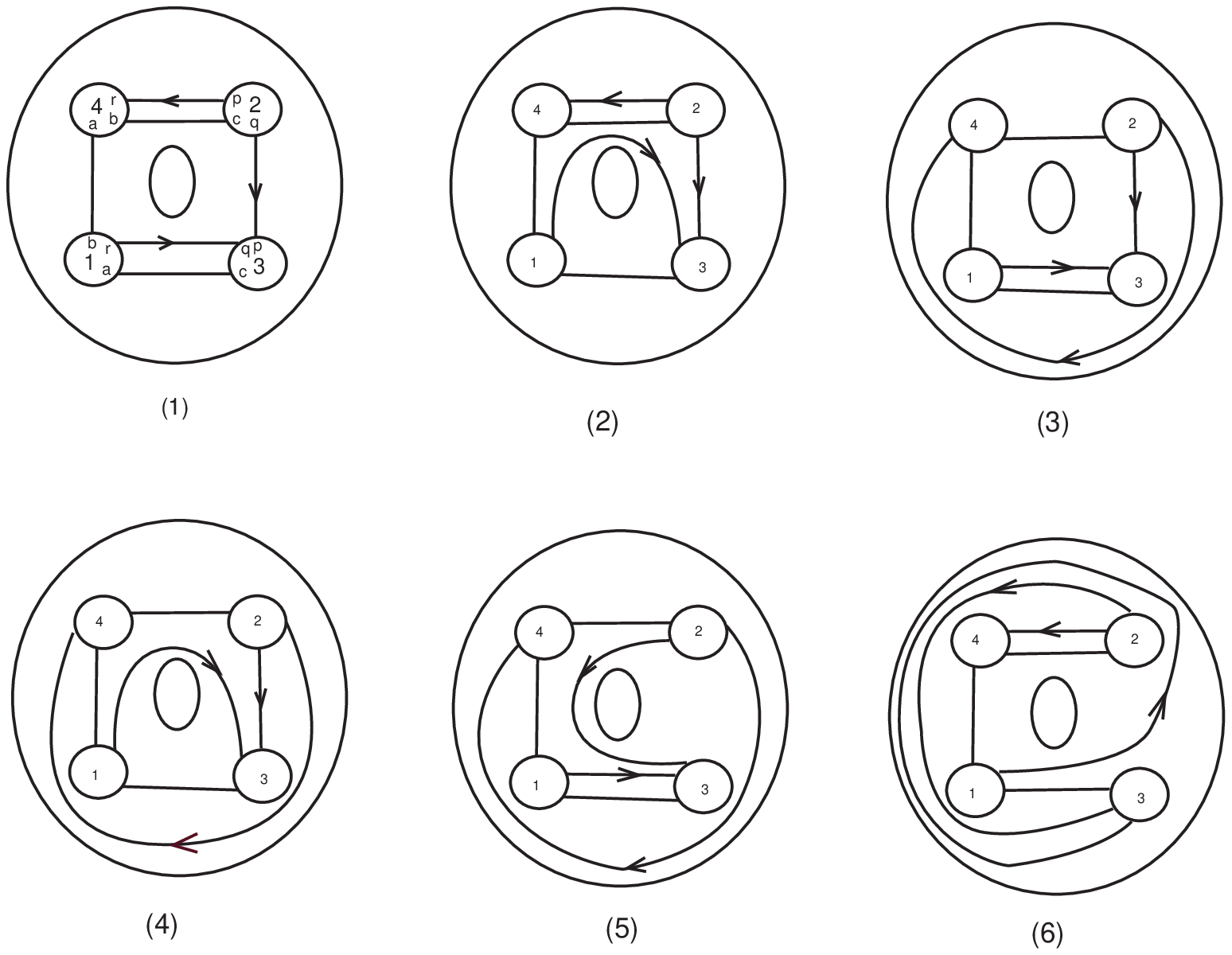}} \caption{ }\label{Fig9}
\end{figure}

Let $V = \widehat A_- \times I \cup H_{(23)} \cup H_{(41)} \cup N(D_1)
\cup N(D_2) \subseteq \widehat X^+$.
Then $\pi_1 (V)$ is generated by $x_2,x_4,t$, where $t$ is represented by
$\alpha_-$, the core of $\widehat A_-$.
We take as ``base-point'' a disk in $\widehat A_-$ containing the edges
of $D_1$ together with the vertices $v_1,v_2,v_3,v_4$.
Then the disk $D_1$ gives the relation $x_2=x_4^2$ in $\pi_1(V)$.
The relation determined by $D_2$ is as follows in cases (1)--(6):
\begin{itemize}
\item[(1)] $x_2^{-1} tx_2^{-1} x_4 =1$
\item[(2)] $x_2^{-1} tx_2^{-1} x_4t =1$
\item[(3)] $x_2^{-1} tx_2^{-1} tx_4 =1$
\item[(4)] $x_2^{-1} tx_2^{-1} tx_4t =1$
\item[(5)] $x_2^{-2} tx_4 =1$
\item[(6)] $x_2^{-2} x_4t =1$
\end{itemize}

In case (1) we get $x_4 = x_2t^{-1} x_2 = x_4^2 t^{-1} x_4^2$, and hence $t= x_4^3$.
Therefore $\pi_1(V)\cong\zed$, generated by $x_4$.
It follows that $V$ is a solid torus and $\alpha_-$ has winding number~3 in $V$.
Hence (see the proof of Lemma~\ref{lem9bis}) $\alpha_- = \varphi_+$.

In case (2) $x_4 = (x_2t^{-1})^2 = z^2$, where $z=x_2t^{-1}$.
Thus $\pi_1(V)$ is generated by $x_2,x_4,z$ with relations $x_2=x_4^2$, $x_4=z^2$.
Therefore $\pi_1(V)\cong\zed$, generated by $z$.
Also $t= z^{-1}x_2 = z^{-1}z^4 =z^3$.
Hence again $\alpha_- = \varphi_+$, as in case~(1).

Cases (3), (5) and (6) are similar and are left to the reader.

In case (4) we have $x_4 = t^{-1} x_2t^{-1} x_2t^{-1}$, so $x_4x_2 = z^3$, where
$z= t^{-1}x_2$.
Since $x_2=x_4^2$, $\pi_1(V)$ has presentation $\langle x_4,z :x_4^3 = z^3\rangle$.
But this contradicts the fact that $\widehat X^+$ is a Seifert fibre space
with base orbifold $D^2 (2,b)$.
\qed

\begin{cor}\label{cor11}
$\alpha_- = \varphi_+$.
\end{cor}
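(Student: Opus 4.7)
The corollary is a quick dichotomy argument built on the three preceding lemmas, so my plan is to dispatch it in two cases according to whether an $A_-$-disk of type $X_4^3$ exists.

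First I would note that the machinery from earlier in the subsection has already produced two $A_-$-trigons whose existence we need not reprove: the face $f_1$ from configuration $C1$ (Figure \ref{C1}) is an $A_-$-disk of type $X_2^{-1}X_4^2$, and by Proposition \ref{prop3} the graph $\Gamma_S$ contains configuration $C2$ (Figure \ref{C2}), giving an $A_-$-disk of type $X_2^{-2}X_4$. These are exactly the inputs needed by the downstream lemmas, and they are in hand unconditionally under the current hypotheses.

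Next I would split on the existence of a type $X_4^3$ $A_-$-disk. If such a disk exists, Lemma \ref{lem9bis} immediately gives $\alpha_- = \varphi_+$, so we are done. If no such disk exists, then the hypothesis of Lemma \ref{lem8} is satisfied, producing disjoint embedded $A_-$-disks of types $X_2^{-1}X_4^2$ and $X_2^{-2}X_4$. These are precisely the inputs required by Lemma \ref{lem10bis}, which then yields $\alpha_- = \varphi_+$ in this remaining case.

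There is no substantive obstacle here, since the real work was done in the preparatory lemmas: the boundary-pattern cutting-and-pasting that upgrades singular trigons to disjoint embedded ones (Lemma \ref{lem8}), and the solid-torus/Seifert computation that identifies the filling slope when the appropriate disks are present (Lemmas \ref{lem9bis} and \ref{lem10bis}). The only thing to check at this stage is that the case split is genuinely exhaustive, which is tautological, and that Lemma \ref{lem8}'s assumption of nonexistence of a type $X_4^3$ disk is exactly the complementary hypothesis. Accordingly, the proof of Corollary \ref{cor11} reduces to a single sentence: either Lemma \ref{lem9bis} applies directly, or Lemma \ref{lem8} feeds into Lemma \ref{lem10bis}.
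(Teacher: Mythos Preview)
Your proof is correct and follows essentially the same approach as the paper's: the dichotomy on whether an $A_-$-disk of type $X_4^3$ exists, invoking Lemma~\ref{lem9bis} in one case and feeding Lemma~\ref{lem8} into Lemma~\ref{lem10bis} in the other. Your first paragraph, noting the existence of the trigons from configurations $C1$ and $C2$, is unnecessary here since Lemma~\ref{lem8} already uses those internally and its statement requires only the nonexistence of a type $X_4^3$ disk as hypothesis --- but this is harmless extra detail, not an error.
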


\pf
This follows from Lemmas \ref{lem8}, \ref{lem9bis} and \ref{lem10bis}.
\qed

We complete the analysis by showing that Corollary \ref{cor11} implies that $\dot{\Phi}_3^+$ contains an $\widehat F$-essential annulus, contrary to our assumptions.

\begin{lemma} \label{no such pair}
There is no pair of disjoint embedded $A_-$-disks of types $X_2^{-1}X_4^2$ and $X_2^{-2}X_4$.
\end{lemma}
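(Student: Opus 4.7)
\textbf{Proof proposal for Lemma \ref{no such pair}.}
I would argue by contradiction. Assume that disjoint embedded $A_-$-disks $D_1, D_2$ of types $X_2^{-1}X_4^2$ and $X_2^{-2}X_4$ exist. By Lemma \ref{lem10bis}, this forces $\alpha_- = \varphi_+$. In particular, $\widehat A_- = \widehat{\tau_-(\dot{\Phi}_1^+)}$ and each component of $\widehat{\dot{\Phi}_1^+}$ are $\widehat F$-essential annuli of the same slope $\varphi_+$, hence mutually parallel (and, after a small isotopy, disjoint) in $\widehat F$.

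The idea is to leverage this slope equality to force $\dot{\Phi}_3^+$ to contain an $\widehat F$-essential annulus, contradicting the standing hypothesis of subsection \ref{proof all tight} that $\breve{\Phi}_3^+$ is a union of tight components. Since $X^-$ is a twisted $I$-bundle, the identity $(\breve{\Phi}_2^-, \dot{\Phi}_2^-) = (\tau_-(\breve{\Phi}_1^+), \tau_-(\dot{\Phi}_1^+))$ combined with \ref{eq surface calculus} yields
\[
\dot{\Phi}_3^+ \;=\; \tau_+\bigl(\dot{\Phi}_1^+ \wedge \dot{\Phi}_2^-\bigr) \;=\; \tau_+\bigl(\dot{\Phi}_1^+ \wedge \tau_-(\dot{\Phi}_1^+)\bigr).
\]
As $\tau_+$ is a homeomorphism, it suffices to show that $\dot{\Phi}_1^+ \wedge \tau_-(\dot{\Phi}_1^+)$ contains a subsurface completing to an $\widehat F$-essential annulus.

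For this, after isotoping $\tau_-(\dot{\Phi}_1^+)$ in $\widehat F$ so that its inner boundary circles lie parallel to those of $\dot{\Phi}_1^+$ and are positioned to maximise overlap, the intersection $\dot{\Phi}_1^+ \cap \tau_-(\dot{\Phi}_1^+)$ must contain a planar piece with two parallel inner boundaries of slope $\varphi_+$. This piece completes to an $\widehat F$-essential annulus. Applying $\tau_+$ then yields the required $\widehat F$-essential annulus component of $\dot{\Phi}_3^+$, contradicting tightness. The disks $D_1, D_2$ themselves enter the argument only through Lemma \ref{lem10bis}; beyond securing the slope equality, the rest is a characteristic subsurface computation.

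The main obstacle: the large essential intersection $\wedge$ is defined via large (non-abelian) maps, whereas an annulus carries abelian $\pi_1$. The subtle point is verifying that the set-theoretic annular overlap corresponds to a component of $\dot{\Phi}_1^+ \wedge \tau_-(\dot{\Phi}_1^+)$ in the proper homotopy-theoretic sense. I would handle this by using the $I$-bundle structure on $\dot{\Sigma}_1^+$ provided by Proposition \ref{sep-seifert} to build, from a large function on a non-annular neighbour of the overlap in $\dot{\Phi}_1^+$, a length-$3$ essential homotopy in $(M, S)$ starting on the $+$-side; the characterisation $(*)$ of $\breve{\Phi}_3^+$ then witnesses the required essential annulus in $\dot{\Phi}_3^+$, completing the contradiction.
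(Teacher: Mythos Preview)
Your overall strategy—show that $\dot{\Phi}_3^+$ contains an $\widehat F$-essential annulus, contradicting the tightness hypothesis of \S\ref{proof all tight}—is exactly the paper's. The difference is in how you extract this from the disks $D_1,D_2$.

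You use only the conclusion $\alpha_-=\varphi_+$ of Lemma~\ref{lem10bis}. The paper instead reaches back into the \emph{proof} of Lemma~\ref{lem10bis} and uses the solid torus $V=\widehat A_-\times I\cup H_{(23)}\cup H_{(41)}\cup N(D_1)\cup N(D_2)$: since $\widehat A_-$ has winding number $3$ in $V$, the annulus $A=\partial V\setminus\widehat A_-$ is vertical in the Seifert structure on $\widehat X^+$. But $A$ lies in $X^+$ (the handles $H_{(23)},H_{(41)}$ are inside $V$), so $A$ is an essential annulus in $(X^+,F)$ with $\partial A=\partial\widehat A_-$. Characteristic submanifold theory then isotopes $\partial\widehat A_-$ in $F$ into $\dot{\Phi}_1^+$, whence $\dot{\Phi}_1^+\cap A_-$ is a pair of $\widehat F$-essential annuli and $\dot{\Phi}_3^+$ inherits one. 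The point is that the annulus $A$ gives a concrete reason, internal to $F$, why the boundary of $A_-$ sits inside $\dot{\Phi}_1^+$.

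Your route via slope equality alone can be made to work, but the argument as written is not quite right. You propose isotoping $\tau_-(\dot{\Phi}_1^+)$ ``in $\widehat F$'' to maximise overlap; but $\wedge$ is defined up to isotopy in $F$, not $\widehat F$, and isotopies across punctures are forbidden. What actually saves you is that the inner boundaries of $\dot{\Phi}_1^+$ and of $A_-$ are all essential simple closed curves of the \emph{same} slope $\varphi_+$, hence can be made disjoint by an isotopy in $F$; the resulting set-theoretic intersection is then a union of punctured annuli, each of which is large (a once-punctured annulus already has free $\pi_1$ of rank $2$). So the obstacle you flag—annuli having abelian $\pi_1$—is not the real issue: the intersection components carry punctures and are automatically large. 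The genuine gap in your write-up is the ``isotope in $\widehat F$'' step; replace it with the same-slope disjointness observation and your argument becomes correct, if somewhat less direct than the paper's.
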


\pf The manifold $V$ given in the proof of
Lemma \ref{lem10bis} is a solid torus such that $\widehat A_-$ is contained in $\partial V$ with winding number $3$
and thus the annulus $A=\partial V\setminus \widehat A_-$ is a vertical annulus in the Seifert fibred
structure of $\widehat X^+$. But $A$ is contained in $X^+$ and thus it is an essential annulus in $X^+$.
So $\partial A=\partial \widehat A_-$ can be isotoped in $F$ into the interior of $\dot{\Phi}_1^+$. Therefore
$\dot{\Phi}_1^+\cap A_-= \dot{\Phi}_1^+\cap \t_-(\dot{\Phi}_1^+)=\dot{\Phi}_1^+\cap \dot{\Phi}_2^-$ is a pair of $\widehat
F$-essential annulus components. Hence $\dot{\Phi}_3^+$ is a  pair of $\widehat F$-essential annulus components.
But this contradicts our assumption that $\dot{\Phi}_3^+$ is  a set of tight components. Thus there is no such pair of embedded disks.
\qed

Thus the situation given by Lemma \ref{lem10bis} cannot arise. Then by Lemma \ref{lem8}, there is an $A_-$-disk of type $X_4^3$. We also have a $A_-$ disk of type $X_2^{-1}X_4^2$ given by configuration $C1$. By Lemma \ref{lem7}, there is an embedded $A_-$-disks $D_1$ of type $X_4^3$ and another $D_2$ of type $X_2^{-1}X_4^2$ or $X_2X_4^2$. In the latter case, we have the relations $\bar x_4^3 = \bar x_2^{-1} \bar x_4^2 = \bar x_2 \bar x_4^2 = 1$ in $\pi_1(\widehat X^+(\alpha_-))$, which imply that $\bar x_2 = \bar x_4 = 1$, contrary to Lemma \ref{not in P}. Thus $D_2$ has type $X_2^{-1}X_4^2$.

\begin{lemma}
 There are disjoint embedded $A_-$-disks $D_1$ and $D_2$ of types $X_4^3$ and $X_2^{-1}X_4^2$ respectively.
\end{lemma}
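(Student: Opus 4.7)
The plan is to mimic the proof of Lemma \ref{lem8}. Start with embedded $A_-$-disks $D_1$ of type $X_4^3$ and $D_2$ of type $X_2^{-1}X_4^2$ (both produced in the discussion immediately preceding this statement), placed in general position and chosen within their respective types to minimize $|D_1 \cap D_2|$. Standard cut-and-paste eliminates circle components of $D_1 \cap D_2$, so we may assume the intersection is a disjoint union of arcs. If this union is empty we are done, so suppose it contains an arc $u$, and let $D_1'\cup D_1''=D_1$ and $D_2'\cup D_2'' = D_2$ be the four subdisks cut off by $u$.

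As in Lemma \ref{lem8}, I would first argue that $u$ cannot be trivial in either $D_1$ or $D_2$. If $u$ is trivial in both, then cutting and pasting produces embedded $A_-$-disks of the same types $X_4^3$ and $X_2^{-1}X_4^2$ with strictly fewer intersections, contradicting minimality; if $u$ is trivial in exactly one, then at least one of the four cut-and-paste products is a monogon, contradicting Lemma \ref{lem5}. Hence $u$ is non-trivial in both disks. The arc $u$ therefore splits the corners of $D_1$ (necessarily) as $(X_4)/(X_4^2)$, and splits the corners of $D_2$ in one of three ways -- $(X_2^{-1})/(X_4^2)$, $(X_2^{-1}X_4)/(X_4)$, or $(X_4)/(X_4X_2^{-1})$ -- each with two sub-cases coming from the two possible orientations of $u$ on $D_2$, exactly as in Figures \ref{Fig5(1)}--\ref{Fig5(2)}.

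In each of the resulting case-pairs I would cut and paste along $u$ and read off the types of the four candidate disks. Lemma \ref{lem5} rules out monogons and Lemma \ref{lem6'} rules out trigons of type $X_2^{\pm 1}$ or $X_4^{\pm 1}$. Whenever one of the candidates has type $X_4^3$ or $X_2^{-1}X_4^2$, we obtain an embedded replacement for $D_1$ or $D_2$ with strictly fewer intersections with the untouched disk, contradicting minimality. For any other type that arises, Lemma \ref{lem7} produces a nearby embedded $A_-$-disk whose associated relation in $\pi_1(\widehat X^+(\alpha_-))$, combined with the relations $\bar x_4^3 = 1$ (from $D_1$) and $\bar x_2 = \bar x_4^2$ (from $D_2$), forces $\bar x_2 = \bar x_4 = 1$, contradicting Lemma \ref{not in P}. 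For instance, a candidate of type $X_2X_4^2$ yields via Lemma \ref{lem7} an embedded disk of type $X_2X_4^2$ or $X_2^{-1}X_4^2$; the first gives $\bar x_2\bar x_4^2 = 1$, which together with $\bar x_4^3=1$ and $\bar x_2=\bar x_4^2$ implies $\bar x_4=1$, a contradiction, so the second applies and again reduces $|D_1\cap D_2|$.

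The main obstacle is the careful bookkeeping across the (at most eight) case-pairs: for each one, I must verify that at least one of the four cut-and-paste products either reduces $|D_1\cap D_2|$ or yields a contradiction via the above algebraic argument, and that the Lemma \ref{lem7} replacement preserves disjointness from the disk being held fixed (so the inductive step on $|D_1\cap D_2|$ goes through). Since the cases and their resolutions parallel those already handled in the proof of Lemma \ref{lem8}, this bookkeeping is purely mechanical, and completing it gives disjoint embedded $A_-$-disks of types $X_4^3$ and $X_2^{-1}X_4^2$ as required.
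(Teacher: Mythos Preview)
Your approach is correct and follows the same template as the paper's proof. The one refinement the paper makes is to choose $u$ to be an \emph{outermost} arc of $D_1\cap D_2$ in $D_1$, so that the single--corner piece $D_1'$ has interior disjoint from $D_2$. This guarantees that any cut--and--paste product using $D_1'$ is already embedded, so there is no need to pass through Lemma~\ref{lem7} (and hence no need to rule out the companion type $W^*$). With this choice the analysis collapses to six cases on $D_2$; in four of them the product is a bigon or trigon whose relation, together with $\bar x_4^3=1$ and $\bar x_2^{-1}\bar x_4^2=1$, forces $\bar x_2=\bar x_4=1$, and in the remaining two one directly obtains an embedded disk of type $X_2^{-1}X_4^2$ or $X_4^3$ with fewer intersections (in the latter case, in fact disjoint from $D_2$). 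Your version without the outermost hypothesis works as well, but the cut--and--paste products may be singular, so each case requires the extra step through Lemma~\ref{lem7} and the attendant check that $W^*$ is excluded; the outermost trick simply buys a shorter and cleaner case analysis.
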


\pf The proof is similar to that of Lemma \ref{lem8}. We may  assume that among all such pairs of embedded
$A_-$-disks of types $X_4^3$ and $X_2^{-1}X_4^2$ respectively, $D_1$ and $D_2$ have been chosen to have
the minimal number of intersection components. If the disks $D_1$ and $D_2$  are disjoint then we are done.
So suppose they intersect. We may assume that they intersect transversely in double arcs, none of which
is trivial in $D_1$ or $D_2$.

Let $u$ be an oriented  double arc which is outermost in $D_1$ with respect to the corner it cuts off (i.e.
the interior of the corner is disjoint from $D_2$) as shown in Figure \ref{ab} (a). Then there are six possibilities for the oriented arc $u$ in $D_2$, as shown in Figure \ref{ab} (b1)-(b6) respectively.

If case (b1) of Figure \ref{ab} occurs, then cutting and pasting  $D_1$ and $D_2$ will produce an embedded
$A_-$-disk  of type $X_2^{-1}X_4^2$ having fewer intersection components with $D_1$ than does $D_2$,
contradicting our assumption on $D_1$ and $D_2$.

\begin{figure}[!ht]
\centerline{\includegraphics{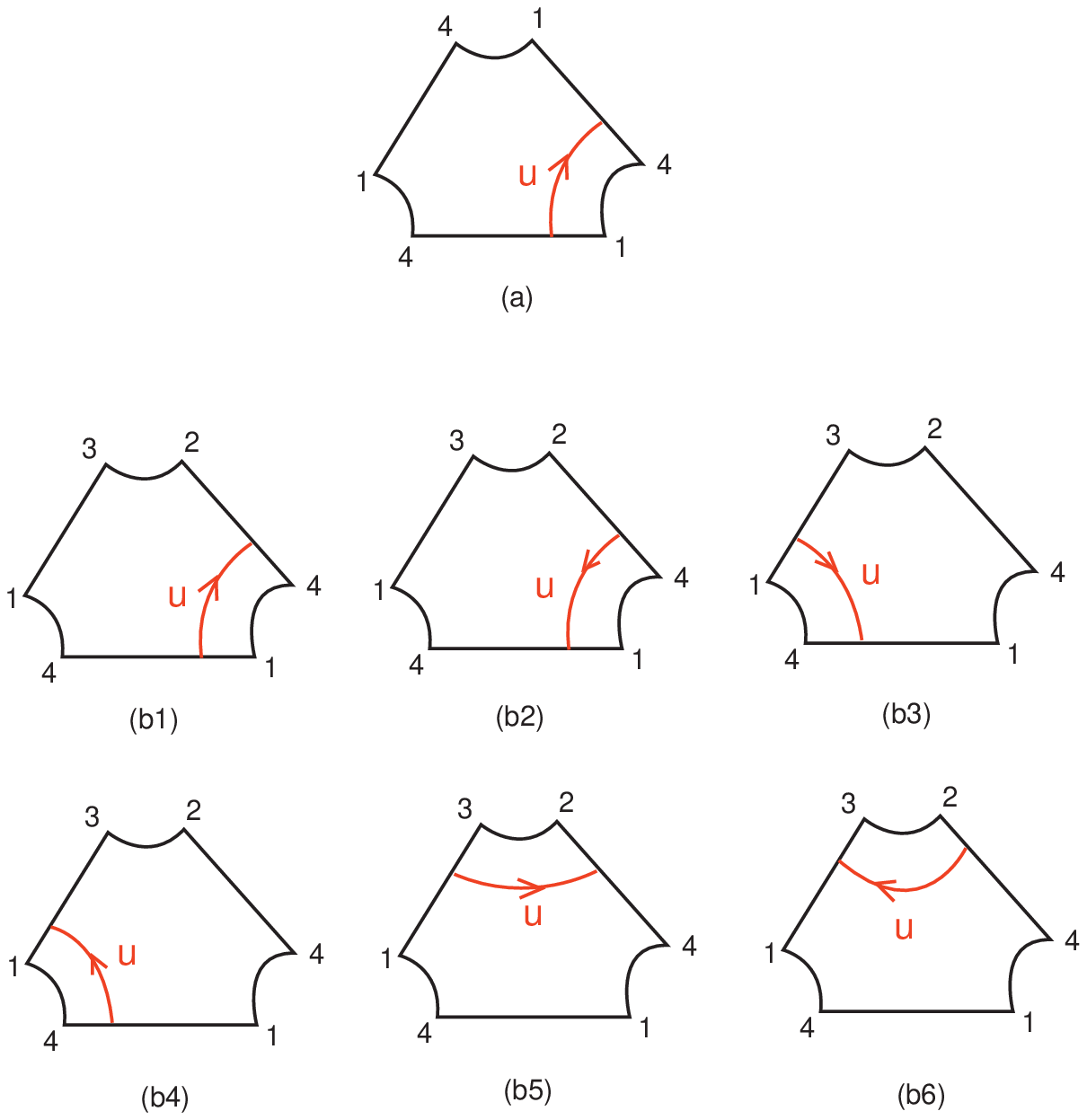}} \caption{ }\label{ab}
\end{figure}

If case (b2) of Figure \ref{ab} occurs, then cutting and pasting will produce an $A_-$-disk
of type $X_4^2$. So in $\pi_1(\widehat X^+(\a_-)$ we have the relation  $\bar x_4^2 = 1$. Together with the
relations $\bar x_4^3 = 1$ and $\bar x_2^{-1} \bar x_4^2 = 1$ this implies that $\bar x_2 = \bar x_4 = 1$ in $\pi_1(\widehat X^+(\a_-))$, a
contradiction.

Cases (b3) and (b4) of Figure \ref{ab} can be treated similarly to the cases (b1) and (b2) respectively.

If case (b5) of Figure \ref{ab} occurs, then cutting and pasting $D_1$ and $D_2$ will produce an $A_-$-disk
of type $X_2^{-1}X_4$. Thus in $\pi_1(\widehat X^+(\a_-)$ we have $\bar x_2^{-1} \bar x_4=1$. Since $\bar x_4^3=1$ and $\bar x_2^{-1} \bar x_4^2=1$, we deduce $\bar x_2 = \bar x_4 = 1$ in $\pi_1(\widehat X^+(\a_-))$, a
contradiction.

Finally, if case (b6) of Figure \ref{ab} occurs, then cutting and pasting will produce an embedded
$A_-$-disk of type $X_4^3$ which is disjoint from $D_2$, giving an obvious contradiction.
\qed

Now let $V$ be a regular neighborhood in $\widehat X^+$ of the set  $\widehat A_-\cup H_{(23)}\cup
H_{(41)}\cup D_1\cup D_2$. As in the proof of Lemma \ref{lem10bis}, the union of the edges of $D_1$ and the fat
vertices $v_1$ and $v_4$ cannot lie in a disk in $\widehat A_-$ and can be assumed to appear as shown in
Figure \ref{Fig7}. Thus two edges of $D_1$ connect the fat vertices $v_1$ and $v_4$ from the left hand side and one
edge of $D_1$ connects $v_1$ and $v_4$ from the right hand side.

Let $e_1$ be the edge of $D_2$ connecting $v_1$ and $v_4$, $e_2$ the edge of $D_2$ connecting $v_1$ and
$v_3$, and $e_3$ the edge of $D_2$ connecting $v_2$ and $v_4$.

Now take as ``base-point'' a disk in $\widehat A_-$ containing the union of the two left-hand side edges
of $D_1$, the fat vertices $v_1, v_2, v_3, v_4$, and the edges $e_2, e_3$. Then the disk $D_1$ will give
the relation
$$x_4^3=t,$$
and the disk $D_2$ will give either the relation $$x_2^{-1}x_4^2=1$$ (when $e_1$ connects $v_1$ and $v_4$
from the left hand side, cf. Figure \ref{Fig7}) or the relation $$x_2^{-1}x_4t^{-1}x_4=1$$ (when $e_1$ connects $v_1$ and
$v_4$ from the right hand side, cf. Figure \ref{Fig7}), where $t$ is represented by $\a_-$. In either case we see
that $V$ has the fundamental group $$\pi_1(V) = \langle x_4,t: x_4^3=t \rangle.$$ So  the manifold $V$  is a solid torus
such that $\widehat A_-$ is contained in $\partial V$ with winding number $3$. Now argue as in the proof of Lemma \ref{no such pair} to see that $\Phi_3^+$ cannot be a set of tight components, yielding the final contradiction.
\qed

\subsubsection{The case $m = 4$, $\Delta(\alpha, \beta) = 6$ and $\overline{\Gamma}_S$ rectangular with edges of weight $6$} \label{rectangular}

As $m=4$, we may assume:
\begin{itemize}
  \item both $\dot{\Phi}_3^+$ and $\dot{\Phi}_5^-$ consist of a pair of tight components, each a twice-punctured disk;
 \vspace{.3cm}  \item $\dot{\Phi}_5^+$ is a collar on $\partial F$, and so contains no large components.
\end{itemize}

Recall that $b_1,...,b_4$ denote the four boundary components $\partial F$ appearing successively along $\partial M$. These four circles cut $\partial M$ into four annuli $A_{i,i+1}, i=1,...,4$, such that $\p
A_{i,i+1}=b_i\cup b_{i+1}$ (indexed mod (4)). We may assume that $\partial X^+ =F\cup A_{2,3}\cup
A_{4,1}$.

As in \S \ref{hexagonal case}, an {\it $n$-gon} (disk) in $X^+$ means a singular disk $D$ with $\partial D \subseteq \partial X^+$ such that $\partial D \cap (A_{2,3} \cup A_{4,1})$ is a set of $n$ embedded essential arcs in $A_{2,3} \cup A_{4,1}$, called the {\it corners} of $D$, and $\partial D\cap F$ is a set of $n$ singular arcs, called the {\it edges} of $D$. Recall that a $1$-gon, $2$-gon or $3$-gon will be called a monogon, bigon or trigon.

There are no monogons in $X^+$ (cf. Lemma \ref{lem5}).

\begin{lemma}\label{Phi5+NoLarge}
There is no bigon $D$ in $X^+$ whose edges $e_1, e_2$ are essential paths in $(\dot\Phi_5^-, \partial F)$ and for which the inclusion $(D, e_1 \cup e_2) \to (X^+, \dot\Phi_5^-)$ is essential as a map of pairs.
\end{lemma}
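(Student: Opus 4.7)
Suppose, for contradiction, that such a bigon $D$ exists. The plan is to use $D$ to force the large essential intersection $\dot{\Phi}_1^+ \wedge \dot{\Phi}_5^-$ to contain a large component. Via the surface-calculus identity \eqref{eq surface calculus} this makes $\dot{\Phi}_6^+ = \tau_+(\dot{\Phi}_1^+ \wedge \dot{\Phi}_5^-)$ contain a large component too. Since $X^-$ is a twisted $I$-bundle, the identities at the end of \S\ref{characteristic subsurfaces} give $\dot{\Phi}_5^+ = \dot{\Phi}_6^+$, so $\dot{\Phi}_5^+$ contains a large component --- contradicting the standing observation from the opening of \S\ref{rectangular} that $\dot{\Phi}_5^+$ is a collar on $\partial F$.

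My first step would be to enclose $D$ in the characteristic $I$-bundle of $(X^+, F)$. Because $D$ is essential as a map of pairs $(D, e_1 \cup e_2) \to (X^+, F)$, the enclosing theorem of \cite{JS, Jo} (in the variant used in \cite[\S 5]{BCSZ1}) permits a homotopy of $D$, through maps of pairs that keep the corners in $\partial M \cap X^+$ and the edges in $F$, so that its image lies in a single component $P$ of $\dot{\Sigma}_1^+$; in particular $e_1, e_2 \subseteq P \cap F \subseteq \Phi_1^+$. Since this homotopy moves the edges only through $F$ rel their endpoints on $\partial F$, each $e_i$ --- still essential in the tight twice-punctured-disk component $\phi$ of $\dot{\Phi}_5^-$ containing it --- is simultaneously homotopic in $F$ into $\Phi_1^+$.

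Second, I would upgrade this arc-level statement to a large-subsurface statement. A preliminary observation is that $e_1$ and $e_2$ cannot be parallel in $\phi$: if they cobounded a rectangle $R \subseteq \phi$, then $D \cup R$ together with an appropriate rectangle in $\partial M \cap X^+$ would form a $2$-sphere in $X^+$, and irreducibility (Assumption~\ref{assumption irreducible}) would bound it by a ball providing an isotopy of $D$ into $\dot{\Phi}_5^-$, contradicting the essentiality of $D$ as a map of pairs. With $e_1, e_2$ non-parallel and essential, a regular neighbourhood $N$ of $e_1 \cup e_2 \cup (\phi \cap \partial F)$ in $\phi$ is a large neat subsurface of $F$. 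The $I$-bundle $P$ enclosing $D$ then carries $N$ into $\Phi_1^+$ by $I$-fibrewise projection, so $N$ lies, up to isotopy in $F$, in $\dot{\Phi}_1^+ \wedge \dot{\Phi}_5^-$ --- the desired large component of the intersection. Combining with \eqref{eq surface calculus} and $\dot{\Phi}_5^+ = \dot{\Phi}_6^+$ completes the contradiction.

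The main obstacle will be this second step, specifically the fibrewise-projection argument that promotes the enclosing of the one-dimensional edges $e_1, e_2$ to an enclosing of the two-dimensional neighbourhood $N$. This requires genuine use of the two-dimensional essentiality of $D$ (not merely of its boundary arcs), together with the tight twice-punctured-disk structure of $\phi$, to guarantee that $N$ can be chosen large and that its inclusion is fully absorbed by the single characteristic $I$-bundle component $P$.
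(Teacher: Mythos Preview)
Your approach is the same as the paper's: show that the bigon forces a large component in $\dot\Phi_1^+ \wedge \dot\Phi_5^-$, apply \eqref{eq surface calculus} to get a large component of $\dot\Phi_6^+ = \dot\Phi_5^+$, and contradict the bullet point at the start of \S\ref{rectangular}.  The paper's proof is terser---it simply asserts that the essential homotopy pushes the edges into $\dot\Phi_1^+$ and that this suffices for the wedge to be large---whereas you try to flesh this out with the neighbourhood $N$.

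That said, your non-parallelism detour is both unnecessary and problematic.  It is unnecessary because a regular neighbourhood in $\phi$ of \emph{one} essential edge $e_1$ together with the outer boundary $\phi\cap\partial F$ is already a pair of pants, hence large; since $t_1^+=0$ gives $\partial F\subset\dot\Phi_1^+$, and the enclosing homotopy carries $e_1$ (through $F$, with endpoints on $\partial F$) into $\dot\Phi_1^+$, this neighbourhood is homotopic into both $\dot\Phi_1^+$ and $\dot\Phi_5^-$, which is all you need.  It is problematic because your $2$-sphere construction does not assemble as claimed: the corners $c_i$ of $D$ run between \emph{distinct} boundary circles $b_j$ and $b_{\tau_+(j)}$ of $F$, whereas the $\partial F$-sides of a parallelism rectangle $R\subset\phi$ each lie on a \emph{single} $b_j$; the resulting boundary curves of $D\cup R$ are not obviously inessential in the annuli of $\partial M\cap X^+$, so no $2$-sphere (and hence no ball giving the claimed isotopy of $D$ into $\dot\Phi_5^-$) is produced without further argument.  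Dropping this step and using a single edge gives a clean proof identical in spirit to the paper's.
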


\pf Suppose otherwise that such a bigon $D$ exists. Then $D$ gives rise to an essential homotopy
between its two edges and  thus the edges of $D$ can be homotoped, relative to their end points, into $\dot\Phi_1^+$. Then the essential intersection $\dot\Phi_5^- \wedge \dot\Phi_1^+$ contains a
large component and therefore so does $\dot\Phi_6^+ = \tau_+(\dot\Phi_5^- \wedge \dot\Phi_1^+)$, contrary to our assumption that $\dot\Phi_5^+$ has no large components.
\qed

Recall that $h$ is the $\pi_1$-injective map from the torus $T$ into $M(\alpha)$
which induces the graph $\Gamma_S$ in $T$. For a subset $s$ of $T$ we use $s^*$ to denote its image under the map $h$.

The image under $h$ of every edge of a rectangular face of $\Gamma_S$ is contained in $\dot\Phi_5^-$. The images of the middle two edges of every family of six parallel edges of $\Gamma_S$ are contained in
$A_-$.

As before the classes $x_j \in \pi_1(\widehat X^+)$ are defined and we use $\bar x_j$ to denote their images in $\pi_1(\widehat X^+(\alpha_-))$.

For notational simplicity, let us write $\dot\Phi_5^- = Q$, a pair
of twice-punctured disks.
A singular disk $D\subset X^+$ whose edges are contained in $Q$ will
be called a {\em $Q$-disk\/}.
An {\em essential\/} $Q$-disk is a $Q$-disk $D$ such that $\partial D$
is essential in $\partial X^+$. The following two lemmas are key to our analysis.

\begin{lemma} \label{PropA}
An essential $Q$-$n$-gon, $n\le4$, is a $4$-gon of type $X_2X_4^{-1}X_4X_4^{-1}$
or $X_4X_2^{-1}X_2X_2^{-1}$.
\end{lemma}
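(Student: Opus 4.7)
The plan is to exploit the fact that every edge of a $Q$-disk lies in $A_-$, so each edge represents a power of $t$ in $\pi_1(\widehat X^+)$, where $t$ is the class of a core of $\widehat A_-$ representing the slope $\alpha_-$. Passing to the quotient $\pi_1(\widehat X^+(\alpha_-))$ kills $t$, so a $Q$-$n$-gon of type $W$ contributes exactly the word relation $W(\bar x_2,\bar x_4)=1$ in this group. The proof is driven by combining this observation with Lemma \ref{not abelian} (non-abelianness of $\pi_1(\widehat X^+(\alpha_-))$) and Lemma \ref{not in P} (no $\bar x_j$ lies in the peripheral subgroup $P$), together with the explicit form $\bar x_j=(ab)^k a^{\pm 1}(ab)^l$ of $\bar x_j$ in $\pi_1(D^2(p,q))$ from Proposition \ref{not peripheral}.

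The case $n=1$ is immediate from Lemma \ref{lem5}. For $n=2$, I would first treat the generic case in which both edges are essential paths in $(Q,\partial F)$: the bigon then realizes an essential homotopy between them in $(M,S)$ beginning on the $+$-side, which is forbidden by Lemma \ref{Phi5+NoLarge}. If some edge is inessential in $(Q,\partial F)$ I would apply the Loop Theorem to replace $D$ by an embedded bigon of the same type; the associated relation has length at most two, and I would verify that each of the possible reduced outcomes (namely $\bar x_j^{\pm 2}=1$ or $\bar x_2^{\pm 1}\bar x_4^{\pm 1}=1$) forces some $\bar x_k$ to lie in $P$ via the explicit form above, contradicting Lemma \ref{not in P}; the degenerate case where the relation is trivial corresponds geometrically to the bigon being isotopic into a collar of $\partial F$, reducing it to a monogon, again ruled out by Lemma \ref{lem5}.

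For $n=3$, I would enumerate unreduced trigon words modulo inversion and the cyclic symmetry, obtaining a short finite list. Each distinct resulting relation of the form $\bar x_2^{\epsilon_1}\bar x_2^{\epsilon_2}\bar x_4^{\epsilon_3}=1$ or similar, when pushed down to $\pi_1(D^2(p,q))$, forces one of $\bar x_2$ or $\bar x_4$ to be peripheral: this is because the relation must hold identically after expressing each $\bar x_j$ as $(ab)^{k_j}a^{\pm 1}(ab)^{l_j}$, and a normal-form argument in the free product $\mathbb Z/p*\mathbb Z/q$ shows that three such elements cannot multiply to a peripheral power unless one of them already is, contradicting Lemma \ref{not in P}.

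For $n=4$ the same enumeration is carried out, but now the list of types is substantially larger. The key point is that the two types $X_2X_4^{-1}X_4X_4^{-1}$ and $X_4X_2^{-1}X_2X_2^{-1}$ both reduce to the relation $\bar x_2\bar x_4^{-1}=1$, i.e.\ $\bar x_2=\bar x_4$, which is compatible with the group structure and so cannot be excluded on algebraic grounds alone. For every other 4-letter word, the normal-form argument above rules it out as in the trigon case. The main obstacle I anticipate is verifying completeness of this enumeration and showing that no other surviving type can occur: one must be careful with cyclic permutations, the identification $W\sim W^{-1}$, and the fact that $W$ is unreduced (so subwords of the form $X_i^{-1}X_i$ represent genuinely non-trivial geometric loops, not algebraic cancellations), and then check case by case using the Seifert presentation of $\pi_1(\widehat X^+)$ with base $D^2(p,q)$ that each forbidden word yields a relation incompatible with Lemma \ref{not in P}.
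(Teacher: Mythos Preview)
Your approach has a fundamental gap: the relation $W(\bar x_2,\bar x_4)=1$ in $\pi_1(\widehat X^+(\alpha_-))$ depends only on the \emph{free reduction} of $W$, not on $W$ itself, so it cannot separate distinct $4$-gon types with the same reduction. For example $X_2X_4X_4^{-1}X_4^{-1}$ (one $X_2$-corner, three $X_4$-corners) reduces to $\bar x_2\bar x_4^{-1}=1$, exactly the relation you concede cannot be excluded algebraically; yet this word is not cyclically equivalent (even up to inversion) to either of the two allowed types. Worse still, words such as $X_2^2X_2^{-2}$ or $X_2X_2^{-1}X_2X_2^{-1}$ reduce to the trivial relation, so your method yields no information at all for them. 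Your proposed normal-form argument in $\mathbb Z/p*\mathbb Z/q$ does not help here, and is in any case stated for the wrong quotient: Proposition~\ref{not peripheral} describes the image in $\pi_1(D^2(p,q))$, but the edge contributions are powers of $t$, which map to nontrivial powers of $ab$ there unless $\alpha_-=\varphi_+$.

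The paper's argument works for a different reason. First, it uses the combinatorial structure of $Q$: since $Q$ is a pair of tight twice-punctured disks with $b_1,b_2$ in different components (Lemma~\ref{lem1'}), every edge of a $Q$-disk must join boundary components lying in a common component of $Q$, which forbids the syllables $X_2X_4$ and $X_4X_2$ (Corollary~\ref{cor1.1}) and forces the adjacency constraint of Lemma~\ref{lem2'}. These two facts already kill types like $X_2X_4X_4^{-1}X_4^{-1}$ and $X_2X_4^{-3}$. Second, the Loop Theorem is applied to obtain an \emph{embedded} $Q$-disk $D$, and embeddedness is used essentially: a trigon of type $X_2^3$ is excluded because $\widehat F\times I\cup H_{(23)}\cup N(D)$ would exhibit a $\mathbb Z/3$ torsion summand in $\pi_1$, and the Case~(A) $4$-gons are excluded by producing either intersecting embedded loops in a single component of $Q$ or a non-separating $2$-sphere in $\widehat X^+$. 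None of these arguments are visible from the relation in $\pi_1(\widehat X^+(\alpha_-))$ alone.
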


\begin{lemma} \label{PropB}
There cannot be essential $Q$-$4$-gons of both types
$X_2X_4^{-1}X_4X_4^{-1}$ and\break $X_4X_2^{-1}X_2X_2^{-1}$.
\end{lemma}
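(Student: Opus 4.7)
The plan is to argue by contradiction in the style of Lemma~\ref{lem8}. Suppose essential $Q$-$4$-gons $D_1$ of type $X_2X_4^{-1}X_4X_4^{-1}$ and $D_2$ of type $X_4X_2^{-1}X_2X_2^{-1}$ both exist. After a small homotopy making $D_1 \cap D_2$ transverse, an innermost-disk argument removes circle components of intersection, so we may assume $D_1 \cap D_2$ is a finite union of arcs. Choose $D_1, D_2$ so that the number of double arcs is minimal over all pairs of essential $Q$-$4$-gons of these two types.

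First I would dispose of trivial double arcs. If an arc $u$ is trivial in both $D_1$ and $D_2$, cutting and pasting along $u$ produces two disks of the same types as $D_1$ and $D_2$ but with strictly fewer intersections, contradicting minimality. If $u$ is trivial in exactly one of the two, say $D_1$, then one of the four cut-and-paste disks is a $Q$-disk with fewer corners than $D_2$, that is with at most $3$ corners; it cannot be a monogon (Lemma~\ref{lem5}), and by Lemma~\ref{PropA} it cannot be an essential $Q$-bigon or trigon, so it must be inessential; the resulting boundary compression feeds back, via the Loop-Theorem technique of Lemma~\ref{lem7}, into an essential $Q$-disk with strictly smaller intersection with $D_1$, again contradicting minimality. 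So we may assume every double arc is non-trivial in both $D_1$ and $D_2$.

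Next I would enumerate the finite list of ways a non-trivial double arc $u$ can sit. Because the corner sequences of $D_1$ and $D_2$ are fixed, the non-trivial partitions $(k_1, 4-k_1)$ and $(k_2, 4-k_2)$ of their corners on either side of $u$, together with the two choices of orientation of $u$ in each disk, give a short finite list of configurations, exactly as in the tabulation in Lemma~\ref{lem8}. For each pair (configuration in $D_1$, configuration in $D_2$) I would write out the corner words of the four cut-and-paste disks and note that at least one has at most $4$ corners. By Lemma~\ref{PropA} that simpler disk must be either inessential, or an essential $Q$-$4$-gon of one of the two classified types. In the first case, the boundary compression plus Lemma~\ref{lem7} yields a nearby embedded essential $Q$-disk contradicting Lemma~\ref{PropA} (for instance an $X_2$-monogon, an $X_2X_4^{-1}$-bigon, or an $X_2X_4^{-1}X_4$-trigon is readily ruled out). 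In the second case, the new essential $Q$-$4$-gon is disjoint from one of $D_1, D_2$ after a further Loop-Theorem step, and replacing $D_1$ or $D_2$ by it yields a pair with strictly fewer double arcs, contradicting minimality.

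The hard part will be the combinatorial bookkeeping: for each of the roughly dozen configurations one must (i) compute the corner word of each cut-and-paste disk paying attention to both orientations of $u$; (ii) verify at each step that essentiality is preserved, by applying Lemma~\ref{lem7} whenever the produced word has nonzero exponent sum in $X_2$ or $X_4$, and otherwise using Lemma~\ref{lem5} and Lemma~\ref{PropA} to exclude the resulting short-corner types; and (iii) when the new disk is of a classified type, verify that it really has smaller intersection with the retained disk (this requires that $u$ was chosen outermost in $D_1$, in analogy with the argument at the end of Lemma~\ref{lem8}). No individual case is deep, but exhausting them cleanly, with essentiality tracked throughout, is the main technical burden.
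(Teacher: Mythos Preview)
Your strategy is the same as the paper's --- minimize intersections and cut-and-paste along a double arc --- but you are missing the crucial terminal step. Your induction shows that in every case either you derive a contradiction or you can strictly reduce the number of double arcs. That forces the minimum to be zero, i.e.\ disjoint embedded $Q$-$4$-gons of the two types exist. At that point you have not reached a contradiction; you have merely arrived at the base case and said nothing about why it is impossible. The paper handles this with a separate short lemma (Lemma~\ref{lem6''}): a disk of type $X_2X_4^{-1}X_4X_4^{-1}$ has a $31$-edge, forcing $b_1$ and $b_3$ into the same component of $Q$; it also has a $1$-loop while a disk of type $X_4X_2^{-1}X_2X_2^{-1}$ has a $3$-loop, and in that component of $Q$ a $1$-loop and a $3$-loop must intersect. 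Without this (or an equivalent observation) your minimality argument has no endpoint.

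There is a second, related gap in the case analysis. The paper's reduction works cleanly precisely because of the partition $b_1,b_3\in Q_1$, $b_2,b_4\in Q_2$: every double arc has one endpoint on a $Q_1$-edge and one on a $Q_2$-edge, which pins down how $u_1$ and $u_2$ are identified and guarantees that one of the cut-and-paste pieces is again of type $X_2X_4^{-1}X_4X_4^{-1}$ (or $X_4X_2^{-1}X_2X_2^{-1}$). Your outline does not invoke this constraint, and without it the ``roughly dozen'' configurations you mention are not obviously exhaustive nor do they obviously yield a disk of one of the two classified types. The bookkeeping you describe as ``not deep'' in fact depends on this structural input to be feasible.
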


The proofs of these two lemmas will be given after we develop several necessary background results.

All the edges of $\bar \Gamma_S$ have weight 6.
We may assume without loss of generality that there is a family of
parallel edges of $\Gamma_S$ at one end of which the label sequence is
${1\ 2\ 3\ 4\ 1\ 2}$.

\begin{lemma}\label{lem1'}
$b_1$ and $b_2$ belong to different components of $Q$.
\end{lemma}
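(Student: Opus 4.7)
\smallskip
\noindent\emph{Proof plan.}
The plan is to exploit the free involution $h_3^+$ on the pair of tight components of $\dot{\Phi}_3^+$ and transport the conclusion across the identification $Q = \tau_-(\dot{\Phi}_3^+)$. The latter holds because $X^-$ is a twisted $I$-bundle, so by the twisted $I$-bundle formulas in \S \ref{characteristic subsurfaces} we have $\dot{\Phi}_5^- = \dot{\Phi}_4^- = \tau_-(\dot{\Phi}_3^+)$, and $\tau_-$ is defined on all of $F = \dot{\Phi}_1^-$.

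I would first determine the action of $h_3^+$ on $\partial F = b_1 \cup b_2 \cup b_3 \cup b_4$. Since $\partial X^+ = F \cup A_{23} \cup A_{41}$ and $\partial X^- = F \cup A_{12} \cup A_{34}$, the involutions $\tau_+$ and $\tau_-$ restrict to $(b_2\,b_3)(b_4\,b_1)$ and $(b_1\,b_2)(b_3\,b_4)$ respectively on $\partial F$. Writing $h_3^+$ as the composition $\tau_+ \circ \tau_- \circ \tau_+$ of three alternating $\tau$-involutions preserving $\dot{\Phi}_3^+$ (the natural alternating composition of \S \ref{characteristic subsurfaces}), a short direct computation yields $h_3^+|_{\partial F} = (b_1\,b_2)(b_3\,b_4)$.

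Since by hypothesis $\dot{\Phi}_3^+$ consists of exactly two tight components $P_0, P_1$, and by Proposition~\ref{tight not invariant} $h_3^+$ sends each tight component to a disjoint tight component, we must have $h_3^+(P_0) = P_1$. Combining this with the computed action on $\partial F$ shows that $b_1$ and $b_2$ lie in different components of $\dot{\Phi}_3^+$ (as do $b_3$ and $b_4$).

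Finally, suppose for contradiction that $b_1, b_2$ lie in the same component $Q_i$ of $Q = \tau_-(\dot{\Phi}_3^+) = \tau_-(P_0) \sqcup \tau_-(P_1)$. Since $\tau_-$ swaps $b_1$ and $b_2$, applying $\tau_-^{-1}$ to $Q_i = \tau_-(P_i)$ gives $\{b_1, b_2\} = \tau_-^{-1}(\{b_1, b_2\}) \subseteq P_i$, contradicting the previous paragraph. The main obstacle is the careful verification that $h_3^+|_{\partial F} = (b_1\,b_2)(b_3\,b_4)$, which requires tracking the correct alternating composition defining $h_3^+$ and the pairings induced by the annular pieces of $\partial M$ on each side of $F$; once this is in hand, the combinatorial conclusion is immediate.
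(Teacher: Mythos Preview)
Your proof is correct and rests on the same key mechanism as the paper's: the free involution $h_{2j+1}^\epsilon$ swaps the two tight components (Proposition~\ref{tight not invariant}), and one computes its action on $\partial F$ from the annulus pairings. The paper works directly with $h_5^-$ on $Q=\dot\Phi_5^-$ and realizes the homotopy concretely through the five bigons in a weight-$6$ parallel family of $\Gamma_S$, whereas you work with $h_3^+$ on $\dot\Phi_3^+$ and then push forward by $\tau_-$ using the twisted $I$-bundle identities $\dot\Phi_5^-=\dot\Phi_4^-=\tau_-(\dot\Phi_3^+)$. Since $h_5^-=\tau_-\,h_3^+\,\tau_-$, the two arguments are conjugate versions of the same computation; your abstract calculation of $h_3^+|_{\partial F}=(b_1\,b_2)(b_3\,b_4)$ makes no appeal to the graph and is a mild streamlining of the paper's route.
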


\begin{proof}
Suppose otherwise so that there is a rectangle face of $\Gamma_S$ as depicted in Figure \ref{rect1}.

\begin{figure}[!ht]
\centerline{\includegraphics{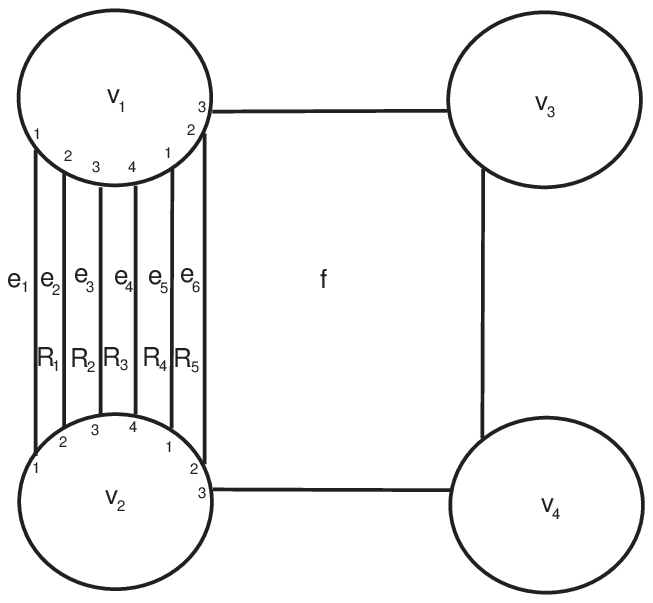}} \caption{ }\label{rect1}
\end{figure}

Here $e_1,e_2, e_3, e_4, e_5, e_6$ is a family of six successive
parallel edges which connect vertices $v_1$ and $v_2$ and whose label-permutation is the identity.
Let $R_i$ be the bigon face between $e_i, e_{i+1}$ for $i=1,...,5$ and $R$ the disk $R_1 \cup ... \cup R_5$. We know $R_2^*, R_4^*, f^* \subseteq X^+$ while
$R_1^*, R_3^*, R_5^* \subseteq X^-$.

There is a product structure $(R_i, e_i, e_{i+1}) = (e_i \times I, e_i \times \{0\}, e_i \times \{1\})$ such that for each $x \in e_i$, $(\{x\} \times I)^*$ is an $I$-fibre of $\Sigma_1^{(-1)^i}$. Thus $\tau_{(-1)^i}(e_i^*) = e_{i+1}^*$, so the free involution $h_5^-: \dot\Phi_5^- \to \dot\Phi_5^-$ (see the end of \S \ref{characteristic subsurfaces}) sends $e_1^*\cup b_1$ to $e_{6}^*\cup b_2$.
Proposition \ref{tight not invariant} then shows that $b_1$ and $b_2$ lie in different components of $\dot{\Phi}_5^-$. Hence $b_3$ and $b_4$ also lie in different components of $\dot{\Phi}_5^-$.
\end{proof}

It follows that $b_3$ and $b_4$ also belong to different components of $Q$.

If $(D,\partial D) \subset (X^+,\partial X^+)$, we will denote the type of
$D$ (see \S \ref{hexagonal case}) by $W(D)$.
Recall that $W(D)$ is defined up to cyclic permutation and inversion.

\begin{cor}\label{cor1.1}
Let $D$ be a $Q$-disk.
Then $W(D)$ does not contain the syllable $X_2X_4$ or $X_4X_2$.
\end{cor}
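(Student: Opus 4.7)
The plan is to deduce this immediately from Lemma \ref{lem1'} and its observation that $b_3,b_4$ also lie in different components of $Q$. The key point is to read off what a syllable $X_2X_4$ or $X_4X_2$ forces on a single edge of $D$ lying in $Q$.

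First I would recall the conventions. A corner of type $X_2$ (resp.\ $X_2^{-1}$) is an essential arc in $A_{2,3}$ running from $b_2$ to $b_3$ (resp.\ $b_3$ to $b_2$), while a corner of type $X_4$ (resp.\ $X_4^{-1}$) is an essential arc in $A_{4,1}$ running from $b_4$ to $b_1$ (resp.\ $b_1$ to $b_4$). Two consecutive corners in the cyclic word $W(D)$ are joined by an edge of $D$, i.e.\ a subarc of $\partial D\cap F$, whose endpoints lie on the two boundary components dictated by the end of the first corner and the start of the second. Since $D$ is a $Q$-disk, every such edge is contained in $Q=\dot\Phi_5^-$, and in particular in a single component of $Q$.

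Next I would analyse the two forbidden syllables. A syllable $X_2X_4$ produces an edge of $D$ running from $b_3$ to $b_4$ inside $Q$; a syllable $X_4X_2$ produces an edge of $D$ running from $b_1$ to $b_2$ inside $Q$. By Lemma \ref{lem1'} the boundary components $b_1$ and $b_2$ lie in different components of $Q$, and the same lemma (together with the remark immediately after it) shows that $b_3$ and $b_4$ also lie in different components of $Q$. Hence no edge of $D$ contained in $Q$ can connect $b_3$ to $b_4$ or $b_1$ to $b_2$, contradicting the existence of either syllable.

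There is no real obstacle here: the corollary is a purely combinatorial consequence of Lemma \ref{lem1'}, once one unwinds the conventions defining $W(D)$. The only thing to be careful about is that the definition of $W(D)$ is up to cyclic permutation and inversion, so I should note that $X_2X_4$ and $X_4X_2$ are interchanged by cyclic rotation and that their inverses $X_4^{-1}X_2^{-1}$ and $X_2^{-1}X_4^{-1}$ produce the same pairs of endpoint labels $(b_4,b_3)$ and $(b_2,b_1)$, so the ruling out of these two syllables is symmetric under the ambiguities in $W(D)$.
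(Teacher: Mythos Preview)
Your proof is correct and follows exactly the same approach as the paper's one-line argument: the syllable $X_2X_4$ forces a $34$-edge and $X_4X_2$ forces a $12$-edge in $\partial D$, both of which are ruled out by Lemma~\ref{lem1'} (and the remark following it). You have simply written out the conventions more explicitly.
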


\begin{proof}
These give rise to a 34- or 12-edge in $\partial D$, respectively.
\end{proof}

\begin{lemma}\label{lem2'}
Let $D$ be a $Q$-disk.
Then in $W(D)$ no $Z\in \{ X_2^{\pm1},X_4^{\pm1}\}$ can be followed or preceded
by two distinct letters $\ne Z^{-1}$.
\end{lemma}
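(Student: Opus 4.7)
The plan is to combine Lemma \ref{lem1'} with the fact that a letter $Z \in \{X_2^{\pm 1}, X_4^{\pm 1}\}$ is uniquely determined by its starting boundary circle, and also by its ending boundary circle. Explicitly, $X_2, X_2^{-1}, X_4, X_4^{-1}$ start at $b_2, b_3, b_4, b_1$ and end at $b_3, b_2, b_1, b_4$ respectively. Moreover, by Lemma \ref{lem1'} and its symmetric analogue applied to $b_3, b_4$, the two components $Q_1, Q_2$ of $Q = \dot\Phi_5^-$ partition $\{b_1, b_2, b_3, b_4\}$ into two pairs, each containing one circle from $\{b_1, b_2\}$ and one from $\{b_3, b_4\}$. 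In particular, for every $j$ there is a unique $b_* = b_*(j)$ (distinct from $b_j$) such that $b_j$ and $b_*$ lie in the same component of $Q$.

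With this in place, the argument for the ``followed by'' half of the lemma would run as follows. Suppose, for contradiction, that $Z$ is followed in $W(D)$ by two distinct letters $L_1, L_2$, neither equal to $Z^{-1}$. For each $i$, the edge of $D$ between the $Z$-corner and the $L_i$-corner is a connected singular arc in $Q$ running from the endpoint $b_{\mathrm{end}(Z)}$ of the $Z$-corner to the starting point $b_{\mathrm{start}(L_i)}$ of the $L_i$-corner, and hence it lies in a single component of $Q$. That component meets $\partial F$ only in $b_{\mathrm{end}(Z)}$ and in $b_* = b_*(\mathrm{end}(Z))$, so $b_{\mathrm{start}(L_i)} \in \{b_{\mathrm{end}(Z)}, b_*\}$. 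The alternative $b_{\mathrm{start}(L_i)} = b_{\mathrm{end}(Z)}$ forces $L_i = Z^{-1}$, which is excluded; therefore $b_{\mathrm{start}(L_1)} = b_{\mathrm{start}(L_2)} = b_*$, and by the uniqueness noted above this gives $L_1 = L_2$, a contradiction.

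The ``preceded by'' case is symmetric: the edge of $D$ immediately before the $Z$-corner runs from $b_{\mathrm{end}(L_i)}$ to $b_{\mathrm{start}(Z)}$, and the same argument, now using the identification of letters by their ending circles, forces $L_1 = L_2$. There is no real obstacle beyond correctly invoking Lemma \ref{lem1'}; the bulk of the work has already been done there in showing that each component of $Q$ contains exactly two of the $b_j$'s, one from $\{b_1, b_2\}$ and one from $\{b_3, b_4\}$. Without that structural input, a component of $Q$ could contain three or four of the $b_j$'s and the above dichotomy for $b_{\mathrm{start}(L_i)}$ would fail.
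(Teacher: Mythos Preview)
Your proof is correct and follows essentially the same approach as the paper: both arguments hinge on the fact that each component of $Q$ contains exactly two of the $b_j$. The paper phrases the contradiction as ``three boundary components in one component of $Q$'', while you phrase it as ``$b_{\mathrm{start}(L_1)} = b_{\mathrm{start}(L_2)} = b_*$ forces $L_1 = L_2$''; these are equivalent. One minor remark: you invoke Lemma~\ref{lem1'} to pin down the specific pairing of the $b_j$ among the components of $Q$, but this is not actually needed here --- the fact that each component of $Q$ is a twice-punctured disk (stated at the start of \S\ref{rectangular}) already gives that each component meets exactly two of the $b_j$, which is all your argument uses.
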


\begin{proof}
If $Z$ were followed by two distinct letters $\ne Z^{-1}$ the same component
of $Q$ would contain three boundary components $b_i$.
For example, if $W(D)$ contained syllables $X_2X_2$ and $X_2X_4^{-1}$ then
$\partial D$ would contain a 32-edge and a 31-edge, implying that
$b_1,b_2$ and $b_3$ belong to the same component of $Q$.
\end{proof}

\begin{proof}[Proof of Lemma \ref{PropA}]
Let $E$ be an essential $Q$-$n$-gon, $n\le 4$.
By the Loop Theorem we get an essential embedded $Q$-disk $D$, with
$\{\text{corners of }D\} \subset \{\text{corners of }E\}$.
\renewcommand{\qed}{}
\end{proof}

\begin{lemma}\label{lem3}
$D$ is a $4$-gon.
\end{lemma}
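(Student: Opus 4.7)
The plan is to rule out $D$ being a monogon, bigon, or trigon. Monogons are immediately excluded by Lemma~\ref{lem5}. For the bigon case, suppose $D$ has two edges $e_1, e_2\subset Q=\dot\Phi_5^-$. Essentiality of $\partial D$ in $\partial X^+$ forces each $e_i$ to be an essential arc in $F$ rel $\partial F$: otherwise a half-disk in $F$ bounded by an inessential $e_i$ and an arc in $\partial F$ would allow an isotopy of $D$ reducing the corner count to one, contradicting Lemma~\ref{lem5}. Next, the inclusion $(D, e_1\cup e_2)\hookrightarrow(X^+,\dot\Phi_5^-)$ is essential as a map of pairs, for if $D$ could be pushed rel $\partial D$ into $\dot\Phi_5^-$ then $\partial D$ would be null-homotopic in $\dot\Phi_5^-\cup A_{23}\cup A_{41}\subset\partial X^+$, contradicting essentiality. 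The bigon thus furnishes exactly the configuration forbidden by Lemma~\ref{Phi5+NoLarge}.

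For the trigon case, Corollary~\ref{cor1.1} and Lemma~\ref{lem2'} restrict $W(D)$, up to cyclic permutation and inversion, to one of $X_2^3$, $X_4^3$, $X_2^2 X_2^{-1}$, $X_4^2 X_4^{-1}$. The mixed types $X_2^2 X_2^{-1}$ and $X_4^2 X_4^{-1}$ are eliminated by a cancellation argument: the $Z\cdot Z^{-1}$ syllable yields an edge $e$ with both endpoints on a single boundary component $b_i$, flanked by two oppositely oriented corners in one of the annuli $A_{23}$, $A_{41}$. An arc in this annulus joining the adjacent endpoints of these two corners, taken together with $e$, bounds a subdisk of $D$ which after a small isotopy in the annulus is a monogon or an essential bigon, contradicting Lemma~\ref{lem5} or the bigon case above.

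For the pure types $X_2^3$ and $X_4^3$, say $W(D)=X_2^3$, the three edges are disjoint essential arcs from $b_3$ to $b_2$ in the same component $Q_0$ of $Q$, which is a pair of pants. In a pair of pants, any two disjoint essential arcs from $b_2$ to $b_3$ must be parallel (they cobound a rectangle missing the inner puncture), so by pigeonhole two of the three edges, say $e_1$ and $e_2$, cobound a rectangle $R\subset Q_0$. The corner $c_2\subset A_{23}$ separating them in $\partial D$, together with a parallel arc $c_2'\subset A_{23}$, cobounds a rectangle $R'\subset A_{23}$, and $\Delta := R\cup R'\subset\partial X^+$ is an embedded disk with boundary $e_1\cup c_2'\cup e_2\cup c_2$. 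Gluing $\Delta$ to $D$ along $e_1\cup c_2\cup e_2$ yields an embedded disk $D'\subset X^+$ whose boundary is homotopic to $\partial D$ in $\partial X^+$ (hence essential), with cyclic boundary word $c_1 c_2' c_3 e_3$; elimination of the U-turns at $b_3$ and $b_2$ in $A_{23}$ by small isotopies in $\partial X^+$ collapses the three consecutive corners to a single $X_2$-corner, reducing $D'$ to a bigon or monogon, which contradicts the earlier steps. The main obstacle is this last case: while the pigeonhole observation in the pair of pants is clean, the cut-and-paste across $\Delta$ followed by U-turn eliminations in $A_{23}$ requires careful bookkeeping to ensure the reduced disk remains embedded and essential, with strictly fewer corners; if an intermediate configuration becomes immersed, a further application of the Loop Theorem may be needed to restore embeddedness before appealing to Lemma~\ref{lem5} or to the bigon argument.
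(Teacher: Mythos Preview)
Your monogon and bigon arguments are correct and match the paper's. Your reduction of the trigon case, via Corollary~\ref{cor1.1} and Lemma~\ref{lem2'}, to the types $X_2^3$, $X_2^2X_2^{-1}$ (and their $X_4$ analogues) is also correct. The gaps are in how you dispose of these two types.

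For $X_2^2X_2^{-1}$: the assertion that the $3$-loop edge $e$ together with an arc in $A_{23}$ ``bounds a subdisk of $D$'' is not justified. The edge $e$ lies in $F$, the proposed arc lies in $A_{23}$, and their union is a loop in $\partial X^+$, not the boundary of any subdisk of $D$. If $e$ is essential in the pair of pants $Q_0$ there is no cancellation move of the kind you describe.

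For $X_2^3$: the disk $\Delta = R\cup R'$ is not well-formed. Both $R\subset Q_0$ and $R'\subset A_{23}$ carry boundary arcs on each of $b_2$ and $b_3$; matching both pairs yields an annulus, matching one leaves a hexagon, and in neither case is $\partial\Delta = e_1\cup c_2'\cup e_2\cup c_2$ as claimed. Your own hedging at the end reflects this.

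The paper's approach is entirely different and avoids cut-and-paste. It exploits the fact that $Q=\dot\Phi_5^-$ consists of \emph{tight} components: the component $Q_0$ containing $b_2,b_3$ completes to a disk $\widehat Q_0\subset\widehat F$. Choosing the base point and the paths $\eta_j$ inside $\widehat Q_0$, every edge-class in $\pi_1(\widehat F)$ is trivial. For $W(D)=X_2^3$ one forms $U=\widehat F\times I\cup H_{(23)}\cup N(D)\subset\widehat X^+$ and obtains $\pi_1(U)\cong\pi_1(\widehat F)*\mathbb Z/3$, forcing a closed connected summand of $\widehat X^+$ with fundamental group $\mathbb Z/3$, which is impossible. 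The case $|\ep_{X_2}(D)|=1$ yields the relation $x_2=1$ in $\pi_1(\widehat X^+)$ by the same mechanism, contradicting Proposition~\ref{not peripheral}. It is the tightness of $Q_0$, not any surface surgery, that drives the trigon case.
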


\begin{proof}
$D$ cannot be a monogon, since then $D$ would be a boundary-compressing
disk for $F$.

$D$ cannot be a bigon by Lemma \ref{Phi5+NoLarge}.

So suppose $D$ is a trigon.
It is easy to see that Corollary~\ref{cor1.1} and Lemma~\ref{lem2'} imply
that $D$ contains only, say, $X_2$-corners.
By Lemma \ref{lem8} $|\ep_{X_2}(D)| \ne1$, so $W(D) = X_2^3$.

Let $U = \widehat F\times I \cup H_{(23)}\cup N(D) \subset \widehat X^+$.
Then $\pi_1 (U) \cong \pi_1 (\widehat F) * \zed/3$.
It follows that $U$, and hence $\widehat X^+$, has a closed summand with
fundamental group $\zed/3$, a contradiction.
\end{proof}

There are three possibilities: $D$ has either
\begin{itemize}
\item[(A)] {\em all $X_2$-corners (or all $X_4$-corners)};
\item[(B)] {\em two $X_2$-corners and two $X_4$-corners};
\item[(C)] {\em one $X_2$-corner and three $X_4$-corners (or vice versa)}.
\end{itemize}

\begin{lemma}\label{lem4'}
Case (A) is impossible.
\end{lemma}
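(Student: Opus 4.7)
The strategy is to enumerate the possible cyclic types $W(D)$ for an essential embedded $Q$-$4$-gon $D$ with all $X_2$-corners (the all-$X_4$ case is symmetric) and derive a contradiction in each. First I would use Lemma~\ref{lem1'} to distribute the four punctures $b_1,\ldots,b_4$ across the two tight twice-punctured disk components of $Q = \dot\Phi_5^-$. Since each edge of $D$ is an arc in one component of $Q$ with both endpoints on a single component of $\partial F$, tracing the endpoint matching at each corner around $\partial D$ restricts $W(D)$ to a short combinatorial list: in the sub-configuration where $b_2$ and $b_3$ lie in different components of $Q$, the corners must alternate, forcing $W(D) = X_2X_2^{-1}X_2X_2^{-1}$; in the other sub-configuration the cyclic types $X_2^{\pm 4}$, shifts of $X_2^3X_2^{-1}$, and $X_2^2X_2^{-2}$ are also admissible.

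For each type with nonzero exponent sum $k := \epsilon_{X_2}(W(D))$, I would adapt the argument of Lemma~\ref{lem3}. Set $U = (\widehat F \times I) \cup H_{(23)} \cup N(D) \subseteq \widehat X^+$; since $H_{(23)}$ is a $3$-ball attached to the torus collar $\widehat F \times I$ along two meridian disks, $\pi_1((\widehat F \times I) \cup H_{(23)}) \cong \pi_1(\widehat F) * \langle x_2 \rangle$. Attaching the $2$-handle $N(D)$ imposes the relation given by $W(D)$ in these generators; a careful basepoint choice in the appropriate component of $Q$, together with the embeddedness of the edges of $D$ in the planar tight components of $Q$, reduces this relation to $x_2^{|k|} = 1$ after absorbing the edge contributions into the $\pi_1(\widehat F)$ factor, yielding $\pi_1(U) \cong \pi_1(\widehat F) * \mathbb{Z}/|k|$. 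Since $|k| \geq 2$, the Kneser--Milnor decomposition for compact $3$-manifolds with boundary splits $U$ as a nontrivial connected sum with closed lens-space summand $L(|k|,q)$, producing an essential $2$-sphere in $\widehat X^+$. This contradicts the irreducibility of $\widehat X^+$, which follows from Assumption~\ref{assumption irreducible} together with the incompressibility of $\widehat F$ in $M(\beta)$.

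The remaining crux case is the forced type $W(D) = X_2X_2^{-1}X_2X_2^{-1}$ from the first sub-configuration (and the other zero-exponent-sum types). Here the torsion argument fails, and I would instead perform a bigon reduction. The pair $\{e_1, e_3\}$ of edges lying in the component of $Q$ attached to $b_2$, and the pair $\{e_2, e_4\}$ in the other component attached to $b_3$, are each a pair of disjoint essential arcs on a single outer boundary circle of a twice-punctured disk. A standard planarity argument yields a $\partial$-parallelism between appropriate edges within a sub-region of $Q$, allowing $D$ to be cut along an arc realizing this parallelism to produce a genuine bigon in $(X^+, F)$ with both edges in $\dot\Phi_5^-$. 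By the proof of Lemma~\ref{Phi5+NoLarge}, such a bigon forces $\dot\Phi_5^- \wedge \dot\Phi_1^+$ to contain a large component, whence $\dot\Phi_6^+ = \tau_+(\dot\Phi_5^- \wedge \dot\Phi_1^+) \subseteq \dot\Phi_5^+$ contains a large component, contradicting the standing assumption that $\dot\Phi_5^+$ is a collar on $\partial F$.

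The main obstacle is the zero-exponent case: one must choose the cutting arc in $D$ so that the resulting bigon has both edges essential and large (so that Lemma~\ref{Phi5+NoLarge}'s mechanism applies), and one must verify that the parallelism of edges in the tight planar components of $\dot\Phi_5^-$ is compatible with the embeddedness of $D$. Once this geometric reduction is carried out, the contradiction is delivered by precisely the argument used in the proof of Lemma~\ref{Phi5+NoLarge}.
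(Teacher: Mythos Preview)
Your handling of the nonzero-exponent types follows the paper's approach (it is exactly ``as in the last part of the proof of Lemma~\ref{lem3}''), so that part is fine.

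The genuine gap is in the zero-exponent cases, and your own acknowledgment that the bigon reduction is ``the main obstacle'' is accurate: as stated, it does not go through. Cutting $D$ along an arc $u$ in its interior produces two disks each of whose boundary contains the image of $u$, which lies in $X^+$ and not in $F$; so neither piece is a $Q$-bigon in the sense of Lemma~\ref{Phi5+NoLarge}. Alternatively, using a parallelism rectangle $R \subset Q$ between two edges to modify $\partial D$ does not obviously yield a properly embedded bigon in $(X^+, \dot\Phi_5^-)$ either: gluing $R$ to $D$ along the two parallel edges gives an annulus, not a disk, and there is no evident way to extract an \emph{essential} bigon from this. In fact the paper shows that for $W(D) = X_2X_2^{-1}X_2X_2^{-1}$ the curve $\partial D$ is null-homotopic in $V = \widehat F \times I \cup H_{(23)}$, which is a strong hint that no essential-homotopy contradiction of the Lemma~\ref{Phi5+NoLarge} flavour is available here.

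The paper's arguments for the two zero-exponent types are quite different from yours and are each short. For $X_2^2X_2^{-2}$: the syllable $X_2X_2^{-1}$ forces a $23$-edge, so $b_2$ and $b_3$ lie in the \emph{same} tight pair-of-pants component of $Q$; but the syllables $X_2X_2$ and $X_2^{-1}X_2^{-1}$ give a $2$-loop and a $3$-loop in that component, and two essential loops based at distinct boundary circles of a pair of pants must intersect, contradicting embeddedness of $D$. For $X_2X_2^{-1}X_2X_2^{-1}$: one checks directly (using the ordering condition on corner labels around $b_2$ and $b_3$) that $\partial D$ is isotopic in the genus-$2$ surface $G = \partial V \setminus (\widehat F \times \{0\})$ to a meridian of the handle $H_{(23)}$, hence bounds a non-separating disk $D' \subset V$; then $D \cup D'$ is a non-separating $2$-sphere in $\widehat X^+$, contradicting irreducibility. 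Neither argument uses characteristic-subsurface machinery at all.
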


\begin{proof}
We may suppose that $D$ has all $X_2$-corners.
Note that $|\ep_{X_2} (D)|$ is not 1 by Lemma \ref{lem8}
and if it is $>1$ then we get a contradiction as in the last part
of the proof of Lemma~\ref{lem3}.
Hence $\ep_{X_2} (D)=0$.
Thus $W(D)= X_2^2 X_2^{-2}$ or $X_2X_2^{-1} X_2X_2^{-1}$.

In the first case, $\partial D$ contains a 23-edge, and hence
$b_2$ and $b_3$ belong to the same component of $Q$.
But $\partial D$ also contains a 2-loop and a 3-loop, which
clearly must intersect, contradicting the fact that $D$ is embedded.

In the second case, label the corners of $D$ $a,b,c,d$ as shown in
Figure \ref{R1}.

\begin{figure}[!ht]
\centerline{\includegraphics{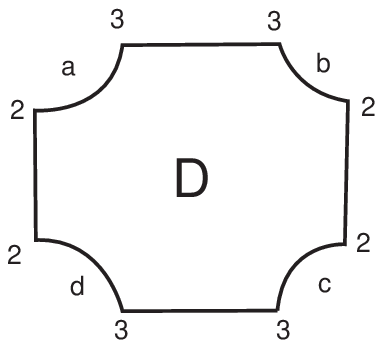}} \caption{ }\label{R1}
\end{figure}

Then $\partial D$ is as shown in Figure \ref{R2}.
Let $V = \widehat F\times I\cup H_{(23)}$.
Note that $\partial V = \widehat F \times \{0\}\,
\raise.4ex\hbox{$\scriptstyle\coprod$}\, G$,
where $G$ is a surface of genus~2.
We see from Figure \ref{R2} that $\partial D$ is isotopic in $G$ to a
meridian of $H_{(23)}$, and so bounds a non-separating disk $D'\subset V$.
Then $D\cup D'$ is a non-separating 2-sphere $\subset V\cup N(D)\subset
\widehat X^+$, a contradiction.
\end{proof}

\begin{figure}[!ht]
\centerline{\includegraphics{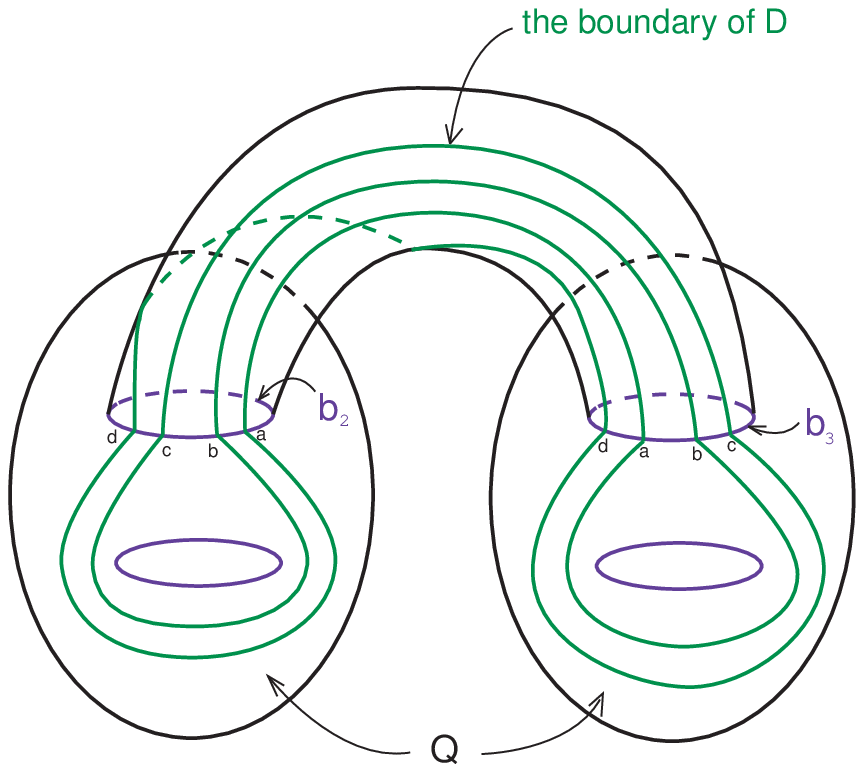}} \caption{ }\label{R2}
\end{figure}

\begin{lemma}\label{lem5'}
Case (B) is impossible.
\end{lemma}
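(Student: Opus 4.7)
To prove Lemma \ref{lem5'} I would follow the template of Lemma \ref{lem4'}, combining combinatorial, algebraic, and topological arguments. Assume for contradiction that $D$ is an essential embedded $Q$-$4$-gon with two $X_2$-corners and two $X_4$-corners.

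The first step is to enumerate the possible cyclic words $W(D)$. Applying Corollary \ref{cor1.1} (which forbids $X_2 X_4$ and $X_4 X_2$ syllables) together with Lemma \ref{lem2'} to all cyclic arrangements of two $X_2^{\pm1}$ and two $X_4^{\pm1}$ letters, and quotienting by cyclic rotation and inversion, reduces the list to exactly three candidates:
$$W_1 = X_2 X_2^{-1} X_4 X_4^{-1}, \quad W_2 = X_2 X_4^{-1} X_2 X_4^{-1}, \quad W_3 = X_2 X_4^{-1} X_4 X_2^{-1}.$$
Reading off the edges of $\partial D$ in $Q$, each $W_i$ requires either a $b_1$-$b_3$ arc or a $b_2$-$b_4$ arc, which combined with Lemma \ref{lem1'} forces the partition of $\partial F$ into the two components of $Q$ to be $\{b_1,b_3\}\sqcup\{b_2,b_4\}$.

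For $W_2$ I would use an algebraic argument. Since the edges of $D$ lie in $Q \subseteq A_-$, projecting the relation determined by $D$ to $\pi_1(\widehat X^+(\alpha_-))$ kills the peripheral factors (powers of the class $t$ of $\alpha_-$) and leaves $(\bar x_2 \bar x_4^{-1})^2 = 1$. Combining this order-two relation with the Seifert structure of $\widehat X^+$ over $D^2(2,b)$ and the explicit form of the $\bar x_j$ guaranteed by Proposition \ref{not peripheral}, I expect either to force one of $\bar x_2, \bar x_4$ into the peripheral subgroup $P$ (contradicting Lemma \ref{not in P}) or to contradict the non-abelianity of $\pi_1(\widehat X^+(\alpha_-))$ provided by Lemma \ref{not abelian}.

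For $W_1$ and $W_3$ the projected relation is trivial, so the contradiction must be topological. Mirroring Lemma \ref{lem4'}, I would form $V = \widehat F \times I \cup H_{(23)} \cup H_{(41)} \subseteq \widehat X^+$, whose non-collar boundary $G$ is a closed surface of positive genus. For $W_1$, $\partial D$ consists of a $1$-loop, a $3$-loop, and two $b_2$-$b_4$ arcs; for $W_3$, it consists of a $2$-loop, a $4$-loop, and two $b_1$-$b_3$ arcs. Using the embedding constraint that the cyclic order of corners around the two circles of each $A_{ij}$ must match — the same technique exploited in Lemmas \ref{lem9bis}--\ref{lem10bis} — I would expect to show that $\partial D$ is isotopic in $G$ to a meridian of one of $H_{(23)}, H_{(41)}$. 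Then $\partial D$ bounds a non-separating disk $D' \subseteq V$, so $D \cup D'$ is a non-separating $2$-sphere in $\widehat X^+$, contradicting Assumption \ref{assumption irreducible}.

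The main obstacle will be this last topological step for $W_1$ and $W_3$: one must enumerate the finitely many embedding types of the edge data (using that each component of $Q$ is a twice-punctured disk with a fixed pair of $b_i$'s as punctures) and verify in each that $\partial D$ is genuinely non-separating on $G$. Degenerate sub-cases where an edge is $\widehat F$-inessential would collapse $D$ to a trigon or bigon, to be disposed of via Lemmas \ref{lem5}, \ref{lem6'}, \ref{Phi5+NoLarge}, and \ref{PropA}.
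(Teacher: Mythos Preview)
Your enumeration is essentially right (the paper lists only $W_1$ and $W_2$, but your $W_3$ is a genuine case, handled identically to $W_1$ by the $(1,3)\leftrightarrow(2,4)$ symmetry). However, for both groups of cases you reach for machinery that is heavier than needed, and in one place the plan is not clearly sound.

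For $W_1$ and $W_3$ you miss a direct contradiction with embeddedness. Take $W_1 = X_2X_2^{-1}X_4X_4^{-1}$: reading the edges gives two $24$-edges together with a $1$-loop and a $3$-loop. The $24$-edge forces $b_2,b_4$ into one component of $Q$ and hence $b_1,b_3$ into the other. But that other component is a pair of pants with cuffs $b_1,b_3$ and one inner boundary; an essential arc with both endpoints on $b_1$ separates $b_3$ from the inner boundary, and similarly for the $3$-loop, so the $1$-loop and $3$-loop must intersect. This contradicts the fact that $D$ is embedded. The same argument (with a $2$-loop and a $4$-loop) disposes of $W_3$. There is no need to build $V$, analyse $\partial D$ on the genus-$2$ surface $G$, or produce a non-separating sphere; indeed your ``main obstacle'' step would be trying to analyse an embedded $D$ that cannot exist.

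For $W_2 = X_2X_4^{-1}X_2X_4^{-1}$ your projected relation $(\bar x_2\bar x_4^{-1})^2=1$ in $\pi_1(\widehat X^+(\alpha_-))$ is correct, but the sentence ``I expect either to force one of $\bar x_2,\bar x_4$ into $P$ or to contradict non-abelianity'' is not an argument: an order-$2$ element in $\pi_1(\widehat X^+(\alpha_-))$ (which is $L_2\# L_b$ or Seifert over $S^2(2,b,\Delta(\varphi_+,\alpha_-))$) is not obviously obstructed, and you have not used the specific form of the $\bar x_j$. The paper avoids $\alpha_-$ entirely: it sets $U=\widehat F\times I\cup H_{(23)}\cup H_{(41)}\cup N(D)\subseteq \widehat X^+$ and computes $\pi_1(U)\cong \pi_1(\widehat F)*\mathbb Z*\mathbb Z/2$, so $\widehat X^+$ would have a closed summand with fundamental group $\mathbb Z/2$, contradicting its Seifert structure over $D^2(2,b)$. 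This is the analogue of the trigon argument in Lemma~\ref{lem3}, and is both shorter and decisive.
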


\begin{proof}
By Corollary~\ref{cor1.1} and Lemma~\ref{lem2'}, the only possibilities
for $W(D)$ are $X_2X_2^{-1}X_4X_4^{-1}$ and $X_2X_4^{-1}X_2X_4^{-1}$.

In the first case, $\partial D$ contains a 24-edge.
Therefore $b_2$ and $b_4$ belong to the same component of $Q$,
and hence $b_1$ and $b_3$ belong to the same component of $Q$.
But  $\partial D$ also contains a 1-loop and a 3-loop, which must intersect.

In the second case, let $U = \widehat F\times I\cup H_{(23)} \cup
H_{(41)} \cup N(D) \subset \widehat X^+$.
Then $\pi_1(U) \cong \pi_1 (\widehat F) * \zed * \zed/2$, implying
that $\widehat X^+$ has a closed summand with fundamental group $\zed/2$,
a contradiction.
\end{proof}

By Lemmas~\ref{lem4'} and \ref{lem5'}, Case (C) must hold; so suppose
that $D$ has one $X_2$-corner and three $X_4$-corners.
Since $\{\text{corners of }D\} \subset \{\text{corners of }E\}$, $E$
is also a 4-gon with one $X_2$-corner and three $X_4$ corners.
Corollary~\ref{cor1.1} rules out all possibilities for $W(E)$ except
$X_2X_4^{-3}$ and $X_2X_4^{-1} X_4X_4^{-1}$, and the first is ruled out by
Lemma~\ref{lem2'}.

This completes the proof of Lemma \ref{PropA}.
\qed

\begin{lemma}\label{lem6''}
There do not exist disjoint $Q$-disks of types $X_2X_4^{-1}X_4X_4^{-1}$
and $X_4X_2^{-1}X_2X_2^{-1}$.
\end{lemma}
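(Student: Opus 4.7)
The plan is to assume for contradiction that disjoint embedded $Q$-disks $D_1$ of type $X_2X_4^{-1}X_4X_4^{-1}$ and $D_2$ of type $X_4X_2^{-1}X_2X_2^{-1}$ both exist, and then derive a structural contradiction by combining the topology of the submanifold they span in $\widehat X^+$ with the standing hypothesis that $\dot\Phi_3^+$ is a union of tight components.

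First I would read off the edges of each disk from its corner sequence. Tracking the four corners of $D_1$ (which visit $\partial M$ as $2\!\to\!3,\ 1\!\to\!4,\ 4\!\to\!1,\ 1\!\to\!4$) gives the four $F$-edges: one $b_3b_1$-arc, one $b_4$-loop, one $b_1$-loop, and one $b_4b_2$-arc. Similarly $D_2$ has a $b_1b_3$-arc, a $b_2$-loop, a $b_3$-loop, and a $b_2b_4$-arc. Since every such edge lies in $Q = \dot\Phi_5^-$, both disks force the two components of $Q$ to be $Q_1\supseteq b_1\cup b_3$ and $Q_2\supseteq b_2\cup b_4$ (consistent with Lemma \ref{lem1'}), and each $\widehat Q_i$ is a disk with two punctures; the four loop-edges above represent essential, pairwise non-parallel classes in $\pi_1(\widehat Q_1)$ and $\pi_1(\widehat Q_2)$ respectively.

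Next I would form $V = \widehat F\times I \cup H_{(23)} \cup H_{(41)}\cup N(D_1)\cup N(D_2) \subseteq \widehat X^+$ and compute $\pi_1(V)$ from a fat base-point in a disk of $\widehat F$ containing $v_1,v_2,v_3,v_4$ and connecting arcs. As in the proofs of Lemmas \ref{lem9bis} and \ref{lem10bis}, the generators are $x_2, x_4$ together with classes $t_1, t_2$ of the cores of $\widehat Q_1, \widehat Q_2$, and the two disks impose relations expressing $x_2$ and $x_4$ in terms of $t_1, t_2$ and certain peripheral elements. I expect a careful Tietze reduction, exactly paralleling the successful cases of Lemma \ref{lem10bis}, to collapse $\pi_1(V)$ to an infinite cyclic group with the $t_i$ becoming proper powers of a single generator. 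Thus $V$ is a solid torus in which $\widehat Q_1\cup \widehat Q_2 \subseteq \partial V$ has positive winding number.

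The principal obstacle is to then convert this topological conclusion into a contradiction. Mirroring the final step of Lemma \ref{no such pair}, the annular frontier $A = \partial V \setminus \mathrm{int}(\widehat Q_1\cup \widehat Q_2)$ is an essential annulus in $X^+$, vertical in the Seifert structure on $\widehat X^+$ with base orbifold $D^2(2,b)$, so $\partial A$ can be isotoped in $F$ into the interior of $\dot\Phi_1^+$. Hence $\dot\Phi_1^+\wedge \tau_-(\dot\Phi_1^+) = \dot\Phi_1^+ \wedge \dot\Phi_2^-$, and therefore $\dot\Phi_3^+ = \tau_+(\dot\Phi_1^+\wedge \dot\Phi_2^-)$, contains an $\widehat F$-essential annulus, contradicting the standing hypothesis that $\dot\Phi_3^+$ is a union of tight components. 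The hardest part of the argument will be verifying that the resulting presentation of $\pi_1(V)$ is genuinely infinite cyclic in every combinatorial sub-case of how the two disks meet $\widehat F$; in the analogue of case~(4) of Lemma \ref{lem10bis}, the presentation instead produces a relation $t_1^a t_2^b = z^3$ that is incompatible with the base orbifold $D^2(2,b)$, and this exceptional configuration must be ruled out directly from the edge-combinatorics of $D_1$ and $D_2$ in $\widehat F$.
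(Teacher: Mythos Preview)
Your first paragraph is correct and, in fact, contains everything needed for the paper's proof --- but you then abandon it for a much harder route that does not work.

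The paper's argument is immediate from your edge reading. You correctly deduce that $b_1,b_3$ lie in one component $Q_1$ of $Q$ and $b_2,b_4$ in the other. Now simply observe that $\partial D_1$ contains a $1$-loop (an arc in $Q_1$ with both endpoints on $b_1$) and $\partial D_2$ contains a $3$-loop (an arc in $Q_1$ with both endpoints on $b_3$). In the pair of pants $Q_1$ an essential $1$-loop separates $b_3$ from the inner boundary, so any essential $3$-loop must cross it. Hence $\partial D_1\cap\partial D_2\neq\emptyset$, contradicting disjointness. That is the entire proof.

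Your proposed detour through $\pi_1(V)$ has two genuine gaps. First, the completions $\widehat Q_i$ are \emph{disks} (the $Q_i$ are tight components of $\dot\Phi_5^-$), so there are no ``cores $t_1,t_2$'' and your generating set is wrong; the analogy with the $A_-$-disk arguments of \S\ref{hexagonal case} breaks down precisely because there the edges lie in an annulus $\widehat A_-$, whereas here they lie in disks. Second, and fatally, $V=\widehat F\times I\cup H_{(23)}\cup H_{(41)}\cup N(D_1)\cup N(D_2)$ cannot be a solid torus: before attaching the $2$-handles the first Betti number is $4$ (two from $\pi_1(\widehat F)\cong\mathbb Z^2$, one from each $1$-handle), and two $2$-handles reduce it by at most $2$, so $b_1(V)\geq 2$. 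Thus $\pi_1(V)\not\cong\mathbb Z$ and the solid-torus/vertical-annulus conclusion is unavailable. The case analysis you anticipate as ``the hardest part'' cannot succeed.
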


\begin{proof}
Let $D_1,D_2$ be $Q$-disks of types $X_2X_4^{-1}X_4X_4^{-1}$ and $X_4X_2^{-1}X_2X_2^{-1}$,
respectively.
Since $\partial D_1$ contains a 31-edge, $b_1$ and $b_3$ must belong to
the same component of $Q$.
But $\partial D_1$ contains a 1-loop and $\partial D_2$ contains a 3-loop,
and these must intersect.
\end{proof}

\begin{proof}[Proof of Lemma \ref{PropB}]
Let $E_1,E_2$ be $Q$-disks of types $X_2X_4^{-1}X_4X_4^{-1}$ and $X_4X_2^{-1}X_2X_2^{-1}$
respectively.
By the Loop Theorem and Lemma \ref{PropA} we get embedded $Q$-disks $D_1$ and
$D_2$ of these types.
By Lemma~\ref{lem6''}, $D_1$ and $D_2$ must intersect; consider an arc of
intersection, coming from the identification of arcs $u_i \subset D_i$,
$i=1,2$.
We may assume that the endpoints of $u_i$ lie on distinct edges of $D_i$,
$i=1,2$.
Then  $u_i$ separates $D_i$ into two disks, $D'_i$ and $D''_i$, say, where
$D'_i$ contains either one or two corners of $D_i$.

If $D'_1$ and $D'_2$ each contain a single corner, and these corners are
distinct, then $D'_1 \cup D'_2$ is a $Q$-bigon with one $X_2$- and one
$X_4$-corner, contradicting Lemma \ref{Phi5+NoLarge}.

If $D'_1$ and $D'_2$ both contain, say, a single $X_2$-corner, then $u_1$ is as
shown in Figure \ref{R3}, which also shows one of the three possibilities for $u_2$.
Since $b_1$ and $b_3$ lie in one component of $Q$, say $Q_1$, and $b_2$
and $b_4$ lie in the other component, say $Q_2$, and each of the arcs $u_1$
and $u_2$ has one endpoint in $Q_1$ and one in $Q_2$, $u_1$ and $u_2$ must
be identified as shown in Figure \ref{R3}.
Then $D_1^* = D''_1 \cup D'_2$ is a $Q$-disk of type $X_2X_4^{-1}X_4X_4^{-1}$
having fewer intersections than $D_1$ with $D_2$.

\begin{figure}[!ht]
\centerline{\includegraphics{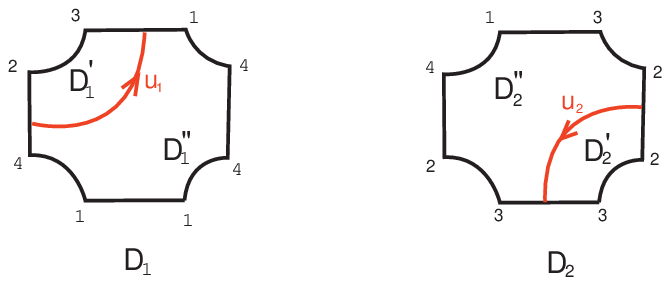}} \caption{ }\label{R3}
\end{figure}

\begin{figure}[!ht]
\centerline{\includegraphics{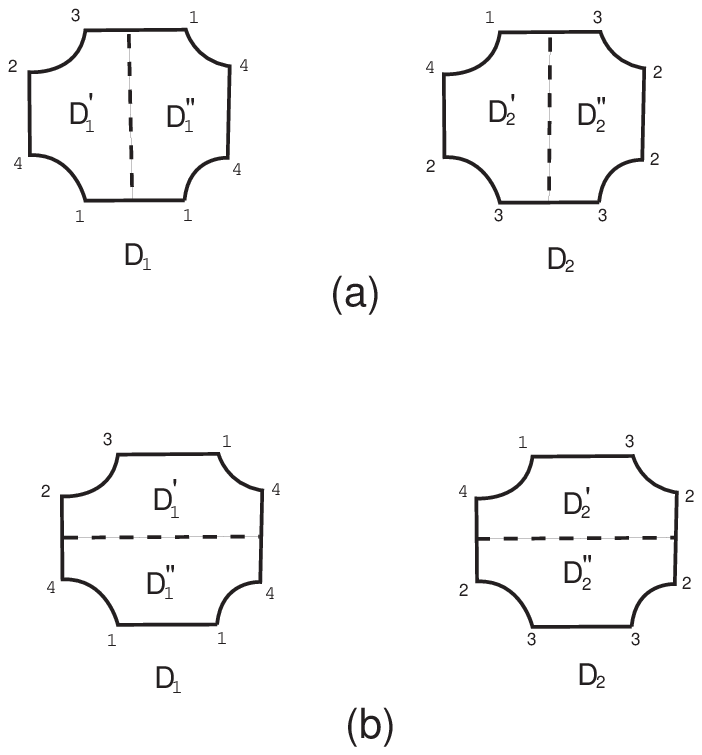}} \caption{ }\label{R4}
\end{figure}

If each of $D'_i$ and $D''_i$ contains two corners, $i=1,2$, the two
possibilities for $u_1$ and $u_2$ are illustrated in Figure \ref{R4}, (a) and (b).
In both cases, $D^*_1 = D''_1 \cup D'_2$ is again a $Q$-disk of type
$X_2X_4^{-1}X_4X_4^{-1}$ having fewer intersections with $D_2$.

Applying the Loop Theorem to the disk $D^*_1$ constructed above, and
using Lemma \ref{PropA}, we get an embedded $Q$-disk of type $X_2X_4^{-1}X_4X_4^{-1}$
having fewer intersections with $D_2$ than $D_1$.
Continuing, we eventually get disjoint embedded $Q$-disks of types
$X_2X_4^{-1}X_4X_4^{-1}$ and $X_4X_2^{-1}X_2X_2^{-1}$, contradicting Lemma~\ref{lem6''} .

This completes the proof of Lemma \ref{PropB}.
\end{proof}

Since each edge of $\bar\Gamma_S$ has weight 6, consecutive 4-gon corners
of $\Gamma_S$ at a given vertex are distinct.
Hence the total number of $X_2$-corners in the 4-gon faces of $\Gamma_S$ is
the same as the total number of $X_4$-corners.
Since a 4-gon face of $\Gamma_S$ is an essential $Q$-disk, this
contradicts Lemmas \ref{PropA} and \ref{PropB}.

This completes the proof for the case where $\bar\Gamma_S$ is rectangular.

\subsection{Proof when $\dot \Phi_3^+$ is not a union of tight components} \label{proof not all tight}

In this section we suppose that $X^-$ is a twisted $I$-bundle and $\dot \Phi_3^+$ is not a union of tight components. Proposition \ref{sep annulus-in-3-+} implies that
\begin{itemize}

\vspace{-.3cm} \item $M(\beta)$ is Seifert with base orbifold $P^2(2,n)$ for some $n > 2$;

\vspace{.3cm} \item $\widehat F$ is vertical in $M(\beta)$;

\vspace{.3cm} \item $\breve \Phi_1^+$ is connected and completes to an $\widehat F$-essential annulus;

\vspace{.3cm} \item  $\breve\Phi_3^+$ completes to the union of two $\widehat F$-essential annuli.

\end{itemize}
\vspace{-.3cm}
By Corollary \ref{max weight of edge}, the edges of $\overline{\Gamma}_S$ have weight bounded above by $m+4$. Hence for any vertex $v$ of $\overline{\Gamma}_S$ we have
\begin{equation}\label{eq delta 2}
\text{\em $\Delta(\alpha, \beta) \leq \hbox{valency}_{\overline{\Gamma}_S}(v) + 4 \Big(\frac{\hbox{valency}_{\overline{\Gamma}_S}(v)}{m} \Big)$}
\end{equation}

As the Seifert structure on $\widehat X^+$ is unique, it is the restriction of the Seifert structure of $M(\beta)$ and therefore its base orbifold is $D^2(2, n)$. Recall from \S \ref{background 2} that $\phi_+$ is the fibre slope on $\widehat F$ of this structure. By hypothesis, it is also the fibre slope of the Seifert structure on $\widehat X^-$, a twisted $I$-bundle over the Klein bottle with base orbifold a M\"{o}bius band. Hence $\phi_+ = \tau_-(\phi_+) = \alpha_-$, so the class $t \in \pi_1(\widehat F) \leq \pi_1(\widehat X^+)$ is the fibre class.

\begin{prop} \label{kbeta}
Suppose that conditions \ref{eq background 2} hold and $\dot \Phi_3^+$ is not a union of tight components. If $m = 4$ there is a presentation $\langle a, b, z : a^2, b^n, abz^{-2}\rangle$ of $\Gamma = \pi_1(P^2(2,n))$ such that the image in $\Gamma$ of the core $K_\beta$ of the $\beta$-filling solid torus in $M(\beta)$ represents the element $\kappa = az^{-1}b^{-1}z \in \Gamma$, at least up to conjugation and taking inverse.
\end{prop}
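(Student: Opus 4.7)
The plan is to amalgamate the Seifert presentations of $\widehat X^+$ and $\widehat X^-$ along $\widehat F$ (which is vertical in the Seifert fibration of $M(\beta)$ by hypothesis), to read off the presentation of $\Gamma$ by killing the regular fibre, and then to compute the class of $K_\beta$ by pushing it onto $\partial M$ and tracking its four arcs across $\widehat F$.

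First, using the Seifert structures, take generators $a, b$ of $\pi_1(\widehat X^+)$ looping around the order-$2$ and order-$n$ cone points of the base $D^2(2,n)$, take $z$ for the core of the M\"obius base of $\widehat X^-$, and let $\mathfrak h$ denote the common regular fibre. Standard Seifert presentations give
\[
\pi_1(\widehat X^+) = \langle a,b,\mathfrak h \mid [a,\mathfrak h],[b,\mathfrak h], a^2\mathfrak h^{e_1}, b^n\mathfrak h^{e_2}\rangle, \qquad \pi_1(\widehat X^-) = \langle z,\mathfrak h \mid z\mathfrak h z^{-1}\mathfrak h\rangle,
\]
with the boundary slope of $\widehat F$ equal to $ab\mathfrak h^{e_0}$ on the $+$-side and $z^2\mathfrak h^{f_0}$ on the $-$-side. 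Since $\alpha_- = \phi_+$ is the common fibre slope, Van Kampen along $\pi_1(\widehat F)$ identifies these two descriptions, and quotienting by the central $\mathfrak h$ yields $\Gamma = \langle a,b,z \mid a^2, b^n, abz^{-2}\rangle$.

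Next, since the four meridian disks $\widehat b_1,\ldots,\widehat b_4$ cut $V_\beta$ into four $3$-balls, two on each side of $\widehat F$, the core $K_\beta$ is freely homotopic to a longitude $\lambda\subset\partial V_\beta = \partial M$ meeting each $b_i$ once. With the labeling $\partial X^+ = F\cup A_{23}\cup A_{41}$ from \S\ref{background 2}, $\lambda$ decomposes as $\sigma_1\cdot\sigma_2\cdot\sigma_3\cdot\sigma_4$, with $\sigma_2\subset A_{23}$ and $\sigma_4\subset A_{41}$ in $\widehat X^+$, and $\sigma_1\subset A_{12}$ and $\sigma_3\subset A_{34}$ in $\widehat X^-$. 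Closing each $\sigma_j$ via reference paths $\eta_j\subset\widehat F$ as in \S\ref{section-relation} produces the elements $x_j$, and the free homotopy class of $K_\beta$ in $\pi_1(M(\beta))$ is represented up to conjugation by the amalgamated product $x_1 x_2 x_3 x_4$. Projecting to $\Gamma$, Proposition~\ref{not peripheral} forces $\bar x_2,\bar x_4\in\pi_1(D^2(2,n))$ into the form $(ab)^k a^{\delta}(ab)^l = z^{2k}a^{\delta}z^{2l}$ after the relation $ab=z^2$, while $\bar x_1,\bar x_3\in\langle z\rangle$ are pure powers of $z$; consequently the product has the shape $z^{A_0}a^{\delta_2}z^{A_1}a^{\delta_4}z^{A_2}$ for integers $A_i$ and signs $\delta_j\in\{\pm 1\}$.

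Finally, the exponents can be pinned down from the geometric data of Proposition~\ref{sep annulus-in-3-+}(ii): the two non-separating annuli $A_1^-, A_2^-\subset X^-$ identify $z$ with the core of the M\"obius base of $\widehat X^-$ and prescribe the winding numbers of $\sigma_1,\sigma_3$, while the twisted $I$-bundle subpart inside $\widehat X^+$ (which produces the order-$2$ cone point and its generator $a$) forces each of $\sigma_2,\sigma_4$ to link the corresponding exceptional fibre exactly once, so $\delta_2=\delta_4=1$. A conjugation absorbs $A_0$ and the substitution $b^{-1}=z^{-2}a$ (from $ab=z^2$ together with $a^2=1$) then rewrites the class as $az^{-3}az = az^{-1}b^{-1}z = \kappa$, the stated form. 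The main obstacle is precisely this exponent bookkeeping: one must read the winding of each $\sigma_j$ through both Seifert structures, using the explicit arrangement of $\widehat F$, $\partial M$, and the four meridian disks, and verify that the combined contributions from the two sides yield exactly the conjugacy class of $\kappa$ rather than some other word of the same shape. The "up to conjugation and inversion" clause absorbs the remaining basepoint and orientation freedom, making the algebraic identification robust once the geometric reading is correct.
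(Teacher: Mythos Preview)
Your overall strategy matches the paper's: decompose $K_\beta$ into four arcs on $\partial M$, two on each side of $F$, and read off their images in $\Gamma$ from the Seifert structures. The presentation of $\Gamma$ is also obtained in the same way. However, the entire content of the proposition is what you call the ``exponent bookkeeping,'' and this is precisely where your argument stops.

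Two specific gaps. First, you assert that the two $X^-$-arcs $\sigma_1,\sigma_3$ map to ``pure powers of $z$,'' but you do not determine which powers, nor their relative sign. In the paper's notation the corresponding elements are $w_1,w_3\in\pi_1(\widehat X^-)$, and the twisted $I$-bundle structure only gives $w_1=w_3^{\pm1}$ a priori. The paper rules out $w_1=w_3$ by a nontrivial argument: if the ``wrong'' pairing of $b_1,b_2,b_3,b_4$ into the annuli $E_1,E_2$ held, one could build an infinite essential homotopy of a large subsurface (since $\tau_-\circ\tau_+$ would fix $E_1\cap F$), contradicting the results of \S\ref{length}. Without this step the conjugacy class of $\kappa$ is not pinned down---the two signs give genuinely different elements of $\Gamma$. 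Second, your claim that each $X^+$-arc ``links the corresponding exceptional fibre exactly once'' is not justified; in the paper this comes from the explicit observation that the disk $D\subset E_1\cup E_2\cup E_4$ projects to a \emph{proper subarc} of the image of $\widehat F$ in $P^2(2,n)$, which forces the images of $x_2,x_4$ to lie in $\{a,b,b^{-1}\}$ rather than in a general conjugate of the form $(ab)^k a^{\delta}(ab)^l$. You invoke Proposition~\ref{not peripheral} for this, but that proposition only gives the general form, not the specific values $k=l=0$.

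In short, your outline is correct but incomplete at the one place that matters: you have identified the shape $z^{A_0}az^{A_1}az^{A_2}$ but not the values of $A_1,A_2$, and the determination of those values (especially the relative sign coming from $w_1=w_3^{-1}$) requires real work that you have deferred rather than done.
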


\pf
Let $E_0$ be the $\widehat F$-essential annulus $\widehat{\breve \Phi_1^+}$. Then $\partial E_0$ is a pair of $\widehat F$-essential curves  $c_1, c_2$. By Proposition \ref{sep annulus-in-3-+}, $\dot \Phi_3^+$ is the union of two $\widehat F$-essential annuli, and there are disjoint, non-separating annuli $A_1^-, A_2^-$ properly embedded in $X^-$ such that $\partial A_1^- \cup \partial A_2^- \subseteq \dot \Phi_1^+$ and for each $j$, $\partial \dot \Phi_1^+ \cap \partial A_j^-$ is a boundary component of $\dot \Phi_1^+$ which we can take to be $c_j$.

We can assume that each $A_j^-$ is $\tau_-$-invariant. Then $A_1^- \cup A_2^-$ splits $\widehat X^-$ into two $\tau_-$-invariant solid tori $V_1, V_2$ where $V_1 \cap M \supset \partial M \cap X^-$ and $V_2 \subset M$. The reader will verify that $\widehat{\breve \Phi_1^+} \cap V_1$ is the union of disjoint $\widehat F$-essential annuli $E_1, E_2$ where $\tau_-(E_1) = E_2$ while $F \cap V_2$ is the union of disjoint $\widehat F$-essential annuli $E_3, E_4$ such that $\tau_-(E_3) = E_4$. Without loss of generality we can suppose that $c_j \subset E_j$ ($j = 1, 2$) and $\widehat{\breve \Phi_1^+} = E_1 \cup E_2 \cup E_3$. Then ${\breve \Phi}_3^- = \tau_-(\widehat{\breve \Phi_1^+}) = E_1 \cup E_2 \cup E_4$.

Number the components of $\partial F$ so that $\partial M \cap X^+$ consists of two annuli, one with boundary $b_1 \cup b_4$, the other with boundary $b_2 \cup b_3$. Let $x = x_4$ and $y = x_2$ be the elements of
$\pi_1(\widehat X^+) \leq \pi_1(M(\beta))$ defined using the disk $D \subset A_- = {\breve \Phi}_3^-$.

The intersection of $\partial M$ with $X^-$ consists of two annuli, one with boundary $b_1 \cup b_2$ and the other with boundary $b_3 \cup b_4$. Let $w_1, w_3$ be the associated elements of $\pi_1(\widehat X^-) \leq \pi_1(M(\beta))$ determined by $D$. Since $V_1 \cap M$ is a twice-punctured annulus cross an interval we see that $w_1 = w_3^{\pm 1}$. We claim that $w_1 = w_3^{-1}$. To see this, exchange $E_1$ and $E_2$, if necessary, so that $b_j \subset E_j$ for $j = 1, 2$. We will be done if $b_4 \in E_1$ and $b_3 \in E_2$. Suppose otherwise that $b_3 \in E_1$ and $b_4 \in E_2$. Then $\tau_+(E_1)$ is an $\widehat F$-essential annulus in $E_0$ containing $c_2 \cup b_2 \cup b_4$ while $\tau_+(E_2)$ is an $\widehat F$-essential annulus in $E_0$ containing $c_1 \cup b_1 \cup b_3$. It follows that $\partial \tau_+(E_1) \setminus c_2$ is an $\widehat F$-essential curve in $\breve \Phi_1^+$ which separates $b_2 \cup b_4$ from $b_1 \cup b_3$. A similar conclusion holds for $\partial \tau_+(E_2) \setminus c_1$. It follows that up to isotopy we can assume $\tau_+(E_1 \cap F) = E_2 \cap F$. On the other hand, by construction we have $\tau_-(E_1 \cap F) = E_2 \cap F$ and therefore $(\tau_- \circ \tau_+)(E_1 \cap F) = E_1 \cap F$. Hence the inclusion of $E_2 \cap F$ in $F$ admits essential homotopies of arbitrarily large length, contrary to the results of \S \ref{length}. Thus $w_1 = w_3^{-1}$.  Let $z$ be the image of $w_1$ in $\Gamma$.

The class of $\pi_1(M(\beta))$ carried by $K_\beta$ is given by $xw_1yw_1^{-1}$. Let $\kappa$ be its image in $\Gamma$.

The base orbifold of $\widehat X^+$ is $D^2(2, n)$ with fundamental group $\pi_1(D^2(2, n)) = \langle a, b : a^2 = 1, b^n = 1 \rangle$. Here $a, b$ are chosen to be represented by oriented simple closed curves in the complement $P$ of the cone points of $D^2(2,n)$.

We can assume that the $E_i$ are vertical in the Seifert structure on $M(\beta)$. Since $D \subset E_1 \cup E_2 \cup E_4$, it projects to a proper subarc of the circle in $P^2(2,n)$ given by the image of the vertical torus $\widehat F$. Thus the images of $x$ and $y$ in $\pi_1(D^2(2,n))$ lie in $\{a^{\pm 1}, b^{\pm 1}\}$ (cf. the proof of Proposition \ref{not peripheral}). Further $z^2 \in \{ab, ab^{-1}, ba, b^{-1}a \} \subset \Gamma$. By construction $b_1 \cup b_4 \subset E_1$ and $b_2 \cup b_3 \subset E_2$ and so as $w_1$ is obtained by contenating an arc in $\partial M \cap X^-$ from $b_1$ to $b_2$ with an arc in $D$ from $b_2$ to $b_1$, it follows that one of the following four possibilities arises:
\begin{enumerate}

\item $x \mapsto a, y \mapsto b, z^2 = ba$ and $\kappa = azbz^{-1}$.

\item $x \mapsto a, y \mapsto b^{-1}, z^2 = b^{-1}a$ and $\kappa = azb^{-1}z^{-1}$.

\item $x \mapsto b, y \mapsto a, z^2 = ab$ and $\kappa = bzaz^{-1}$.

\item $x \mapsto b^{-1}, y \mapsto a, z^2 = ab^{-1}$ and $\kappa = b^{-1}zaz^{-1}$.

\end{enumerate}
In case (3) we have $\Gamma = \langle a,b,z : a^2,b^n,ab=z^2 \rangle$ where $\kappa = bzaz^{-1} = z(az^{-1}b^{-1}z)^{-1}z^{-1}$. In case (4) we have $\Gamma = \langle a,b,z : a^2,b^n,ab^{-1}=z^2 \rangle$ where $\kappa = b^{-1}zaz^{-1}$. Replacing $b$ by $b^{-1}$ gives the presentation stated in the proposition and $\kappa = bzaz^{-1} = z(az^{-1}b^{-1}z)^{-1}z^{-1}$ as before. In case (2) we replace $z$ by $z^{-1}$ and note that then $\kappa = az^{-1}b^{-1}z$. Finally in case (1) we replace $b$ by $b^{-1}$ and $z$ by $z^{-1}$ after which again we have $\kappa = az^{-1}b^{-1}z$.
\qed

\begin{prop} \label{delta = 2}
Suppose that conditions \ref{eq background 2} hold and $\dot \Phi_3^+$ is not a union of tight components. If $m \equiv 2$ $($mod $4$$)$ and $\Delta(\alpha, \beta)$ is even, then $\Delta(\alpha, \beta) = 2$.
\end{prop}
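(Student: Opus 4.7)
The plan is to assume for contradiction that $\Delta(\alpha,\beta) \geq 4$ (recalling that $\Delta(\alpha,\beta) > 3$ throughout this subsection) and combine the Seifert structure of $M(\beta)$ with the parity constraints coming from $m \equiv 2 \pmod 4$ and $\Delta$ even to produce a relation in $\Gamma = \pi_1(P^2(2,n))$ that is incompatible with its presentation.

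First, I would establish an analogue of Proposition \ref{kbeta} for arbitrary $m \equiv 2 \pmod 4$. The construction mirrors the $m=4$ case: the $\tau_-$-invariant non-separating annuli $A_1^-, A_2^-$ provided by Proposition \ref{sep annulus-in-3-+} split $\widehat X^-$ into two solid tori, one of which meets $\partial M$, and the verticality of $\widehat F$ in the Seifert structure on $M(\beta)$ forces the images of the $x_j$ in $\pi_1(D^2(2,n))$ to lie in $\{a^{\pm 1}, b^{\pm 1}\}$. After the same case analysis as in Proposition \ref{kbeta}, this produces a presentation $\Gamma = \langle a, b, z \mid a^2, b^n, abz^{-2} \rangle$ with $K_\beta$ representing $\kappa = az^{-1}b^{-1}z$ up to conjugacy and inversion. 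Note the partition of $\partial F = \{b_1,\ldots,b_m\}$ between the two components of $\partial M \cap X^\pm$ is determined modulo $2$, and $m \equiv 2 \pmod 4$ ensures each side contains $m/2$ components with $m/2$ odd.

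Second, I would exploit the parity rule. Since $m$ is even, the permutation $\sigma(j) = -j + 2k+1 \pmod m$ associated with a positive edge has no fixed points, so no positive edge of $\Gamma_S$ has identical labels at its two endpoints. For negative edges, $\sigma(j) = j + 2k$ has orbit length dividing $m/\gcd(m,2k)$, and $m \equiv 2 \pmod 4$ restricts these lengths to $1$, $m/2$, or $m$. Combined with Corollary \ref{max weight of edge} (edge weight at most $m+4$), Lemma \ref{no more than m and 2 version 2}, and Proposition \ref{mu sum} together with Proposition \ref{possible values for mu}, I would narrow the possible face and vertex configurations. When $\Delta$ is even and $\Delta \geq 4$, the inequality $\Delta m \leq (m+4)\,\mathrm{valency}_{\bar\Gamma_S}(v)$ together with the face-counting bound leaves only hexagonal or rectangular configurations with edge weights of specific parities dictated by $\Delta$ even.

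Third, I would extract a specific relation: pick a triangle face incident to an edge of maximal weight (forcing the image of that edge into $\dot\Phi_2^- = A_-$ by Corollary \ref{triangle not tight}), read off the associated word in the generators $\bar x_2, \bar x_4, t$ of $\pi_1(\widehat X^+(\alpha_-))$, and, using the identification from the first step, translate it into an equation in $\Gamma$. The parity hypothesis $m \equiv 2 \pmod 4$ with $\Delta$ even will force the label shift $2k$ to be $\equiv 2 \pmod 4$, yielding a $t$-exponent with a definite parity that, combined with $\kappa = az^{-1}b^{-1}z$ and the Seifert presentation, drives $\bar x_j$ into the peripheral subgroup $P$ of Definition \ref{P}, contradicting Lemma \ref{not in P}.

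The main obstacle will be the third step: organizing the label-tracking across a family of parallel edges of weight up to $m+4$, so as to correctly identify the $t$-exponents appearing between consecutive visits of an edge to $\widehat A_-$, and then verifying that the resulting word in $\langle a,b,z \mid a^2, b^n, abz^{-2}\rangle$ cannot hold unless $n \leq 2$ or $\Delta \leq 2$. The even-$\Delta$ and $m \equiv 2 \pmod 4$ assumptions are precisely what align the parities of the exponents so that no cancellation rescues the relation.
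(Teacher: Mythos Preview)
Your approach is fundamentally different from the paper's, and the third step you yourself flag as the main obstacle is in fact a genuine gap: you never specify which face or which family of edges produces the contradictory relation, nor do you verify that the parity alignment you claim actually forces $\bar x_j \in P$. Extending Proposition~\ref{kbeta} to general $m$ is also nontrivial, since for $m>4$ the word represented by $K_\beta$ in $\pi_1(M(\beta))$ involves $m/2$ factors $x_j$ interleaved with $m/2$ factors $w_j$, and the case analysis that collapses this to $az^{-1}b^{-1}z$ when $m=4$ does not obviously persist. As written, the proposal is a plausible heuristic outline rather than a proof.

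The paper's argument bypasses all of this with a covering-space trick. Since $X^-$ is a twisted $I$-bundle, there is a $2$-fold cover $\widetilde M \to M$ that unwraps $X^-$ to $F\times I$ and is trivial over $X^+$. The condition $m\equiv 2 \pmod 4$ guarantees that $\partial\widetilde M$ is connected; the condition $\Delta$ even guarantees that $\alpha$ lifts to a slope $\alpha'$. The lift $\beta'$ of $\beta$ has $\widetilde M(\beta')$ Seifert with base orbifold $S^2(2,n,2,n)$ (hyperbolic since $n>2$), so $\beta'$ is a singular slope for some closed essential surface $G$ in $\widetilde M$ by \cite[Theorem~1.7]{BGZ1}. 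Since $\Delta(\alpha',\beta') = \Delta(\alpha,\beta)/2$, if this exceeds $1$ then $G$ stays incompressible in $\widetilde M(\alpha')$; but $\widetilde M(\alpha')$ double-covers the small Seifert manifold $M(\alpha)$, so $G$ must be horizontal and non-separating there, and then \cite[Theorem~1.5]{BGZ1} forces $\Delta(\alpha',\beta')\le 1$. Hence $\Delta(\alpha,\beta)\le 2$. This avoids the intersection-graph combinatorics entirely and uses the two parity hypotheses in a transparent way: one to make the cover have connected boundary, the other to make $\alpha$ lift.
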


\pf Suppose otherwise. Consider the $2$-fold cover of $\widetilde M \to M$ which restricts to the cover $F \times I \to X^-$ on the $-$-side of $F$ and the trivial double cover on the $+$-side of $F$. Since $m \equiv 2$ (mod $4$) the boundary of $\widetilde M$ is connected. Now $\beta$ lifts to a slope $\beta'$ on $\partial \widetilde M$ with associated filling a Seifert manifold with base orbifold $S^2(2,n,2,n) \ne S^2(2,2,2,2)$. Hence $\beta'$ is a singular slope of some closed essential surface $S \subseteq \widetilde M$. Since the distance of $\alpha$ to $\beta$ is even, $\alpha$ also lifts to a slope $\alpha'$ on $\partial \widetilde M$ with the associated filling Seifert with base orbifold a $2$-sphere with three or four cone points. It's easy to see that the distance between $\alpha'$ and $\beta'$ is $\Delta(\alpha, \beta)/2$. Hence as $\beta'$ is a singular slope for $S$, $S$ is incompressible in $\widetilde M(\alpha')$. As $\widetilde M$ is hyperbolic, $S$ cannot be a torus and therefore must be horizontal in $\widetilde M(\alpha')$. It cannot be separating as the base orbifold of $\widetilde M(\alpha')$ is orientable. Thus it is non-separating. But then \cite[Theorem 1.5]{BGZ1} implies the distance between $\alpha'$ and $\beta'$ is at most $1$, so $\Delta(\alpha, \beta) = 2$.
\qed

\subsubsection{$M(\alpha)$ is very small}

We assume that $M(\alpha)$ is very small in this subsection and prove $\Delta(\alpha, \beta) \leq 3$.

\begin{lemma}\label{only torus}
$M(\beta)$ contains no horizontal essential surfaces. Thus every closed orientable incompressible surface in $M(\beta)$ is a vertical  torus.
\end{lemma}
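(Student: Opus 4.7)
The plan is to reduce both assertions to the single homological fact that $H_1(M(\beta); \mathbb{Q}) = 0$. The second assertion follows from the first by the standard classification of essential surfaces in Seifert fibred spaces: any such surface is isotopic to a vertical or a horizontal one, and a closed orientable vertical surface in $M(\beta)$ projects to an orientation-preserving simple closed curve in $P^2(2,n)$ missing the cone points, hence is a torus. So everything rests on ruling out horizontal essential surfaces.

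A two-sided horizontal essential surface $G$ in the closed orientable Seifert manifold $M(\beta)$ over the hyperbolic base $P^2(2,n)$ is the fibre of a surface bundle $M(\beta) \to S^1$: the Seifert $S^1$-action is transverse to $G$, and the first-return map $G \to G$ realises $M(\beta)$ as the mapping torus of a homeomorphism of $G$. Such a fibration forces $\pi_1(M(\beta)) \twoheadrightarrow \mathbb{Z}$, so $b_1(M(\beta)) \geq 1$. A one-sided horizontal essential surface can be replaced by the two-sided frontier $\partial N(G)$ of a regular neighbourhood, which is again horizontal; $\partial N(G)$ remains essential because $G$ is incompressible and $M(\beta)$ is irreducible (Assumption \ref{assumption irreducible}), so the same contradiction follows.

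It therefore suffices to show that $H_1(M(\beta); \mathbb{Q}) = 0$. I would use the Seifert structure over $P^2(2,n)$ to write a presentation of $\pi_1(M(\beta))$ with generators $a, b, z$ (the cone-point and non-orientable loops from $\pi_1(P^2(2,n))$, as in Proposition \ref{kbeta}) together with the regular fibre $t$, with relators of the form $a^2 t^{\beta_1}$, $b^n t^{\beta_2}$, $ab z^{-2} t^{c}$, $[a,t]$, $[b,t]$, and $z t z^{-1} t$ for appropriate integers $\beta_1, \beta_2, c$. Abelianising over $\mathbb{Q}$, the last relator yields $2t = 0$, hence $t = 0$; then $2a = 0$ and $nb = 0$ force $a = b = 0$, and $a + b - 2z = 0$ gives $z = 0$.

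The main obstacle is justifying the form of the fibre relator for the non-orientable loop, i.e.\ that $z t z^{-1} = t^{-1}$ rather than $t$. This is forced by the fact that $M(\beta)$ is orientable while its base $P^2(2,n)$ is not: traversing the orientation-reversing loop $z$ in the base necessarily reverses the orientation of the $S^1$-fibre. Once this sign is settled, the rest of the computation is purely mechanical.
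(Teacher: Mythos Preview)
Your homological approach is different from the paper's and is an attractive idea, but there is a genuine gap in the step ``horizontal surface $\Rightarrow$ fibres over $S^1$''. You justify this by invoking a ``Seifert $S^1$-action'' and a first-return map. However, an orientable Seifert fibred space over a non-orientable base such as $P^2(2,n)$ admits no global $S^1$-action: traversing an orientation-reversing loop in the base reverses the fibre orientation (this is exactly the relation $ztz^{-1}=t^{-1}$ you use later), so the circles cannot be coherently oriented and there is no flow whose first-return map to use. What cutting along a two-sided horizontal $G$ actually gives is an $I$-bundle; if $G$ is non-separating this is $G\times I$ and $M(\beta)$ is a mapping torus, but if $G$ is separating you get $M(\beta)=W_1\cup_G W_2$ with each $W_i$ a twisted $I$-bundle over a non-orientable surface, and this does \emph{not} force $b_1\geq 1$. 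Your argument does not rule out this separating case, so the implication ``horizontal $\Rightarrow b_1\geq 1$'' is unproved.

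The gap is repairable, but it takes real work: one route is to pass to the orientation double cover $\tilde M(\beta)\to M(\beta)$, which is Seifert over $S^2(2,2,n,n)$, observe that an orientable horizontal $G$ lifts to two disjoint horizontal surfaces there, identify these as fibres of a fibration $\tilde M(\beta)\to S^1$, and check that the deck involution swaps the two complementary pieces, forcing $G$ to be non-separating downstairs --- which then contradicts your (correct) computation $b_1(M(\beta))=0$. By contrast, the paper's proof avoids all of this by exploiting the splitting $M(\beta)=\widehat X^+\cup_{\widehat F}\widehat X^-$ already in hand: a horizontal $G$ would restrict to horizontal pieces in each side, forcing $\widehat X^+(\lambda)$ to be Seifert over $S^2(2,n)$ with a horizontal surface, which is only possible when $n=2$, contradicting $n>2$.
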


\pf Suppose $M(\beta)$ contains a horizontal essential surface $G$.
Then for each $\epsilon$, the components of $G \cap \widehat X^\epsilon$ are horizontal incompressible surfaces in                    $\widehat X^\epsilon$. Hence if $\lambda$ denotes the slope on $\widehat F$ of the curves $G \cap \widehat F$, then $\lambda$ is the fibre slope of the Seifert structure on $\widehat X^-$ with base orbifold $D^2(2,2)$. In particular, $\Delta(\lambda, \phi_+) =  \Delta(\lambda, \alpha_-) =  1$. Then $\widehat X^+(\lambda)$ is a Seifert manifold with base orbifold $S^2(2, n)$ which admits a horizontal surface. Thus it must be $S^1 \times S^2$. But then $n = 2$ and therefore $X^+$ is a twisted $I$-bundle (Proposition \ref{order two}), contrary to our assumptions.
\qed

Note that closed, essential surfaces in $M$ have genus $2$ or larger. Hence we deduce the following corollary.

\begin{cor}\label{compresses}
If $M$ contains a closed orientable essential surface, then the surface must compress in $M(\beta)$.
\qed
\end{cor}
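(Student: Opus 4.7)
The plan is to invoke Lemma \ref{only torus} together with the fact that any closed orientable essential surface in the hyperbolic knot manifold $M$ has genus at least $2$.

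First I would recall that $M$ is hyperbolic, hence irreducible and atoroidal, so any closed orientable essential surface $G \subset M$ must have $\hbox{genus}(G) \geq 2$: it cannot be a sphere (irreducibility) and cannot be a torus (atoroidality, using that $\partial M$ is already the unique incompressible torus up to isotopy in a hyperbolic knot manifold when considering closed essential surfaces, and in particular no closed essential torus exists in $M$).

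Next, suppose for contradiction that $G$ remains incompressible in $M(\beta)$. Then $G$ is a closed orientable incompressible surface in $M(\beta)$, so Lemma \ref{only torus} applies and forces $G$ to be a vertical torus in the Seifert structure on $M(\beta)$. This contradicts $\hbox{genus}(G) \geq 2$, completing the proof.

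There is essentially no obstacle here; the corollary is an immediate consequence of Lemma \ref{only torus} combined with the standing hypothesis that $M$ is hyperbolic (hence has no closed essential surfaces of genus less than $2$). The only thing worth being careful about is recording explicitly that being essential in $M$ and of genus $\geq 2$ is incompatible with being a torus, which is the sole output of Lemma \ref{only torus}.
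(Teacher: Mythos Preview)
Your proof is correct and matches the paper's approach exactly: the paper simply notes that closed essential surfaces in the hyperbolic manifold $M$ have genus at least $2$, and then deduces the corollary immediately from Lemma \ref{only torus}.
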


\begin{lemma}\label{at least 4 bdry comps}
If $\beta$ is not a singular slope, then any orientable essential surface $H$ in $M$ with boundary slope $\beta$
has at least $4$ boundary components.
\end{lemma}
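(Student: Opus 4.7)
I would argue by contradiction. Suppose $H$ is an orientable essential surface in $M$ with boundary slope $\beta$ and $|\partial H|\le 3$. Let $V_\beta$ be the $\beta$-filling solid torus of $M(\beta)$, and cap off $\partial H$ with meridian disks of $V_\beta$ to form the closed surface $\widehat H \subseteq M(\beta)$.

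In the first case, $\widehat H$ is incompressible in $M(\beta)$. Since $M(\beta)$ is irreducible (Assumption~\ref{assumption irreducible}), $\widehat H$ is then closed and essential. Lemma~\ref{only torus} forces $\widehat H$ to be a vertical torus in the Seifert structure on $M(\beta)$, and Assumption~\ref{assumption minimal} then gives $|\partial H| = |\widehat H \cap \partial M| \ge m \ge 4$, contradicting $|\partial H|\le 3$.

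In the remaining case, $\widehat H$ is compressible in $M(\beta)$. I would invoke the standard compression-and-surgery argument in the spirit of Culler--Gordon--Luecke--Shalen and Wu~\cite{Wu1}: pick a compressing disk $D$ for $\widehat H$ in $M(\beta)$ minimising $|D\cap V_\beta|$. The essentiality of $H$ in $M$ rules out $D\cap V_\beta = \emptyset$ (else $D$ would be a compressing disk for $H$ in $M$), and a standard innermost-disk argument (using that essential circles on $\partial M$ do not bound disks in $M$ by hyperbolicity) forces each component of $D\cap V_\beta$ to be a meridian disk of $V_\beta$. One then iteratively surgers $\widehat H$ along these meridian disks and compresses along the resulting pieces of $D$, arriving at a closed essential surface $G \subseteq M$ for which $\beta \in \mathcal{C}(G)$ and, crucially, for which every $\delta \in \mathcal{C}(G)$ satisfies $\Delta(\delta, \beta) \le 1$; in other words, $\beta$ is a singular slope for $G$. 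This contradicts the hypothesis that $\beta$ is not a singular slope.

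The main obstacle is the execution of the second case: producing from the compressing disk $D$ not merely a closed essential surface $G \subseteq M$ with $\beta \in \mathcal{C}(G)$, but one for which $\beta$ itself is singular. The crux is that all reductions in the argument are carried out using disks whose intersections with $V_\beta$ consist of meridian disks of $V_\beta$; any compressing disk for $G$ in some filling $M(\delta)$ must then, after minimising intersections with $V_\beta$, have its intersections with $V_\beta$ controlled by $\beta$, forcing $\Delta(\delta, \beta)\le 1$ as required.
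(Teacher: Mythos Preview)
Your approach is essentially the same as the paper's: split into the cases where $\widehat H$ is incompressible or compressible in $M(\beta)$, use Lemma~\ref{only torus} and Assumption~\ref{assumption minimal} in the first case, and in the second case produce a closed essential surface for which $\beta$ is singular. The paper's proof is shorter because it first reduces to the case where $|\partial H|$ is minimal among all essential surfaces with boundary slope $\beta$, and then invokes \cite[Theorem~2.0.3]{CGLS} as a black box: that theorem gives exactly the dichotomy ``$\beta$ is a singular slope for some closed essential surface, or $\widehat H$ is incompressible in $M(\beta)$.'' What you sketch in your second case is precisely the content of that CGLS theorem, and the minimality hypothesis on $|\partial H|$ is what makes the surgery/tubing argument go through (it guarantees the intermediate surfaces stay essential). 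So your ``main obstacle'' is resolved in the literature; you should cite \cite[Theorem~2.0.3]{CGLS} directly rather than reconstruct it, and add the harmless reduction to minimal $|\partial H|$ at the outset.
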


\pf We may assume that $|\partial H|$ is minimal among all such surfaces.
Then by \cite[Theorem 2.0.3]{CGLS}, either $\beta$ is a singular slope or
$\widehat H$ is incompressible in $M(\beta)$.
So by our assumption $\widehat H$ is incompressible in $M(\beta)$.
Thus by Lemma \ref{only torus}, $\widehat H$ is an incompressible torus in
$M(\beta)$. Hence $|\partial H| \geq m$ and so is at least $4$.
\qed

We complete this part of the proof of Theorem \ref{main} using $PSL_2(\mathbb C)$-character variety methods. We refer the reader to \S 6 of \cite{BCSZ2} for the explanations of the relevant notation, background results, and references.

Now let $X_0 \subseteq X_{PSL_2}(M(\beta)) \subseteq X_{PSL_2}(M)$ be an irreducible curve which contains a
character of a non-virtually-reducible representation. Let $x$ be any ideal point of $\tilde X_0$. If
$\tilde f_\alpha$ has finite value at $x$, then \cite[Proposition 4.10]{BZ1} and Corollary \ref{compresses} imply that $\beta$ is a singular slope, in which case we would have  $\D(\alpha,\beta)\leq 1$. So every ideal point of $\tilde X_0$ is a pole of
$\tilde f_\alpha$. In particular  $X_0$ provides a non-zero Culler-Shalen seminorm $\|\cdot\|_{X_0}$
on $H_1(\partial M; \mathbb R)$ with $\beta$ the unique slope with $\|\beta\|_{X_0} = 0$.

By \cite[Proposition 10.2]{BCSZ2}  and \cite{BZ1} we have
$$\| \alpha \|_{X_0} \leq s_{X_0} + 5$$
Let $H$ be an essential surface associated to an ideal point
$x$ of $\tilde X_0$.
As $x$ is a pole of $\tilde f_\alpha$, $H$ has boundary slope $\beta$.
By Lemma \ref{at least 4 bdry comps},
$|\partial H| \geq 4$. This implies, by the arguments in \cite[Proposition 6.6]{BCSZ2},
that $s_{X_0}\geq 2$. Thus
$$\Delta(\alpha, \beta) = \frac{\|\alpha\|_{X_0}}{s_{X_0}} \leq 1 + 5/2 = 3.5$$	
Thus $\Delta(\alpha, \beta) \leq 3$, which completes the proof when $M(\alpha)$ is very small.

\subsubsection{$M(\alpha)$ is not very small}

We suppose that $M(\alpha)$ is not very small in this subsection and that $Y$ is a torus.

\begin{lemma} \label{mu larger}
Suppose that conditions \ref{eq background 2} hold and $\dot \Phi_3^+$ is not a union of tight components. If $\Delta(\alpha, \beta) > 5$ then and there is a vertex $v$ of $\overline{\Gamma}_S$ such that $\mu(v) > m \Delta(\alpha, \beta) - 4$, then $m = 4$.
\end{lemma}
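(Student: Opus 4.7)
The plan is a case analysis on $\hbox{valency}_{\overline{\Gamma}_S}(v)$, combining the weight bound of Corollary \ref{max weight of edge}(3) with Lemma \ref{no more than m and 2 version 2} and the observation that $m$ is even. First I would note that because $F$ separates $M$, the $m$ curves of $\partial F$ partition the torus $\partial M$ into $m$ annuli that alternate between $\partial M\cap X^+$ and $\partial M\cap X^-$; closing up around $\partial M$ forces $m$ to be even. Combined with $m\geq 4$, it therefore suffices to show $m\leq 5$.

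By Proposition \ref{possible values for mu}(1), the hypothesis $\mu(v)>m\Delta(\alpha,\beta)-4$ gives $\hbox{valency}_{\overline{\Gamma}_S}(v)\in\{3,4,5\}$, with $\varphi_3(v)\geq 1$ when the valency is $4$ and $\varphi_3(v)\geq 4$ when it is $5$. Since $\dot\Phi_3^+$ is not a union of tight components, Proposition \ref{sep annulus-in-3-+} places us in the setting where Corollary \ref{max weight of edge}(3) applies, so every edge of $\overline{\Gamma}_S$ has weight at most $m+4$. The sum of the weights at $v$ equals $m\Delta(\alpha,\beta)\geq 6m$. The valency-$3$ case is immediate: $3(m+4)\geq 6m$ forces $m\leq 4$. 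For valency $4$, fix the triangle face $f$ at $v$ guaranteed by $\varphi_3(v)\geq 1$, with edges $\bar e_4,\bar e_1$ at $v$. The contrapositive of Lemma \ref{no more than m and 2 version 2}(1) says that if $w_1>m$ or $w_4>m$, the other of $w_1,w_4$ is $\leq 2$, so the sum at $v$ is at most $3(m+4)+2=3m+14$; otherwise both $w_1,w_4\leq m$ gives sum $\leq 4m+8$. Both subcases force $m\leq 4$.

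The main work is the valency-$5$ case with $\varphi_3(v)\geq 4$. Label the cyclic faces $f_1,\ldots,f_5$ at $v$ and edges $e_1,\ldots,e_5$ so that $e_i$ separates $f_i$ and $f_{i+1}$, and take $f_1$ to be the unique non-triangle when $\varphi_3(v)=4$. Set $S=\{i:w_i>m\}$. The contrapositive of Lemma \ref{no more than m and 2 version 2}(1) says that whenever $i\in S$, every edge at $v$ sharing a triangle face with $e_i$ has weight at most $2$. Since the only pair of cyclically adjacent edges at $v$ whose common face is not a triangle is $\{e_1,e_5\}$ (via $f_1$), any two elements of $S$ are either non-adjacent in the cycle or equal $\{1,5\}$. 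Enumerating the admissible $S$: $|S|\leq 1$ gives a contradiction with $m\geq 4$; each two-element set $\{1,3\},\{1,5\},\{3,5\}$ gives sum $\leq 3m+12$, forcing $m\leq 4$; and the maximal configuration $S=\{1,3,5\}$ forces $w_2,w_4\leq 2$ and sum $\leq 3(m+4)+4=3m+16$, giving $m\leq 5$. The subcase $\varphi_3(v)=5$ is stricter, since every cyclic adjacency is then via a triangle, so $|S|\leq 2$, yielding $\sum\leq 2m+14$ and a contradiction with $m\geq 4$. Combining the cases, $m\leq 5$, and the parity of $m$ forces $m=4$.

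The main obstacle is the valency-$5$ configuration $S=\{1,3,5\}$: absent the non-triangular face $f_1$, the usual independent-set bound on a $5$-cycle would give $|S|\leq 2$ and $m\leq 4$ directly, but $f_1$ decouples $e_1$ from $e_5$, permitting $S=\{1,3,5\}$ with total weight up to $3m+16$; this alone only yields $m\leq 5$, and it is the parity constraint (forced by the separating hypothesis on $F$) that finally pins the result down to $m=4$.
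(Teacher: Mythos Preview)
Your argument is correct and follows the same approach as the paper: a case analysis on $\hbox{valency}_{\overline{\Gamma}_S}(v)\in\{3,4,5\}$, bounding the total weight at $v$ via Lemma~\ref{no more than m and 2 version 2} and Corollary~\ref{max weight of edge}(3), then using the parity of $m$ (from $F$ separating) to pass from $m\leq 5$ to $m=4$. The paper's proof is terser, recording only the final bounds $\max\{3m+14,4m+4\}$ at valency $4$ and $\max\{3m+16,4m+6,5m\}$ at valency $5$; your introduction of the set $S=\{i:w_i>m\}$ and its independence in the adjacency-via-triangle graph is a clean way to see where those numbers come from.

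Two minor presentational points. First, you invoke only part~(1) of Lemma~\ref{no more than m and 2 version 2} for the contrapositive, but part~(1) applies only when $\bar e$ is negative; you should also cite part~(2), which gives the same bound $\leq m$ for positive edges, so that the contrapositive ``$w_i>m\Rightarrow$ adjacent $w_j\leq 2$'' holds regardless of sign. Second, in the valency-$5$ enumeration of two-element $S$ you list only $\{1,3\},\{1,5\},\{3,5\}$, omitting the independent sets $\{1,4\},\{2,4\},\{2,5\}$; each of these contains an element of $\{2,4\}$, which forces both cyclic neighbours (via triangles on both sides) to have weight $\leq 2$, so three of the $w_i$ are $\leq 2$ and the sum is $\leq 2(m+4)+6=2m+14<6m$, an outright contradiction. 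With these additions the case analysis is complete.
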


\pf Proposition \ref{possible values for mu} and Inequality \ref{eq delta 2} show that  $3 \leq \hbox{valency}_{\overline{\Gamma}_S}(v) \leq 5$ and if $v$ has valency $3$, then $\Delta(\alpha, \beta) \leq 6$ with equality only if $m = 4$. If it has valency $4$, Proposition \ref{possible values for mu} shows that $\varphi_3(v) \geq 1$. Lemma \ref{no more than m and 2 version 2} then implies that $\Delta(\alpha, \beta) m$, the sum of the weights of the edges incident to $v$, is bounded above by $\hbox{max}\{3m + 14, 4m + 4\}$. Hence if $\Delta(\alpha, \beta) > 5$, then $m = 4$ and $\Delta(\alpha, \beta) = 6$. Finally suppose that $v$ has valency $5$. In this case $\varphi_3(v) \geq 4$ (cf. Corollary \ref{possible values for mu}) so Lemma \ref{no more than m and 2 version 2} implies that $\Delta(\alpha, \beta) m \leq \hbox{max}\{3m + 16, 4m + 6, 5m\}$. Hence if $\Delta(\alpha, \beta) > 5$, then $m = 4$.
\qed

In the absence of vertices $v$ of $\overline{\Gamma}_S$ for which $\mu(v) > m \Delta(\alpha, \beta) - 4$, Corollary \ref{mu constant} implies that $\mu(v) = m \Delta(\alpha, \beta) - 4$ for all vertices.

\begin{lemma} \label{if equal}
Suppose that conditions \ref{eq background 2} hold and $\dot \Phi_3^+$ is not a union of tight components. Assume moreover that $\mu(v) = m \Delta(\alpha, \beta) - 4$ for all vertices $v$ of $\overline{\Gamma}_S$. If $\Delta(\alpha, \beta) > 5$,  then either

$(i)$ $m = 4$, or

$(ii)$ $m = 8, \Delta(\alpha, \beta) = 6$, each edge has weight $12$, and $\overline{\Gamma}_S$ is rectangular.

\end{lemma}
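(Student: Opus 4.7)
The plan is to perform a case analysis on the local valency and face structure of $\overline{\Gamma}_S$. Under the standing hypothesis $\mu(v) = m\Delta(\alpha,\beta) - 4$, Proposition~\ref{local structure} forces every vertex of $\overline{\Gamma}_S$ to have valency exactly $4$ (with $\varphi_4(v) = 4$), $5$ (with $\varphi_3(v) = 3$ and $\varphi_4(v) = 2$), or $6$ (with $\varphi_3(v) = 6$). The key structural observation is that a valency-$4$ vertex has only quad corners while a valency-$6$ vertex has only triangle corners, so no edge can join a valency-$4$ vertex to a valency-$6$ vertex (the face on each side of such an edge would have to be both a quad at one endpoint and a triangle at the other). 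Combined with the vertex-sum identity $\sum_{\bar e \ni v} \mathrm{wt}(\bar e) = m\Delta$ and the weight bounds from Corollary~\ref{max weight of edge} (each edge $\leq m + 4$), Lemma~\ref{weight of positive version 2} (positive edges $\leq m/2 + 4$), and Lemma~\ref{no more than m and 2 version 2}, this reduces the analysis to three configurations: uniformly hexagonal, uniformly rectangular, or containing at least one valency-$5$ vertex.

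In the hexagonal configuration, the vertex-sum $m\Delta \geq 6m$, the positive-edge cap $\leq m/2 + 4$, and the existence of positive edges (Propositions~\ref{plus-minus vertices} and~\ref{euler}(1)) force $m \leq 8$. The case $m = 6$ is excluded by Proposition~\ref{delta = 2} (which would force $\Delta$ even, but $\Delta \geq 6$ then violates the positive-edge cap). The case $m = 8$ is excluded by Lemma~\ref{no more than m and 2 version 2}(2): applied to any positive edge of maximum weight $m = 8 > m - 1$, it forces every other edge in both adjacent triangles to have weight $\leq 2$, which drops the vertex-sum at either endpoint below $m\Delta = 48$. Only $m = 4$ survives, giving case~(i).

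In the rectangular configuration, the vertex-sum yields $m(\Delta - 4) \leq 16$, so $m \leq 8$. For $m = 8$ the inequality is tight: $\Delta = 6$ and every edge has weight exactly $12 = m + 4$, matching case~(ii). For $m < 8$, Proposition~\ref{delta = 2} rules out $m = 6$; the odd cases $m = 5, 7$ are excluded via a refined analysis of the quad face relations in $\pi_1(\widehat X^+(\alpha_-))$ using Lemma~\ref{not in P}, modelled on the disk-type arguments of \S\ref{hexagonal case} and~\S\ref{rectangular}, leaving $m = 4$ for case~(i).

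The remaining mixed configuration, in which some vertex $v$ has valency $5$, is excluded as follows. In either of the two cyclic arrangements TTTQQ or TTQTQ of three triangle and two quad corners around $v$, at least one edge at $v$ bounds two triangles; applying Lemma~\ref{no more than m and 2 version 2} to both adjacent triangles tightly constrains the weights of the five edges at $v$, and a direct count shows the vertex-sum $\sum_{\bar e \ni v} \mathrm{wt}(\bar e)$ cannot reach $m\Delta \geq 6m$ once $m \geq 5$; for $m = 4$ conclusion~(i) follows directly. The principal obstacle is the fine exclusion of $m = 8$ in the hexagonal case and of $m \in \{5,7\}$ in the rectangular case: both require a delicate interplay between the vertex-sum identity, the weight bounds of Lemma~\ref{no more than m and 2 version 2}, and the $\pi_1(\widehat X^+(\alpha_-))$-relations arising from the small faces.
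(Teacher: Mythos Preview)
Your overall trichotomy (uniformly hexagonal, uniformly rectangular, or some valency-$5$ vertex exists) is sound and matches the paper's approach in spirit, but there are two concrete problems.

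First, since $F$ is separating, $m = |\partial F|$ is necessarily even: the boundary circles of $F$ pair off on $\partial M$ into the annuli making up $\partial M \cap X^\epsilon$. So the cases $m = 5, 7$ in your rectangular analysis simply do not arise, and your appeal to a ``refined analysis of the quad face relations \ldots\ modelled on the disk-type arguments of \S\ref{hexagonal case} and \S\ref{rectangular}'' is both unnecessary and too vague to constitute an argument. The rectangular case is in fact complete once you note $m$ is even: $m\Delta \leq 4(m+4)$ gives $m \leq 8$; Proposition~\ref{delta = 2} eliminates $m = 6$; $m = 8$ forces $\Delta = 6$ with all weights equal to $12$ (conclusion~(ii)); and $m = 4$ is conclusion~(i).

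Second, and more seriously, your hexagonal bound ``$m \leq 8$'' is not justified as written. Knowing that some edges at a vertex are positive and hence have weight at most $m/2 + 4$ does not by itself bound $m$, since the remaining edges could still have weight up to $m + 4$; a vertex-sum estimate with two positive edges only gives $6m \leq 2(m/2+4) + 4(m+4) = 5m + 24$, i.e.\ $m \leq 24$. The paper proceeds differently: at a valency-$6$ vertex $v$, either some incident edge has weight $> m$, in which case Lemma~\ref{no more than m and 2 version 2} forces its two neighbours at $v$ to have weight $\leq 2$, and a short case check bounds the vertex-sum by $\max\{3m+18,\,4m+8\}$, forcing $m = 4$ (after Proposition~\ref{delta = 2} removes $m = 6$); or every incident edge has weight $\leq m$, whence $\Delta = 6$ and all weights equal $m$ exactly, and then Lemma~\ref{no more than m and 2 version 2}(2) applied to a positive edge in a triangle with an edge of weight $m > 2$ gives weight $\leq m-1$ for $m > 6$, a contradiction, so $m \leq 6$ and again Proposition~\ref{delta = 2} leaves only $m = 4$. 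Your separate treatment of $m = 8$ via Lemma~\ref{no more than m and 2 version 2}(2) is correct in spirit but presupposes that all edges have weight exactly $m$, which you have not established.

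Your valency-$5$ case is only a sketch. The paper makes it precise by bounding the vertex-sum at such a vertex by $\max\{3m+16,\,4m+10,\,5m\}$ via Lemma~\ref{no more than m and 2 version 2}; since $m\Delta \geq 6m > 5m$ and $m$ is even, this forces $m = 4$.
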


\pf Proposition \ref{possible values for mu} shows that $4 \leq \hbox{valency}_{\overline{\Gamma}_S}(v) \leq 6$ for all vertices of $\overline{\Gamma}_S$. Further, Proposition \ref{local structure} shows that if
\vspace{-.1cm}
$$\eqno{(\ref{if equal}.1)} \;\;\;\;\;\;\;\;\;\;\; \left\{
\begin{array}{l}
\hbox{valency}_{\overline{\Gamma}_S}(v) = 4, \hbox{ then } \varphi_3(v) = 0, \varphi_4(v) = 4, \hbox{ and } \varphi_j(v) = 0 \hbox{ for } j > 4 \\ \\
\hbox{valency}_{\overline{\Gamma}_S}(v) = 5, \hbox{ then } \varphi_3(v) = 3 \hbox{ and } \varphi_4(v) = 2, \hbox{ and } \varphi_j(v) = 0 \hbox{ for } j > 4  \\ \\
\hbox{valency}_{\overline{\Gamma}_S}(v) = 6, \hbox{ then } \varphi_3(v) = 6, \hbox{ and } \varphi_j(v) = 0 \hbox{ for } j > 3
\end{array} \right. $$
 \vspace{.05cm}

Let $v$ be a vertex of valency $6$. Since the weight of each edge of $\overline{\Gamma}_S$ is at most $m+4$, Lemma \ref{no more than m and 2 version 2} implies that if some edge incident to $v$ has weight larger than $m$ then $\Delta(\alpha, \beta) m$, the sum of the weights of the edges incident to the vertex, is bounded above by $\hbox{max}\{3m + 18, 4m + 8\}$. Hence Proposition \ref{delta = 2} implies that $m = 4$ and $\Delta(\alpha, \beta) =  6$. If, on the other hand, each edge incident to $v$ has weight $m$ or less, then Inequality \ref{eq delta 2} shows that $\Delta(\alpha, \beta) = 6$ and each such edge has weight $m$. If some edge incident to $v$ connects it to a vertex $v_1$ of valency less than $6$,  \ref{if equal}.1 implies that the valency of $v_1$ is $5$ and $\varphi_3(v_1) = 3$. Then Lemma \ref{no more than m and 2 version 2} shows that $6m$, the sum of the weights of the edges incident to $v_1$, is bounded above by $\hbox{max}\{4m + 10, 5m + 4\}$. In either case, $m = 4$. Assume then that each edge incident to $v$ connects it to a vertex $v_1$ of valency $6$. Proceeding inductively we see that if $m > 4$, then each vertex in the component of $\overline{\Gamma}_S$ containing $v$ has valency $6$. It follows that $\overline{\Gamma}_S$ is hexagonal (cf. the proof of Lemma \ref{even weight}) and each edge of $\overline{\Gamma}_S$ has weight $m$. Since $\overline{\Gamma}_S$ must have a positive edge, Lemma \ref{weight of positive version 2} shows that $m = 6$.  But this is impossible by  Proposition \ref{delta = 2}. Thus $m = 4$.

Next let $v$ be a vertex of valency $5$. Then $\Delta(\alpha, \beta) m$, the sum of the weights of the edges incident to $v$, is bounded above by $\hbox{max}\{3m + 16, 4m + 10, 5m\}$. Since $\Delta(\alpha, \beta) > 5$, the only possibility is for $m = 4$.

Finally if there are no vertices of valency $5$ or $6$, each vertex of $\overline{\Gamma}_S$ has valency $4$ and thus Identities \ref{if equal}.1 implies that it has no triangle faces. Lemma \ref{euler} then shows that $\overline{\Gamma}_S$ is rectangular. Inequality \ref{eq delta 2} shows that $m \leq 8$ and
$$\Delta(\alpha, \beta) \leq \left\{
\begin{array}{ll} 8 & \hbox{ if } m = 4 \\
6  & \hbox{ if } m = 6, 8
\end{array} \right. $$
Since $\Delta(\alpha, \beta) \geq 6$, Proposition \ref{delta = 2} implies that $m \ne 6$. If $m = 8$, it is easy to see that each edge of $\overline{\Gamma}_S$ has weight $12$. This completes the proof.
\qed

By the last two results, the proof of Theorem \ref{main} when $\dot \Phi_3^+$ is not a union of tight components reduces to proving the following two propositions.

\begin{prop} \label{m=8rectangular}
If $m = 8, \Delta(\alpha, \beta) = 6$, $\overline{\Gamma}_S$ is rectangular, each of its edges has weight $12$, and $\dot \Phi_3^+$ is not a union of tight components, then $\Delta(\alpha, \beta) \leq 5$.
\end{prop}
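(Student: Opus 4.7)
The plan is to derive a contradiction from the stipulated hypotheses, since they include $\Delta(\alpha,\beta)=6$ whereas the conclusion asserts $\Delta(\alpha,\beta) \le 5$. I will follow the same general strategy used for the $m=4$ rectangular case in \S \ref{rectangular}, but with $m=8$ the combinatorial input is quite different and the key leverage comes from the largeness of the edge weights.

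First, I would observe that since $m = 8$ is divisible by $4$, Lemma \ref{weight of positive version 2} bounds the weight of any positive edge of $\overline{\Gamma}_S$ by $m/2 + 4 = 8$, which is strictly less than $12$. Thus every edge of $\overline{\Gamma}_S$ is negative, and by the parity rule each weight-$12$ edge carries a label permutation of the form $j \mapsto j+2k \pmod 8$. Lemma \ref{not identity permutation version 2} rules out the identity case $k=4$, so $k \in \{1,2,3\}$. Next, using the hypothesis that $\dot\Phi_3^+$ is not a union of tight components together with Proposition \ref{sep annulus-in-3-+}, I know the precise structure of $\dot\Phi_1^+$ and $\dot\Phi_3^+$; combining this with Proposition \ref{tightness for large j}(3), I obtain explicit descriptions of $\dot\Phi_5^\pm$ as unions of annular components completing to $\widehat F$-essential annuli.

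Next I would fix a family $e_1,\ldots,e_{12}$ of parallel edges joining vertices $v$ and $v'$, with labels $1,2,\ldots,8,1,2,3,4$ at $v$, and analyse the $11$ consecutive bigons $R_1,\ldots,R_{11}$. These alternate between the $+$- and $-$-sides of $F$, and for the interior bigons the edge images lie in $\dot\Phi_5^\mp$, i.e.\ in the annular subsurfaces whose completions are the tight/annular components. Each such bigon on the $+$-side therefore yields a relation $\bar x_{\ell(i)}\,\bar x_{\ell(i)+2k}^{-1} = t^{n_i}$ in $\pi_1(\widehat X^+(\alpha_-))$, where $t$ represents the slope $\alpha_-$ and thus lies in the subgroup $P$ of Definition \ref{P}; a dual analysis applies to the $-$-side bigons.

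Finally I would iterate these relations along the orbit of $j \mapsto j+2k \pmod 8$, which has length $8/\gcd(2k,8) \in \{4,8\}$, to identify many of the $\bar x_j$ modulo $P$. Each rectangle face of $\Gamma_S$ contributes an additional length-$4$ word relation $\bar x_{j_1}^{\varepsilon_1}\bar x_{j_2}^{\varepsilon_2}\bar x_{j_3}^{\varepsilon_3}\bar x_{j_4}^{\varepsilon_4} \in P$, and combining the chained bigon relations with such a face relation should force some single $\bar x_j$ to lie in $P$, contradicting Lemma \ref{not in P}. The hard part will be handling the three cases $k=1,2,3$ uniformly: when $k=2$ the orbit has period $4$ and meshes nicely with the rectangle structure, whereas for $k=1,3$ the orbit covers all labels but the needed containment of bigon-edge images in the correct tight subsurface $\dot\Phi_5^\mp$ must be checked more delicately, paralleling the case-by-case cutting-and-pasting used in Lemmas \ref{lem8}--\ref{lem10bis}. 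In particular, one likely needs an analogue of Lemma \ref{PropA} that classifies essential $Q$-$n$-gons for $n \le 4$ in the present setting in order to rule out the final combinations.
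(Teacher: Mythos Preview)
Your proposal has genuine gaps and misses the key simplification the paper uses.

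First, two concrete errors. You invoke Lemma~\ref{not identity permutation version 2} to rule out the identity permutation, but that lemma requires the edge to be incident to a \emph{triangle} face; in the rectangular case there are no triangles, so the lemma does not apply. Second, you claim Proposition~\ref{tightness for large j}(3) gives ``$\dot\Phi_5^\pm$ as unions of annular components completing to $\widehat F$-essential annuli''. This is false: the proposition says the components of $\breve\Phi_j^+$ are \emph{tight} (complete to disks) for $j \geq 5$, not annular. Your subsequent plan to extract relations of the form $\bar x_{\ell(i)} \bar x_{\ell(i)+2k}^{-1} = t^{n_i}$ from interior bigons depends on the edge images lying in $A_- = \dot\Phi_2^-$, and only a few middle bigons among the eleven have this property; you cannot chain enough of them to cover the orbit of the permutation.

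The paper's approach avoids all of this by a direct reduction to the $m=4$ rectangular case. Since $t_3^+ = 0$ (by Proposition~\ref{phi3 options}(ii)), one has $l_+ \leq m + 2 = 10$, so the length-$11$ essential homotopy along each weight-$12$ family must start on the $-$-side; hence every rectangle face lies on the $+$-side and the extreme edges of each family have images in $\dot\Phi_{11}^-$. By Proposition~\ref{tightness for large j}(3) and Proposition~\ref{tight increase}, $\dot\Phi_{11}^-$ has at least six tight components, and since the extreme edges connect pairs of boundary components, exactly two of these components are twice-punctured disks containing (together) four of the $b_j$. Calling their union $Q$, one identifies the relevant boundary components (say $b_1, b_4, b_5, b_8$) and shows as in Lemma~\ref{lem1'} that $b_1, b_4$ lie in different components of $Q$. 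At this point the situation is \emph{formally identical} to \S\ref{rectangular}: the rectangle faces are $Q$-$4$-gons in $X^+$ with corners on the annuli $A_{4,5}$ and $A_{8,1}$, and Lemmas~\ref{PropA} and~\ref{PropB} apply verbatim with $(45)$ and $(81)$ replacing $(23)$ and $(41)$. Your final sentence hints at this, but the point is that no new analogue is needed---the existing argument transfers directly once the correct $Q$ is identified.
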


\begin{prop} \label{m=4}
If $m = 4$ and $\dot \Phi_3^+$ is not a union of tight components, then $\Delta(\alpha, \beta) \leq 5$.
\end{prop}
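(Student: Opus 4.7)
\textbf{Proof plan for Proposition \ref{m=4}.} The plan is to assume $\Delta(\alpha,\beta) \geq 6$ toward a contradiction. Since $\dot\Phi_3^+$ is not a union of tight components, Proposition \ref{sep annulus-in-3-+} puts us in situation (ii): $M(\beta)$ is Seifert with base orbifold $P^2(2,n)$ for some $n>2$, $\widehat F$ is vertical, $\widehat{\breve\Phi_1^+}$ is a single $\widehat F$-essential annulus, and $\widehat{\breve\Phi_3^+}$ is a pair of such annuli coming from the two non-separating annuli $A_1^-,A_2^-$ in $X^-$. In particular, Corollary \ref{max weight of edge}(3) limits each edge of $\overline{\Gamma}_S$ to weight at most $m+4=8$, while Lemma \ref{weight of positive version 2} limits positive edges to weight at most $m/2+4=6$.

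First I would settle the graph combinatorics. Using Corollary \ref{mu constant}, split into the cases $\mu(v) > m\Delta - 4$ for some $v$ (handled by Lemma \ref{mu larger}, forcing $m=4$ and providing a vertex of valency $\leq 5$) and $\mu(v) = m\Delta-4$ for all $v$ (Lemma \ref{if equal}, giving either $m=4$ or the rectangular $m=8$ case; the latter is excluded by Proposition \ref{delta = 2} since $\Delta=6$ is even and $m=8 \equiv 0 \pmod 4$ would still need separate treatment, but the $m=8$ case in Lemma \ref{if equal} is only reached when $\dot\Phi_3^+$ is tight). Hence I may assume $\Delta=6$, $m=4$, and use Proposition \ref{possible values for mu} together with Lemma \ref{no more than m and 2 version 2} to enumerate the permitted local pictures at each vertex: at a vertex of valency $v$ the weighted sum of incident edges equals $24$, with at most one edge of weight $>2$ being adjacent to another edge of weight $>2$ only under strong constraints on the associated label permutations.

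Next I would exploit the group presentation from Proposition \ref{kbeta},
\[
\Gamma = \pi_1(M(\beta)) = \langle a,b,z \mid a^2,\, b^n,\, abz^{-2}\rangle,\qquad \kappa = az^{-1}b^{-1}z,
\]
where $\kappa$ represents the core $K_\beta$ of the $\beta$-filling solid torus. The image of $\alpha$ in $\Gamma$ is conjugate to $\kappa^{\pm 6}$. Each rectangle or triangle face $f$ of $\Gamma_S$ gives a relation $W_f = 1$ in $\pi_1(\widehat X^\epsilon)$ where $\epsilon$ is the side $f$ lies on; bigons whose two edges are parallel copies sitting in $A_-=\dot\Phi_2^-$ contribute powers of the fibre class $t$, and the corresponding $x_j$ project to $\bar x_j \in \{a^{\pm 1}(ab)^k, b^{\pm 1}(ab)^k\}$ in $\pi_1(D^2(2,n))$ by Proposition \ref{not peripheral}. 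Extended $S$-cycles are possible (Proposition \ref{EScycle}(ii)) but doubly-extended ones are not (Proposition \ref{2SC}), and Lemmas \ref{not identity permutation version 2}, \ref{no more than m and 2 version 2} sharply restrict which permutations can accompany a heavy edge adjacent to a triangle. I would enumerate the few allowed configurations (following the template of \S \ref{hexagonal case} and \S \ref{rectangular}, now with the full $\Gamma$ rather than just $\pi_1(\widehat X^+(\alpha_-))$ available) and construct explicit essential $A_-$-disks with small numbers of $X_2,X_4$-corners.

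The main obstacle, and the place the argument really has to work, will be the final word-calculus step: each admissible configuration produces a short word $W$ in the $\bar x_j$ which must represent $1$ in $\Gamma$, and I must show either that $W$ forces $\bar x_j$ to be peripheral (contradicting Lemma \ref{not in P}, appropriately generalised from $\pi_1(\widehat X^+(\alpha_-))$ to $\Gamma$), or that $W$ combined with the relation $abz^{-2}=1$ and the torsion relations $a^2=b^n=1$ collapses $\Gamma$ to a group inconsistent with having $P^2(2,n)$ as base orbifold (e.g. forces extra torsion of order coprime to $2n$, or forces $n=2$ contradicting $n>2$). Because positive edges are capped at weight $6$ and negative edges at weight $8$, the constraint $\sum_{e\ni v}wt(e)=24$ leaves only a handful of vertex stars; each produces one of a small family of words $W$ that can be ruled out by direct manipulation in $\Gamma$, paralleling Lemmas \ref{lem9bis}, \ref{lem10bis} and their analogue in \S \ref{rectangular}. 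This contradicts the assumption $\Delta(\alpha,\beta) \geq 6$ and yields $\Delta(\alpha,\beta)\leq 5$.
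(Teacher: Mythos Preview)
Your plan has a genuine gap and diverges substantially from the paper's argument.

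First, two concrete errors in the setup. Your reduction to $\Delta=6$ does not follow from the lemmas you cite: Lemma~\ref{mu larger} and Lemma~\ref{if equal} only force $m=4$ when $\Delta>5$; for $m=4$ the inequalities in their proofs (e.g.\ $4\Delta\le 3m+16=28$ at a valency-$5$ vertex, and $\Delta\le 8$ in the rectangular subcase) still allow $\Delta=7$ or $8$, and Proposition~\ref{delta = 2} is unavailable since $m=4\equiv 0\pmod 4$. So your enumeration would have to cover several values of $\Delta$, not just $6$. Second, your parenthetical that ``the $m=8$ case in Lemma~\ref{if equal} is only reached when $\dot\Phi_3^+$ is tight'' is false---that lemma is stated precisely under the hypothesis that $\dot\Phi_3^+$ is \emph{not} a union of tight components---but this is moot here since $m=4$ is given.

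The deeper issue is that the ``final word-calculus step'' you flag as the main obstacle is never carried out, and there is no evidence it would succeed. In the earlier sections the word calculus took place in $\pi_1(\widehat X^+(\alpha_-))$, but here $\alpha_-=\varphi_+$, so that group degenerates; you propose instead to work in $\Gamma=\pi_1(P^2(2,n))$ and vaguely hope each configuration ``collapses $\Gamma$'' to something impossible. This is speculation, not a proof.

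The paper avoids all of this combinatorics. It observes that $G=\Gamma/\langle\!\langle\kappa^\Delta\rangle\!\rangle$ is a quotient of $\pi_1(M(\alpha))$, rewrites $G$ as the relative presentation $\langle a,d,z\mid a^2,\,d^n,\,(ad)^\Delta,\,az^3dz^{-1}\rangle$ (Lemma~\ref{new presentation}), and then uses external algebraic results: Gerstenhaber--Rothaus shows $T(2,n,\Delta)\hookrightarrow G$; Edjvet--Howie and Bogley--Pride show every finite subgroup of $G$ lies in a conjugate of $T$; a deficiency argument shows $Z(G)$ is finite, hence trivial. Consequently the epimorphism $\pi_1(M(\alpha))\to G$ kills the Seifert-fibre center and factors through the triangle group $T(a,b,c)$, whose image (being generated by torsion) lands in $\langle\!\langle T\rangle\!\rangle$---contradicting $G/\langle\!\langle T\rangle\!\rangle\cong\mathbb Z/2$. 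This works uniformly for all $\Delta\ge 6$ and all $n>2$, with no case analysis of $\overline\Gamma_S$ whatsoever. That is the missing idea in your approach.
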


\begin{proof}[Proof of Proposition \ref{m=8rectangular}]
Each component of $\dot\Phi_{j}^-$ is tight for $j \geq 5$ (Proposition \ref{tightness for large j}) and so $\dot\Phi_{11}^-$ has at least six tight components (Proposition \ref{tight increase}(2)). On the other hand,
since the weight of each edge of $\overline{\Gamma}_S$ is $12$, at least two components of $\dot\Phi_{11}^-$ have two or more outer boundary components. It follows that $\dot\Phi_{11}^-$ has two components, each having two outer boundary components. We shall call the union of these two large components $Q$.
By Lemma \ref{weight of positive version 2} each edge of $\overline{\Gamma}_S$ is negative.
Without loss of generality we may assume that there is a parallel family of
edges $\bar e$ of $\Gamma_S$ whose label sequence at one of the vertices
$v$ adjacent to $\bar e$ is ${1\ 2\ 3\ 4\ 5\ 6\ 7\ 8\ 1\ 2\ 3\ 4}$.
Therefore $b_1$ and $b_4$ belong to $Q$, and by looking at the corners
of the 4-gons of $\Gamma_S$ contiguous to $\bar e$ at $v$ we see that
$b_5$ and $b_8$ also belong to $Q$.
As in Lemma~\ref{lem1'}, $b_1$ and $b_4$ belong to different components
of $Q$, as do $b_5$ and $b_8$.

This case is now ruled out exactly as in \S \ref{rectangular}, with the corners (45) and (81)
replacing (23) and (41).
\end{proof}

The proof of Proposition \ref{m=4} requires a certain amount of preparatory work. We use $\Delta$ to denote $\Delta(\alpha, \beta)$ and assume it is at least $6$.

Let $\gamma_\beta \in \pi_1(M(\beta))$ be the element represented by the core $K_\beta$ of the Dehn filling solid torus. Then $[\alpha] \in \pi_1(M)$ is sent to $\gamma_\beta^\Delta \in \pi_1(M(\beta)) = \pi_1(M)/ \langle \langle [\beta] \rangle \rangle$. Hence $\pi_1(M)/ \langle \langle [\alpha], [\beta] \rangle \rangle \cong  \pi_1(M(\beta))/ \langle \langle \gamma_\beta^\Delta  \rangle \rangle$. Note that this group is a quotient of $\pi_1(M)/ \langle \langle [\alpha] \rangle \rangle \cong \pi_1(M(\alpha))$.

The quotient of $\pi_1(M(\beta))$ by the fibre-class is $\Gamma = \pi_1(P^2(2, n))$. As before, denote the image of $\gamma_\beta$ in $\Gamma$ by $\kappa$. By Proposition \ref{kbeta} $\Gamma$ admits a presentation $\langle a, b, z : a^2, b^n, abz^{-2}\rangle$ such that up to conjugation and taking inverse, $\kappa = a z^{-1} b^{-1} z$. Thus if we set $G = \Gamma / \langle \langle \kappa^\Delta \rangle \rangle$, then $G$ has a presentation
$$G = \langle a, b, z: a^2, b^n, abz^{-2}, (az^{-1}b^{-1}z)^\Delta \rangle$$
Since $\pi_1(M(\beta))/ \langle \langle \gamma_\beta^\Delta  \rangle \rangle$ is a quotient of $\pi_1(M(\alpha))$, the same is true for $G$. We will show that this is impossible when $\Delta \geq 6$.

First we give an alternate presentation of $G$ which will be useful in the sequel.

\begin{lemma} \label{new presentation}
$G \cong \langle a, d, z: a^2, d^n, (ad)^\Delta, a z^3 d z^{-1} \rangle$.
\end{lemma}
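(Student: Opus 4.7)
The plan is to perform a Tietze transformation on the presentation of $G$ given above the lemma, introducing a new generator $d$ defined in terms of $b$ and $z$, and then verifying that the three ``transformed'' relations become exactly $d^n$, $(ad)^\Delta$, and $a z^3 d z^{-1}$, while the old relation $abz^{-2}$ becomes redundant (equivalently, is replaced by the new one).

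Concretely, I would set
$$d \;=\; z^{-1} b^{-1} z, \qquad \text{equivalently} \qquad b \;=\; z d^{-1} z^{-1}.$$
First, $d$ is conjugate to $b^{-1}$, so the relation $b^n = 1$ translates directly into $d^n = 1$. Second, a one-line computation gives
$$a z^{-1} b^{-1} z \;=\; a z^{-1}(z d z^{-1}) z \;=\; a d,$$
so the relation $(a z^{-1} b^{-1} z)^\Delta = 1$ becomes $(ad)^\Delta = 1$.

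The only nontrivial check is that the old relation $a b z^{-2} = 1$ is equivalent, modulo the other relations, to $a z^3 d z^{-1} = 1$. Substituting $b = z d^{-1} z^{-1}$ into $abz^{-2}$ yields $a z d^{-1} z^{-3}$; inverting and using $a^2 = 1$ converts this to $z^3 d z^{-1} a = 1$, which (again using $a^2 = 1$) is equivalent to $a = z^3 d z^{-1}$, i.e., to $a z^3 d z^{-1} = 1$ after multiplying through. Running the substitution in reverse (introduce $b$ as $z d^{-1} z^{-1}$ and eliminate $d$) shows the two presentations define the same group.

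There is no real obstacle here: the lemma is a routine Tietze-transformation statement, and the only care required is keeping track of signs and the use of $a^2 = 1$ when converting $abz^{-2}$ into $a z^3 d z^{-1}$. I would present the proof in one short paragraph exhibiting the substitution $d = z^{-1}b^{-1}z$, verifying the three identities above, and noting the inverse substitution $b = z d^{-1} z^{-1}$ gives the equivalence of the two presentations.
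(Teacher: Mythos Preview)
Your proposal is correct and takes exactly the same approach as the paper: the paper's proof is the one-line ``Let $d = z^{-1}b^{-1}z$ and eliminate $b = zd^{-1}z^{-1}$. This gives the stated presentation.'' You have simply spelled out the Tietze transformation in more detail, and your verifications (including the cyclic/inversion step using $a^2=1$ to convert $azd^{-1}z^{-3}=1$ into $az^3dz^{-1}=1$) are all correct.
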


\begin{proof}
Let $d = z^{-1}b^{-1}z$ and eliminate $b = zd^{-1}z^{-1}$. This gives the stated presentation.
\end{proof}

Lemma \ref{new presentation} shows that $G$ is obtained from the triangle group $T = T(2, n, \Delta)$ by adding a new generator $z$ and the relation $az^3dz^{-1} = 1$. Such {\it relative presentations} (\cite{BP}) have been studied extensively. In particular, since $T$ is residually finite, a result of Gerstenhaber and Rothaus \cite{GR} implies

\begin{lemma} \label{T injects}
The natural map $T \to G$ is injective.
\qed
\end{lemma}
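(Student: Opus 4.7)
The plan is to recognize this lemma as a direct application of the Gerstenhaber--Rothaus framework to the relative presentation already in hand. From Lemma~\ref{new presentation}, $G$ is exhibited as the one-relator product
\[
G \;\cong\; T * \langle z \rangle \,\big/\, \langle\!\langle\, a z^{3} d z^{-1} \,\rangle\!\rangle.
\]
The first step is to read off the exponent sum of $z$ in the single relator $r = a z^{3} d z^{-1}$: it equals $3-1 = 2 \neq 0$. This non-vanishing is the essential hypothesis.

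The second step is to check that $T = T(2,n,\Delta)$ is residually finite. Since $n \geq 3$ and $\Delta \geq 6$, the quantity $\tfrac{1}{2}+\tfrac{1}{n}+\tfrac{1}{\Delta}$ is at most $1$, so $T$ acts properly discontinuously and cocompactly on either the Euclidean plane or the hyperbolic plane, realizing it as a discrete subgroup of $\mathrm{Isom}(\mathbb{R}^{2})$ or of $PSL_{2}(\mathbb{R})$. Either way $T$ is a finitely generated linear group in characteristic zero, hence residually finite by Malcev's theorem.

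The third step is to invoke the theorem of Gerstenhaber and Rothaus~\cite{GR} together with its by-now-standard extension from groups embeddable in compact connected Lie groups to arbitrary (finitely generated) residually finite groups: if $K$ is residually finite and $w \in K * \langle z \rangle$ has non-zero exponent sum on $z$, then the natural homomorphism $K \to K * \langle z \rangle / \langle\!\langle w \rangle\!\rangle$ is injective. Taking $K = T$ and $w = a z^{3} d z^{-1}$ yields the desired injectivity of $T \to G$.

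There is essentially no obstacle in executing this plan; the only content is identifying the correct relative presentation and verifying the two hypotheses (non-zero exponent sum and residual finiteness of the coefficient group). The only subtlety is citing the appropriate version of Gerstenhaber--Rothaus, since the original statement requires embeddability in a compact connected Lie group, whereas the residually-finite version is what applies here cleanly.
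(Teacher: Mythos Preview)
Your proposal is correct and takes essentially the same approach as the paper: the paper simply remarks that since $T$ is residually finite, the result of Gerstenhaber and Rothaus~\cite{GR} applies to the relative presentation of Lemma~\ref{new presentation}. You have supplied the details the paper leaves implicit (the non-zero exponent sum of $z$ in the relator and the reason $T$ is residually finite), but the argument is the same.
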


The specific relation $az^3dz^{-1}$ is analysed by Edjvet and Howie in \cite{EH}, in the more general setting where $T$ is replaced by an arbitrary group $H$ generated by $a$ and $d$. They show, using the method of Dehn (or Van Kampen) diagrams, that the natural map $H \to G$ is injective \cite[Proposition 1]{EH}. Combining this proof with a result of Bogley and Pride \cite{BP} gives us the following.

\begin{lemma} \label{finite in T}
Any finite subgroup of $G$ is contained in a conjugate of $T$.
\end{lemma}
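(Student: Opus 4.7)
The plan is to view $G$ as a one-relator quotient of the free product $T \ast \langle z \rangle$ modulo the normal closure of the single word $r = az^3dz^{-1}$, and then invoke the machinery of relative presentations. Writing $\mathcal{P} = \langle T, z \mid r \rangle$, the quotient by $\langle\langle r\rangle\rangle$ is exactly $G$, and the content of Lemma \ref{T injects} is that $T$ embeds, already established via Gerstenhaber--Rothaus. I would strengthen this using pictures over $\mathcal{P}$ to control torsion.

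First I would recall the setup of Bogley--Pride \cite{BP}: for a relative presentation $\mathcal{P}$ satisfying a suitable asphericity condition (no reduced spherical picture exists, up to standard ambiguities), every finite subgroup of the resulting quotient group is conjugate into a vertex group of $\mathcal{P}$ or into the cyclic subgroup generated by a root of the (unique, in our case) relator. In our setting the vertex groups are $T$ and the infinite cyclic group $\langle z \rangle$, and the relator $r = az^3dz^{-1}$ has $z$-exponent sum $2$ and contains the non-central element $a$ of $T$, so $r$ is not a proper power in $T \ast \langle z \rangle$. Hence the Bogley--Pride dichotomy reduces, under asphericity, to: every finite subgroup of $G$ is conjugate into $T$ or into $\langle z \rangle$, and the latter case produces only the trivial group.

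The main obstacle is therefore verifying asphericity of $\mathcal{P}$, and this is precisely what is carried out by Edjvet and Howie \cite{EH} for relators of the form $az^3dz^{-1}$ over an arbitrary two-generator group $H = \langle a, d\rangle$. My plan is to quote their diagram analysis: they work with reduced spherical pictures over $\mathcal{P}$, assign angles to corners of $z$-regions and apply a curvature/Euler-characteristic count to conclude that no such picture exists (their Proposition 1 is stated as giving injectivity of $H \to G$, but the proof proceeds precisely by proving asphericity, which is the stronger statement one needs). Specialising $H$ to the triangle group $T(2,n,\Delta)$ adds no obstruction since asphericity is proved for general $H$.

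Combining these two ingredients yields the lemma: any finite subgroup $K \leq G$ is, by Bogley--Pride applied to the aspherical relative presentation $\mathcal{P}$, conjugate into a vertex group; the $\langle z \rangle$ case forces $K = 1$, which is trivially contained in $T$, so in every case $K$ is conjugate into $T$. I do not expect any further calculation to be required; the only delicate point is invoking \cite{EH} with the correct interpretation of their argument as yielding asphericity, rather than merely the Freiheitssatz.
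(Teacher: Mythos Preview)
Your approach is the same as the paper's: identify $G$ as the group of the relative presentation $\mathcal{P} = \langle T, z \mid az^3dz^{-1}\rangle$, quote Edjvet--Howie \cite{EH} for asphericity, observe that the relator is not a proper power, and then invoke Bogley--Pride \cite[(0.4)]{BP} to conclude that every finite subgroup is conjugate into $T$.

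There is, however, a genuine gap in your justification of asphericity. You write that ``Specialising $H$ to the triangle group $T(2,n,\Delta)$ adds no obstruction since asphericity is proved for general $H$.'' This is not correct. The curvature argument in \cite{EH} does \emph{not} go through for arbitrary $H$: their Case~2 analysis (page~353) has a list of exceptional situations in which the diagram count fails, namely when $H$ is finite, or when $H$ satisfies some short relation (other than $a^2$) involving at most three occurrences each of $a$ and $d$, or a relation that is a product of at most five words of the form $(ad^{\pm1})^{\pm1}$. The paper's proof explicitly checks that none of these exceptional relations hold in $T(2,n,\Delta)$ for $n\ge 3$ and $\Delta\ge 6$, and only then concludes that the relative presentation admits no reduced spherical picture. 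You need to carry out this verification; without it, the appeal to \cite{EH} is incomplete.
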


\begin{proof}
Proposition 1 in \cite{EH} is proved by showing that the relative presentation in question admits no non-empty spherical diagram, except for some special cases where the group $H$ generated by $a$ and $d$ is small. We observe that these do not arise in our situation where $H = T(2, n, \Delta)$. The part of the proof of Proposition 1 that is relevant there is Case 2 (\cite[page 353]{EH}). In the exceptional cases that arise either $H$ is finite, or there is a relation in $H$, other than $a^2$, which contains at most three occurrences each of $a$ and $d$, or a relation which is a product of between one and five words of the form $(ad^{\pm 1})^{\pm 1}$. Since none of these hold in our case ($H = T(2, n, \Delta)$ where $n \geq 3$ and $\Delta \geq 6$), we conclude that our relative presentation of $G$ admits no non-empty spherical diagram. In the dual language of {\it pictures}, this says that it admits no reduced spherical picture \cite{BP}. Since the element $az^3dz^{-1} \in T * \langle z \rangle$ is not a proper power, Lemma \ref{finite in T}  follows from \cite[(0.4)]{BP}.
\end{proof}

\begin{lemma} \label{centre finite}
The centre of $G$ is finite.
\end{lemma}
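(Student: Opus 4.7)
The plan is to handle torsion and infinite-order elements of $Z(G)$ separately, reducing in both cases to properties of the triangle group $T = T(2,n,\Delta)$, and then to rule out the infinite-order case via the one-relator-product structure of $G$.

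First, I would verify that $Z(T) = 1$. Under our standing hypotheses $n > 2$ and $\Delta \geq 6$ (the regime of interest for deriving the contradiction in the proof of Proposition \ref{m=4}), we have $\tfrac{1}{2}+\tfrac{1}{n}+\tfrac{1}{\Delta} \leq \tfrac{1}{2}+\tfrac{1}{3}+\tfrac{1}{6} = 1$, so $T$ is an infinite ordinary triangle group acting faithfully as a cocompact discrete group of orientation-preserving isometries of $\mathbb{E}^2$ or $\mathbb{H}^2$. Any $g \in Z(T)$ must commute with the rotations $a$ and $d$, hence fix their distinct rotation centres $A,B$; but an orientation-preserving isometry of $\mathbb{E}^2$ or $\mathbb{H}^2$ fixing two distinct points is the identity, so $Z(T) = 1$. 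Combining this with Lemma \ref{finite in T}: if $x \in Z(G)$ has finite order, then $\langle x\rangle \subseteq gTg^{-1}$ for some $g \in G$, and centrality of $x$ gives $x = g^{-1}xg \in T$, whence $x \in Z(T) = 1$.

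Second, suppose for contradiction that $x \in Z(G)$ has infinite order. By Lemma \ref{T injects} we may view $T \subseteq G$, and $x$ centralises $T$. Then $T \cap \langle x \rangle$ is central in $T$, hence trivial by the preceding paragraph, so the natural homomorphism $T \times \langle x \rangle \to G$, $(t, x^k) \mapsto tx^k$, is injective. This produces an embedding $T \times \mathbb{Z} \hookrightarrow G$.

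The third and hardest step is to rule out this embedding by exploiting the relative presentation $G = \langle T, z \mid az^3dz^{-1}\rangle$. The proof of Lemma \ref{finite in T} already establishes, via the analysis of \cite{EH} and the picture-theoretic machinery of Bogley-Pride \cite{BP}, that this relative presentation admits no non-empty reduced spherical picture. I would leverage this asphericity to deduce that the centraliser $C_G(T)$ equals $Z(T) = 1$: an element of $C_G(T) \setminus \{1\}$ would be represented by a reduced picture over the relative presentation whose boundary is a word in $T$ centralising all of $T$, and a careful reduction argument shows any such picture must be empty. Since $\langle x\rangle \subseteq C_G(T) = 1$ contradicts $x$ having infinite order, no such $x$ exists, and $Z(G) = 1$.

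The main obstacle is the centraliser computation $C_G(T) = Z(T)$ in the third step: asphericity of the relative presentation gives strong control over its second homotopy module, but extracting from it a Magnus-type conclusion about centralisers of the subgroup $T$ requires a careful picture-theoretic analysis, reducing the problem to showing that any reduced picture whose boundary cycle centralises $T$ reduces to the empty picture. Once this technical point is in place, the two-step argument above completes the proof of finiteness (in fact, triviality) of $Z(G)$.
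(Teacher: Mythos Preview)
Your approach is genuinely different from the paper's, and steps 1--3 are correct and useful: you correctly establish $Z(T)=1$, correctly deduce that torsion elements of $Z(G)$ are trivial via Lemma \ref{finite in T}, and correctly obtain an embedding $T\times\mathbb Z\hookrightarrow G$ from a hypothetical infinite-order central element. Note, however, that steps 1--2 alone only show $Z(G)$ is torsion-free, not finite, so everything rests on step 4.

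Step 4 is a genuine gap. You assert that asphericity of the relative presentation $\langle T, z\mid az^3dz^{-1}\rangle$ forces $C_G(T)=Z(T)$, but you give no argument beyond ``a careful picture-theoretic analysis''. Asphericity (no reduced spherical pictures) controls the second homotopy module and yields the torsion theorem you used in Lemma \ref{finite in T}, but it does not by itself give Magnus-type centraliser conclusions for the factor $T$; there is no result in \cite{BP} or \cite{EH} that delivers this, and your sketch of how pictures with boundary a $T$-word centralising $T$ would reduce is not an argument. Without this, the infinite-order case remains open and the lemma is unproved.

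The paper avoids this difficulty entirely. It computes the orbifold Euler characteristic $\chi(\Gamma)=\tfrac1n-\tfrac12$ and applies \cite[Theorem 1.2]{BZ2} (together with \cite[Lemma 8.1]{DT}) to produce a finite-index normal subgroup $G_0\trianglelefteq G$ with $\hbox{def}(G_0)\geq 2$ whenever $\chi(\Gamma)+\tfrac1\Delta<0$; Hillman's results \cite[Corollaries 2.3.1 and 2.4.1]{Hil2} then force $Z(G_0)$, hence $Z(G)$, to be finite. In the borderline case $(n,\Delta)=(3,6)$ one only gets $\hbox{def}(G_0)\geq 1$, and a further argument from \cite{Hil1} is invoked: if $Z(G_0)$ were infinite then $[G_0,G_0]$ would be free, contradicting the fact that $[G,G]\supseteq [T,T]\cong\mathbb Z\oplus\mathbb Z$. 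This route sidesteps any centraliser computation inside $G$ and uses only standard deficiency/cohomological-dimension machinery. If you wish to pursue your line, you would need either a reference giving $C_G(T)=Z(T)$ for aspherical one-relator products over a factor, or an independent argument ruling out $T\times\mathbb Z\hookrightarrow G$ (for instance via a cohomological-dimension bound on $G$).
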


\begin{proof}
The orbifold Euler characteristic
$$\chi(\Gamma) = \chi^{orb}(P^2(2,n)) = 1 - (\frac12 + \frac{n-1}{n}) = \frac1n - \frac12$$
Hence, unless $n = 3$ and $\Delta = 6$, $\chi(\Gamma) + \frac{1}{\Delta} < 0$, and so by \cite[Theorem 1.2]{BZ2} $G$ has a normal subgroup $G_0$ of finite index with deficiency $\hbox{def}(G_0) \geq 2$ as long as there is a representation $\rho: \Gamma \to PSL_2(\mathbb C)$ which preserves the orders of the torsion elements of $\Gamma$ and which sends $ad$ to an element of order $\Delta$. This is easy to do by hand in our case, but we can also appeal to \cite[Lemma 8.1]{DT} where the result is proven in a broader context. By \cite[Corollaries 2.3.1 and 2.4.1]{Hil2}, the centre $Z(G_0)$, and hence $Z(G)$, is finite.

Suppose then that $n = 3$ and $\Delta = 6$. In this case $\chi(\Gamma) + \frac{1}{\Delta} = 0$ and by  \cite[Theorem 1.2]{BZ2} and \cite[Lemma 8.1]{DT} $G$ has a normal subgroup $G_0$ of finite index with deficiency $\hbox{def}(G_0) \geq 1$. If $\hbox{def}(G_0) > 1$ we argue as above. If $\hbox{def}(G_0) = 1$, \cite[Corollary 1, page 38]{Hil1} implies that if $Z(G_0)$ is infinite then the commutator subgroup $[G_0, G_0]$ is free. But $[G, G]$ contains $[T, T]$ (by Lemma \ref{T injects}), which is isomorphic to $\mathbb Z \oplus \mathbb Z$, and hence $[G_0, G_0]$ contains a copy of $\mathbb Z \oplus \mathbb Z$. It follows that  $Z(G_0)$, and therefore $Z(G)$, is finite in this case also.
\end{proof}

Since the triangle group $T$ has trivial centre, Lemmas \ref{finite in T} and \ref{centre finite} give

\begin{prop} \label{centre trivial}
The centre of $G$ is trivial.
\qed
\end{prop}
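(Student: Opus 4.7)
The proof should be short given Lemmas~\ref{finite in T} and~\ref{centre finite}: the plan is to push any central element of $G$ into a conjugate of the triangle group $T=T(2,n,\Delta)$, and then invoke the fact that such triangle groups are centreless. First I would apply Lemma~\ref{centre finite} to obtain that $Z(G)$ is finite, and then invoke Lemma~\ref{finite in T} to find $g\in G$ with $Z(G)\subseteq gTg^{-1}$. After replacing $T$ by the conjugate $gTg^{-1}$ (which is still isomorphic to $T$ as an abstract group), I may assume $Z(G)\subseteq T$.

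Now, for any $c\in Z(G)$, the element $c$ commutes with every element of $G$, and in particular with every element of $T$. Since $c\in T$, this says $c\in Z(T)$. So the statement reduces to verifying that $Z(T(2,n,\Delta))=\{1\}$.

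Under the standing hypotheses $n\ge 3$ and $\Delta\ge 6$, we have $\tfrac12+\tfrac1n+\tfrac1\Delta\le 1$, so $T$ is either a cocompact hyperbolic Fuchsian triangle group or (in the boundary case $n=3,\Delta=6$) the Euclidean $(2,3,6)$ triangle group. In the hyperbolic case $T$ acts faithfully and properly discontinuously by orientation-preserving isometries on $\mathbb H^2$; any central element would have to fix the axis (or fixed point) of every hyperbolic (resp.\ elliptic) element, which forces it to be the identity. In the Euclidean case $T\cong\mathbb Z^2\rtimes\mathbb Z/6$ with the natural faithful fixed-point-free linear action, so the centre is again visibly trivial. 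This verification is classical, and I do not foresee it posing any difficulty; the substantive work has already been done in establishing Lemmas~\ref{T injects}, \ref{finite in T}, and~\ref{centre finite}, and no further obstacle remains.
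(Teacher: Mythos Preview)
Your proposal is correct and follows essentially the same approach as the paper: the paper's proof is the single sentence ``Since the triangle group $T$ has trivial centre, Lemmas~\ref{finite in T} and~\ref{centre finite} give [the result],'' and you have simply unpacked this, supplying the (standard) verification that $T(2,n,\Delta)$ is centreless for $n\ge 3$, $\Delta\ge 6$.
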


\begin{proof}[Proof of Proposition \ref{m=4}]
Suppose that $\Delta(\alpha, \beta) > 5$ and let $\varphi: \pi_1(M(\alpha)) \to G$ be the epimorphism described above. Recall that $M(\alpha)$ is a small Seifert fibred manifold with hyperbolic base orbifold $S^2(a,b,c)$. Let $Z$ be the (infinite cyclic) center of $\pi_1(M(\alpha))$. By Proposition \ref{centre trivial} $\varphi(Z) = \{1\}$, and hence $\varphi$ factors through $\pi_1(M(\alpha))/ Z \cong T(a,b,c) = T'$. Since $T'$ is generated by elements of finite order, its image under the induced homomorphism is contained in $\langle \langle T \rangle \rangle$ by Lemma \ref{finite in T} . Since $G/ \langle \langle T \rangle \rangle \cong \mathbb Z / 2$, this is a contradiction.
\end{proof}

\end{document}